\def\psl{\mathrm{PSL}(2,\mathbb{R})}
\def\pgl{\mathrm{PGL}(2,\mathbb{R})}
\def\SL{\mathrm{SL}(2,\mathbb{R})}
\def\GL{\mathrm{GL}(2,\mathbb{R})}
\def\sl{\mathfrak{sl}_2(\mathbb{R})}
\def\fund{\pi_1(\Sigma)}
\def\hom{\mathrm{Hom}\big(\fund, \psl\big)}
\def\univcover{\widetilde{\SL}}
\def\Hyp{\mathrm{Hyp}}
\def\Par{\mathrm{Par}}
\def\Parp{\mathrm{Par}^+}
\def\Parm{\mathrm{Par}^-}
\def\Ell{\mathrm{Ell}}
\def\R{\mathcal{R}(\Sigma)}
\def\Rn{\mathcal{R}_n(\Sigma)}
\def\Rns{\mathcal{R}_n^s(\Sigma)}
\def\W{\mathcal{W}(\Sigma)}
\def\Wn{\mathcal{W}_n(\Sigma)}
\def\HP{\mathrm{HP}}
\def\HPns{\mathrm{HP}_n^s}
\def\PE{\mathrm{PE}}
\def\PP{\mathrm{PP}}
\def\nonabel{\mathrm{NA}^s_n(\Sigma)}
\def\tphicone{\widetilde{\phi(c_1)}}
\def\tphictwo{\widetilde{\phi(c_2)}}
\def\ev{\mathrm{ev}}
\def\tr{\mathrm{tr}}
\def\decomp{\displaystyle\bigcup_{k=1}^{-\chi(\Sigma)} \Sigma_k}
\def\apdecomp{\displaystyle \bigg(\bigcup_{i=1}^{g+p-2} P_i\bigg)\cup\bigg(\bigcup_{j=1}^g T_j\bigg)}
\def\torus{\Sigma_{1,1}}
\def\pants{\Sigma_{0,3}}
\def\torustwo{\Sigma_{1,2}}
\def\pantstwo{\Sigma_{0,4}}
\def\parpp
\def\parmp
\def\parpm
\def\parmm
\def\part
\def\abcd
\def\inverse
\def\pqrs
\def\xdiag
\def\ell
    \cos\frac{\theta}{2} & \sin\frac{\theta}{2} \\
\sin\frac{\theta}{2} & \cos\frac{\theta}{2}
\def\elltheta
\def\xell
\def\sldiag
\def\slupper
\def\sllower
\def\path{\{\phi_t\}\interval}
\def\interval{_{t\in [0,1]}}
\newtheorem{theorem}{Theorem}[section]
\newtheorem{proposition}[theorem]{Proposition}
\newtheorem{lemma}[theorem]{Lemma}
\newtheorem{corollary}[theorem]{Corollary}
\newtheorem{remark}[theorem]{Remark}
\theoremstyle{definition}
\begin{document}
	
\title{\textbf{Connected components of the space of
type-preserving representations}}
\author{Inyoung Ryu and Tian Yang}
\date{}
\maketitle

\begin{abstract}
We complete the characterization of the connected components of the space of type-preserving representations of a punctured surface group into $\psl$. 
We show that the connected components are indexed by the relative Euler classes and the signs of the images of the peripheral elements satisfying a generalized Milnor-Wood inequality; and when the surface is a punctured sphere, there are additional connected components consisting of ``totally non-hyperbolic" representations. As a consequence, we count the total number of the connected components of the space of type-preserving representations. 
\end{abstract}


\tableofcontents

\section{Introduction}

For a connected, closed and oriented surface  $\Sigma,$
the $\psl$-\emph{representation space} is the space
of group homomorphisms $\phi:\pi_1(\Sigma)\to \psl$ of the fundamental group of $\Sigma$ into the Lie group $\psl,$ endowed with the compact-open topology. The \emph{Euler class} $e(\phi)$ of such  $\phi$ is the Euler class of the associated flat $\psl$-bundle over $\Sigma,$ which is an integer under the identification $\mathrm{H}^2(\Sigma;\pi_1(\psl))\cong \mathbb Z.$  It was proved in \cite{milnor, wood} that the Euler class $e(\phi)$ satisfies the Milnor-Wood inequality 
$$\chi(\Sigma)\leqslant e(\phi)\leqslant -\chi(\Sigma),$$
where $\chi(\Sigma)$ is the Euler characteristic of $\Sigma.$ Goldman, in the seminal work \cite{goldman}, proved that if the genus of $\Sigma$ equals $g,$ then the $\psl$-representation space of $\Sigma$ has exactly $4g-3$ connected components, one for each integer $n$ in between $\chi(\Sigma)$ and $-\chi(\Sigma)$ consisting of the representations with Euler class equal to $n.$ He further proved that the two components respectively corresponding to $n=\pm\chi(\Sigma)$ consist of discrete and faithful representations, whose quotients by the $\psl-$conjugations give two copies of the Teichm\"uller space of $\Sigma$, one for each orientation of $\Sigma.$ 

The goal of this article is to understand the space of $\psl$-representations for punctured surfaces. Now let $\Sigma$ be a connected, oriented punctured surface, that is, a surface obtained from a connected, closed  and oriented surface with finitely many points removed. As the fundamental group of $\Sigma$ is a free group of rank $1-\chi(\Sigma),$ the space of all the $\psl$-representations of $\pi_1(\Sigma),$ endowed with the compact-open topology, is homeomorphic to $\psl^{1-\chi(\Sigma)}$ and is connected. Therefore, it is natural to consider the subspaces of the $\psl$-representations with suitable constraints. Natural constraints include the \emph{hyperbolic boundary condition} where all the peripheral elements (elements represented by circles around a puncture) of $\pi_1(\Sigma)$ are required to be sent to hyperbolic elements of $\psl,$ and the  \emph{parabolic boundary condition} where all the peripheral elements of $\pi_1(\Sigma)$ are sent to parabolic elements of $\psl.$ For a representation $\phi:\pi_1(\Sigma)\to\psl$ satisfying either the hyperbolic boundary condition or the parabolic boundary condition,  a \emph{relative Euler class} $e(\phi)$ was defined in \cite{goldman} which is an integer and still satisfies the Milnor-Wood inequality $\chi(\Sigma)\leqslant e(\phi)\leqslant -\chi(\Sigma).$ See Section \ref{relE} for more details.

Let $\mathcal W(\Sigma)$ be the space of the $\psl$-representations of $\pi_1(\Sigma)$ that satisfy the hyperbolic boundary condition. Then similar to the closed surface case, Goldman\,\cite{goldman} proved that the connected components of $\mathcal W(\Sigma)$ are indexed by the relative Euler classes, ie., each component of $\mathcal W(\Sigma)$ corresponds to an integer $n$ in between $\chi(\Sigma)$ and $-\chi(\Sigma)$ and consists of representations with the relative Euler class equal to $n.$
\\

The situation becomes a lot more subtle when considering representations with the parabolic boundary condition. For instance, as observed in \cite[Theorem 1.3]{yang}, for the four-puncture sphere $\Sigma_{0,4}$ the space of representations with the parabolic boundary condition of relative Euler class $\pm 1$ has five connected components each, and the spaces of  irreducible representations with the parabolic boundary condition of relative Euler class $0$ has six connected components.  This shows that for representations with the parabolic boundary condition, the relative Euler classes alone do not characterize the connected components. 

As the main focus of this paper, we call a representation 
 $\phi:\pi_1(\Sigma)\to\psl$ satisfying the parabolic boundary condition a \emph{type-preserving representation,} and denote by $\mathcal R(\Sigma)$  the space of all the type-preserving representations. As observed by Kashaev\,\cite{kashaev}, besides the relative Euler class, the connected components of $\mathcal R(\Sigma)$ depend on another quantity, the \emph{sign} of the representation which assigns each puncture of $\Sigma$  a sign $+1$ or $-1.$ See Section \ref{Sign} for more details. Kashaev\,\cite{kashaev} then conjectured that the space $\mathcal R_n^s(\Sigma)$ of type-preserving representations with the relative Euler class $n$ and the sign $s,$ if non-empty, is connected. However, there was no prediction on when the space $\mathcal R_n^s(\Sigma)$ is non-empty.

In Theorem \ref{thm_main1} and Theorem \ref{thm_main2} below, we find a sufficient and necessary condition for the space $\mathcal R_n^s(\Sigma)$ to be non-empty, and prove the connectedness of each non-empty $\mathcal R_n^s(\Sigma),$ confirming Kashaev's Conjecture and counting  the number of connected components of $\R.$ In Theorem \ref{thm_main1} we consider punctured surfaces with a positive genus, and in Theorem \ref{thm_main2} we consider the punctured spheres.  To state the results, for an $s\in\{\pm 1\}^p,$ let $p_+(s)$ be the number of $+1$'s and let $p_-(s)$ be the number of $-1$'s in the components of $s.$

 \begin{theorem}\label{thm_main1}
        Let $\Sigma=\Sigma_{g,p}$ be a connected and oriented punctured surface with genus $g\geqslant 1$ and $p\geqslant 1$ punctures.  Let $n\in \mathbb{Z}$ and $s\in \{\pm 1\}^p.$ Then the space $\mathcal R_n^s(\Sigma)$ of type-preserving representations with relative Euler class $n$ and sign $s$  is non-empty if and only if the pair $(n,s)$ satisfies the following inequality: 
        $$\chi(\Sigma) + p_+(s)\leqslant n\leqslant -\chi(\Sigma) - p_-(s).$$
        Moreover, each non-empty $\mathcal R_n^s(\Sigma)$ above is connected.
    \end{theorem}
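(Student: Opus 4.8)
The plan is to prove Theorem \ref{thm_main1} in two halves: first the necessity of the inequality $\chi(\Sigma)+p_+(s)\leqslant n\leqslant -\chi(\Sigma)-p_-(s)$, and then its sufficiency together with connectedness of each non-empty stratum.

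For necessity, the strategy is to refine the Milnor–Wood inequality by accounting separately for the sign at each puncture. The relative Euler class is additive under a pants decomposition $\Sigma=\decomp$ (or more precisely the decomposition $\apdecomp$ into pairs of pants and one-holed tori suggested by the macros), and on each pair of pants the contribution is controlled by the local data at its three boundary curves. The key observation, going back to the behavior of $\widetilde{\phi(c)}$ in $\univcover$ for a parabolic $\phi(c)$, is that a $+1$ peripheral puncture forces a "shift" that costs one unit in one direction, while a $-1$ puncture costs one unit in the other; summing these local constraints across the decomposition yields exactly the claimed bounds. Concretely, I would lift $\phi$ to a (not necessarily homomorphic) map into $\univcover$, express $n$ as a sum of translation-number-type quantities over the edges of the dual graph, and bound each summand by $\pm 1$ with the appropriate sign correction at boundary edges — this is the generalized Milnor–Wood inequality referenced in the abstract.

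For sufficiency and connectedness, the plan is an induction on $-\chi(\Sigma)$ using the pants decomposition, with the base cases being the one-holed torus $\torus$ and, as a building block, the pair of pants $\pants$. On $\pants$ one analyzes directly the spaces $\PPm$, $\EPms$, $\PEms$ of representations sending the three boundary elements to prescribed conjugacy classes (parabolic with given sign, or hyperbolic/elliptic as needed for gluing), showing each relevant such space is non-empty exactly when the local inequality holds and is connected — this is essentially the $\Sigma_{0,3}$ computation and should reduce to explicit matrix identities using \parpp, \parmp, etc. For the inductive step, I would cut $\Sigma$ along a simple closed curve $c$ into pieces (or a single piece of smaller complexity), apply the inductive hypothesis to the pieces with a hyperbolic or elliptic boundary condition along $c$, and then glue: the gluing locus is a $\psl$-orbit of matchings along $c$, which is connected, and one must check that every $(n,s)$ satisfying the global inequality can be realized by distributing the Euler class additively over the pieces so that each piece satisfies its own inequality. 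Connectedness then follows because any two representations with the same $(n,s)$ can be connected by first deforming the restriction to $c$ to a standard hyperbolic element (using that $\mathcal W$-type spaces are connected, by Goldman's theorem), then applying the inductive connectedness on the pieces, and finally moving through the connected gluing parameter.

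The main obstacle I anticipate is the bookkeeping in the gluing/induction step: one must arrange the decomposition so that the intermediate boundary curves can be taken hyperbolic (where Goldman's result gives connectedness of $\mathcal W(\Sigma)$ for free) while still realizing the extreme values of $n$ — near the endpoints of the Milnor–Wood interval some intermediate curves may be forced to be parabolic or even non-hyperbolic, and handling those degenerate gluings (where the centralizer jumps and the matching space changes topology) requires care. A secondary subtlety is ensuring the sign $s$ is genuinely a deformation invariant and that it behaves additively/compatibly under cutting and gluing, so that the local inequalities on the pieces really do sum to the global one; this is where the precise definition of the sign from Section \ref{Sign} and its continuity must be invoked. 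I expect the positive-genus hypothesis $g\geqslant 1$ to be used precisely to guarantee there is always a one-holed torus handle available, whose representation space has enough flexibility (a free rank-two group with no constraint forcing non-hyperbolicity) to absorb Euler class and avoid the exceptional "totally non-hyperbolic" components that appear only for punctured spheres in Theorem \ref{thm_main2}.
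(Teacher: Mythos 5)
Your outline follows the same broad strategy as the paper (pants/almost-path decomposition, additivity of the relative Euler class, per-piece analysis on $\Sigma_{0,3}$ and $\Sigma_{1,1}$, induction with gluing), and your guess about the role of $g\geqslant 1$ is exactly right. But there are two genuine gaps. First, everything you propose — both the necessity argument via additivity and the cut-and-glue connectedness argument — presupposes that the decomposition curves are sent to hyperbolic (or at least non-elliptic) elements, since otherwise the relative Euler classes of the restrictions are not even defined and Proposition \ref{prop_additivity} does not apply. You flag this as an obstacle "requiring care," but it is not a boundary-case nuisance: for an arbitrary $\phi\in\mathcal R_n^s(\Sigma)$ the decomposition curves can be elliptic, and the paper devotes an entire section to fixing this, first proving a genericity theorem (every representation in the stratum can be deformed so that its restriction to each piece is non-abelian — itself delicate, because abelian pieces obstruct the use of Goldman's Lemmas 9.3/9.6) and then a further deformation lemma making all decomposition curves hyperbolic. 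Without an argument here your proof does not start. Second, your inductive connectedness step tacitly assumes that two representations with the same global $(n,s)$ induce matchable data on the pieces, but the distribution of the relative Euler class over the pieces is \emph{not} determined by $(n,s)$. One must be able to shift Euler class across a decomposition curve, which the paper does by an explicit analysis on the $\chi=-2$ subsurfaces (changing $e(\phi|_{\pi_1(P_1)})$ along a path in the stratum) combined with Goldman's graph-theoretic Lemma \ref{lem_graph}; and to propagate a deformation from one side of a cutting curve to the other while keeping the boundary holonomies matched one needs the path-lifting property of the evaluation maps (the lifted commutator for $\Sigma_{1,1}$ and the lifted product for $\Sigma_{0,3}$, Theorem \ref{thm_PLP}), not merely the connectedness of a "$\psl$-orbit of matchings," since deforming one side moves the holonomy of the curve itself.

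A smaller but still real issue: the per-pants sign bookkeeping you assert ("each summand bounded by $\pm 1$ with the appropriate sign correction") is precisely the content of the computation that $\mathrm{HP}_n^{(s_1,s_2,0)}(\Sigma_{0,3})$ is non-empty only for $n=\tfrac{1}{2}(s_1+s_2)$, and similarly for one parabolic boundary; in the paper this is a genuine argument tracking which component $\mathrm{Par}^{\pm}_{\pm 1}$ a lifted path crosses in $\univcover$ (via the sign criterion of Lemma \ref{lem_offdiag}), not a formal consequence of translation-number additivity. If you carry out your translation-number version you will end up reproving exactly this, so the necessity half of your plan is workable in principle but currently only asserted where it matters most.
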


\begin{theorem}\label{thm_main2}
        Let $\Sigma = \Sigma_{0, p}$ be a punctured sphere with $p \geqslant 3$ punctures. Let $n\in \mathbb{Z}$ and $s\in \{\pm 1\}^p.$ 
        Then the space $\mathcal R_n^s(\Sigma)$ of type-preserving representations with relative Euler class $n$ and sign $s$ is non-empty if and only if the pair $(n,s)$ satisfies one of the following conditions: 
       \begin{enumerate}[(1)]

           \item $\chi(\Sigma) + p_+(s)\leqslant n\leqslant -\chi(\Sigma) - p_-(s)$, 

            \item $n = 0,$ and either $p_-(s) = 1$ or $p_+(s)=1$, and

           \item $n = 1$ and $p_-(s) = 0,$ or $n = -1$ and $p_+(s) = 0.$

      \end{enumerate}
       Moreover, each non-empty $\mathcal R_n^s(\Sigma)$ above is connected.
    \end{theorem}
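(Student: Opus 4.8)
The plan is to reduce Theorem \ref{thm_main2} (and, in parallel, Theorem \ref{thm_main1}) to a short list of elementary pieces by cutting $\Sigma$ along simple closed curves, using the additivity of the relative Euler class, and then to establish non-emptiness by explicit constructions and connectedness by an induction on $-\chi(\Sigma)$. Two quantities carry all of the bookkeeping: the relative Euler class $n$ adds under gluing of subsurfaces, and the sign of a peripheral parabolic is locally constant on the (two-component) set of parabolic elements of $\psl$, hence is constant on the connected components of $\Rns$. For the inductive step it will be convenient to prove a slightly more general statement in which each puncture carries either the parabolic boundary condition, with a sign, or the hyperbolic boundary condition; the latter option recovers Goldman's connectedness theorem for $\W$ \cite{goldman} and makes $\pants$ and $\pantstwo$ available as base cases.

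\emph{Necessity of the conditions.} Fix a pants decomposition $\Sigma_{0,p}=P_1\cup\dots\cup P_{p-2}$ in which every pair of pants contains at least one puncture. For a type-preserving $\phi$, after choosing compatible lifts of the boundary holonomies to $\univcover$, each $P_k\cong\pants$ carries a relative Euler number $e_k$ with $n=e(\phi)=\sum_k e_k$, the interior curves of the decomposition being unconstrained. The heart of this step is a classification, on $\pi_1(\pants)=\langle c_1,c_2,c_3\mid c_1c_2c_3=1\rangle$, of the pairs (relative Euler number, signs at the parabolic boundary components) realized by triples of elements of $\psl$ with trivial product — an explicit computation in $\SL$. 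Summing the local constraints over the $p-2$ pieces, and using that closing up a planar surface forces the relation $\phi(c_1)\cdots\phi(c_p)=\mathrm{id}$ among the peripheral holonomies, gives exactly the union of conditions (1)--(3); the marginal families (2) and (3) arise because in genus zero a single pair of pants can be put in a ``totally non-hyperbolic'' configuration — no hyperbolic elements at all — that enlarges the attainable range of $n$ by one, a phenomenon that disappears once a handle is present, which is why Theorem \ref{thm_main1} has only the analogue of (1).

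\emph{Non-emptiness.} Given $(n,s)$ satisfying condition (1), I would build $\phi$ by choosing on each pair of pants of the decomposition a model representation realizing one of the admissible local data $(e_k,s_k)$ with $\sum_k e_k=n$ and the $s_k$ agreeing with $s$ at the punctures, sending each interior curve to a hyperbolic element, and gluing with a suitable twist so that the eigenvalues match across each interior curve; additivity then gives $e(\phi)=n$. For $(n,s)$ in the extra families (2) and (3), I would instead exhibit explicit totally non-hyperbolic representations of $\pi_1(\Sigma_{0,p})$ — all peripheral elements parabolic of the prescribed signs, and every non-peripheral element parabolic or elliptic — and check by direct computation that their relative Euler class is $0$ or $\pm 1$ as claimed. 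These representations are what produce the additional connected components.

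\emph{Connectedness, and the main obstacle.} I would prove connectedness by induction on $-\chi(\Sigma)$, the base cases $\pants$ and $\pantstwo$ being handled directly (the latter essentially as in \cite{yang}). Given $\phi,\psi$ in a non-empty $\Rns$ with $p\geqslant 5$, choose a simple closed curve $\gamma$ separating off a pair of pants that contains two of the punctures. The strategy is: first deform $\phi$ and $\psi$ within $\Rns$ so that $\phi(\gamma)$ and $\psi(\gamma)$ are hyperbolic with a common axis and the same translation length; then connect the two restrictions to $\Sigma_{0,p-1}$ inside the space of representations of the smaller punctured sphere with $\gamma$ given the hyperbolic boundary condition, which is connected by the mixed-boundary induction hypothesis; connect the two restrictions to the pair of pants; and finally join the resulting paths with a path in the twist parameter along $\gamma$. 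The step I expect to be the main obstacle is the first one: showing that an arbitrary type-preserving representation can be pushed, without leaving $\Rns$, to one for which the chosen pants curve has hyperbolic holonomy, or else recognizing that it cannot and that $\phi$ lies in one of the finitely many exceptional components from families (2)--(3), which then must be shown to be connected by a separate, direct analysis. This is exactly the phenomenon flagged for $\pantstwo$ in the introduction — two type-preserving representations can share $n$ and $s$ and still lie in different components because their interior holonomies are trapped on opposite sides of the non-hyperbolic locus — so the connectedness argument cannot stay in the ``generic'' part of the representation space but must interpolate carefully through elliptic and parabolic interior holonomies, using the classification of the elementary pieces from the necessity step to decide when such an interpolation exists.
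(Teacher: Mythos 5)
Your overall architecture (cut along a pants decomposition, use additivity of the relative Euler class, induct on $-\chi$, treat the exceptional families separately) is the same as the paper's, but as written the proposal has two genuine gaps, and they are exactly where the real work lies. First, the necessity step is not valid as stated: Goldman's additivity, and indeed the very definition of the relative Euler class of a restriction $\phi|_{\pi_1(P_k)}$, requires the decomposition curves to be sent to hyperbolic or parabolic elements, since the canonical lifts live in $\overline{\Hyp_0}$. Precisely in the exceptional families (2)--(3) the representations send \emph{every} interior simple closed curve to a non-hyperbolic element (in family (3), to elliptic elements), so ``each $P_k$ carries a relative Euler number $e_k$ with $n=\sum e_k$, the interior curves being unconstrained'' does not hold in the form you use it. You can salvage a telescoping sum by choosing arbitrary lifts of the elliptic interior holonomies, but then the ``local classification'' you need is a classification of lifted products of parabolics/elliptics by which component $\Hyp_m,\Par_m^{\pm},\Ell_m$ of $\univcover$ they land in -- this is the content of the paper's Proposition 3.6, Lemma 7.6 and Proposition 7.11, and it is precisely what rules out, e.g., $(n,s)=(0,(+1,\dots,+1))$ or $(2,(+1,\dots,+1))$, neither of which your sketch excludes. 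Relatedly, your heuristic that a single ``totally non-hyperbolic'' pair of pants shifts the range of $n$ by one is misleading: for $p\geqslant 4$, if even one decomposition curve is hyperbolic the generalized Milnor--Wood inequality follows, so family (3) forces \emph{all} interior curves to be non-hyperbolic, and family (2) consists entirely of abelian representations -- a different mechanism.

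Second, the step you flag as the main obstacle is left open, and it is most of the paper: (i) deforming an arbitrary $\phi\in\Rns$ so that some (then all) decomposition curves become hyperbolic requires the genericity theorem (deformation into the locus where all restrictions are non-abelian), which itself needs a careful case analysis -- in particular it \emph{fails} when $n=0$ and $p_{\pm}(s)=1$, because those components are entirely abelian, and this failure has to be recognized and quarantined rather than assumed away; (ii) the dichotomy ``either a pants curve can be made hyperbolic, or $\phi$ lies in an exceptional component with $e(\phi)=s_1$'' is itself a theorem (the paper's Lemma 5.7 and Proposition 7.12), not an observation; and (iii) the exceptional components are positive-dimensional, so their connectedness is not a ``separate, direct analysis'' in any finite sense -- the paper needs new evaluation maps $\ev:\mathrm{PE}^{s}(\Sigma_{0,3})\to\Ell_1$ and $\ev:\mathrm{PP}^{s}(\Sigma_{0,3})\to\Ell_s$ with path-lifting property and connected fibers to run the same cut-and-glue induction inside the totally non-hyperbolic locus. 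Until these three points are supplied, the proposal is a correct outline of the paper's strategy rather than a proof.
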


In the rest of this paper, we will call the inequality in Theorem \ref{thm_main1} and in Theorem \ref{thm_main2} (1) the \emph{generalized Milnor-Wood inequality.}

\begin{remark} 
The representations in (2) and (3) of Theorem \ref{thm_main2} are the ``super-maximal" representations in \cite{deroin_tholozan} that send every element of $\pi_1(\Sigma)$ represented by a simple closed curve in $\Sigma$ to a non-hyperbolic element of $\psl,$ where in (2) all the representation are abelian and in (3) all the representations are irreducible.   
\end{remark}

For $n\in \mathbb Z,$ let $\mathcal R_n(\Sigma)$ be the space of type-preserving representations of $\pi_1(\Sigma)$ of relative Euler class $n.$ Then as a consequence of Theorem \ref{thm_main1} and Theorem \ref{thm_main2}, we are able to count the number of connected components of each $\mathcal R_n(\Sigma)$ and $\mathcal R(\Sigma).$

\begin{corollary}\label{thm_main3} Let $\Sigma=\Sigma_{g,p}.$ 
    \begin{enumerate}[(1)]  
    \item If $g\geqslant 1,$ then for each $n\in\mathbb Z,$
             the number of connected components of $\mathcal R_n(\Sigma)$ equals
            $$\displaystyle\sum_{k = \max\{n+p+\chi(\Sigma), 0\}}^{\min\{n-\chi(\Sigma), p\}}\binom{p}{k}. $$
            
    \item If $g=0$ and $n=0,$ then the number of connected components of  $\mathcal R_n(\Sigma)$ equals
            $$\displaystyle\sum_{k = \max\{n+p+\chi(\Sigma), 0\}}^{\min\{n-\chi(\Sigma), p\}}\binom{p}{k} + 2p.$$
            
   \item If $g=0$ and $n=\pm 1,$ then the number of connected components of  $\mathcal R_n(\Sigma)$ equals
            $$\displaystyle\sum_{k = \max\{n+p+\chi(\Sigma), 0\}}^{\min\{n-\chi(\Sigma), p\}}\binom{p}{k} + 1.$$

    \end{enumerate}
    
    \end{corollary}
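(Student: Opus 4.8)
The plan is to deduce the count directly from Theorems 1.2 and 1.3 by bookkeeping. First I would fix $\Sigma = \Sigma_{g,p}$ and a relative Euler class $n \in \mathbb{Z}$, and organize the nonempty spaces $\mathcal R_n^s(\Sigma)$ according to the value of $p_+(s) \in \{0, 1, \dots, p\}$. Since by Theorems 1.2 and 1.3 each nonempty $\mathcal R_n^s(\Sigma)$ is connected, the number of connected components of $\mathcal R_n(\Sigma) = \bigcup_{s \in \{\pm1\}^p} \mathcal R_n^s(\Sigma)$ is exactly the number of sign vectors $s$ for which $\mathcal R_n^s(\Sigma)$ is nonempty (one should note in passing that for distinct $s$ the spaces $\mathcal R_n^s(\Sigma)$ are disjoint and open-closed in $\mathcal R_n(\Sigma)$, since the sign is a locally constant invariant — this is recorded in the section on signs, so it may be cited rather than reproved). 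The problem therefore reduces to counting sign vectors satisfying the relevant inequalities.

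For part (1), with $g \geqslant 1$, a sign vector $s$ with $p_+(s) = k$ (hence $p_-(s) = p - k$) has $\binom{p}{k}$ realizations, and by Theorem 1.2 it gives a nonempty space precisely when
\[
\chi(\Sigma) + k \leqslant n \leqslant -\chi(\Sigma) - (p-k),
\]
which, writing $\chi(\Sigma) = 2 - 2g - p$, rearranges to the two bounds $k \leqslant n - \chi(\Sigma)$ and $k \geqslant n + p + \chi(\Sigma)$. Combining these with the a priori constraint $0 \leqslant k \leqslant p$ yields $\max\{n + p + \chi(\Sigma),\, 0\} \leqslant k \leqslant \min\{n - \chi(\Sigma),\, p\}$, and summing $\binom{p}{k}$ over this range gives the stated formula (which is to be read as $0$ when the range is empty). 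I would also remark that the upper and lower summation limits are automatically consistent with $0 \leqslant k \leqslant p$ so no extra edge cases arise.

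For parts (2) and (3), $g = 0$, and one adds to the count from condition (1) of Theorem 1.3 the extra sign vectors coming from conditions (2) and (3). When $n = 0$: condition (2) contributes the sign vectors with $p_-(s) = 1$ and those with $p_+(s) = 1$; there are $\binom{p}{1} = p$ of each, and (since $p \geqslant 3$) these two families are disjoint and also disjoint from any $s$ already counted under (1) for $n=0$ — this last disjointness is the one small point to check, and it follows because for $n = 0$ condition (1) forces $p_+(s)$ into a range that excludes $p_+(s) = 1$ and $p_+(s) = p-1$ when $p\geq 3$; hence the extra count is $2p$. When $n = 1$: condition (3) adds exactly the single sign vector with $p_-(s) = 0$ (all entries $+1$), and one checks it is not already counted under (1); symmetrically for $n = -1$ the all-$(-1)$ vector is the unique addition, giving $+1$. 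The main (and only) obstacle is verifying these disjointness claims carefully so that no sign vector is double-counted across the three conditions of Theorem 1.3; everything else is the elementary rearrangement of inequalities and summation of binomial coefficients described above. Finally, summing the per-$n$ counts over all $n$ gives the count for $\mathcal R(\Sigma)$ itself, which I would state as an immediate consequence.
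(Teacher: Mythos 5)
Your proposal is correct and follows essentially the same route as the paper: by Theorems \ref{thm_main1} and \ref{thm_main2} each non-empty $\mathcal R_n^s(\Sigma)$ is connected (and the sign is locally constant, so distinct signs give distinct components), so one counts signs via $k=p_+(s)$, rewrites the generalized Milnor–Wood inequality as $\max\{n+p+\chi(\Sigma),0\}\leqslant k\leqslant \min\{n-\chi(\Sigma),p\}$, and for $g=0$ adds the exceptional components ($2p$ when $n=0$ by Theorem \ref{thm_Exc1}, and $1$ when $n=\pm 1$ by Theorem \ref{thm_Exc2}). Your extra disjointness check (that $p_+(s)\in\{1,p-1\}$ for $n=0$, resp. $p_+(s)\in\{0,p\}$ for $n=\pm 1$, lies outside the range allowed by the inequality) is exactly the fact the paper relies on when it calls these pairs exceptional, so no double counting occurs.
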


       \begin{corollary}\label{total} Let $\Sigma = \Sigma_{g,p}.$ 
       \begin{enumerate}[(1)]
           \item  If $g\geqslant 1,$ then $\R$ has in total
        $$\displaystyle\sum_{n=\chi(\Sigma)}^{-\chi(\Sigma)} 
        \displaystyle\sum_{k = \max\{n+p+\chi(\Sigma), 0\}}^{\min\{n-\chi(\Sigma), p\}} \binom{p}{k}$$
        connected components, one for each pair $(n,s)\in \mathbb Z\times \{\pm 1\}^p$ satisfying the generalized Milonr-Wood inequality.
        \item If $g=0,$ then $\R$ has in total $$\displaystyle\sum_{n=\chi(\Sigma)}^{-\chi(\Sigma)} 
        \displaystyle\sum_{k = \max\{n+p+\chi(\Sigma), 0\}}^{\min\{n-\chi(\Sigma), p\}} \binom{p}{k} + 2p + 2$$
        connected components, one for each pair $(n,s)\in\mathbb Z\times \{\pm 1\}^p$ satisfying the conditions in Theorem \ref{thm_main2}. 
       \end{enumerate}
        \end{corollary}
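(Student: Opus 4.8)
The plan is to deduce the count directly from Theorems \ref{thm_main1} and \ref{thm_main2} via the decomposition of $\R$ into the pieces $\Rns$. First I would record that the relative Euler class $e(\phi)$ and the sign of $\phi$ are locally constant on $\R$ (by their definitions in Sections \ref{relE} and \ref{Sign}), so each $\Rns$ is open and closed in $\R$; since $e(\phi)$ is constrained to $\chi(\Sigma)\le n\le-\chi(\Sigma)$ by the Milnor--Wood inequality and $s$ runs over the finite set $\{\pm1\}^p$, this exhibits $\R$ as a finite disjoint union of the clopen subsets $\Rns$. Therefore the connected components of $\R$ are exactly the connected components of the individual $\Rns$, ranging over all pairs $(n,s)\in\mathbb Z\times\{\pm1\}^p$.

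Next, by Theorem \ref{thm_main1} (if $g\ge1$) and Theorem \ref{thm_main2} (if $g=0$), every non-empty $\Rns$ is connected, and the non-empty ones correspond precisely to the pairs $(n,s)$ satisfying the generalized Milnor--Wood inequality, respectively one of the conditions (1)--(3) of Theorem \ref{thm_main2}. Hence the number of connected components of $\R$ equals the number of admissible pairs $(n,s)$, and the components are naturally indexed by those pairs, as claimed. It then remains to count them.

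For fixed $n$, put $k=p_+(s)$, so that $p_-(s)=p-k$; the generalized Milnor--Wood inequality $\chi(\Sigma)+p_+(s)\le n\le-\chi(\Sigma)-p_-(s)$ becomes $n+p+\chi(\Sigma)\le k\le n-\chi(\Sigma)$, which together with $0\le k\le p$ pins $k$ to the interval appearing in the statement, and each such $k$ accounts for $\binom pk$ signs. Summing over $k$ and then over $\chi(\Sigma)\le n\le-\chi(\Sigma)$ gives the double sum of part (1) (and recovers Corollary \ref{thm_main3}(1)). When $g=0$ the pairs satisfying condition (1) give the same double sum; condition (2) contributes, at $n=0$, the $s$ with exactly one $-1$ together with the $s$ with exactly one $+1$ — two families of $p$ signs each, disjoint from one another and from condition (1) because $p\ge3$ — for $2p$ additional pairs; and condition (3) contributes the two pairs $\bigl(1,(+1,\dots,+1)\bigr)$ and $\bigl(-1,(-1,\dots,-1)\bigr)$. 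This yields the extra $2p+2$ in part (2).

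The only step needing genuine care is checking that conditions (1), (2), (3) of Theorem \ref{thm_main2} carve out pairwise disjoint subsets of $\mathbb Z\times\{\pm1\}^p$, so that the three contributions are not overcounted; with $\chi(\Sigma)=2-p$ and $p\ge3$ this is a short verification — e.g.\ condition (1) at $n=0$ forces $p_+(s)\ge2$ and $p_-(s)\ge2$, ruling out condition (2) at $n=0$, while condition (1) at $n=1$ forces $p_-(s)\ge1$ and at $n=-1$ forces $p_+(s)\ge1$, ruling out condition (3). Everything else is routine bookkeeping, so I do not anticipate any real obstacle beyond this disjointness check.
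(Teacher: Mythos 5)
Your proposal is correct and follows essentially the same route as the paper: the components of $\R$ are indexed by the admissible pairs $(n,s)$ via Theorems \ref{thm_main1} and \ref{thm_main2}, and the count is obtained by substituting $k=p_+(s)$ into the generalized Milnor--Wood inequality and, for $g=0$, adding the $2p$ abelian and $2$ totally non-hyperbolic exceptional pairs from Theorems \ref{thm_Exc1} and \ref{thm_Exc2}. Your explicit disjointness check of conditions (1)--(3) and the observation that each $\Rns$ is clopen are only spelled out implicitly in the paper (via the local constancy of $e$ and $s$), but they do not constitute a different argument.
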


\begin{corollary} Let  $\Sigma = \Sigma_{g,p}.$ Then the space of type-preserving $\psl$-characters of $\Sigma$ has the same number of connected components as in Corollary \ref{total}, indexed by the pairs  $(n,s)\in\mathbb Z\times \{\pm 1\}^p$ satisfying the conditions in Theorem \ref{thm_main1} and Theorem \ref{thm_main2}. 
    \end{corollary}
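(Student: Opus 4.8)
The plan is to realize the space $\mathcal X(\Sigma)$ of type-preserving characters as the quotient of $\R$ by the $\psl$-conjugation action and to transport the component count along the quotient map $q\colon\R\to\mathcal X(\Sigma)$. Since $q$ is a continuous surjection, the number of connected components of $\mathcal X(\Sigma)$ is automatically \emph{at most} that of $\R$; the work is to show that no two connected components of $\R$ get merged in $\mathcal X(\Sigma)$, which will follow from the fact that the two invariants carving $\R$ into the pieces $\Rns$ are invariant under $\psl$-conjugation.

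First I would record that, for each pair $(n,s)$, the set $\Rns$ is a connected component of $\R$: it is connected by Theorem \ref{thm_main1} (when $g\geqslant 1$) and Theorem \ref{thm_main2} (when $g=0$), and it is open and closed in $\R$ because the relative Euler class and the sign are locally constant on $\R$. Local constancy of the relative Euler class is the usual homotopy invariance of this integer under deformations that keep the boundary condition; local constancy of the sign holds because along a path in $\R$ each peripheral image stays in the parabolic locus of $\psl$, which (as recalled in Section \ref{Sign}) has exactly the two connected components $\Parp$ and $\Parm$, so no puncture can change sign. Since both invariants are conjugation-invariant, each $\Rns$ is moreover a \emph{saturated} subset of $\R$.

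Next I would descend these invariants to $\mathcal X(\Sigma)$. Because $\Rns$ is saturated, open and closed, and because $q$ is an open map (the quotient map of a continuous group action is open), the image
$$\mathcal X_n^s(\Sigma):=q(\Rns)$$
is open in $\mathcal X(\Sigma)$; its complement is $q(\R\setminus\Rns)$, again the image of an open saturated set, hence open, so $\mathcal X_n^s(\Sigma)$ is also closed. Thus each $\mathcal X_n^s(\Sigma)$ is clopen in $\mathcal X(\Sigma)$, it is non-empty exactly when $\Rns$ is non-empty (equivalently, when $(n,s)$ satisfies the conditions in Theorem \ref{thm_main1} or Theorem \ref{thm_main2}), and when non-empty it is connected, being the continuous image of the connected set $\Rns$. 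Finally, for distinct pairs $(n,s)$ the sets $\mathcal X_n^s(\Sigma)$ are pairwise disjoint, since the relative Euler class and the sign are well defined on characters. Hence $\mathcal X(\Sigma)$ is the disjoint union of the non-empty clopen connected pieces $\mathcal X_n^s(\Sigma)$, these are precisely its connected components, and there is exactly one for each pair $(n,s)$ satisfying the conditions of Theorems \ref{thm_main1} and \ref{thm_main2} — the same list, hence the same count, as in Corollary \ref{total}.

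I do not anticipate a genuine obstacle: the statement is a formal consequence of Theorems \ref{thm_main1} and \ref{thm_main2}, given that the relative Euler class and the sign are $\psl$-conjugation invariant and locally constant on $\R$, and that the conjugation action endows $\mathcal X(\Sigma)$ with the quotient topology and an open quotient map. The only mildly delicate input is the local constancy of the sign, which rests on the two-component description of the parabolic locus of $\psl$; everything else is bookkeeping.
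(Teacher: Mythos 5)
Your argument is correct and is exactly the (implicit) reasoning the paper relies on: the corollary is stated there without proof, as an immediate consequence of Theorems \ref{thm_main1} and \ref{thm_main2} via the facts that the relative Euler class and the sign are conjugation-invariant and locally constant, that each $\Rns$ is therefore a saturated clopen connected piece, and that the quotient map by the connected group $\psl$ is open, so the pieces neither merge nor disconnect in the quotient. The only point worth making explicit is that ``the space of type-preserving $\psl$-characters'' is taken to be the topological quotient $\R/\psl$; with that reading your bookkeeping is complete and matches the authors' intent.
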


\begin{remark}
In \cite{B,CB-M}, the same result was obtained for the five-puncture sphere and the two-puncture torus with a different method, using Kashaev's lengths coordinates\,\cite{kashaev} as in \cite{yang,MPY}. 
\end{remark}

    To prove Theorem \ref{thm_main1} and Theorem \ref{thm_main2}, we need to consider a larger space $\mathrm{HP}(\Sigma)$ consisting of $\psl$-representations with the \emph{mixed boundary condition,} which send each peripheral element of $\pi_1(\Sigma)$ to either a hyperbolic or a parabolic element of $\psl.$ Then both $\R$ and $\W$ are subspaces of $\mathrm{HP}(\Sigma).$ Similar to the type-preserving representations, for each representation $\phi$ in $\mathrm{HP}(\Sigma),$ the \emph{sign} of $\phi$ assigns  a sign $+1$ or $-1$ to each parabolic puncture, and a $0$ to each hyperbolic puncture of $\Sigma.$  See Section \ref{Sign} for more details. 
 For an $s\in\{+1,0,-1\}^p,$ we let $p_+(s),$ $p_0(s)$ and $p_-(s)$ respectively be the number of $+1$'s, $0$'s and $-1$'s in the components of $s.$

    \begin{theorem}\label{thm_main4} 
        Let $\Sigma = \Sigma_{g, p}.$ Let $n\in \mathbb{Z}$ and $s\in \{+1,0, -1\}^p$ with $p_0(s)\geqslant 1.$ Then the space $\HP_n^s(\Sigma)$ of representations with the mixed boundary condition with relative Euler class $n$ and sign $s$ is non-empty if and only if the pair $(n,s)$ satisfies the generalized Milnor-Wood inequality: 
        $$\chi(\Sigma) + p_+(s)\leqslant n\leqslant -\chi(\Sigma) - p_-(s).$$
       Moreover, each non-empty  $\HP_n^s(\Sigma)$ above is connected.
    \end{theorem}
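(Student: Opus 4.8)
The plan is to argue by induction on the complexity $-\chi(\Sigma)=2g+p-2$, using a decomposition of $\Sigma$ along non-peripheral simple closed curves into pairs of pants and one-holed tori together with the additivity of the relative Euler class under such gluings (Section \ref{relE}). Since $p_0(s)\geqslant 1$, fix a puncture $\partial_0$ of $\Sigma$ whose sign is $0$. The base cases are $\Sigma=\pants$ and $\Sigma=\torus$; for the latter the hypothesis forces $s=(0)$, so it is Goldman's space $\W(\torus)$, while for the former one parametrizes a representation by the images of a free basis, imposes the prescribed boundary types and signs, computes the relative Euler class via lifts to $\univcover$, and checks directly that each nonempty stratum — the spaces $\PPm$, $\PEms$, $\EPms$ according to the combination of boundary types — is connected with relative Euler class filling exactly the interval predicted by the generalized Milnor-Wood inequality, the endpoints being realized by Fuchsian-type representations. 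For the inductive step I would cut $\Sigma$ along a non-peripheral simple closed curve $\gamma$ so that one piece $P$ is a pair of pants (or one-holed torus) carrying $\partial_0$ and the other piece $\Sigma'$ has strictly smaller complexity; one can always arrange, using $p\geqslant 1$ and $p_0(s)\geqslant 1$, that the sign $s'$ induced on $\Sigma'$ (with the new boundary coming from $\gamma$ declared hyperbolic, that is, assigned sign $0$) still has $p_0(s')\geqslant 1$, so that the inductive hypothesis applies to $(\Sigma',s')$.

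For non-emptiness, the necessity of $\chi(\Sigma)+p_+(s)\leqslant n\leqslant-\chi(\Sigma)-p_-(s)$ follows from the additivity of the relative Euler class over the decomposition together with the per-piece bounds from the base case: each parabolic boundary of sign $+1$ lowers the attainable maximum by $1$ and each of sign $-1$ raises the attainable minimum by $1$, and these corrections add up to $p_+(s)$ and $p_-(s)$. For sufficiency, given $(n,s)$ satisfying the inequality I would split $n=n'+e_P$ with $n'$ in the admissible interval for $(\Sigma',s')$ and $e_P$ in the admissible interval for $P$; because both are intervals of consecutive integers whose endpoints add up exactly to $\chi(\Sigma)+p_+(s)$ and $-\chi(\Sigma)-p_-(s)$ (an elementary computation using $\chi(\Sigma)=\chi(\Sigma')+\chi(P)$ and that $\gamma$ carries sign $0$ on both sides), such a split exists. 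The inductive hypothesis gives a nonempty connected $\HP$-space for $(\Sigma',s')$ with relative Euler class $n'$, the base case gives a representation of $P$ with relative Euler class $e_P$, and — choosing both so that the holonomy of $\gamma$ is hyperbolic — gluing them along $\gamma$ with matched hyperbolic holonomies produces an element of $\HP_n^s(\Sigma)$.

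For connectedness we combine two ingredients. First, the gluing construction is continuous in the two representations, in the common hyperbolic conjugacy class of the holonomy of $\gamma$ (a connected parameter), and in a twist parameter in $\mathbb{R}$, so its image is connected once the two factors and their $\gamma$-hyperbolic loci are, which holds by the base case and the inductive hypothesis. Second, every $\phi\in\HP_n^s(\Sigma)$ can be joined, within $\HP_n^s(\Sigma)$, to a representation in the image of such a gluing: deforming $\phi$ near the interior curve $\gamma$ changes neither the peripheral holonomies nor — the deformation being continuous with the boundary types preserved — the relative Euler class, so one may push the holonomy of $\gamma$ into the hyperbolic elements, the only obstruction being when one of the two sides is a rigid extremal representation, which are the Fuchsian-type representations and are handled directly. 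One then checks that the finitely many admissible splittings $n=n'+e_P$ all lie in the same component, by connecting adjacent values through representations where the holonomy of $\gamma$ degenerates to a parabolic. Abelian representations require separate attention: they occur only for $n=0$, and there one shows that the non-abelian locus $\nonabel$ is connected by the preceding argument and that every abelian representation in $\HP_0^s(\Sigma)$ deforms into $\nonabel$, the hyperbolic puncture $\partial_0$ supplying the room.

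I expect the main obstacle to lie in two places: the base-case analysis on the pair of pants, which requires explicit computations with lifts to $\univcover$ to pin down exactly which triples of boundary holonomies and signs are realized with which relative Euler classes; and the push-to-hyperbolic step, where one must control precisely how the relative Euler class of each side jumps as the holonomy of $\gamma$ crosses the parabolic and central elements, correctly enumerate the rigid extremal configurations, and verify that all the arithmetic splittings of $n$ consistent with the boundary data are both realized and mutually connected.
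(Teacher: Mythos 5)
Your overall route is the paper's: decompose the surface, use additivity of the relative Euler class for necessity, glue representations with matched hyperbolic holonomy along the decomposition curves for sufficiency, and for connectedness push the holonomy of an interior curve into $\Hyp$ and redistribute the Euler class between the two sides while the curve's holonomy leaves the hyperbolic locus. The gap is in the push-to-hyperbolic step. You assert that the only obstruction to deforming the holonomy of $\gamma$ into $\Hyp$ is a ``rigid extremal (Fuchsian-type)'' representation, plus globally abelian representations at $n=0$. That is not the actual obstruction. The deformation you need is Goldman's (Lemmas \ref{lem_inthyptorus} and \ref{lem_inthyppants}), and its hypotheses are that the restrictions of $\phi$ to \emph{both} complementary pieces of $\gamma$ are non-abelian (and, for the four-holed sphere, that some peripheral element is already hyperbolic). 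A restriction to one pair of pants can perfectly well be abelian — e.g.\ two boundary holonomies lying in a common one-parameter subgroup — for an irreducible, non-extremal $\phi$ with arbitrary relative Euler class, so such $\phi$ are neither rigid, nor Fuchsian, nor confined to $n=0$, yet your argument gives no way to move them. Being globally abelian (which indeed forces $n=0$ by Proposition \ref{prop_reducible}) is the wrong dichotomy: the relevant condition is piecewise non-abelianness relative to the decomposition.

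What is missing is therefore a genericity statement: every $\phi\in\HPns(\Sigma)$ with $p_0(s)\geqslant 1$ can be deformed \emph{inside} $\HPns(\Sigma)$ to a representation whose restriction to every pants and one-holed torus of the decomposition is non-abelian (the space $\nonabel$). This is a substantial step, because perturbing an abelian restriction on one piece necessarily changes the holonomy of some boundary curve of that piece, and one must absorb that change without altering the sign vector or the relative Euler class; the paper does this by routing the compensation to the hyperbolic puncture, whose holonomy stays hyperbolic (sign $0$) under small perturbations, and by propagating the deformation piece by piece along the decomposition. Your use of the hyperbolic puncture only for globally abelian representations at $n=0$ does not cover this. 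Once that step is supplied, the remaining heavy lifting you flagged — the base-case bookkeeping on $\Sigma_{0,3}$ with lifts to $\univcover$, and the precise control of how the two side Euler classes trade a unit as $\tr$ of the $\gamma$-holonomy crosses $\pm 2$ without passing through $\pm\mathrm{I}$ — is exactly where the paper spends its effort, and your sketch of the redistribution (degenerating $\gamma$ through parabolic/elliptic holonomy) is the right mechanism, but it too silently relies on the non-abelian restrictions that the missing genericity step provides.
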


\begin{remark}
In \cite{Mondello}, similar results for the relative $\psl$-character varieties with a non-zero relative Euler class were obtained using the techniques from Higgs bundles; and in \cite{KPW}, related results were obtained for representations into higher rank Lie groups.
In a subsequent paper \cite{RY}, we will address the problem of characterizing the connected components of spaces of $\psl$-representations which allow the elliptic boundary condition.
\end{remark}

This article is organized as follows. 
    In Section 2, we recall the relative Euler class and the sign of type-preserving representations and representations with the mixed boundary condition, and state the related results.
    In Section 3, we prove the main results for surfaces with Euler characteristic $-1$. 
    A genericity property is established in Section 4, which will be used
    in the proof of Theorem \ref{thm_main1} and Theorem \ref{thm_main4} for surfaces with Euler characteristic $-2$ in Section 5, and for the general punctured surfaces in Section 6. Finally, in Section 7, we address the extra components in Theorem \ref{thm_main2}.   
\\

\noindent\textbf{Acknowledgments.} 
The authors would like to thank Francis Bonahon, Tushar Pandey, Sara Maloni and Gabriele Mondello for helpful discussions, Nicolas Tholozan and Inkang Kim for bringing their attentions to \cite{Mondello} and \cite{KPW}. They are also grateful to Bill Goldman for showing interests in this work. The second author is supported by NSF Grants DMS-2203334.

\section{Preliminary}

Let $\Sigma = \Sigma_{g,p}$ be a punctured surface with genus $g$ and $p$ punctures, and let $\fund$ be the fundamental group of $\Sigma$. 
    A \emph{peripheral element} of $\fund$ 
    is an element represented by a closed curve freely homotopic to a circle around a puncture of $\Sigma$. Notice that each puncture of $\Sigma$ determines a conjugacy class of $\pi_1(\Sigma)$ consisting of the peripheral elements around it; and through out this paper, we choose and fix a set of representatives $\{c_1,\dots,c_p\}$ of the conjugacy classes of the peripheral elements, called the \emph{preferred peripheral elements}, one for each puncture of $\Sigma.$ 
    
    A representation $\phi: \pi_1(\Sigma)\to\psl$ is \emph{type-preserving} if for each $i\in\{1,\dots,p\},$ $\phi(c_i)$ is a parabolic element of $\psl.$ Let $\hom$ be the space of $\psl$-representations of $\pi_1(\Sigma)$ into $\psl$ endowed with the compact-open topology, and denote by $\R$ be the subspace of $\hom$ consisting of the type-preserving representations. Similarly, a representation $\phi: \pi_1(\Sigma)\to\psl$  is with the \emph{hyperbolic boundary condition} if for each $i\in\{1,\dots, p\},$  $\phi(c_i)$ is a hyperbolic element of $\psl;$ and is with the \emph{mixed boundary condition} if for each $i\in\{1,\dots, p\},$ $\phi(c_i)$ is either a hyperbolic or a parabolic element of $\psl.$ We denote by $\W$ be the subspace of $\hom$ consisting of representations with the hyperbolic boundary condition, and denote by $\mathrm{HP}(\Sigma)$ be the subspace of $\hom$ consisting of representations with the mixed boundary condition. Then both $\R$ and $\W$ are subspaces of $\mathrm{HP}(\Sigma).$

The purpose of this section is to recall the definition and basic properties  of the relative Euler classes and the signs of $\psl$-representations with these various boundary conditions, and results of Goldman in \cite{goldman}, which are the key ingredients in our main results. 
In the rest of the paper, we will use the notation $g$ or $\pm A$ for $\psl$-elements, $A$ for $\SL$-elements, and $\widetilde{g}$ or $\widetilde{A}$ for the elements in the universal cover $\univcover$. 

   \subsection{The relative Euler classes}\label{relE}

    For a closed surface $\Sigma$, each representation 
    $\phi\in \hom$ 
    determines  and is the holonomy representation of a flat principal $\psl-$bundle over $\Sigma$. 
    The Euler class of this bundle is an obstruction class in  $\mathrm{H}^2\big(\Sigma; \pi_1\big(\psl\big)\big)$ that measures the non-triviality of the principal bundle.
    This obstruction class defines the Euler class of the representation $\phi$, which can be considered as an integer under the isomorphism $\mathrm{H}^2\big(\Sigma; \pi_1\big(\psl\big)\big)\cong \mathbb{Z}.$ 
    For a punctured surface $\Sigma$, as $\mathrm{H}^2\big(\Sigma; \pi_1\big(\psl\big)\big) = 0,$ all flat principal bundles over $\Sigma$ are trivial. Therefore, boundary conditions are needed to obtain a nontrivial invariant of $\psl-$representations of $\pi_1(\Sigma).$
    For each representation $\phi \in \HP(\Sigma)$, 
    the corresponding flat bundle admits a well-defined special trivialization (see \cite[Section 3]{goldman}). The obstruction to extend this special trivialization on $\partial \Sigma$ to a trivialization on $\Sigma$ defines the \emph{relative Euler class} of $\phi$, which is an obstruction class in 
    $\mathrm{H}^2\big(\Sigma, \partial \Sigma ; \pi_1\big(\psl\big)\big)$ and can be considered as an integer under the isomorphism $\mathrm{H}^2\big(\Sigma, \partial \Sigma ; \pi_1\big(\psl\big)\big)\cong \mathbb{Z}$.
    \\

    For more details, let us recall that elements of $\psl$ are classified as hyperbolic, parabolic and elliptic as follows: For a non-trivial $\pm A$ in $\psl,$ let $A$ be one of its lifts in $\SL.$ Then $\pm A$ is hyperbolic if $|\tr(A)|>2,$ parabolic if $|\tr(A)|=2,$ and elliptic if $|\tr(A)|<2.$ 
We respectively let $\Hyp,$ $\Par$ and $\Ell$ be the spaces of hyperbolic, parabolic and elliptic elements of $\psl$. Then $\Hyp$ and $\Ell$ are connected subspaces of $\psl$; and $\Par$ has two connected components $\Parp$ and $\Parm$ which are respectively the $\psl$-conjugacy classes of $\pm \parpp$ and $\pm \parmp$. We call a parabolic element \emph{positive} if it lies in $\Par^+$, and \emph{negative} if it lies in $\Par^-$.
    
  Similarly, non-central elements of the universal covering $\univcover$ of $\psl$ are also classified as \emph{hyperbolic}, \emph{parabolic}, or 
    \emph{elliptic} according to the type of their projections to $\psl$. A parabolic element of $\univcover$ is \emph{positive} or \emph{negative} if its projection to $\psl$ is respectively so. As a convention, we defined the \emph{trace} of an element in $\univcover$ to be the trace of its projection to $\SL.$ Then a non-central element in $\univcover$ is hyperbolic, parabolic, or elliptic if its trace is greater than, equal to, or less than $2$ in the absolute value.

    As $\psl$ is homeomorphic to the unit tangent bundle of the hyperbolic plane $\mathbb H^2$ which is an open solid torus, its universal covering 
    group $\univcover$ is  homeomorphic to $\mathbb{R}^3$, with the group of
    deck transformations 
    $\pi_1(\psl)\cong \mathbb{Z}$. 
The pre-image of each of $\Hyp$, $\Par^\pm$ and $\Ell$ in $\univcover$ have infinitely many connected components, indexed by the integers $\mathbb{Z}$. We denote these components as follows: Recall that the center of  $\univcover$ is  infinite cyclic consisting of the lifts of $\pm\mathrm I.$  Let $\widetilde{\exp}: \sl\to \univcover$ be the exponential map of $\univcover$, and let $z \doteq \widetilde{\exp}\begin{bmatrix}
        0 & \pi \\
       -\pi & 0 
    \end{bmatrix}$. Then $z$ is a generator of the center of $\univcover$ and for any $n\in\mathbb Z,$ the lift $ \widetilde{\exp}
    \begin{bmatrix}
        0 & n\pi \\
       -n\pi & 0 
    \end{bmatrix} $ of $\pm\mathrm{I}$  in $\univcover$ is equal to $z^n.$ 
(In \cite{goldman}, the generator $z$  is chosen as
    $\widetilde{\exp}
    \begin{bmatrix}
        0 & -\pi \\
       \pi & 0 
    \end{bmatrix}$, which is the inverse of ours.
    As a result, the relative Euler class defined there  
    is the negative of ours.) 
    Let  $\Hyp_0$ be the space of hyperbolic elements of $\univcover$ lying in the image  $\widetilde{\exp}(\sl)$ of the exponential map; 
    and for any $n\in\mathbb Z$, let  $\Hyp_n \doteq z^n\Hyp_0$. 
    Then $\{\Hyp_n\}_{n\in \mathbb Z}$ are the connected components of the space of hyperbolic elements in $\univcover$; and two hyperbolic elements in $\univcover$ are conjugate if and only if they lie in the same connected component $\Hyp_n$  for some $n\in\mathbb Z$ and have the same trace.
    Similarly, let $\Par^+_0$ and $\Par^-_0$ respectively be the spaces of positive and negative parabolic elements of $\univcover$ lying in the image  of the exponential map, and let  $\Par_0
    = \Par^+_0\cup \Par^-_0.$ For any $n\in\mathbb Z,$ let $\Par^+_n = z^n\Par^+_0$ and $\Par^-_n = z^n\Par^-_0,$ and let  $\Par_n \doteq \Par^+_n\cup \Par^-_n=z^n\Par_0.$ Then $\{\Par^+_n\}_{n\in \mathbb Z}$ and $\{\Par^-_n\}_{n\in \mathbb Z}$ are respectively the connected components of the space of positive and negative parabolic elements in $\univcover$; and two parabolic elements in $\univcover$ are conjugate if and only if they lie in the same connected component $\Par^+_n$ or $\Par^-_n$ for some $n\in\mathbb Z.$  Finally, for $n\in \mathbb{Z}$, we denote by $\overline{\Hyp_n}$ the closure of $\Hyp_n$ in $\univcover$, which is a union of 
    $\Hyp_n,$ $\Par_n$ and $\{z^n\}$.

Unlike hyperbolic and parabolic elements, all elliptic elements of $\univcover$ lie in the image of the exponential map; and we need to index their connected components differently. For $n>0$, let  $\Ell_n$ be the subspace of  $\univcover$  consisting of elements conjugate to $\widetilde{\exp}\begin{bmatrix}
        0 & \theta \\
       -\theta & 0 
    \end{bmatrix}$ for some $\theta$ in $((n-1)\pi, n\pi),$ and for $n<0$, let  $\Ell_n$ be the subspace of  $\univcover$ consisting of elements conjugate to $\widetilde{\exp}\begin{bmatrix}
        0 & \theta \\
       -\theta & 0 
    \end{bmatrix}$ for some $\theta$ in $(n\pi, (n+1)\pi).$
    Then $\{\Ell_n\}_{n\in \mathbb Z\setminus\{0\}}$ are the connected components of the space of elliptic  elements in $\univcover;$ and two elliptic elements in $\univcover$ are conjugate if and only if both lie in the same connected component $\Ell_n$ and have the same trace. See Figure \ref{fig: Universal_cover_Ell_colored2}.

\begin{figure}[hbt!]
        \centering
        \begin{overpic}[width=1.0\textwidth]{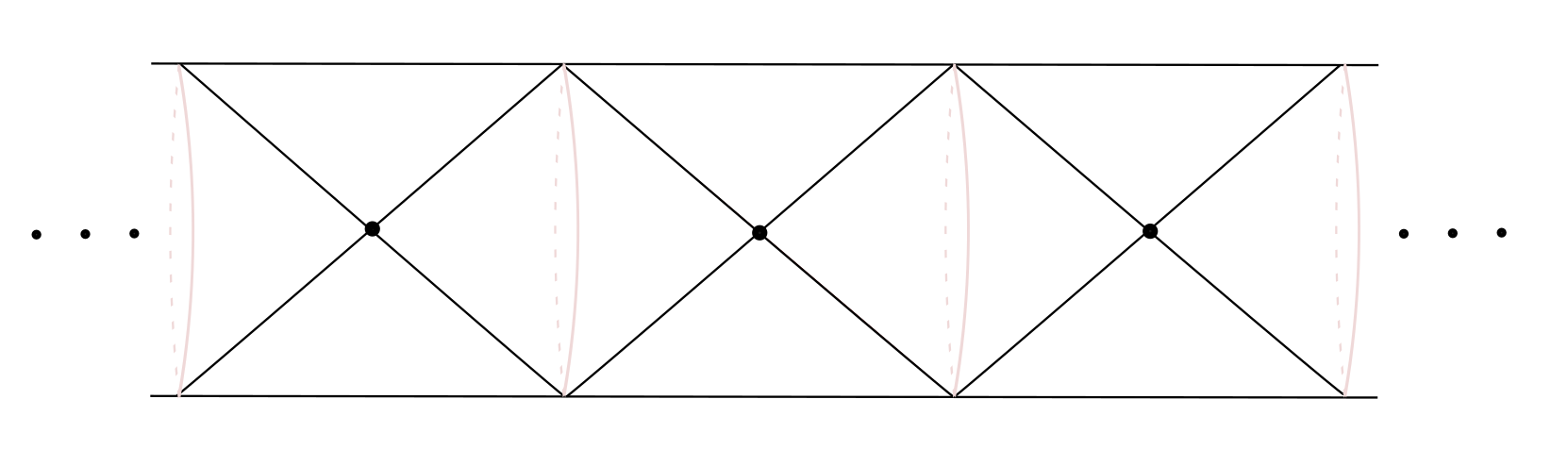}

            \put(22.0,17){\textcolor{black}{$z^{-1}$}}

            \put(48,17){\textcolor{black}{$\mathrm{I}$}}

            \put(72.5,17){\textcolor{black}{$z$}}

        \put(21,6){\textcolor{OliveGreen}{$\Hyp_{-1}$}}

            \put(46,6){\textcolor{OliveGreen}{$\Hyp_{0}$}}
            
            \put(71,6){\textcolor{OliveGreen}{$\Hyp_{1}$}}

            \put(14,23.5){\textcolor{blue}{$\Par^-_{-1}$}}

            \put(27,23.5){\textcolor{blue}{$\Par^+_{-1}$}}

            \put(39,23.5){\textcolor{blue}{$\Par^-_0$}}

            \put(53,23.5){\textcolor{blue}{$\Par^+_0$}}

            \put(64,23.5){\textcolor{blue}{$\Par^-_1$}}

            \put(78,23.5){\textcolor{blue}{$\Par^+_1$}}

        \put(10,14){\textcolor{red}{$\Ell_{-2}$}}
        
        \put(33,14){\textcolor{red}{$\Ell_{-1}$}}

        \put(59,14){\textcolor{red}{$\Ell_1$}}

        \put(84,14){\textcolor{red}{$\Ell_2$}}
        \end{overpic}
        \caption{\label{fig: Universal_cover_Ell_colored2} The universal covering $\univcover$}
    \end{figure}

    The relative Euler class of a representation $\phi$ in $\HP(\Sigma)$ can be defined as follows.
    Let 
    $$\fund = \big\langle a_1, b_1, \cdots, a_g, b_g, c_1, \cdots, c_p \big|
    [a_1, b_1] \cdots [a_g, b_g] c_1\cdots c_p
    \big\rangle$$ be a presentation of $\pi_1(\Sigma)$, where $c_1,\cdots, c_p$ are the preferred peripheral elements of $\pi_1(\Sigma)$.  
    Since for each $i\in\{1,\dots, p\},$ the image $\phi(c_i)$ of the peripheral element $c_i$ is either hyperbolic or parabolic, there exists a unique lift 
    $\widetilde{\phi(c_i)}$  of $\phi(c_i)$ in $\univcover$ that lies in $\overline{\Hyp_0}$.  
 For $j \in \{1,\cdots, g\}$, choose arbitrarily lifts 
    $\widetilde{\phi(a_j)}$ and $\widetilde{\phi(b_j)}$ of $\phi(a_j)$ and $\phi(b_j)$ 
    in $\univcover$; and 
    note that the commutator 
    $[\widetilde{\phi(a_j)}, \widetilde{\phi(b_j)}]$ is independent of the choice of the lifts. 
    Since $$[\phi(a_1), \phi(b_1)] \cdots [\phi(a_g), \phi(b_g)] \phi(c_1)\cdots\phi(c_p)=\pm\mathrm{I},$$ 
    the product $[\widetilde{\phi(a_1)}, \widetilde{\phi(b_1)}] \cdots [\widetilde{\phi(a_g)}, \widetilde{\phi(b_g)}] \widetilde{\phi(c_1)}\cdots\widetilde{\phi(c_p)}$ in $\univcover$ projects to 
    $\pm\mathrm{I}$ in $\psl$ under the covering map, hence lies in the center of $\univcover,$ i.e., there exists an $n\in\mathbb Z$ such that $$[\widetilde{\phi(a_1)}, \widetilde{\phi(b_1)}] \cdots [\widetilde{\phi(a_g)}, \widetilde{\phi(b_g)}] \widetilde{\phi(c_1)}\cdots\widetilde{\phi(c_p)} = z^n.$$
   The \emph{relative Euler class} $e(\phi)$ of the representation $\phi\in\mathrm{HP}(\Sigma)$ is defined to as
   $e(\phi)=n.$


A key property of the relative Euler class is that it defines an integer valued continuous function on $\HP(\Sigma),$ hence is a constant on each connected subspace of $\HP(\Sigma).$   As a consequence, it provides a necessary condition for two representations to lie in the same connected component of a subspace of $\HP(\Sigma)$: If  $\phi$ and $\psi$ are connected by a path in $\HP(\Sigma)$, then 
     $e(\phi) = e(\psi)$. In \cite{goldman}, Goldman proved that for the subspace $\W$ of $\HP(\Sigma)$ consisting of representations with the hyperbolic boundary condition, this necessary condition is also sufficient. As a consequence, the connected components of $\W$ are characterized by the relative Euler class. 
     
     \begin{theorem}\label{thm_goldman}\cite[Theorem 3.3]{goldman}
        Let $\Sigma = \Sigma_{g,p}$ be a connected, oriented punctured surface with genus $g$ and $p$ punctures. Then for each $n\in\mathbb Z,$ the space $\mathcal W_n(\Sigma)=\{\phi\in\W\ |\ e(\phi)=n\}$ is non-empty if and only if $n$ satisfies the Milnor-Wood inequality $$\chi(\Sigma)\leqslant n\leqslant -\chi(\Sigma).$$
        Moreover, each non-empty $\mathcal W_n(\Sigma)$ above is connected. 
    \end{theorem}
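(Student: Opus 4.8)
The plan is to prove the necessity of the Milnor--Wood inequality by the classical estimate on lifts, and then to establish existence of a representation in every $\Wn$ with $\chi(\Sigma)\leqslant n\leqslant-\chi(\Sigma)$ and the connectedness of each non-empty $\Wn$ simultaneously, by induction on the complexity $-\chi(\Sigma)$, using the surfaces with $-\chi=1$ as base cases.

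For necessity I would use the presentation of $\fund$ to write $z^{e(\phi)}$ as the product of the $g$ lift-independent commutators $[\widetilde{\phi(a_j)},\widetilde{\phi(b_j)}]$ and the $p$ preferred lifts $\widetilde{\phi(c_i)}\in\overline{\Hyp_0}$, and then invoke the Milnor--Wood bound on the displacement of such a product in $\univcover$; reading off the power of $z$ gives $|e(\phi)|\leqslant-\chi(\Sigma)$. For the base case $-\chi(\Sigma)=1$, the surface is $\torus$ or $\pants$ and $\fund$ is free of rank $2$, so $\Wn$ is non-empty only for $n\in\{-1,0,1\}$; I would identify $\mathcal W_{\pm1}$ with the holonomies of hyperbolic structures with geodesic boundary of a fixed orientation, hence a cell (coordinatized by boundary lengths, plus a twist in the torus case), and treat $\mathcal W_0$ directly: such a representation is either reducible or irreducible with crossing axes, and in each case straightening the axes and rescaling the translation lengths connects it to a fixed model. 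This also yields existence when $-\chi(\Sigma)=1$.

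For the inductive step $-\chi(\Sigma)\geqslant2$, I would choose an essential simple closed curve $\gamma$ cutting off an elementary piece $\Sigma_0$ (a two-punctured pair of pants when $p\geqslant2$, a one-holed torus when $p=1$, so $g\geqslant2$), giving $\Sigma=\Sigma_0\cup_\gamma\Sigma'$ with $-\chi(\Sigma')=-\chi(\Sigma)-1$; along such a splitting the relative Euler class is additive once $\phi(\gamma)$ is hyperbolic. Given $\phi\in\Wn$, I would first deform it inside $\Wn$ to make $\phi(\gamma)$ hyperbolic --- possible because $\gamma$ is an interior curve and so unconstrained --- then restrict to $\Sigma_0$ and $\Sigma'$, apply the base case and the inductive hypothesis to deform the two restrictions to fixed models compatibly along $\gamma$ (the gluing parameter lives in the connected centralizer of a hyperbolic element), and conclude that $\phi$ is connected to a model $M_{n_0,n'}$ depending only on the pair $(n_0,n')$ with $n_0+n'=n$. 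Finally I would connect the $M_{n_0,n'}$ to one another by a \emph{transfer move}: a path inside $\Wn$ along which $\phi(\gamma)$ leaves $\Hyp$, crosses $\Ell_{\pm1}$, and returns to $\Hyp$ with its preferred lift multiplied by $z^{\mp1}$, thereby replacing $(n_0,n')$ with $(n_0\mp1,n'\pm1)$ while keeping $n$ and the boundary values fixed; a one-dimensional lattice argument then chains any two admissible pairs through such moves, staying within the Milnor--Wood ranges of $\Sigma_0$ and $\Sigma'$. Running the same construction starting from an arbitrary admissible $(n_0,n')$ with $n_0+n'=n$ gives existence, since such a pair exists exactly when $\chi(\Sigma)\leqslant n\leqslant-\chi(\Sigma)$.

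I expect the main obstacle to be the transfer move together with the compatibility of the two piecewise deformations along $\gamma$: one must produce, inside a single pair of pants (or one-holed torus) with two of its boundary values held fixed, an explicit path whose third boundary value stays non-trivial but passes through the elliptic locus so as to realize the $z^{\pm1}$ change of preferred lift, and one must arrange that the simultaneous deformation on $\Sigma'$ matches across $\gamma$. This is exactly the mechanism that makes the relative Euler class a complete invariant under the hyperbolic boundary condition, and whose breakdown under the parabolic boundary condition produces the additional sign invariant studied in the rest of the paper.
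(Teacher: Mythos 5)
You should note first that the paper does not prove this statement at all: it is quoted from Goldman and used as an input, so the only fair comparison is with Goldman's own argument, which is also the template the present paper follows for its generalizations (Proposition \ref{prop_sameEuler}, Lemma \ref{lem_graph}, and the arguments of Sections 5--6). Measured against that, your outline is essentially the same proof: necessity by the Milnor--Wood estimate on lifts in $\univcover$, base cases $\pants$ and $\torus$, decomposition along interior curves made hyperbolic, additivity of the relative Euler class, and redistribution of the class between adjacent pieces by a path along which the decomposition curve crosses $\Ell_{\pm1}$ and its preferred lift changes by $z^{\mp1}$ --- your ``transfer move'' is exactly the mechanism used, e.g., in the proof of Theorem \ref{thm_pantstype1}.

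That said, three steps in your plan are genuinely under-proved rather than merely compressed. First, the claim that you can deform $\phi$ inside $\Wn$ so that $\phi(\gamma)$ becomes hyperbolic ``because $\gamma$ is an interior curve and so unconstrained'' is not automatic: this is precisely Goldman's Lemmas 9.3 and 9.6 (Lemmas \ref{lem_inthyptorus} and \ref{lem_inthyppants} here), and they require the restrictions of $\phi$ to the two complementary pieces to be non-abelian, which in turn needs a preliminary density/genericity deformation (Goldman's $\mathcal W'(\Sigma)$; compare Section 4 of this paper, whose failure in the type-preserving setting is exactly what produces the exceptional components). Second, matching the two piecewise deformations along $\gamma$ needs more than the connectedness of the centralizer of a hyperbolic element: one must lift a prescribed path of boundary values to a path of representations of the complementary piece and keep the two sides equal on $\gamma$ throughout, which is the path-lifting property and connectedness of the fibers of the evaluation maps (Theorem \ref{thm_liftcommu}, Proposition \ref{prop_plpliftcommu}, Lemma \ref{lem_plpchar}), together with Lemma \ref{lem_conjugacypath} to conjugate along a path; this is the technical heart of the proof and your proposal only gestures at it. Third, in the base case, your description of $\mathcal W_0(\pants)$ as ``reducible or irreducible with crossing axes'' is not accurate --- irreducible representations with relative Euler class $0$ and pairwise disjoint axes exist --- and the connectedness of $\mathcal W_0(\pants)$ is established via the character map and the analysis of its fibers, including the reducible locus $\kappa=2$ (Proposition \ref{prop_char}), not by straightening axes. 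None of these gaps indicates a wrong strategy, but each is a place where the cited proof has real content that your plan would have to reproduce.
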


    In the rest of this subsection, we list several results of \cite{goldman}  that will be used in the proofs of our main results. Recall that a $\psl$-representation is \emph{reducible} if there exists a line in $\mathbb{R}^2$ that is invariant under the action of  $\phi\big(\pi_1(\Sigma)\big)$, or equivalently, $\phi$ is conjugate to a representation into the 
    Borel subgroup of $\psl$ consisting of the projection of the upper-triangular matrices. A representation is \emph{irreducible} if it is not reducible. 
    The following proposition determines the relative Euler class of reducible representations in $\HP(\Sigma)$, where in the original statement $\phi$ is assumed to lie in $\W$ but the proof works verbatim here.

    \begin{proposition}\cite[Proposition 3.6]{goldman}\label{prop_reducible}
        If $\phi\in \HP(\Sigma)$ is reducible, then $e(\phi)=0$. In particular, if $\phi$ is abelian, then $e(\phi)=0.$
    \end{proposition}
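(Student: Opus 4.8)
The plan is to reduce to a statement about lifts in the universal cover $\univcover$ and exploit the special feature of reducible representations: they are conjugate into the Borel subgroup, and the Borel subgroup lifts to an honest subgroup of $\univcover$ (it is simply connected, being homeomorphic to an upper half-plane). First I would observe that the relative Euler class is invariant under conjugation, so after conjugating we may assume $\phi\big(\pi_1(\Sigma)\big)$ lies in the Borel subgroup $B$ of $\psl$ consisting of projections of upper-triangular matrices with positive diagonal entries; note that every hyperbolic or parabolic element of $\psl$ fixing the line at infinity can be conjugated into this specific $B$ (possibly one must allow the diagonal entry to be negative, but such an element is still in a connected solvable subgroup lifting to $\univcover$). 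The key point is that the restriction of the covering map $\univcover\to\psl$ over $B$ is trivial: $B$ is contractible, so $\widetilde B \cong B\times\mathbb Z$ and there is a canonical section $\sigma\colon B\to\univcover$ whose image is the identity component of the preimage, and $\sigma$ is a group homomorphism.

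Next I would use this section to compute the relative Euler class directly from its definition. For each generator $\phi(a_j),\phi(b_j),\phi(c_i)\in B$, the canonical lift $\sigma(\phi(\cdot))$ is a choice of lift in $\univcover$. One must check this is the \emph{correct} lift for the peripheral elements, i.e.\ that $\sigma(\phi(c_i))\in\overline{\Hyp_0}$: this holds because $\sigma(B)$ is the identity component of $\widetilde B$, which meets $\overline{\Hyp_0}$ (it contains the identity) and is connected, hence lies in the closure of a single $\Hyp_n$, necessarily $n=0$. Since $\sigma$ is a homomorphism, the lifted product
$$[\sigma\phi(a_1),\sigma\phi(b_1)]\cdots[\sigma\phi(a_g),\sigma\phi(b_g)]\,\sigma\phi(c_1)\cdots\sigma\phi(c_p)=\sigma\Big([\phi(a_1),\phi(b_1)]\cdots[\phi(a_g),\phi(b_g)]\phi(c_1)\cdots\phi(c_p)\Big)=\sigma(\pm\mathrm I).$$
The relation in $\pi_1(\Sigma)$ forces the argument to be $\pm\mathrm I$; but $\pm\mathrm I$ lies in $B$ only as $+\mathrm I$ (or, if we allowed negative diagonal entries, as an element whose canonical lift is still $z^0$), so $\sigma(\pm\mathrm I)=z^0=\mathrm{I}$ in $\univcover$. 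By the definition of $e(\phi)$ this gives $e(\phi)=0$. Finally, since commutators of arbitrary lifts agree with commutators of the canonical lifts (the commutator is independent of the choice of lifts), no adjustment is needed for the $a_j,b_j$ terms. The abelian case is a special case since abelian $\psl$-subgroups are reducible (they are contained in a one-parameter subgroup or a conjugate of $B$, or fix a point or a pair of points on $\partial\mathbb H^2$).

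The main obstacle I anticipate is the bookkeeping around the sign ambiguity in $\psl$: one has to be careful that an element of $\phi(\pi_1(\Sigma))$ fixing a line in $\mathbb R^2$ may be represented by an upper-triangular $\SL$-matrix with \emph{negative} diagonal, so the relevant connected solvable subgroup is slightly larger than the $B$ with positive diagonal, and one must confirm it still lifts homeomorphically (it is still contractible, homotopy equivalent to $\mathbb R$, so this is fine) and that its canonical lift still sits in the $z^0$-components $\overline{\Hyp_0}$, $\Par^{\pm}_0$. Once the correct connected lifting subgroup is pinned down, the computation is immediate; alternatively one can cite \cite[Proposition 3.6]{goldman} directly, as the excerpt already notes that Goldman's proof works verbatim, and simply reproduce the one-line argument: a reducible representation factors through a contractible group, so the associated flat bundle is trivial relative to the boundary, whence the relative obstruction class vanishes.
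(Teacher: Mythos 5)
Your argument is correct, and it is essentially the proof the paper delegates to Goldman: the paper gives no proof of Proposition \ref{prop_reducible} itself (it only cites \cite[Proposition 3.6]{goldman} and notes the proof carries over), and that proof is exactly the one you give — conjugate into the Borel subgroup $B$, use that $B$ is contractible so the covering $\univcover\to\psl$ splits homomorphically over it via a section $\sigma$, check that $\sigma$ of each peripheral image is the canonical lift in $\overline{\Hyp_0}$, and apply $\sigma$ to the relator to get $z^0$.

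Two small points to tighten. First, to conclude $\sigma(B)\subset\overline{\Hyp_0}$ from connectedness you also need the (easy but necessary) observation that $B$ contains no elliptic elements, so that $\sigma(B)\subset\bigcup_{n}\overline{\Hyp_n}$, whose connected components are precisely the $\overline{\Hyp_n}$; only then does ``connected and containing $\mathrm I$'' force $\sigma(B)\subset\overline{\Hyp_0}$ and hence $\sigma(\phi(c_i))=\widetilde{\phi(c_i)}$. (Your worry about negative diagonal entries is vacuous in $\psl$, where $A$ and $-A$ are identified, so the Borel subgroup is already connected and contractible.) Second, your justification of the abelian case — ``abelian $\psl$-subgroups are reducible'' — is false in general: an elliptic one-parameter subgroup (rotations about a point of $\mathbb{H}^2$) is abelian but has no invariant line in $\mathbb{R}^2$. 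What saves the statement is that $\phi\in\HP(\Sigma)$ with $p\geqslant 1$, so some $\phi(c_i)$ is hyperbolic or parabolic; an abelian image then lies in the centralizer of that element, which is the hyperbolic or parabolic one-parameter subgroup through it, and is therefore reducible. With these one-line repairs your proof is complete and coincides with the cited argument.
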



    The relative Euler class satisfies the following additivity property.
    
    \begin{proposition}\label{prop_additivity}\cite[Proposition 3.7]{goldman}
        Let $\Sigma = \Sigma_{g,p},$ let $\gamma$ be a separating simple closed curve on $\Sigma,$ and let $\Sigma_1$ and $\Sigma_2$ respectively be the two components of the complement $\Sigma\setminus \gamma.$ 
     Suppose a representation $\phi\in \HP(\Sigma)$ maps $[\gamma]\in \pi_1(\Sigma)$ to a hyperbolic or parabolic element of $\psl.$ Then $\phi|_{\pi_1(\Sigma_1)}\in \mathrm{HP}(\Sigma_1)$, $\phi|_{\pi_1(\Sigma_2)}\in \mathrm{HP}(\Sigma_2)$, and 
        $$e(\phi) = e\big(\phi|_{\pi_1(\Sigma_1)}\big) + e(\phi|_{\pi_1(\Sigma_2)}).$$
    \end{proposition}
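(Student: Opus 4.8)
The plan is to reduce the additivity to the defining formula for the relative Euler class in terms of lifts to $\univcover$, after choosing a presentation of $\fund$ adapted to the separating curve $\gamma$. Since $\gamma$ is separating, cutting along it writes $\Sigma=\Sigma_1\cup_\gamma\Sigma_2$ with $\Sigma_1=\Sigma_{g_1,k+1}$, $\Sigma_2=\Sigma_{g_2,p-k+1}$, $g_1+g_2=g$, and, after relabeling the punctures, $c_1,\dots,c_k$ lying in $\Sigma_1$ and $c_{k+1},\dots,c_p$ in $\Sigma_2$. Applying van Kampen's theorem to this decomposition, I would choose a presentation
$$\fund=\big\langle a_1,b_1,\dots,a_g,b_g,c_1,\dots,c_p\ \big|\ W_1W_2\big\rangle,$$
where $W_1=[a_1,b_1]\cdots[a_{g_1},b_{g_1}]c_1\cdots c_k$ is the boundary word of $\Sigma_1$ and $W_2=[a_{g_1+1},b_{g_1+1}]\cdots[a_g,b_g]c_{k+1}\cdots c_p$ that of $\Sigma_2$; since such a presentation arises from a cell decomposition of $\Sigma$ adapted to $\gamma$, the relative Euler class of $\phi$ is still computed from it by the usual lift-formula (cf. \cite[Section 3]{goldman}). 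With this choice one has $\pi_1(\Sigma_1)=\langle a_1,\dots,b_{g_1},c_1,\dots,c_k,e_1\mid W_1e_1\rangle$ and $\pi_1(\Sigma_2)=\langle a_{g_1+1},\dots,b_g,c_{k+1},\dots,c_p,e_2\mid W_2e_2\rangle$, where $e_1,e_2$ are the preferred peripheral elements produced by $\gamma$; and, as elements of $\fund$, $e_1=W_1^{-1}$ and $e_2=W_2^{-1}$, so that $e_2=W_1=:\delta$ and $e_1=\delta^{-1}$, with $\delta$ representing the free homotopy class of $\gamma$ up to orientation.

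Given this, the memberships $\phi|_{\pi_1(\Sigma_1)}\in\HP(\Sigma_1)$ and $\phi|_{\pi_1(\Sigma_2)}\in\HP(\Sigma_2)$ are immediate: the preferred peripheral elements of $\Sigma_1$ are $c_1,\dots,c_k$, which $\phi$ sends to hyperbolic or parabolic elements since $\phi\in\HP(\Sigma)$, together with $e_1=\delta^{-1}$, and $\phi(\delta^{-1})$ is conjugate to $\phi([\gamma])^{-1}$, hence hyperbolic or parabolic by hypothesis; likewise for $\Sigma_2$ with $e_2=\delta$.

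For the additivity of $e$, I would first record that $\overline{\Hyp_0}$ is closed under inversion. Indeed $\Hyp_0=\widetilde{\exp}\big(\{X\in\sl\ |\ \det X<0\}\big)$ and $\Par_0=\widetilde{\exp}\big(\{X\in\sl\ |\ \det X=0,\ X\neq 0\}\big)$, both stable under $X\mapsto-X$, while $\widetilde{\exp}(-X)=\widetilde{\exp}(X)^{-1}$; since $\overline{\Hyp_0}=\Hyp_0\cup\Par_0\cup\{\mathrm{I}\}$ and inversion is a continuous involution, the claim follows. Hence, writing $\widetilde{\phi(\delta)}$ for the unique lift of $\phi(\delta)$ in $\overline{\Hyp_0}$, uniqueness forces the lift of $\phi(\delta)^{-1}$ in $\overline{\Hyp_0}$ to be $\widetilde{\phi(\delta)}^{-1}$; thus $\widetilde{\phi(e_1)}=\widetilde{\phi(\delta)}^{-1}$ and $\widetilde{\phi(e_2)}=\widetilde{\phi(\delta)}$. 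Now, fixing arbitrary lifts $\widetilde{\phi(a_j)},\widetilde{\phi(b_j)}$ (the commutators are independent of these), observing that each $\widetilde{\phi(c_i)}\in\overline{\Hyp_0}$ is the same element in all three computations, and setting
$$P_1:=[\widetilde{\phi(a_1)},\widetilde{\phi(b_1)}]\cdots[\widetilde{\phi(a_{g_1})},\widetilde{\phi(b_{g_1})}]\,\widetilde{\phi(c_1)}\cdots\widetilde{\phi(c_k)},$$
$$P_2:=[\widetilde{\phi(a_{g_1+1})},\widetilde{\phi(b_{g_1+1})}]\cdots[\widetilde{\phi(a_g)},\widetilde{\phi(b_g)}]\,\widetilde{\phi(c_{k+1})}\cdots\widetilde{\phi(c_p)},$$
the defining formulas read $P_1P_2=z^{e(\phi)}$, $P_1\,\widetilde{\phi(\delta)}^{-1}=z^{e(\phi|_{\pi_1(\Sigma_1)})}$, and $P_2\,\widetilde{\phi(\delta)}=z^{e(\phi|_{\pi_1(\Sigma_2)})}$. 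Substituting $P_1=z^{e(\phi|_{\pi_1(\Sigma_1)})}\widetilde{\phi(\delta)}$ and $P_2=z^{e(\phi|_{\pi_1(\Sigma_2)})}\widetilde{\phi(\delta)}^{-1}$ into $P_1P_2=z^{e(\phi)}$ and using that $z$ is central, the factors $\widetilde{\phi(\delta)}^{\pm1}$ cancel, yielding $e(\phi)=e(\phi|_{\pi_1(\Sigma_1)})+e(\phi|_{\pi_1(\Sigma_2)})$.

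The one genuinely delicate point, and where I expect most of the effort to lie, is the topological input of the first paragraph: constructing a $\gamma$-adapted presentation in which the two peripheral generators $e_1,e_2$ coming from $\gamma$ are literally inverse to one another in $\fund$ (not merely conjugate), and verifying that $e(\phi)$ is computed by the standard lift-formula with respect to this presentation. Once that is in place, the remaining steps rest only on the inversion-stability of $\overline{\Hyp_0}$ and the centrality of $z$, and are routine.
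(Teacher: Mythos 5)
Your computation is correct, and it is worth noting that the paper itself does not prove this statement: it is quoted verbatim from \cite[Proposition 3.7]{goldman}, so there is no in-paper argument to compare against. What you give is the standard lift-formula derivation that sits directly on top of the definition of $e(\phi)$ used in Section 2.1, and all the analytic ingredients you use are sound: $\Hyp_0$ and $\Par_0$ are exactly $\widetilde{\exp}$ of the $X\in\sl$ with $\det X<0$, resp.\ $\det X=0$, $X\neq 0$, so $\overline{\Hyp_0}$ is inversion-stable and uniqueness of the lift forces $\widetilde{\phi(\delta^{-1})}=\widetilde{\phi(\delta)}^{-1}$ (this exact fact is already used in the paper's proof of Lemma \ref{lem_evimage}); the conjugation/inversion invariance of type gives the memberships in $\HP(\Sigma_1)$, $\HP(\Sigma_2)$; and the cancellation $P_1P_2=z^{n_1}\widetilde{\phi(\delta)}\,z^{n_2}\widetilde{\phi(\delta)}^{-1}=z^{n_1+n_2}$ via centrality of $z$ is exactly right. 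Goldman's own treatment runs through the obstruction-cocycle description of the relative Euler class, but once one accepts that the class is computed by the lift formula, your route is the natural one and is essentially self-contained.

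The point you flag is the only real issue, and it deserves to be closed rather than just acknowledged, because your relator $W_1W_2$ is not of the standard form $[a_1,b_1]\cdots[a_g,b_g]c_1\cdots c_p$ used in the paper's definition of $e(\phi)$ (the $c_1,\dots,c_k$ sit between the two blocks of commutators). It can be discharged without redoing Goldman's obstruction theory: set $u=c_1\cdots c_k$ and replace the $\Sigma_2$-side handle generators by $\hat a_j=ua_ju^{-1}$, $\hat b_j=ub_ju^{-1}$ for $j>g_1$; then $W_1W_2=[a_1,b_1]\cdots[a_{g_1},b_{g_1}][\hat a_{g_1+1},\hat b_{g_1+1}]\cdots[\hat a_g,\hat b_g]\,c_1\cdots c_p$, which is a standard presentation with the same preferred peripheral elements. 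Choosing the lift $\widetilde{\phi(u)}=\widetilde{\phi(c_1)}\cdots\widetilde{\phi(c_k)}$ and using that commutators of lifts are independent of the choice of lifts, the standard lift product for this presentation collapses to your $P_1P_2$, so $P_1P_2=z^{e(\phi)}$ follows from the definition together with the presentation-independence of $e(\phi)$ (independence of the choice of handle generators and of peripheral representatives within their conjugacy classes), which is part of Goldman's well-definedness that the paper already assumes. With that paragraph added, your proof is complete at the level of rigor of the paper.
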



    In the proof of the main results, the additivity of the relative Euler class allows us to reduce the argument to proper subsurfaces of $\Sigma$. To this end, we need to consider a
    \emph{maximal dual-tree decomposition}, which is a decomposition
    $\Sigma = \displaystyle\decomp$  of $\Sigma$ satisfying the following properties:
    \begin{enumerate}[(1)]
        \item Each $\Sigma_k$ in the decomposition is a subsurface of Euler characteristic $-1,$ 
        \item two subsurfaces are either disjoint or intersect at  
        boundary components, and 
        \item  the dual graph of the decomposition is a tree.
        \end{enumerate}
    Here, the dual graph is a graph 
    whose vertices correspond to subsurfaces in the decomposition, 
    and whose edges correspond to the common 
    boundary components of the adjacent pairs of subsurfaces.  We call the common boundary components in the decomposition the \emph{decomposition curves}; and by abuse of notation, we also call the elements in $\pi_1(\Sigma)$ represented by these decomposition curves the \emph{decomposition curves}.
 The dual graph being a tree and the subsurfaces being either the one-hole torus or the three-hole sphere  imply that in a maximal dual-tree decomposition of $\Sigma_{g,p},$ there are exactly $g$ one-hole tori and $g+p-2$ three-hold sphere; or equivalently, the tree has exactly $g$ univalent vertices and $g+p-2$ trivalent vertices. 

   The  following graph theoretical lemma allows us to further reduce the argument to subsurfaces of Euler characteristic $-1$ or $-2$. 
    \begin{lemma}
    \cite[Lemma 3.9]{goldman}\label{lem_graph}
        Let $\Sigma$ be a surface with Euler characteristic $\chi(\Sigma)\leqslant -2$, together with a maximal dual-tree decomposition $\Sigma = \decomp$. For some $n\in\mathbb Z$ and $s\in\{-1,0,+1\}^p,$ let $\phi, \psi\in \HPns(\Sigma)$  
        be representations that map all the decomposition curves to hyperbolic elements of $\psl$. Then there exists a sequence of representations 
        $\phi_1, \cdots, \phi_N$ in $\HPns(\Sigma)$ such that $\phi_1=\phi,$ $\phi_N=\psi,$ and for each $j \in\{0,\cdots, N-1\}$, $\phi_j$ and $\phi_{j+1}$ are related as follows: 
        After a possibly reindexing, there exists a connected subsurface 
        $\Sigma_1\cup \Sigma_2$ such that $e(\phi_j|_{\pi_1(\Sigma_1\cup \Sigma_2)}) = e(\phi_{j+1}|_{\pi_1(\Sigma_1\cup \Sigma_2)})$ and for all $k\in \{3,\cdots, -\chi(\Sigma)\}$, 
        $e(\phi_j|_{\pi_1(\Sigma_k)}) =
        e(\phi_{j+1}|_{\pi_1(\Sigma_k)})$.
    \end{lemma}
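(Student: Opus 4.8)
The plan is to reduce the problem to purely graph-theoretic moves on the dual tree $T$ of the maximal dual-tree decomposition, so that each move only changes the relative Euler classes on (at most) a pair of adjacent subsurfaces. Since by hypothesis all decomposition curves are sent by $\phi$ and $\psi$ to hyperbolic elements, the restriction of $\phi$ (and of $\psi$) to each $\pi_1(\Sigma_k)$ lies in $\HP(\Sigma_k)$ with a well-defined relative Euler class $e_k(\phi) \doteq e(\phi|_{\pi_1(\Sigma_k)})$, and the additivity of the relative Euler class (Proposition~\ref{prop_additivity}), applied repeatedly along the tree, gives $\sum_{k=1}^{-\chi(\Sigma)} e_k(\phi) = n = \sum_{k=1}^{-\chi(\Sigma)} e_k(\psi)$. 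The task is therefore to interpolate between the two integer vectors $(e_1(\phi),\dots,e_{-\chi}(\phi))$ and $(e_1(\phi^N),\dots) = (e_1(\psi),\dots)$ with the same total sum, using a sequence of representations, where consecutive representations differ only in the values on one adjacent pair $\Sigma_i \cup \Sigma_j$ (with the total over that pair fixed) and agree on every other piece. First I would isolate the following combinatorial statement: if $\mathbf{a}, \mathbf{b} \in \mathbb{Z}^V$ have $\sum_v \mathbf{a}_v = \sum_v \mathbf{b}_v$, where $V = V(T)$, then one can pass from $\mathbf{a}$ to $\mathbf{b}$ by a finite sequence of elementary moves, each of which picks an edge $e = \{i,j\}$ of $T$ and replaces $(\mathbf{c}_i, \mathbf{c}_j)$ by $(\mathbf{c}_i - 1, \mathbf{c}_j + 1)$ or $(\mathbf{c}_i + 1, \mathbf{c}_j - 1)$, leaving all other coordinates fixed. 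This is a standard fact: the lattice of sum-zero integer vectors on a connected graph is generated by the edge vectors $\chi_i - \chi_j$.

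Next I would produce, for each such elementary combinatorial move, an actual path of representations realizing it inside $\HP^{\widetilde{s}}_n(\Sigma)$. Fix an edge $e = \{i,j\}$ of the tree corresponding to a decomposition curve $\gamma$, and let $\Sigma' = \Sigma_i \cup \Sigma_j$ be the connected two-piece subsurface glued along $\gamma$ (a once-holed torus plus a pair of pants, or two pairs of pants glued along one curve, so $\chi(\Sigma') = -2$). The key point is that one can modify $\phi$ on $\pi_1(\Sigma')$ only — keeping $\phi$ fixed on the complementary subsurfaces — while keeping $\phi|_{\pi_1(\Sigma')}$ in $\HP(\Sigma')$ throughout, keeping the boundary types and signs on the punctures of $\Sigma$ unchanged, keeping all other decomposition curves (and $\gamma$ itself) hyperbolic, and changing only the distribution of Euler class between the two pieces. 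Concretely, $\gamma$ being hyperbolic, $\phi(\gamma)$ is conjugate into a fixed one-parameter hyperbolic subgroup; one can perform a "bending"/twist deformation along $\gamma$, or more directly, use that for each admissible pair of Euler classes $(e_i, e_j)$ on the two pieces with $e_i + e_j$ fixed (and with the Milnor–Wood / generalized Milnor–Wood constraints satisfied on each piece, which hold automatically since such representations exist and $\Sigma'$ has $\chi = -2$, handled by the Euler-characteristic $-1$ and $-2$ cases already developed), there is a representation of $\pi_1(\Sigma')$ with those Euler classes and the prescribed hyperbolic value on $\gamma$ matching $\phi(\gamma)$; interpolating the $\Sigma_i$-factor and the $\Sigma_j$-factor separately (each realizing a single unit shift of Euler class while fixing the $\gamma$-boundary) and gluing gives the required path. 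Each such step changes $e(\phi|_{\pi_1(\Sigma_i \cup \Sigma_j)})$ not at all (that sum is pinned) while the individual pieces shift by $\pm 1$, and leaves $e(\phi|_{\pi_1(\Sigma_k)})$ fixed for $k \neq i, j$, which is exactly the conclusion of the lemma.

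I would then assemble: apply the combinatorial lemma to get a sequence of edge-moves from the $\phi$-vector to the $\psi$-vector, realize each move by the path above, and concatenate. A technical bookkeeping point is that after carrying out some moves the "current" representation $\phi_j$ is no longer equal to $\phi$ on the pieces that have been modified; but this is harmless because the realization step only ever requires knowing the (hyperbolic) value on the relevant decomposition curve $\gamma$ and the Euler classes on $\Sigma_i$ and $\Sigma_j$ — all other data is carried along unchanged — and the target $\psi$-values are reached coordinate-by-coordinate in the order dictated by the combinatorial moves. One should also check that the final representation $\phi_N$ agrees with $\psi$ not merely in the Euler-class vector but can be taken literally equal to $\psi$; but the lemma as stated only asserts the existence of the sequence with the stated Euler-class matching conditions on $\Sigma_1 \cup \Sigma_2$ and on each $\Sigma_k$, $k \geq 3$, so it suffices to land on the $\psi$-vector and it is cleanest to read the conclusion as about the pairwise relations between consecutive $\phi_j$, with $\phi_1 = \phi$, $\phi_N = \psi$. \textbf{The main obstacle} I anticipate is the realization step: showing that a single unit of relative Euler class can be transferred across a hyperbolic decomposition curve by an honest path of representations in $\HP^{\widetilde{s}}_n(\Sigma)$ without disturbing the boundary signs or the hyperbolicity of the other decomposition curves. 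This is where the structure theory of $\chi = -1$ and $\chi = -2$ subsurfaces (the pair-of-pants and once-holed-torus analyses, and the genericity property of Section~4) has to be invoked; the purely combinatorial part is routine by comparison.
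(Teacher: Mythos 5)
The paper does not actually prove this lemma; it is imported from Goldman (his Lemma 3.9), where it is established by an induction over the dual tree that keeps every intermediate weight function admissible, so your attempt has to be measured against that argument. Your overall shape — redistribute the piecewise relative Euler classes along edges of the dual tree and realize each redistribution by a representation — is the right one, but the combinatorial core is a genuine gap. Every intermediate $\phi_j$ must be an actual element of $\HPns(\Sigma)$ whose restrictions have well-defined Euler classes, so each intermediate vector of piecewise Euler classes must lie in the product of the per-piece admissible ranges: $\{-1,0,1\}$ for a one-holed torus or an all-hyperbolic pair of pants, but, because of the signs, possibly the two-element set $\{0,s_1\}$ or even the single value $\tfrac{1}{2}(s_1+s_2)$ for a pants piece containing punctures of $\Sigma$ (Theorem \ref{thm_pants2}). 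The ``standard fact'' that sum-zero integer vectors on a connected graph are generated by edge vectors only decomposes the difference $\mathbf b-\mathbf a$; it does not order the unit transfers so that every partial sum stays inside this non-uniform box, and routing a unit through an interior vertex already sitting at an end of its range requires an argument (one must also observe, for instance, that the pieces whose Euler class is pinned to a single value are necessarily leaves of the tree). Your parenthetical that the constraints ``hold automatically since such representations exist'' is circular: the existence of a representation realizing each intermediate vector is exactly what has to be shown. This ordering/admissibility argument is the real content of Goldman's lemma, and it is absent from your sketch.

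The realization mechanism you propose is also internally inconsistent. Along a path in $\HPns(\Sigma)$ that keeps every boundary curve of $\Sigma_1$ — in particular the curve $\gamma$ between $\Sigma_1$ and $\Sigma_2$ as well as the outer decomposition curves — hyperbolic and the puncture types fixed, the function $t\mapsto e(\phi_t|_{\pi_1(\Sigma_1)})$ is integer-valued and continuous, hence constant; so no unit of Euler class can be transferred across $\gamma$ by such a path, contrary to what your ``interpolating each factor while fixing the $\gamma$-boundary'' step asserts. Fortunately the lemma only asks for a sequence of representations, not for paths: the correct realization is a cut-and-paste construction, replacing $\phi_j$ on $\Sigma_1\cup\Sigma_2$ by a representation carrying the shifted pair of Euler classes whose values on $\partial(\Sigma_1\cup\Sigma_2)$ — not on the interior curve $\gamma$ — agree with those of $\phi_j$ up to conjugation (after which one conjugates the complementary components), and this is what the surjectivity of the evaluation maps onto $\Hyp_m$ provides, exactly as in the non-emptiness arguments of Sections 3, 5 and 6. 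As written, both the combinatorial step and the realization step need repair before your argument goes through.
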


\subsection{Path-lifting property}\label{PLP}

    As another important ingredient in the proof, a smooth map $f: X\to Y$ between two smooth manifolds $X$ and $Y$ is said to satisfy the
    \emph{path-lifting property} if, 
    for every $x\in X$ and path $\{y_t\}_{t\in [0,1]}$ with $f(x) = y_0$, there exists a non-decreasing surjective map (a re-parametrization of the path) $\tau: [0,1]\to [0,1]$ and a path 
    $\{x_s\}_{s\in [0,1]}$ starting at $x_0 = x$ such that $f(x_s) = y_{\tau(s)}$ for $0\leqslant s\leqslant 1$. 
    The following lemma provides a sufficient condition for a map to satisfy the path-lifting property.
    \begin{lemma}\cite[Lemma 1.4]{goldman}\label{lem_plp}
        Let $X, Y$ be smooth manifolds, and let $f: X\to Y$ be a smooth map. Suppose that 
        (a) $f$ is a submersion, and 
        (b) $f^{-1}(y)$ is connected for every $y\in Y$.
        Then $f$ satisfies the path-lifting property. Furthermore, 
        if $Y$ is path-connected, then $X$ is path-connected.
    \end{lemma}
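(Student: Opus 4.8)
The plan is to prove the two assertions at once, with the path-lifting property carrying the content and the ``furthermore'' coming out as a one-line consequence. The only structural input is the \emph{local normal form for submersions} coming from the implicit function theorem: around each $x\in X$ there are charts in which $f$ becomes the standard projection $\mathbb{R}^{\dim Y}\times\mathbb{R}^{\dim X-\dim Y}\to\mathbb{R}^{\dim Y}$. From this I will use exactly two local facts. First, $f$ is an open map. Second, for each $x\in X$ there is a connected coordinate neighborhood $U$ of $x$ with $f(U)=:V$ open such that $f|_U$ is (conjugate to) a projection; in particular any path in $V$ lifts to a path in $U$ through any prescribed starting point of $f^{-1}(f(x))\cap U$, and every fiber over a point of $V$ meets $U$. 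The global step will then be a connectedness argument over the parameter interval $[0,1]$.

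For path-lifting, fix a path $\gamma=\{y_t\}_{t\in[0,1]}$ in $Y$ and a point $x\in f^{-1}(y_0)$, and let $T\subseteq[0,1]$ be the set of $t$ for which $\gamma|_{[0,t]}$ can be lifted starting at $x$ in the reparametrized sense (a non-decreasing surjection $[0,1]\to[0,t]$ together with a lift along it with initial value $x$). Then $0\in T$, and $T$ is downward closed, hence an interval. Using the local normal form near a point of the fiber $f^{-1}(\gamma(t))$, one sees that $T\cap[0,1)$ is relatively open: a lift that has reached over $\gamma(t)$ can be prolonged a little inside a coordinate neighborhood, then concatenated and reparametrized. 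The crucial point is that $T$ is closed: given $t_k\nearrow t^{*}$ with $t_k\in T$, choose $x_{*}\in f^{-1}(\gamma(t^{*}))$ and a connected coordinate neighborhood $U$ of $x_{*}$ with $f(U)=V$, and fix one $k$ large enough that $\gamma([t_k,t^{*}])\subseteq V$. A lift of $\gamma|_{[0,t_k]}$ terminates at some point of $f^{-1}(\gamma(t_k))$; here hypothesis (b) enters, for that fiber is connected hence path-connected, so one may append a path inside it reaching a point of $f^{-1}(\gamma(t_k))\cap U$, and from there the normal form lifts $\gamma|_{[t_k,t^{*}]}$ inside $U$. Concatenating and reparametrizing gives $t^{*}\in T$. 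Thus $T$ is a nonempty, relatively clopen subinterval of the connected set $[0,1]$, so $T=[0,1]$; in particular $1\in T$, which is the path-lifting property. The final assertion is then immediate: if $Y$ is path-connected and $x,x'\in X$, lift a path in $Y$ from $f(x)$ to $f(x')$ starting at $x$; its lift ends in $f^{-1}(f(x'))$, which is connected hence path-connected, so it can be joined to $x'$ inside that fiber.

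The conceptually essential step is the closedness of $T$, and it is precisely there that connectedness of the fibers is used — it is what allows a partially built lift to be steered into a coordinate chart around a limiting point so that it can be continued. The only genuine fussiness, which I would treat as routine, is the repeated bookkeeping of ``concatenate two lifts and pass to a monotone reparametrization of $[0,1]$''. (As an alternative to the $[0,1]$-argument for the last assertion, one can argue directly that for any connected component $X_0\subseteq X$ the image $f(X_0)$ is open by openness of $f$ and closed because any fiber meeting $X_0$ is connected hence contained in $X_0$; thus $f(X_0)=Y$ and then $X_0=X$.)
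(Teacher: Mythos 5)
Your argument is correct: the open–closed argument on $[0,1]$, using the submersion normal form to prolong a lift and the connectedness (hence path-connectedness) of fibers to stall the base path while steering the partial lift into a chart around a limiting fiber point, is exactly the standard proof of this statement, and the ``furthermore'' part follows as you say. The paper does not prove this lemma at all — it is quoted from Goldman's Lemma 1.4 — so there is nothing in-paper to compare against; the only implicit point worth noting is that you (like Goldman) read ``$f^{-1}(y)$ connected'' as in particular nonempty when you choose $x_{*}\in f^{-1}(\gamma(t^{*}))$, which is the intended reading and the one used in the paper's applications.
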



  One  example of the maps that we need to satisfy the path-lifting property is the following  \emph{character map}
    $\chi: \SL\times\SL\to \mathbb{R}^3$  defined  by 
    $$
    \chi(A,B) \doteq 
    \big(
    \mathrm{tr}(A), 
    \mathrm{tr}(B), 
    \mathrm{tr}(AB)\big).
    $$
 Since $\pi_1(\Sigma_{0,3}) = \langle c_1, c_2, c_3\ |\ c_1c_2c_3\rangle \cong \mathbb{F}_2$ is a free group of rank $2,$ a representation $\phi:\pi_1(\Sigma_{0,3})\to \SL$ is determined by the images $\phi(c_1)$ and $\phi(c_2)$ in $\SL$. 
    As a consequence, 
    $$\mathrm{Hom}\big(\pi_1(\Sigma_{0,3}),  \SL\big)\cong \SL\times\SL$$
    under the identification of $\phi$ with the pair $\big(\phi(c_1), \phi(c_2)\big),$ on which the character map $\chi$ is defined.

    Similarly, the character of $\psl$-representations can be defined under suitable boundary conditions. Recall that the trace of an element in $\univcover$ is defined as the trace of its projection to $\SL$. 
    Let $\overline{\Hyp}=\mathrm{Hyp}\cup\mathrm{Par}\cup\{\pm\mathrm{I}\}$ be the subspace of $\psl$ consisting of hyperbolic elements, parabolic elements and the identity element.
    If $\phi:\pi_1(\Sigma_{0,3})\to \psl$ maps the peripheral elements $c_1, c_2$ of $\pi_1(\Sigma_{0,3})$ into $\overline{\Hyp}$, 
    then there exists a unique lift $\tphicone$ of $\phi(c_1)$ and a unique lift $\tphictwo$ of $\phi(c_2)$ in $\overline{\Hyp}_0= \Hyp_0
    \cup 
    \Par_0
    \cup 
    \{\mathrm I\}$. 
    We define the \emph{character} of such $\phi$ as the triple
    $$
    \chi(\phi) \doteq 
    \Big(
    \mathrm{tr}\big(\tphicone\big), 
    \mathrm{tr}\big(\tphictwo\big), 
    \mathrm{tr}\big(\tphicone\tphictwo\big)\Big).
    $$
    In particular, 
    if $\phi$ lies in $\HP(\Sigma_{0,3})$ with $e(\phi) =n$, then by the definition of the relative Euler class we have
    $\widetilde{\phi(c_1)}
    \widetilde{\phi(c_2)}
    \widetilde{\phi(c_3)}
    = z^n$ for the unique lifts
    $\widetilde{\phi(c_1)},$ $
    \widetilde{\phi(c_2)},$ $ \widetilde{\phi(c_3)}$ of $\phi(c_1)$, $\phi(c_2)$, $\phi(c_3)$ in $\overline{\Hyp_0}$. 
  Therefore,  
    the product $\widetilde{\phi(c_1)}
    \widetilde{\phi(c_2)}
    = z^n \widetilde{\phi(c_3)}^{-1}
    $ lies in $\overline{\Hyp_n}=
    z^n\overline{\Hyp_0}$.
    Observe that, if $n$ is even,  then the trace of an element of $\overline{\Hyp_n}$ lies in $[2,\infty);$ and if $n$ is odd, then the trace of an element of $\overline{\Hyp_n}$ lies in $(-\infty, -2].$ 
    As a consequence, the character $\chi(\phi)$ lies in $[2,\infty)^3$ when $e(\phi)$ is even, and lies in $[2,\infty)^2\times (-\infty, -2]$ when $e(\phi)$ is odd. 
    \\

    \begin{proposition}
    \cite[Proposition 4.1, Theorem 4.3]{goldman}
    \label{prop_char}
        Let $\chi: \SL\times\SL\to \mathbb{R}^3$ be the character map, and let $\kappa: \mathbb{R}^3 \to \mathbb{R}$ 
        be the polynomial map defined by
        $$
        \kappa(x,y,z) \doteq x^2 + y^2 + z^2 - xyz - 2 .
        $$
        Then 
        \begin{enumerate}[(1)]
            \item 
        For a pair $(A,B)$ in $\SL\times\SL$, $\kappa\big(\chi(A,B)\big) = \tr[A,B]$. Furthermore, if the projection of $(A,B)$ to $\psl\times \psl$ lies in $\overline{\Hyp}\times \overline{\Hyp}$,
        then $\kappa\big(\chi(A,B)\big) = 2$ if and only if the representation $\phi:\pi_1(\Sigma_{0,3})\to \SL$ determined by the pair $(A,B)$ is reducible.
        \item  If 
        $\kappa(x,y,z)\neq 2$
        and $\chi^{-1}(x,y,z)$ is non-empty, 
        $\chi^{-1}(x,y,z)$ in $\SL\times \SL$ 
        consists of one orbit of the $\GL$-conjugation, which is the union of two orbits of the $\SL$-conjugation.
        \end{enumerate} 
    \end{proposition}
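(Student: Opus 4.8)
For the identity in Part (1), I would argue from the Cayley--Hamilton relation, which for $X\in\SL$ gives $X^{-1}=(\tr X)\mathrm I-X$ together with the linearization $\tr(XY)+\tr(XY^{-1})=\tr X\,\tr Y$. Writing $x=\tr A$, $y=\tr B$, $z=\tr AB$ and substituting $A^{-1}=x\mathrm I-A$, $B^{-1}=y\mathrm I-B$ into $[A,B]=ABA^{-1}B^{-1}$, one expands and takes traces; the auxiliary identities $\tr(AB^2)=zy-x$, $\tr(A^2B)=xz-y$, $\tr((AB)^2)=z^2-2$ then collapse the expression to $x^2+y^2+z^2-xyz-2=\kappa(x,y,z)$, so $\kappa(\chi(A,B))=\tr[A,B]$.

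For the reducibility assertion in Part (1), suppose the image of $(A,B)$ in $\psl\times\psl$ lies in $\overline{\Hyp}\times\overline{\Hyp}$, so $|\tr A|,|\tr B|\geqslant2$ and both $A$ and $B$ have real eigenvalues. If $\phi$ is reducible, then $A$ and $B$ share a real invariant line and hence are simultaneously upper triangular after a conjugation, so $[A,B]$ is unipotent and $\kappa(\chi(A,B))=\tr[A,B]=2$. Conversely, assume $\tr[A,B]=2$. If $A=\pm\mathrm I$ or $B=\pm\mathrm I$ the pair is trivially reducible; otherwise, conjugating over $\mathbb R$, I may take $A=\mathrm{diag}(\mu,\mu^{-1})$ with $\mu^2\neq1$ (hyperbolic case) or $A=\pm\left(\begin{smallmatrix}1&1\\0&1\end{smallmatrix}\right)$ (parabolic case), write $B=\left(\begin{smallmatrix}a&b\\c&d\end{smallmatrix}\right)$, and compute directly that $\tr[A,B]-2$ equals $-bc(\mu-\mu^{-1})^2$ in the first case and $c^2$ in the second. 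In either case $\tr[A,B]=2$ forces $B$ to preserve one of the two real eigenlines of $A$, so $\phi$ is reducible. The hypothesis $(A,B)\in\overline{\Hyp}\times\overline{\Hyp}$ enters precisely to guarantee that the common eigenvector can be taken real.

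For Part (2), fix $(x,y,z)$ with $\kappa(x,y,z)\neq2$ and $\chi^{-1}(x,y,z)\neq\emptyset$, and take $(A,B)$ in the fiber. By Part (1), $\tr[A,B]=\kappa(x,y,z)\neq2$; since a reducible pair (over $\mathbb R$ or over $\mathbb C$) always has unipotent commutator, $(A,B)$ is absolutely irreducible. I would first show that any two absolutely irreducible pairs with the same $(x,y,z)$ are $\mathrm{GL}(2,\mathbb R)$-conjugate. Over $\mathbb C$ this is classical: by Burnside's theorem the matrices $\mathrm I,A,B,AB$ span, hence form a basis of, $M_2(\mathbb C)$, and Cayley--Hamilton together with the identity $AB+BA=(\tr B)A+(\tr A)B+\big(\tr(AB)-\tr A\,\tr B\big)\mathrm I$ expresses every pairwise product of these four matrices as an explicit combination with coefficients polynomial in $x,y,z$; therefore the linear map fixing $\mathrm I$ and sending $A\mapsto A'$, $B\mapsto B'$, $AB\mapsto A'B'$ is an algebra automorphism of $M_2(\mathbb C)$, which by Skolem--Noether is conjugation by some $g\in\mathrm{GL}(2,\mathbb C)$. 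To descend to $\mathbb R$, apply complex conjugation: as $A,B,A',B'$ are real, $\bar g^{-1}g$ centralizes the absolutely irreducible pair, so $\bar g^{-1}g=\lambda\mathrm I$ with $|\lambda|=1$; replacing $g$ by a suitable scalar multiple $\nu g$ with $|\nu|=1$ then yields a real conjugating matrix. Hence $\chi^{-1}(x,y,z)$, call it $\mathcal O$, is a single $\mathrm{GL}(2,\mathbb R)$-orbit.

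Finally, I would count the $\SL$-orbits inside $\mathcal O$. By Schur's lemma applied to the absolutely irreducible pair, the $\mathrm{GL}(2,\mathbb R)$-stabilizer of $(A,B)$ equals $\mathbb R^\times\mathrm I$, which lies in the index-two subgroup $\mathrm{GL}^+(2,\mathbb R)$ since $\det(\lambda\mathrm I)=\lambda^2>0$. Consequently $\mathcal O$ is the union of the $\mathrm{GL}^+(2,\mathbb R)$-orbit of $(A,B)$ and that of $g_0(A,B)g_0^{-1}$ for any $g_0$ with $\det g_0<0$, and these two orbits are disjoint, since their coincidence would force $g_0\in\mathrm{GL}^+(2,\mathbb R)$. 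As $\mathrm{GL}^+(2,\mathbb R)=\mathbb R_{>0}\cdot\SL$ and conjugation by positive scalars is the identity, each $\mathrm{GL}^+(2,\mathbb R)$-orbit coincides with an $\SL$-orbit; thus $\mathcal O$ splits into exactly two $\SL$-conjugation orbits. The step I expect to be the main obstacle is exactly this real-versus-complex accounting in Part (2): over $\mathbb C$ the fiber is a single $\mathrm{PGL}(2,\mathbb C)$-orbit by soft arguments, but confirming that over $\mathbb R$ it is one $\mathrm{GL}(2,\mathbb R)$-orbit that breaks into precisely two $\SL$-orbits requires both the Galois-descent argument and the observation that the centralizer is contained in $\mathrm{GL}^+(2,\mathbb R)$.
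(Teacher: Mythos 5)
This statement is quoted from Goldman (his Proposition 4.1 and Theorem 4.3) and the paper contains no proof of it, so there is nothing internal to compare your argument against; what can be said is that your blind proof is correct and self-contained. Part (1) is the standard Cayley--Hamilton derivation of the Fricke identity (your auxiliary traces $\tr(AB^2)=yz-x$, $\tr(A^2B)=xz-y$, $\tr\big((AB)^2\big)=z^2-2$ do collapse the expansion to $\kappa$), and your case analysis for the reducibility criterion is sound: with $A$ normalized hyperbolic or parabolic one gets $\tr[A,B]-2=-bc(\mu-\mu^{-1})^2$ or $c^2$, so vanishing forces $B$ to preserve a real eigenline of $A$. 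One small point worth making explicit: in the case $A=\pm\mathrm I$ the reducibility is not quite "trivial" on its own, it uses the hypothesis that $B$ also projects into $\overline{\Hyp}$ and hence has a real eigenvector; this is also exactly where the hypothesis excludes commuting elliptic pairs, which have $\tr[A,B]=2$ but no invariant real line.

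For Part (2) your route differs from Goldman's. Goldman argues concretely, producing explicit normal forms for an irreducible pair with prescribed character and checking by matrix computation that $\GL$-conjugation acts transitively on the fiber, with the two $\SL$-orbits distinguished by a sign of conjugating element. You instead run the classical abstract argument: $\kappa\neq 2$ forces absolute irreducibility, the reduction identities show $\{\mathrm I,A,B,AB\}$ spans the algebra generated by the pair with structure constants polynomial in $(x,y,z)$, Burnside and a dimension count make it a basis of $M_2(\mathbb{C})$, Skolem--Noether gives conjugacy over $\mathrm{GL}(2,\mathbb{C})$, and the Hilbert 90 computation $\bar g^{-1}g=\lambda\mathrm I$, $|\lambda|=1$, rescaling by $\nu$ with $\bar\nu/\nu=\lambda$, descends the conjugating element to $\GL$. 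Your count of $\SL$-orbits via the scalar stabilizer lying in $\mathrm{GL}^+(2,\mathbb{R})$, the disjointness of the two $\mathrm{GL}^+$-orbits, and $\mathrm{GL}^+(2,\mathbb{R})=\mathbb{R}_{>0}\cdot\SL$ is also correct and in fact cleanly establishes that the fiber splits into exactly two $\SL$-orbits. The trade-off is the expected one: Goldman's computation is elementary and simultaneously settles nonemptiness of fibers, while your argument is shorter on transitivity, generalizes more readily, but requires the real-versus-complex descent bookkeeping you correctly identified as the delicate step.
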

    
    \begin{lemma}\cite[Theorem 4.3, Lemma 4.5 (b)]{goldman}
    \label{lem_plpchar}
        Let 
        $$\Omega 
        \doteq
        \{(A,B)\in \SL\times\SL\ |\ 
        [A,B]\neq \mathrm I
        \}.$$ Then the character map
        $$\chi: \Omega\to \mathbb{R}^3
        \setminus \big([-2,2]^3\cap \kappa^{-1}([-2,2])\big)
        $$ is surjective, and satisfies the path-lifting property.
    \end{lemma}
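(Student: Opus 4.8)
**Proof proposal for Lemma (Theorem 4.3, Lemma 4.5(b) of Goldman): the character map $\chi:\Omega\to\mathbb R^3\setminus([-2,2]^3\cap\kappa^{-1}([-2,2]))$ is surjective and satisfies the path-lifting property.**

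The plan is to verify the two hypotheses of Lemma \ref{lem_plp} — that the restricted character map is a submersion and has connected fibers — and then invoke that lemma to obtain the path-lifting property, while establishing surjectivity along the way. First I would address surjectivity: given $(x,y,z)\in\mathbb R^3$ with $(x,y,z)\notin[-2,2]^3\cap\kappa^{-1}([-2,2])$, I need to produce a pair $(A,B)\in\SL\times\SL$ with $\chi(A,B)=(x,y,z)$ and $[A,B]\ne\mathrm I$. The standard approach is to exhibit an explicit pair: up to $\GL$-conjugation one may take $A=\begin{bmatrix} a & 1\\ 0 & a^{-1}\end{bmatrix}$-type normal forms and solve the three trace equations for the entries of $B$; this is the classical Fricke construction and is a routine computation, yielding a solution whenever one is not forced into the excluded region. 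The condition $[A,B]\ne\mathrm I$ is then equivalent, by Proposition \ref{prop_char}(1), to $\kappa(x,y,z)\ne 2$, which must be checked to be compatible with the target set (points with $\kappa=2$ that are excluded lie in $[-2,2]^3$, so outside that cube $\kappa=2$ is still allowed and corresponds precisely to the reducible non-abelian pairs, which have $[A,B]=\mathrm I$; one has to be slightly careful here and confirm that the target set is exactly the image of $\Omega$).

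Next I would prove the submersion property. On $\Omega$, the differential of $\chi$ at $(A,B)$ should be shown to have rank $3$. The clean way is to use the identity $\kappa(\chi(A,B))=\tr[A,B]$ together with the fact that the level sets of $\kappa$ away from the critical value behave well; more directly, one computes $d\chi$ using the derivative of the trace, $\tr(XC)$-type pairings against the Lie algebra directions, and shows the three differentials $d(\tr A)$, $d(\tr B)$, $d(\tr AB)$ are linearly independent at any $(A,B)$ where $A$ and $B$ do not commute. This is essentially Goldman's computation: the obstruction to the rank being $3$ is exactly that $A,B$ generate an abelian (or otherwise degenerate) subgroup, which is excluded on $\Omega$. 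I would also note that $\Omega$ is open in $\SL\times\SL$, hence a smooth manifold of dimension $6$, and the target is an open subset of $\mathbb R^3$.

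Then I would establish connectedness of the fibers: for $(x,y,z)$ in the target set, $\chi^{-1}(x,y,z)\cap\Omega$ is connected. By Proposition \ref{prop_char}(2), when $\kappa(x,y,z)\ne 2$ the full fiber $\chi^{-1}(x,y,z)$ in $\SL\times\SL$ is a single $\GL$-orbit (two $\SL$-orbits glued), and since $\GL$ has two connected components this orbit is connected; moreover such pairs automatically have $[A,B]\ne\mathrm I$, so the fiber already lies in $\Omega$. When $\kappa(x,y,z)=2$ but $(x,y,z)$ is still in the target (outside $[-2,2]^3$), the pairs with that character and $[A,B]=\mathrm I$ are excluded, and I must check the remaining ones still form a connected set — here Goldman's analysis shows that the non-abelian reducible representations with prescribed boundary traces form a connected family (parametrized by the "off-diagonal" entry), so removing them does not disconnect, or rather these are precisely the ones removed and what remains (if anything) is still connected; this case needs the most care. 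Having verified both hypotheses, Lemma \ref{lem_plp} applied to $\chi|_\Omega$ gives the path-lifting property and completes the proof. The main obstacle I anticipate is the bookkeeping around the critical value $\kappa=2$: correctly identifying which pairs are in $\Omega$ versus removed, and confirming that the image of $\Omega$ is exactly $\mathbb R^3\setminus([-2,2]^3\cap\kappa^{-1}([-2,2]))$ rather than something slightly larger or smaller, which requires a careful case analysis of reducible and irreducible representations with mixed hyperbolic/parabolic/elliptic boundary behavior.
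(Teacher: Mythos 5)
First, note that the paper does not prove this lemma at all: it is imported verbatim from Goldman (Theorem 4.3 and Lemma 4.5(b) there), so there is no internal argument to compare yours against; I can only judge your sketch on its own terms. Your submersion step is fine (the computation that $d\chi$ has rank $3$ exactly off the commuting pairs is correct, and is of the same type as the computations in the paper's appendix), and your worry about pinning down the image exactly is legitimate but secondary.

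The genuine gap is your verification of hypothesis (b) of Lemma \ref{lem_plp}: the fibers of $\chi|_\Omega$ are \emph{not} connected, and the inference ``the fiber is a single $\GL$-orbit, and since $\GL$ has two connected components this orbit is connected'' is a non sequitur. When $\kappa(x,y,z)\neq 2$ the fiber is the $\GL$-conjugation orbit of an irreducible pair $(A,B)$, whose stabilizer is the group of scalar matrices, which lies entirely in the component $\det>0$; hence the orbit is a copy of $\pgl$, which has two components, namely the two \emph{disjoint} $\SL$-orbits (if $h(A,B)h^{-1}=(dAd^{-1},dBd^{-1})$ with $d=\mathrm{diag}(1,-1)$ and $h\in\SL$, then $d^{-1}h$ centralizes the irreducible pair, forcing $\det h<0$, a contradiction). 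Concretely, over a character such as $(2,y,z)$ the coordinate $A$ stays a non-identity unipotent along any path in the fiber, and the positive and negative parabolic classes are distinct components of that set, so pairs with $A$ of opposite parabolic sign cannot be joined inside the fiber, yet both occur (conjugate by $d$); this is exactly the dichotomy the paper itself exploits in Proposition \ref{prop_pglpsl} and in the sign argument inside the proof of Theorem \ref{thm_PLP}, and it is also why the quoted lemma carefully claims only surjectivity and path-lifting, not connected fibers. Consequently Lemma \ref{lem_plp} cannot be applied to $\chi|_\Omega$ as you propose, and there is no quick repair by restricting to ``half'' of $\Omega$: since $\Omega$ is connected, no continuous labelling of fiber components exists globally, so the path-lifting property has to be established by a different mechanism (as in Goldman's own proof of his Lemma 4.5(b)). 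A lesser issue: your surjectivity sketch via triangular normal forms for $A$ breaks down when $x\in(-2,2)$ (elliptic $A$ is not real-triangularizable), and deciding when a real, rather than $SU(2)$, solution exists is precisely the content of the excluded set $[-2,2]^3\cap\kappa^{-1}([-2,2])$; that case analysis is the substance of Goldman's Theorem 4.3 and cannot be dismissed as routine.
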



    Another example of the maps satisfying the path-lifting property is the following \emph{lifted commutator} $\widetilde{R}: \psl \times \psl \to \univcover$
    defined for any pair of elements  $(\pm A,\pm B)$ of $\psl$ by 
    $$\widetilde{R}(\pm A, \pm B) = \widetilde{A} \widetilde{B} \widetilde{A}^{-1} \widetilde{B}^{-1},$$ where 
    $\widetilde{A}$ and $\widetilde{B}$ are respectively arbitrary lifts of $\pm A$ and $\pm B$ in $\univcover$. The lifted commutator map is closely related to the $\psl$-representations of $\pi_1(\Sigma_{1,1})=\langle a,b,c\ |\ [a,b]\cdot c\rangle\cong \mathbb{F}_2$, where the peripheral element $c^{-1}$ equals the commutator of the two generators $a$ and $b.$

        \begin{theorem}\cite[Theorem 7.1]{goldman}\label{thm_liftcommu}
            The image of the lifted commutator 
            $\widetilde{R}: \psl\times \psl \to \univcover$ equals 
            $$\mathcal{I} = 
            \{\mathrm I\}
             \cup
            \mathrm{Hyp}_0
            \cup 
            \mathrm{Par}_0
            \cup 
            \mathrm{Ell}_{-1}
            \cup
            \mathrm{Ell}_{1}
            \cup 
            \mathrm{Par}_{-1}^{+}
             \cup 
            \mathrm{Par}_{1}^{-}
            \cup
            \mathrm{Hyp}_{- 1}
            \cup
            \mathrm{Hyp}_{1}.
            $$
            Moreover, for 
            each $C$ in the image $\mathcal{I}$, the fiber 
            $\widetilde{R}^{-1}(C)$ is connected.
        \end{theorem}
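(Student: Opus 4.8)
The plan is to pin down $\mathrm{im}(\widetilde R)$ by the two inclusions $\mathrm{im}(\widetilde R)\subseteq\mathcal I$ and $\mathcal I\subseteq\mathrm{im}(\widetilde R)$, and then to analyze the fibers. Two general facts are used throughout: $\mathrm{im}(\widetilde R)$ is connected, being the continuous image of the connected space $\psl\times\psl$; and it is invariant under conjugation in $\univcover$, because $\widetilde R(fgf^{-1},fhf^{-1})=\widetilde f\,\widetilde R(g,h)\,\widetilde f^{-1}$. Hence $\mathrm{im}(\widetilde R)$ is a connected union of $\univcover$-conjugacy classes.

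For $\mathrm{im}(\widetilde R)\subseteq\mathcal I$ I would use the sharp Milnor--Wood inequality for a single commutator, phrased in terms of displacements. Regard elements of $\univcover\subset\widetilde{\mathrm{Homeo}}^{+}(S^{1})$ as lifts to $\mathbb R$ of circle homeomorphisms, so that $z$ acts by $x\mapsto x+1$, and write $\tau$ for the translation number with $\tau(z)=1$. The key claim is that for all $g,h$ there is an $x$ with $[g,h](x)<x+1$: otherwise, writing $g=\tilde a$, $h=\tilde b$, $T(x)=x+1$, and composing the pointwise inequality $[\tilde a,\tilde b]\ge T$ with $\tilde b\tilde a$ on the right, one gets $\tilde a\tilde b\ge T\,\tilde b\tilde a$, hence $(\tilde a\tilde b)^{n}\ge T^{n}(\tilde b\tilde a)^{n}$ for all $n$ by monotonicity, so $\tau(\tilde a\tilde b)\ge\tau(\tilde b\tilde a)+1$; but $\tilde b\tilde a=\tilde b(\tilde a\tilde b)\tilde b^{-1}$ is conjugate to $\tilde a\tilde b$, a contradiction. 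Applying this to $(h,g)$ gives the mirror statement, that there is a $y$ with $[g,h](y)>y-1$. Inspecting the component descriptions of $\univcover$, the set of $\widetilde C$ with $\inf_{x}(\widetilde C(x)-x)\ge 1$ is exactly $\mathrm{Par}^{+}_{1}\cup\{z^{n}:n\ge1\}\cup\bigcup_{n\ge2}(\mathrm{Hyp}_{n}\cup\mathrm{Par}^{\pm}_{n}\cup\mathrm{Ell}_{n})$, the set with $\sup_{x}(\widetilde C(x)-x)\le -1$ is its mirror image, and the complement of their union is precisely $\mathcal I$.

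For $\mathcal I\subseteq\mathrm{im}(\widetilde R)$ it suffices to realize at least one element of every $\univcover$-conjugacy class meeting $\mathcal I$. Commuting pairs give $I$. A reducible representation of $\pi_{1}(\Sigma_{1,1})$ with $g=\mathrm{diag}(\lambda,\lambda^{-1})$ and $h=\left[\begin{smallmatrix}1&1\\0&1\end{smallmatrix}\right]$ has $[g,h]$ parabolic with shear $\lambda^{2}-1$, positive for $|\lambda|>1$ and negative for $|\lambda|<1$, and $\widetilde R(g,h)$ lies at level $0$ by Proposition \ref{prop_reducible}; this realizes all of $\mathrm{Par}^{\pm}_{0}$. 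A pair of half-turns about points at distance $\ell>0$ satisfies $[g,h]=(gh)^{2}$ with $gh$ a translation of length $2\ell$, and taking the lifts with $\widetilde g^{2}=\widetilde h^{2}=z$ gives $\widetilde R(g,h)=z^{-2}(\widetilde g\widetilde h)^{2}$, which lies in $\mathrm{Hyp}_{0}$ (noting $\widetilde g\widetilde h\in\mathrm{Hyp}_{1}$, e.g.\ by letting $q\to p$) and has trace $4\cosh^{2}\ell-2$, realizing all of $\mathrm{Hyp}_{0}$. Discrete and faithful representations of $\pi_{1}(\Sigma_{1,1})$ uniformizing a hyperbolic structure with a single geodesic boundary curve, respectively a single cusp, respectively a single cone point, have lifted commutator in $\mathrm{Hyp}_{1}$, $\mathrm{Par}^{-}_{1}$, $\mathrm{Ell}_{1}$, and sweep these out as the boundary length or cone angle varies; here one uses $\mathrm{tr}[g,h]=\kappa(\chi(g,h))$ (Proposition \ref{prop_char}) to see that all traces and rotation angles occur, continuity of $\widetilde R$ to track the component, and conjugation-invariance to fill out each conjugacy class. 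Finally, the substitution $(g,h)\mapsto(h,g)$ inverts $\widetilde R$, carrying $\mathrm{Hyp}_{1},\mathrm{Par}^{-}_{1},\mathrm{Ell}_{1}$ to $\mathrm{Hyp}_{-1},\mathrm{Par}^{+}_{-1},\mathrm{Ell}_{-1}$.

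Finally, the fibers. Writing $R\colon\psl\times\psl\to\psl$ for the commutator map $R(g,h)=[g,h]$: the fiber $\widetilde R^{-1}(I)=R^{-1}(I)=\{(g,h):gh=hg\}$ is connected, since any commuting pair lies in $T\times T$ for a common one-parameter subgroup $T$ (or has a trivial coordinate) and each such $T\times T$ contains $(I,I)$. For non-central $\widetilde C\in\mathcal I$ with image $C$, the map $\widetilde R|_{R^{-1}(C)}$ is locally constant (valued in the discrete set of lifts of $C$), so $\widetilde R^{-1}(\widetilde C)$ is a union of connected components of $R^{-1}(C)$; using the already-proved inclusion $\mathrm{im}(\widetilde R)\subseteq\mathcal I$ to pin down the level of $[\widetilde g,\widetilde h]$ from its image in $\SL$, together with the symmetry $(g,h)\mapsto(h,g)$, one reduces to showing that the $\SL$-commutator preimages $R^{-1}_{\SL}(\hat C)$ have the expected number of components (for a suitable $\SL$-lift $\hat C$ of $C$). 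When $\mathrm{tr}\,\hat C\neq2$ this follows from the character-map machinery: $R^{-1}_{\SL}(\hat C)\subseteq\Omega$ fibers over the level surface $\kappa^{-1}(\mathrm{tr}\,\hat C)\subseteq\mathbb R^{3}$, whose $\chi$-fibers are single $\GL$-conjugation orbits (Proposition \ref{prop_char}(2)) and over which $\chi$ has the path-lifting property away from the compact ``bad'' locus (Lemma \ref{lem_plpchar}); one counts components of $R^{-1}_{\SL}(\hat C)$ by combining path-lifting with the topology of $\kappa^{-1}(\mathrm{tr}\,\hat C)$ (number of components and monodromy, which differ according as $\mathrm{tr}\,\hat C$ is $>2$, in $(-2,2)$, or $<-2$) and the way the deck group $\{\pm I\}^{2}$ of $\SL\times\SL\to\psl\times\psl$ acts compatibly through $\chi$. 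The case $\mathrm{tr}\,\hat C=2$, i.e.\ $\widetilde C\in\mathrm{Par}^{\pm}_{0}\cup\mathrm{Par}^{-}_{1}\cup\mathrm{Par}^{+}_{-1}$, lies outside the scope of Proposition \ref{prop_char}(2) and is handled directly, by showing that $\widetilde R^{-1}(\widetilde C)$ is swept out by an explicit path-connected family meeting both the reducible and the irreducible loci. This last step — the bookkeeping matching components of $R^{-1}_{\SL}(\hat C)$ with the individual lifts in $\mathcal I$, and the control of the level surfaces $\kappa^{-1}(\mathrm{tr}\,\hat C)$ — is the main obstacle; by contrast the Milnor--Wood half is short and robust, and the realization of all of $\mathcal I$ is routine once the right families are written down.
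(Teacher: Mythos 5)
This statement is not proved in the paper at all: it is imported verbatim from Goldman \cite[Theorem 7.1]{goldman}, so your proposal can only be measured against Goldman's argument and on its own merits. Your first half (the computation of the image) is essentially sound and close in spirit to the classical Milnor-type argument underlying Goldman's result: the translation-number trick correctly shows that $\widetilde R(g,h)$ can move neither every point by at least $+1$ nor every point by at most $-1$, and your identification of those two forbidden regions with the complement of $\mathcal I$ in $\univcover$ checks out; the commuting pairs, the reducible family realizing $\Par_0^{\pm}$, the half-turn family realizing $\Hyp_0$, and the symmetry $(g,h)\mapsto(h,g)$ are all fine. The realization of $\Hyp_1\cup\Par_1^-\cup\Ell_1$ via holonomies of hyperbolic structures with geodesic boundary, cusp, or cone point is plausible but rests on facts you do not establish (existence of cone structures for every angle, and that the lifted commutator of such a holonomy lies at level $+1$ rather than $-1$); this is repairable with explicit matrices and a continuity argument, so I would call it incomplete rather than wrong.

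The genuine gap is the second assertion, connectedness of every fiber, which is precisely the part of the theorem this paper uses (Theorem \ref{thm_torus}, Proposition \ref{prop_plpliftcommu}, Proposition \ref{prop_sameEuler}). Beyond the fiber over $\mathrm I$, you reduce everything to an unstated component count for the $\SL$-commutator preimages, to unproved connectivity and ``monodromy'' properties of the level sets $\kappa^{-1}(\tr\hat C)$, and, in the parabolic cases, to an ``explicit path-connected family'' that is never exhibited; you flag this yourself as the main obstacle, so the key claim is asserted rather than proved. The case division also shows the fibers have not really been analyzed: for $\widetilde C\in\Par_1^-\cup\Par_{-1}^+$ the commutator has $\SL$-trace $-2$, so $\kappa=-2\neq 2$, Proposition \ref{prop_char}(2) is available, and every pair in the fiber is irreducible, whereas for $\widetilde C\in\Par_0^{\pm}$ one has $\kappa=2$ and every pair in the fiber is reducible (a pair with non-central commutator of $\SL$-trace $2$ preserves a common real line), so no parabolic fiber ``meets both the reducible and the irreducible loci'', and lumping $\Par_1^-$, $\Par_{-1}^+$ together with $\Par_0^{\pm}$ as one trace-$2$ case misplaces where Proposition \ref{prop_char} can and cannot help. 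Even in the generic case, knowing that $\chi^{-1}(\text{point})$ is a single $\GL$-orbit does not by itself control the components of the fiber of the commutator map, which fixes $[A,B]$ exactly and not merely its trace; Goldman's Section 7 obtains the connectivity from a substantive analysis of the commutator map and the level sets of $\kappa$ (together with path-lifting, in the spirit of Lemma \ref{lem_evTsubm} and Lemma \ref{lem_plpchar}), and that analysis is what is missing from your sketch.
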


\begin{figure}[hbt!]
        \centering
        \begin{overpic}[width=0.8\textwidth]{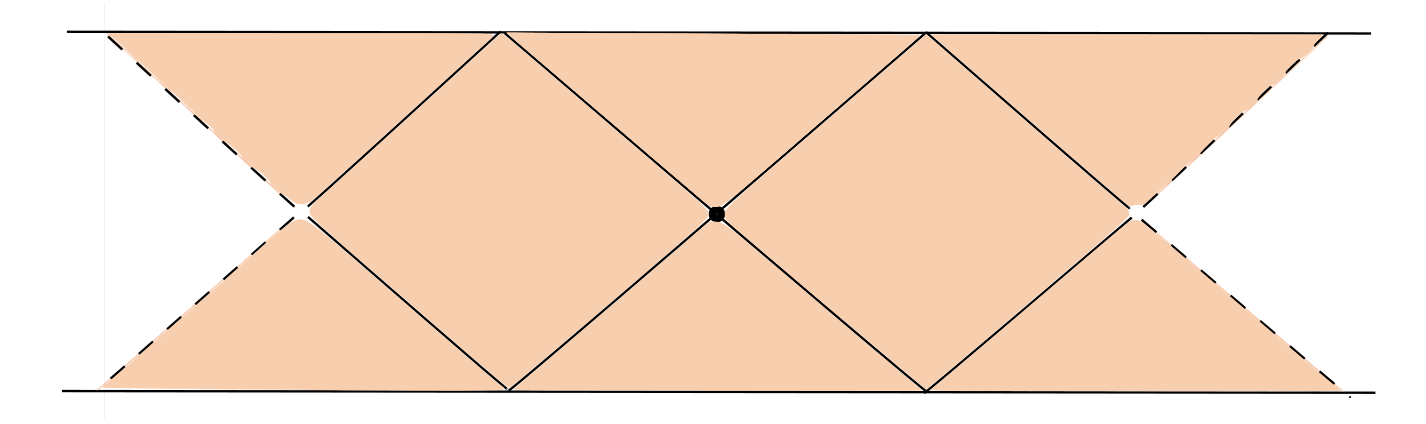}


            \put(49.8,17){\textcolor{black}{$\mathrm{I}$}}


        \put(19,4){\textcolor{OliveGreen}{$\Hyp_{-1}$}}

            \put(48,4){\textcolor{OliveGreen}{$\Hyp_{0}$}}
            
            \put(77,4){\textcolor{OliveGreen}{$\Hyp_{1}$}}


            \put(24,25){\textcolor{blue}{$\Par^+_{-1}$}}

            \put(39,25){\textcolor{blue}{$\Par^-_0$}}

            \put(56.5,25){\textcolor{blue}{$\Par^+_0$}}

            \put(69,25){\textcolor{blue}{$\Par^-_1$}}


        
        \put(33,14){\textcolor{red}{$\Ell_{-1}$}}

        \put(63,14){\textcolor{red}{$\Ell_1$}}

        \end{overpic}
        \caption{\label{fig: evimage} The image of the lifted commutator  and the lifted product maps}
    \end{figure}

\begin{lemma}\label{lem_evTsubm}
    Let 
    $R: \SL\times \SL
    \to \SL$ defined by $R(A,B) = ABA^{-1}B^{-1}$. Then its differential $dR$ is surjective at the point $(A,B)\in \SL\times \SL$ if and only if $A$ and $B$ do not commute. As a consequence, the restriction of the lifted commutator $$\widetilde{R}: \widetilde{R}^{-1}(\mathcal{I} \setminus \{\mathrm I\}) \to \mathcal{I} \setminus 
            \{\mathrm I\}$$ 
    is a submersion.
\end{lemma}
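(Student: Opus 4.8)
The plan is to first obtain a clean formula for $\mathrm{Image}(dR_{(A,B)})$ as a subspace of $\sl$, and then to decide when that subspace is all of $\sl$. Perturbing $A$ and $B$ by left-invariant vector fields and right-translating the result by $R(A,B)^{-1}$, a direct chain-rule computation on $\SL$ shows that, under the resulting identification of $T_{R(A,B)}\SL$ with $\sl$,
$$\mathrm{Image}\big(dR_{(A,B)}\big)=(\mathrm{Id}-\mathrm{Ad}_C)(\sl)+(\mathrm{Id}-\mathrm{Ad}_E)(\sl),$$
where $C=ABA^{-1}$ and $E=ABA^{-1}B^{-1}A^{-1}$, the first summand coming from the $A$-partial and the second from the $B$-partial. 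Here $C$ is conjugate to $B$, $E$ is conjugate to $A^{-1}$, and one has the identity $E=C(AB)^{-1}$, which will be the key to closing the loop later.

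Next I would pin down the two summands using the non-degenerate $\mathrm{Ad}$-invariant form $\langle X,Y\rangle=\tr(XY)$ on $\sl$: each $\mathrm{Ad}_g$ is then orthogonal, so for non-central $g\in\SL$ one gets $\mathrm{Image}(\mathrm{Id}-\mathrm{Ad}_g)=\ker(\mathrm{Id}-\mathrm{Ad}_{g^{-1}})^{\perp}=\mathfrak{z}(g)^{\perp}$, a $2$-plane (where $\mathfrak{z}(g)\subset\sl$ is the centralizer line, possibly null when $g$ is parabolic). If $A=\pm\mathrm{I}$ or $B=\pm\mathrm{I}$ then $A$ and $B$ commute and a direct check shows one summand vanishes and the other is at most $2$-dimensional, so $dR_{(A,B)}$ is not surjective; this matches the claim. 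Otherwise $C$ and $E$ are non-central and $\mathrm{Image}(dR_{(A,B)})=\mathfrak{z}(C)^{\perp}+\mathfrak{z}(E)^{\perp}$ is a sum of two $2$-planes in $3$-space, which equals $\sl$ precisely when $\mathfrak{z}(C)\neq\mathfrak{z}(E)$.

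So everything comes down to showing that, for non-central $A,B$, one has $\mathfrak{z}(C)=\mathfrak{z}(E)$ if and only if $AB=BA$. One direction is easy: if $AB=BA$ then $C=B$ and $E=A^{-1}$, and commuting non-central elements of $\SL$ have equal centralizer lines. For the converse — which I expect to be the crux — suppose $\mathfrak{z}(C)=\mathfrak{z}(E)=\mathbb{R}X$ with $X\neq 0$; then $C$ and $E$ both commute with $X$ as matrices, hence both lie in the commutative algebra $\mathbb{R}\mathrm{I}+\mathbb{R}X$, so $CE=EC$; feeding in $E=C(AB)^{-1}$ shows $C$ commutes with $AB$, and writing out $(ABA^{-1})(AB)=(AB)(ABA^{-1})$ and cancelling yields $B=ABA^{-1}$, i.e.\ $AB=BA$. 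This finishes the first assertion. (There is also a more conceptual route: $\mathrm{coker}(dR_{(A,B)})$ is dual to the space of $\langle A,B\rangle$-invariants in $\sl$, which is nonzero exactly when $A$ and $B$ share a common eigenline in $\sl$, i.e.\ exactly when they commute; but the computation above is self-contained.)

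For the final sentence of the lemma, I would note that $\widetilde{R}$ is well defined and smooth on $\psl\times\psl$ and satisfies $\pi\circ\widetilde{R}=\mathrm{comm}$ and $\mathrm{comm}\circ(q\times q)=q\circ R$, where $\pi\colon\univcover\to\psl$ and $q\colon\SL\to\psl$ are the covering projections and $\mathrm{comm}$ is the commutator map of $\psl$. Since $\pi$ and $q$ are local diffeomorphisms, $d\widetilde{R}_{(\pm A,\pm B)}$ is surjective iff $dR_{(A,B)}$ is. Now for $(\pm A,\pm B)\in\widetilde{R}^{-1}(\mathcal{I}\setminus\{\mathrm{I}\})$, the explicit list for $\mathcal{I}$ in Theorem \ref{thm_liftcommu} shows $\widetilde{R}(\pm A,\pm B)$ is non-central in $\univcover$, hence projects to a non-identity element of $\psl$, so $A$ and $B$ do not commute; by the first assertion $dR_{(A,B)}$, and therefore $d\widetilde{R}_{(\pm A,\pm B)}$, is surjective, i.e.\ $\widetilde{R}|_{\widetilde{R}^{-1}(\mathcal{I}\setminus\{\mathrm{I}\})}$ is a submersion.
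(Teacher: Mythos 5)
Your argument is correct, but it takes a genuinely different route from the paper's. The paper proves the first assertion by explicit computation: after composing $dR_{(A,B)}$ with right translation it writes $R_*(\xi,\eta)=[\xi,ABA^{-1}]AB^{-1}A^{-1}+A[\eta,BA^{-1}B^{-1}]BAB^{-1}A^{-1}$, normalizes $ABA^{-1}$ (up to conjugation and sign) to be diagonal hyperbolic, unipotent, or a rotation, and in each of the three cases finds preimages of the generators of $\sl$, showing that failure of surjectivity forces a system of equations whose only solutions are commuting pairs. You instead identify the image of $dR_{(A,B)}$, after right translation, as $(\mathrm{Id}-\mathrm{Ad}_C)(\sl)+(\mathrm{Id}-\mathrm{Ad}_E)(\sl)$ with $C=ABA^{-1}$ and $E=ABA^{-1}B^{-1}A^{-1}$ (this formula checks out under the paper's convention of writing tangent vectors as $\xi A$, $\eta B$; these are values of right-invariant rather than left-invariant fields, but nothing in your argument depends on the label), use orthogonality of $\mathrm{Ad}$ for the non-degenerate trace form to see that each summand is the perp of a centralizer line, and thereby reduce everything to the purely algebraic claim that $\mathfrak z(C)=\mathfrak z(E)$ forces $AB=BA$, which you close via the identity $E=C(AB)^{-1}$ and the fact that the centralizer of a nonzero $X\in\sl$ in $M_2(\mathbb R)$ is the commutative algebra $\mathbb R\,\mathrm I+\mathbb R X$; the central cases $A=\pm\mathrm I$ or $B=\pm\mathrm I$ are correctly disposed of separately. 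The trade-off: the paper's proof is elementary and self-contained but long and case-heavy, while yours is shorter, coordinate-free, avoids the case split, and exposes the general principle that the cokernel of $dR$ is dual to the common $\mathrm{Ad}$-invariants of $A$ and $B$, at the cost of invoking standard facts about the indefinite trace form (in particular $\mathrm{Image}(S)=\ker(S^*)^{\perp}$, which indeed survives the isotropic centralizer line in the parabolic case, as you note). Your treatment of the second assertion — transferring surjectivity through the covering maps and observing that every element of $\mathcal I\setminus\{\mathrm I\}$ is non-central, so the corresponding $A,B$ cannot commute — is exactly the deduction the paper leaves implicit in its ``as a consequence.''
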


A proof of Lemma \ref{lem_evTsubm} is included in the Appendix. As a consequence of Lemma \ref{lem_plp}, Theorem \ref{thm_liftcommu} and Lemma \ref{lem_evTsubm}, we have the following Proposition \ref{prop_plpliftcommu}.

        \begin{proposition}\label{prop_plpliftcommu}
            The lifted commutator 
            $\widetilde{R}: \widetilde{R}^{-1}(\mathcal{I} \setminus \{\mathrm I\}) \to \mathcal{I} \setminus 
            \{\mathrm I\}$ satisfies the path-lifting property.
        \end{proposition}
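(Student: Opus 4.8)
The plan is to derive Proposition \ref{prop_plpliftcommu} directly from Lemma \ref{lem_plp}, applied to the restricted lifted commutator map
$$\widetilde{R}\colon \widetilde{R}^{-1}(\mathcal I\setminus\{\mathrm I\})\longrightarrow \mathcal I\setminus\{\mathrm I\}.$$
Lemma \ref{lem_plp} requires two hypotheses of a smooth map between smooth manifolds: that it is a submersion, and that all of its fibers are connected. So the proof has essentially three parts: check that source and target are smooth manifolds (so that the statement of Lemma \ref{lem_plp} even applies), verify the submersion hypothesis, and verify the connected-fibers hypothesis.

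First I would address the manifold structure. The target $\mathcal I\setminus\{\mathrm I\}$ is an open subset of $\univcover$: indeed $\mathcal I$ consists of $\{\mathrm I\}$ together with a union of the sets $\Hyp_n$, $\Par_n^{\pm}$, $\Ell_n$ for $n\in\{-1,0,1\}$ (with the appropriate parabolic signs), and removing the single point $\mathrm I$ leaves an open subset of the $3$-manifold $\univcover\cong\mathbb R^3$; hence it is a smooth $3$-manifold. The source $\widetilde R^{-1}(\mathcal I\setminus\{\mathrm I\})$ is the preimage of this open set under the continuous map $\widetilde R\colon\psl\times\psl\to\univcover$, hence an open subset of the $6$-manifold $\psl\times\psl$, so it is a smooth $6$-manifold. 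The second hypothesis of Lemma \ref{lem_plp}, connectedness of each fiber $\widetilde R^{-1}(C)$ for $C\in\mathcal I\setminus\{\mathrm I\}$, is exactly the second assertion of Theorem \ref{thm_liftcommu}. The submersion hypothesis is exactly the content of Lemma \ref{lem_evTsubm}: since the differential $dR$ of the $\SL$-commutator map is surjective precisely at non-commuting pairs, and $\widetilde R(\pm A,\pm B)=\mathrm I$ iff $R(A,B)=\mathrm I$ iff $A,B$ commute, the restriction of $\widetilde R$ to $\widetilde R^{-1}(\mathcal I\setminus\{\mathrm I\})$ is a submersion (the covering map $\univcover\to\SL$ being a local diffeomorphism transfers the submersion statement from $R$ to $\widetilde R$). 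With both hypotheses verified, Lemma \ref{lem_plp} immediately yields the path-lifting property for $\widetilde R$ on $\widetilde R^{-1}(\mathcal I\setminus\{\mathrm I\})$.

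There is no real obstacle here; the proposition is a formal consequence of results already assembled in the excerpt, and the only mild care needed is in confirming that $\mathcal I\setminus\{\mathrm I\}$ is genuinely an open (hence manifold) subset of $\univcover$ — which is clear from the explicit description of $\mathcal I$ as a union of the locally closed pieces $\Hyp_n,\Par^\pm_n,\Ell_n$ together with the single interior point $\mathrm I$ that is then deleted — and in noting that passing between $R$ and $\widetilde R$ via the covering map does not affect the submersion property. The argument can therefore be written in a few lines: invoke Theorem \ref{thm_liftcommu} for connected fibers, invoke Lemma \ref{lem_evTsubm} for the submersion, observe the manifold hypotheses, and apply Lemma \ref{lem_plp}.
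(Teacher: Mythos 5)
Your proposal is correct and follows exactly the route the paper takes: the paper states Proposition \ref{prop_plpliftcommu} as an immediate consequence of Lemma \ref{lem_plp} (path-lifting from submersion plus connected fibers), with the submersion supplied by Lemma \ref{lem_evTsubm} and the fiber connectedness by Theorem \ref{thm_liftcommu}. Your additional remarks that $\mathcal I\setminus\{\mathrm I\}$ is open in $\univcover$ (being the component of the complement of the closed set $\Par_1^+\cup\Par_{-1}^-\cup\{z^{\pm 1}\}$ containing $\mathrm I$, minus a point) and that the covering $\univcover\to\psl$ transfers the submersion statement are the same routine verifications the paper leaves implicit.
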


    In Section 3, we will define the \emph{lifted product} $\ev: \overline{\Hyp}\times \overline{\Hyp} \to \univcover$, which is closely related to the $\psl$-representations of $\pi_1(\Sigma_{0,3})=\langle c_1,c_2,c_3\ |\ c_1c_2c_3\rangle\cong \mathbb{F}_2$.
    To show its path-lifting property in Theorem \ref{thm_PLP}, we need the following Lemma \ref{lem_evPsubm}, whose proof is included in the Appendix. 
\begin{lemma}\label{lem_evPsubm}

    Let $m: \SL\times \SL
    \to \SL$ be defined by $(A,B)\mapsto AB$.
    Let $U$ be an open subset of $\SL$, and let 
    $\Par$ be the subset of $\SL$ consisting of the parabolic matrices.
    \begin{enumerate}[(1)]
        \item The restrictions of $m$ to 
        $U\times U$, $\Par\times U$, and $U\times \Par$ are submersions. 
        \item At $(A,B)\in \Par\times \Par$, the differential $dm_{(A,B)}$ of the restriction   
    $m: \Par\times \Par
    \to \SL$ is surjective if and only if $A$ and $B$ do not commute. 
    \end{enumerate}
\end{lemma}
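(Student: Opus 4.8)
The plan is to compute the differential of the multiplication map $m$ explicitly in the right-invariant trivialization of $T\SL$. Identifying $T_A\SL$ with $\sl$ via $\dot A\mapsto A^{-1}\dot A$ (and likewise at $B$ and at $AB$), differentiating a product of curves gives
$$dm_{(A,B)}(\xi,\eta)=\mathrm{Ad}_{B^{-1}}\xi+\eta,\qquad \xi,\eta\in\sl.$$
Granting this formula, part (1) is immediate. On $U\times U$ and on $\Par\times U$ the second argument $\eta$ still ranges over all of $\sl$, so the image of $dm$ is all of $\sl$; on $U\times\Par$ we instead set $\eta=0$ and let $\xi$ range over $\sl$, and then $\mathrm{Ad}_{B^{-1}}\xi$ covers $\sl$ because $\mathrm{Ad}_{B^{-1}}$ is a linear isomorphism. (Here $\Par\subset\SL$ is a smooth $2$-manifold, being the regular level sets $\tr=\pm2$ with $\pm\mathrm I$ removed.)

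For part (2) I would first describe $T_A\Par$. Writing $A=\tfrac12(\tr A)\mathrm I+A_0$ with $A_0\in\sl$ the trace-free part and using $\tr\xi=0$, one finds that $T_A\Par$ corresponds to $V_A\doteq\{\xi\in\sl:\tr(A_0\xi)=0\}$, the orthogonal complement of $A_0$ for the trace form $\langle X,Y\rangle\doteq\tr(XY)$ on $\sl$. Since $A$ is parabolic, $A_0$ is a nonzero nilpotent matrix, hence a null vector of this (Lorentzian) form; thus $V_A$ is a degenerate $2$-plane that contains $A_0$. By the formula for $dm$, the restriction of $dm_{(A,B)}$ to $T_A\Par\times T_B\Par$ has image $AB\cdot(\mathrm{Ad}_{B^{-1}}V_A+V_B)$, so it is surjective iff $\mathrm{Ad}_{B^{-1}}V_A+V_B=\sl$. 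Because $\mathrm{Ad}_{B^{-1}}$ is an isometry of the trace form, $\mathrm{Ad}_{B^{-1}}V_A=(\mathrm{Ad}_{B^{-1}}A_0)^\perp$; and the sum of two such ``light-cone'' $2$-planes $P^\perp$ and $Q^\perp$ is all of $\sl$ exactly when $P^\perp\neq Q^\perp$, i.e. when $P$ and $Q$ span different lines. Hence $dm_{(A,B)}$ is surjective iff $\mathrm{Ad}_{B^{-1}}A_0$ and $B_0$ are not proportional.

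Finally I would translate non-proportionality into non-commutation. Two nonzero nilpotent $2\times2$ matrices are proportional iff they have the same kernel; $\ker A_0$ is the unique fixed line of the parabolic $A$, and $\ker(\mathrm{Ad}_{B^{-1}}A_0)=\ker(B^{-1}A_0B)=B^{-1}(\ker A_0)$, while $\ker B_0$ is $B$-invariant. Therefore $\mathrm{Ad}_{B^{-1}}A_0$ and $B_0$ are proportional iff $\ker A_0=\ker B_0$, i.e. iff $A$ and $B$ have the same fixed point on $\partial\mathbb H^2$; and two parabolic elements of $\SL$ commute precisely when they share a fixed point (conjugate the common fixed point to $\infty$, or note that a matrix commuting with a parabolic $B$ must preserve the fixed-point set $\{p\}$ of $B$). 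This yields the asserted equivalence. I expect the only slightly delicate point to be the identification of $T_A\Par$ with the degenerate plane $A_0^\perp$ and the resulting translation of ``$\mathrm{Ad}_{B^{-1}}V_A+V_B=\sl$'' into ``$A$ and $B$ have distinct axes''; the verification of the $dm$ formula and of the nilpotent-proportionality fact are direct matrix computations.
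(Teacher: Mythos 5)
Your argument is correct, but it takes a genuinely different route from the paper's proof. The paper works in the right trivialization ($T_A\SL=\{\xi A:\xi\in\sl\}$), reduces to the map $m_*(\xi,\eta)=\xi+A\eta A^{-1}$, and then proceeds by explicit matrix algebra: it normalizes $A=\pm\begin{bmatrix}1&s\\0&1\end{bmatrix}$, describes $T_X\Par$ by the linear equation $cx+by+(a-d)z=0$ in the coordinates of $\sl$, checks which standard generators of $\sl$ lie in the image, and shows that failure of surjectivity forces $B$ to be upper-triangular unipotent up to sign, hence commuting with $A$. You instead use the left trivialization to get $dm_{(A,B)}(\xi,\eta)=\mathrm{Ad}_{B^{-1}}\xi+\eta$, identify $T_A\Par$ with the degenerate plane $A_0^{\perp}$ for the trace form on $\sl$ (where $A_0$ is the nilpotent trace-free part), use that $\mathrm{Ad}_{B^{-1}}$ is an isometry of this Lorentzian form, and reduce surjectivity to the statement that two null-vector orthogonal planes span $\sl$ iff the null directions are non-proportional; the translation into non-commutation then goes through the shared-fixed-point characterization of commuting parabolics. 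All the steps you flag as delicate check out: the tangent-space identification is exactly the level-set computation $\tr(A\xi)=\tr(A_0\xi)$, the dimension count $\dim(P^{\perp}+Q^{\perp})=4-\dim\operatorname{span}(P,Q)^{\perp}$ gives the span criterion, and the kernel bookkeeping ($\ker(\mathrm{Ad}_{B^{-1}}A_0)=B^{-1}\ker A_0$, $B$-invariance of $\ker B_0$) is right. What your approach buys is a coordinate-free proof of (2) with no normalization or case analysis, and a clear geometric reason for the commutativity criterion; what the paper's computation buys is that it is entirely elementary and parallels the companion computation for the commutator map in Lemma \ref{lem_evTsubm}, where the same generator-chasing template is reused.
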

\begin{remark}
    Recall that the set $\Par$ is previously defined as the set of parabolic elements of $\psl$. In Lemma \ref{lem_evPsubm}, the set $\Par\subset \SL$ refers to the inverse image of $\Par\subset \psl$ under the covering map $\SL\to \psl$; here we abuse notation and continue to denote this set by $\Par$. In the rest of the paper, $\Par$ always refers to the subset of $\psl$ consisting of parabolic elements.
\end{remark}



   \subsection{The signs}\label{Sign}

    For $\Sigma = \Sigma_{g,p}$ with $c_1, \cdots, c_p$ the preferred peripheral elements of $\fund.$  The \emph{sign} of a representation $\phi\in\HP(\Sigma)$ is a $p$-tuple $s(\phi)=(s_1,\dots,s_p)$ in $\{-1,0,+1\}^p$  with 
    \begin{equation*}
   s_i = \left\{
    \begin{array}{rl}
       +1,  &  \text{if } \phi(c_i) \text{ is positive parabolic}\\
        0, & \text{if } \phi(c_i) \text{ is hyperbolic}\\ 
        -1, &  \text{if } \phi(c_i) \text{ is negative parabolic},
    \end{array}\right.
    \end{equation*}
$i\in\{1,\dots,p\}.$
    Since the type of an element of $\psl$ is invariant under $\psl-$conjugation, the sign of $\phi$ is independent on the choice of the preferred peripheral elements $c_i$'s. In the rest of this paper, for an $s\in \{-1,0,+1\}^p$, we respectively let  $p_+(s),$ $p_0(s)$ and $p_-(s)$ be the number of $1$'s, $0$,s and $-1$'s in the components of  $s$.
    
   Similar to the relative Euler class, the sign also defines an integer valued continuous function on $\HP(\Sigma),$ hence is a constant on each connected subspace of $\HP(\Sigma).$   As a consequence, it provides a necessary condition for two representations to lie in the same connected component of a subspace of $\HP(\Sigma)$: If  $\phi$ and $\psi$ are connected by a path in $\HP(\Sigma)$, then  $s(\phi) = s(\psi)$.
   \bigskip

   The following lemma provides a convenient way to determine the sign of a parabolic element.  

    \begin{lemma}\label{lem_offdiag} 
        For $n\in \mathbb{Z}$ and a parabolic element $\widetilde{A}\in \mathrm{Par}_n$ of $\univcover,$ let  $s(\widetilde{A})\in\{\pm\}$ be its sign, i.e., $s(\widetilde{A})=+$ if $\widetilde{A}\in \mathrm{Par}^+_n,$ and $s(\widetilde{A})=-$ if $\widetilde{A}\in \mathrm{Par}^-_n.$ Let $A$ be the projection of $\widetilde{A}$ to $\SL$, and for $i,j\in\{1,2\}$ let $a_{ij}$ be the $(i,j)$-entry of $A.$ Then either $a_{12}\neq0$ or $a_{21}\neq 0.$  Furthermore,
        
        \begin{enumerate}[(1)]
       \item If $n$ is even, then $s(\widetilde{A})=sgn(a_{12})$ if $a_{12}\neq 0,$ and $s(\widetilde{A})=-sgn(a_{21})$ if $a_{21}\neq 0.$
       
    \item If $n$ is odd, then $s(\widetilde{A})=-sgn(a_{12})$ if $a_{12}\neq 0,$  and $s(\widetilde{A})=sgn(a_{21})$ if $a_{21}\neq 0.$
    \end{enumerate}
    \end{lemma}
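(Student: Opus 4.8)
The plan is to reduce everything to a concrete computation inside the connected component $\mathrm{Par}^{\pm}_n$ of $\univcover$, using the conjugation classification already recorded: two parabolic elements of $\univcover$ are conjugate iff they lie in the same component $\mathrm{Par}^+_n$ or $\mathrm{Par}^-_n$. First I would treat the non-vanishing claim. If $A$ is parabolic in $\SL$ with $a_{12}=a_{21}=0$, then $A$ is diagonal with $\tr(A)=\pm 2$, forcing $A=\pm\mathrm I$, contradicting that $A$ is parabolic; hence $a_{12}\neq 0$ or $a_{21}\neq 0$. Note also that under $A\mapsto A$ versus $A\mapsto -A$ the signs of $a_{12}$ and $a_{21}$ both flip while the parity of $n$ is unchanged (the two $\SL$-lifts of a given $\psl$-element lift to $\univcover$-elements differing by $z^k$ for $k$ even, since $z^2$ projects to the non-trivial lift of $\mathrm I$ in $\SL$)\,; so one must be a little careful, but in fact the statement is consistent under this replacement because $\mathrm{sgn}$ of both entries flips simultaneously and the asserted formulas for $s(\widetilde A)$ are unaffected only if we also note that negating $A$ moves $\widetilde A$ within the same $\mathrm{Par}^{\pm}_n$. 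Rather than belabor this, I would instead fix once and for all the standard generators: by the definition in Section \ref{relE}, $\mathrm{Par}^+_0$ and $\mathrm{Par}^-_0$ are the components of $\widetilde{\exp}(\sl)$ containing lifts of the $\psl$-conjugacy classes of $\pm\parpp$ and $\pm\parmp$, and $\mathrm{Par}^{\pm}_n=z^n\mathrm{Par}^{\pm}_0$.

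The key computation is then the following. Using $\widetilde{\exp}$, one checks that $\widetilde{\exp}\,t\slupper$ is a positive parabolic in $\mathrm{Par}^+_0$ for $t>0$ and a negative parabolic in $\mathrm{Par}^-_0$ for $t<0$; its projection to $\SL$ is $\part$, which has $(1,2)$-entry $t$. This establishes the $n=0$ case for upper-triangular representatives: $s(\widetilde A)=\mathrm{sgn}(a_{12})$. For the $(2,1)$-entry, conjugate by $\ell$ with $\theta=\pi$, i.e.\ by $\begin{bmatrix}0&1\\-1&0\end{bmatrix}$ (whose $\univcover$-lift realizes conjugation preserving $\mathrm{Par}^{\pm}_0$ since that element lies in $\widetilde{\exp}(\sl)$ and is not central): this sends $\part$ to $\begin{bmatrix}1&0\\-t&1\end{bmatrix}$, whose $(2,1)$-entry is $-t$, giving $s(\widetilde A)=-\mathrm{sgn}(a_{21})$ in the $n$ even case. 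Finally, multiplying by the central generator $z$ interchanges $\mathrm{Par}^{\pm}_n$ with $\mathrm{Par}^{\mp}_{n+1}$ (this is exactly the content of Figure \ref{fig: Universal_cover_Ell_colored2}, where consecutive parabolic strata alternate $+,-$), which flips the sign while leaving the $\SL$-projection unchanged; hence the $n$ odd formulas are obtained from the $n$ even ones by an overall sign change, which is precisely statement (2).

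The main obstacle is bookkeeping the sign conventions: one must verify that conjugating a parabolic representative by an element of $\univcover$ keeps it in the same component $\mathrm{Par}^{\pm}_n$ (true because $\univcover$ is connected, so conjugation is by a connected family of maps and cannot jump components), and one must pin down which of $\mathrm{Par}^+_0,\mathrm{Par}^-_0$ contains the lift of $\pm\parpp$ — this is fixed by the choice of generator $z=\widetilde{\exp}\begin{bmatrix}0&\pi\\-\pi&0\end{bmatrix}$, and it is the place where our convention differs from \cite{goldman} (as the parenthetical remark in Section \ref{relE} records). Once that base case is nailed down by a direct $\widetilde{\exp}$ computation, everything else is propagation by the two moves "conjugate by the rotation by $\pi$'' (swaps the roles of $a_{12}$ and $a_{21}$ with a sign) and "multiply by $z$'' (shifts $n$ by one and flips the sign), and the lemma follows by induction on $|n|$ together with these two identities.
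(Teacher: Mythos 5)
Your plan has a genuine gap at its main step. The lemma concerns an \emph{arbitrary} parabolic $\widetilde A\in\Par_n$ and the off-diagonal entries of its $\SL$-projection, but your argument only computes those entries for the triangular representatives $\pm\part$ and their image under conjugation by the rotation by $\pi$, and then proposes to ``propagate'' using just these two moves and multiplication by $z$. Those moves never reach a generic element of the conjugacy class: a typical parabolic, e.g.\ $A=\pm\abcd\parpp\abcd^{-1}=\pm\begin{bmatrix}1-ac & a^2\\ -c^2 & 1+ac\end{bmatrix}$ with $ac\neq 0$, has both off-diagonal entries nonzero and is neither upper nor lower triangular. The essential content of the lemma---that on the entire $\SL$-conjugacy class of $\parpp$ one has $a_{12}\geqslant 0\geqslant a_{21}$ with at least one strict inequality (and the reversed inequalities for $\parmp$)---is nowhere established in your plan; conjugation does preserve the component $\Par^{\pm}_n$, as you note, but it changes the matrix entries, which is exactly the point at issue. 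The paper settles this by the one-line computation just displayed, which covers all cases at once. If you want to keep your ``base case plus propagation'' structure, you must add an invariance statement about the entries, for instance: the conjugacy class is connected; for a trace-$2$ matrix, $a_{12}a_{21}=a_{11}a_{22}-1\leqslant 0$, with equality only if $a_{11}=a_{22}=1$; hence $a_{12}-a_{21}$ never vanishes on the class, so it has constant sign, equal to its value $1>0$ at $\parpp$, which forces $a_{12}\geqslant 0\geqslant a_{21}$ throughout.

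A secondary error: your description of multiplication by $z$ is backwards. By definition $\Par^{\pm}_{n+1}=z\,\Par^{\pm}_n$, and the sign of an element of $\univcover$ is defined through its $\psl$-projection, which $z$ leaves unchanged; so multiplication by $z$ \emph{preserves} the sign. What it changes is the $\SL$-projection, which is multiplied by $-\mathrm I$ (since $z$ projects to $-\mathrm I$ in $\SL$), flipping the signs of $a_{12}$ and $a_{21}$. You assert the opposite on both counts (``flips the sign while leaving the $\SL$-projection unchanged''); the two mistakes happen to cancel, so you land on the correct formulas in (2), but as written the inductive step is not a proof and should be corrected.
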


    \begin{proof}  Recall that a parabolic element  of $\psl$ is \emph{positive} if it is conjugate to $\pm\parpp,$ and is \emph{negative} if it is conjugate to $\pm \parmp;$ and a parabolic element in the universal covering $\widetilde{\SL}$ of $\psl$ is \emph{positive} or \emph{negative} if its projection to $\psl$ is so. 
    Therefore, by the definition, if $\widetilde{A}$ is positive, then $A$ is conjugate to $\parpp$ when $n$ is even, and is conjugate to $-\parpp$ when $n$ is odd. This means $A = \pm \abcd\parpp\abcd^{-1}
        = \pm \begin{bmatrix}
                1-ac & a^2 \\
                -c^2 & 1+ac
          \end{bmatrix}$ for some matrix $\abcd$ in $\SL,$ again, $+$ when $n$ is even and $-$ when $n$ is odd.  Note that $ad - bc = 1$ implies that at least one of $a$ and $c$ is nonzero. As a consequence, in the case that $n$ is even, if $a\neq 0,$ then $a_{12}=a^2>0,$ and if $c\neq 0,$ then $a_{21}=-c^2<0;$ and in the case that $n$ is odd, if $a\neq 0,$ then $a_{12}=-a^2<0,$ and if $c\neq 0,$ then $a_{21}=c^2>0.$  Similarly, if $A$ is negative, then $A$ is conjugate to $\parmp$ when $n$ is even, and is conjugate to $-\parmp$ when $n$ is odd; and we have $A =\pm \abcd\parmp\abcd^{-1}
        = \pm \begin{bmatrix}
                1+ac & -a^2 \\
                c^2 & 1-ac
          \end{bmatrix}.$ As a consequence, in the case that $n$ is even, if $a\neq 0,$ then $a_{12}=-a^2<0,$ and if $c\neq 0,$ then $a_{21}=c^2>0;$ and in the case that $n$ is odd,  if $a\neq 0,$ then $a_{12}=a^2>0,$ and if $c\neq 0,$ then $a_{21}=-c^2<0.$
    \end{proof} 
    
The following proposition describes the behavior of the relative Euler class and the sign under the $\pgl\setminus\psl$-conjugations.

    \begin{proposition}\label{prop_pglpsl} 
    Let $h$ be an element of $\pgl\setminus \psl$; and for a representation $\phi\in\HP(\Sigma),$ let $h \phi h^{-1}:\pi_1(\Sigma)\to\psl$ be  defined by 
    $h \phi h^{-1}(c)=h \phi(c)h ^{-1}$
 for each $c\in\pi_1(\Sigma).$  Then $h \phi h^{-1}\in\HP(\Sigma)$ with the relative Euler class $e(h \phi h^{-1})=-e(\phi)$ and the sign $s(h \phi h^{-1})=-s(\phi).$    
    \end{proposition}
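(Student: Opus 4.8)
The plan is to realize conjugation by $h$ as an automorphism of $\psl,$ lift it to the universal cover $\univcover,$ and track what it does to the center of $\univcover$ and to the distinguished lifts that enter the definitions of $e(\phi)$ and $s(\phi).$ First I would note that types of $\psl$-elements are invariant under conjugation by anything in $\pgl,$ so $h\phi h^{-1}$ still sends each $c_i$ to a hyperbolic or parabolic element and hence $h\phi h^{-1}\in\HP(\Sigma);$ this is immediate. Next I would reduce to a single $h$: since $\psl$ has index $2$ in $\pgl$ it is normal and $\pgl\setminus\psl$ is a single coset, so write $h=g\,h_0$ with $g\in\psl$ and $h_0$ the class of $\sldiag.$ Conjugation by $g$ is an inner automorphism of $\psl,$ which lifts to conjugation by a lift of $g$ in $\univcover;$ this fixes the center pointwise and sends each distinguished lift to a conjugate element lying in the same component of $\univcover,$ so it changes neither $e$ nor $s.$ Thus it suffices to prove the proposition for $h=h_0.$

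Then I would set up the lifted automorphism. Let $\mathrm{Inn}_{h_0}\colon\psl\to\psl$ be $\pm A\mapsto h_0(\pm A)h_0^{-1}.$ Composing with the covering $\pi\colon\univcover\to\psl$ and lifting through $\pi$ (possible since $\univcover$ is simply connected) yields the unique automorphism $\widetilde{\mathrm{Inn}}_{h_0}$ of $\univcover$ with $\pi\circ\widetilde{\mathrm{Inn}}_{h_0}=\mathrm{Inn}_{h_0}\circ\pi$ and $\widetilde{\mathrm{Inn}}_{h_0}(\mathrm I)=\mathrm I;$ its differential at $\mathrm I$ is $\mathrm{Ad}_{h_0}$ on $\sl,$ so $\widetilde{\mathrm{Inn}}_{h_0}\circ\widetilde{\exp}=\widetilde{\exp}\circ\mathrm{Ad}_{h_0}.$ The key computation I would carry out is that $\widetilde{\mathrm{Inn}}_{h_0}$ inverts the center: since $z=\widetilde{\exp}\begin{bmatrix}0&\pi\\-\pi&0\end{bmatrix}$ and $\mathrm{Ad}_{h_0}\begin{bmatrix}0&\pi\\-\pi&0\end{bmatrix}=\begin{bmatrix}0&-\pi\\\pi&0\end{bmatrix},$ we get $\widetilde{\mathrm{Inn}}_{h_0}(z)=\widetilde{\exp}\begin{bmatrix}0&-\pi\\\pi&0\end{bmatrix}=z^{-1}.$ In parallel I would record that, because $\widetilde{\mathrm{Inn}}_{h_0}$ preserves traces (traces are conjugation-invariant under the covering map) and carries $\widetilde{\exp}(\sl)$ onto itself, it maps $\Hyp_0,$ $\{\mathrm I\}$ onto themselves and maps $\Par_0^{\pm}$ onto $\Par_0^{\mp}$ — the sign flip coming from $\mathrm{Ad}_{h_0}\slupper=-\slupper$; in particular $\widetilde{\mathrm{Inn}}_{h_0}(\overline{\Hyp_0})=\overline{\Hyp_0}.$

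With these in hand the Euler class is routine. Using the presentation of $\fund$ from Section \ref{relE}, the element $\widetilde{\mathrm{Inn}}_{h_0}\big(\widetilde{\phi(c_i)}\big)$ lies over $h_0\phi(c_i)h_0^{-1}$ and lies in $\overline{\Hyp_0},$ so it is exactly the distinguished lift of $h_0\phi h_0^{-1}(c_i);$ taking $\widetilde{\mathrm{Inn}}_{h_0}\big(\widetilde{\phi(a_j)}\big)$ and $\widetilde{\mathrm{Inn}}_{h_0}\big(\widetilde{\phi(b_j)}\big)$ as the chosen arbitrary lifts of the genus generators and applying the homomorphism $\widetilde{\mathrm{Inn}}_{h_0}$ to the defining identity $\prod_j[\widetilde{\phi(a_j)},\widetilde{\phi(b_j)}]\prod_i\widetilde{\phi(c_i)}=z^{e(\phi)}$ turns the left side into the corresponding product for $h_0\phi h_0^{-1}$ and the right side into $z^{-e(\phi)},$ giving $e(h_0\phi h_0^{-1})=-e(\phi).$ For the sign I would argue directly: a hyperbolic image stays hyperbolic (sign $0$), while if $\phi(c_i)=g'(\pm\parpp)g'^{-1}$ with $g'\in\psl$ then $h_0\phi(c_i)h_0^{-1}=(h_0g'h_0^{-1})(\pm\parmp)(h_0g'h_0^{-1})^{-1}$ with $h_0g'h_0^{-1}\in\psl,$ i.e. it is negative parabolic, and symmetrically $-1$ flips to $+1;$ hence $s(h_0\phi h_0^{-1})=-s(\phi)$ (this is also contained in the statement $\widetilde{\mathrm{Inn}}_{h_0}(\Par_0^{\pm})=\Par_0^{\mp}$ applied to the distinguished lifts). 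Combining with the reduction of the first paragraph completes the proof. The only substantive point — and the step I expect to need the most care — is the identification $\widetilde{\mathrm{Inn}}_{h_0}(z)=z^{-1},$ namely that an orientation-reversing conjugation reverses the generator of $\pi_1(\psl);$ all the rest is functoriality of the covering $\univcover\to\psl.$
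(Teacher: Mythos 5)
Your proof is correct, but it takes a partly different route from the paper's. The paper reduces to the single representative $h=J=\pm\begin{bmatrix}0&1\\1&0\end{bmatrix}$ of the nontrivial coset, checks that conjugation by $J$ preserves traces (hence types, so $h\phi h^{-1}\in\HP(\Sigma)$), reads the sign flip off the matrix entries of $\pm JAJ^{-1}$ via Lemma \ref{lem_offdiag}, and for the statement $e(h\phi h^{-1})=-e(\phi)$ it simply cites the proof of Proposition 4.6 in Goldman rather than arguing it. You instead reduce to $h_0=\pm\sldiag$, obtain the sign flip directly from the conjugacy-class definition of positive/negative parabolics (using normality of $\psl$ in $\pgl$, so $h_0g'h_0^{-1}\in\psl$), and—this is the genuinely different part—give a self-contained proof of the Euler-class reversal by lifting $\mathrm{Inn}_{h_0}$ to the unique basepoint-preserving automorphism of $\univcover$, computing $\mathrm{Ad}_{h_0}\begin{bmatrix}0&\pi\\-\pi&0\end{bmatrix}=\begin{bmatrix}0&-\pi\\\pi&0\end{bmatrix}$ to get $\widetilde{\mathrm{Inn}}_{h_0}(z)=z^{-1}$, checking that the lifted automorphism preserves $\overline{\Hyp_0}$ (so it carries distinguished lifts to distinguished lifts), and applying it to the defining relation $\prod_j[\widetilde{\phi(a_j)},\widetilde{\phi(b_j)}]\prod_i\widetilde{\phi(c_i)}=z^{e(\phi)}$. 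Your approach buys a self-contained and conceptually transparent explanation (an orientation-reversing automorphism inverts the generator of $\pi_1(\psl)$, hence negates the relative Euler class), at the cost of setting up the lifted automorphism and verifying it preserves traces and $\widetilde{\exp}(\sl)$; the paper's version is shorter because it leans on the already-proved Lemma \ref{lem_offdiag} and on Goldman for the Euler-class half. All the steps you flag as delicate (uniqueness of the lift, naturality of $\widetilde{\exp}$, the center computation, and the conjugation-invariance of $e$ under $\psl$ needed for the reduction $h=gh_0$) check out.
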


\begin{proof} Let $J=\pm \begin{bmatrix}
        0 & 1 \\
        1 & 0
    \end{bmatrix}\in\pgl\setminus\psl$. As every element of $\pgl\setminus \psl$ has the form $Jg$ for some  $g$ in $\psl,$ up to a $\psl$-conjugation of $\phi$, we can assume that $h=J.$ Then we observe that for $\pm A = \pm \abcd\in\psl,$ 
    $\pm JAJ^{-1}
    = \pm \begin{bmatrix}
        d & c \\
        b & a
    \end{bmatrix}
    $, and hence $|\tr(\pm A)|=|\pm JAJ^{-1}|=|a+d|$, and $\pm A$ and $\pm JAJ^{-1}$ share the same type. As a consequence, $h\phi h^{-1}\in \HP(\Sigma).$ Furthermore, if both $\pm A$ and $\pm JAJ^{-1}$ are parabolic, then by Lemma \ref{lem_offdiag} they have the opposite sign. This implies that the sign $s(h \phi h^{-1})=-s(\phi).$ The relationship of the relative Euler classes is included in \cite[Proof of Proposition 4.6]{goldman}. 
    \end{proof}

    \subsection{Some other results of Goldman}\label{Other}

In this subsection, we recall the known results that is needed for the proof in this paper. The two lemmas below will be needed for punctured surfaces of Euler characteristic $-2$, namely, the two-hole torus $\Sigma_{1,2}$ and the four-hole sphere $\Sigma_{0,4}.$

    \begin{lemma}\cite[Lemma 9.3]{goldman}\label{lem_inthyptorus}
    	Let $\Sigma = \Sigma_{1,2}$
        with $c_1, c_2$ the peripheral elements of $\fund$. Let $d=(c_1c_2)^{-1}$ be the element of $\pi_1(\Sigma)$ which is represented by a simple closed curve that separates $\Sigma$ into a one-hole torus $T$  and a three-hole sphere $P$.
        Let $\phi\in \hom$ such that $\phi\big(\pi_1(P)\big)$ and $\phi\big(\pi_1(T)\big)$ are non-abelian. Then,
        there exists a path $\{\phi_t\}_{t\in [0,1]}$ in $\hom$
         such that:
         \begin{enumerate}[(1)]
            \item $\phi_0 = \phi$,
            \item for all $t\in [0,1]$, $\phi_t|_{\pi_1(P)}$ and $\phi_t|_{\pi_1(T)}$ are non-abelian,
            \item for all $t\in [0,1]$ and $i\in\{1,2\},$ $\phi_t(c_i)$ is conjugate to $\phi(c_i)$, and
            \item $\phi_1(d)$ is hyperbolic.
            \end{enumerate}
    \end{lemma}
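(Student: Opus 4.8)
The plan is to connect $\phi$ to a representation whose restriction to the one-hole torus $T$ sends the peripheral element $d^{-1}=c_1c_2$ to a hyperbolic element, while keeping both restrictions non-abelian and fixing the conjugacy classes of $\phi(c_1),\phi(c_2)$ throughout. The key observation is that $d^{-1}\in\pi_1(T)$ is precisely the commutator of the two free generators $a,b$ of $\pi_1(T)=\langle a,b,d\mid [a,b]d\rangle$, so controlling the type of $\phi_t(d)$ on the torus side is exactly the question of which elements arise as lifted commutators, governed by Theorem \ref{thm_liftcommu} and Proposition \ref{prop_plpliftcommu}. On the three-hole sphere side $P$, the element $d$ is a peripheral element, so moving $\phi_t(d)$ within $\SL$ while keeping $\phi_t|_{\pi_1(P)}$ non-abelian and fixing the conjugacy classes of $\phi_t(c_1),\phi_t(c_2)$ is a matter of realizing a path of characters on $\pi_1(\Sigma_{0,3})$, controlled by Lemma \ref{lem_plpchar}.

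First I would set up the torus side. Write $u=\phi(a)$, $v=\phi(b)$ in $\psl$, so $\phi(d)=\widetilde R(u,v)^{-1}$ projects to $[u,v]^{-1}$; non-abelianity of $\phi(\pi_1(T))$ means $[u,v]\neq\mathrm I$, so $\widetilde R(u,v)$ lies in $\mathcal I\setminus\{\mathrm I\}$. The target set $\mathcal I\setminus\{\mathrm I\}$ contains $\Hyp_0$, so I want to find a path in $\mathcal I\setminus\{\mathrm I\}$ from $\widetilde R(u,v)$ to some hyperbolic element; if $[u,v]$ is already hyperbolic there is nothing to do on this side. Because $\Hyp_0$ lies in the closure of every other piece of $\mathcal I$ that meets $\overline{\Hyp_0}$ and, crucially, $\mathcal I$ is connected with $\Hyp_0$ adjacent to $\Ell_{\pm1}$, $\Par_0$, etc., such a path exists inside $\mathcal I\setminus\{\mathrm I\}$ — indeed one can first leave a neighborhood of the identity and then flow into $\Hyp_0$. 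I would then invoke Proposition \ref{prop_plpliftcommu} (path-lifting for $\widetilde R$ on $\widetilde R^{-1}(\mathcal I\setminus\{\mathrm I\})$) to lift this path to a path $(u_t,v_t)$ in $\psl\times\psl$ with $[u_t,v_t]\neq\mathrm I$ throughout, starting at $(u,v)$ and ending at a pair with hyperbolic commutator. This produces a path of non-abelian representations $\phi_t|_{\pi_1(T)}$ with $\phi_t(d)$ ending hyperbolic; along this path $\phi_t(d)=\phi_t(c_1c_2)$ traces out a path $\delta_t$ in $\psl$, transverse to $\Par$ after a small perturbation, with $\delta_0=\phi(d)$.

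Next I would propagate this across the gluing curve to the three-hole sphere $P$, whose fundamental group is generated by $c_1,c_2$ with $c_3=d^{-1}=(c_1c_2)^{-1}$. Here I need a path $\psi_t|_{\pi_1(P)}$ of non-abelian representations with $\psi_t(c_1)$ conjugate to $\phi(c_1)$, $\psi_t(c_2)$ conjugate to $\phi(c_2)$, and $\psi_t(c_1c_2)$ conjugate to $\delta_t$; then, using the standard lemma that two paths through $\psl\setminus\{\pm\mathrm I\}$ which are pointwise conjugate and transverse to $\Par$ are conjugate by a continuous path of elements (the lemma hidden in the commented-out block, \cite[Lemma 1.3]{goldman}), I would splice $\psi_t$ and $\phi_t|_{\pi_1(T)}$ into a single path $\phi_t$ on $\pi_1(\Sigma)$. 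To produce $\psi_t$, I work with the character map $\chi:\Omega\to\mathbb R^3$: the traces of $\phi(c_1),\phi(c_2)$ are fixed (say $x_0,y_0$, parabolic so $|x_0|=|y_0|=2$), and $\tr\delta_t$ gives a prescribed continuous path $z_t$ of third coordinates with $z_0=\tr\phi(d)$; I need $(x_0,y_0,z_t)$ to avoid $\kappa^{-1}(2)$ (which would force reducibility) so I can lift through $\chi$ via Lemma \ref{lem_plpchar}, yielding non-abelian $\psi_t$ with the correct peripheral conjugacy classes. The main obstacle, and the step requiring genuine care, is exactly this coordination: I must choose the torus-side path $\delta_t$ so that the resulting character path $(x_0,y_0,z_t)$ stays off the reducible locus and lands in $\Omega$'s image, and simultaneously so that $\delta_t$ can be matched up to conjugacy on both sides — in particular I may need to pass through the parabolic locus $\Par$ on the torus side (since a positive parabolic commutator lies in $\Par_{-1}^+\subset\mathcal I$) and must verify this is compatible with non-abelianity on the $P$-side. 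Handling these transversality-and-compatibility conditions, rather than any single hard computation, is where the real work lies; everything else is assembly via the path-lifting lemmas already established.
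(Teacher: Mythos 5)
Your architecture is the right one, and it is essentially the architecture of the proof this paper cites (Goldman's Lemma 9.3, which the paper does not reprove; it only remarks that the proof rests on Lemma \ref{lem_plpchar} and Proposition \ref{prop_plpliftcommu}): deform the commutator on the torus side by path-lifting for $\widetilde R$, deform the character on the pants side by path-lifting for $\chi$ with the first two coordinates fixed, and splice along $d$ with Lemma \ref{lem_conjugacypath}. But the step you defer as ``where the real work lies'' is the actual content of the lemma, and as sketched it has a genuine gap. Your plan is to match the two sides of $d$ by matching traces: the torus-side path $\delta_t$ supplies a trace path $z_t$, the pants side lifts $(x_0,y_0,z_t)$, and then Lemma \ref{lem_conjugacypath} is supposed to conjugate the two paths of images of $d$ together. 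This fails because equality of traces does not imply conjugacy in $\psl$ once $\phi_t(d)$ is elliptic: rotations by $\theta$ and $-\theta$ have the same trace but are not $\psl$-conjugate, and after the trace path crosses $\pm 2$ the rotation direction of the elliptic elements is dictated by \emph{which component of} $\Par$ was crossed, which the trace does not see. So the hypothesis of Lemma \ref{lem_conjugacypath} (pointwise conjugacy) is not guaranteed by your construction; one must control the signs of the parabolic crossings on both sides, exactly the kind of explicit $(1,2)$/$(2,1)$-entry computations via Lemma \ref{lem_offdiag} (and Lemma \ref{lem_offdiag_Ell}) that this paper carries out in Lemma \ref{lem_evimage}, Theorem \ref{thm_PLP} and Theorem \ref{thm_pants2}, or else one must arrange the deformation so that the two sides agree at $d$ on the nose rather than up to conjugacy.

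A second concrete problem: you tacitly assume $\phi(c_1),\phi(c_2)$ are parabolic ($|x_0|=|y_0|=2$), but the lemma as stated allows arbitrary non-abelian restrictions, in particular elliptic boundary images. In that case the starting character $(x_0,y_0,z_0)$ can already lie in $[-2,2]^3\cap\kappa^{-1}([-2,2])$, which is outside the codomain of Lemma \ref{lem_plpchar}, so your pants-side lifting cannot even start; and even with parabolic boundary you must verify that the path $(x_0,y_0,z_t)$ avoids $\kappa^{-1}(2)$ (the reducible locus), which requires an argument (non-abelian with parabolic $c_1,c_2$ forces $z_0\neq 2$, by the trace computation $2-s_1s_2c^2t^2$ used in Theorem \ref{thm_pants2}), not just a remark. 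There is also a small, fixable point at $t=0$: the conjugating path $g_t$ from Lemma \ref{lem_conjugacypath} need not satisfy $g_0=\pm\mathrm{I}$, only that $g_0$ commutes with $\phi(d)$, so you must correct by a path to $\pm\mathrm{I}$ inside the (connected) centralizer of $\phi(d)$, in the spirit of Lemma \ref{lem_conjugacypath_const}, to get condition (1) exactly. In short: correct skeleton, same route as the cited proof, but the conjugacy-matching across $d$ in the elliptic/parabolic range and the elliptic-boundary case are not handled, and these are precisely where the lemma's difficulty sits.
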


    \begin{lemma}\cite[Lemma 9.6]{goldman}\label{lem_inthyppants}
    Let $\Sigma = \Sigma_{0,4}$
        with $c_1, c_2, c_3, c_4$ the  peripheral elements in $\fund$. Let $d=(c_1c_2)^{-1}$ be the element of $\pi_1(\Sigma)$ which is represented by a simple closed curve that separates $\Sigma$ into two  three-hole spheres $P_1$ and $P_2$.
        Let $\phi\in \hom$ such that $\phi\big(\pi_1(P_1)\big)$ and $\phi\big(\pi_1(P_2)\big)$ are non-abelian. 
        Suppose in addition  that there exists an $i\in \{1,2,3,4\}$ such that $\phi(c_i)$ is hyperbolic.
        Then there exists a path $\{\phi_t\}_{t\in [0,1]}$ in $\hom$ such that:
        \begin{enumerate}[(1)]
            \item $\phi_0 = \phi$,
            \item for all $t\in [0,1]$, $\phi_t|_{\pi_1(P_1)}$ and $\phi_t|_{\pi_1(P_2)}$ are non-abelian,
            \item for all $t\in [0,1]$ and $i\in\{1,2,3,4\}$,  $\phi_t(c_i)$ is conjugate to $\phi(c_i)$, and 
       \item $\phi_1(d)$ is hyperbolic.
       \end{enumerate}
    \end{lemma}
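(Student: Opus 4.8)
The plan is to localize the deformation to the decomposition $\Sigma=P_1\cup P_2$, to produce the required change of $\phi(d)$ by a conjugation deformation on the pants $P_2$ that carries the distinguished hyperbolic boundary, and then to transport the resulting path of values of $\phi(d)$ back to a matching deformation on $P_1$ by a path-lifting argument, exactly in the spirit of the proof of Lemma \ref{lem_inthyptorus}. For the set-up, write $\fund=\langle c_1,c_2,c_3,c_4\mid c_1c_2c_3c_4\rangle$, so that $d=(c_1c_2)^{-1}=c_3c_4$ and $\pi_1(P_1)=\langle c_1,c_2\rangle$, $\pi_1(P_2)=\langle c_3,c_4\rangle$ are free of rank $2$ with $d^{\pm 1}$ the third boundary of each. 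Since $\phi|_{\pi_1(P_1)}$ is non-abelian, $\phi(c_1)$ and $\phi(c_2)$ do not commute, so $\phi(c_1)\phi(c_2)\ne\pm\mathrm I$ and $\phi(d)$ is hyperbolic, parabolic, or elliptic; if $\phi(d)$ is hyperbolic we take the constant path, so assume it is parabolic or elliptic. The statement is symmetric under the relabelings $c_1\leftrightarrow c_2$, $c_3\leftrightarrow c_4$ and under interchanging the two pants, each of which fixes the conjugacy class of $d$, so we may assume the distinguished hyperbolic boundary is $\phi(c_3)$.

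Next I would build a path of values of $\phi(d)$ supported on $P_2$. Let $\mathcal C_i$ be the conjugacy class of $\phi(c_i)$. Keeping $\phi(c_3)$ fixed, deform $\phi(c_4)$ inside $\mathcal C_4$ by $\phi_t(c_4)=g_t\phi(c_4)g_t^{-1}$ with $g_0=\mathrm I$ and $g_t\in\psl$, and set $\phi_t(d):=\phi(c_3)\phi_t(c_4)$ and $\phi_t|_{\pi_1(P_2)}:=(\phi(c_3),\phi_t(c_4))$. The path $g_t$ is to be chosen so that (i) $\phi(c_3)$ and $\phi_t(c_4)$ never commute, so $\phi_t|_{\pi_1(P_2)}$ stays non-abelian; (ii) $\phi_t(d)$ stays inside the image
$$V_1\ \doteq\ \bigl\{(AB)^{-1}\ :\ A\in\mathcal C_1,\ B\in\mathcal C_2,\ [A,B]\ne\mathrm I\bigr\}\subseteq\psl$$
of the $P_1$-product map; and (iii) $\phi_1(d)$ is hyperbolic. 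For (iii), $\phi(c_3)$ is a non-trivial hyperbolic translation, and conjugating $\phi(c_4)$ so as to push its axis (respectively fixed point, or rotation centre) far from the axis of $\phi(c_3)$ turns $\phi(c_3)\phi_t(c_4)$ into a hyperbolic translation of arbitrarily large length. For (ii), $V_1$ is open (it is the image of a submersion on the non-commuting locus, by submersion computations in the spirit of Lemmas \ref{lem_evTsubm} and \ref{lem_evPsubm}), connected, contains $\phi(d)$, and — by a case check on the types of $\phi(c_1),\phi(c_2)$ using the normal forms \xdiag, \parpp, \elltheta and the fact that a product of two non-central elements of $\psl$ can be hyperbolic of arbitrarily large trace — contains hyperbolic elements of arbitrarily large translation length; these facts together make such a path $g_t$ available.

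Then I would transport this to $P_1$ by path-lifting. Apply Lemma \ref{lem_plp} to the map
$$m_1\colon\ \bigl\{(A,B)\in\mathcal C_1\times\mathcal C_2:\ [A,B]\ne\mathrm I\bigr\}\ \longrightarrow\ V_1,\qquad (A,B)\mapsto(AB)^{-1},$$
starting at $(\phi(c_1),\phi(c_2))\in m_1^{-1}(\phi(d))$, to lift the path $\{\phi_t(d)\}\interval$ to a path $\{(\phi_t(c_1),\phi_t(c_2))\}\interval$ with $\phi_t(c_i)\in\mathcal C_i$, $[\phi_t(c_1),\phi_t(c_2)]\ne\mathrm I$, and $\phi_t(c_1)\phi_t(c_2)=\phi_t(d)^{-1}$. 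As $\phi_t(d)^{-1}=(\phi(c_3)\phi_t(c_4))^{-1}$, the quadruples $(\phi_t(c_1),\phi_t(c_2),\phi(c_3),\phi_t(c_4))$ satisfy $c_1c_2c_3c_4=1$ and so define a path $\path$ in $\hom$ with $\phi_0=\phi$; along it $\phi_t|_{\pi_1(P_1)}$ and $\phi_t|_{\pi_1(P_2)}$ are non-abelian, each $\phi_t(c_i)$ is conjugate to $\phi(c_i)$, and $\phi_1(d)$ is hyperbolic, which is exactly (1)--(4). (The same scheme gives Lemma \ref{lem_inthyptorus}, with the one-holed torus replacing $P_2$ and the lifted commutator of Theorem \ref{thm_liftcommu} furnishing the flexibility, so no hyperbolic boundary is needed there.)

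The step I expect to be the main obstacle is the path-lifting property of $m_1$: through Lemma \ref{lem_plp} it reduces to showing that $m_1$ is a submersion on the non-commuting locus — an infinitesimal computation with conjugacy classes, parallel to Lemmas \ref{lem_evTsubm} and \ref{lem_evPsubm} — and, more seriously, that its fibres are connected. The fibre of $m_1$ over $E$ is $\{A\in\mathcal C_1:\ A^{-1}E^{-1}\in\mathcal C_2\}$, a level set of a trace function on the surface $\mathcal C_1$, whose connectedness is of the same nature as the fibre-connectedness statements for the character map in Proposition \ref{prop_char} and must be checked according to the types of $\mathcal C_1,\mathcal C_2$. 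A secondary point is to verify, in every combination of types, that the $2$-parameter family $\phi(c_3)\,\mathcal C_4$ in which $\phi_t(d)$ is permitted to move actually meets $V_1$ in a subset joining $\phi(d)$ to a hyperbolic element; the delicate cases are those in which $\mathcal C_1$, $\mathcal C_2$ or $\mathcal C_4$ is a ``small'' elliptic class.
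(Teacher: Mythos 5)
Your scheme has two genuine gaps, and they sit exactly where the real content of the lemma lives. First, the path-lifting step through $m_1\colon\{(A,B)\in\mathcal C_1\times\mathcal C_2:[A,B]\neq\mathrm I\}\to\psl$ cannot be obtained from Lemma \ref{lem_plp}, because hypothesis (b) there requires \emph{every} fibre to be connected, and the fibres of $m_1$ at the $\psl$ level are in general disconnected: the fibre over a value $E$ is a space of pants representations with all three boundary holonomies pinned, and it splits into pieces indexed by a relative Euler class (and, for parabolic classes, a sign). For instance, if $\mathcal C_1,\mathcal C_2$ are hyperbolic classes and $E$ is hyperbolic, the fibre is the disjoint union of the $\ev$-fibres over the three lifts of $E$ in $\Hyp_{-1},\Hyp_0,\Hyp_1$, and all three occur by Proposition \ref{prop_evimage}(3); these pieces lie in different components because the relative Euler class is locally constant. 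The connectedness statements that do hold are the ones for the \emph{lifted} evaluation maps with the Euler class and signs fixed (Goldman's Proposition 4.6, Theorem \ref{thm_PLP}, and, for elliptic targets with a parabolic boundary, Proposition \ref{prop_PLP_Ell}), and they are not available in the generality your argument needs (arbitrary types of $\mathcal C_1,\mathcal C_2$, with $\phi_t(d)$ passing through elliptic and parabolic values). Second, your step (ii) — that the conjugation path on the $c_4$ side can be chosen so that $\phi(c_3)g_t\phi(c_4)g_t^{-1}$ reaches a hyperbolic element while staying in $V_1$ — does not follow from openness and connectedness of $V_1$: the path is confined to the two-parameter slice $\phi(c_3)\cdot\mathcal C_4$, and nothing you say controls the intersection of that slice with $V_1$ (nor, once you are forced to the lifted level by the first gap, with the correct Euler-class component of the lifted image).

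A concrete sign that these are not mere technicalities: for $\Sigma_{0,4}$ with $s=(+1,+1,+1,+1)$ and relative Euler class $1$ there exist representations with both pants restrictions non-abelian and $\phi(d)$ elliptic (Proposition \ref{Prop_Exc2}), and by Theorem \ref{thm_Exc2} no path satisfying your conditions (1)–(3) can ever make $d$ hyperbolic, since sign and Euler class are preserved and that component is totally non-hyperbolic. Your sketch uses the hyperbolic boundary only to ``push the product far,'' so as written it would seem to apply verbatim to this case too; the failure must therefore be hidden precisely in the two unproven steps, which is where the hypothesis that some $\phi(c_i)$ is hyperbolic actually does its work. This is also where your route diverges from the proof the paper relies on (Goldman's Lemma 9.6): there one deforms the \emph{character} rather than the group elements — the hyperbolic boundary keeps the character triple of the relevant pants outside $[-2,2]^3\cap\kappa^{-1}([-2,2])$, the trace of $d$ is moved along a segment to a hyperbolic value, the path is lifted on each pants by Lemma \ref{lem_plpchar} (whose fibre-connectedness input is Proposition \ref{prop_char}), and the two sides are matched by a conjugating path as in Lemma \ref{lem_conjugacypath}. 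If you want to salvage your approach, you would need to prove the submersion and fibre-connectedness statements for $m_1$ with the Euler class and signs fixed in all the type combinations that can occur, and verify the slice-containment at the lifted level — at which point you have essentially reconstructed the character-map argument.
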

    \begin{remark}
        In each of Lemma \ref{lem_inthyptorus} and Lemma \ref{lem_inthyppants}, Condition (2) is not stated explicitly in \cite{goldman}, however it is nevertheless satisfied, as the proofs rely on Lemma \ref{lem_plpchar} and Lemma \ref{prop_plpliftcommu}.
    \end{remark}

     \begin{lemma}\cite[Lemma 1.3]{goldman}\label{lem_conjugacypath}
            Let $\{\pm A_t\}\interval$ and $\{\pm B_t\}\interval$ 
            be piecewise smooth paths in $\psl \setminus \{\pm \mathrm I\}$ such that 
            for each $t\in [0,1]$, the elements $\pm A_t$ and $\pm B_t$ are conjugate. 
            Suppose that each path is transverse to the subset $\Par$ of $\psl \setminus \{\pm \mathrm I\}$. 
            Then there exists a continuous path 
            $\{g_t\}_{t\in [0,1]}$ in $\psl$ such that $\pm B_t = g_t(\pm A_t)g_t^{-1}$. 
        \end{lemma}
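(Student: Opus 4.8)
\textbf{Proof proposal for Lemma \ref{lem_conjugacypath}.}

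The plan is to produce the conjugating path $\{g_t\}$ locally and then patch, using the transversality hypothesis to control behavior near the set $\Par$. First I would note that since each of $\{\pm A_t\}$ and $\{\pm B_t\}$ is transverse to $\Par$, each path meets $\Par$ in only finitely many parameter values; choose a partition $0 = t_0 < t_1 < \dots < t_N = 1$ of $[0,1]$ fine enough that on each closed subinterval $[t_{k-1}, t_k]$ at most one of these crossings occurs, and it occurs (if at all) in the interior. It then suffices to construct a continuous conjugating path on each subinterval with matching endpoints, because one can left-translate the path on $[t_k, t_{k+1}]$ by a constant element of $\psl$ (the centralizer of a non-central element acts transitively enough, or more simply one adjusts by $g_{t_k}^{\text{new}}(g_{t_k}^{\text{old}})^{-1}$) so that the pieces agree at the breakpoints. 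Hence the problem reduces to two local cases on a subinterval $I$: (i) $\{\pm A_t\}$ stays in a single conjugacy-type stratum ($\Hyp$, $\Ell$, or one of $\Parp$, $\Parm$) on all of $I$; and (ii) $\{\pm A_t\}$ crosses $\Par$ transversally once, at an interior point $t^* \in I$.

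In case (i), the centralizer of a fixed non-central element of $\psl$ is a one-dimensional closed subgroup, and the conjugacy class is a homogeneous space $\psl/Z$; the map $\psl \to (\text{conjugacy class})$, $g \mapsto g(\pm A_0)g^{-1}$, is a locally trivial fiber bundle (a submersion onto its image, which is a submanifold), so the path $t \mapsto \pm B_t$, which by hypothesis lies in the same conjugacy class as $\pm A_t$ and hence in the orbit of $\pm A_0$ under an explicit element, lifts through this submersion by the path-lifting property of Lemma \ref{lem_plp} — here one uses that the fibers $g Z$ are connected since $Z$ is connected (true for $\Hyp$, $\Ell$, and each of $\Parp, \Parm$ in $\psl$). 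Concretely, conjugate so that $\pm A_t$ and $\pm B_t$ are simultaneously put in a normal form (diagonal, rotation, or upper-triangular unipotent form) depending continuously on $t$, read off the conjugating element between the normal forms, and compose. In case (ii), on $I \setminus \{t^*\}$ we are in two instances of case (i) (possibly with different parabolic signs on the two sides, but $\Hyp$ on each side near $t^*$ if the crossing is $\Hyp \to \Par \to \Hyp$, or $\Hyp \to \Par \to \Ell$, etc.), giving conjugating paths $\{g_t\}$ on each half; the content is to show these extend continuously across $t^*$. This is where transversality is essential: write $\pm A_t$ in a local chart near the parabolic value $\pm A_{t^*}$ adapted to the trace function $\tau(t) = \tr(\pm A_t)$, with $\tau'(t^*) \neq 0$, and similarly for $\pm B_t$ with the same trace function (traces of conjugate elements agree, so $\tr(\pm A_t) = \tr(\pm B_t)$ throughout, and transversality of one path forces it for the other). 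One then checks that the one-parameter families of conjugating elements, expressed via the $QR$- or Iwasawa-type decomposition of a suitable fixed-point-moving matrix, have finite limits as $t \to t^*$ from either side and that these limits agree; equivalently, the conjugacy-class union $\overline{\Hyp} \cup \Ell$ near $\pm A_{t^*}$, together with the conjugation action, forms a space over which short transverse arcs admit continuous equivariant local sections, which is a direct computation in $2\times 2$ matrices.

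The main obstacle is precisely the gluing across a $\Par$-crossing in case (ii): on the nose, the conjugacy class of a parabolic element is lower-dimensional (or rather, $\Par^{\pm}$ together is codimension one) and the "orbit map" $g \mapsto g(\pm A_{t^*})g^{-1}$ is no longer a submersion onto a neighborhood in $\psl$, so the naive fiber-bundle/path-lifting argument breaks down exactly at $t^*$. The transversality hypothesis is the tool that repairs this: it guarantees the paths pass through $\Par$ with nonzero speed in the trace direction, so that on either side one stays in the open hyperbolic (or elliptic) stratum for $t \neq t^*$ and the conjugating elements, built from normalized eigenvector or fixed-point data, remain bounded and converge as $t \to t^*$. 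I would devote the bulk of the write-up to making this limiting argument precise — choosing an explicit normal form valid uniformly across the crossing (for a $\Hyp$–$\Par$–$\Hyp$ passage, upper-triangular form $\begin{bmatrix} \lambda & * \\ 0 & \lambda^{-1}\end{bmatrix}$ with $\lambda \to 1$; for a $\Hyp$–$\Par$–$\Ell$ passage a form interpolating diagonalizable real and complex eigenvalues) and verifying continuity of the resulting $g_t$ at $t^*$ — while treating case (i) and the finite patching as routine once the framework of Lemma \ref{lem_plp} is invoked.
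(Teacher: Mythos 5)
This lemma is quoted in the paper directly from Goldman \cite[Lemma 1.3]{goldman}; the paper contains no proof of it, so there is no in-paper argument to compare yours against, and your proposal has to stand on its own (or against Goldman's original proof, which is in the same general spirit of stratifying by conjugacy type and using transversality at $\Par$).

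As written, your proposal has a concretely failing step and leaves the real content unproved. First, the patching: if $h_tA_th_t^{-1}=B_t$ on $[t_k,t_{k+1}]$, then for a constant $c$ one has $(ch_t)A_t(ch_t)^{-1}=cB_tc^{-1}$ and $(h_tc)A_t(h_tc)^{-1}=h_t(cA_tc^{-1})h_t^{-1}$, neither of which equals $B_t$ unless $c$ centralizes $B_t$ (resp.\ $A_t$) for \emph{every} $t$ in the subinterval; so "adjust by a constant so the pieces agree at the breakpoints" does not preserve the conjugating property. The set of valid conjugators at time $t$ is the coset $h_t\,Z(A_t)$ of the time-varying centralizer, so the correct adjustment is right-multiplication by a continuous family $z_t\in Z(A_t)$ with prescribed value at $t_k$; such a family exists (e.g.\ $z_t=\exp\big(s(t)X_t\big)$ with $X_t$ the continuous nonvanishing traceless section $A_t-\tfrac12\mathrm{tr}(A_t)\mathrm I$ of the centralizer algebras), but this is an argument you must supply, not a one-line normalization. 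Second, your case (i) conflates the fixed conjugacy class of $\pm A_0$ with the time-varying one: the trace changes along the path, so $\pm B_t$ is conjugate to $\pm A_t$, not to $\pm A_0$, and the orbit map $g\mapsto g(\pm A_0)g^{-1}$ is the wrong map to lift through; the correct formulation is to lift through $(g,t)\mapsto g(\pm A_t)g^{-1}$, i.e.\ to produce a section of the solution bundle $\{(g,t):g(\pm A_t)g^{-1}=\pm B_t\}\to[0,1]$, whose fibers are the connected cosets $h_tZ(A_t)$. Third, and most importantly, the entire difficulty of the lemma is your case (ii), the continuity of $g_t$ across a parabolic crossing, and there you only assert that the normalized fixed-point/eigenvector data "have finite limits and agree" as "a direct computation in $2\times2$ matrices"; that computation, using the nonvanishing derivative of the trace at the crossing to control the rate at which the two fixed points of $\pm A_t$ (and of $\pm B_t$) collide, is precisely what a proof must contain. (A minor symptom of the same omission: a transverse crossing of $\Par$ necessarily passes from $\Hyp$ to $\Ell$ or vice versa, so the "$\Hyp\!-\!\Par\!-\!\Hyp$" passage you list cannot occur under the transversality hypothesis.)
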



Very similar to Lemma \ref{lem_conjugacypath}, we will also need the following Lemma \ref{lem_conjugacypath_const}. 
    
    \begin{lemma}\label{lem_conjugacypath_const}
        Let $\pm A$ be an element of $\psl\setminus \{\pm \mathrm I\}$,
        and let $\{\pm A_t\}\interval$ 
        be a continuous path in $\psl$ with $\pm A_0 = \pm A$. 
        Suppose that for each $t\in [0,1]$, the elements $\pm A_t$ and $\pm A$ are conjugate. 
        Then there exists a continuous path 
        $\{g_t\}_{t\in [0,1]}$ in $\psl$ with $g_0 = \pm \mathrm{I}$ such that $\pm A_t = g_t(\pm A)g_t^{-1}$. 
    \end{lemma}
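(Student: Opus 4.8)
The plan is to realize the conjugation orbit map as the projection of a fiber bundle and to use its path-lifting property. Let $Z\doteq\{g\in\psl\mid g(\pm A)g^{-1}=\pm A\}$ be the centralizer of $\pm A$ in $\psl$; since $\pm A\neq\pm\mathrm I$, this is a proper closed one-dimensional subgroup. Consider the map $\pi\colon\psl\to\psl$ defined by $\pi(g)=g(\pm A)g^{-1}$, whose image is the conjugacy class $\mathcal C$ of $\pm A$. Since $\pi(g)=\pi(h)$ if and only if $gZ=hZ$, the map $\pi$ factors as $\psl\xrightarrow{q}\psl/Z\xrightarrow{\bar\pi}\mathcal C$ with $q$ the quotient projection and $\bar\pi$ a continuous bijection. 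Because each $\pm A_t$ is conjugate to $\pm A$, the given path $\{\pm A_t\}\interval$ lies entirely in $\mathcal C$, so the strategy is: transport it through $\bar\pi^{-1}$ to a path in $\psl/Z$, lift that path through $q$ starting at the coset $q(\pm\mathrm I)=Z$, and read off the resulting path $\{g_t\}\interval$ in $\psl$.

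First I would check that $\bar\pi$ is a homeomorphism onto $\mathcal C$ with its subspace topology from $\psl$, equivalently that $\mathcal C$ is a locally closed submanifold of $\psl$ on which $\psl$ acts transitively by conjugation. This follows from the classification of elements of $\psl$: when $\pm A$ is hyperbolic, $\mathcal C=\{g\in\psl\mid |\tr(g)|=|\tr(A)|\}$ is a closed submanifold; when $\pm A$ is parabolic, $\mathcal C$ is one of the two components $\Parp,\Parm$ of $\Par$; and when $\pm A$ is elliptic, $\mathcal C$ is a level set of the rotation angle, again an embedded submanifold. Second, since $Z$ is a closed subgroup of the Lie group $\psl$, the quotient $q\colon\psl\to\psl/Z$ is a locally trivial principal $Z$-bundle, and locally trivial bundles have the (genuine, un-reparametrized) path-lifting property: any path in the base, together with a choice of point in the fiber over its initial point, lifts to a path in the total space. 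Applying this to the image of $\{\pm A_t\}\interval$ in $\psl/Z$ with initial point $\pm\mathrm I$ (note $\bar\pi(q(\pm\mathrm I))=\pm A=\pm A_0$) produces a continuous $\{g_t\}\interval$ in $\psl$ with $g_0=\pm\mathrm I$ and $\bar\pi(q(g_t))=\pm A_t$, that is, $\pm A_t=g_t(\pm A)g_t^{-1}$ for all $t\in[0,1]$.

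The only point that genuinely needs care is the identification $\mathcal C\cong\psl/Z$ as \emph{topological} spaces, equivalently that $\pi$ is a fiber bundle onto its image rather than merely a smooth surjection; this is precisely what the brief case analysis above settles. An alternative, staying closer to the tools already in place, is to note that $\pi\colon\psl\to\mathcal C$ is a submersion whose fibers are cosets of $Z$, and that $Z$ is connected (conjugate to the projection of the positive diagonal subgroup, the upper unipotent subgroup, or $\mathrm{SO}(2)$ according to the type of $\pm A$), so that Lemma \ref{lem_plp} applies and yields a lift up to a non-decreasing reparametrization of $[0,1]$; one then removes the reparametrization using once more that $q$ is locally trivial. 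Either way the argument is short, and I do not expect a serious obstacle.
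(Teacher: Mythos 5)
Your argument is correct, but it takes a genuinely different route from the paper. The paper splits into cases by type: for $\pm A$ hyperbolic or elliptic it invokes Goldman's conjugating-path lemma (Lemma \ref{lem_conjugacypath}, applied to $\{\pm A_t\}$ and the constant path at $\pm A$) and then normalizes by $g_t=h_th_0^{-1}$ to force $g_0=\pm\mathrm I$; for $\pm A$ parabolic it gives a hands-on computation, tracking the fixed point $x_t\in\partial\mathbb H^2$ of $\pm A_t$, conjugating it back to $\infty$ by an explicit $\sigma_t$, and rescaling the resulting upper-triangular element by a diagonal $h_t$ using that the translation parameter $\xi_t$ never changes sign. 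You instead treat all three types uniformly: the conjugacy class $\mathcal C$ of $\pm A$ is the homogeneous space $\psl/Z$ for the (connected, one-dimensional, closed) centralizer $Z$, the quotient $\psl\to\psl/Z$ is a locally trivial principal $Z$-bundle, and exact (unreparametrized) path lifting in such a bundle, starting at $\pm\mathrm I$ over $\pm A_0$, produces $\{g_t\}$ directly. Your identification of $\mathcal C$ with $\psl/Z$ as topological spaces is indeed the only point needing care, and your case check (closed trace level set for hyperbolic, the components $\Parp$, $\Parm$ for parabolic, a fixed-rotation-angle class for elliptic — in each case a locally closed embedded orbit) settles it. What your route buys: it needs no explicit matrix computation, and it applies verbatim to merely continuous paths, whereas the paper's hyperbolic/elliptic case leans on Lemma \ref{lem_conjugacypath}, which is stated for piecewise smooth paths transverse to $\Par$. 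What the paper's route buys: it stays entirely within the elementary toolkit already used elsewhere in the article, while yours imports standard but heavier Lie-theoretic facts (orbits of locally closed type are homogeneous spaces, local triviality of $G\to G/H$, path lifting in fiber bundles). Note also that in your primary argument the connectedness of $Z$ is not actually needed — it only matters for your alternative via Lemma \ref{lem_plp}, where one additionally has to undo the reparametrization, so the bundle argument is the cleaner of your two variants.
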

\begin{proof}
  
    If $\pm A$ is  hyperbolic or elliptic, then by applying Lemma \ref{lem_conjugacypath} to the path $\{\pm A_t\}\interval$ and the constant path $\pm B_t\equiv \pm A$, we obtain a path 
    $\{h_t\}_{t\in [0,1]}$ in $\psl$ such that for all $t\in [0,1]$, $\pm A_t = h_t(\pm A)h_t^{-1}$. Since $\pm A = \pm A_0 = h_0(\pm A)h_0^{-1}$, $h_0$ commutes with $\pm A$. Then by letting $g_t \doteq h_th_0^{-1}$ for each $t\in [0,1]$, 
    we define a path
    $\{g_t\}\interval$ in $\psl$ with $g_0 = \pm \mathrm{I}$, such that 
    $g_t(\pm A)g_t^{-1}
    = h_th_0^{-1}(\pm A)h_0h_t^{-1}
    = h_t(\pm A)h_t^{-1}
    = \pm A_t$. 
    \smallskip
    
    If $\pm A$ is parabolic, then up to conjugation we can assume that  $\pm A 
    = \pm 
    \begin{bmatrix}
        1 & s\\
        0 & 1
    \end{bmatrix}$ for some $s\in \{\pm 1\}$. 
 Recall that for each $t\in [0,1]$, as an orientation preserving isometry of $\mathbb{H}^2$, $\pm A_t$ has a unique fixed point $x_t$ on $\partial \mathbb{H}^2= \mathbb{R}\cup\{\infty\}$.
    As $\{\pm A_t\}\interval$ is a continuous path in $\psl$, $\{x_t\}\interval$ defines a continuous path in $\partial \mathbb{H}^2 $ starting from  $x_0 = \infty$ which is the fixed point of $\pm A_0 = \pm\begin{bmatrix}
        1 & s \\
        0 & 1
    \end{bmatrix} $. 
    For each $t\in [0,1]$, we let
    $\sigma_t \doteq \pm \dfrac{1}{\sqrt{1+x_t^2}}
    \begin{bmatrix}
        x_t & -1 \\
        1 & x_t
    \end{bmatrix}$.
    Then $\sigma_0 = \pm \mathrm I$; and for each $t\in [0,1],$
   $\sigma_t^{-1}(\pm A_t)\sigma_t$ fixes $\infty\in \partial \mathbb{H}^2$. Thus,
    $\sigma_t^{-1}(\pm A_t)\sigma_t = 
    \pm 
    \begin{bmatrix}
        1 & \xi_t\\
        0 & 1
    \end{bmatrix}$ for some $\xi_t\in \mathbb{R}\setminus\{0\}$. Since $\{\sigma_t^{-1}(\pm A_t)\sigma_t\}\interval$ is a continuous path in $\psl$ starting from $\sigma_0^{-1}(\pm A_0)\sigma_0 = \pm \begin{bmatrix}
        1 & s\\
        0 & 1
    \end{bmatrix}$, $\{\xi_t\}\interval$ is a continuous path in $\mathbb R\setminus \{0\}$ starting from $\xi_0=s$. As a consequence,  $sgn(\xi_t) = sgn(s)$ for all $t\in [0,1]$. 
    Let $h_t \doteq \pm \begin{bmatrix}
        \sqrt{s\xi_t} & 0 \\
        0 & \sqrt{s\xi_t}^{-1}
    \end{bmatrix}$, and let $g_t \doteq \sigma_th_t$. This defines a continuous path $\{g_t\}\interval$ such that $g_0 = \sigma_0h_0 = \pm \mathrm I$, and  
    $\pm A_t 
    = \sigma_t \bigg(\pm \begin{bmatrix}
        1 & \xi_t\\
        0 & 1
    \end{bmatrix}\bigg) \sigma_t^{-1}
    = g_t(\pm A)g_t^{-1}$.
    \end{proof}



\section{Evaluation maps and surfaces with Euler characteristic $-1$}

    In this section, we define and study the path-lifting properties of the evaluation maps, which will be used to prove the main results for surfaces with Euler characteristic $-1$ in this section and for the general surfaces later.

        \subsection{One-hole torus}
        
     The fundamental group of the one-hole torus  $\Sigma_{1,1}$  is a free group of rank two with a presentation 
        $\langle a,b,c\ |\ [a,b]\cdot c\rangle,$ where $c$ is the preferred peripheral element; and a $\psl$-representation $\phi$ of $\pi_1(\Sigma_{1,1})$ can be identified with 
        a pair $(\phi(a),\phi(b)) \in \psl\times \psl$ with  $c$ mapped  to 
        $[\phi(c_1),\phi(c_2)]^{-1}$.  
        The \emph{evaluation map} 
        $$\ev:\mathcal R(\Sigma_{1,1})\to\univcover$$
        is the restriction of the lifted commutator map $\widetilde{R}: \psl\times\psl\to \univcover$ to $\mathcal R(\Sigma_{1,1})$ recalled in Section \ref{Other} and defined by sending the pair $(g_1,g_2)$ 
        to the commutator $[\widetilde{g_1}, \widetilde{g_2}]$, where $\widetilde{g_1}$ and $\widetilde{g_2}$ are arbitrary lifts of $g_1$ and $g_2$  in $\univcover$. 

        \begin{theorem}\label{thm_torus}
           The space $\mathcal{R}(\Sigma_{1,1})$  has four connected components $\mathcal R_{-1} ^{-1} (\Sigma_{1,1}),$ $\mathcal R_{0} ^{+1} (\Sigma_{1,1}),$ $\mathcal R_{0} ^{-1} (\Sigma_{1,1})$ and $\mathcal R_{1} ^{+1} (\Sigma_{1,1}).$
        \end{theorem}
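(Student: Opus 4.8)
The plan is to identify $\mathcal{R}(\Sigma_{1,1})$ with a subspace of $\psl\times\psl$ via the identification $\phi\mapsto(\phi(a),\phi(b))$, under which the peripheral element $c$ is sent to $[\phi(a),\phi(b)]^{-1}$. The type-preserving condition says precisely that $[\phi(a),\phi(b)]$ is parabolic, i.e. the pair $(\phi(a),\phi(b))$ lies in $\widetilde{R}^{-1}(\Par_0\cup\Par_{-1}^+\cup\Par_1^-)$, where $\widetilde{R}$ is the lifted commutator map and we have used Theorem \ref{thm_liftcommu} to know that the only parabolic strata appearing in the image $\mathcal{I}$ of $\widetilde{R}$ are $\Par_0=\Par_0^+\cup\Par_0^-$, $\Par_{-1}^+$ and $\Par_1^-$. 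Thus $\ev$ takes values in the four disjoint connected sets $\Par_0^+$, $\Par_0^-$, $\Par_{-1}^+$ and $\Par_1^-$, and these four sets are exactly distinguished by the pair (relative Euler class, sign): if $\phi(c)=[\phi(a),\phi(b)]^{-1}$ and $\widetilde{\phi(c)}$ is the unique lift in $\overline{\Hyp_0}$, then $[\widetilde{\phi(a)},\widetilde{\phi(b)}]\widetilde{\phi(c)}=z^n$ with $n=e(\phi)$; one reads off that $\ev(\phi)\in\Par_0^-$ corresponds to $(n,s)=(0,+1)$, $\ev(\phi)\in\Par_0^+$ to $(0,-1)$, $\ev(\phi)\in\Par_{-1}^+$ to $(1,+1)$, and $\ev(\phi)\in\Par_1^-$ to $(-1,-1)$ (the sign being the negative of the sign of the unique $\overline{\Hyp_0}$-lift of $\phi(c)$, because $\phi(c)$ is the inverse of the commutator). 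This shows $\mathcal{R}(\Sigma_{1,1})$ is the disjoint union of the four sets $\mathcal{R}_{-1}^{-1}$, $\mathcal{R}_0^{+1}$, $\mathcal{R}_0^{-1}$, $\mathcal{R}_1^{+1}$, and that these are closed-and-open in $\mathcal{R}(\Sigma_{1,1})$ because relative Euler class and sign are locally constant.

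It remains to show each of the four pieces is nonempty and connected. Nonemptiness is a direct check: exhibit for each target parabolic stratum an explicit pair $(\phi(a),\phi(b))$ whose lifted commutator lands there (for instance small perturbations of a Fuchsian pair give $\Par_{\pm1}^\mp$, and reducible-type or near-reducible pairs give $\Par_0^\pm$); alternatively this already follows from the surjectivity of $\widetilde{R}$ onto $\mathcal{I}$ in Theorem \ref{thm_liftcommu}, since $\Par_0^\pm,\Par_{-1}^+,\Par_1^-\subset\mathcal{I}$. For connectedness, the idea is to apply the path-lifting machinery. Each piece, say $\mathcal{R}_n^s(\Sigma_{1,1})$, is $\ev^{-1}(\mathcal{P})$ where $\mathcal{P}$ is one of the connected parabolic strata $\Par_0^+,\Par_0^-,\Par_{-1}^+,\Par_1^-$. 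By Proposition \ref{prop_plpliftcommu}, the lifted commutator restricted to $\widetilde{R}^{-1}(\mathcal{I}\setminus\{\mathrm I\})\to\mathcal{I}\setminus\{\mathrm I\}$ satisfies the path-lifting property, and by Theorem \ref{thm_liftcommu} each fiber $\widetilde{R}^{-1}(C)$ is connected. Therefore for $C,C'$ in the same connected stratum $\mathcal{P}$, pick a path from $C$ to $C'$ in $\mathcal{P}$, lift it (after reparametrization) to a path in $\widetilde{R}^{-1}(\mathcal{P})$ starting at any prescribed point of $\widetilde{R}^{-1}(C)$, and then use connectedness of the fibers to move freely within a fiber; concatenating, any two points of $\widetilde{R}^{-1}(\mathcal{P})=\mathcal{R}_n^s(\Sigma_{1,1})$ are joined by a path. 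More cleanly, one can invoke Lemma \ref{lem_plp}: the restriction $\widetilde{R}:\widetilde{R}^{-1}(\mathcal{P})\to\mathcal{P}$ is a submersion (Lemma \ref{lem_evTsubm}, since $\mathcal{P}\subset\mathcal{I}\setminus\{\mathrm I\}$ forces the generators not to commute — a commuting pair would give commutator $\mathrm I$), has connected fibers, and $\mathcal{P}$ is path-connected, so the total space $\widetilde{R}^{-1}(\mathcal{P})$ is path-connected.

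The one subtlety, and the place where a little care is needed, is the passage between $\psl$-representations and pairs in $\psl\times\psl$: an element of $\mathcal{R}(\Sigma_{1,1})$ is literally a homomorphism $\pi_1(\Sigma_{1,1})\to\psl$, and the identification with $(\phi(a),\phi(b))$ must be compatible with the compact-open topology, which it is since $\pi_1$ is free of rank $2$. A second point to verify carefully is that the correspondence between the four strata and the four index pairs $(n,s)$ is exactly as claimed — in particular the sign flip coming from $c$ being the inverse of the commutator, and the parity bookkeeping relating the trace sign on $\overline{\Hyp_n}$ to $n$ — so that the labels $\mathcal{R}_{-1}^{-1},\mathcal{R}_0^{+1},\mathcal{R}_0^{-1},\mathcal{R}_1^{+1}$ in the statement are the correct ones and no other pair $(n,s)$ occurs. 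I expect the main (though still modest) obstacle to be assembling these identifications precisely rather than any hard analysis, since all the heavy lifting — the image computation and fiber-connectedness of $\widetilde{R}$, and the submersion property — is already supplied by Theorem \ref{thm_liftcommu}, Lemma \ref{lem_evTsubm}, and Proposition \ref{prop_plpliftcommu}.
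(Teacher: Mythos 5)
Your proposal is correct in substance and follows the same skeleton as the paper: identify $\mathcal R(\Sigma_{1,1})$ with pairs $(\phi(a),\phi(b))$, use Theorem \ref{thm_liftcommu} to see that $\ev$ lands in the four strata $\Par^+_{-1},\Par^-_0,\Par^+_0,\Par^-_1$, and prove each preimage is nonempty and connected. Where you genuinely differ is the connectedness step: you invoke Lemma \ref{lem_plp}, noting that $\widetilde{R}\colon\widetilde{R}^{-1}(\mathcal P)\to\mathcal P$ is a submersion (Lemma \ref{lem_evTsubm}, since the commutator is a nontrivial parabolic, so the generators cannot commute) with connected fibers (Theorem \ref{thm_liftcommu}) over a connected stratum $\mathcal P$. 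The paper avoids the submersion/path-lifting machinery here: since any two elements of a fixed $\Par^{\pm}_n$ are conjugate (parabolic strata carry no trace modulus), one conjugates $\phi$ along a path in $\psl$ until its evaluation equals $\ev(\psi)$ exactly, and then connects the two representations inside the connected fiber. Both arguments are valid; yours is essentially Theorem \ref{thm_PLP} in miniature and makes the later arguments look uniform, while the paper's is more elementary for this special case. Your nonemptiness via surjectivity of $\widetilde{R}$ onto $\mathcal I$ is also fine and replaces the paper's explicit matrices.

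The one concrete slip is the stratum-to-$(n,s)$ dictionary, which you flagged as needing care and then stated backwards for the odd strata. If $\ev(\phi)\in\Par^+_{-1}$, write $\ev(\phi)=z^{-1}Q$ with $Q\in\Par^+_0$; then $\phi(c)=[\phi(a),\phi(b)]^{-1}$ is a \emph{negative} parabolic, its unique lift in $\Par_0$ is $Q^{-1}\in\Par^-_0$, and $[\widetilde{\phi(a)},\widetilde{\phi(b)}]\,Q^{-1}=z^{-1}$, so $(n,s)=(-1,-1)$, not $(1,+1)$; symmetrically $\ev(\phi)\in\Par^-_1$ gives $(1,+1)$, not $(-1,-1)$. (Your assignments for $\Par^{\pm}_0$ are correct.) Because the two labels are merely swapped between the two strata, the set of four components claimed in the theorem is unaffected and your conclusion stands; but the correspondence as written contradicts, for instance, the analogous computation in Theorem \ref{thm_pants}, where $\ev(\phi)\in\Par^-_1$ forces $e(\phi)=1$ and positive peripheral sign, and it also mislabels which components are the discrete faithful ones versus the reducible ones, so it should be fixed before use.
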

        
        \begin{proof} By Theorem \ref{thm_liftcommu}, the image of the evaluation map  $\ev:\mathcal R(\Sigma_{1,1})\to \widetilde{\SL}$ lies in the union of the four pieces $\mathrm{Par}_{-1}^+,$ $\mathrm{Par}_0^-,$ $\mathrm{Par}_0^+$ and $\mathrm{Par}_{1}^-;$ and  the continuity of $\ev$ implies that each connected component of $\mathcal{R}(\Sigma_{1,1})$ is mapped into one of these pieces. We will prove that the pre-image of each of these four pieces is non-empty and connected, from which the result follows.
        
 To show that these spaces are non-empty, under the identification of the $\psl$-representations of $\pi_1(\Sigma_{1,1})$ and pairs of elements of $\psl,$ we have  
         $\ev\Bigg(\pm \begin{bmatrix}
        \frac{5}{3} & \frac{4}{3} \\
        \frac{4}{3} & \frac{5}{3}
        \end{bmatrix},
         \pm \begin{bmatrix}
        2 & 0 \\
        0 & \frac{1}{2}
        \end{bmatrix}
        \Bigg)\in \mathrm{Par}_{-1}^{+},$ 
       $\ev\Bigg(
        \pm \begin{bmatrix}
        2 & 0 \\
        0 & \frac{1}{2}
        \end{bmatrix},
        \pm \begin{bmatrix}
         1 & 1 \\
        0 & 1
        \end{bmatrix}
        \Bigg)\in \mathrm{Par}_{0}^{+},$ 
        $\ev\Bigg( \pm \begin{bmatrix}
        1 & 1 \\
        0 & 1
        \end{bmatrix},
        \pm \begin{bmatrix}
        2 & 0 \\
        0 & \frac{1}{2}
        \end{bmatrix}
        \Bigg) \in  \mathrm{Par}_{0}^{-}$,  and
         $\ev\Bigg(
        \pm \begin{bmatrix}
        2 & 0 \\
        0 & \frac{1}{2}
        \end{bmatrix}, 
        \pm \begin{bmatrix}
        \frac{5}{3} & \frac{4}{3} \\
        \frac{4}{3} & \frac{5}{3}
        \end{bmatrix}
        \Bigg)\in \mathrm{Par}_{1}^{-}.$

        Next we show that the pre-image under $\ev$ of each of these pieces $\mathrm{Par}_{-1}^+,$ $\mathrm{Par}_0^-,$ $\mathrm{Par}_0^+$ and $\mathrm{Par}_{1}^-$ is connected. For $\phi, \psi\in \mathcal{R}(\Sigma_{1,1})$ with $\ev(\phi)$ and $\ev(\psi)$ in the same piece, we will find a path in the pre-image  connecting $\phi$ and $\psi.$ 
        Since  $\ev(\phi)$ and $\ev(\psi)$ are lifts of parabolic elements that have the same sign, i.e., conjugate parabolic elements, 
       there is a $g\in \psl$ such that $[\psi(a), \psi(b)] = g[\phi(a), \phi(b)]g^{-1};$ and for any paths $\{g_t\}_{t\in[0,1]}$ in $\psl$ connecting $\pm \mathrm I$ and $g,$ the path $\{g_t\phi g_t^{-1}\}_{t\in[0,1]}$ connects $\phi$ and $g\phi g^{-1}$. Since for all $t\in[0,1],$ the commutator $g_t[\phi(a),\phi(b)]g_t^{-1}$ of $\phi_t$ is parabolic, the curve $\{\phi_t\}$ stays in the pre-image of one of the four pieces. 
        On the other hand, since the commutator  $[g\phi g^{-1}(a), g\phi g^{-1}(b)]=g[\phi(a),\phi(b)]g^{-1}=[\psi(a),\psi(b)]$ and the relative Euler class $e(g\phi g^{-1})=e(\phi)=e(\psi)$,  we have
        $\ev(g\phi g^{-1}) = \ev(\psi)\in\widetilde{\SL}.$ Hence $g\phi g^{-1}$ and $\psi$ are in the same fiber of the evaluation map $\ev,$ which by Theorem \ref{thm_liftcommu} is connected; and there is a path $\{\psi_t\}$ in the fiber connecting $g\phi g^{-1}$ and $\psi.$
        Finally, the path $\{\phi_t\}$ connecting $\phi$ and $g\phi g^{-1}$ followed by the path $\{\psi_t\}$ connecting $g \phi g^{-1}$ and $\psi$ connects $\phi$ and $\psi$ in the pre-image of one of the four pieces. This completes the proof.
        \end{proof}
        
        \begin{remark} We remark that the components $\mathcal R_{-1} ^{-1} (\Sigma_{1,1})$ and $\mathcal R_{1} ^{+1} (\Sigma_{1,1})$ consist of discrete faithful representations, and the components $\mathcal R_{0} ^{+1} (\Sigma_{1,1})$ and $\mathcal R_{0} ^{-1} (\Sigma_{1,1})$ consist of reducible representations. (cf. Proposition \ref{prop_char}(a).)
        \end{remark}
        
        \begin{remark} 
        Theorem \ref{thm_torus} implies that for $n\in \mathbb Z$ and $s\in\{\pm 1\},$ the space $\mathcal R_n^s(\Sigma_{1,1})$ is non-empty if and only if $(n,s)$ satisfies the generalized Milnor-Wood inequality 
        $$\chi(\Sigma_{1,1})+ p_+(s) \leqslant n \leqslant -\chi(\Sigma_{1,1}) - p_-(s).$$ 
        I.e., Theorem \ref{thm_main1} holds for the one-hole torus $\Sigma_{1,1}.$
        \end{remark}

    \subsection{Three-hole sphere}

    The fundamental group of the three-hole sphere $\Sigma_{0,3}$ is a free group of rank two with a presentation 
        $$\pi_1(\Sigma_{0,3})=\langle c_1,c_2,c_3\ |\  c_1c_2c_3 \rangle, $$ where $c_i$'s are the preferred peripheral elements; and a $\psl$-representation $\phi$ of $\pi_1(\Sigma_{0,3})$ can be identified with 
        a pair $\big(\phi(c_1),\phi(c_2)\big) \in \psl\times \psl$ with $c_3$ mapped to 
        $\big(\phi(c_1)\phi(c_2)\big)^{-1}$. In this subsection, we define  the evaluation maps $\ev: \mathcal R(\Sigma_{0,3})\to \widetilde{\SL} $
     and 
     $\ev: \mathrm{HP} (\Sigma_{0,3})\to \widetilde{\SL}$, and prove that they satisfy the path-lifting property, using which we will prove the main results for $\Sigma_{0,3}$ in Subsections \ref{subsection_HP} and \ref{subsection_TP}.

\subsubsection{The evaluation map and the path-lifting properties}\label{subsection_PLP}
        Let $\overline{\Hyp}=\Hyp\cup\Par\cup\{\pm\mathrm{I}\}\subset \psl,$ and for $n\in\mathbb Z$ let $\overline{\Hyp_n}=\Hyp_n\cup\Par_n\cup\{z^n\}\subset\univcover$.
        We consider the \emph{lifted product map}  
        $$\ev: \overline{\Hyp}\times \overline{\Hyp}\to \widetilde{\SL}$$
    defined by sending the pair $(g_1,g_2)$ to the product $\widetilde{g_1}
    \widetilde{g_2}$ of the unique lifts $\widetilde{g_1}$ and $
    \widetilde{g_2}$ of  $g_1$ and $g_2$  in $\overline{\Hyp_0}$.
    Recall that the product $\widetilde{g_1}\widetilde{g_2}$ in the universal cover $\widetilde{\SL}$ is defined as follows. For $i=1,2,$ let $\{g_{i,t}\}\interval$ be a path connecting $g_{i,0} = \pm\mathrm{I}$ and $g_{i,1} = g_i$ in the one-parameter subgroup of $\psl$ generated by $g_i,$ and let $\{\widetilde{g_{1,t}g_{2,t}}\}\interval$ be the lift of the path $\{g_{1,t}g_{2,t}\}\interval$ in $\psl$ to  $\widetilde{\SL}$ starting from $\mathrm I.$ Then  $\widetilde{g_1}\widetilde{g_2}$ is the endpoint $\widetilde{g_{1,1}g_{2,1}}$ of the lifted path.
    
    Let $\mathcal{R}(\Sigma_{0,3})$ be the space of type-preserving representations of $\pi_1(\Sigma_{0,3})$, and let $\mathrm{HP}(\Sigma_{0,3})$ be the space of $\psl-$representations of $\pi_1(\Sigma_{0,3})$ with the mixed boundary condition.
    Under the identification of $\psl$-representations of $\pi_1(\Sigma_{0,3})$ with pairs of elements of $\psl,$ both $\mathcal R(\Sigma_{0,3})$ and $\mathrm{HP}(\Sigma_{0,3})$ can be considered as a subspace of $\overline{\Hyp}\times \overline{\Hyp}.$ The \emph{evaluation maps} 
     $$\ev: \mathcal R(\Sigma_{0,3})\to \widetilde{\SL} $$
     and 
     $$\ev: \mathrm{HP} (\Sigma_{0,3})\to \widetilde{\SL}$$
     are respectively defined as the restrictions of the lifted product map.
    For a representation $\phi$ in $\HP(\Sigma_{0,3})$ with  $e(\phi)=n$, 
     let $\widetilde{\phi(c_i)},$  $i \in\{ 1,2,3\},$  be the unique lift of $\phi(c_i)$ in $\overline{\Hyp_0}$. Since $\ev(\phi)=\widetilde{\phi(c_1)}\widetilde{\phi(c_2)}= z^n\widetilde{\phi(c_3)}^{-1} \in \overline{\Hyp_n}$, 
     the image of $\phi$ under the map $\ev$ lies in $\overline{\Hyp_n}$.

        \begin{proposition}\label{prop_evimage}
            The image of the lifted product map $\ev: \overline{\Hyp}\times \overline{\Hyp}\to \univcover$ equals 

            $$\mathcal{I} = 
            \mathrm{Hyp}_{- 1}
            \cup 
            \mathrm{Par}_{-1}^{+}
            \cup
            \mathrm{Ell}_{-1}
            \cup
            \mathrm{Par}_0
            \cup 
            \{\mathrm I\}
            \cup 
            \mathrm{Hyp}_0
            \cup
            \mathrm{Ell}_{1}
            \cup 
            \mathrm{Par}_{1}^{-}
            \cup
            \mathrm{Hyp}_{1}.
            $$
        Moreover, we  have:
        \begin{enumerate}[(1)]
            \item $\ev(\Par\times \Par) = \mathcal{I}$,
            and 
            $\ev\big(\R\big)= \mathrm{Par}_{-1}^+\cup \mathrm{Par}_0^-\cup \mathrm{Par}_0^+\cup \mathrm{Par}_{1}^-.$
            
            \item $\Hyp_0\cup \Hyp_{s}\subset \ev(\Par^{sgn(s)}\times \Hyp)$ for $s\in\{\pm 1\}$.
            
            \item $\Hyp_{-1}\cup  \Hyp_0\cup\Hyp_1\cup \{\mathrm I\}\subset \ev(\Hyp\times \Hyp)$, and $  \ev\big(\mathcal{W}_n(\Sigma_{0,3})\big)= \Hyp_n$ for $n\in \{-1,0,1\}$.

        \end{enumerate}
        \end{proposition}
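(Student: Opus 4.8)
The strategy is to explicitly compute the lifted product $\ev(g_1,g_2) = \widetilde{g_1}\widetilde{g_2}$ by choosing convenient representatives of the conjugacy classes in $\overline{\Hyp}$ and tracking the lift through a one-parameter family. First I would record the key ``winding'' behavior of the lift: writing $g_1 = \pm A$ and $g_2 = \pm B$ with $A, B$ the lifts in $\overline{\Hyp_0}$, the product $\widetilde{g_1}\widetilde{g_2}$ lies in one of $\overline{\Hyp_{-1}}$, $\overline{\Hyp_0}$, or $\overline{\Hyp_1}$, according to how the geodesics/horocycles fixed by $g_1$ and $g_2$ are nested on $\partial\mathbb{H}^2$; this is the $\SL$-analogue of the computation of the Euler number of a pair of pants and is exactly what makes $\ev$ land in $\mathcal I$. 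Concretely, since $\overline{\Hyp_0} = \widetilde{\exp}(\sl)\cap(\text{non-elliptic part})$ is the image of the exponential map restricted to the hyperbolic and parabolic directions, the product of two such elements can only pick up a central factor $z^{\pm 1}$, never $z^{\pm 2}$; this gives the containment $\ev(\overline{\Hyp}\times\overline{\Hyp})\subseteq \overline{\Hyp_{-1}}\cup\overline{\Hyp_0}\cup\overline{\Hyp_1}$, and then one checks that the elliptic part actually attained is only $\Ell_{\pm 1}$ and the parabolic part only $\Par_0\cup\Par^+_{-1}\cup\Par^-_1$, yielding the claimed image $\mathcal I$. I expect the cleanest way to organize this is to first treat $\ev(\Hyp\times\Hyp)$, which surjects onto $\Hyp_{-1}\cup\Hyp_0\cup\Hyp_1\cup\{\mathrm I\}$ because two hyperbolic elements with variable axes and translation lengths realize a full range of traces and windings, and then obtain the parabolic and elliptic parts by taking limits of hyperbolic configurations (degenerating an axis to a horocycle, or letting two axes cross).

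For part (1): the inclusion $\ev(\Par\times\Par)\subseteq\mathcal I$ is immediate from the first paragraph, and for the reverse inclusion I would exhibit, for each piece of $\mathcal I$, an explicit pair of parabolics mapping into it — e.g. conjugates of $\pm\parpp$ and $\pm\parmp$ with varying conjugating elements, so that the product ranges over all traces in $(-\infty,-2]\cup[2,\infty)$ inside the appropriate component, plus the elliptic windings from suitably ``linked'' parabolic fixed points (this mirrors the sample computations already given in the proof of Theorem \ref{thm_torus}). For the statement $\ev(\R)=\Par^+_{-1}\cup\Par^-_0\cup\Par^+_0\cup\Par^-_1$: a representation $\phi\in\R(\Sigma_{0,3})$ has all three $\phi(c_i)$ parabolic, so $\ev(\phi)=\widetilde{\phi(c_1)}\widetilde{\phi(c_2)}=z^n\widetilde{\phi(c_3)}^{-1}$ is a parabolic element of $\overline{\Hyp_n}$; by the computation that $\Par\cap\overline{\Hyp_n}=\Par_n$ and the constraint $\ev\in\mathcal I$ forcing $n\in\{-1,0,1\}$, together with the observation that $\Par_{-1}\cap\mathcal I=\Par^+_{-1}$ and $\Par_1\cap\mathcal I=\Par^-_1$ while both signs occur in $\Par_0$, we get exactly the four listed pieces; conversely each is realized by the explicit pants-group examples.

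For parts (2) and (3): these are ``surjectivity onto the prescribed windings'' statements, proved by producing explicit families. For (2), fix a positive parabolic $p = \pm\parpp$ (resp. negative $\pm\parmp$) and a one-parameter family of hyperbolics $h_\lambda$ whose axis is disjoint from the fixed point of $p$; then $\widetilde{p}\,\widetilde{h_\lambda}$ traces out all of $\Hyp_0$ as $\lambda$ ranges over small values and sweeps into $\Hyp_{\pm 1}$ as the axis of $h_\lambda$ is rotated to ``wrap around'' the parabolic fixed point, giving $\Hyp_0\cup\Hyp_{\mathrm{sgn}(s)}\subseteq\ev(\Par^{\mathrm{sgn}(s)}\times\Hyp)$. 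For (3), the containment $\Hyp_{-1}\cup\Hyp_0\cup\Hyp_1\cup\{\mathrm I\}\subseteq\ev(\Hyp\times\Hyp)$ follows from choosing $A=\xdiag$ and $B$ a conjugate $gAg^{-1}$ (so $AB$ has trace depending continuously on $g$), together with the winding discussion; and the equality $\ev(\mathcal W_n(\Sigma_{0,3}))=\Hyp_n$ for $n\in\{-1,0,1\}$ then follows because $\mathcal W_n(\Sigma_{0,3})$ requires all three $\phi(c_i)$ hyperbolic, forcing $\ev(\phi)=z^n\widetilde{\phi(c_3)}^{-1}\in\Hyp_n$, while the reverse containment is the realization just given. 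The main obstacle is the careful bookkeeping of the central winding in the lift — i.e.\ proving rigorously that the product of two elements of $\overline{\Hyp_0}$ lands in $\overline{\Hyp_{-1}}\cup\overline{\Hyp_0}\cup\overline{\Hyp_1}$ and identifying, in each geometric configuration of fixed points, precisely which of the three occurs and with which sign — since everything else reduces to exhibiting explicit matrices; this bookkeeping is most safely done by lifting the path $\{g_{1,t}g_{2,t}\}$ used in the definition of the product and reading off the winding number, or alternatively by invoking the translation-number / rotation-number cocycle on $\widetilde{\SL}$ and its additivity.
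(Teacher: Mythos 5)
Your overall architecture does match the paper's: the containment of the image in $\mathcal I$ is to be read off from the lift of the path $\{g_{1,t}g_{2,t}\}$ defining the product, and surjectivity onto each piece in (1)--(3) comes from explicit one-parameter matrix families together with conjugation-equivariance of $\ev$ (one point per conjugacy class suffices); the statements about $\ev\big(\R\big)$ and $\ev\big(\mathcal W_n(\Sigma_{0,3})\big)$ then follow as you indicate. The families you sketch are of the same kind as the paper's, and that half of your plan is fine.

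The gap is in the step you yourself flag as the main obstacle, and the tools you propose for it would fail. Excluding $\Par_1^+$, $\Par_{-1}^-$ and $z^{\pm 1}$ from the image is not a winding-number statement: elements of $\Par_1^+$, $\Par_1^-$ and $\Hyp_1$ all carry the same translation/rotation number, so neither ``reading off the winding number'' nor the quasimorphism additivity of the translation number on $\univcover$ can separate them; relatedly, your preliminary claim that the product of two elements of $\overline{\Hyp_0}$ lies in $\overline{\Hyp_{-1}}\cup\overline{\Hyp_0}\cup\overline{\Hyp_1}$ is both too coarse (it would still allow $\Par_1^+$, $\Par_{-1}^-$, $z^{\pm1}$) and literally false (the product can be elliptic). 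What is actually needed — and is the content of Lemma \ref{lem_evimage} in the paper — is a sign-consistency argument: in each of the cases $\Hyp\times\Hyp$, $\Par\times\Hyp$, $\Hyp\times\Par$, $\Par\times\Par$ one computes the projection $A_t$ of $\widetilde{g_{1,t}g_{2,t}}$ and observes that the relevant off-diagonal entry has a fixed sign for $t>0$, so by Lemma \ref{lem_offdiag} every crossing of $\Par_{\pm1}$ along the lifted path has one and the same sign, and the path never passes $z^{\pm1}$; since $\Par_1^{-}\cup\{z\}$ (resp.\ $\Par_{-1}^{+}\cup\{z^{-1}\}$) separates $\univcover$ with $\mathrm I$ on one side and $\Par_1^{+}$ (resp.\ $\Par_{-1}^{-}$) on the other, an endpoint in $\Par_1^{+}$ or $\Par_{-1}^{-}$ would force an earlier crossing of the opposite sign, a contradiction. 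Only after this disjointness does connectedness of $\overline{\Hyp}\times\overline{\Hyp}$ give $\ev\big(\overline{\Hyp}\times\overline{\Hyp}\big)\subset\mathcal I$ (as the component of the complement containing $\mathrm I$) — a step your outline also leaves implicit. Without some version of this signed analysis the image is not established, and then the forward inclusions in (1), in particular $\ev\big(\R\big)\subset \Par_{-1}^+\cup\Par_0^-\cup\Par_0^+\cup\Par_1^-$, are unsupported; your alternative suggestion of reaching the parabolic and elliptic parts ``by limits of hyperbolic configurations'' also cannot substitute for it in (1), where the inputs themselves must be parabolic.
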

        
        See Figure \ref{fig: evimage}. To prove Proposition \ref{prop_evimage}, we need the following lemma.
        
        \begin{lemma}\label{lem_evimage}
            The subsets  $\ev\Big(\overline{\Hyp}\times \overline{\Hyp}\Big)$ and  $ \Par_1^+\cup \Par_{-1}^-\cup\{z^{\pm 1}\}$ of $\univcover$ are disjoint. 
        \end{lemma}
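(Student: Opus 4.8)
I would prove the lemma through the action of $\univcover$ on the circle. Recall that $\psl$ acts on $\partial\mathbb{H}^2\cong\mathbb{RP}^1$ by orientation preserving homeomorphisms, so $\univcover$ acts on the universal cover $\mathbb{R}\to\mathbb{RP}^1=\mathbb{R}/\pi\mathbb{Z}$; normalize the coordinate on $\mathbb{R}$ so that the central generator $z=\widetilde{\exp}\left[\begin{smallmatrix}0&\pi\\-\pi&0\end{smallmatrix}\right]$ fixed in Section~\ref{relE} acts as the translation $x\mapsto x-\pi$ (this sign is a one-line computation with the one-parameter rotation subgroup). The starting point is the identification of $\overline{\Hyp_0}$ with the set of elements of $\univcover$ having a fixed point in $\mathbb{R}$: an element with a real fixed point projects to a M\"obius transformation fixing a point of $\mathbb{RP}^1$, hence hyperbolic, parabolic or trivial, and among the $\mathbb{Z}$-family of lifts of such an element only the one in $\overline{\Hyp_0}$ can have a fixed point, since the others move every point by a nonzero multiple of $\pi$; conversely any $\widetilde g\in\overline{\Hyp_0}\subset\widetilde{\exp}(\sl)$ lies on a one-parameter subgroup conjugate to a hyperbolic, parabolic or trivial flow, which fixes a point of $\mathbb{RP}^1$, and lifting the flow from the identity shows $\widetilde g$ fixes a lift of that point.

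Then I would record two displacement estimates. First, if $\widetilde g\in\overline{\Hyp_0}$ then $|\widetilde g(x)-x|<\pi$ for all $x\in\mathbb{R}$: commuting with $z$, the element $\widetilde g$ has a fixed point in every interval of length $\pi$, so consecutive fixed points of $\widetilde g$ are at distance at most $\pi$; being increasing, $\widetilde g$ preserves each open arc between consecutive fixed points, and the displacement $x\mapsto\widetilde g(x)-x$ -- continuous, $\pi$-periodic, and vanishing at the fixed points -- is therefore strictly less than $\pi$ in absolute value. Second, if $\widetilde h\in\Par_1^+\cup\{z\}$ then $\widetilde h(x)\le x-\pi$ for all $x$, and if $\widetilde h\in\Par_{-1}^-\cup\{z^{-1}\}$ then $\widetilde h(x)\ge x+\pi$ for all $x$: for $z^{\pm1}$ this is the defining translation, while for a parabolic element one writes it as $z$ (resp.\ $z^{-1}$) times a lift $\widetilde h_0\in\Par_0^+$ (resp.\ $\Par_0^-$) and uses that such an $\widetilde h_0$ pushes every non-fixed point of $\mathbb{R}$ in one fixed direction, i.e.\ satisfies $\widetilde h_0(x)\le x$ for all $x$ in the $\Par_0^+$ case (and $\widetilde h_0(x)\ge x$ in the $\Par_0^-$ case) -- a $\univcover$-conjugation invariant condition that one verifies on the standard generator $\pm\parpp$ -- so that composing with the $z^{\pm1}$-translation yields the claimed bounds.

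With these in hand the conclusion is immediate. Let $g_1,g_2\in\overline{\Hyp}$, with lifts $\widetilde g_1,\widetilde g_2\in\overline{\Hyp_0}$, and suppose $\ev(g_1,g_2)=\widetilde g_1\widetilde g_2\in\Par_1^+\cup\{z\}$. The second estimate gives $\widetilde g_1(\widetilde g_2(x))\le x-\pi$ for all $x$, while the first estimate applied to $\widetilde g_1$ gives $\widetilde g_1(y)>y-\pi$ for all $y$; taking $y=\widetilde g_2(x)$ and combining yields $\widetilde g_2(x)<x$ for every $x$, which contradicts the fact that $\widetilde g_2$ has a fixed point. The case $\widetilde g_1\widetilde g_2\in\Par_{-1}^-\cup\{z^{-1}\}$ is the mirror image, forcing $\widetilde g_2(x)>x$ for every $x$, the same contradiction. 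Hence $\ev\big(\overline{\Hyp}\times\overline{\Hyp}\big)$ is disjoint from $\Par_1^+\cup\Par_{-1}^-\cup\{z^{\pm1}\}$.

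The only delicate point -- and the one I expect to need the most care -- is fixing the two orientation conventions consistently: the direction in which $z$ translates $\mathbb{R}$ and the direction in which a positive parabolic moves $\partial\mathbb{H}^2$. Each is a short computation with explicit one-parameter subgroups, but only their combination determines which of $\Par_1^{\pm}$ (and of $\Par_{-1}^{\pm}$) is the one ruled out by the displacement estimates. As a consistency check, the central elements $z^{\pm1}$ can also be excluded purely algebraically: $\widetilde g_1\widetilde g_2=z^{\pm1}$ would force $g_2=g_1^{-1}$ in $\psl$, hence $\widetilde g_2=\widetilde g_1^{-1}$ because $\overline{\Hyp_0}$ is closed under inversion, so $\widetilde g_1\widetilde g_2=\mathrm{I}\neq z^{\pm1}$.
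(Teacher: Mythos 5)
Your proof is correct, and it takes a genuinely different route from the paper. The paper argues by brute force: it splits into the four cases $\Hyp\times\Hyp$, $\Par\times\Hyp$, $\Hyp\times\Par$, $\Par\times\Par$, computes the $\SL$-projection of the path $\widetilde{g_{1,t}g_{2,t}}$ explicitly in each case to rule out $z^{\pm 1}$, and then uses the off-diagonal sign criterion (Lemma \ref{lem_offdiag}) plus a separation argument ($\Par_1^-\cup\{z\}$ disconnects $\mathrm I$ from $\Par_1^+$) to show the path would have to cross a parabolic stratum with the wrong sign. You instead work with the action of $\univcover$ on the line: elements of $\overline{\Hyp_0}$ have fixed points and hence displacement strictly less than $\pi$ in absolute value, while every element of $\Par_1^+\cup\{z\}$ (resp.\ $\Par_{-1}^-\cup\{z^{-1}\}$) displaces every point by at least $\pi$ in the fixed direction, and these two facts are immediately incompatible for a product of two elements of $\overline{\Hyp_0}$. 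Your three lemmas (fixed-point characterization of $\overline{\Hyp_0}$, the $\pi$-periodicity displacement bound, and the conjugation-invariant one-sided displacement of $\Par_0^{\pm}$ checked on $\pm\parpp$) are all sound, and you correctly identify the only delicate point, namely pinning down the two orientation conventions consistently — with $z$ acting by $x\mapsto x-\pi$ one does check that $\pm\parpp$ weakly decreases the angle coordinate, so the signs come out as you claim. What each approach buys: your argument is shorter, uniform (no casework), and sits in the translation-number framework that also underlies the Milnor–Wood-type bounds elsewhere in this subject; the paper's computational route, on the other hand, extracts along the way the precise sign of the parabolic stratum that the path $\widetilde{g_{1,t}g_{2,t}}$ crosses, information that is reused verbatim in the proofs of Proposition \ref{prop_evimage} and Theorem \ref{thm_pants2}, which is presumably why the authors set it up this way. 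One small blemish: your justification that only the lift in $\overline{\Hyp_0}$ has a fixed point ("the others move every point by a nonzero multiple of $\pi$") is phrased incorrectly — the other lifts move points by $(\widetilde g_0(x)-x)-n\pi$, which is merely bounded away from $0$ by your displacement estimate, not a multiple of $\pi$ — but that converse is never used in your argument, so nothing breaks.
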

        \begin{proof}
            If 
            $\ev(g_1, g_2) = \widetilde{g_1}\widetilde{g_2}= z^n$ for some $g_1, g_2\in \overline{\Hyp}$ and $n\in \mathbb{Z}$, 
            then 
            $g_1g_2 =\pm  \mathrm{I}$, and hence $g_2 = g_1^{-1}$. As the inverse $\widetilde{g_1}^{-1}$ of $\widetilde{g_1}$ is the unique lift of $g_1^{-1}$ to $\overline{\Hyp_0}$, we have $\widetilde{g_1}\widetilde{g_2} 
            =\widetilde{g_1}\widetilde{g_1}^{-1}= \mathrm I$ in $\univcover$, hence $n = 0$. This implies that $\ev\Big(\overline{\Hyp}\times \overline{\Hyp}\Big)$ and $\{z^{\pm 1}\}$ are disjoint.
            
            Suppose that $\ev(g_1, g_2) \in \Par_1^+\cup \Par_{-1}^-$ for some $g_1, g_2\in \overline{\Hyp}$. 
            Then $g_1\neq\pm\mathrm I$ and $g_2\neq\pm\mathrm I$, 
            as otherwise $\ev(g_1,g_2) \in \overline{\Hyp_0}$. It follows that the pair $(g_1,g_2)$ lies in one of the following sets: $\Hyp\times \Hyp$, 
            $\Hyp\times \Par$, 
            $\Par\times \Hyp$, 
            or $\Par\times \Par$. We first derive a contradiction for the case $(g_1,g_2)\in \Hyp\times \Hyp$.

            For $i=1,2,$ let $\{g_{i,t}\}\interval$ be a path connecting $\pm\mathrm{I}$ and $g_i$ in the one-parameter subgroup of $\psl$ generated by $g_i,$ and let $\{\widetilde{g_{1,t}g_{2,t}}\}\interval$ be the lift of $\{g_{1,t}g_{2,t}\}_{t\in [0,1]}$ in $\widetilde{\SL}$ starting from $\mathrm{I}$ and ending at $\widetilde{g_1}\widetilde{g_2}$. Up to a $\psl-$conjugation, we can assume that $g_1=\pm \begin{bmatrix}
        e^\lambda & 0 \\
        0 & e^{-\lambda}
        \end{bmatrix}$ for some $\lambda>0$; and by a re-parametrization if necessary, we can assume that 
        $$g_{1,t}=\pm \begin{bmatrix}
        e^{\lambda t} & 0 \\
        0 & e^{-\lambda t}
        \end{bmatrix}$$
        for all $t\in [0,1].$ Similarly, we can assume that 
        $$g_{2,t}=\pm \abcd\begin{bmatrix}
        e^{\eta t} & 0 \\
        0 & e^{-\eta t}
        \end{bmatrix}\abcd^{-1} 
       $$
        for some $\eta>0$ and $\pm\abcd\in\psl.$ Then the projection $\{A_t\}\interval$ of the path $\{\widetilde{g_{1,t}g_{2,t}}\}\interval$  in $\univcover$ to $\SL$ equals 
          $$ A_t= 
        \begin{bmatrix}
               e^{\lambda t} (ad e^{\eta t} - bc e^{-\eta t})
                & abe^{\lambda t}(e^{-\eta t} - e^{\eta t})  \\
                cde^{-\lambda t}(e^{\eta t} - e^{-\eta t})
                &  e^{-\lambda t} (ad e^{-\eta t} - bc e^{\eta t})
            \end{bmatrix}.$$
            Since $\widetilde{g_{1,1}g_{2,1}} =\widetilde{g_1}\widetilde{g_2} \notin \overline{\Hyp_0}$, the paths $\{g_{1,t}\}\interval$ and $\{g_{2,t}\}\interval$ are not contained in a common one-parameter subgroup of $\psl$, as otherwise the path $\{\widetilde{g_{1,t}g_{2,t}}\}\interval$ is entirely contained in $\overline{\Hyp_0}$. As a consequence, for all $t\in (0,1]$, the lift of $g_{2,t}$ to $\SL$ is not a diagonal matrix. Hence either $ab\neq 0$ or $cd\neq 0$, which implies that $\widetilde{g_{1,t}g_{2,t}}\neq z^{\pm 1}$ for all $t\in [0,1]$. Moreover, for any $t$ such that $\widetilde{g_{1,t}g_{2,t}}\in \Par_{\pm 1}$, Lemma \ref{lem_offdiag} implies that the sign of $\widetilde{g_{1,t}g_{2,t}}
            $ equals $- sgn\big(abe^{\lambda t}(e^{-\eta t} - e^{\eta t})\big)
            = sgn(ab)$ when $ab\neq 0$; and equal $sgn\big(cde^{-\lambda t}(e^{\eta t} - e^{-\eta t})\big)
            = sgn(cd)$ when $cd\neq 0$. In particular, all $\widetilde{g_{1,t}g_{2,t}}$, $t\in (0, 1],$ have the same sign. 
            
            Notice that $\mathrm{Par}^{-}_{1}\cup \{z\}$ devides $\univcover$ into two connected components with $\mathrm{I}$ lying in one and $\Par^+_1$  in the other.
            Therefore, if $\widetilde{g_1}\widetilde{g_2} = \widetilde{g_{1,1}g_{2,1}}\in \Par^+_1$, then
           the path $\big\{\widetilde{g_{1,t}g_{2,t}}\big\}$ connecting $\mathrm{I}$ and $\widetilde{g_{1,1}g_{2,1}}\in \mathrm{Par}^+_1$ must intersect
    $\mathrm{Par}^{-}_{1}\cup \{z\}$ at some time $t_0\in (0,1)$; see Figure \ref{fig: 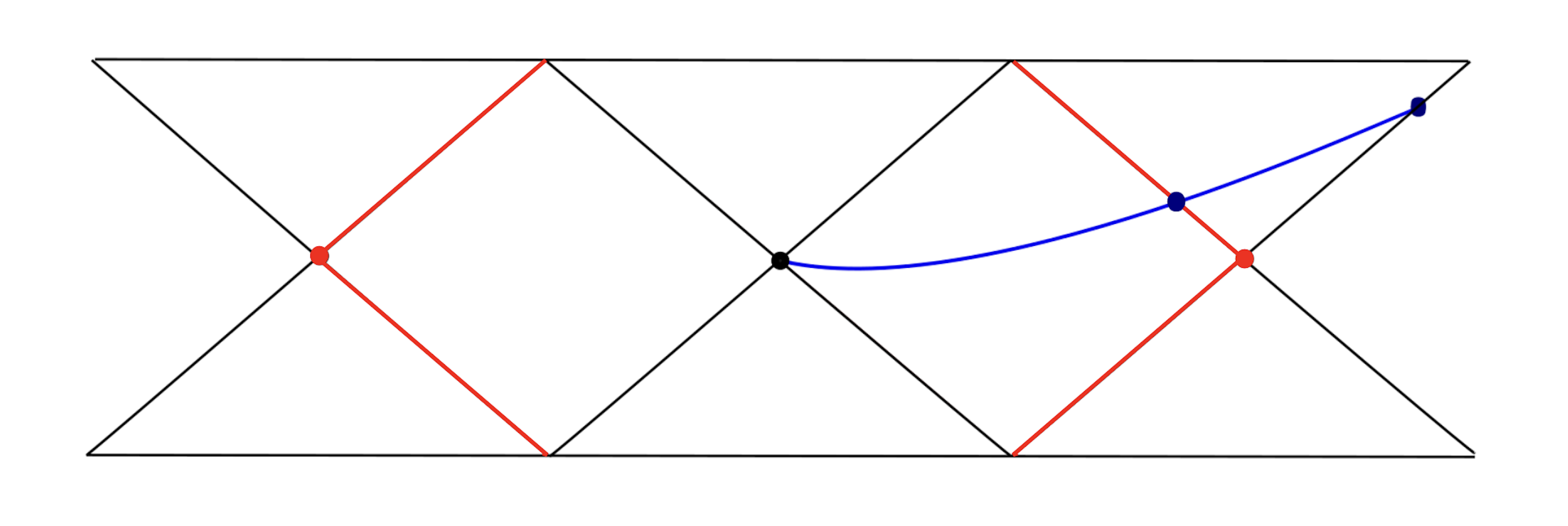}.
     Since $\widetilde{g_{1,t_0}g_{2,t_0}}\neq z$,
     $\widetilde{g_{1,t_0}g_{2,t_0}}\in \Par^-_1$, which implies that $\widetilde{g_{1,t_0}g_{2,t_0}}$ and $\widetilde{g_{1,1}g_{2,1}}$ are parabolic elements of $\univcover$ of opposite signs. However, as shown above, $\widetilde{g_{1,t_0}g_{2,t_0}}$ and $\widetilde{g_{1,1}g_{2,1}}$ have the sign, which is a contradiction. 
     Similarly, $\mathrm{Par}^{+}_{-1}\cup \{z^{-1}\}$ devides $\univcover$ into two connected components with $\mathrm{I}$ lying in one  and $\Par^-_{-1}$  in the other; and 
    if $\widetilde{g_{1,1}g_{2,1}}\in \Par^-_{-1}$, then
     there exists a $t_0\in (0,1)$ such that $\widetilde{g_{1,t_0}g_{2,t_0}}\in \Par^+_{-1}$, and one can derive a contradiction in the same way.  This concludes that there is no $(g_1, g_2)\in \Hyp\times \Hyp$ such that $\ev(g_1, g_2)\in \Par^+_1\cup \Par^-_{-1}$.
     \begin{figure}[hbt!]
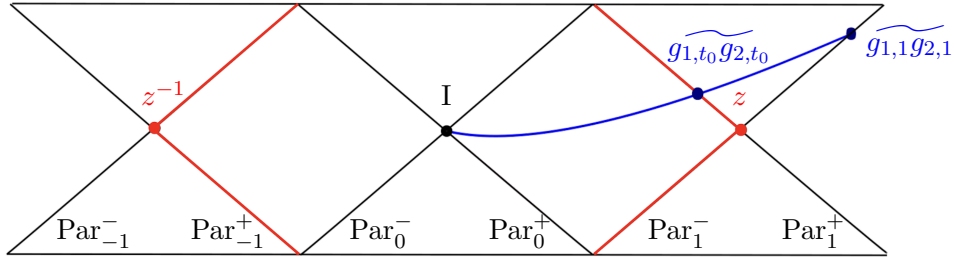

        \centering
        \begin{overpic}[width=0.8\textwidth]{Lemma_3.2_colored2.png}

            \put(19.0,19.5){\textcolor{red}{$z^{-1}$}}

            \put(49.2,19.5){\textcolor{black}{$\mathrm{I}$}}

            \put(78.5,19.5){\textcolor{red}{$z$}}

            \put(72,24.5){\textcolor{blue}{$\widetilde{g_{1,t_0}g_{2,t_0}}$}}
             \put(92,25){\textcolor{blue}{$\widetilde{g_{1,1}g_{2,1}}$}}


            

            \put(10.5,6){\textcolor{black}{$\Par^-_{-1}$}}

            \put(24,6){\textcolor{black}{$\Par^+_{-1}$}}

            \put(40,6){\textcolor{black}{$\Par^-_0$}}

            \put(54,6){\textcolor{black}{$\Par^+_0$}}

            \put(70,6){\textcolor{black}{$\Par^-_1$}}

            \put(83.5,6){\textcolor{black}{$\Par^+_1$}}

        


        \end{overpic}
        \caption{\label{fig: Lemma_3.2_colored2.png} The path $\widetilde{g_{1,t}g_{2,t}}$ with endpoint lying in $\Par^+_1$.}
    \end{figure}
     
    For the other cases, we compute the path $\big\{\widetilde{g_{1,t}g_{2,t}}\big\}_{t\in[0,1]}$ as follows:
    \\
   If $(g_1,g_2) \in \Par\times \Hyp$, then we can assume for all $t\in [0,1]$ that 
        $$g_{1,t}=\pm \abcd\begin{bmatrix}
        1 & s_1 t \\
        0 & 1
        \end{bmatrix}\abcd^{-1}\quad\text{and}\quad g_{2,t}=\pm \begin{bmatrix}
        e^{\lambda t} & 0 \\
        0 & e^{-\lambda t}
        \end{bmatrix}$$
        for some $s_1\in \{\pm 1\}$, $\lambda >0$, and $\pm\abcd\in\psl.$  Then the projection $A_t$ of $\widetilde{g_{1,t}g_{2,t}}$ to $\SL$ equals
          $$ A_t= 
        \begin{bmatrix}
               (1-s_1act)e^{\lambda t}
                & s_1a^2te^{-\lambda t}  \\
                -s_1c^2te^{\lambda t} 
                &  (1+s_1act)e^{-\lambda t}
            \end{bmatrix}.$$
        Since  either $a\neq 0$ or $c\neq 0$ as $\pm \abcd\in \psl$, $A_t\neq \pm \mathrm{I}$ and hence $\widetilde{g_{1,t}g_{2,t}}\neq z^{\pm 1}$ for all $t\in [0,1]$.
        \\
        
        If  $(g_1,g_2) \in \Hyp\times \Par$, then we can assume for all $t\in [0,1]$ that 
        $$g_{1,t}=\pm  \begin{bmatrix}
        e^{\lambda t} & 0 \\
        0 & e^{-\lambda t}
        \end{bmatrix}\quad\text{and}\quad g_{2,t}=\pm \abcd\begin{bmatrix}
        1 & s_2 t \\
        0 & 1
        \end{bmatrix}\abcd^{-1}$$
        for some $s_2\in \{\pm 1\}$, $\lambda >0$, and $\pm\abcd\in\psl.$  Then the projection $A_t $ of $\widetilde{g_{1,t}g_{2,t}}$ to $\SL$ equals
          $$ A_t= 
        \begin{bmatrix}
               (1-s_2act)e^{\lambda t}
                & s_2a^2te^{\lambda t}  \\
                -s_2c^2te^{-\lambda t} 
                &  (1+s_2act)e^{-\lambda t}
            \end{bmatrix}.$$
        Again, since either $a\neq 0$ or $c\neq 0$, $A_t\neq \pm \mathrm{I}$ and hence $\widetilde{g_{1,t}g_{2,t}}\neq z^{\pm 1}$ for all $t\in [0,1]$.
        \\
        
        If  $(g_1,g_2) \in \Par\times \Par$, then we can assume for all $t\in [0,1]$ that 
        $$g_{1,t}=\pm \begin{bmatrix}
        1 & s_1 t \\
        0 & 1
        \end{bmatrix}\quad\text{and}\quad g_{2,t}=\pm \abcd\begin{bmatrix}
        1 & s_2 t \\
        0 & 1
        \end{bmatrix}\abcd^{-1}$$
        for some $s_1, s_2\in \{\pm 1\}$ and $\pm\abcd\in\psl.$  Then the projection $A_t$ of  $\widetilde{g_{1,t}g_{2,t}}$ to $\SL$ equals
          $$ A_t= 
        \begin{bmatrix}
                1 - s_2act - s_1s_2c^2t^2
                & (s_1  + s_2 a^2)t + s_1s_2ac t^2  \\
                -s_2c^2t 
                & 1 + s_2act
            \end{bmatrix}.$$
            Since $\widetilde{g_{1,1}g_{2,1}}=\widetilde{g_1}\widetilde{g_2}  \not\in \overline{\Hyp_0}$, the paths $\{g_{1,t}\}\interval$ and $\{g_{2,t}\}\interval$ are not contained in a common one-parameter subgroup of $\psl$.
            This implies $c\neq 0$, and hence $-s_2c^2t\neq 0$ for all $t>0$. Therefore, $A_t\neq\pm\mathrm I$ and hence $\widetilde{g_{1,t}g_{2,t}}\neq z^{\pm 1}$ for all $t\in (0,1]$.

            In all the cases above, if $\widetilde{g_1}\widetilde{g_2} = \widetilde{g_{1,1}g_{2,1}}\in \Par^-_{-1}\cup \Par^+_1,$ by the same argument as in the $\Hyp\times\Hyp$ case, 
            there exists a $t_0\in (0,1)$ such that
            $\widetilde{g_{1,t_0}g_{2,t_0}}\in \Par_{-1}^+\cup\Par_{1}^-,$ and $\widetilde{g_{1,t_0}g_{2,t_0}}$ and  $\widetilde{g_{1,1}g_{2,1}}$ are elements in $\Par_{-1}$ or $\Par_1$ of the opposite signs.
            On the other hand, Lemma \ref{lem_offdiag} implies that $\widetilde{g_{1,t_0}g_{2,t_0}}$ and $\widetilde{g_{1,1}g_{1,1}}$ have the same sign, which is a contradiction. This concludes that 
    $\ev\Big(\overline{\Hyp}\times \overline{\Hyp}\Big)$ and $\Par_1^+\cup \Par_{-1}^-$ are disjoint.
        \end{proof}
        \begin{proof}[Proof of Proposition \ref{prop_evimage}]

We first prove that $\ev\Big(\overline{\Hyp}\times \overline{\Hyp}\Big)\subset \mathcal{I}$. 
            By Lemma \ref{lem_evimage} and the connectedness of $\overline{\Hyp}\times\overline{\Hyp},$ the image $\ev\Big(\overline{\Hyp}\times \overline{\Hyp}\Big)$ is contained in the connected component of the complement of $\Par_1^+\cup \Par_{-1}^-\cup \{z^{\pm 1}\}$ in $\univcover$ that contains $\mathrm{I},$ which is $\mathcal I.$ The other direction that $\mathcal I\subset \ev\Big(\overline{\Hyp}\times\overline{\Hyp}\Big)$ follows from (1) below. 
            \\

    For (1),
             we consider $$g_1=\pm \begin{bmatrix}
        1 & s_1 \\
        0 & 1
        \end{bmatrix}\quad\text{and}\quad g_2 = 
        \pm 
        \begin{bmatrix}
        1 & 0\\
        \lambda & 1
        \end{bmatrix}
        \begin{bmatrix}
        1 & s_2 \\
        0 & 1
        \end{bmatrix}
        \begin{bmatrix}
        1 & 0\\
        \lambda  & 1
        \end{bmatrix}^{-1}$$ for $s_1,s_2\in \{\pm 1\}$ and 
        $\lambda \geqslant 0.$ 
         Then $\ev(g_1,g_2) = \widetilde{g_1}\widetilde{g_2}$ projects to the matrix 
         $$A_\lambda = \begin{bmatrix}
                1 - s_2\lambda - s_1s_2\lambda^2
                & s_1  + s_2 + s_1s_2\lambda  \\
                -s_2\lambda^2 
                & 1 + s_2\lambda
            \end{bmatrix}
           $$ in $\SL$ with $\tr(A_\lambda) = 2 - s_1s_2\lambda^2$.  
           If $s_1 = s_2 = 1$, then we have
            $A_\lambda = \begin{bmatrix}
                1 - \lambda - \lambda^2
                & 2 + \lambda  \\
                -\lambda^2 
                & 1 + \lambda
            \end{bmatrix}$, and  
            as $\lambda$ varies from $0$ to $+\infty,$ the trace $\tr(A_\lambda)=2-\lambda^2$ decreases from $2$ to $-\infty$.
            Moreover, for $\lambda = 0$, $\widetilde{g_1}\widetilde{g_2}\in \Par_0\cup \{\mathrm{I}\}$ as $g_1$ and $g_2$ lie in a common one-parameter subgroup; and by Lemma \ref{lem_offdiag}, $\widetilde{g_1}\widetilde{g_2}\in \Par^+_0$ as the projection  $A_0 = \begin{bmatrix}
            1 & 2 \\
            0 & 1\end{bmatrix}$. 
            For $\lambda\in(0,2),$ we have $\tr(A_\lambda)\in(-2,2)$ and hence  $\widetilde{g_1}\widetilde{g_2}\in \Ell_1$ which is adjacent to $\Par_0^+;$ for $\lambda = 2$, we have $\tr(A_2)=-2$ and  
            hence  $\widetilde{g_1}\widetilde{g_2}\in\Par^-_1$ which is adjacent to $\Ell_{1};$ and for $\lambda\in (2,\infty),$ we have $\tr(A_\lambda)<-2$ and hence  $\widetilde{g_1}\widetilde{g_2}\in \Hyp_1$ which is adjacent to $\Par^-_1$. Therefore, the family $ \{\widetilde{g_1}\widetilde{g_2}\ |\ \lambda\in [0,\infty)\}
            $ 
            in $\univcover$ lies in
            $\Par^+_0\cup \Ell_1\cup \Par^-_1\cup \Hyp_1$, with their traces taking all the possible values in $(-\infty, 2]$.
            Recall that two non-central  elements in $\univcover$ are conjugate if and only if they have the same trace and are contained in one and the same of $\Hyp_n$, $\Par^+_n$, $\Par^-_n$ or $\Ell_n,$ for some $n\in \mathbb{Z}$.
            Therefore, for any $\widetilde{C} \in \Par^+_0\cup \Ell_1\cup \Par^-_1\cup \Hyp_1$, there exists a $\lambda\geqslant 0$ where the corresponding $\widetilde{g_1}\widetilde{g_2}$ is conjugate to $\widetilde{C}$ by some element $\widetilde{h}\in \univcover$. Let  $h$ be the projection of $\widetilde{h}$ to $\psl$. Then  $\widetilde{h}\widetilde{g_i}\widetilde{h}^{-1}$ is the unique lift of $hg_ih^{-1}$ in $\Par_0$ for $i \in \{1,2\},$ and  we have $\ev(hg_1h^{-1}, hg_2h^{-1}) = \widetilde{h}\widetilde{g_1}\widetilde{h}^{-1}\cdot \widetilde{h}\widetilde{g_2}\widetilde{h}^{-1} = \widetilde{C}$. As a consequence, $\Par^+_0\cup \Ell_1\cup \Par^-_1\cup \Hyp_1$ is contained in $\ev(\Par\times \Par)$. 
            Similarly, 
            If $s_1 = s_2 = -1$, then we have
            $A_\lambda = \begin{bmatrix}
                1 +\lambda - \lambda^2
                & -2 + \lambda  \\
                \lambda^2 
                & 1 -\lambda
            \end{bmatrix}$, and as $\lambda$ varies from $0$ to $+\infty,$ the trace $\tr(A_\lambda)=2-\lambda^2$ decreases from $2$ to $-\infty$.
            Moreover, for $\lambda = 0$, Lemma \ref{lem_offdiag}  shows that $\widetilde{g_1}\widetilde{g_2}\in \Par^-_0$ as the projection $A_0= \begin{bmatrix}
            1 & -2 \\
            0 & 1
           \end{bmatrix}$. Then by the same argument as in the previous case,  the family $\{\widetilde{g_1}\widetilde{g_2}\ |\ \lambda\in [0,\infty)\}$ in $\univcover$ lies in
            $\Hyp_{-1}\cup\Par^+_{-1}\cup\Ell_{-1}\cup\Par^-_0$, with their traces taking all the possible values in $(-\infty, 2]$, and $\Hyp_{-1}\cup\Par^+_{-1}\cup\Ell_{-1}\cup\Par^-_0$ is contained in $\ev(\Par\times \Par)$. 
            Finally, if $s_1\neq s_2$, then $A_\lambda =  \begin{bmatrix}
                1 - s_2\lambda +\lambda^2
                & -\lambda  \\
                -s_2\lambda^2 
                & 1 + s_2\lambda
            \end{bmatrix}$, with $\tr(A_\lambda)=2+\lambda^2$. 
            It follows that $\widetilde{g_1}\widetilde{g_2} = \mathrm{I}$ when $\lambda = 0$, and lies in $\Hyp_0$ for all $\lambda >0$ with its trace taking all the possible values in $(2,\infty)$.
            They by the same argument as in the previous two cases, $\Hyp_0$ is contained in $\ev(\Par\times \Par)$.  
            Putting all the three cases togeher, we have that the set $\mathcal I$ lies in the image $\ev(\Par\times \Par).$ In particular, this implies that $\mathcal I\subset\ev\Big(\overline{\Hyp}\times\overline{\Hyp}\Big).$
\\
            
            For (2), 
             we consider $$g_1
        = \pm \begin{bmatrix}
        1 & 0\\
        2s  & 1
        \end{bmatrix}\begin{bmatrix}
            1 & s \\
            0 & 1
        \end{bmatrix}
        \begin{bmatrix}
        1 & 0\\
        2s  & 1
        \end{bmatrix}^{-1}\quad\text{and}\quad g_2=
        \pm 
        \begin{bmatrix}
            e^{\lambda} & 0 \\
        0 & e^{-\lambda}
        \end{bmatrix}
        $$ for $s \in \{\pm 1\}$ and 
        $\lambda \in \mathbb{R}.$ 
         Then $(g_1, g_2)\in \Par^{sgn(s)}\times \Hyp$ for all $\lambda \neq 0$, and $\ev(g_1,g_2)= \widetilde{g_1}\widetilde{g_2}$ projects to the matrix $A_\lambda= 
            \begin{bmatrix}
        -e^{\lambda} & se^{-\lambda} \\
        -4se^{\lambda} & 3e^{-\lambda}
        \end{bmatrix}
           $ in $\SL$ with 
            $\tr(A_\lambda) = 3e^{-\lambda}-e^\lambda.$ As $\lambda$ increases from $0$ to $+\infty$, the trace $\tr(A_\lambda)$ takes all the possible values in $(-\infty, 2]$. 
            Moreover, for $\lambda = 0$, $\widetilde{g_1}\widetilde{g_2}\in \Par_0$ as $g_2 = \pm \mathrm I$; and Lemma \ref{lem_offdiag} shows that $\widetilde{g_1}\widetilde{g_2}\in \Par^{sgn(s)}_0$ as the projection $A_0 = \begin{bmatrix} 
        -1 & s \\
        -4s & 3 
        \end{bmatrix}.$ 
For $\lambda\in(0,\ln 3),$ we have $\tr(A_\lambda)\in(-2,2)$ and hence  $\widetilde{g_1}\widetilde{g_2}\in \Ell_{s}$ which is adjacent to $\Par_0^{sgn(s)};$ for $\lambda = \ln 3$, we have $\tr(A_2)=-2$ and  
            hence  $\widetilde{g_1}\widetilde{g_2}\in\Par^{-sgn(s)}_{s}$ which is adjacent to $\Ell_{s};$ and for $\lambda\in (\ln 3,\infty),$ we have $\tr(A_\lambda)<-2$ and hence  $\widetilde{g_1}\widetilde{g_2}\in \Hyp_{s}$ which is adjacent to $\Par^{-sgn(s)}_{s}$. 
            Thus, $\widetilde{g_1}\widetilde{g_2}\in \Hyp_{s}$ for all $\lambda > \ln 3$; and for any $\widetilde{C}\in \mathrm{Hyp}_{s},$ one can choose $\lambda\in (\ln 3, \infty)$ such that $\widetilde{g_1}\widetilde{g_2}$ is conjugate to $\widetilde{C}$ by some element $\widetilde{h}$ in $\widetilde{\SL}.$ Then for the projection $h$ of $\widetilde{h}$ to $\psl,$  $\ev(h\widetilde{g_1}h^{-1}, h\widetilde{g_2} h^{-1}) = \widetilde{h}\widetilde{g_1}\widetilde{h}^{-1}\cdot \widetilde{h}\widetilde{g_2} \widetilde{h}^{-1}=\widetilde{C},$ and hence $\Hyp_{s}\subset \ev(\Par^{sgn(s)}\times \Hyp)$. 
        Similarly, as $\lambda$ decreases from $0$ to $-\infty,$ the trace $\tr(A_\lambda)$ increases from 2 to $+\infty$.
        It follows that $\widetilde{g_1}\widetilde{g_2}\in \Hyp_0$ for all $\lambda >0$ with $\tr(A_\lambda)$ taking all the possible values in $(2,\infty)$,
        and consequently $\Hyp_0\subset \ev(\Par^{sgn(s)}\times \Hyp)$. This concludes that $\Hyp_{-1}\cup \Hyp_0\cup \Hyp_1$ lies in the image $\ev(\Par^{sgn(s)}\times \Hyp).$
\\

           For (3), to show $\Hyp_{\pm 1}\subset \ev(\Hyp\times \Hyp)$,
             we consider $$g_1=\pm \begin{bmatrix}
            e^{\lambda} & 0 \\
        0 & e^{-\lambda}
        \end{bmatrix}\quad\text{and}\quad g_2
        = \pm \begin{bmatrix}
        -1 & 1\\
        -2  & 1
        \end{bmatrix}
        \begin{bmatrix}
            e^{\lambda} & 0 \\
        0 & e^{-\lambda}
        \end{bmatrix}
        \begin{bmatrix}
        -1 & 1\\
        -2  & 1
        \end{bmatrix}^{-1}
        $$ for
        $\lambda \in \mathbb{R}.$ 
         Then $(g_1, g_2)\in \Hyp\times \Hyp$ for all $\lambda \neq 0$, 
         and $\ev(g_1,g_2)= \widetilde{g_1}\widetilde{g_2}$ projects to the matrix $ 
            A_\lambda = \begin{bmatrix}
        2 -e^{2\lambda} & -1+e^{2\lambda} \\
        2e^{-2\lambda}-2 & -e^{-2\lambda}+2
        \end{bmatrix}
           $ in $\SL$ with $\tr(A_\lambda) = 4 - 2\cosh(2\lambda).$ 
           As $\lambda$ varies from $0$ to $+\infty,$ $\tr(A_\lambda)$ take all the possible values in $(-\infty, 2].$ 
           Let $\lambda_0=\frac{1}{2}\cosh^{-1}(3)$ so that $\tr(A_{\lambda_0})=-2$.
           Then by Lemma \ref{lem_offdiag}, $\widetilde{g_1}\widetilde{g_2}\in \Par^-_1$ as its projection $A_{\lambda_0} = \begin{bmatrix}
        2 -e^{2\lambda_0} & -1+e^{2\lambda_0} \\
        2e^{-2\lambda_0}-2 & -e^{-2\lambda_0}+2
        \end{bmatrix}$. Thus, for all $\lambda > \lambda_0,$ $\widetilde{g_1}\widetilde{g_2}\in \Hyp_1$  as it is adjacent to $\Par_1^-;$ and for any $\widetilde{C}\in \mathrm{Hyp}_1,$ one can choose $\lambda \in (\lambda_0, \infty)$ such that $\widetilde{g_1}\widetilde{g_2}$ is conjugate to $\widetilde{C}$ by some element $\widetilde{h}$ in $\widetilde{\SL}.$ Then for the projection $h$ of $\widetilde{h}$ to $\psl,$  $\ev(h\widetilde{g_1}h^{-1}, h\widetilde{g_2} h^{-1}) = \widetilde{h}\widetilde{g_1}\widetilde{h}^{-1}\cdot \widetilde{h}\widetilde{g_2} \widetilde{h}^{-1}=\widetilde{C},$ and hence  $\Hyp_1\subset \ev(\Hyp\times \Hyp)$. 
        Similarly, as $\lambda$ varies from $0$ to $-\infty,$ $\tr(A_\lambda)$ take all the possible values in $(-\infty, 2].$
           Moreover, for $\lambda = -\lambda_0$,
           Lemma \ref{lem_offdiag} shows  that $\widetilde{g_1}\widetilde{g_2}\in \Par^+_{-1}$ as its projection $A_{-\lambda_0} = \begin{bmatrix}
        2 -e^{-2\lambda_0} & -1+e^{-2\lambda_0} \\
        2e^{2\lambda_0}-2 & -e^{
        2\lambda_0}+2
        \end{bmatrix}$; and by the same argument as above, $\widetilde{g_1}\widetilde{g_2}\in \Hyp_{-1}$ for all $\lambda < -\lambda_0,$ 
           and $\Hyp_{-1}\subset \ev(\Hyp\times \Hyp)$. 
           
        To show $\Hyp_0\subset \ev(\Hyp\times \Hyp)$,
             we consider 
             $$g_1= g_2=
        \pm 
        \begin{bmatrix}
            e^{\lambda} & 0 \\
        0 & e^{-\lambda}
        \end{bmatrix}
        $$ for  
        $\lambda > 0.$ Then $\ev(g_1,g_2)= \widetilde{g_1}\widetilde{g_2}$ projects to the matrix $A_\lambda = \begin{bmatrix}
        e^{2\lambda} & 0 \\
        0 & e^{-2\lambda}
        \end{bmatrix}
           $ in $\SL$ with $\tr(\widetilde{g_1}\widetilde{g_2}) = 2\cosh(2\lambda).$ 
           As $\lambda$ varies from $0$ to $+\infty,$ the trace $\tr(\widetilde{g_1}\widetilde{g_2})$ increases from 2 to $+\infty$.
            It follows that $\widetilde{g_1}\widetilde{g_2}\in \Hyp_0$ for all $\lambda >0$ with $\tr(\widetilde{g_1}\widetilde{g_2})$ taking all the possible values in $[2,\infty)$;
        and by the same argument as above, $\Hyp_0\subset \ev(\Hyp\times \Hyp)$.
           With $\ev(g, g^{-1}) = \mathrm{I}$ for any element $g\in \Hyp$, this concludes  that the set $\Hyp_{-1}\cup
        \Hyp_0\cup \Hyp_{1}\cup \{\mathcal I\}$ lies in the image $\ev(\Hyp\times\Hyp)$.
        \end{proof}

    Let $e: \HP(\Sigma_{0,3})\to \mathbb{Z}$ be the relative Euler class map and let $s: \HP(\Sigma_{0,3})\to \{-1,0,+1\}^3$ be the sign map defined respectively by sending $\phi$ to the relative Euler class $e(\phi)$ and the sign $s(\phi)$ of $\phi.$
    For $n\in \mathbb{Z}$ and $s\in \{-1,0,+1\}^3$, we let $\HPns(\Sigma_{0,3}) = e^{-1}(n)\cap s^{-1}(s)$ be the subspace of $\HP(\Sigma_{0,3})$ consisting of representations with the relative Euler class $n$ and the sign $s.$
\begin{theorem} \label{thm_PLP}
For $n\in\mathbb Z$ and $s=(s_1,s_2,0)$ 
with $s_1,s_2\in \{-1,0,1\}$ 
so that $\mathrm{HP}_n^s(\Sigma_{0,3})$ is non-empty, the restriction 
$$\ev: \mathrm{HP}_n^s(\Sigma_{0,3})\to\Hyp_n$$
of the evaluation map to $\HPns(\pants)$ satisfies the path-lifting property. Moreover, for each $\widetilde{C}$ in $\Hyp_n$, the fiber $\ev^{-1}\big(\widetilde{C}\big)$ is connected in $\HPns(\Sigma_{0,3})$.
\end{theorem}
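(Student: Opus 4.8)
The plan is to verify the two hypotheses of Lemma \ref{lem_plp} for the map $\ev:\mathrm{HP}_n^s(\Sigma_{0,3})\to\Hyp_n$ — that it is a submersion and that every fiber is connected — from which the path-lifting property follows at once (and, as a bonus, connectedness of $\mathrm{HP}_n^s(\Sigma_{0,3})$, since $\Hyp_n$ is an open, path-connected submanifold of $\univcover$). First I would record that $\mathrm{HP}_n^s(\Sigma_{0,3})$, under the identification $\phi\leftrightarrow(\phi(c_1),\phi(c_2))$, is an open submanifold of $X_{s_1}\times X_{s_2}$, where $X_0=\Hyp$ and $X_{\pm1}=\Par^{\pm}\subset\psl$: it is cut out by the open condition that $\phi(c_1)\phi(c_2)$ be hyperbolic (equivalently $\phi(c_3)$ hyperbolic) together with the locally constant condition $e(\phi)=n$, and its image under $\ev$ lies in $\Hyp_n$ because, with $\widetilde{\phi(c_3)}\in\overline{\Hyp_0}$ the unique lift of the hyperbolic element $\phi(c_3)$, the definition of the relative Euler class gives $\ev(\phi)=z^n\widetilde{\phi(c_3)}^{-1}\in\Hyp_n$.

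For the submersion statement, observe that the covering $p:\univcover\to\psl$ is a local diffeomorphism and that $p\circ\ev$ equals the multiplication map $m_\psl(g_1,g_2)=g_1g_2$ on $X_{s_1}\times X_{s_2}$; hence $\ev$ is submersive at a point precisely when $m_\psl$ is, and this in turn is detected, after lifting to $\SL$, by Lemma \ref{lem_evPsubm}. When $s_1=0$ or $s_2=0$ at least one factor is the open set $\Hyp$ and part (1) of Lemma \ref{lem_evPsubm} applies directly; when $s_1,s_2\in\{\pm1\}$ both factors are parabolic and part (2) requires $\phi(c_1),\phi(c_2)$ not to commute, which is automatic on $\mathrm{HP}_n^s(\Sigma_{0,3})$, since two commuting parabolics fix a common point of $\partial\mathbb{H}^2$ and hence have a parabolic or trivial product, contradicting that $\phi(c_1)\phi(c_2)$ is hyperbolic.

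For the connected fibers, fix $\widetilde C\in\Hyp_n$ and put $F_0=p(\widetilde C)^{-1}\in\Hyp$. Unwinding the definitions exactly as above shows $\ev^{-1}(\widetilde C)=\{\phi\in\mathrm{HP}_n^s(\Sigma_{0,3})\ |\ \phi(c_3)=F_0\}$; in particular this is empty unless $n\in\{-1,0,1\}$, by Proposition \ref{prop_evimage}. Given $\phi,\psi$ in this fiber I would connect them in three moves. First, using the path-lifting property of the character map (Lemma \ref{lem_plpchar}), move $\phi$ along a path of representations so that its boundary traces $\big(\tr\widetilde{\phi(c_1)},\tr\widetilde{\phi(c_2)}\big)$ come to agree with those of $\psi$ (the third trace $\tr\widetilde C$ being already common); along this path $\phi(c_3)$ stays conjugate to $F_0$, and Lemma \ref{lem_conjugacypath_const} lets one simultaneously conjugate so that $\phi(c_3)\equiv F_0$ throughout, keeping the path inside the fiber. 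Second, $\phi$ and $\psi$ now have the same character and the same sign and relative Euler class, so by Proposition \ref{prop_char}(2), together with Proposition \ref{prop_pglpsl} and Lemma \ref{lem_offdiag} (which show the sign and the Euler class single out one of the two $\SL$-orbits inside a $\GL$-orbit), they are $\psl$-conjugate; conjugating by a path in the connected group $Z(F_0)$ then joins them inside the fiber. The one configuration in which the sign and Euler class fail to distinguish the two orbits is $s=(0,0,0)$, $n=0$, i.e.\ the space $\mathcal W_0(\Sigma_{0,3})$; there I would instead invoke Goldman's Theorem \ref{thm_goldman} directly, or route the path through the reducible locus $\{\kappa=2\}$ (nonempty only when $n=0$) where the two orbits coalesce, treating the reducible representations with $\phi(c_3)=F_0$ by hand as a connected set lying in the closure of the irreducible one.

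The main obstacle is this last point in the connected-fiber argument: keeping track of the winding data carried by $\ev$ (as opposed to the mere trace data seen by $\chi$) while matching up two representations, and coping with the fact that a fiber of $\chi$ over a character with $\kappa\neq2$ consists of two distinct $\psl$-orbits which, in the all-hyperbolic Euler-zero case, are separated neither by the sign nor by the relative Euler class. Producing a path that genuinely stays inside the single fiber $\{\phi(c_3)=F_0\}$ — rather than merely inside a conjugation-invariant set — is what forces the use of Lemma \ref{lem_conjugacypath_const} and the careful reduction outlined above.
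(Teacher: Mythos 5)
Your overall strategy is the same as the paper's: verify the hypotheses of Lemma \ref{lem_plp} using Lemma \ref{lem_evPsubm} for the submersion (your reduction of non-commutativity in the $\Par\times\Par$ case and your identification of the fiber with $\{\phi\ |\ \phi(c_3)=p(\widetilde C)^{-1}\}$ are both correct), and prove fiber connectedness by deforming characters via Lemma \ref{lem_plpchar}, invoking Proposition \ref{prop_char}(2) plus the sign/Euler-class argument to get a $\psl$-conjugacy, and finishing with a conjugating path in the centralizer of $F_0$ (your centralizer trick is a clean variant of the paper's final reconjugation via one-parameter subgroups).

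However, there is a genuine gap in the conjugacy step. You deform $\phi$ so that its character matches $\chi(\psi)$ exactly, but when exactly one of $s_1,s_2$ is nonzero and $n=0$, the character of $\psi$ can satisfy $\kappa=2$: e.g.\ $\psi(c_1)=\pm\begin{bmatrix}1&1\\0&1\end{bmatrix}$, $\psi(c_2)=\pm\begin{bmatrix}\lambda&0\\0&\lambda^{-1}\end{bmatrix}$ lies in such a fiber and has $\chi(\psi)=(2,\lambda+\lambda^{-1},\lambda+\lambda^{-1})$, so $\kappa(\chi(\psi))=2$ and $\psi$ is reducible. In that situation Proposition \ref{prop_char}(2) does not apply, so "same character, sign and Euler class $\Rightarrow$ $\psl$-conjugate" does not follow, and your step two breaks; you flagged the $\kappa=2$/two-orbit difficulty only for $s=(0,0,0)$, but it also occurs for $s=(\pm1,0,0)$ and $(0,\pm1,0)$ with $n=0$. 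The repair is the paper's device: instead of matching $\phi$ to $\chi(\psi)$, choose a common target trace $y\in(2,\infty)\setminus\{\tr\widetilde C\}$ and deform \emph{both} representations to the character $(2,y,\tr\widetilde C)$, which forces $\kappa=(y-\tr\widetilde C)^2+2\neq2$ before applying Proposition \ref{prop_char}(2). Separately, for $s=(0,0,0)$ your citation is off: Theorem \ref{thm_goldman} gives connectedness of $\mathcal W_n(\Sigma_{0,3})$, not connectedness of the fibers of $\ev$; what is needed (and what the paper cites) is Goldman's Proposition 4.6, or else your sketched reducible-locus argument would have to be carried out in detail.
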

    
\begin{proof}  By Lemma \ref{lem_plp} and Lemma \ref{lem_evPsubm}, it suffices to show that for all $\widetilde{C}\in \Hyp_n$, the fiber $\ev^{-1}(\widetilde{C})$ is connected in $\HPns(\Sigma_{0,3}).$ The case that $s_1 = s_2 = 0$ is contained in \cite[Proposition 4.6]{goldman}; and we only need to consider the remaining cases.
\\


    We first consider the case that $s_1\neq 0$ and $s_2 = 0$. Let $\widetilde{C}\in \Hyp_n$, and let $\phi_1, \phi_2$ be representations in the fiber $\ev^{-1}(\widetilde{C})$. Recall that for each $j \in \{ 1,2\}$, the character of $\phi_j$ is defined by the triple $$
    \chi(\phi_j) = 
    \Big(
    \mathrm{tr}\big(\widetilde{\phi_j(c_1)}\big), 
    \mathrm{tr}\big(\widetilde{\phi_j(c_2)}\big), 
    \mathrm{tr}\big(\widetilde{\phi_j(c_1)}\widetilde{\phi_j(c_2)}\big)\Big)
    $$
    in $\mathbb{R}^3$, where $\widetilde{\phi_j(c_1)}$ and 
    $\widetilde{\phi_j(c_2)}$ are respectively the lifts of $\phi_j(c_1)$ and $\phi_j(c_2)$ in $\overline{\Hyp_0}$, and their traces are defined as the traces of their projections to $\SL$. Since 
    $\widetilde{\phi_j(c_1)}, \widetilde{\phi_j(c_2)}\in \overline{\Hyp_0}$, we have  $\tr\big(\widetilde{\phi_j(c_1)}\big), \tr\big(\widetilde{\phi_j(c_2)}\big) \in [2,\infty)$; and since $s_1\neq 0$ and $s_2=0,$   
    we have $\chi(\phi_1) = (2,y_1,z)$ and 
    $\chi(\phi_2) = (2,y_2,z)$ with $y_1, y_2 \in (2,\infty)$ and $z = \tr \widetilde{C}$. Choose arbitrarily a $y\in (2,\infty)\setminus z$; and for $j\in\{1,2\},$
    define the straight line segments
    $\big\{(2,y_{j,t},z)\big\}\interval$ in $\mathbb{R}^3$, where $y_{j,t}= (1-t)y_j+ty$ connecting $y_j$ and $y$.  
    
    To construct paths in $\HPns(\pants)$, we use
    the path-lifting property of the character map $\chi:\SL\times \SL\to \mathbb{R}^3$ in Lemma \ref{lem_plpchar}. 
    For $j\in \{1,2\}$, let $A_{j,0}$ and $B_{j,0}$ respectively
    be the projections of $\widetilde{\phi_j(c_1)}$ and $\widetilde{\phi_j(c_2)}$ to $\SL$, noting that the image of $(A_{j,0}, B_{j,0})$ under the character map $\chi$ equals $\chi(\phi_j) = (2,y_j, z)$.
    Since $\phi_j(c_1)\in \Par^{sgn(s_1)}$ and $\phi_j(c_2)\in \Hyp$, $A_{j,0}$ and $B_{j,0}$ do not commute; and since
    $|z| = \big|\tr\widetilde{C}\big|>2$, the paths $\big\{(2,y_{j,t},z)\big\}\interval$ lie in
    $\mathbb{R}^3
        \setminus \big([-2,2]^3\cap \kappa^{-1}([-2,2])\big)$, where $\kappa$ is as defined in Proposition \ref{prop_char}. 
        Then by Lemma \ref{lem_plpchar}, 
        there exist paths $\{(A_{j,t}, B_{j,t})\}\interval$ of non-commuting pairs of elements of $\SL$ starting from  $(A_{j,0}, B_{j,0})$, having 
        $\chi(A_{j,t}, B_{j,t}) = (2, y_{j,t},z)$ for all $t\in [0,1]$. 
        Let $\pm A_{j,t}$ and $\pm B_{j,t}$ respectively be the projections of 
        $A_{j,t}$ and $B_{j,t}$ to $\psl$, and define $\phi_{j,t}:\pi_1(\pants)\to \psl$ by letting $\big(\phi_{j,t}(c_1), \phi_{j,t}(c_2)\big) = (\pm A_{j,t}, \pm B_{j,t})$. Then for each $j \in \{1,2\}$, the path $\{\phi_{j,t}\}\interval$ of $\psl-$representations starting from $\phi_{j,0} = \phi_j$ satisfies $\chi(\phi_{j,t}) = \chi(A_{j,t}, B_{j,t}) = (2,y_{j,t}, z)$ for all $t\in [0,1]$, which implies $\chi(\phi_{1,1}) = \chi(\phi_{2,1})=(2,y,z)$. 
        %
        Since $\tr A_{j,t} = 2$ for all $t\in [0,1]$, the path $\{\phi_{j,t}(c_1)\}\interval$ in $\psl$ is contained in $\Par\cup \{\pm \mathrm{I}\}$,
        which never passes $\pm\mathrm{I}$ as otherwise $A_{j,t} = \mathrm{I}$ for some $t\in [0,1]$ and would commute with $B_{j,t}$. Since $\phi_{j,0}(c_1)$ lies in $\Par^{sgn(s_1)}$ which is a connected component of $\Par$, we have
        $\phi_{j,t}(c_1)\in \Par^{sgn(s_1)}$
        for all $t\in [0,1]$; since $y_j, y\in (2,\infty)$,
        $y_{j,t} = (1-t)y_j+ty \in (2,\infty)$, and hence $\phi_{j,t}(c_2)\in \Hyp$ for all $t\in [0,1]$; and since $|z|>2,$ $\phi_{j,t}(c_3) = \big(\phi_{j,t}(c_1)\phi_{j,t}(c_2)\big)^{-1}\in \Hyp$ for all $t\in [0,1]$. This implies that 
        the path $\{\phi_{j,t}\}\interval$ lies in $\HP(\Sigma_{0,3})$, and more precisely, in  $s^{-1}(s).$  Moreover, the relative Euler classes $e(\phi_{j,t}) = e(\phi_{j}) = n$ for all $t\in [0,1]$, hence $\{\phi_{j,t}\}\subset e^{-1}(n)\cap s^{-1}(s) =  \HPns(\pants)$.
        
        We now find a path connecting $\phi_{1,1}$ and $ \phi_{2,1}$ in $\HPns(\Sigma_{0,3})$. 
        Since both $(A_{1,1}, B_{1,1})$ and $(A_{2,1}, B_{2,1})$ are in $\chi^{-1}(2,y,z)\subset \SL\times \SL$ with $y \neq z,$ and $\kappa(2, y, z) = (y-z)^2+ 2 \neq 2$, Proposition \ref{prop_char} implies that these pairs of elements of $\SL$ are $\GL$-conjugate.
        It follows that they are $\SL$-conjugate as otherwise by Proposition \ref{prop_pglpsl},
        $s(\phi_{2,1}) = -s(\phi_{1,1})\neq s(\phi_{1,1})$.
        Let $P\in\SL$ be such that 
        $\big(A_{2,1}, B_{2,1}\big)
        = 
        \big(PA_{1,1}P^{-1}, PB_{1,1}P^{-1}\big),$ and let $g=\pm P$ be the projection of $P$ to $\psl.$ Then we have $\phi_{2,1}=g \phi_{1,1} g^{-1}.$ 
        Let $\{g_t\}\interval$ be a path in $\psl$ connecting $g_0 =\pm \mathrm I$ and $g_1 = g$. Then
        the path $\{g_t\phi_{1,1} g_t^{-1}\}\interval$ connects $\phi_{1,1}$ to $\phi_{2,1}$ within $\HPns(\Sigma_{0,3})$, as both the relative Euler class and the sign remain constant under $\psl$-conjugations of $\phi_{1,1}$. 
        We have constructed the paths 
        $\{\phi_{1,t}\}$ connecting $\phi_1$ and $\phi_{1,1}$, $\{g_t\phi_{1,1} g_t^{-1}\}$ connecting $\phi_{1,1}$ and $\phi_{2,1},$ and $\{\phi_{2,1-t}\}$ connecting $\phi_{2,1}$ and $\phi_2$, and the composition of these three paths with a suitable re-parametrization defines a path $\{\phi'_t\}_{t\in [1,2]}$ in $\HPns(\pants)$ connecting $\phi'_1 = \phi_1$ and $\phi'_2 = \phi_2$. 
        
        Next, using the path $\{\phi'_t\}_{t\in [1,2]}$ in  $\HPns(\pants)$, we will construct a path $\{\phi_t\}_{t\in [1,2]}$ in $\ev^{-1}(\widetilde{C})$ connecting $\phi_1$ and $\phi_2$.
        For each $t\in [1,2]$, since $\mathrm{tr}\big(\widetilde{\phi'_t(c_1)}\widetilde{\phi'_t(c_2)}\big) = z = \tr\widetilde{C}$, the hyperbolic element $\phi'_t(c_1)\phi'_t(c_2)$ is $\psl$-conjugate to the projection $\pm C$ of $\widetilde{C}$ to $\psl$. 
        Guaranteed by  Lemma \ref{lem_conjugacypath_const}, let $\{h_t\}_{t\in [1,2]}$ be a continuous path in $\psl$ such that 
        $\pm C = h_t\big(\phi'_t(c_1)\phi'_t(c_2)\big)h_t^{-1}$ for all $t\in [1,2]$. Since $\phi'_1, \phi'_2\in \ev^{-1}(\widetilde{C})$, for $j\in\{1,2\},$ 
        $h_j\big(\pm C\big)h_j^{-1}=h_j\big(\phi'_j(c_1)\phi'_j(c_2)\big)h_j^{-1}= \pm C$, and hence $h_j$ commutes with $\pm C$. This implies that $h_1^{-1}h_t\big(\phi'_t(c_1)\phi'_t(c_2)\big)h_t^{-1}h_1 = h_1\big(\pm C\big) h_1^{-1}= \pm C$ for all $t\in [1,2]$; and by replacing $h_t$ by  $h_1^{-1}h_t$ if necessary, we can assume that $h_1 = \pm \mathrm I$. 
        Then the path $\{h_t\phi'_th_t^{-1}\}_{t\in [1,2]}$ connects  $h_1\phi'_1h_1^{-1} = \phi_1$ to $h_2\phi'_2h_2^{-1} = 
         h_2\phi_2h_2^{-1}$. Moreover, since $\phi'_t$ lies in $\HPns(\pants)$ for all $t\in [1,2]$, its $\psl$-conjugation $h_t\phi'_th_t^{-1}$ also lies in $\HPns(\pants)$ as shown in the previous paragraph. Thus, the evaluation map $\ev$ is defined at $h_t\phi'_th_t^{-1}$ for all $t\in [1,2]$ with $\ev\big(h_t\phi'_th_t^{-1}\big)=\widetilde{C}$ which is the unique lift of $\pm C$ to $\Hyp_n$, i.e., $\{h_t\phi'_th_t^{-1}\}_{t\in[1,2]}\subset \ev^{-1}(\widetilde C)$.
        It remains to find a path in $\ev^{-1}(\widetilde{C})$ connecting $h_2\phi_2h_2^{-1}$ and $\phi_2$. Let $\{h_{2,t}\}\interval$ be a path connecting $h_{2,0} = h_2$ to $h_{2,1} = \pm\mathrm{I}$ in the one-parameter subgroup of $\psl$ generated by $h_2$. Since $h_2$ and $\pm C$ commute, $\pm C$ lies in this one-parameter subgroup, and we have $h_{2,t}
        \phi_2(c_1)
        \phi_2(c_2)
        h_{2,t}^{-1} = h_{2,t}
       \big(\pm C\big)
        h_{2,t}^{-1} = \pm C$ for all $t\in [0,1]$. 
        As a consequence, the path 
        $\{h_{2,t}
        \phi_2
            h_{2,t}^{-1}\}_{t\in [0,1]}$ connects $h_2\phi_2h_2^{-1}$ to $\phi_2$ within $\HPns(\pants)$ with $\ev(h_{2,t}
        \phi_2
            h_{2,t}^{-1}) = \widetilde{C}$ for all $t\in [0,1]$, i.e., $\{h_{2,t}
        \phi_2
            h_{2,t}^{-1}\}_{t\in [0,1]}\subset \ev^{-1}(\widetilde{C})$. 
            Finally, the composition of 
            $\{h_t\phi'_th_t^{-1}\}_{t\in[1,2]}$ and  
            $\{h_{2,t}
        \phi_2
            h_{2,t}^{-1}\}_{t\in [0,1]}$ with a suitable re-parametrization defines a path $\{\phi_t\}_{t\in [1,2]}$ in 
            $\ev^{-1}(\widetilde{C})$ connecting $\phi_1$ and $\phi_2$. This concludes that  the fiber $\ev^{-1}(\widetilde{C})$ is connected in $\HPns(\pants)$. 
        \\

       Lastly, we consider the remaining cases of $s$.  In the case that $s_1 = 0$ and $s_2 \neq 0$, 
        the representations $\phi_1, \phi_2\in \ev^{-1}(\widetilde{C})$ have characters $\chi(\phi_1) = (x_1,2,z)$ and 
    $\chi(\phi_2) = (x_2,2,z)$ with $x_1, x_2 \in (2,\infty)$. Similar to the previous case, we choose $x\in (2,\infty)\setminus \{z\} $ to construct a straight line segment 
    $\big\{(x_{j,t}, 2,z)\big\}\interval,$ where $\{x_{j,t}\}_{t\in[0,1]}$ connects $x_j$ and $x$; and we use Lemma \ref{lem_plpchar} to obtain the path $\{\phi_{j,t}\}\interval$ of $\psl$-representations with $\chi(\phi_{j,1}) = (x,2,z)$. The rest of the proof follows verbatim the previous case. In the case that $s_1\neq 0$ and $s_2\neq 0$, we have $\chi(\phi_1) = \chi(\phi_2) = (2,2,z)$ with $|z|>2,$ and $\kappa(2, 2, z) = (z-2)^2 + 2 \neq 2$. 
    Then Proposition \ref{prop_char} and Proposition \ref{prop_pglpsl} imply that there exists a $g\in \psl$ such that $\phi_2 = g\phi_1 g^{-1}$. Let $\{g_t\}\interval$ be a path in $\psl$ connecting $g_0 =\pm \mathrm I$ and $g_1 = g$. Then the path $\{g_t\phi_1 g_t^{-1}\}\interval$ connects $\phi_1$ and $\phi_2$ within $\HPns(\Sigma)$. The rest of the proof follows  the previous case.
    This concludes that for all $s = (s_1,s_2,0)$ and all $\widetilde{C}\in \Hyp_n$, the fiber $\ev^{-1}(\widetilde{C})$ in $\HPns(\Sigma_{0,3})$ is connected. 
\end{proof}

 \subsubsection{Representations with mixed boundary condition}\label{subsection_HP}
        Let $n\in \mathbb{Z}$, and $s\in \{-1,0,+1\}^3$.
        Note that if $s\in\{\pm 1\}^3$, then $\mathrm{HP}^s_n(\Sigma_{0,3}) = \mathcal R^s_n(\Sigma_{0,3})$; and if $s=(0,0,0),$ then $\mathrm{HP}^s_n(\Sigma_{0,3}) = \mathcal W_n(\Sigma_{0,3})$. The connected components of 
        $\mathcal W(\Sigma_{0,3})$ are described in Theorem \ref{thm_goldman}. Here, we consider the cases of $s\in \{-1,0,+1\}^3$ with  $p_0(s)\geqslant 1$ in the following Theorem \ref{thm_pants2} and Corollary \ref{cor_evimage}, and describe the connected components of $\mathcal R(\Sigma_{0,3})$  in Theorem \ref{thm_pants}.

\begin{theorem}\label{thm_pants2} For $n\in\mathbb Z$ and $s \in\{-1,0,+1\}^3,$ let $\mathrm{HP}_n^s(\Sigma_{0,3})=e^{-1}(n)\cap s^{-1}(s).$
\begin{enumerate}[(1)]
\item If, up to a permutation of the peripheral elements, $s=(s_1,s_2,0)$ with $s_1, s_2\in\{\pm 1\},$ then $\mathrm{HP}_n^s(\Sigma_{0,3})$ is non-empty if and only if $n=\frac{1}{2}(s_1+s_2).$

\item If, up to a permutation of the peripheral elements, $s=(s_1,0,0)$ with $s_1\in\{\pm 1\},$ then $\mathrm{HP}_n^s(\Sigma_{0,3})$ is non-empty if and only if $n=0$ or $n=s_1.$ 
\end{enumerate}
Moreover, all the non-empty spaces above are connected. 
\end{theorem}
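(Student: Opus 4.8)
The statement has two halves: the non-emptiness criterion and the connectedness of the non-empty spaces; the latter is routine. After a permutation of the punctures any $s$ occurring in (1) or (2) has the form $(s_1,s_2,0)$ with $s_i\in\{-1,0,1\}$, so Theorem \ref{thm_PLP} applies and says $\ev\colon \mathrm{HP}_n^s(\Sigma_{0,3})\to\Hyp_n$ has the path-lifting property with connected fibres. Since $\Hyp_n=z^n\Hyp_0$ is path-connected, the usual argument (compare the proof of Lemma \ref{lem_plp}) shows $\mathrm{HP}_n^s(\Sigma_{0,3})$ is path-connected: given $\phi,\psi$ in it, join $\ev(\phi)$ to $\ev(\psi)$ by a path in $\Hyp_n$, lift it to a path from $\phi$ to some $\phi'$ in the fibre over $\ev(\psi)$, and connect $\phi'$ to $\psi$ inside that connected fibre.

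For non-emptiness I would first record the translation into a statement about $\ev$. If $\phi(c_1)\in\Par^{s_1}$, $\phi(c_2)$ is parabolic of sign $s_2$ or hyperbolic, and $\phi(c_3)\in\Hyp$, then $\widetilde{\phi(c_3)}^{-1}\in\Hyp_0$, so $\ev(\phi)=\widetilde{\phi(c_1)}\widetilde{\phi(c_2)}=z^{e(\phi)}\widetilde{\phi(c_3)}^{-1}$ lies in $\Hyp_{e(\phi)}$; conversely a pair in $\Par^{s_1}\times\Par^{s_2}$ (resp.\ $\Par^{s_1}\times\Hyp$) whose $\ev$-image lies in $\Hyp_n$ defines a representation in the corresponding space. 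Thus $\mathrm{HP}_n^s(\Sigma_{0,3})\neq\emptyset$ precisely when the image of the relevant restriction of $\ev$ meets $\Hyp_n$. The existence of representations for the prescribed values of $n$ then follows from Proposition \ref{prop_evimage}: the one-parameter families in the proof of part (1) of that proposition realize points of $\Hyp_1$ (for $s=(1,1,0)$), of $\Hyp_{-1}$ (for $s=(-1,-1,0)$), and of $\Hyp_0$ (for $s_1\neq s_2$) using pairs of parabolics of the prescribed signs; and part (2) of that proposition gives $\Hyp_0\cup\Hyp_{s_1}\subseteq\ev(\Par^{s_1}\times\Hyp)$.

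The substance of the proof is the converse: $\ev(\phi)$ cannot lie in a forbidden component $\Hyp_m$. The plan is, for a given $\phi$, to conjugate so that $\phi(c_1)=\pm\begin{bmatrix}1&s_1\\0&1\end{bmatrix}$ in case (1), or so that $\phi(c_2)$ is diagonal in case (2), to write the other generator through a conjugating matrix $\begin{bmatrix}a&b\\c&d\end{bmatrix}\in\SL$, and to follow the path $t\mapsto\widetilde{g_{1,t}g_{2,t}}$ from $\mathrm I$ to $\ev(\phi)$ in $\mathcal I$ coming from the one-parameter-subgroup paths $g_{i,t}$ that define the lifted product. This path is the one whose matrix $A_t$ is computed in the proofs of Lemma \ref{lem_evimage} and Proposition \ref{prop_evimage}. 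In case (1), $\tr A_t=2-s_1s_2c^2t^2$, and $c\neq0$ is forced (otherwise $\phi(c_1)$ and $\phi(c_2)$ share a fixed point, making $\phi(c_3)$ parabolic, against the boundary condition). When $s_1=s_2$ the trace is $<-2$ at $t=1$, so the path meets the parabolic locus at a unique $t_0\in(0,1)$; Lemma \ref{lem_offdiag} applied to $a_{21}(t_0)=-s_2c^2t_0\neq0$ gives that this parabolic value has sign $-s_2$, which inside $\mathcal I$ can only be $\Par_1^-$ (if $s_1=s_2=1$) or $\Par_{-1}^+$ (if $s_1=s_2=-1$), so $\ev(\phi)$ lies in the adjacent stratum $\Hyp_1$, resp.\ $\Hyp_{-1}$; when $s_1\neq s_2$ the trace stays $>2$ for $t>0$, so the path never leaves $\Hyp_0$ and $\ev(\phi)\in\Hyp_0$. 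In every case $n=\tfrac12(s_1+s_2)$. For case (2): if $\phi$ is reducible then $e(\phi)=0$ by Proposition \ref{prop_reducible}, which is permitted, so we may assume $\phi$ irreducible; then in the standardized picture $a\neq0$ and $c\neq0$, both off-diagonal entries of $A_t$ are nonzero for $t>0$, and Lemma \ref{lem_offdiag} forces every parabolic value along the path to have sign $s_1$ in an even stratum and $-s_1$ in an odd one. Consequently the path avoids $\Par_{-1}^+\cup\{z^{-1}\}$ when $s_1=+1$ and $\Par_1^-\cup\{z\}$ when $s_1=-1$ — using $A_t\neq\pm\mathrm I$ for $t>0$ for the central part — and these are exactly the separating sets of $\univcover$ exploited in the proof of Lemma \ref{lem_evimage}, so the path cannot reach $\Hyp_{-1}$, resp.\ $\Hyp_1$; since $\ev(\phi)\in\mathcal I\cap\Hyp=\Hyp_{-1}\cup\Hyp_0\cup\Hyp_1$, we conclude $n\in\{0,s_1\}$.

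The part I expect to be delicate is the stratum bookkeeping in this last argument: one must check that $\{\widetilde{g_{1,t}g_{2,t}}\}$ really lies in $\mathcal I$ (so that a sign and a trace value together determine the stratum, not merely its class modulo the central $\mathbb Z$), that the path never hits a center $z^{\pm1}$ (which is where irreducibility, i.e.\ $c\neq0$, enters), and that the adjacency ``$\Par_m^{\pm}$ borders $\Hyp_m$'' and the $\univcover$-separations are invoked with the orientations as in Figure \ref{fig: evimage}. None of this is hard, but the whole argument is a careful stratification chase, so it is the step that needs to be written out with care rather than asserted.
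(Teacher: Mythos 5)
Your proposal is correct and follows essentially the same route as the paper's proof: realizability is read off from Proposition \ref{prop_evimage}, the ``only if'' direction is the same conjugated one-parameter-path trace computation with Lemma \ref{lem_offdiag} pinning down the sign of each parabolic crossing, and connectedness comes from Theorem \ref{thm_PLP} together with the path-connectedness of $\Hyp_n$. The differences are cosmetic only (quoting the families in Proposition \ref{prop_evimage}'s proof instead of exhibiting explicit matrices, and in case (2) excluding $\Hyp_{-s_1}$ via the separating sets $\Par_{\mp 1}^{\pm}\cup\{z^{\mp 1}\}$ from Lemma \ref{lem_evimage} rather than identifying the crossing stratum $\Par_{s_1}^{-sgn(s_1)}$ and its adjacent hyperbolic component directly), so no further comparison is needed.
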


        \begin{proof} For (1), under the identification of the $\psl$-representations of $\pi_1(\Sigma_{0,3})$ and pairs of elements of $\psl,$ we have
         $\Bigg(
        \pm \begin{bmatrix}
        0 & -\frac{1}{5} \\
        5 & 2
        \end{bmatrix},
        \pm 
        \begin{bmatrix}
         1 &  -1\\
        0 & 1
        \end{bmatrix}
        \Bigg)\in \mathrm{HP}_{-1}^{(-1,-1,0)}
            (\Sigma_{0,3})
            ,
            $ 
       $\Bigg(
        \pm \begin{bmatrix}
        1 & 1 \\
        0 & 1
        \end{bmatrix},
        \pm \begin{bmatrix}
         1 & 0\\
         1 & 1
        \end{bmatrix}
        \Bigg)\in \mathrm{HP}_0^{(+1,-1,0)}
            (\Sigma_{0,3})
            $,
            and 
         $\Bigg(
        \pm \begin{bmatrix}
        2 & 5\\
        -\frac{1}{5} & 0
        \end{bmatrix},
        \pm 
        \begin{bmatrix}
         1 & 0 \\
        -1 & 1
        \end{bmatrix}
        \Bigg)\in \mathrm{HP}_1^{(+1,+1,0)}
            (\Sigma_{0,3}).$ This shows that $\mathrm{HP}_{\frac{1}{2}(s_1+s_2)}^{(s_1, s_2,0)}
            (\Sigma_{0,3})$ is non-empty. 
            
            Next we show that if $\mathrm{HP}_n^{(s_1, s_2,0)}
            (\Sigma_{0,3})$ is non-empty for  $s_1,s_2\in\{\pm 1\},$ then $n = \frac{1}{2}(s_1+s_2).$ For $\phi\in \mathrm{HP}_n^{(s_1, s_2,0)}
            (\Sigma_{0,3}),$ let $\widetilde{\phi(c_1)}$ and $\widetilde{\phi(c_2)}$ respectively be the lifts of $\phi(c_1)$ and $\phi(c_2)$ in $\mathrm{Par_0}\subset\widetilde{\SL}$. 
            Then the evaluation map sends $\phi$ to  $\ev(\phi) = \widetilde{\phi(c_1)}\widetilde{\phi(c_2)}
            \in \mathrm{Hyp}_n.$
For $i=1,2,$ let $\{g_{i,t}\}$ be a path connecting $\pm\mathrm{I}$ and $\phi(c_i)$ in the one-parameter subgroup of $\psl$ generated by $\phi(c_i),$ and let $\{\widetilde{g_{1,t}g_{2,t}}\}$ be the lift of $\{g_{1,t}g_{2,t}\}$ in $\widetilde{\SL}$ starting from $\mathrm I.$
        Up to a $\psl$-conjugation of $\phi$, we can assume that $\phi(c_1)=\pm \begin{bmatrix}
        1 & s_1 \\
        0 & 1
        \end{bmatrix};$ and by a re-parametrization if necessary, we can assume that 
        $$g_{1,t}=\pm \begin{bmatrix}
        1 & s_1 t \\
        0 & 1
        \end{bmatrix}$$
        for all $t\in [0,1].$ Similarly, we can also assume that 
        $$g_{2,t}=\pm \abcd\begin{bmatrix}
        1 & s_2 t \\
        0 & 1
        \end{bmatrix}\abcd^{-1}$$
        for some $\pm\abcd\in\psl.$  Then $\widetilde{g_{1,t}g_{2,t}}$ projects to the matrix
          $$A_t= 
            \begin{bmatrix}
                1 - s_2act - s_1s_2c^2t^2
                & (s_1  + s_2 a^2)t + s_1s_2ac t^2  \\
                -s_2c^2t 
                & 1 + s_2act
            \end{bmatrix}
           $$ in $\SL$ with $\tr(A_t)=2-s_1s_2c^2t^2.$ Since $\phi$ is non-abelian (as the peripheral elements of $\pi_1(\Sigma_{0,3})$ are sent to elements of different types in $\psl$), we have $c\neq 0.$
           \\

If $s_1=s_2,$ then we have $\tr(A_t)=2-c^2t^2<2,$ where the strict inequality comes from that $c\neq 0.$ Since $\widetilde{\phi(c_1)}\widetilde{\phi(c_2)}$ is a hyperbolic element, we have $
\tr(A_1)<-2<0.$ As a consequence, $\widetilde{\phi(c_1)}\widetilde{\phi(c_2)}\in\mathrm{Hyp}_{\pm 1}$ and $n=\pm 1.$  To further determine $n,$ let us look at the intersection of the path $\{\widetilde{g_{1,t}g_{2,t}}\}$ with $\mathrm{Par}_{\pm 1}.$ Since $\tr(A_t)=2-c^2t^2$ is monotonically decreasing in $t$ and is strictly less than $-2$ at the endpoint, there is a unique time $t_0\in(0,1)$ such that $
\tr(A_{t_0})=-2,$ i.e., $\widetilde{g_{1,t_0}g_{2,t_0}}\in \mathrm{Par_{\pm 1}}.$ More precisely, $\widetilde{g_{1,t_0}g_{2,t_0}}\in \mathrm{Par}_{- 1}^{+}\cup\mathrm{Par}_{1}^{-},$ as the former is the component of $\Par_{-1}$ adjacent to $\Ell_{-1}$ and $\Hyp_{-1},$ and the latter is the component of $\Par_{1}$ adjacent  $\Ell_{1}$ and $\Hyp_{1}.$ Then by Lemma \ref{lem_offdiag}, since $c\neq 0,$ the sign of $\widetilde{g_{1,t_0}g_{2,t_0}}$ equals $sgn(-s_2c^2t_0)=-sgn(s_2),$ and 
$\widetilde{g_{1,t_0}g_{2,t_0}}\in \mathrm{Par}^{-sgn(s_2)}_{s_2}$. 
As a consequence, $\widetilde{\phi(c_1)}\widetilde{\phi(c_2)}\in \mathrm{Hyp}_{s_2},$ which among $\mathrm{Hyp}_{\pm 1}$ is the one adjacent to $\mathrm{Par}_{s_2}^{-sgn(s_2)},$ and $n=s_2=\frac{1}{2}(s_1+s_2).$ 

If $s_1\neq s_2,$ then we have $\tr(A_t)
=2+c^2t^2>2,$ where the strict inequality again comes from that $c\neq 0.$  This shows that the path $\{\widetilde{g_{1,t}g_{2,t}}\}$ 
never intersects $\bigcup_{k \in \mathbb{Z}}\mathrm{Par}_k,$
and hence stays in $\mathrm{Hyp}_0,$ including the endpoint $\widetilde{\phi(c_1)}\widetilde{\phi(c_2)}.$  As a consequence, $n=0=\frac{1}{2}(s_1+s_2).$

\bigskip

       For (2), under the identification of the $\psl$-representations of $\pi_1(\Sigma_{0,3})$ and pairs of elements of $\psl,$ we have that 
         $\Bigg(
          \pm 
        \begin{bmatrix}
         5 & -4 \\
        4 & -3
        \end{bmatrix},
        \pm \begin{bmatrix}
        \frac{1}{2} & 0 \\
        0 & 2
        \end{bmatrix}
        \Bigg)\in \mathrm{HP}_{-1}^{(-1,0,0)}
            (\Sigma_{0,3}),
            $ 
       $\Bigg(
        \pm \begin{bmatrix}
         1 & 0\\
         1 & 1
        \end{bmatrix},
         \pm \begin{bmatrix}
        2 & 0 \\
        0 & \frac{1}{2}
        \end{bmatrix}
        \Bigg)\in \mathrm{HP}_0^{(-1,0,0)}
            (\Sigma_{0,3}),$ 
        $\Bigg(
        \pm \begin{bmatrix}
         1 & 1 \\
        0 & 1
        \end{bmatrix},
        \pm \begin{bmatrix}
       \frac{1}{2} & 0 \\
        0 &   2
        \end{bmatrix}
        \Bigg) \in \mathrm{HP}_0^{(+1,0,0)}
            (\Sigma_{0,3})$,  and 
         $\Bigg(
        \pm \begin{bmatrix}
         -3 & 4 \\
        -4 & 5
        \end{bmatrix},
         \pm \begin{bmatrix}
        2 & 0 \\
        0 & \frac{1}{2}
        \end{bmatrix}
        \Bigg)\in \mathrm{HP}_1^{(+1,0,0)}
            (\Sigma_{0,3}).$
            This shows that $\mathrm{HP}_0^{(s_1,0,0)}
            (\Sigma_{0,3})$ and $\mathrm{HP}_{s_1}^{(s_1,0,0)}
            (\Sigma_{0,3})$ are non-empty. 

Next we show that if $\mathrm{HP}_n^{(s_1, 0,0)}
            (\Sigma_{0,3})$ is non-empty for $s_1\in\{\pm 1\},$ then $n = 0$ or $n=s_1.$ Since $\mathrm{HP}_0^{(s_1, 0,0)}
            (\Sigma_{0,3})$ is non-empty for both $s_1=+1$ and $s_1=-1,$ it suffices to show that if $\mathrm{HP}_n^{(s_1, 0,0)}
            (\Sigma_{0,3})$ is non-empty for $s_1\in\{\pm 1\}$ and $n\in\{\pm 1\},$ then $n=s_1.$ For $\phi\in \mathrm{HP}_n^{(s_1,0,0)}
            (\Sigma_{0,3})$ with $n\in\{\pm 1\},$ let $\widetilde{\phi(c_1)}$ be the lift of $\phi(c_1)$ in $\mathrm{Par}_0\subset\widetilde{\SL}$ and let  $\widetilde{\phi(c_2)}$ and $\widetilde{\phi(c_3)}$ respectively be the lifts of  $\phi(c_2)$ and $\phi(c_3)$ in $\mathrm{Hyp}_0\subset\widetilde{\SL}.$ 
            Then the evaluation map sends $\phi$ to  $\widetilde{\phi(c_1)}\widetilde{\phi(c_2)}
            \in \mathrm{Hyp}_n.$
            Similar to part (1), for $i=1,2,$ let $\{g_{i,t}\}$ be a path connecting $\pm\mathrm{I}$ and $\phi(c_i)$ in the one-parameter subgroup of $\psl$ generated by $\phi(c_i),$ and let $\{\widetilde{g_{1,t}g_{2,t}}\}$ be the lift of $\{g_{1,t}g_{2,t}\}$ in $\widetilde{\SL}$ starting from $\mathrm I$ and ending with $\widetilde{\phi(c_1)}\widetilde{\phi(c_2)}.$  Up to conjugation, we can assume that $\phi(c_2)=\pm \begin{bmatrix}
        e^\lambda & 0 \\
        0 & e^{-\lambda}
        \end{bmatrix}$ for some $\lambda>0$; and by a re-parametrization if necessary, we can assume that 
        $$g_{2,t}=\pm \begin{bmatrix}
        e^{\lambda t} & 0 \\
        0 & e^{-\lambda t}
        \end{bmatrix}$$
        for all $t\in [0,1].$ Similarly, we can also assume that 
        $$g_{1,t}=\pm \abcd\begin{bmatrix}
        1 & s_1 t \\
        0 & 1
        \end{bmatrix}\abcd^{-1}$$
        for some $\pm\abcd\in\psl.$  Then $\widetilde{g_{1,t}g_{2,t}}$ projects down to an $\SL$-matrix
          $$ A_t= 
        \begin{bmatrix}
               (1-s_1act)e^{\lambda t}
                & s_1a^2te^{-\lambda t}  \\
                -s_1c^2te^{\lambda t} 
                &  (1+s_1act)e^{-\lambda t}
            \end{bmatrix}$$
            with $\tr(A_t)=2\cosh (\lambda t)-2s_1act \sinh(\lambda t).$ 
           Since $\widetilde{\phi(c_1)}\widetilde{\phi(c_2)}\in \mathrm{Hyp}_{\pm 1},$ $
           \tr(A_1)<-2;$ and since $\tr(A_0) = \tr (\mathrm I)=2>-2,$ there exist some $t_0\in (0,1)$ such that $
           \tr(A_{t_0})=-2,$ i.e., $\widetilde{g_{1,t_0}g_{2,t_0}}\in \mathrm{Par}_{-1}^+\cup\Par_1^-.$  By Lemma \ref{lem_offdiag}, for all such $t_0,$ if $a\neq 0,$ then the sign of $\widetilde{g_{1,t_0}g_{2,t_0}}$ equals $sgn\big(-s_1a^2t_0e^{-\lambda t_0}\big)=-sgn(s_1);$ and if $c\neq 0,$  then the sign of $\widetilde{g_{1,t_0}g_{2,t_0}}$ equals $sgn\big(-s_1c^2t_0e^{\lambda t_0}\big)=-sgn(s_1).$ 
           In any case, 
$\widetilde{g_{1,t_0}g_{2,t_0}}\in \mathrm{Par}^{-sgn(s_1)}_{s_1}$. 
           As a consequence, $\widetilde{\phi(c_1)}\widetilde{\phi(c_2)}\in \mathrm{Hyp}_{s_1},$ which among $\mathrm{Hyp}_{\pm 1}$ is the one adjacent to $\mathrm{Par}_{s_1}^{-sgn(s_1)},$ and $n=s_1.$ 
        \bigskip
        
        For the connectedness, let $(n,s)$ be a pair in (1) or (2) where the corresponding $\HPns(\pants)$ is nonempty.
        By Corollary \ref{cor_evimage} below, the image of $\HPns(\Sigma_{0,3})$ under the evaluation map is $\Hyp_n$, which is connected. Then by Theorem \ref{thm_PLP}, the domain $\mathrm{HP}_n^s(\Sigma_{0,3})$ of $\ev: \mathrm{HP}_n^s(\Sigma_{0,3})\to\Hyp_n$ is connected, completing the proof.
        \end{proof}
        By Theorem \ref{thm_pants2} (1), (2) and Proposition \ref{prop_evimage}, we have the following corollary, which is needed in the proofs of Theorem \ref{thm_pants2} (3), and of the main results in Section 5 and Section 6.
        
    \begin{corollary}\label{cor_evimage}
    For $n\in\mathbb Z$ and $s \in\{-1,0,+1\}^3,$ let $\mathrm{HP}_n^s(\Sigma_{0,3})=e^{-1}(n)\cap s^{-1}(s).$
        \begin{enumerate}[(1)] 
            \item For $s_1, s_2\in \{\pm 1\},$ $\ev\Big(\HP_{_{\frac{1}{2}(s_1+s_2)}}^{(s_1,s_2,0)}(\Sigma_{0,3})\Big)= \Hyp_{\frac{1}{2}(s_1+s_2)}$.

            \item For $s_1\in\{\pm 1\},$ $\ev\big(\HP_{0}^{(s_1,0,0)}(\Sigma_{0,3})\big)= \Hyp_0$
             and 
             $\ev\big(\HP_{s_1}^{(s_1,0,0)}(\Sigma_{0,3})\big)= \Hyp_{s_1}$.

             \item For $s_2\in\{\pm 1\},$ $\ev\big(\HP_{0}^{(0,s_2,0)}(\Sigma_{0,3})\big)= \Hyp_0$
             and 
             $\ev\big(\HP_{s_2}^{(0,s_2,0)}(\Sigma_{0,3})\big)= \Hyp_{s_2}$.


        \end{enumerate}
    \end{corollary}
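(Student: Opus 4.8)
The plan is to establish, for each pair $(n,s)$ occurring in (1)--(3), the two inclusions $\ev\big(\HP_n^s(\Sigma_{0,3})\big)\subseteq\Hyp_n$ and $\Hyp_n\subseteq\ev\big(\HP_n^s(\Sigma_{0,3})\big)$. The first inclusion I would prove uniformly: in every one of these cases the third peripheral element carries the sign $0$, so for $\phi\in\HP_n^s(\Sigma_{0,3})$ the element $\phi(c_3)$ is hyperbolic, its unique lift $\widetilde{\phi(c_3)}$ to $\overline{\Hyp_0}$ lies in $\Hyp_0$, and hence by the definition of the relative Euler class $\ev(\phi)=\widetilde{\phi(c_1)}\widetilde{\phi(c_2)}=z^{n}\widetilde{\phi(c_3)}^{-1}\in z^{n}\Hyp_0=\Hyp_n$; this is the observation recorded just before Proposition \ref{prop_evimage}, now specialized to the case where $c_3$ is hyperbolic.

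For the reverse inclusion I would combine non-emptiness with the path-lifting property. By Theorem \ref{thm_pants2}(1) the space $\HP_{\frac12(s_1+s_2)}^{(s_1,s_2,0)}(\Sigma_{0,3})$ in case (1) is non-empty, and by Theorem \ref{thm_pants2}(2) (applied directly, and once more after the permutation of peripheral elements taking $(0,s_2,0)$ to $(s_2,0,0)$) the spaces in (2) and (3) are non-empty. Fix such a pair $(n,s)$ and a representation $\phi_0\in\HP_n^s(\Sigma_{0,3})$; by the first paragraph $\ev(\phi_0)\in\Hyp_n$. Each $s$ above has the form $(s_1,s_2,0)$ with $s_1,s_2\in\{-1,0,1\}$ and non-empty $\HP_n^s(\Sigma_{0,3})$, so Theorem \ref{thm_PLP} applies: the restricted map $\ev:\HP_n^s(\Sigma_{0,3})\to\Hyp_n$ has the path-lifting property. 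Given an arbitrary $\widetilde C\in\Hyp_n$, I would take a path in the path-connected set $\Hyp_n$ from $\ev(\phi_0)$ to $\widetilde C$, lift it through $\ev$ to a path in $\HP_n^s(\Sigma_{0,3})$ starting at $\phi_0$, and read off from its endpoint $\phi$ that $\ev(\phi)=\widetilde C$; hence $\widetilde C\in\ev\big(\HP_n^s(\Sigma_{0,3})\big)$. Together with the first inclusion this yields the asserted equalities.

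An alternative, more computational route --- the one matching the phrase ``by Theorem \ref{thm_pants2} and Proposition \ref{prop_evimage}'' --- is to read the reverse inclusion straight off the explicit one-parameter families constructed in the proof of Proposition \ref{prop_evimage}: those families already give $\Hyp_1\subseteq\ev(\Parp\times\Parp)$, $\Hyp_{-1}\subseteq\ev(\Parm\times\Parm)$ and $\Hyp_0\subseteq\ev(\Parp\times\Parm)$ for (1), and $\Hyp_0\cup\Hyp_{s_1}\subseteq\ev(\Par^{sgn(s_1)}\times\Hyp)$ for (2), with (3) obtained by interchanging the two factors. Any representation $\phi$ realizing an element of $\Hyp_n$ through one of these images automatically has $\phi(c_3)$ hyperbolic, hence carries exactly the prescribed sign $s$, and satisfies $e(\phi)=n$ (forced by $\ev(\phi)\in\Hyp_n$, in agreement with the value of $n$ attached to $s$ by Theorem \ref{thm_pants2}(1),(2)), so it lies in $\HP_n^s(\Sigma_{0,3})$. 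The only genuinely delicate point, in either route, is the bookkeeping that pins the correct relative Euler class $n$ to the correct sign $s$: one must be sure the representations produced fall into the intended component $\HP_n^s(\Sigma_{0,3})$ rather than some $\HP_{n'}^{s}(\Sigma_{0,3})$ with $n'\neq n$, which is precisely what the determination of $n$ by $s$ in Theorem \ref{thm_pants2}(1),(2) guarantees, and that case (3) is not treated from scratch but deduced from case (2) via the symmetry exchanging the first two peripheral slots.
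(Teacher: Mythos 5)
Your forward inclusion and your ``alternative, more computational route'' are essentially the paper's own proof: the paper likewise deduces $\ev\big(\HP_n^s(\Sigma_{0,3})\big)\subseteq\Hyp_n$ from $s_3=0$ and the definition of the relative Euler class, and proves the reverse inclusion by producing, for each $\widetilde C\in\Hyp_n$, a pair of parabolic/hyperbolic elements whose lifted product is $\widetilde C$ via Proposition \ref{prop_evimage}, then matching $n$ with $s$ through Theorem \ref{thm_pants2}(1),(2); case (3) is handled by swapping $c_1$ and $c_2$, exactly as you propose. The one small difference is the sign bookkeeping in (1): you read the sign-refined inclusions ($\Hyp_{1}\subseteq\ev(\Parp\times\Parp)$, $\Hyp_{-1}\subseteq\ev(\Parm\times\Parm)$, $\Hyp_0\subseteq\ev(\Parp\times\Parm)$) off the explicit families inside the \emph{proof} of Proposition \ref{prop_evimage}, whereas the paper uses only its statement and, when the constructed representation falls in $\HP_{0}^{(-s_1,-s_2,0)}(\Sigma_{0,3})$, flips the sign by conjugating with an element $h\in\pgl\setminus\psl$ commuting with $\pm C$ (Proposition \ref{prop_pglpsl}), which preserves $\ev=\widetilde C$. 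Either version of the bookkeeping is fine.

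Your primary route, however, has a genuine gap: surjectivity cannot safely be extracted from Theorem \ref{thm_PLP}. The path-lifting property there comes from Lemma \ref{lem_plp}, and that lemma is only meaningful when every fiber is non-empty; for a non-surjective submersion (e.g.\ the inclusion of a proper open subset of a manifold) the ``connected fiber'' hypothesis is vacuous over the missing points while the conclusion fails, because the reparametrization $\tau$ is required to be surjective, so a lifted path must reach the endpoint of the given path. Hence applying Lemma \ref{lem_plp} to $\ev:\HP_n^s(\Sigma_{0,3})\to\Hyp_n$ already presupposes that $\ev$ hits every point of $\Hyp_n$ --- which is precisely the content of Corollary \ref{cor_evimage}. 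This matches the paper's logical order: the corollary is proved first by explicit construction, and only then is it fed, together with Theorem \ref{thm_PLP}, into the connectedness assertion of Theorem \ref{thm_pants2}. Deriving the corollary from Theorem \ref{thm_PLP} therefore inverts that order and is circular (or at least rests on a reading of the path-lifting theorem that its proof does not support); keep the constructive argument as the actual proof and, if you mention the path-lifting route at all, present it only as a consistency check.
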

    \begin{proof}
        
        
         For each pair $(n,s)$ where $s_3 = 0$ and $\HPns(\pants) \neq \emptyset$, we have $\ev\big(\HPns(\pants)\big)\subset \Hyp_n$. Here, for $(n,s)$ in (1) and (2), we find a representation $\phi\in \HPns(\pants)$ for each $\widetilde{C}\in \Hyp_n$ such that $\ev(\phi) = \widetilde{C}$, and conclude that $\Hyp_n\subset \ev\big(\HPns(\pants)\big)$.       
         \\
        
        For (1), let $\widetilde{C}\in \Hyp_{\frac{1}{2}(s_1+s_2)}$. By Proposition \ref{prop_evimage}, there exists a $(g_1, g_2)\in \Par\times \Par$ such that $ \ev(g_1,g_2) = \widetilde{C}$. 
        Define a representation $\phi\in \HP(\pants)$ by setting $\phi(c_1) = g_1$ and $\phi(c_2) = g_2$. Since $\ev(\phi) = \widetilde{C}\in \Hyp_{\frac{1}{2}(s_1+s_2)}$, 
        $\phi\in \HP_{_{\frac{1}{2}(s_1+s_2)}}^{(s'_1,s'_2,0)}(\Sigma_{0,3})$ for some $s'_1, s'_2\in \{\pm 1\}$. If $s'_1 = s_1$ and $s'_2 = s_2$, then $\phi$ is a desired representation. Otherwise, since Theorem \ref{thm_pants2} implies $\frac{1}{2}(s_1+s_2) = \frac{1}{2}(s'_1+s'_2)$, we have $s'_1 = -s_1, s'_2 = -s_2$ and $e(\phi) = 0$. Let $\pm C\in \Hyp$ be a projection of $\widetilde{C}\in \Hyp_0$ to $\psl$. 
        Then $\pm C = g\bigg(\pm
        \begin{bmatrix}
            e^{\lambda t} & 0 \\
            0 & e^{-\lambda t}
        \end{bmatrix}
        \bigg)g^{-1}$ for some $\lambda >0$ and $g\in \psl$. 
         Let $h=g\bigg(\pm \begin{bmatrix}
        1 & 0 \\
        0 & -1
    \end{bmatrix}\bigg)g^{-1}\in\pgl\setminus\psl$. 
    By Proposition \ref{prop_pglpsl}, 
    the $\pgl$-conjugation $h\phi h^{-1}$ of $\phi$ is in $\HP^{(s_1, s_2, 0)}_{0}$.
    Moreover, $(hg_1h^{-1})(hg_2h^{-1}) =  \pm C$ and hence $\ev(h\phi h^{-1}) = \widetilde{C}$, as it is a unique lift of $\pm C$ to $\Hyp_0$.
    
        For (2), we first let $\widetilde{C}\in \Hyp_0$. Then by Proposition \ref{prop_evimage}, there exists a $(g_1, g_2)\in \Par\times \Hyp$ such that $ \ev(g_1,g_2) = \widetilde{C}$. 
        Define a representation $\phi\in \HP(\pants)$ by setting $\phi(c_1) = g_1$ and $\phi(c_2) = g_2$. Since $\ev(\phi) = \widetilde{C}\in \Hyp_{0}$, 
        $\phi\in \HP_0^{(s'_1,0,0)}(\Sigma_{0,3})$ for some $s'_1\in \{\pm 1\}$. Letting $\pm C \in \Hyp$ and $h\in \pgl\setminus \psl$ as above, we obtain a desired representation $\phi$ if $s'_1 = s_1$, or $h\phi h^{-1}$ if $s'_1 = -s_1$. 
        For $\widetilde{C}\in \Hyp_{s_1}$, again by Proposition \ref{prop_evimage} there exists a $\phi\in \HP^{(s'_1,0,0)}_{s_1}(\pants)$ for some $s'_1\in \{\pm 1\}$ such that $\ev(\phi) = \widetilde{C}$. Then Theorem \ref{thm_pants2} implies $s'_1 = s_1$.

        For (3), as it differs from (2) by a permutation of the pepheral elements $c_1$ and $c_2$, it follows directly from (2).
    \end{proof}

   \begin{remark}\label{rmk_MWpants} Theorem \ref{thm_pants2} together with Theorem \ref{thm_goldman} imply that, if $s\notin \{\pm 1\}^3,$  then $\mathrm{HP}_n^s(\Sigma_{0,3})$ is non-empty and connected if and only if $(n,s)$ satisfies the generalized Milnor-Wood inequality 
$$\chi(\Sigma_{0,3})+ p_+(s) \leqslant n \leqslant -\chi(\Sigma_{0,3}) - p_-(s).$$  I.e., Theorem \ref{thm_main4} holds for the three-hole sphere $\Sigma_{0,3}.$
   \end{remark}

    \subsubsection{Type-preserving representations}\label{subsection_TP}

    Finally, we describe the connected components of the space $\mathcal R (\Sigma_{0,3})$ of type-preserving representations. 
    Let $e: \mathcal R(\Sigma_{0,3})\to \mathbb{Z}$ be the relative Euler class map and  $s: \mathcal R(\Sigma_{0,3})\to \{\pm 1\}^3$ be the sign map defined respectively by sending $\phi$ to the relative Euler class $e(\phi)$ and the sign $s(\phi)$ of $\phi.$
    For $n\in \mathbb{Z}$, let 
    $\mathcal R_n(\Sigma_{0,3}) = e^{-1}(n)$.
   Then by Proposition \ref{prop_evimage}, $\mathcal R_n(\Sigma_{0,3})$ is non-empty if and only if $n\in \{-1,0,1\}.$

    \begin{theorem}\label{thm_pants} For $n\in \mathbb{Z}$ and $s\in\{\pm 1\}^3,$  let $\mathcal R^s_n(\Sigma_{0,3}) = e^{-1}(n)\cap s^{-1}(s)$. 
    
    \begin{enumerate}[(1)]

\item $\mathcal R_{-1}^s(\Sigma_{0,3})$ is non-empty if and only if  $s=(-1,-1,-1).$ 

\item $\mathcal R_{0}^s(\Sigma_{0,3})$ is non-empty if and only if,  up to a permutation of the peripheral elements, $s=(+1,-1,-1)$ or $(+1,+1,-1).$

\item $\mathcal R_{1}^s(\Sigma_{0,3})$ is non-empty if and only if $s=(+1,+1,+1).$

 \end{enumerate}
Moreover, all the non-empty spaces above are connected. 
\end{theorem}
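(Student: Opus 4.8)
The plan is to run everything through the lifted product map $\ev:\mathcal R(\pants)\to\univcover$ of Subsection \ref{subsection_PLP} together with the character machinery of Proposition \ref{prop_char}, establishing in turn the emptiness of all pairs $(n,s)$ outside the listed families, the non-emptiness of the listed ones, and their connectedness.

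\emph{Emptiness.} Let $\phi$ be a type-preserving representation of $\pi_1(\pants)$. By the definition of the relative Euler class, $\ev(\phi)=\widetilde{\phi(c_1)}\widetilde{\phi(c_2)}=z^{e(\phi)}\widetilde{\phi(c_3)}^{-1}$, and since $\widetilde{\phi(c_3)}^{-1}\in\Par_0$ this element lies in $\Par_{e(\phi)}$; comparing with $\ev\big(\mathcal R(\pants)\big)=\Par_{-1}^+\cup\Par_0^-\cup\Par_0^+\cup\Par_1^-$ from Proposition \ref{prop_evimage}(1) forces $e(\phi)\in\{-1,0,1\}$, and in the cases $e(\phi)=-1$ and $e(\phi)=1$ forces $\ev(\phi)$ into $\Par_{-1}^+$ and $\Par_1^-$ respectively. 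Since inversion interchanges $\Par_0^+$ and $\Par_0^-$, this gives $s_3=-1$ when $e(\phi)=-1$ and $s_3=+1$ when $e(\phi)=1$. Because $c_1c_2c_3=1$ in $\pi_1(\pants)$, the pairs $(\phi(c_2),\phi(c_3))$ and $(\phi(c_3),\phi(c_1))$ also determine type-preserving representations of $\pi_1(\pants)$, and the same argument applied to them controls $s_1$ and $s_2$ in the identical way (cyclically relabeling the peripheral elements does not change the relative Euler class); hence $e(\phi)=-1\Rightarrow s=(-1,-1,-1)$ and $e(\phi)=1\Rightarrow s=(+1,+1,+1)$, which are the ``only if'' parts of (1) and (3).

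It remains to treat $e(\phi)=0$. Then $\widetilde{\phi(c_1)},\widetilde{\phi(c_2)}\in\Par_0$ have trace $2$, and $\widetilde{\phi(c_1)}\widetilde{\phi(c_2)}=\widetilde{\phi(c_3)}^{-1}\in\Par_0$ also has trace $2$, so the character of $\phi$ equals $(2,2,2)$ and $\kappa(2,2,2)=2$; by Proposition \ref{prop_char}(1), $\phi$ is reducible. A reducible type-preserving representation has $\phi(c_1),\phi(c_2),\phi(c_3)$ all fixing the same point of $\partial\mathbb{H}^2$, so up to conjugation $\phi(c_i)=\pm\left[\begin{smallmatrix}1&\mu_i\\0&1\end{smallmatrix}\right]$ with $\mu_i\neq0$ and $\mu_1+\mu_2+\mu_3=0$; by Lemma \ref{lem_offdiag} (with $n=0$ even) $s_i=sgn(\mu_i)$, so the $s_i$ cannot all be equal, giving the ``only if'' part of (2). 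Conversely, for any $\mu_i\neq0$ with $\mu_1+\mu_2+\mu_3=0$ and prescribed signs, the abelian representation $c_i\mapsto\pm\left[\begin{smallmatrix}1&\mu_i\\0&1\end{smallmatrix}\right]$ lies in $\mathcal R_0^s(\pants)$, with relative Euler class $0$ by Proposition \ref{prop_reducible}; this gives non-emptiness in all cases of (2). Non-emptiness for $n=\pm1$ follows from $\Par_{-1}^+,\Par_1^-\subset\ev\big(\mathcal R(\pants)\big)$ in Proposition \ref{prop_evimage}(1) (or from an explicit pair of parabolics), the sign being then forced by the analysis above.

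\emph{Connectedness.} For $n=\pm1$ the character of every $\phi\in\mathcal R_n^{(\pm1,\pm1,\pm1)}(\pants)$ equals $(2,2,-2)$ — the first two coordinates are traces of elements of $\Par_0$, the third is the trace of $\ev(\phi)\in\Par_{\pm1}$ — and $\kappa(2,2,-2)=18\neq2$; so by Proposition \ref{prop_char}(2) the $\SL$-lifts of any two such representations are $\GL$-conjugate, and they must be $\SL$-conjugate (otherwise Proposition \ref{prop_pglpsl} would force opposite signs), hence the representations are $\psl$-conjugate. Thus each non-empty $\mathcal R_n^{(\pm1,\pm1,\pm1)}(\pants)$ is a single $\psl$-conjugacy orbit, so connected. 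For $n=0$, the reducibility analysis shows $\mathcal R_0^s(\pants)$ is exactly the union of the $\psl$-conjugates of the set $F_s$ of normal forms $c_i\mapsto\pm\left[\begin{smallmatrix}1&\mu_i\\0&1\end{smallmatrix}\right]$ with $\mu_1+\mu_2+\mu_3=0$ and $sgn(\mu_i)=s_i$; identifying $F_s$ with $\{\mu\in\mathbb{R}^3: \mu_1+\mu_2+\mu_3=0,\ sgn(\mu_i)=s_i\}$, which is the intersection of a hyperplane with an open orthant and hence convex and connected, the continuous surjection $\psl\times F_s\to\mathcal R_0^s(\pants)$, $(g,\phi)\mapsto g\phi g^{-1}$, has connected domain and therefore connected image. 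The main points requiring care are the $\univcover$-sign bookkeeping in the emptiness step and the observation that, for $\pants$, the Euler-number-$0$ type-preserving locus coincides with the (abelian) reducible locus; once these are in hand, (1)--(3) follow.
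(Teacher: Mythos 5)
Your argument is correct and runs on the same machinery as the paper's proof: the image constraint $\ev\big(\mathcal R(\pants)\big)= \Par_{-1}^+\cup\Par_0^-\cup\Par_0^+\cup\Par_1^-$ (Lemma \ref{lem_evimage}, Proposition \ref{prop_evimage}) to pin down the Euler class and the sign of $\phi(c_3)$; reducibility via $\kappa(2,2,2)=2$ and Proposition \ref{prop_char}(1) in the $n=0$ case, which is exactly the content of the paper's Lemma \ref{lem_R0pants}; and the character-plus-$\pgl$ argument (Propositions \ref{prop_char}(2) and \ref{prop_pglpsl}) for connectedness when $n=\pm1$. The one genuinely different step is how you determine $s_1$ and $s_2$ when $e(\phi)=\pm1$: the paper normalizes $\widetilde{\phi(c_1)}\,\widetilde{\phi(c_2)}$, computes the off-diagonal entry ($c=-4\epsilon$) explicitly and reads off all three signs from Lemma \ref{lem_offdiag}, whereas you determine only $s_3$ from the component of $\ev(\phi)$ and then cycle the peripheral elements. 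This is legitimate and spares you the matrix computation, but it rests on the claim, stated only parenthetically, that cyclic relabeling preserves the relative Euler class; add the one-line justification that the canonical lifts are unchanged and $\widetilde{\phi(c_2)}\,\widetilde{\phi(c_3)}\,\widetilde{\phi(c_1)}=\widetilde{\phi(c_1)}^{-1}z^{n}\widetilde{\phi(c_1)}=z^{n}$ by centrality of $z$. Two cosmetic points: deducing non-emptiness for $n=\pm1$ from $\Par_{-1}^+,\Par_1^-\subset\ev\big(\mathcal R(\pants)\big)$ is fine and non-circular (Proposition \ref{prop_evimage} is proved beforehand), where the paper simply exhibits explicit pairs; and the phrase ``$\SL$-lifts'' is loose, since odd-Euler-class representations do not lift to $\SL$ --- as the paper notes, you are only conjugating the pairs of trace-$2$ representatives of $\big(\phi(c_1),\phi(c_2)\big)$, which is all the argument needs. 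Your $n=0$ connectedness (the $\psl$-orbit of a convex family of normal forms is the continuous image of a connected set) is the paper's conjugate-then-interpolate argument in slightly slicker packaging.
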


For the proof of Theorem \ref{thm_pants}, we need the following lemma, which will also be needed in the proof of Theorem 5.6.
    \begin{lemma}\label{lem_R0pants}
        The space $\mathcal{R}_0(\Sigma_{0,3})$ consists of abelian representations.
    \end{lemma}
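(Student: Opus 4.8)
The plan is to show that a type-preserving representation $\phi:\pi_1(\Sigma_{0,3})\to\psl$ with relative Euler class $0$ must be reducible, and then upgrade reducibility to abelianity. For the first step, I would argue by contradiction: suppose $\phi\in\mathcal R_0(\Sigma_{0,3})$ is irreducible. Then $\phi(c_1)$ and $\phi(c_2)$ do not commute, so, working with lifts $A,B\in\SL$ of $\phi(c_1),\phi(c_2)$, Proposition \ref{prop_char}(1) gives $\kappa\big(\chi(A,B)\big)=\tr[A,B]$, and irreducibility together with the fact that both $\phi(c_i)$ are parabolic forces $\kappa\big(\chi(A,B)\big)\neq 2$. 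Now use the evaluation-map picture: since $e(\phi)=0$, the unique lifts $\widetilde{\phi(c_1)},\widetilde{\phi(c_2)}$ in $\overline{\Hyp_0}=\Par_0\cup\{\mathrm I\}$ (they lie in $\Par_0$, being parabolic) satisfy $\ev(\phi)=\widetilde{\phi(c_1)}\widetilde{\phi(c_2)}\in\overline{\Hyp_0}$, and by Proposition \ref{prop_evimage}(1) the product of two parabolic elements that lands in $\overline{\Hyp_0}$ lies in $\mathrm{Par}_0\cup\{\mathrm I\}$; hence $\phi(c_3)=(\phi(c_1)\phi(c_2))^{-1}$ is parabolic with lift in $\Par_0$, consistent with $\phi$ being type-preserving with $e(\phi)=0$. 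So far no contradiction, which tells me the Euler-class bookkeeping alone is not enough; the real content is that a product of two parabolics cannot be parabolic unless the pair is reducible.

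So the key computational step is: if $P_1,P_2\in\psl$ are parabolic and $P_1P_2$ is parabolic or $\pm\mathrm I$, then $P_1$ and $P_2$ share a fixed point on $\partial\mathbb H^2$, equivalently they lie in a common one-parameter (parabolic) subgroup, equivalently the representation is reducible. I would prove this directly: conjugate so that $P_1=\pm\begin{bmatrix}1 & s_1\\ 0 & 1\end{bmatrix}$ and write $P_2=\pm\abcd\begin{bmatrix}1 & s_2\\ 0 & 1\end{bmatrix}\abcd^{-1}$ with $\pm\abcd\in\psl$; then a trace computation (exactly the matrix $A_t$ appearing in the proof of Theorem \ref{thm_pants2}(1), specialized to $t=1$) gives $\tr(P_1P_2)=2-s_1s_2c^2$, and this equals $\pm 2$ forcing $s_1s_2c^2\in\{0,4\}$. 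The case $c=0$ is exactly the reducible (common-fixed-point) case. The case $s_1s_2c^2=4$, i.e. $s_1=s_2$ and $c^2=4$, gives $\tr(P_1P_2)=-2$, so $P_1P_2\in\Par$; but then the off-diagonal entry $-s_2c^2$ of $A_1$ is nonzero and Lemma \ref{lem_offdiag} pins down the component, and tracing through the one-parameter path $\{g_{1,t}g_{2,t}\}$ as in Theorem \ref{thm_pants2}(1) shows $\widetilde{\phi(c_1)}\widetilde{\phi(c_2)}\in\Hyp_{s_2}$ or at least not in $\overline{\Hyp_0}$ — contradicting $e(\phi)=0$. (Alternatively, and more cleanly: $\kappa(2,2,\pm 2)=(\mp2-2)^2+2\in\{2,18\}$; the value $2$ corresponds to reducible by Proposition \ref{prop_char}(1), and the value $18$ means $\tr[A,B]=18\neq 2$, so the commutator is hyperbolic and, combined with the Euler-class constraint, is incompatible — I will pick whichever of these two routes is shortest once I write it out.) Either way we conclude $\phi$ is reducible.

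Finally, from reducible to abelian: a reducible $\phi$ is conjugate into the Borel subgroup, so every $\phi(c_i)$ is upper triangular; being parabolic, each $\phi(c_i)$ is conjugate within the Borel to $\pm\begin{bmatrix}1 & *\\ 0 & 1\end{bmatrix}$, but an upper-triangular parabolic element is automatically of the form $\pm\begin{bmatrix}1 & t_i\\ 0 & 1\end{bmatrix}$ (a diagonal part would make it hyperbolic or elliptic or $\pm\mathrm I$). Hence all $\phi(c_i)$ lie in the abelian unipotent subgroup $\{\pm\begin{bmatrix}1 & t\\ 0 & 1\end{bmatrix}\}$, and since $c_1,c_2$ generate $\pi_1(\Sigma_{0,3})$, the image $\phi(\pi_1(\Sigma_{0,3}))$ is abelian. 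I expect the main obstacle to be the second paragraph: making the dichotomy $c=0$ versus $s_1s_2c^2=4$ genuinely force reducibility, i.e.\ ruling out the $\tr(P_1P_2)=-2$ branch cleanly using the Euler-class-zero hypothesis rather than getting tangled in which component of $\Par_{\pm1}$ things land in. Everything else is routine linear algebra and an appeal to Proposition \ref{prop_char} and Proposition \ref{prop_evimage}.
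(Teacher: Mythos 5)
Your argument does reach the right conclusion, but it takes a longer road than necessary and, as written, rests on one claim that is false and one assessment that is mistaken. The paper's proof is exactly the line you abandon at the end of your first paragraph: since $e(\phi)=0$, the lifts $\widetilde{\phi(c_1)},\widetilde{\phi(c_2)}\in\Par_0$ have trace $2$ and their product equals $\widetilde{\phi(c_3)}^{-1}\in\Par_0$, so $\chi(\phi)=(2,2,2)$; now $\kappa(2,2,2)=2$, so Proposition \ref{prop_char}(1) says $\phi$ is reducible, and the Borel-subgroup step you give at the end upgrades this to abelian. So when you write ``so far no contradiction, the Euler-class bookkeeping alone is not enough,'' you are wrong: you had already forced the character to be $(2,2,2)$ and assumed $\kappa(\chi(\phi))\neq 2$; you simply did not evaluate $\kappa$ at $(2,2,2)$ to close the contradiction.

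Second, the ``key computational step'' you announce --- that two parabolics whose product is parabolic or $\pm\mathrm I$ must share a fixed point --- is false in that generality: the paper's own examples in Theorem \ref{thm_pants}, e.g. $\pm\begin{bmatrix}3&-2\\2&-1\end{bmatrix}$ and $\pm\begin{bmatrix}1&0\\2&1\end{bmatrix}$, are non-commuting parabolics whose product is parabolic; these are precisely the representations with relative Euler class $\pm1$. What is true, and what your case analysis actually uses, is the Euler-class-zero refinement: $e(\phi)=0$ forces the lifted product to equal $\widetilde{\phi(c_3)}^{-1}$, whose trace is $+2$, so your formula gives $2-s_1s_2c^2=2$, hence $c=0$, hence $\phi(c_1),\phi(c_2)$ lie in a common unipotent one-parameter subgroup and $\phi$ is abelian --- there is no need to discuss the trace $-2$ branch at all (and in that branch the product is parabolic of trace $-2$, not in $\Hyp_{s_2}$ as you wrote; only your fallback ``not in $\overline{\Hyp_0}$'' is correct). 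So your computational route can be tightened into a valid, slightly more hands-on substitute for the paper's appeal to $\kappa(2,2,2)=2$ and Proposition \ref{prop_char}(1); but as proposed it hedges between two routes, asserts a false auxiliary lemma, and dismisses the short argument that actually works. Either finish your first paragraph the way the paper does, or replace the announced lemma by the trace-$2$ identity above.
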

    \begin{proof}
        By Proposition \ref{prop_reducible},  all abelian  type-preserving  representations are in $\mathcal{R}_0(\Sigma_{0,3})$.
        Let $\phi \in \mathcal{R}_0(\Sigma_{0,3})$. For the lifts  $\widetilde{\phi(c_1)}, 
        \widetilde{\phi(c_2)}\in \Par_0$ of $\phi(c_1), 
        \phi(c_2)\in \psl$, $e(\phi) = 0$ implies that $\widetilde{\phi(c_1)}
        \widetilde{\phi(c_2)}\in \Par_0$, and hence $\chi(\phi) = (2,2,2)$. 
        By proposition \ref{prop_char} (1), $\phi$ is reducible, and hence up to a $\psl-$conjugation is a representation into the Borel subgroup of $\psl$. That is, up to conjugation, 
        $$\phi(c_1) = 
        \pm \begin{bmatrix}
            1 & t_1\\
            0 & 1
        \end{bmatrix} \quad \text{and}\quad \phi(c_2) = 
        \pm \begin{bmatrix}
            1 & t_2\\
            0 & 1
        \end{bmatrix}$$ 
        for some $t_1, t_2\in \mathbb{R}\setminus \{0\}$, which  implies that $\phi$ is abelian.
    \end{proof}

        \begin{proof}[Proof of Theorem \ref{thm_pants}] 

        For (1) and (3), under the identification of the 
        $\psl$-representations of $\pi_1(\Sigma_{0,3})$ and pairs of elements of $\psl$, we have  
        $\Bigg(\pm \begin{bmatrix}
            3 & -2 \\
            2 & -1
        \end{bmatrix},
         \pm \begin{bmatrix}
            1 & 0 \\
            2 & 1
        \end{bmatrix}
        \Bigg)\in 
        \mathcal R_{-1}^{(-1,-1,-1)} (\Sigma_{0,3})$ and
        $\Bigg(\pm \begin{bmatrix}
            -1 &  2\\
            -2 & 3
        \end{bmatrix},
         \pm \begin{bmatrix}
            1 & 2 \\
            0 & 1
        \end{bmatrix}
        \Bigg)\in 
        \mathcal R_{1}^{(+1,+1,+1)} (\Sigma_{0,3}),$ hence $\mathcal R_{-1}^{(-1,-1,-1)} (\Sigma_{0,3})$ and $\mathcal R_{1}^{(+1,+1,+1)} (\Sigma_{0,3})$ are non-empty. Next, we will show that any $\phi\in \mathcal R_{\pm 1}^s(\Sigma_{0,3})$ must be in one of these two spaces.  By abuse of notations, we let $\widetilde{\phi(c_1)}$ and $\widetilde{\phi(c_2)}$ respectively be both the lifts of $\phi(c_1)$ and $\phi(c_2)$ in $\mathrm{Par}_0,$ and their projections to $\SL.$ Since $e(\phi)=\pm 1,$ we can up to conjugation assume that $\widetilde{\phi(c_1)}\widetilde{\phi(c_2)}=-\begin{bmatrix}
        1 & -\epsilon \\
        0 & 1
        \end{bmatrix},$ where $\epsilon = \pm 1. $ Let $\ev: \mathcal R(\Sigma_{0,3})\to \widetilde{\SL}$ be the evaluation map.  Then the sign of $\ev(\phi) = \widetilde{\phi(c_1)}\widetilde{\phi(c_2)}$ equals $-sgn(\epsilon),$
        hence $\ev(\phi)\in \mathrm{Par}_{\pm 1}^{-sgn(\epsilon)}.$ By Lemma \ref{lem_evimage}, the only possibility is $\ev(\phi)\in \mathrm{Par}_{\epsilon}^{-sgn(\epsilon)}$, which implies that  $e(\phi)=\epsilon.$ Now let 
        $\widetilde{\phi(c_1)}=\abcd.$ Then $\widetilde{\phi(c_2)}=\begin{bmatrix}
        -d &  \epsilon d + b \\
        c & -\epsilon c-a 
        \end{bmatrix}.$
     Since $\tr\widetilde{\phi(c_1)}=a+d=2$ and $\tr \widetilde{\phi(c_2)}=-\epsilon c -(a+d)=2,$ we have $c= -4\epsilon.$ 
        Then by Lemma \ref{lem_offdiag}, both
        $\phi(c_1)$ and $\phi(c_2)$ have sign $-sgn(c)=-sgn(-4\epsilon)=sgn(\epsilon)$, and $\phi(c_3)$ has sign $sgn(\epsilon)$ as it is the sign of $\big(\widetilde{\phi(c_1)}\widetilde{\phi(c_2)}\big)^{-1}$. 
        Therefore, $\phi\in\mathcal R_{\epsilon}^{(\epsilon,\epsilon,\epsilon)}(\Sigma_{0,3}).$ 
\\

For (2), if $\phi\in\mathcal R_0^s(\Sigma_{0,3}),$ then Lemma \ref{lem_R0pants} implies that $\phi$ is abelian, and hence is conjugate to a representation into the one-parameter subgroup generated by $\pm\parpp.$ That is, up to a $\psl$-conjugation, 
$(\phi(c_1),\phi(c_2))=\Bigg(\pm  \begin{bmatrix}
        1 & t_1 \\
        0 & 1
        \end{bmatrix} ,\pm  \begin{bmatrix}
        1 & t_2 \\
        0 & 1
        \end{bmatrix}\Bigg)$ for some $t_1,t_2\in\mathbb R\setminus \{0\}.$  Then $\phi(c_3)=\big(\phi(c_1)\phi(c_2)\big)^{-1}=\pm \begin{bmatrix}
        1 & -(t_1+t_2) \\
        0 & 1
        \end{bmatrix};$
        and by Lemma \ref{lem_offdiag} we have $s(\phi)=(sgn(t_1), sgn(t_2), - sgn(t_1+t_2)),$ which can take all the possible values of $\{\pm 1\}^3$ except $(+1,+1,+1)$ and $(-1,-1,-1).$ Therefore, $\mathcal R_0^s(\Sigma_{0,3})$ is non-empty if and only if $s\neq (+1,+1,+1)$ and $s\neq (-1,-1,-1).$ 
\\

Finally, we prove the connectedness. For the spaces in (1) and (3), we claim that any $\phi_1$ and $\phi_2$ in $\mathcal R_{-1}^{(-1,-1,-1)} (\Sigma_{0,3}),$ or in $\mathcal R_{1}^{(+1,+1,+1)} (\Sigma_{0,3}),$ are conjugate. Indeed, for $i\in\{1,2\},$ define representations $\widetilde{\phi_i}:\pi_1(\Sigma_{0,3})\to \SL$ by $\widetilde{\phi_i}(c_1)=\widetilde{\phi_i(c_1)}$ and $\widetilde{\phi_i}(c_2)=\widetilde{\phi_i(c_2)}.$
        Notice that $\widetilde{\phi_1}$ and $\widetilde{\phi_2}$ are not $\SL$-lifting of $\phi_1$ and $\phi_2$ as they have odd relative Euler class and do not lift to $\SL.$ We have the character  $\chi(\widetilde{\phi_i})=(2,2,-2)$ for $i\in\{1,2\}.$ By Proposition \ref{prop_char}(b), since $\kappa(2,2,-2)=18\neq 2,$ the $\SL$-representations  $\widetilde{\phi_1}$ and  $\widetilde{\phi_2}$ are $\GL$-conjugate. Since the signs $s(\phi_1)=s(\phi_2),$ we have $s\big(\widetilde{\phi_1}(c_1)\big)=s\big(\widetilde{\phi_2}(c_1)\big),$ which by Proposition \ref{prop_pglpsl} implies that $\widetilde{\phi_1}$ and $\widetilde{\phi_2}$ are indeed $\SL$-conjugate. Let $A\in\SL$ so that $\widetilde{\phi_2}=A\widetilde{\phi_1}A^{-1},$ and let $g=\pm A$ be its projection in $\psl.$ Then we have $\phi_2=g \phi_1 g^{-1}.$
        Let $\{g_t\}_{t\in[0,1]}$ be any path in $\psl$ connecting $\pm\mathrm{I}$ and $g$. Then the path $\{g_t\phi_1 g_t^{-1}\}$ connects $\phi_1$ and $\phi_2$ in $\mathcal R_{-1}^{(-1,-1,-1)} (\Sigma_{0,3}),$ or in $\mathcal R_{1}^{(+1,+1,+1)} (\Sigma_{0,3}).$ 
        
For the spaces in (2), let $s\in\{\pm 1\}^3\setminus \{(+1,+1,+1), (-1,-1,-1)\},$ and let $\phi_1$ and $\phi_2$ be two representations in $\mathcal R_{0}^s(\Sigma_{0,3}).$ For $i\in\{1,2\}$  let $g_i$ be an element of $\psl$ such that $$(g_i\phi_i(c_1)g_i^{-1}, g_i\phi_i(c_1)g_i^{-1})=\Bigg(\pm  \begin{bmatrix}
        1 & a_i \\
        0 & 1
        \end{bmatrix} ,\pm  \begin{bmatrix}
        1 & b_i \\
        0 & 1
        \end{bmatrix}\Bigg)$$ for some $a_i,b_i\in\mathbb R\setminus \{0\}.$  and let $\{g_{i,t}\}_{t\in[0,1]}$ be the path in $\psl$ connecting $\pm\mathrm{I}$ and $g_i.$   Then path $\{g_{i,t}\phi_ig_{i,t}^{-1}\}$ connects $\phi_i$ and $g_i\phi_ig_i^{-1}$ in $\mathcal R_0^s(\Sigma_{0,3}).$ To connect $g_1\phi_1g_1^{-1}$ and $g_2\phi_2g_2^{-1},$ we consider the path $\{\psi_t\}_{t\in [1,2]}$ given by $$(\psi_t(c_1),\psi_t(c_2))= \Bigg(\pm  \begin{bmatrix}
        1 & (2-t)a_1+(t-1)a_2 \\
        0 & 1
        \end{bmatrix} ,\pm  \begin{bmatrix}
        1 & (2-t)b_1+(t-1)b_2 \\
        0 & 1
        \end{bmatrix}\Bigg).$$ 
        Since $s(\phi_1)=s(\phi_2)=s,$ we have $sgn(a_1)=sgn(a_2)$, $sgn(b_1)=sgn(b_2)$ and $sgn(a_1+b_1)=sgn(a_2+b_2).$ As a consequence, for all $t\in [1,2],$ we have $sgn\big((2-t)a_1+(t-1)a_2\big)=sgn(a_1),$ $sgn\big((2-t)b_1+(t-1)b_2\big)=sgn(b_1),$ and $-sgn\big((2-t)a_1+(t-1)a_2 +(2-t)b_1+(t-1)b_2\big)=-sgn(a_1+b_1)$, which implies that $s(\psi_t)=s.$ Therefore, $\{\psi_t\}_{t\in [1,2]}$ is  path in $\mathcal R_0^s(\Sigma_{0,3})$ connecting $g_1\phi_1g_1^{-1}$ and $g_2\phi_2g_2^{-1}.$ As a consequence, $\mathcal R_0^s(\Sigma_{0,3})$ is connected.
\end{proof}

\begin{remark} Theorem \ref{thm_pants} implies that Theorem \ref{thm_main2} holds with none of the pairs $(n,s)$ satisfying the generalized Milnor-Wood inequality 
and all the components are exceptional, where $\mathcal R_{-1} ^{(-1,-1,-1)} (\Sigma_{0,3})$ and $\mathcal R_{1} ^{(+1,+1,+1)} (\Sigma_{0,3})$ consist of discrete faithful representations  and the other six consist of abelian representations.
\end{remark}
\section{A genericity property}

    Let $\Sigma = \Sigma_{g,p}$ be a surface with genus $g$ and $p\geqslant 1$ punctures such that the Euler characteristic $\chi(\Sigma)=2-2g-p\leqslant -2$. 
     We will consider a maximal dual-tree decomposition of $\Sigma$, that is, a pants decomposition of $\Sigma$ whose dual graph is a tree.
    Each vertex of this dual graph corresponds to a subsurface homeomorphic to either $\torus$ or $\pants$.
    The union of all subsurfaces in this decomposition that are homeomorphic to $\pants$ is a connected subsurface of $\Sigma$ with genus zero, as otherwise the dual graph would contain a loop.
    Therefore, there are exactly $g+p-2$ subsurfaces in the maximal dual-tree decomposition homeomorphic to $\pants$, which we denote by $P_i$ for $i\in \{1, \cdots, g+p-2\}$; and there are exactly $g$ subsurfaces in the decomposition homeomorphic to $\torus$, which we denote by $T_j$ for $1\leqslant j\leqslant g$.
    By abuse of notations, we let $P_i$'s and $T_j$'s be both the subsurfaces of $\Sigma$ in the decomposition and the corresponding vertices in the dual graph. 
    For convenience, we use a maximal dual-tree decomposition $\Sigma = \apdecomp$ with an additional condition that the dual graph of the decomposition restricted to the subsurface $\displaystyle\bigcup_{i=1}^{g+p-2} P_i$ is a path, i.e., a tree where each vertex is adjacent to at most two edges. 
     We call such a decomposition an \emph{almost-path decomposition}. See Figure \ref{fig:APD_general}. For each $i\in\{1,\dots,g+p-3\}$, we connect the vertices $P_i$ and $P_{i+1}$ by an edge to form a path; and for each $j\in\{1,\dots,g\}$, we connect the vertices $T_j$ and $P_j$ by an edge. In the special case that $p = 1$, we have $p+g -2 = g - 1$, and we connect both $T_{g-1}$ and $T_g$ to $P_{g-1}$. This gives the dual graph of an almost-path decomposition. Throughout this paper, we will use this fixed almost-path decomposition. 
        \begin{figure}[hbt!]
        \centering
        \begin{overpic}[width=1.0\textwidth]{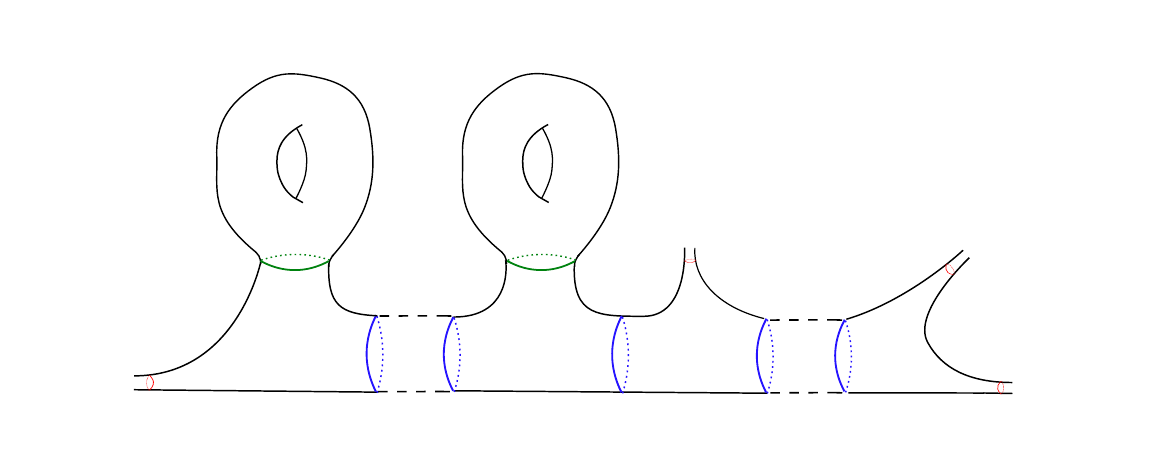}
            \put(9,5.5){\textcolor{red}{$c_1$}}
            \put(58,19){\textcolor{red}{$c_2$}}
            \put(83,17.5){\textcolor{red}{$c_{p-1}$}}
            \put(87,5){\textcolor{red}{$c_{p}$}} 

        \put(24,34){\textcolor{OliveGreen}{$T_1$}}
             \put(45,34){\textcolor{OliveGreen}{$T_g$}}
            \put(24,13.5){\textcolor{OliveGreen}{$d'_1$}}
            \put(45,13.5){\textcolor{OliveGreen}{$d'_g$}}

            \put(24,3){\textcolor{blue}{$P_1$}}
            \put(45,3){\textcolor{blue}{$P_g$}}
            \put(57,3){\textcolor{blue}{$P_{g+1}$}}
            \put(77,3)
            {\textcolor{blue}{$P_{g+p-2}$}}
            \put(29,8){\textcolor{blue}{$d_1$}}
            \put(39,8){\textcolor{blue}{$d_{g-1}$}}
            \put(49.5,8)
            {\textcolor{blue}{$d_g$}}
            \put(59.8,8){\textcolor{blue}{$d_{g+1}$}}
            \put(72,8){\textcolor{blue}{$d_{g+p-3}$}}
 \end{overpic}
        \caption{\label{fig:APD_general} Almost-path decomposition of 
        $\Sigma = \Sigma_{g,p}$.}
    \end{figure}
   
    For $j\in\{1,\dots,g\}$, let $a_j, b_j$ be the generators of $\pi_1(T_j)$. In the rest of this paper, we will use the following presentation 
    $$\pi_1(\Sigma) = \big\langle 
     a_1, b_1,a_2, b_2,\cdots, a_g, b_g,
     c_1, \cdots, c_p\ \big|\ c_1\cdot [a_1, b_1][a_2, b_2]\cdots [a_g, b_g]\cdot
     c_2\cdots c_p  \big\rangle$$ of the fundamental group of $\Sigma$, 
     where $c_1,\cdots, c_p$ are the preferred peripheral elements. 
    For each $j\in\{1,\dots,g\}$,  let 
    $$d'_j =  [a_j, b_j]$$
    be the element in $\pi_1(\Sigma)$ represented by a common boundary of $P_j$ and $T_j$; and for $i\in\{1,\dots,g + p - 3\}$, let $d_i$ be the element of $\pi_1(\Sigma)$ represented by a common boundary of $P_i$ and $P_{i+1}$, defined as 
    \begin{equation*}
  d_i  = \left\{
    \begin{array}{rcl}
      (c_1d'_1)^{-1} & \text{if } & i=1,\\
     \big(d_{i-1}^{-1}d'_i\big)^{-1} & \text{if } &  2\leqslant i \leqslant g,\\
    \big(d_{i-1}^{-1}c_{i-g+1}\big)^{-1} & \text{if }&  g+1\leqslant  i\leqslant g+p-3.
    \end{array} \right.
\end{equation*}
    By abuse of notations, we let $d'_1,\cdots, d'_g$ and $d_1,\cdots, d_{g+p-3}$ be both the elements of $\pi_1(\Sigma)$ and their representatives, and call them the \emph{decomposition curves} of the almost-path decomposition.

    For $n\in \mathbb{Z}$ and $s\in \{-1,0,+1\}^p$, we denote by $\nonabel$ the subspace of $\HPns(\Sigma)$ consisting of representations whose restriction to  each $\pi_1(P_i),$ $i \in\{ 1,\cdots, g+p-2\}$,  and each $\pi_1(T_j),$ $j \in\{ 1,\cdots, g\}$, is non-abelian. If $\phi$ lies in $\nonabel$, then we can apply Lemma \ref{lem_inthyptorus} and Lemma \ref{lem_inthyppants} respectively to the restrictions $\phi|_{\pi_1(T_j\cup P_j)}$ and $\phi
|_{\pi_1(P_i\cup P_{i+1})}$
    for all $i\in \{1,\cdots, g+p-3\}$ and  $j\in \{1,\cdots, g\}$. This will be used in the proof of Theorem \ref{thm_general}, which is the main result of this paper for the general surfaces. 
    The goal of this section is to prove the following  Theorem \ref{thm_nonabel}  which provides  a necessary and sufficient condition on the triple $(\Sigma, n,s)$ for which every representation in $\HPns(\Sigma)$ can be continuously deformed into $\nonabel.$ 

        \begin{theorem}\label{thm_nonabel}
        Let $\Sigma = \Sigma_{g,p}$ be a punctured surface with $\chi(\Sigma)\leqslant -2$, together with the chosen almost-path decomposition $\apdecomp$. For $n\in \mathbb{Z}$ and $s\in \{-1,0,+1\}^p$,
        suppose that one of the following conditions holds:
        \begin{enumerate}[(1)]
            \item $g\geqslant 1$,
            \item $n\neq 0$,
            \item $p_0(s)\geqslant 1$, and
            \item $p_+(s)\neq 1$ and $p_-(s)\neq 1$.
        \end{enumerate}
        Then for any $\phi \in \HPns(\Sigma)$, 
        there is a path 
        $\{\phi_t\}_{t\in [0,1]}$ 
         in $\HPns(\Sigma)$ with $\phi_0 = \phi$ and $\phi_1\in \nonabel$.
\end{theorem}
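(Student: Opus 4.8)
<br>

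The plan is to prove Theorem~\ref{thm_nonabel} by induction on the complexity of $\Sigma$, using the almost-path decomposition to reduce the abelian-to-nonabelian deformation problem on $\Sigma$ to the same problem on the Euler-characteristic $-1$ and $-2$ building blocks, where the relevant facts are already available. Concretely, given $\phi\in\HPns(\Sigma)$, I would first deform $\phi$ so that it maps all decomposition curves $d'_1,\dots,d'_g$ and $d_1,\dots,d_{g+p-3}$ to \emph{hyperbolic} elements of $\psl$: this is where Lemma~\ref{lem_inthyptorus} and Lemma~\ref{lem_inthyppants} come in, applied successively to the pairs $T_j\cup P_j$ and $P_i\cup P_{i+1}$, sweeping along the path of the dual graph from one end to the other. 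At each step these lemmas change only the internal structure of a two-block piece while keeping the conjugacy classes of the peripheral elements fixed, hence keeping the representation inside $\HPns(\Sigma)$; one must be a little careful about the order in which the curves are made hyperbolic so that an earlier fix is not destroyed by a later one, but since the dual graph restricted to the $P_i$'s is a path this can be arranged inductively.

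Once all the decomposition curves are hyperbolic, by Proposition~\ref{prop_additivity} the relative Euler class is additive over the pieces: $n=e(\phi)=\sum_i e(\phi|_{\pi_1(P_i)})+\sum_j e(\phi|_{\pi_1(T_j)})$, where each torus piece contributes a value in $\{-1,0,1\}$ and each pants piece contributes a value determined (via Theorem~\ref{thm_pants2}, Theorem~\ref{thm_goldman}, Corollary~\ref{cor_evimage}) by the signs of its three boundary curves, two of which are interior decomposition curves with sign $0$. The next step is a combinatorial/bookkeeping argument: I want to show that, under hypotheses (1)–(4), one can choose a representation $\psi\in\nonabel$ with the same $(n,s)$ whose restriction to each block is nonabelian, and then connect $\phi$ to $\psi$. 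Here Lemma~\ref{lem_graph} is the key reduction device: it lets me move between any two representations (with hyperbolic decomposition curves) by a sequence of moves each of which only alters a single two-block subsurface while fixing the Euler classes of the remaining blocks. So the problem collapses to: on a single $\Sigma_{1,2}$ or $\Sigma_{0,4}$ with prescribed boundary data realizing a given relative Euler class, the set of representations can be connected to one lying in $\mathrm{NA}$ — and for these small surfaces this follows from Lemma~\ref{lem_inthyptorus}, Lemma~\ref{lem_inthyppants} together with the $\chi=-1$ results of Section~3.

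The hypotheses (1)–(4) enter precisely to guarantee that a \emph{nonabelian} target $\psi$ with the prescribed $(n,s)$ exists: if $g\geq 1$ a torus block is available and can always be made nonabelian; if $n\neq 0$ then some block must be nonabelian by Proposition~\ref{prop_reducible} applied blockwise; if $p_0(s)\geq 1$ a hyperbolic boundary curve gives room to make the adjacent pants nonabelian; and the condition $p_\pm(s)\neq 1$ rules out exactly the degenerate genus-zero, Euler-class-zero, single-exceptional-sign configurations (the ``super-maximal'' abelian representations of the remark after Theorem~\ref{thm_main2}) for which \emph{every} representation in $\HPns$ is forced to be abelian and the conclusion genuinely fails. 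I would handle the base cases $\chi(\Sigma)=-2$ first (so $\Sigma_{1,2}$ and $\Sigma_{0,4}$), directly from Lemmas~\ref{lem_inthyptorus}--\ref{lem_inthyppants} plus Theorem~\ref{thm_torus}, Theorem~\ref{thm_pants2} and Theorem~\ref{thm_pants}, and then run the induction for $\chi(\Sigma)\leq -3$.

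\textbf{Main obstacle.} The genuinely delicate point is the blockwise realizability with \emph{fixed} sign vector $s$: making all interior curves hyperbolic is routine given the cited lemmas, and Lemma~\ref{lem_graph} handles the global connectivity, but one must verify that after making a block nonabelian the peripheral signs $s_1,\dots,s_p$ have not changed and that the Euler-class budget $n=\chi(\Sigma)+p_+(s)+(\text{slack})$ can actually be distributed over the blocks so that at least one block — and in fact, by propagating along the path, all of them — can be taken nonabelian while respecting the per-block Milnor--Wood-type constraints from Theorem~\ref{thm_pants2} and Theorem~\ref{thm_goldman}. This is essentially an integer-programming feasibility check on the tree, and carrying it out carefully under each of the four alternative hypotheses — especially disentangling which hypothesis saves which configuration near the boundary of the allowed range — is where the real work lies.
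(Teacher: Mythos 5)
Your plan has a genuine circularity at its first step. Lemma \ref{lem_inthyptorus} and Lemma \ref{lem_inthyppants} take as a \emph{hypothesis} that the restrictions of $\phi$ to the two blocks are non-abelian (and Lemma \ref{lem_inthyppants} additionally requires some peripheral $\phi(c_i)$ to be hyperbolic), so you cannot use them to start deforming an arbitrary $\phi\in\HPns(\Sigma)$: producing those non-abelian restrictions is exactly what Theorem \ref{thm_nonabel} is for. In the paper the logical order is the reverse of yours: Theorem \ref{thm_nonabel} is proved first by local perturbation arguments, and only afterwards (in Lemma \ref{lem_inthyp} and the connectivity arguments of Sections 5--6, via Lemma \ref{lem_graph} and Proposition \ref{prop_sameEuler}) are the decomposition curves made hyperbolic and the tree reduction carried out. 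Moreover, your intermediate goal ``make all decomposition curves hyperbolic'' is not merely non-routine, it is impossible in cases the theorem must cover: for $g=0$, $s=(+1,\dots,+1)$ and $n=1$, Condition (2) applies, yet by Lemma \ref{lem_inthyp_sphere} and Theorem \ref{thm_Exc2} every representation in $\mathcal R^s_1(\Sigma)$ is totally non-hyperbolic, so no deformation inside $\HPns(\Sigma)$ sends any $d_i$ to a hyperbolic element; and whenever $p_0(s)=0$ the hyperbolic-boundary hypothesis of Lemma \ref{lem_inthyppants} is unavailable in any case.

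The actual content of the theorem, which your proposal defers to ``routine bookkeeping,'' is the deformation of an \emph{abelian} block restriction into a non-abelian one while preserving the global relation and the type and sign of every peripheral image. The paper does this by explicit one-parameter conjugation deformations (Lemma \ref{lem_nonabelpants}, Lemma \ref{lem_nonabeltorus}) which must then be extended across the rest of $\Sigma$; the extension uses the openness of $\Hyp$ when a hyperbolic boundary exists (Proposition \ref{prop_NAhyp}, Condition (3)), the submersion property of the commutator map plus the inverse function theorem when $g\geqslant 1$ (Lemma \ref{lem_evTsubm}, Proposition \ref{prop_NAtorus}, Condition (1)), the submersion of the product map on $\Par\times\Par$ together with Lemma \ref{lem_conjugacypath_const} for punctured spheres with some non-abelian block (Proposition \ref{prop_NAsphere1}, covering Condition (2) via Proposition \ref{prop_reducible} and Proposition \ref{prop_additivity}), and a separate induction on $p$ (Lemma \ref{lem_abelsphere}, Lemma \ref{lem_claimforNAsphere2}, Proposition \ref{prop_NAsphere2}) for the delicate all-abelian genus-zero case under Condition (4). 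None of these mechanisms appears in your outline, so the proposal as written does not prove the statement.
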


Theorem \ref{thm_nonabel} will be proved in Subsection \ref{Pf} with the following  outline: 
For the three cases corresponding to Conditions (1), (2), and (3), the theorem will be proved using Proposition \ref{prop_NAhyp}, Proposition \ref{prop_NAtorus}, and Proposition \ref{prop_NAsphere1}, respectively. These propositions are respectively stated and proved in Subsections \ref{4.1}, \ref{4.2.1}, and \ref{4.2.2}. When only Condition (4) holds, we will deform each representation $\phi\in \mathcal{R}^s_0(\Sigma_{0,p})\setminus \mathrm{NA}^s_0(\Sigma_{0,p})$ into $ \mathrm{NA}^s_0(\Sigma_{0,p})$, using Proposition \ref{prop_NAsphere1} if $\phi|_{\pi_1(P_i)}$ is non-abelian for some $1\leqslant i\leqslant p-2$, and using Proposition \ref{prop_NAsphere2} if $\phi|_{\pi_1(P_i)}$ is abelian for all $1\leqslant i\leqslant p-2$.

\begin{lemma}\label{lem_nonabelpants}
        Let $\pi_1(\pants) = \langle c_1, c_2\rangle$,
        and let $\phi$ be an abelian $\psl$-representation of $\pi_1(\pants)$ with $\phi(c_1)\neq \pm\mathrm{I}$ and $\phi(c_2)\neq \pm\mathrm{I}$. 
        Then there is a one-parameter subgroup $\{h_t\}_{t\in \mathbb{R}}$ of $\psl$ with $h_0 = \pm \mathrm I$, such that $\phi(c_1)$ and $h_t \phi(c_2) h_t^{-1}$ do not commute for all $t\neq 0$.
    \end{lemma}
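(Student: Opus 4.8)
The plan is to reduce to normal forms for the pair $\big(\phi(c_1),\phi(c_2)\big)$ according to the type of the abelian subgroup they generate. Since $\phi$ is abelian and neither generator is $\pm\mathrm I$, the images $\phi(c_1)$ and $\phi(c_2)$ lie in a common one-parameter subgroup of $\psl$ (two commuting elements of $\psl$, neither central, share a common axis / fixed-point structure), so up to a global $\psl$-conjugation we are in one of three cases: both hyperbolic with a common axis, both parabolic with a common fixed point, or both elliptic with a common fixed point. The strategy is then, in each case, to exhibit an explicit one-parameter subgroup $\{h_t\}$ that ``tilts'' $\phi(c_2)$ off the axis of $\phi(c_1)$, and to compute the commutator $\big[\phi(c_1),h_t\phi(c_2)h_t^{-1}\big]$ (or just enough of its entries) to see that it is not $\pm\mathrm I$ for $t\neq 0$.

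First I would do the hyperbolic case: after conjugation $\phi(c_1)=\pm\xdiag$ with $x\neq\pm1$, and $\phi(c_2)=\pm\begin{bmatrix} y & 0\\ 0 & y^{-1}\end{bmatrix}$ with $y\neq\pm1$. Take $h_t=\pm\begin{bmatrix} 1 & t\\ 0 & 1\end{bmatrix}$, the unipotent one-parameter subgroup fixing $\infty$. Then $h_t\phi(c_2)h_t^{-1}$ is upper triangular with a nonzero $(1,2)$-entry proportional to $t(y-y^{-1})$, which is nonzero for $t\neq 0$ since $y\neq\pm1$; a direct $2\times 2$ computation shows the commutator with the diagonal matrix $\phi(c_1)$ is not $\pm\mathrm I$ because its $(1,2)$-entry is a nonzero multiple of $t(x-x^{-1})(y-y^{-1})$. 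The parabolic case is similar: normalize $\phi(c_1)=\pm\begin{bmatrix}1 & s_1\\ 0 & 1\end{bmatrix}$, $\phi(c_2)=\pm\begin{bmatrix}1 & s_2\\ 0 & 1\end{bmatrix}$ with $s_1,s_2\neq 0$; now conjugating by a unipotent subgroup fixing $\infty$ does nothing, so instead I take the opposite unipotent subgroup $h_t=\pm\begin{bmatrix}1 & 0\\ t & 1\end{bmatrix}$ (or a hyperbolic one-parameter subgroup), and check that the $(2,1)$-entry of $\big[\phi(c_1),h_t\phi(c_2)h_t^{-1}\big]$ is a nonzero polynomial in $t$ with no constant term, hence nonzero for small $t\neq 0$; one then rescales $t$ if needed so that it is nonzero for all $t\neq 0$, or simply replaces $\{h_t\}$ by $\{h_{\epsilon t}\}$ after checking the offending entry vanishes only at $t=0$. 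For the elliptic case, normalize $\phi(c_1)$ and $\phi(c_2)$ to rotations $\pm\elltheta$ about $i\in\mathbb H^2$ with angle parameters not in $\pi\mathbb Z$, and conjugate by the hyperbolic one-parameter subgroup $h_t=\pm\begin{bmatrix}e^{t}&0\\0&e^{-t}\end{bmatrix}$, which moves the fixed point of $\phi(c_2)$ off $i$; again a short trace/entry computation shows the commutator is non-trivial for $t\neq 0$.

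The main obstacle, and the place requiring care rather than cleverness, is ensuring the chosen entry of the commutator vanishes \emph{only} at $t=0$ and not at some other $t\neq 0$ — the naive choices give a polynomial in $t$ (or in $e^t$) that could in principle have a second root. I expect to handle this by choosing the perturbing subgroup so that the relevant entry is, up to a nonzero constant, exactly $t$ times a quantity that never vanishes (e.g.\ $t\,(x-x^{-1})(y-y^{-1})$ in the hyperbolic case, which is visibly nonzero for all $t\neq 0$), and in the parabolic case by using that two parabolics with the \emph{same} fixed point, once one of them is tilted to have a different fixed point, cannot commute at all — so in fact $\big[\phi(c_1),h_t\phi(c_2)h_t^{-1}\big]=\pm\mathrm I$ would force $h_t\phi(c_2)h_t^{-1}$ to fix $\infty$, i.e.\ $t=0$. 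Phrasing the parabolic and elliptic cases in this ``fixed-point'' language avoids the polynomial-root bookkeeping entirely: $\phi(c_1)$ is a non-trivial isometry of $\mathbb H^2$ with a definite fixed-point set $F_1\subset\overline{\mathbb H^2}$, and $\phi(c_1)$ commutes with $h_t\phi(c_2)h_t^{-1}$ iff the latter preserves $F_1$; choosing $\{h_t\}$ so that $h_t(F_2)\neq F_1$ for all $t\neq 0$ (possible since $F_2=F_1$ and the one-parameter subgroup acts non-trivially on the relevant component of $\overline{\mathbb H^2}$) finishes the proof uniformly. This is the argument I would ultimately write up, with the explicit matrices relegated to at most a line or two.
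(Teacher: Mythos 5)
Your final, recommended argument — two non-trivial commuting elements of $\psl$ have the same fixed-point set, so it suffices to pick a one-parameter subgroup $\{h_t\}$ moving $\mathrm{Fix}\big(\phi(c_2)\big)$ off $\mathrm{Fix}\big(\phi(c_1)\big)$ for all $t\neq 0$ — is exactly the paper's proof, and it is correct; the explicit normal-form computations in your first two paragraphs are a valid but unnecessary detour. One small caution: your ``commutes iff it preserves $F_1$'' is false in the ``if'' direction (a half-turn about a point on the axis of a hyperbolic preserves the axis without commuting), but the direction you actually use — distinct fixed-point sets imply non-commuting — is the correct one, so this does not affect the argument.
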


    \begin{proof}
        Recall that every non-trivial element $g$ of $\psl$ acts on $ \mathbb{H}^2$: Each $g\in \Hyp$ fixes a unique geodesic $\mathrm{Fix}(g)$ in $\mathbb{H}^2$, each $g\in \Par$ fixes a unique ideal point $\mathrm{Fix}(g)$ on $\partial \mathbb{H}^2$, and each $g\in \Ell$ fixes a unique point $\mathrm{Fix}(g)$ in $\mathbb{H}^2$. 
        Since $\phi(c_1)$ and $\phi(c_2)$ are non-trivial and commute, we have
        $\mathrm{Fix}\big(\phi(c_1)\big)=\mathrm{Fix}\big(\phi(c_2)\big)$.
            As $\psl$ acts transitively on $\mathbb{H}^2$, we can choose $h\in \psl$ 
            such that $h\cdot \mathrm{Fix}\big(\phi(c_2)\big)\neq \mathrm{Fix}\big(\phi(c_1)\big)$. Then we have $\mathrm{Fix}\big(h\phi(c_2)h^{-1}\big) = h\cdot \mathrm{Fix}\big(\phi(c_2)\big)\neq\mathrm{Fix}\big(\phi(c_1)\big)$; and as a consequence 
            $h\phi(c_2)h^{-1}$ and $\phi(c_1)$ do not commute. 
            Let $\{h_t\}_{t\in\mathbb{R}}$ be the one-parameter subgroup of $\psl$ generated by $h$, with $h_0 = \pm\mathrm{I}$.  Since $\mathrm{Fix}(h_t) = \mathrm{Fix}(h)\neq \mathrm{Fix}\big(\phi(c_2)\big)$ for $t\neq 0$, 
            $h_t\cdot\mathrm{Fix}\big(\phi(c_2)\big)\neq \mathrm{Fix}\big(\phi(c_2)\big) = \mathrm{Fix}\big(\phi(c_1)\big)$. As a consequence, 
            $h_t\phi(c_2)h_t^{-1}$ and $\phi(c_1)$ do not commute for all $t\neq 0$.
    \end{proof}

    \begin{lemma}\label{lem_nonabeltorus}
            Let $\Sigma = \Sigma_{g,p}$ with $\chi(\Sigma)\leqslant -2$, and let $\apdecomp$ be the chosen almost-path decomposition of $\Sigma$. For $n\in \mathbb{Z}$ and $s\in \{-1,0,+1\}^p$,
            let $\phi\in \HPns(\Sigma)$ such that $\phi |_{\pi_1(T_j)}$ is abelian for some $j\in \{1,\cdots, g\}$. 
            Then there exists a path 
            $\{\phi_t\}_{t\in [0,1]}$ in 
            $\HPns(\Sigma)$ with  $\phi_0 = \phi$, 
            $\big(\phi_1(a_j), \phi_1(b_j)\big)
            = \bigg(\pm\mathrm{I}, 
        \pm \begin{bmatrix}
            e & 0 \\
            0 & e^{-1}
        \end{bmatrix}\bigg)$ for the generators $a_j, b_j$ of $\pi_1\big(T_j\big)$, 
        and 
        $\phi_1|_{\pi_1(\Sigma \setminus T_j)} = \phi|_{\pi_1(\Sigma \setminus T_j)}$.
    \end{lemma}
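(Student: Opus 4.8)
The plan is to deform $\phi$ by changing only the images of the two generators $a_j,b_j$ of $\pi_1(T_j)$, keeping all other generators fixed. Since $\phi|_{\pi_1(T_j)}$ is abelian, $\phi(a_j)$ and $\phi(b_j)$ commute in $\psl$, so the pair $(\phi(a_j),\phi(b_j))$ lies in the set $\mathcal C\subset\psl\times\psl$ of commuting pairs and $[\phi(a_j),\phi(b_j)]=\pm\mathrm I$. First I would show that $\mathcal C$ is path-connected. If $(A,B)\in\mathcal C$ with $A\neq\pm\mathrm I$, then $B$ lies in the centralizer of $A$ in $\psl$, which is a connected one-parameter subgroup $\{h_t\}$ (a diagonal, unipotent, or rotation subgroup up to conjugation) also containing $A$; writing $A=h_\alpha$ and $B=h_\beta$, the path $t\mapsto(h_{(1-t)\alpha},h_{(1-t)\beta})$ stays in $\mathcal C$ and ends at $(\pm\mathrm I,\pm\mathrm I)$. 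If $A=\pm\mathrm I$, one joins $B$ to $\pm\mathrm I$ through the one-parameter subgroup generated by $B$, again staying in $\mathcal C$. Finally $t\mapsto(\pm\mathrm I,\pm\mathrm{diag}(e^t,e^{-t}))$ joins $(\pm\mathrm I,\pm\mathrm I)$ to $(\pm\mathrm I,\pm\mathrm{diag}(e,e^{-1}))$ within $\mathcal C$. Concatenating, I obtain a path $\{(A_t,B_t)\}\interval$ in $\mathcal C$ from $(\phi(a_j),\phi(b_j))$ to $\bigl(\pm\mathrm I,\pm\mathrm{diag}(e,e^{-1})\bigr)$.

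Then I would define $\phi_t\colon\pi_1(\Sigma)\to\psl$ by $\phi_t(a_j)=A_t$, $\phi_t(b_j)=B_t$, and $\phi_t=\phi$ on $a_k,b_k$ for $k\neq j$ and on $c_1,\dots,c_p$. Because $[A_t,B_t]=\pm\mathrm I=[\phi(a_j),\phi(b_j)]$ for all $t$, the defining relation $c_1[a_1,b_1]\cdots[a_g,b_g]c_2\cdots c_p$ of $\pi_1(\Sigma)$ continues to hold, so each $\phi_t$ is a well-defined representation; moreover $\phi_0=\phi$, and $\phi_1(a_j)=\pm\mathrm I$, $\phi_1(b_j)=\pm\mathrm{diag}(e,e^{-1})$ as required. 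Also $\phi_t$ agrees with $\phi$ on the generators of $\pi_1(\Sigma\setminus T_j)$ — the $a_k,b_k$ with $k\neq j$, the $c_i$, and the boundary curve $d'_j=[a_j,b_j]$, whose image is constantly $\pm\mathrm I$ — hence $\phi_t|_{\pi_1(\Sigma\setminus T_j)}=\phi|_{\pi_1(\Sigma\setminus T_j)}$.

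It remains to verify $\{\phi_t\}\interval\subset\HPns(\Sigma)$. The peripheral images $\phi_t(c_i)=\phi(c_i)$ are constant, so each $\phi_t$ satisfies the mixed boundary condition and has sign $s$. For the relative Euler class, the point is that the lifted commutator of a commuting pair is trivial: if $A,B$ lie in a common one-parameter subgroup of $\psl$ their lifts can be taken inside the corresponding one-parameter subgroup of $\univcover$ and therefore commute, while if $A=\pm\mathrm I$ any lift of $A$ is central; in both cases the lifted commutator $\widetilde R(A,B)=\mathrm I$. Hence in the defining product
$$\widetilde{\phi_t(c_1)}\,[\widetilde{\phi_t(a_1)},\widetilde{\phi_t(b_1)}]\cdots[\widetilde{\phi_t(a_g)},\widetilde{\phi_t(b_g)}]\,\widetilde{\phi_t(c_2)}\cdots\widetilde{\phi_t(c_p)}$$
every factor is independent of $t$ except the $j$-th lifted commutator, which is constantly $\mathrm I$; so the product equals $z^n$ throughout and $e(\phi_t)=n$. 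I expect this last point — that restricting the deformation to commuting pairs leaves the lifted commutator, and hence the relative Euler class, unchanged — to be the only place needing genuine care; the rest is bookkeeping with the chosen presentation of $\pi_1(\Sigma)$.
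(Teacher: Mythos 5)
Your proof is correct and follows essentially the same route as the paper: deform $(\phi(a_j),\phi(b_j))$ through commuting pairs (the paper phrases this via the common one-parameter subgroup $S$ and connectedness of $S\times S$, which is the same retraction as your centralizer argument) to $\bigl(\pm\mathrm I,\pm\mathrm{diag}(e,e^{-1})\bigr)$, keep $\phi$ fixed on $\pi_1(\Sigma\setminus T_j)$ since the commutator stays $\pm\mathrm I$, and note the sign and relative Euler class are unchanged. Your explicit check that the lifted commutator of a commuting pair is trivial is a valid (slightly more detailed) substitute for the paper's appeal to the constancy of the peripheral images and of the integer-valued continuous relative Euler class along the path.
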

        \begin{proof}
        Let $j\in \{1,\cdots, g\}$ such that $\phi |_{\pi_1(T_j)}$ is abelian.
        Then $\phi(a_j)$ and $\phi(b_j)$ lie in a common one-parameter subgroup $S = \{g_t\}_{t\in\mathbb{R}}$ of $\psl$, and hence the pair $\big(\phi(a_j), \phi(b_j)\big)$ lies in $S\times S$. 
        Since $S\times S$ is connected, there exists a path in $S\times S$ connecting $\big(\phi(a_j), \phi(b_j)\big)$ to $(\pm \mathrm{I}, \pm \mathrm{I})$, consisting of commuting pairs of elements of $\psl$.
        The composition of this path and the path 
        $\bigg\{\bigg(\pm\mathrm{I}, 
        \pm \begin{bmatrix}
            e^t & 0 \\
            0 & e^{-t}
        \end{bmatrix}\bigg)\bigg\}\interval$ 
        connects $\big(\phi(a_j), \phi(b_j)\big)$ and 
        $\bigg(\pm\mathrm{I}, 
        \pm \begin{bmatrix}
            e & 0 \\
            0 & e^{-1}
        \end{bmatrix}\bigg)$ in $\psl\times\psl$. Under the identification of $\psl$-representations of $\pi_1(T_j)$ and pairs of elements of $\psl$ and a suitable re-parametrization, this path of pairs corresponds to a path $\{\phi_t\}\interval$ of abelian representations of $\pi_1(T_j)$ starting from $\phi_0 = \phi$. 
        As we have $\phi_t(d'_j) = [\phi_t(a_j), \phi_t(b_j)] = \pm\mathrm{I}$ for all $t\in [0,1]$, we can extend the path $\{\phi_t\}\interval$ to $\pi_1(\Sigma)$ by setting $\phi_t|_{\pi_1(\Sigma \setminus T_j)} = \phi|_{\pi_1(\Sigma \setminus T_j
        )}$. 
        Since $\phi_t(c_i) = \phi(c_i)$ for all $t\in [0,1]$ and $i\in \{1,\cdots, p\}$, 
        the signs $s(\phi_t) = s(\phi)$ and the relative Euler classes $e(\phi_t) = e(\phi)$ for all $t\in [0,1]$. As a consequence, $\{\phi_t\}\interval$ lies in $\HPns(\Sigma)$. 
    \end{proof}

\subsection{Representations with at least one hyperbolic boundary component}\label{4.1}
    In this subsection, we will prove Theorem \ref{thm_nonabel} for representations with the sign $s$ satisfying $p_0(s)\geqslant 1$, which is stated as Proposition \ref{prop_NAhyp} below. 

    \begin{proposition}\label{prop_NAhyp}
        Let $\Sigma = \Sigma_{g,p}$ with $\chi(\Sigma) \leqslant - 2$ and with the chosen almost-path decomposition $\Sigma = \apdecomp$. Let $n\in \mathbb{Z}$, and let $s\in \{-1,0,+1\}^p$ with $p_0(s)\geqslant 1$. Then for every representation $\phi$ in $\mathcal{\HP}^s_n(\Sigma)$, there exists a path 
        in $\HP^s_n(\Sigma)$ connecting $\phi$ to a representation $\psi$
        in $\nonabel$.
    \end{proposition}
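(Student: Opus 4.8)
The plan is to induct on $N=-\chi(\Sigma)=2g+p-2\geqslant 2$, using the hyperbolic peripheral image to seed non-abelianity. As a normalization, for every torus piece $T_j$ with $\phi|_{\pi_1(T_j)}$ abelian we first apply Lemma \ref{lem_nonabeltorus} to deform $\phi$ inside $\HPns(\Sigma)$, fixing $\phi|_{\pi_1(\Sigma\setminus T_j)}$, into the standard form $\bigl(\phi(a_j),\phi(b_j)\bigr)=\bigl(\pm\mathrm I,\pm\mathrm{diag}(e,e^{-1})\bigr)$. Throughout we use that $e$ and $s$ are locally constant on $\HP(\Sigma)$, so any path along which no peripheral type changes automatically stays in $\HPns(\Sigma)$.

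\emph{Base case.} When $N=2$, $\Sigma=\torustwo$ or $\Sigma=\pantstwo$, write $\Sigma=\Sigma_1\cup_\gamma\Sigma_2$ along the single decomposition curve $\gamma$. If both $\phi|_{\pi_1(\Sigma_i)}$ are non-abelian we are done. Otherwise let $P$ be a pants among $\Sigma_1,\Sigma_2$ carrying a hyperbolic peripheral image $\phi(c_k)$; one perturbs $\phi|_{\pi_1(P)}$ off the reducible locus $\{\kappa=2\}$ of Proposition \ref{prop_char}, keeping $\phi(c_k)$ hyperbolic and the remaining peripheral type fixed, so that it becomes non-abelian, while prescribing a path for the value of $\phi$ on $\gamma$; one lifts that path to a path of representations of the other piece (by Lemma \ref{lem_plpchar} for a pants and by Proposition \ref{prop_plpliftcommu} for a one-holed torus, after first moving a $\pm\mathrm I$ off the central fibre inside the image $\mathcal I$ of the lifted commutator (Theorem \ref{thm_liftcommu}) if that torus piece was in standard form), and reconciles the two sides along $\gamma$ by Lemma \ref{lem_conjugacypath}; a second similar perturbation makes the other piece non-abelian if needed.

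\emph{Inductive step.} For $N\geqslant 3$, choose a leaf $\Sigma_0$ of the dual tree, i.e.\ $\Sigma_0=T_j$ or $\Sigma_0=P_{g+p-2}$, let $\gamma$ ($=d_j'$ or $=d_{g+p-3}$) be its separating curve, and write $\Sigma=\Sigma_0\cup_\gamma\Sigma'$ with $\chi(\Sigma')=\chi(\Sigma)+1$. We first deform $\phi$ so that $\phi(\gamma)$ is hyperbolic: when $\Sigma_0=T_j$ this means moving $[\phi(a_j),\phi(b_j)]$ off $\pm\mathrm I$ into $\mathrm{Hyp}_0$ (whence $\phi|_{\pi_1(T_j)}$ becomes non-abelian) via the path-lifting property of the lifted commutator, while simultaneously refitting $\phi|_{\pi_1(\Sigma')}$ to track the value of $\phi(\gamma)$; when $\Sigma_0=P_{g+p-2}$ it means perturbing $\phi|_{\pi_1(P_{g+p-2})}$, keeping the types of $c_{p-1},c_p$ and using Proposition \ref{prop_evimage}, to make $\phi(c_{p-1})\phi(c_p)$ hyperbolic and $\phi|_{\pi_1(P_{g+p-2})}$ non-abelian, again refitting $\Sigma'$ accordingly. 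Now $\phi|_{\pi_1(\Sigma')}\in\HP^{s'}_{n'}(\Sigma')$ with the entry of $s'$ at the new puncture $\gamma$ equal to $0$, so $p_0(s')\geqslant 1$; by the inductive hypothesis there is a path deforming $\phi|_{\pi_1(\Sigma')}$ to a representation non-abelian on every piece of $\Sigma'$, and one may arrange that $\phi(\gamma)$ stays hyperbolic along it. Conjugating $\phi|_{\pi_1(\Sigma_0)}$ continuously (Lemma \ref{lem_conjugacypath}) to keep the gluing along $\gamma$ consistent, and re-running the previous perturbation on $\Sigma_0$ if needed so that its pieces remain non-abelian, produces the required $\psi\in\nonabel$.

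\emph{Main obstacle.} The delicate point is the bookkeeping in the inductive step: promoting the deformation furnished on $\Sigma'$ to a global path that simultaneously keeps the conjugacy data along $\gamma$ consistent, preserves the non-abelianity already achieved on $\Sigma_0$, and changes no peripheral type — that is, splicing the path-lifting properties of the evaluation maps (Theorem \ref{thm_PLP}, Proposition \ref{prop_plpliftcommu}) with Lemma \ref{lem_conjugacypath} and Lemma \ref{lem_conjugacypath_const}, in the same spirit as the proofs of Theorem \ref{thm_PLP} and Lemma \ref{lem_graph}. The remaining ingredients are elementary perturbations of pants and one-holed-torus holonomies off the reducible locus of Proposition \ref{prop_char}, together with Lemma \ref{lem_nonabelpants}.
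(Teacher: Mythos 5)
Your architecture is genuinely different from the paper's (induction on $-\chi(\Sigma)$ with a preliminary step that makes the separating curve of a leaf hyperbolic, then path-lifting to refit the complement), and the central move of your inductive step is exactly where the argument breaks. The step ``first deform $\phi$ so that $\phi(\gamma)$ is hyperbolic, while refitting $\phi|_{\pi_1(\Sigma')}$ to track the value of $\phi(\gamma)$'' is not supported by the tools you cite, and it is essentially circular: the results that make an interior decomposition curve hyperbolic (Lemma \ref{lem_inthyptorus}, Lemma \ref{lem_inthyppants}, and later Lemma \ref{lem_inthyp}) all require the restrictions on \emph{both} sides of the curve to be non-abelian, which is precisely the conclusion you are trying to establish. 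The alternative you gesture at --- prescribing a path of values on $\gamma$ and lifting it through an evaluation map of the adjacent piece --- fails exactly in the abelian situations you must handle: Proposition \ref{prop_plpliftcommu} lifts paths only over $\mathcal I\setminus\{\mathrm I\}$ and Theorem \ref{thm_PLP} only over $\Hyp_n$ with the boundary signs fixed, so when the leaf torus is abelian ($\phi(\gamma)=[\phi(a_j),\phi(b_j)]=\pm\mathrm I$, a central-fibre point where the commutator is not a submersion) or the leaf pants is abelian with $\phi(c_{p-1})\phi(c_p)=\pm\mathrm I$ or elliptic, there is no lifting statement in the paper that starts the deformation on the $\Sigma'$ side; saying ``after first moving $\pm\mathrm I$ off the central fibre'' names the gap rather than filling it, since moving it off the central fibre forces $\phi(\gamma)$ to change and hence already requires the global refit you have not constructed. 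A secondary (fixable) slip: in the splicing step the conjugacy class of $\phi_t(\gamma)$ is not constant along the path furnished by the inductive hypothesis (only its type is), so Lemma \ref{lem_conjugacypath} does not apply; you would need to re-solve on $\Sigma_0$ via the evaluation maps as in Proposition \ref{prop_sameEuler}, and then re-check non-abelianity of $\Sigma_0$'s pieces.

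For contrast, the paper's proof never touches the decomposition curves and uses no induction on $\chi$ and no path-lifting: it fixes the almost-path decomposition, and performs a finite sequence of local perturbations --- first on the torus pieces, then on the pants pieces in a specific order relative to the pants containing the hyperbolic puncture $c_m$ --- where in each step only one piece is moved (via Lemma \ref{lem_nonabeltorus} or the one-parameter conjugation of Lemma \ref{lem_nonabelpants}, possibly conjugating a whole sub-tree by $h_t$), all other generators are frozen, and the image of $c_m$ is the one element that is allowed to change, determined by the long relation. Because $\Hyp$ is open in $\psl$, $\phi_t(c_m)$ stays hyperbolic for small $t$, so no peripheral type changes and the path stays in $\HPns(\Sigma)$; this is the entire role of the hypothesis $p_0(s)\geqslant 1$, and it is what makes the statement provable \emph{before} any ``interior curve hyperbolic'' result is available. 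If you want to salvage your induction, replace the ``make $\gamma$ hyperbolic'' step by this local absorption-into-$c_m$ mechanism; as written, the proposal does not go through.
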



    \begin{proof}
        

The outline of the proof is as follows:
      We will first construct a path in $\HPns(\Sigma)$ connecting $\phi$ to a representation $\rho\in \HPns(\Sigma)$, such that for all $j \in \{1, \cdots, g\}$, the restriction 
      $\rho|_{\pi_1(T_j)}$ is non-abelian. 
      Next, we will construct a path in $\HPns(\Sigma)$ connecting $\rho$ to $\psi$ in $\nonabel$, that is,  each restriction 
      $\psi|_{\pi_1(P_i)}$, $i\in \{1,\cdots, g+p-2\}$, and $\psi|_{\pi_1(T_j)}$, $j\in \{1,\cdots, g\}$,
       is non-abelian. Then the composition of these two paths connects $\phi$ to $\psi\in \nonabel$.
\\

     The path connecting $\phi$ to $\rho$ is constructed as follows. 
     If $g = 0$, then there is no one-hole torus in the almost-path decompositon. Hence, $\phi$ automatically satisfies the condition for $\rho$, and we can let $\rho = \phi$. If otherwise that $g\geqslant 1$, then we will show by induction that for all $j\in \{0,\cdots, g\}$, there is a path in $\HPns(\Sigma)$ connecting $\phi$ to a representation $\phi_j\in \HPns(\Sigma)$, such that for all $k\in \{1,\cdots, j\}$, $\phi_j|_{\pi_1(T_k)}$ is non-abelian. Then for all $k\in \{1,\cdots, g\}$, $\phi_g|_{\pi_1(T_k)}$ is non-abelian, and we let $\rho = \phi_g$. For the base case $j = 0$, the statement automatically holds, as we can let $\phi_0 = \phi$ connected by a constant path. 
     Now assume that, for some
     $j\in \{0, \cdots, g-1\}$, there is a path $\{\phi_t\}_{t\in [0,j]}$ connecting $\phi$ to $\phi_j$, where $\phi_j$ satisfies the condition above. 
     We will find a path $\{\phi_t\}_{t\in [j,j+1]}$ connecting $\phi_j$ to a representation $\phi_{j+1}$ such that for all $k\in \{1,\cdots, j+1\}$, $\phi_{j+1}|_{\pi_1(T_k)}$ is non-abelian. 
     
     If $\phi_j|_{\pi_1(T_{j+1})}$ is also non-abelian, 
    then we let $\phi_{j+1} = \phi_{j}$, and let $\{\phi_t\}_{t\in [j,j+1]}$ be the constant path. If $\phi_j|_{\pi_1(T_{j+1})}$ is abelian, then we construct the path $\{\phi_t\}_{t\in [j,j+1]}$ as follows. 
     We first construct a continuous one-parameter family $\{\phi_t\}_{t\in \mathbb{R}}$ of representations containing $\phi_j$ at $t = j$, such that for all $t\neq j$ and $k\in \{1,\cdots, j+1\}$,  $\phi_t|_{\pi_1(T_{k})}$ is non-abelian.
     As  $\phi_j|_{\pi_1(T_{j+1})}$ is abelian, by Lemma \ref{lem_nonabeltorus}, we can assume that
         $\phi_j(a_{j+1}) = \pm\mathrm{I}$, and 
         $\phi_j(b_{j+1})= 
        \pm \begin{bmatrix}
            e & 0 \\
            0 & e^{-1}
        \end{bmatrix}$.
        For each $t\in \mathbb{R}$, 
        we define $\phi_t$ as follows: Let
        $\big(\phi_t(a_{j+1}), \phi_t(b_{j+1})\big)
        \doteq\bigg(\pm 
        \begin{bmatrix}
            1 & t-j \\
            0 & 1
        \end{bmatrix}, \pm \begin{bmatrix}
            e & 0 \\
            0 & e^{-1}
        \end{bmatrix}\bigg)$, 
        and 
        $\big(\phi_t(a_l),  \phi_t(b_l)\big) \doteq \big(\phi_j(a_l),  \phi_j(b_l)\big)$ for each $
        l\in \{1,\cdots, g\}\setminus\{j+1\}$.
        Since $p_0(s)\geqslant 1,$ there is an $m\in\{1,\dots, p\}$ such that $\phi(c_m)$ is hyperbolic; and for each $k\in \{1,\cdots, p\}\setminus\{m\}$, we let $\phi_t(c_k) \doteq \phi_j(c_k)$. Then the values $\{\phi_t(a_j),\phi_t(b_j)\}_{j\in\{1,\dots, g\}}$ and $\{\phi_t(c_k)\}_{k\in\{1,\dots,p\}\setminus\{m\}}$ determine the representation $\phi_t.$ 
        For all $t\in \mathbb{R}$ and $k\in \{1,\cdots, j\}$, we have $\phi_t|_{\pi_1(T_k)} = \phi_j|_{\pi_1(T_k)}$, which is non-abelian. 
        Moreover, for all $t\neq j$, $\phi_t(a_{j+1})$ and $\phi_t(b_{j+1})$ are elements of different types in $\psl$ which do not commute with each other, hence $\phi_t|_{\pi_1(T_{j+1})}$ is non-abelian. Next, we will find a sub-path of $\{\phi_t\}_{t\in \mathbb{R}}$ that is contained in $\HPns(\Sigma)$. 
        Since $\phi_j(c_m)$ lies in the open subset $\Hyp$ of $\psl$, there exists a $t_0>0$ such that for all $t\in [j,j+t_0]$, $\phi_t(c_m)\in \Hyp$. 
            Then for all $t\in [j,j+t_0]$ and for all $k\in \{1,\cdots, p\}$, $\phi_t(c_k)$ and $\phi_j(c_k)$ are 
            both hyperbolic or both parabolic with the same sign. 
            Therefore, the signs $s(\phi_t) = s(\phi_j)$ and the relative Euler classes $e(\phi_t) = e(\phi_j)$ for all $t\in [j,j+t_0]$.
            By a re-parametrization,
            this defines
            the path $\{\phi_t\}_{t\in [j,j+1]}$ in $\HPns(\Sigma)$ starting from $\phi_j$ such that 
            for each $k\in \{1, \cdots, j+1\}$,
            $\phi_{j+1}|_{\pi_1(T_k)}$ is non-abelian. 
\\

    Next, let $m_0\in\{1,\cdots, g+p-2\}$ such that $c_m\in \pi_1(P_{m_0}).$ Then $m_0 = 1$ when $m=1$, $m_0  = m + g - 1$ when
    $2\leqslant m\leqslant p-1$, and $m_0 = g + p -2$ when $m = p$ and $p \neq 1$. We will construct a path connecting $\rho$ to a representation $\mu\in  \HPns(\Sigma)$ such that each restriction $\mu|_{\pi_1(P_i)}$, $i\in \{1,\dots, m_0-1\}$, and $\mu|_{\pi_1(T_j)}$, $j\in \{1,\dots, g\}$, is non-abelian. 
 Recall that for all $j\in \{1,\cdots, g\}$, $\rho|_{\pi_1(T_j)}$ is non-abelian.
     Hence, if $m_0 = 1$, $\rho$ automatically satisfies the condition for $\mu$, and we can let $\mu = \rho$. If otherwise that $m_0\geqslant 2$, then we will show by induction that for all $i\in \{0,\cdots, m_0 - 1\}$, there is a path in $\HPns(\Sigma)$ connecting $\rho$ to a representation $\rho_i\in \HPns(\Sigma)$
     such that each restriction  $\rho_i|_{\pi_1(P_{k})}$, $k\in \{1,\cdots, i\}$, and each $\rho_i|_{\pi_1(T_j)},$ $j \in\{ 1,\cdots, g\}$, is non-abelian. Then $\rho_{m_0 - 1}$ satisfies the condition for $\mu$, and we let $\mu = \rho_{m_0 - 1}$. For the base case $i = 0$, the statement automatically holds, as we can let $\rho_0 = \rho$ connected by a constant path. 
     Now assume that, for some
     $i\in \{0, \cdots, m_0 - 2\}$, there is a path $\{\rho_t\}_{t\in [0,i]}$ connecting $\rho$ to $\rho_i$, where $\rho_i$ satisfies the condition above. 
     We will find a path $\{\rho_t\}_{t\in [i,i+1]}$ connecting $\rho_i$ to a representation $\rho_{i+1}$ such that each 
     $\rho_{i+1}|_{\pi_1(P_k)}$, $k\in \{1,\cdots, i+1\}$,
     and each $\rho_{i+1}|_{\pi_1(T_j)},$ $j \in\{ 1,\cdots, g\}$, is non-abelian. 
     
     If $\rho_i|_{\pi_1(P_{i+1})}$ is also non-abelian, 
    then we let $\rho_{i+1} = \rho_i$, and let $\{\rho_t\}_{t\in [i,i+1]}$ be the constant path. If $\rho_i|_{\pi_1(P_{i+1})}$ is abelian, then we construct the path $\{\rho_t\}_{t\in [i,i+1]}$ as follows. 

     First, we construct a continuous one-parameter family $\{\rho_t:\pi_1(P_{i+1})\to\psl\}_{t\in \mathbb{R}}$ of representations of $\pi_1(P_{i+1})$ that contains $\rho_i|_{\pi_1(P_{i+1})}$ at $t = i$, and with $\rho_t$ non-abelian for all $t\neq i$. 
     For this construction, we will apply Lemma \ref{lem_nonabelpants} to $\rho_i|_{\pi_1(P_{i+1})}$, with the two generators $c'_1, c'_2$ of $\pi_1(P_{i+1})$ chosen as follows: If $i = 0$, then we let $c'_1 = c_1$; and if $i\geqslant 1$, then we let $c_1' = d_i$. Also, if $i\leqslant g-1$, then we let $c'_2 = d'_{i+1}$, and if $i\geqslant g$, then we let $c'_2 = c_{i-g+2}$ (cf. Figure \ref{fig:APD_general}).  
    We claim that $\rho_i(c_1')\neq \pm \mathrm I$ and $\rho_i(c_2')\neq \pm \mathrm I$. Indeed, if $i = 0$, then since $\rho_i\in \HP(\Sigma)$, we have $\rho_i(c_1')=\rho_i(c_1)\in \Hyp\cup \Par$; and if $i\geqslant 1$, then 
    since $d_i$ is the common boundary component of $P_i$ and $P_{i+1}$, and since $\rho_i|_{\pi_1(P_i)}$ is non-abelian, we have
    $\rho_i(c_1')=\rho_i(d_i)\neq \pm \mathrm I$.  Similarly, if $i\leqslant g-1$, then since $\rho_i|_{\pi_1(T_{i+1})}$ is non-abelian, we have
    $\rho_i(c_2')=\rho_i(d'_{i+1}) = \big[\rho_i(a_{i+1}), \rho_i(b_{i+1})\big]\neq \pm \mathrm I$; and if $i \geqslant g$, then since $\rho_i\in \HP(\Sigma)$, we have $\rho_i(c_2')=\rho_i(c_{i-g+2})\in \Hyp\cup \Par$. As $\rho_i|_{\pi_1(P_{i+1})}$ is abelian with $\rho_i(c_1')\neq \pm \mathrm I$ and $\rho_i(c_2')\neq \pm \mathrm I$,
        by Lemma \ref{lem_nonabelpants}, there is a one-parameter subgroup $\{h_t\}_{t\in \mathbb{R}}$ of $\psl$ with $h_{i} = \pm\mathrm{I}$ such that for all $t\neq i$, $\rho_i(c_1')$ and $ h_t\rho_i(c_2')h_t^{-1}$ do not commute. We define
        the representation
        $\rho_t$ 
        by setting $\big(\rho_t(c_1'), \rho_t(c_2')\big)\doteq \big(\rho_i(c_1'), h_t \rho_i(c_2') h_t^{-1}\big)$.
        
        Next, by abuse of notations, we extend  $\{\rho_t:\pi_1(P_{i+1})\to\psl\}_{t\in \mathbb{R}}$ to a family of representations $\{\rho_t:\pi_1(\Sigma)\to\psl\}_{t\in \mathbb{R}}$ such that for all $t\neq i$, each restriction $\rho_t|_{\pi_1(P_{k})}$, $k\in \{1,\cdots, i+1\}$, and $\rho_t|_{\pi_1(T_j)},$ $j \in\{1,\cdots, g\}$, is non-abelian. For each $t\in \mathbb{R}$, we extend $\rho_t$ from $\pi_1(P_{i+1})$ to $\pi_1(\Sigma)$ as follows: 
         Recall that if $i\leqslant g - 1$, then $c_2' = d_{i+1}'$. In this case, we let
        $\big(\rho_t(a_{i+1}), \rho_t(b_{i+1})\big) \doteq \big(h_t\rho_i(a_{i+1})h_t^{-1}, h_t\rho_i(b_{i+1})h_t^{-1}\big)$, 
        $\big(\rho_t(a_j), \rho_t(b_j)\big) \doteq \big(\rho_i(a_j), \rho_i(b_j)\big)$ for each $j\in \{1,\cdots, g\}\setminus \{i+1\}$, and 
        $\rho_t(c_k) \doteq \rho_i(c_k)$ for each $k\in \{1,\cdots, p\}\setminus \{m\}$. 
        Recall also that if $i\geqslant g$, then we have $c'_2 = c_{i-g+2}$. In this case, we let
        $\big(\rho_t(a_j), \rho_t(b_j)\big) \doteq \big(\rho_i(a_j), \rho_i(b_j)\big)$ for each $j\in \{1,\cdots, g\}$, and 
        $\rho_t(c_k) \doteq \rho_i(c_k)$ for each $k\in \{1,\cdots, p\}\setminus \{i-g+2,m\}$.
        Then the values $\{\rho_t(a_j),\rho_t(b_j)\}_{j\in\{1,\dots, g\}}$ and $\{\rho_t(c_k)\}_{k\in\{1,\dots,p\}\setminus\{m\}}$ determine the representation $\rho_t$, whose restriction to $\pi_1(P_{i+1})$ equals $\rho_t : \pi_1(P_{i+1})\to \psl$ defined above. 
        Then for all $t\in \mathbb{R}$ and
        $k\in \{1,\cdots, i\}$, we have $\rho_t|_{\pi_1(P_k)} = \rho_i|_{\pi_1(P_k)}$, which is non-abelian; and hence for all $t\neq i$ and $k\in \{1,\cdots, i+1\}$, $\rho_t|_{\pi_1(P_k)}$ is non-abelian. Moreover, for all $t\in \mathbb{R}$ and $j\in \{1,\cdots, g\}$, we have either $\rho_t|_{\pi_1(T_j)} = \rho_i|_{\pi_1(T_j)}$ or 
        $\rho_t|_{\pi_1(T_j)} = h_t\big(\rho_i|_{\pi_1(T_j)}\big)h_t^{-1}$, neither of which is abelian. 
        
        Finally, we will find a sub-path of $\{\rho_t\}_{t\in \mathbb{R}}$ that is contained in $\HPns(\Sigma)$. 
        Similar to the construction of $\{\phi_t\}_{t\in [j,j+1]}$ above, since $\rho_i(c_m)\in \Hyp$, there exists a $t_0>0$ such that for all $t\in [i,i+t_0]$, $\rho_t(c_m)\in \Hyp$. 
        Then the signs $s(\rho_t) = s(\rho_i)$ and the relative Euler classes $e(\rho_t) = e(\rho_i)$ for all $t\in [i,i+t_0]$.
            By a re-parametrization,
            this defines
        the path $\{\rho_t\}_{t\in [i,i+1]}$ in $\HPns(\Sigma)$ such that        each restriction $\rho_{i+1}|_{\pi_1(P_{k})}$, $k\in \{1,\cdots, i+1\}$, and $\rho_{i+1}|_{\pi_1(T_j)},$ $j \in\{1,\cdots, g\}$, is non-abelian.
\\

    It remains to construct a path connecting $\mu$ to $\psi\in \nonabel$. Recall that each restriction   $\mu|_{\pi_1(P_k)}$, $k\in \{1,\dots, m_0-1\}$, and $\mu|_{\pi_1(T_j)}$, $j\in \{1,\dots, g\}$, is non-abelian. We will show by induction that for all $i\in \{0, \cdots, g+p-1 - m_0\}$, there is a path in $\HPns(\Sigma)$ connecting $\mu$ to a representation $\mu_i\in \HPns(\Sigma)$, 
     such that each restriction  $\mu_i|_{\pi_1(P_k)}$, $k\in \{1,\cdots, m_0-1\}$, $\mu_i|_{\pi_1(P_{g+p-1-l})}$, $l\in \{1,\cdots, i\}$, and $\mu_i|_{\pi_1(T_j)},$ $j \in\{ 1,\cdots, g\}$, is non-abelian. Then $\mu_{g+p-1-m_0}\in \nonabel$, and we let $\psi = \mu_{g+p-1-m_0}$. For the base case $i = 0$, the statement automatically holds, as we can let $\mu_0 = \mu$ connected by a constant path. Now assume that, for some
     $i\in \{0, \cdots, g+p-2 - m_0\}$, there is a path $\{\mu_t\}_{t\in [0,i]}$ connecting $\mu$ to $\mu_i$, where $\mu_i$ satisfies the condition above. 
     We will find a path $\{\mu_t\}_{t\in [i,i+1]}$ connecting $\mu_i$ to a representation $\mu_{i+1}$ such that each restriction 
     $\mu_{i+1}|_{\pi_1(P_k)}$, $k\in \{1,\cdots, m_0-1\}$, $\mu_{i+1}|_{\pi_1(P_{g+p-1-l})}$, $l\in \{1,\cdots, i+1\}$, and $\mu_{i+1}|_{\pi_1(T_j)},$ $j \in\{ 1,\cdots, g\}$, is non-abelian. 
     
     If $\mu_i|_{\pi_1(P_{g+p-2-i})}$ is also non-abelian, 
    then we let $\mu_{i+1} = \mu_i$, and let $\{\mu_t\}_{t\in [i,i+1]}$ be the constant path. 
    
    Now suppose that $\mu_i|_{\pi_1(P_{g+p-2-i})}$ is abelian. If $i\leqslant g+p-3-m_0$, then we construct the path $\{\mu_t\}_{t\in [i,i+1]}$ as follows. To simplify the notation, we let $i^* \doteq g+p-2-i$. Similar to the construction of $\{\rho_t\}_{t\in \mathbb{R}}$ above, we will construct a continuous one-parameter family $\{\mu_t\}_{t\in \mathbb{R}}$ of representations containing $\mu_i$ at $t = i$ such that for all $t\neq i$, each restriction $\mu_t|_{\pi_1(P_k)}$, $k\in \{1,\cdots, m_0-1\}$, $\mu_t|_{\pi_1(P_{g+p-1-l})}$, $l\in \{1,\cdots, i+1\}$, and $\mu_t|_{\pi_1(T_j)},$ $j \in\{ 1,\cdots, g\}$, is non-abelian.
     For this construction, we will first apply Lemma \ref{lem_nonabelpants} to $\mu_i|_{\pi_1(P_{i^*})}$, with the two generators $c'_1, c'_2$ of $\pi_1(P_{i^*})$ chosen as follows: 
     If $i = 0$ and $p = 1$, then we let $c'_1 = d'_g$; if $i = 0$ and $p\geqslant 2$, then we let $c'_1 = c_p$; and if $i\geqslant 1$, then we let $c_1' = d_{i^*}$. Also, if $i^*\leqslant g$, then we let $c'_2 = d'_{i^*}$; and if $i^*\geqslant g+1$, then we  let $c'_2 = c_{i^*-g+1}$ (cf. Figure \ref{fig:APD_general}).
     By a similar argument as above, we can show that $\mu_{i}(c_1')\neq \pm \mathrm I$ and $\mu_{i}(c_2')\neq \pm \mathrm I$. Then
        by Lemma \ref{lem_nonabelpants}, there is a one-parameter subgroup $\{h_t\}_{t\in \mathbb{R}}$ of $\psl$ with $h_{i} = \pm\mathrm{I}$ such that for all $t\neq i$, $\mu_{i}(c_1')$ and $ h_t\mu_{i}(c_2')h_t^{-1}$ do not commute. For each $t\in \mathbb{R}$, we define
        the representation
        $\mu_t$ of $\pi_1(P_{i^*})$ 
        by setting $\big(\mu_t(c_1'), \mu_t(c_2')\big)\doteq \big(\mu_{i}(c_1'), h_t \mu_{i}(c_2') h_t^{-1}\big)$. We extend $\mu_t$ from $\pi_1(P_{i^*})$ to $\pi_1(\Sigma)$ as follows: 
        If $i^*\geqslant g+1$, then $c'_2 = c_{i^*-g+1}$. In this case, we let
        $\big(\mu_t(a_j), \mu_t(b_j)\big) \doteq \big(\mu_i(a_j), \mu_i(b_j)\big)$ for each $j\in \{1,\cdots, g\}$, and 
        $\mu_t(c_k) \doteq \mu_i(c_k)$ for each $k\in \{1,\cdots, p\}\setminus \{i^*-g+1,m\}$.
        If $i^*\leqslant g$, then $c_2' = d_{i^*}'$. In this case, we let
        $\big(\mu_t(a_{i^*}), \mu_t(b_{i^*})\big) \doteq \big(h_t\mu_i(a_{i^*})h_t^{-1}, h_t\mu_i(b_{i^*})h_t^{-1}\big)$, 
        $\big(\mu_t(a_j), \mu_t(b_j)\big) \doteq \big(\mu_i(a_j), \mu_i(b_j)\big)$ for each $j\in \{1,\cdots, g\}\setminus \{i^*\}$, and 
        $\mu_t(c_k) \doteq \mu_i(c_k)$ for each $k\in \{1,\cdots, p\}\setminus \{m\}$. 
        Then the values $\{\mu_t(a_j),\mu_t(b_j)\}_{j\in\{1,\dots, g\}}$ and $\{\mu_t(c_k)\}_{k\in\{1,\dots,p\}\setminus\{m\}}$ determine the representation $\mu_t$, whose restriction to $\pi_1(P_{i^*})$ equals $\mu_t:\pi_1(P_{i^*})\to \psl$ defined above. For each $t\in \mathbb{R}$, we have either $\mu_t|_{\pi_1(T_j)} = \mu_i|_{\pi_1(T_j)}$ or 
        $\mu_t|_{\pi_1(T_j)} = h_t\big(\mu_i|_{\pi_1(T_j)}\big)h_t^{-1}$ for all $j\in \{1,\cdots, g\}$,
        $\mu_t|_{\pi_1(P_k)} = \mu_i|_{\pi_1(P_k)}$ for all $k\in \{1,\cdots, m_0 - 1\}$,
        and if $i\geqslant 1$, 
        $\mu_t|_{\pi_1(P_{g+p-1-l})} = \mu_i|_{\pi_1(P_{g+p-1-l})}$ for all $l\in \{1,\cdots, i\}$. By the assumption of $\mu_i$, all these restrictions are non-abelian. Moreover, for all $t\neq i$, by the definition of $h_t$, $\mu_t|_{\pi_1(P_{i^*})}$ is non-abelian. 
        Finally, we will find a sub-path of $\{\mu_t\}_{t\in \mathbb{R}}$ in $\HPns(\Sigma)$. 
        Since $\mu_i(c_m)\in \Hyp$, there is a $t_0>0$ such that for all $t\in [i,i+t_0]$, $\mu_t(c_m)\in \Hyp$. 
        Then the signs $s(\mu_t) = s(\mu_i)$ and the relative Euler classes $e(\mu_t) = e(\mu_i)$ for all $t\in [i,i+t_0]$.
            By a re-parametrization,
            this defines
        the path $\{\mu_t\}_{t\in [i,i+1]}$ in $\HPns(\Sigma)$ such that        each restriction 
     $\mu_{i+1}|_{\pi_1(P_k)}$, $k\in \{1,\cdots, m_0-1\}$, $\mu_{i+1}|_{\pi_1(P_{g+p-1-l})}$, $l\in \{1,\cdots, i+1\}$, and $\mu_{i+1}|_{\pi_1(T_j)},$ $j \in\{ 1,\cdots, g\}$, is non-abelian. 
        
        If $i = g+p-2-m_0$, then as above, we will construct a continuous family $\{\mu_t\}_{t\in \mathbb{R}}$ of representations containing $\mu_i$ at $t = i$ such that for all $t\neq i$, each restriction $\mu_t|_{\pi_1(P_k)}$, $k\in \{1,\cdots, g+p-2\}$, and $\mu_t|_{\pi_1(T_j)},$ $j \in\{ 1,\cdots, g\}$, is non-abelian.
        To apply Lemma \ref{lem_nonabelpants} to $\mu_i|_{\pi_1(P_{m_0})}$, the two generators $c'_1, c'_2$ of $\pi_1(P_{m_0})$ are chosen as follows: 
     If $m_0 = 1$, then we let $c'_1 = d_1$; and if $m_0 \geqslant  2$, then we let $c'_1 = d_{m_0 - 1}$. Also, if $m_0 \leqslant g$, then we let $c'_2 = d'_{m_0}$; and if $g+1\leqslant m_0\leqslant g+p-3$, then we let $c'_2 = d_{m_0}$. If $m_0 = g+p-2$, then $m = p-1$ or $m = p$. In this case, let $c'_2$ be the one in $\{c_{p-1},c_p\}$ that is not $c_m$. 
     By the same argument as above, we can show that $\mu_{i}(c_1')\neq \pm \mathrm I$ and $\mu_{i}(c_2')\neq \pm \mathrm I$; and
        by Lemma \ref{lem_nonabelpants}, there is a one-parameter subgroup $\{h_t\}_{t\in \mathbb{R}}$ of $\psl$ with $h_{i} = \pm\mathrm{I}$ such that for all $t\neq i$, $\mu_{i}(c_1')$ and $ h_t\mu_{i}(c_2')h_t^{-1}$ do not commute. For each $t\in \mathbb{R}$, we define
        $\mu_t$ of $\pi_1(P_{m_0})$ 
        by setting $\big(\mu_t(c_1'), \mu_t(c_2')\big)\doteq \big(\mu_{i}(c_1'), h_t \mu_{i}(c_2') h_t^{-1}\big)$; and we extend $\mu_t$ from $\pi_1(P_{m_0})$ to $\pi_1(\Sigma)$ as follows: 
        If $m_0 \leqslant g+p-3$, then the decomposition curve $c'_2$ separates $\Sigma$ into two subsurfaces $\Sigma_1$ and $\Sigma_2$ with $P_{m_0}\subset \Sigma_1$. In this case, we let $\mu_t|_{\pi_1(\Sigma_1 \setminus P_{m_0})}\doteq \mu_i|_{\pi_1(\Sigma_1\setminus P_{m_0})}$, and $\mu_t|_{\pi_1(\Sigma_2)}\doteq h_t\big(\mu_i|_{\pi_1(\Sigma_2)}\big)h_t^{-1}$.
        If $m_0 = g+p-2$, then we let $\mu_t|_{\pi_1(\Sigma\setminus P_{m_0})}\doteq \mu_i|_{\pi_1(\Sigma\setminus P_{m_0})}$.
        This
     extends $\mu_t$ from $\pi_1(P_{m_0})$ to 
     $\pi_1(\Sigma)$, defining
     the continuous family $\{\mu_t\}_{t\in \mathbb{R}}$ on $\pi_1(\Sigma)$. For each $t\in \mathbb{R}$, we have either
        $\mu_t|_{\pi_1(P_k)} = \mu_i|_{\pi_1(P_k)}$ or $\mu_t|_{\pi_1(P_k)} = h_t\big(\mu_i|_{\pi_1(P_k)}\big)h_t^{-1}$ for all $k\in \{1,\cdots, g+p-2\}\setminus \{m_0\}$, and $\mu_t|_{\pi_1(T_j)} = \mu_i|_{\pi_1(T_j)}$ or 
        $\mu_t|_{\pi_1(T_j)} = h_t\big(\mu_i|_{\pi_1(T_j)}\big)h_t^{-1}$ for all $j\in \{1,\cdots, g\}$. By the assumption of $\mu_i$, all these restrictions are non-abelian. Moreover, for all $t\neq i$, by the definition of $h_t$, $\mu_t|_{\pi_1(P_{m_0})}$ is non-abelian. 
        Finally, we will find a sub-path of $\{\mu_t\}_{t\in \mathbb{R}}$ in $\HPns(\Sigma)$. 
        Since $\mu_i(c_m)\in \Hyp$, there is a $t_0>0$ such that for all $t\in [i,i+t_0]$, $\mu_t(c_m)\in \Hyp$. 
        Then the signs $s(\mu_t) = s(\mu_i)$ and the relative Euler classes $e(\mu_t) = e(\mu_i)$ for all $t\in [i,i+t_0]$.
            By a re-parametrization,
            this defines
        the path $\{\mu_t\}_{t\in [i,i+1]}$ in $\HPns(\Sigma)$ such that  $\mu_{i+1}\in \nonabel$. Since $\mu_{g+p-1-m_0}=\mu_{i+1}\in \nonabel$, we let $\psi = \mu_{g+p-1-m_0}$. This completes the proof.
         \end{proof}

    \subsection{Type-preserving representations}
   
    \subsubsection{Surfaces with positive genus}\label{4.2.1}
    In this subsection, we will prove Theorem \ref{thm_nonabel} for type-preserving representations of $\pi_1(\Sigma)$ where $\Sigma$ has positive genus, which is stated as Proposition \ref{prop_NAtorus} below. 
    \begin{proposition}\label{prop_NAtorus}
        Let $\Sigma = \Sigma_{g,p}$ with $g\geqslant 1$ and $\chi(\Sigma)\leqslant -2$, and with the chosen almost-path decomposition $\Sigma = \apdecomp$. 
        Let $n\in\mathbb{Z}$, and $s\in \{\pm 1\}^p$.
        Then for every type-preserving representation $\phi \in \Rns$, there exists a path 
        in $\Rns$ connecting $\phi$ to a representation
        $\psi \in \nonabel$.
    \end{proposition}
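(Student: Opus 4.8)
The plan is to reproduce the inductive, subsurface-by-subsurface deformation of Proposition \ref{prop_NAhyp}, with the one-hole torus $T_g$ (which exists because $g\geqslant 1$) playing the role that the hyperbolic puncture played there. In Proposition \ref{prop_NAhyp} the flexibility came from the openness of $\Hyp\subset\psl$: the image of the ``free'' puncture was determined by the rest of the representation and, being hyperbolic, stayed hyperbolic under every sufficiently small deformation. Since $s\in\{\pm 1\}^p$ there is no hyperbolic puncture to use, so instead I would make the decomposition curve $d'_g=[a_g,b_g]$ (the common boundary of $T_g$ and $P_g$) hyperbolic, and then treat the pair $(\phi(a_g),\phi(b_g))$ as the free data: keeping every $\phi_t(c_k)$ fixed and all other generators fixed or conjugated by a one-parameter subgroup $h_t$, the lifted commutator $[\widetilde{\phi_t(a_g)},\widetilde{\phi_t(b_g)}]$ is forced to equal a prescribed element $\widetilde W_t\in\univcover$ varying continuously with $t$, and the path-lifting property of the lifted commutator provides a continuous choice of $(\phi_t(a_g),\phi_t(b_g))$ realizing it.

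\emph{Step 1: reduce to $\phi(d'_g)\in\Hyp$.} The subsurface $T_g\cup P_g$ is a copy of $\Sigma_{1,2}$ whose two boundary curves are the decomposition curves adjacent to $P_g$ (together with the puncture $c_1$ when $g=1$), with $d'_g$ the separating curve between the one-hole torus and the three-hole sphere. I would first perform a preliminary deformation, fixing every $\phi_t(c_k)$, making $\phi|_{\pi_1(T_g)}$ and $\phi|_{\pi_1(P_g)}$ non-abelian: abelianity of $\phi|_{\pi_1(P_g)}$ is broken by Lemma \ref{lem_nonabelpants}, and abelianity of $\phi|_{\pi_1(T_g)}$ by moving $(\phi(a_g),\phi(b_g))$ (normalized via Lemma \ref{lem_nonabeltorus}) off the commuting locus into a pair with small non-trivial commutator --- which is legitimate because, by Theorem \ref{thm_liftcommu}, $\mathcal I$ contains a full neighborhood of $\mathrm I$ in $\univcover$, so the change this induces in the surface relation can always be absorbed. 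With both restrictions non-abelian, Lemma \ref{lem_inthyptorus} applied to $\phi|_{\pi_1(T_g\cup P_g)}$ yields a path along which the two boundary curves of $T_g\cup P_g$ stay conjugate to their originals, the two restrictions stay non-abelian, and $\phi(d'_g)$ becomes hyperbolic; Lemma \ref{lem_conjugacypath} extends this path to $\pi_1(\Sigma)$, and since each $\phi_t(c_k)$ stays conjugate to $\phi(c_k)$ the sign and relative Euler class are preserved, so the whole path lies in $\Rns$.

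\emph{Step 2: the induction.} With $\phi(d'_g)$ hyperbolic, I would non-abelianize the remaining subsurfaces $P_i$ ($i\ne g$) and $T_j$ ($j\ne g$) one at a time, as in Proposition \ref{prop_NAhyp} and as in Step 1, but this time absorbing in the pair $(\phi_t(a_g),\phi_t(b_g))$ the change that each deformation induces in the surface relation. At each step the resulting determined value $\widetilde W_t=[\widetilde{\phi_t(a_g)},\widetilde{\phi_t(b_g)}]$ varies continuously from $\widetilde W_0=\widetilde{\phi(d'_g)}\in\Hyp_0\cup\Hyp_{-1}\cup\Hyp_1\subset\mathcal I\setminus\{\mathrm I\}$, which is open; hence on a short time interval $\widetilde W_t$ stays in the same piece $\Hyp_k$, and Proposition \ref{prop_plpliftcommu} furnishes a continuous choice of $(\phi_t(a_g),\phi_t(b_g))$ with $[\widetilde{\phi_t(a_g)},\widetilde{\phi_t(b_g)}]=\widetilde W_t$. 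These pairs are automatically non-commuting, so $\phi_t|_{\pi_1(T_g)}$ stays non-abelian and $\phi_t(d'_g)$ stays hyperbolic; and conjugation preserves non-abelianity of the already-treated pieces, including $P_g$. Concatenating the steps yields a path in $\Rns$ from $\phi$ to some $\psi$ all of whose restrictions to the $\pi_1(P_i)$ and $\pi_1(T_j)$ are non-abelian, i.e.\ $\psi\in\nonabel$.

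The step I expect to be the main obstacle is the preliminary non-abelianization in Step 1: unlike in Proposition \ref{prop_NAhyp} there is no robust open boundary stratum to lean on while breaking abelianity, so one must exploit that $\mathcal I$ is a neighborhood of $\mathrm I$ and check by hand the degenerate configurations --- in particular $g=1$ with $p=2$, where $[a_1,b_1]$ is rigidly determined by the $\phi(c_k)$ and one must first conjugate the punctures to free it up --- together with the bookkeeping that the two preliminary deformations (on $T_g$ and on $P_g$) can be combined without leaving $\Rns$. Once $\phi(d'_g)$ is hyperbolic everything else is a faithful adaptation of the argument for Proposition \ref{prop_NAhyp}, with the openness of the pieces $\Hyp_k\subset\univcover$ and Proposition \ref{prop_plpliftcommu} replacing the openness of $\Hyp\subset\psl$.
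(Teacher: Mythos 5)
Your Step 2 is sound and is essentially what the paper does once one handle has been made non-abelian (the paper absorbs each subsequent deformation into that handle using the submersivity of the commutator map at non-commuting pairs, Lemma \ref{lem_evTsubm}, plus the inverse function theorem on a short time interval, rather than first hyperbolicizing $d'_g$ and invoking Proposition \ref{prop_plpliftcommu}; either implementation works at that stage). The genuine gap is exactly the step you flag as the main obstacle, and your proposed repair does not close it. If you hold every $\phi_t(c_k)$ fixed and every other handle fixed, the relation $c_1[a_1,b_1]\cdots[a_g,b_g]c_2\cdots c_p=1$ determines $[\phi(a_g),\phi(b_g)]$ rigidly, so it cannot be moved at all; this is not special to $g=1,p=2$ but happens for every $\phi$ all of whose restrictions are abelian (such $\phi$ exist in $\mathcal R_0^s(\Sigma)$ for any $g\geqslant 1$). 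The fact that $\mathcal I$ is a neighborhood of $\mathrm I$ is beside the point, because the issue is not whether a nearby commutator value is realizable but where the induced change in the relation is to be absorbed; and Proposition \ref{prop_plpliftcommu} cannot supply a continuous absorption at this stage, since $\widetilde R$ is excluded at $\mathrm I$ (it is not a submersion at commuting pairs). The same circularity affects your preliminary non-abelianization of $P_g$ via Lemma \ref{lem_nonabelpants}: the conjugation $h_t$ moves $\phi_t(d_g)$ out of its conjugacy class, and in Proposition \ref{prop_NAhyp} this was absorbed into a hyperbolic puncture, which is unavailable when $s\in\{\pm1\}^p$. Finally, "conjugating the punctures to free it up'' forces some $\phi_t(c_k)$ to change by more than a global conjugation of the whole representation, and then you must prove it stays parabolic of the same sign; since $\Par$ is not open in $\psl$, no openness or neighborhood argument can deliver this.

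The paper closes precisely this gap with an explicit computation, which is the one ingredient your proposal is missing. When $\phi|_{\pi_1(T_1)}$ is abelian one has $\phi(d'_1)=\pm\mathrm I$, hence $\phi(d_1)=\phi(c_1)^{-1}$; after conjugating so that $\phi(c_1)=\pm\parpp$ (or $\pm\parmp$) and normalizing $\big(\phi(a_1),\phi(b_1)\big)=\Big(\pm\mathrm I,\pm\begin{bmatrix} e & 0\\ 0 & e^{-1}\end{bmatrix}\Big)$ by Lemma \ref{lem_nonabeltorus}, one deforms only $a_1$ via $\phi_t(a_1)=\pm\begin{bmatrix}1 & t\\ 0 & 1\end{bmatrix}$, keeps $b_1$, all other handles and $c_2,\dots,c_p$ fixed, and lets the relation push the change onto $c_1$: a direct computation gives $\phi_t(c_1)=\pm\begin{bmatrix}1 & (e^2-1)t\pm 1\\ 0 & 1\end{bmatrix}$, which is visibly parabolic of the unchanged sign for $0\leqslant t< \tfrac{1}{e^2-1}$. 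This sign-preserving absorption into the puncture $c_1$ is what replaces the open hyperbolic stratum of Proposition \ref{prop_NAhyp}; with it, one torus becomes non-abelian and the rest of your outline (and the paper's induction over the remaining $T_j$ and $P_i$) goes through.
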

    \begin{proof}
    The outline of the proof is as follows:
      We will first construct a path in $\Rns$ connecting $\phi$ to a representation $\rho\in \Rns$ such that the restriction 
      $\rho|_{\pi_1(T_1)}$ is non-abelian. 
      Next, we will construct a path in $\Rns$ connecting $\rho$ to a representation $\mu\in \Rns$ such that for all $j \in \{1, \cdots, g\}$, the restriction 
      $\mu|_{\pi_1(T_j)}$ is non-abelian. 
      Finally, we will construct a path in $\Rns$ connecting $\mu$ to $\psi$ in $\nonabel$. Then the composition of these three paths connects $\phi$ to $\psi\in \nonabel$.
      \\

        The path $\{\phi_t\}_{t\in [0,1]}$ connecting $\phi$ to $\rho$ is constructed as follows. Since $g\geqslant 1$, there is a one-hole torus $T_1$ in the almost-path decomposition.
        If $\phi|_{\pi_1(T_1)}$ is non-abelian, then we let $\rho = \phi$ and let $\{\phi_t\}_{t\in [0,1]}$ be the constant path. 
        
        In the case that $\phi|_{\pi_1(T_1)}$  is abelian, for the convenience of the proof, we assume 
        the values of $\phi(a_1), \phi(b_1),$ $\phi(c_1)$ and $\phi(d_1)$ in $\phi\big(\pi_1(P_1\cup T_1)\big)$ as follows. 
        As $\phi|_{\pi_1(T_1)}$ is abelian, we have $\phi(d'_1) = \big[\phi(a_1), \phi(b_1)\big] = \pm\mathrm{I}$, 
        and hence $\phi(d_1)\phi(c_1) = \phi(d'_1)^{-1} = \pm\mathrm{I}$, i.e., $\phi(d_1) = \phi(c_1)^{-1}$. 
         As $\phi$ is type-preserving, $\phi(c_1)$ and $\phi(d_1)$ are parabolic elements of the opposite signs.
        If $\phi(c_1)\in \Parp$, then there exists a $g\in \psl$ such that 
        $g\phi(c_1)g^{-1} = \pm\parpp$. Let $\{g_t\}\interval$ be a path connecting $\pm \mathrm I$ to $g$ in $\psl$. Then the path $\{g_t\phi g_t^{-1}\}\interval$ in $\Rns$ connects $\phi$ to the representation $g\phi g^{-1}$ that maps $c_1$ to $\pm \parpp$ and $d_1$ to $\pm \parmp$. 
        Therefore, by replacing $\phi$ to $g\phi g^{-1}$ if necessary, we can assume that $\phi(c_1) = \pm\parpp$ and $\phi(d_1) = \pm\parmp$. 
        Similarly, if $\phi(c_1)\in \Par^-$, we can assume that $\phi(c_1) = \pm\parmp$ and $\phi(d_1) = \pm\parpp$. 
        Finally, by Lemma \ref{lem_nonabeltorus}, we can assume 
        that
         $\phi(a_1) = \pm\mathrm{I}$, and 
         $\phi(b_1)= 
        \pm \begin{bmatrix}
            e & 0 \\
            0 & e^{-1}
        \end{bmatrix}$. 
        
        We now construct a continuous one-parameter family $\{\phi_t\}_{t\in \mathbb{R}}$ of representations with $\phi_0 = \phi$, such that for all $t\neq 0$, $\phi_t|_{\pi_1(T_1)}$ is non-abelian. 
        For each $t\in \mathbb{R}$, 
        we define $\phi_t$ as follows: Let
        $\big(\phi_t(a_1), \phi_t(b_1)\big)
        \doteq\bigg(\pm 
        \begin{bmatrix}
            1 & t \\
            0 & 1
        \end{bmatrix}, \pm \begin{bmatrix}
            e & 0 \\
            0 & e^{-1}
        \end{bmatrix}\bigg)$, 
        $\big(\phi_t(a_j),  \phi_t(b_j)\big) \doteq \big(\phi(a_j),  \phi(b_j)\big)$ for each $
        j\in \{2,\cdots, g\}$,
        and $\phi_t(c_k) \doteq \phi(c_k)$ for each $k\in \{2,\cdots, p\}$. Then the values $\{\phi_t(a_j),\phi_t(b_j)\}_{j\in\{1,\dots, g\}}$ and $\{\phi_t(c_k)\}_{k\in\{2,\dots,p\}}$ determine the representation $\phi_t.$ 
        For all $t\neq 0$, $\phi_t(a_1)$ and $\phi_t(b_1)$ are elements of different types in $\psl$ which do not commute, hence $\phi_t|_{\pi_1(T_1)}$ is non-abelian. 
        
        Next, we find a sub-path of $\{\phi_t\}_{t\in \mathbb{R}}$ that is contained in $\Rns$. 
        By the definition of $d_1$, we have
        $$d_1 
        = (c_1[a_1, b_1])^{-1}
        = [a_2, b_2]\cdots [a_g, b_g]\cdot c_2\cdots c_p,$$
        hence we have
        \begin{eqnarray*}
            \phi_t(c_1) &=& \big(\big[\phi_t(a_1),\phi_t(b_1)\big]\cdots \big[\phi_t(a_g),\phi_t(b_g)\big]\cdot \phi_t(c_2)\cdots \phi_t(c_p)\big)^{-1}\\
            &=& \big(\big[\phi_t(a_1),\phi_t(b_1)\big]\big[\phi(a_2),\phi(b_2)\big]\cdots \big[\phi(a_g),\phi(b_g)\big]\cdot \phi(c_2)\cdots \phi(c_p)\big)^{-1}
            \\
            &=& \big(\big[\phi_t(a_1),\phi_t(b_1)\big]\phi(d_1)\big)^{-1}\\
            &=& \pm \begin{bmatrix}
                1 & (e^2 - 1)t \pm 1\\
                0 & 1
                \end{bmatrix},
        \end{eqnarray*}
        where the $\pm$ in the $(1,2)$-entry of the matrix is $+$ if $\phi(c_1)\in \Par^+$, and is $-$ if $\phi(c_1)\in \Par^-$. Let $t_0\in \big(0, \frac{1}{e^2-1}\big)$. Then, for all $t\in [0,t_0]$,
        we have respectively $sgn\big((e^2 - 1)t\pm 1\big) = \pm$, and $\phi_t(c_1)$ and $\phi(c_1)$ are parabolic elements of the same sign. 
        Then for all $t\in [0,t_0]$ and for all $k\in \{1,\cdots, p\}$, $\phi_t(c_k)$ and $\phi(c_k)$ are 
     parabolic with the same sign, and hence the signs $s(\phi_t) = s(\phi)$ and the relative Euler classes $e(\phi_t) = e(\phi)$ for all $t\in [0,t_0]$.
        By a re-parametrization,
        this defines
        the path $\{\phi_t\}_{t\in [0,1]}$ in $\Rns$ starting from $\phi$ such that 
        $\phi_1|_{\pi_1(T_1)}$ is non-abelian; and we let $\rho = \phi_1$.
        \\
        
        Next, we construct the path connecting $\rho$ to $\mu$. We will show by induction that for all $j\in \{1,\cdots, g\}$, there is a path in $\Rns$ connecting $\rho$ to a representation $\rho_j\in \Rns$, such that for all $k\in \{1,\cdots, j\}$, $\rho_j|_{\pi_1(T_k)}$ is non-abelian. Then for all $k\in \{1,\cdots, g\}$, $\rho_g|_{\pi_1(T_k)}$ is non-abelian, and we can let  $\mu = \rho_g$. 
        For the base case $j = 1$, since $\rho|_{\pi_1(T_1)}$ is non-abelian, we can let $\rho_1 = \rho$ connected by a constant path. 
     Now assume that, for some
     $j\in \{1, \cdots, g-1\}$, there is a path $\{\rho_t\}_{t\in [0,j]}$ connecting $\rho$ to $\rho_j$, where $\rho_j$ satisfies the condition above. 
     We will find a path $\{\rho_t\}_{t\in [j,j+1]}$ connecting $\rho_j$ to a representation $\rho_{j+1}$ such that for all $k\in \{1,\cdots, j+1\}$, $\rho_{j+1}|_{\pi_1(T_k)}$ is non-abelian. 
     
     If $\rho_j|_{\pi_1(T_{j+1})}$ is also non-abelian, 
    then we let $\rho_{j+1} = \rho_{j}$, and let $\{\rho_t\}_{t\in [j,j+1]}$ be the constant path. 
    
    If $\rho_j|_{\pi_1(T_{j+1})}$ is abelian, then we construct the path $\{\rho_t\}_{t\in [j,j+1]}$ as follows: We first construct a continuous one-parameter family 
     $\{\rho_t:\pi_1(\Sigma\setminus T_1)\to\psl\}_{t\in \mathbb{R}}$ of representations of $\pi_1(\Sigma\setminus T_1)$ that contains $\rho_j|_{\pi_1(\Sigma\setminus T_1)}$ at $t = j$, and with $\rho_t|_{\pi_1(T_k)}$ non-abelian for all $t\neq j$ and $k\in \{2,\cdots, j+1\}$. 
     As  $\rho_j|_{\pi_1(T_{j+1})}$ is abelian, by Lemma \ref{lem_nonabeltorus}, we can assume that
         $\rho_j(a_{j+1}) = \pm\mathrm{I}$, and 
         $\rho_j(b_{j+1})= 
        \pm \begin{bmatrix}
            e & 0 \\
            0 & e^{-1}
        \end{bmatrix}$.
        For each $t\in \mathbb{R}$, 
        we define $\rho_t$ as follows: Let
        $\big(\rho_t(a_{j+1}), \rho_t(b_{j+1})\big)
        \doteq\bigg(\pm 
        \begin{bmatrix}
            1 & t-j \\
            0 & 1
        \end{bmatrix}, \pm \begin{bmatrix}
            e & 0 \\
            0 & e^{-1}
        \end{bmatrix}\bigg)$, 
        $\big(\rho_t(a_l),  \rho_t(b_l)\big) \doteq \big(\rho_j(a_l),  \rho_j(b_l)\big)$ for each $
        l\in \{2,\cdots, g\}\setminus\{j+1\}$,
        and $\rho_t(c_k) \doteq \rho(c_k)$ for each $k\in \{1,\cdots, p\}$. Then the values $\{\rho_t(a_j),\rho_t(b_j)\}_{j\in\{2,\dots, g\}}$ and $\{\rho_t(c_k)\}_{k\in\{1,\dots,p\}}$ determine the representation $\rho_t$ of $\pi_1(\Sigma\setminus T_1)$. For all $t\in \mathbb{R}$ and $k\in \{2,\cdots, j\}$, we have $\rho_t|_{\pi_1(T_k)} = \rho_j|_{\pi_1(T_k)}$, which is non-abelian. Moreover, for all $t\neq j$, $\rho_t(a_{j+1})$ and $\rho_t(b_{j+1})$ are elements of different types in $\psl$ which do not commute, hence $\rho_t|_{\pi_1(T_{j+1})}$ is non-abelian.
        
        Next, by abuse of notations, we will extend the path $\{\rho_t:\pi_1(\Sigma\setminus T_1)\to\psl\}_{t\in [j,j+t_0]}$ for some $t_0>0$ to a path of representations $\{\rho_t:\pi_1(\Sigma)\to\psl\}_{t\in [j,j+t_0]}$ in $\Rns$.
        To this end, we consider the commutator map $R: \psl\times \psl\to \psl$ sending $(\pm A,\pm B)$ to $[\pm A,\pm B]$. 
             Since $\rho_j|_{\pi_1(T_1)}$ is non-abelian, $\rho_j(a_1)$ and $\rho_j(b_1)$ do not commute. 
             Then as the commutator map $
             \SL\times \SL\to \SL$ descends to $R$ under the natural projection $\SL\to \psl$, by Lemma \ref{lem_evTsubm}, the differential $dR$ is surjective at the point $\big(\rho_j(a_1), \rho_j(b_1)\big)$. 
             Therefore, by the inverse function theorem, 
             there exists a $t_0 >0$ and a path of non-commuting $\psl$-pairs $\big\{(\pm A_t,\pm B_t)\big\}_{t\in [j, j+t_0]}$, with $(\pm A_j,\pm B_j) = \big(\rho_j(a_1), \rho_j(b_1)\big)$ and $R(\pm A_t,\pm B_t) = [\pm A_t,\pm B_t] = \rho_t(d'_1)$ for all $t\in [j, j+t_0]$.
            For each $t\in [j,j+t_0]$,
            let $\big(\rho_t(a_1), \rho_t(b_1)\big) \doteq (\pm A_t,\pm B_t)$.
            This extends the path 
            $\{\rho_t:\pi_1(\Sigma\setminus T_1)\to\psl\}_{t\in [j,j+t_0]}$ to $\pi_1(\Sigma)$, and since $\pm A_{j+t_0}$ and $\pm B_{j+t_0}$ do not commute, $\rho_{j+t_0}|_{\pi_1(T_1)}$ is non-abelian. 
            For all $t\in [j,j+t_0]$, since $\rho_t(c_k) = \rho_j(c_k)$ for all $k\in \{1,\cdots, p\}$, 
            the signs $s(\rho_t) = s(\rho_j)$ and the relative Euler classes $e(\rho_t) = e(\rho_j)$.
            By a re-parametrization,
            this defines
            the path $\{\rho_t\}_{t\in [j,j+1]}$ in $\HPns(\Sigma)$ starting from $\rho_j$ such that 
            for each $k\in \{1, \cdots, j+1\}$,
            $\rho_{j+1}|_{\pi_1(T_k)}$ is non-abelian. 
            \\
            
            It remains to construct a path connecting $\mu$ to $\psi\in \nonabel$. We will show by induction that for all $i\in \{0, \cdots, g+p-2\}$, there is a path in $\Rns$ connecting $\mu$ to a representation $\mu_i\in \Rns$, 
     such that each of the restrictions  $\mu_i|_{\pi_1(P_{g+p-1-l})}$, $l\in \{1,\cdots, i\}$, and $\mu_i|_{\pi_1(T_j)},$ $j \in\{ 1,\cdots, g\}$, is non-abelian. Then $\mu_{g+p-2}\in \nonabel$, and we can let $\psi = \mu_{g+p-2}$. For the base case $i = 0$, the statement automatically holds, and we can let $\mu_0 = \mu$ connected by a constant path. Now assume that, for some
     $i\in \{0, \cdots, g+p-3\}$, there is a path $\{\mu_t\}_{t\in [0,i]}$ connecting $\mu$ to $\mu_i$, where $\mu_i$ satisfies the condition above. 
     We will find a path $\{\mu_t\}_{t\in [i,i+1]}$ connecting $\mu_i$ to a representation $\mu_{i+1}$ such that each restriction 
     $\mu_{i+1}|_{\pi_1(P_{g+p-1-l})}$, $l\in \{1,\cdots, i+1\}$, and $\mu_{i+1}|_{\pi_1(T_j)},$ $j \in\{ 1,\cdots, g\}$, is non-abelian. 
     
     If $\mu_i|_{\pi_1(P_{g+p-2-i})}$ is also non-abelian, 
    then we let $\mu_{i+1} = \mu_i$, and let $\{\mu_t\}_{t\in [i,i+1]}$ be the constant path. If $\mu_i|_{\pi_1(P_{g+p-2-i})}$ is abelian, then we construct the path $\{\mu_t\}_{t\in [i,i+1]}$ as follows. To simplify the notation, we let $i^* \doteq g+p-2-i$. We will construct a continuous one-parameter family $\{\mu_t:\pi_1(\Sigma\setminus T_1)\to\psl\}_{t\in \mathbb{R}}$ of representations containing $\mu_i$ at $t = i$ such that for all $t\neq i$, each restriction $\mu_t|_{\pi_1(P_{g+p-1-l})}$, $l\in \{1,\cdots, i+1\}$, and $\mu_t|_{\pi_1(T_j)},$ $j \in\{ 1,\cdots, g\}$, is non-abelian.
     For this construction, we will first apply Lemma \ref{lem_nonabelpants} to $\mu_i|_{\pi_1(P_{i^*})}$, with the two generators $c'_1, c'_2$ of $\pi_1(P_{i^*})$ chosen as follows: 
     If $i = 0$ and $p = 1$, then we let $c'_1 = d'_g$; if $i = 0$ and $p\geqslant 2$, then we let $c'_1 = c_p$; and if $i\geqslant 1$, then we let $c_1' = d_{i^*}$. Also, if $i^*\leqslant g$, then we let $c'_2 = d'_{i^*}$; and if $i^*\geqslant g+1$, then we  let $c'_2 = c_{i^*-g+1}$ (cf. Figure \ref{fig:APD_general}).
     By a similar argument as in the proof of Proposition \ref{prop_NAhyp}, we can show that $\mu_{i}(c_1')\neq \pm \mathrm I$ and $\mu_{i}(c_2')\neq \pm \mathrm I$. Then
        by Lemma \ref{lem_nonabelpants}, there is a one-parameter subgroup $\{h_t\}_{t\in \mathbb{R}}$ of $\psl$ with $h_{i} = \pm\mathrm{I}$ such that for all $t\neq i$, $\mu_{i}(c_1')$ and $ h_t\mu_{i}(c_2')h_t^{-1}$ do not commute. For each $t\in \mathbb{R}$, we define
        the representation
        $\mu_t|_{\pi_1(P_{i^*})}$ 
        by setting $\big(\mu_t(c_1'), \mu_t(c_2')\big)\doteq \big(\mu_{i}(c_1'), h_t \mu_{i}(c_2') h_t^{-1}\big)$. 
        
        We extend $\mu_t|_{\pi_1(P_{i^*})}$ to $\pi_1(\Sigma\setminus T_1)$ as follows: 
        If $i^*\geqslant g+1$, then $c'_2 = c_{i^*-g+1}$. In this case, we let
        $\big(\mu_t(a_j), \mu_t(b_j)\big) \doteq \big(\mu_i(a_j), \mu_i(b_j)\big)$ for each $j\in \{2,\cdots, g\}$, and 
        $\mu_t(c_k) \doteq \mu_i(c_k)$ for each $k\in \{1,\cdots, p\}\setminus \{i^*-g+1\}$.
        If $i^*\leqslant g$, then $c_2' = d_{i^*}'$. In this case, we let
        $\big(\mu_t(a_{i^*}), \mu_t(b_{i^*})\big) \doteq \big(h_t\mu_i(a_{i^*})h_t^{-1}, h_t\mu_i(b_{i^*})h_t^{-1}\big)$, 
        $\big(\mu_t(a_j), \mu_t(b_j)\big) \doteq \big(\mu_i(a_j), \mu_i(b_j)\big)$ for each $j\in \{2,\cdots, g\}\setminus \{i^*\}$, and 
        $\mu_t(c_k) \doteq \mu_i(c_k)$ for each $k\in \{1,\cdots, p\}$. 
        Then the values $\{\mu_t(a_j),\mu_t(b_j)\}_{j\in\{2,\dots, g\}}$ and $\{\mu_t(c_k)\}_{k\in\{1,\dots,p\}}$ determine the representation $\mu_t$, whose restriction to $\pi_1(P_{i^*})$ equals $\mu_t:\pi_1(P_{i^*})\to \psl$ defined above. For each $t\in \mathbb{R}$, we have either $\mu_t|_{\pi_1(T_j)} = \mu_i|_{\pi_1(T_j)}$ or 
        $\mu_t|_{\pi_1(T_j)} = h_t\big(\mu_i|_{\pi_1(T_j)}\big)h_t^{-1}$ for all $j\in \{2,\cdots, g\}$, and $\mu_t|_{\pi_1(P_{g+p-1-l})} = \mu_i|_{\pi_1(P_{g+p-1-l})}$ for all $l\in \{1,\cdots, i\}$. By the assumption of $\mu_i$, all these restrictions are non-abelian. Moreover, for all $t\neq i$, by the definition of $h_t$, $\mu_t|_{\pi_1(P_{i^*})}$ is non-abelian.

        Finally, we will extend the path $\{\mu_t:\pi_1(\Sigma\setminus T_1)\to\psl\}_{t\in [j,j+t_0]}$ for some $t_0>0$ to a path of representations $\{\mu_t:\pi_1(\Sigma)\to\psl\}_{t\in [j,j+t_0]}$ in $\Rns$. As in the construction $\{\rho_t\}_{t\in [j,j+t_0]}$ above,
        since $\mu_i|_{\pi_1(T_1)}$ is non-abelian, Lemma \ref{lem_evTsubm} implies that the differential of the commutator map $R$ at the point $\big(\mu_i(a_1), \mu_i(b_1)\big)$ is surjective; then by the inverse function theorem, there is a $t_0>0$ such that $\mu_t$ is extended from $\pi_1(\Sigma\setminus T_1)$ to $\pi_1(\Sigma)$ for all $t\in [i,i+ t_0]$. For all $t\in [i,i+t_0]$, since $\mu_t(c_k) = \mu_i(c_k)$  for all $k\in \{1,\cdots, p\}$, 
            the signs $s(\mu_t) = s(\mu_i)$ and the relative Euler classes $e(\mu_t) = e(\mu_i)$.
            By a re-parametrization,
            this defines
        the path $\{\mu_t\}_{t\in [i,i+1]}$ in $\Rns$ such that        each restriction 
     $\mu_{i+1}|_{\pi_1(P_{g+p-1-l})}$, $l\in \{1,\cdots, i+1\}$, and $\mu_{i+1}|_{\pi_1(T_j)},$ $j \in\{ 1,\cdots, g\}$, is non-abelian. 
     Since $\mu_{g+p-2}\in \nonabel$, we let $\psi = \mu_{g+p-2}$. This completes the proof.
    \end{proof}
    \subsubsection{Punctured spheres}\label{4.2.2}
    

    It remains to prove Theorem \ref{thm_nonabel} for a type-preserving representation $\phi$ of the fundamental group $\pi_1(\Sigma)$ of a punctured sphere $\Sigma=\Sigma_{0,p}$. In this subsection, we will consider the chosen almost-path decomposition $\Sigma = \displaystyle\bigcup^{p-2}_{i = 1} P_i$, with the decomposition curves  $d_i = c_{i+2}\cdots c_p = (c_1\cdots c_{i+1})^{-1}$ for $i\in \{1,\cdots, p-3\}$.
    We first consider the case that $\phi|_{\pi_1(P_i)}$ is non-abelian for some $i\in \{1,\cdots, p-2\}$ as stated in Proposition \ref{prop_NAsphere1}; and consider the case that $\phi |_{\pi_1(P_i)}$ is abelian for all $i\in \{1,\cdots, p-2\}$ in Proposition \ref{prop_NAsphere2} and subsection \ref{Pf}.

\begin{proposition}\label{prop_NAsphere1}
        Let $\Sigma = \Sigma_{0,p}$ with $p\geqslant 4$.
        For $n\in\mathbb{Z}$ and $s\in \{\pm 1\}^p$, let $\phi \in \Rns$ be a type-preserving representation such that $\phi|_{\pi_1(P_i)}$ is non-abelian for some $i\in \{1,\cdots, p-2\}$.
        Then there exists a path  in $\Rns$ connecting $\phi$ to 
        $\psi\in \nonabel$. 
    \end{proposition}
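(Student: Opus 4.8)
The plan is to deform $\phi$ within $\Rns$ so as to enlarge, one pants at a time, the set of indices $k$ for which $\phi|_{\pi_1(P_k)}$ is non-abelian. Starting from the given index $i_0$, we grow a contiguous block $\{a,\dots,b\}\ni i_0$ of such indices outward along the chain $P_1-P_2-\cdots-P_{p-2}$ dual to the almost-path decomposition, until the block is all of $\{1,\dots,p-2\}$. This follows the pattern of Proposition \ref{prop_NAhyp} and Proposition \ref{prop_NAtorus}, except that the non-abelian pants $P_{i_0}$ (and, as the induction proceeds, the whole processed block) plays the role that the hyperbolic peripheral element $c_m$ played there.

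For the inductive step, suppose we have reached $\phi'\in\Rns$ with $\phi'|_{\pi_1(P_k)}$ non-abelian for $k\in\{a,\dots,b\}\ni i_0$, and, say, $a\geqslant2$; we enlarge the block to $\{a-1,\dots,b\}$. Write $\pi_1(P_{a-1})=\langle\alpha,c'\rangle$ with $c'$ a peripheral boundary component of $P_{a-1}$ and $\alpha$ the other generator, and apply Lemma \ref{lem_nonabelpants} with $c_1'=\alpha$, $c_2'=c'$ to get a one-parameter subgroup $\{h_t\}$ of $\psl$ with $h_0=\pm\mathrm I$ such that $\phi'(\alpha)$ and $h_t\phi'(c')h_t^{-1}$ do not commute for $t\neq0$. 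Set $\phi_t(\alpha)=\phi'(\alpha)$ and $\phi_t(c')=h_t\phi'(c')h_t^{-1}$; this makes $P_{a-1}$ non-abelian for $t\neq0$ and preserves the sign of $c'$, but perturbs the image of $d_{a-1}$, the internal decomposition curve shared with $P_a$. We absorb this perturbation by propagating a correction inward along the already-non-abelian block: at $P_a$ we realize the new value of $\phi_t(d_{a-1})$ while keeping the peripheral boundary of $P_a$ fixed, which can be done continuously with $\phi_t|_{\pi_1(P_a)}$ still non-abelian for small $t$, because the product map on $\pi_1(P_a)$ is a submersion at a non-commuting pair by Lemma \ref{lem_evPsubm}; this in turn changes $\phi_t(d_a)$, and we repeat through $P_{a+1},P_{a+2},\dots$ and, if the correction reaches them, through the end pants $P_1$ and $P_{p-2}$, where the submersion of the multiplication map $\Par\times\Par\to\SL$ from Lemma \ref{lem_evPsubm}(2) keeps the two peripheral boundaries parabolic with the correct signs. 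The net effect is that the only images that change are those of internal decomposition curves together with that of the single conjugated peripheral $c'$; hence all signs and the relative Euler class are unchanged, the defining relation is still satisfied, and $\phi_t\in\Rns$ for small $t$. After a re-parametrization this gives a path in $\Rns$ from $\phi'$ to a representation non-abelian on $\{a-1,\dots,b\}$; the step enlarging the block on the right is symmetric. Composing all such paths connects $\phi$ to some $\psi\in\nonabel$.

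The technical heart of the proof is the absorption bookkeeping. In Proposition \ref{prop_NAhyp} the hyperbolic peripheral $c_m$ is a ``free'' absorber precisely because hyperbolicity is an open condition, so small changes do not affect its type; here every peripheral is parabolic and parabolicity is not open, so no peripheral can play that role, and the perturbation must be routed entirely through the non-abelian pants of the processed block, using that their product maps are submersions (Lemma \ref{lem_evPsubm}). One must verify that at every stage the relevant pairs stay non-commuting — so these maps remain submersions and the processed pants remain non-abelian — that the two-peripheral end pants $P_1$ and $P_{p-2}$, which carry less room, can still absorb the correction, and that the degenerate configurations in which some $\phi'(d_i)$ equals $\pm\mathrm I$ (which force an adjacent pants to be abelian and obstruct the application of Lemma \ref{lem_nonabelpants}) are either avoided by a careful choice of the one-parameter subgroups $\{h_t\}$ or disposed of separately. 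Arranging that all of these smallness and non-degeneracy requirements hold simultaneously along the path is where the real work lies.
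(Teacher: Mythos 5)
There is a genuine gap, and it sits exactly where you defer the work: the endpoint of the absorption. At an interior pants $P_k$ of your block the correction cannot be stopped at all: with the puncture $c_{k+1}$ held parabolic of fixed sign, the relation $d_{k-1}=c_{k+1}d_k$ simply determines the new value of $\phi_t(d_k)$ from the new value of $\phi_t(d_{k-1})$, so Lemma \ref{lem_evPsubm} buys you nothing there — the only constrained factor is the two-dimensional parabolic one, and a submersion statement does not let you freeze the other boundary curve. Hence the perturbation necessarily propagates all the way to an end pants ($P_{p-2}$ or $P_1$), where it must be absorbed by moving the pair of peripheral images inside $\Par^{sgn(s)}\times\Par^{sgn(s)}$ so that their product follows a prescribed path; Lemma \ref{lem_evPsubm}(2) converts this into an inverse-function-theorem problem only when that pair does not commute. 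In your outward-growing induction the end pants reached by the propagation need not lie in the processed block, so it may well be abelian (both punctures can be powers of one parabolic), the product map is then not a submersion at the base point, the lifting argument collapses, and you offer no substitute. This is the missing idea, not "smallness and non-degeneracy bookkeeping."

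The paper's proof is organized precisely to avoid this. It first makes the end pants $P_1$ non-abelian, using the rightmost non-abelian pants $P_{i_0}$ as a temporary absorber: maximality of $i_0$ forces $\phi(d_{i_0})$ to be parabolic (it commutes with the parabolic $\phi(c_{i_0+2})$ and is not $\pm\mathrm I$ since $P_{i_0}$ is non-abelian), and because the absorbed pair $(\phi(c_{i_0+1}),\phi(d_{i_0}))$ is moved inside $\Par\times\Par$, the image of $d_{i_0}$ stays in a single conjugacy class, so the entire unprocessed piece to its right is carried along by a conjugating path supplied by Lemma \ref{lem_conjugacypath_const} — a shielding trick your scheme does not use. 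With $P_1$ non-abelian once and for all, the remaining pants are processed from right to left, each time conjugating one boundary curve of the current pants while fixing the curve shared with the already-processed side, so every correction is routed leftward to the fixed absorber $P_1$, where the non-commuting parabolic pair $(\mu(c_1),\mu(c_2))$ makes Lemma \ref{lem_evPsubm}(2) and the inverse function theorem legitimately applicable and keeps both punctures parabolic with the correct signs. If you reorganize your induction in this way — designate one two-puncture pants as the absorber, make it non-abelian first (with the conjugation trick to protect the untouched side), and always route corrections toward it — your argument closes; as written, it does not.
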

    \begin{proof}
    We will construct a path 
    $\{\phi_t\}_{t\in [0,1]}$ in $\Rns$ connecting $\phi$ to a representation $\mu\in \Rns$ with 
    $\mu|_{\pi_1(P_1)}$ non-abelian, followed by constructing a path 
 $\{\mu_t\}_{t\in [0,1]}$ in $\Rns$ connecting $\mu$ to a representation $\psi$ in $\nonabel$.  Then the composition of these two paths connects $\phi$ to $\psi\in \nonabel$.
\\
    
            If $\phi|_{\pi_1(P_1)}$ is non-abelian, then we let $\mu = \phi$ and let  $\{\phi_t\}_{t\in [0,1]}$ be the constant path. If $\phi|_{\pi_1(P_1)}$ is abelian, then we construct the path $\{\phi_t\}_{t\in [0,1]}$ as follows: By assumption, we can choose the largest integer $i_0\in\{2,\dots,p-2\}$  such that $\phi|_{\pi_1(P_{i_0})}$ is non-abelian. Let $\Sigma_0 \doteq \displaystyle\bigcup^{i_0 - 1}_{i = 1} P_i$.
            We will first construct a continuous one-parameter family $\{\phi_t:\pi_1(\Sigma_0) \to\psl\}_{t\in \mathbb{R}}$ 
            of representations with $\phi_0 = \phi|_{\pi_1(\Sigma_0)}$ such that  for all $t\neq 0$, $\phi_t|_{\pi_1(P_1)}$ is non-abelian.
     	   As $\phi$ sends $c_1$ and $c_2$ into $\Par$, we have  $\phi(c_1)\neq \pm \mathrm I$ and $\phi(c_2)\neq \pm \mathrm I$. By Lemma \ref{lem_nonabelpants}, 
	   there is a one-parameter subgroup $\{h_t\}_{t\in \mathbb{R}}$ of $\psl$ with $h_0 = \pm\mathrm{I}$ such that for all $t\neq 0$, $h_t\phi(c_1)h_t^{-1}$ and $\phi(c_2)$ do not commute. 
	   For each $t\in \mathbb{R}$, we define the representation $\phi_t$ of $\pi_1(P_1)$ by setting $\big(\phi_t(c_1), \phi_t(c_2)\big)\doteq \big(h_t\phi(c_1)h_t^{-1}, \phi(c_2)\big)$.
	   By letting $\phi_t(c_k) \doteq \phi(c_k)$ for each $k\in \{3,\cdots, i_0\}$, we extend the continuous family $\{\phi_t\}_{t\in \mathbb{R}}$ from $\pi_1(P_1)$ to $\pi_1(\Sigma_0)$.

         Next, we will extend the path $\{\phi_t:\pi_1(\Sigma_0)\to\psl\}_{t\in [0, t_0]}$ for some $t_0>0$ to a path of representations $\{\phi_t:\pi_1(\Sigma_0\cup P_{i_0})\to\psl\}_{t\in  [0, t_0]}$.
         To this end, we first claim that if $i_0 <p-2$, then $\phi(d_{i_0})$ is parabolic. Indeed, by the definition of $i_0$, $\phi|_{\pi_1(P_{i_0 + 1})}$ is abelian; and as elements of  $\pi_1(P_{i_0 + 1})$, $\phi(c_{i_0 + 2})$ and $\phi(d_{i_0})$ commute  and  lie in the same one-parameter subgroup. As $\phi(c_{i_0 + 2})\in \Par$, either $\phi(d_{i_0}) = \pm \mathrm I$ or $\phi(d_{i_0})\in \Par$. Since $d_{i_0}$ is also a boundary component of $P_{i_0}$ and $\phi|_{\pi_1(P_{i_0})}$ is non-abelian, we have $\phi(d_{i_0})\neq \pm\mathrm I$, and hence $\phi(d_{i_0})\in \Par$. 
         
         We now extend the path $\{\phi_t\}_{t\in [0, t_0]}$ to $\pi_1(\Sigma_0\cup P_{i_0})$ using the product map $m: \Par \times \Par\to \psl$ sending $(\pm A,\pm B)$ to $\pm AB$. 
         If $i_0 < p-2$, let $(\pm A_0,\pm B_0) \doteq (\phi(c_{i_0 +1}), \phi(d_{i_0}))\in \Par\times \Par$; and if $i_0 = p-2$, let $(\pm A_0,\pm B_0) \doteq (\phi(c_{p-1}), \phi(c_{p}))\in \Par\times \Par$. Then we have $m(\pm A_0,\pm B_0) = \phi(d_{i_0 - 1})$.
         Since $\phi|_{\pi_1(P_{i_0})}$ is non-abelian, $\pm A_0$ and $\pm B_0$ do not commute. 
         Then, as the product map $
             \SL\times \SL\to \SL$ descends to $m$ under the natural projection $\SL\to \psl$, by Lemma \ref{lem_evPsubm}, the differential $dm$ is surjective at the point $\big(\pm A_0, \pm B_0\big)$. 
             Therefore, by the inverse function theorem, there exists a $t_0 >0$ and a path of non-commuting pairs of parabolic elements $\big\{(\pm A_t,\pm B_t)\big\}_{t\in [0, t_0]}$ containing $\big(\pm A_0, \pm B_0\big)$ at $t = 0$ such that 
             $m(\pm A_t,\pm B_t) = \pm A_t B_t = \phi_t(d_{i_0 - 1})$ for all $t\in [0, t_0]$. 
             For each $t\in [0, t_0]$, we let $\ (\phi_t(c_{i_0 +1}), \phi_t(d_{i_0})) = (\pm A_t,\pm B_t)$ if $i_0<p-2$; and we let $\ (\phi_t(c_{p-1}), \phi_t(c_p)) = (\pm A_t,\pm B_t)$ if $i_0 = p-2$. 
             This extends the path $\{\phi_t\}_{t\in [0, t_0]}$ from $\pi_1(\Sigma_0)$ to $\pi_1(\Sigma_0\cup P_{i_0})$. 
             
             Next, we extend the path $\{\phi_t:\pi_1(\Sigma_0\cup P_{i_0})\to\psl\}_{t\in [0, t_0]}$ to $\pi_1(\Sigma)$. 
             If $i_0 = p-2$, then $\Sigma = \Sigma_0\cup P_{i_0}$, and the path $\{\phi_t\}_{t\in [0, t_0]}$ is already defined on $\pi_1(\Sigma)$. 
             If $i_0 < p-2$, then we extend $\{\phi_t\}_{t\in [0, t_0]}$ from $\pi_1(\Sigma_0\cup P_{i_0})$ to $\pi_1(\Sigma)$ as follows: Since the path $\{\phi_t(d_{i_0})\}_{t\in[0,t_0]}$ lies in a connected component $\Par^+$ or $\Par^-$ of $\Par$, for each $t\in [0,t_0]$, 
             $\phi_t(d_{i_0})$ and $\phi(d_{i_0})$ are conjugate. Then by Lemma \ref{lem_conjugacypath_const}, there exists a path $\{g_t\}_{t\in [0,t_0]}$ with $g_0 = \pm \mathrm{I}$ and such that $\phi_t(d_{i_0}) = g_t \phi(d_{i_0}) g_t^{-1}$. For $t\in [0,t_0]$, we extend the representation $\phi_t$ by defining $\phi_t|_{\pi_1(\Sigma - (\Sigma_0\cup P_{i_0}))}\doteq 
             g_t \big(\phi|_{\pi_1(\Sigma - (\Sigma_0\cup P_{i_0}))} \big)g_t^{-1}$. Then for all $t\in [0, t_0]$, we have: $\phi_t(c_1)=h_t\phi(c_1)h_t^{-1}$ which is conjugate to $\phi(c_1)$; $\phi_t(c_k)=\phi(c_k)$ for all $k\in \{2,\cdots, i_0\}$; $\phi_t(c_{i_0+1})$ is conjugate to $\phi(c_{i_0+1})$ as the path $\{\phi_t(c_{i_0+1})\}_{t\in[0,t_0]}$ lies in a connected component $\Parp$ or $\Parm$ which itself is a conjugacy class; and $\phi_t(c_k)=g_t\phi(c_k)g_t^{-1}$ for all $k\in\{i_0+2,\dots,p\}$, which is conjugate to $\phi(c_k)$.  Therefore the signs $s(\phi_t) = s(\phi)$ and the relative Euler classes $e(\phi_t) = e(\phi)$ for all $t\in [0,t_0]$.
            By a re-parametrization and letting $\mu = \phi_1$,
            this defines
            the path $\{\phi_t\}_{t\in [0,1]}$ in $\Rns$ connecting $\phi$ and $\mu$ with 
            $\mu|_{\pi_1(P_1)}$  non-abelian.
            \\
            
            It remains to construct a path connecting $\mu$ to $\psi\in \nonabel$. We will show by induction that for all $i\in \{0, \cdots, p-3\}$, there is a path in $\Rns$ connecting $\mu$ to a representation $\mu_i\in \Rns$
         such that $\mu_i|_{\pi_1(P_1)}$ and $\mu_i|_{\pi_1(P_{p-1-l})}$ for all $l\in \{1,\cdots, i\}$ are non-abelian. Then $\mu_{p-3}\in \nonabel$, and we can let $\psi = \mu_{p-3}$. For the base case $i = 0$, the statement automatically holds, and we let $\mu_0 = \mu$ connected by a constant path. Now assume that, for some
         $i\in \{0, \cdots, p-4\}$, there is a path $\{\mu_t\}_{t\in [0,i]}$ connecting $\mu$ to $\mu_i$, where $\mu_i$ satisfies the condition above. 
         We will find a path $\{\mu_t\}_{t\in [i,i+1]}$ connecting $\mu_i$ to a representation $\mu_{i+1}$ such that $\mu_{i+1}|_{\pi_1(P_1)}$ and $\mu_{i+1}|_{\pi_1(P_{p-1-l})}$ for all $l\in \{1,\cdots, i+1\}$ are non-abelian. 

     First, we will construct a continuous one-parameter family $\{\mu_t:\pi_1(\Sigma\setminus P_1) \to\psl\}_{t\in \mathbb{R}}$ 
            of representations containing $\mu_i|_{\pi_1(\Sigma\setminus P_1)}$ at $t = i$ such that for all $t\neq i$, $\mu_t|_{\pi_1(P_{p-2-i})}$ is non-abelian. 
            To simplify the notation, we let $i^* \doteq p-2-i$. 
         For this construction, we will apply Lemma \ref{lem_nonabelpants} to $\mu_i|_{\pi_1(P_{i^*})}$, with the two generators $c'_1, c'_2$ of $\pi_1(P_{i^*})$ chosen as follows: 
     If $i = 0$, then we let $(c'_1, c'_2) = (c_p, c_{p-1})$; and if $i\geqslant 1$, then we let $(c'_1, c'_2) = (d_{i^*}, c_{i^*+1})$. 
     Then for $i = 0$, as $\mu_i$ is type-preserving, $\mu_i(c'_1), \mu_i(c'_2)\in \Par$; and for $i\geqslant 1$, since $d_{i^*}$ is the common boundary component of $P_{i^*}$ and $P_{i^*+1}$, and since $\mu_i|_{\pi_1(P_{i^*+1})}$ is non-abelian, we have
    $\mu_i(c_1')=\mu_i(d_{i^*})\neq \pm \mathrm I$, and $\mu_i(c'_2)\in \Par$ as $\mu_i$ is type-preserving.
    Hence by Lemma \ref{lem_nonabelpants}, 
	there is a one-parameter subgroup $\{h_t\}_{t\in \mathbb{R}}$ of $\psl$ with $h_i = \pm\mathrm{I}$ such that for all $t\neq i$, $\mu_i(c'_1)$ and $ h_t\mu_i(c'_2)h_t^{-1}$ do not commute. For each $t\in \mathbb{R}$, we define the representation $\mu_t$ of $\pi_1(P_{i^*})$ by setting $\big(\mu_t(c'_1), \mu_t(c'_2)\big)\doteq \big(\mu(c'_1), h_t \mu(c'_2) h_t^{-1}\big)$.
	   By letting $\mu_t(c_k) \doteq \mu(c_k)$ for each $k\in \{3,\cdots, p\}\setminus \{i^*+1\}$, we extend the continuous family $\{\mu_t\}_{t\in \mathbb{R}}$ from $\pi_1(P_{i^*})$ to $\pi_1(\Sigma\setminus P_1)$. Then $\mu_t|_{\pi_1(P_{p-1-l})} = 
       \mu_i|_{\pi_1(P_{p-1-l})}$ for $l\in \{1,\cdots, i\}$, hence is non-abelian by the induction hypothesis on $\mu_i$.
       Moreover, for all $t\neq i$, by the definition of $h_t$, $\mu_t|_{\pi_1(P_{i^*})}$ is non-abelian. 
       
       Next, by abuse of notations, we extend the path $\{\mu_t:\pi_1(\Sigma\setminus P_1)\to\psl\}_{t\in [i, i+t_0]}$ for some $t_0>0$ to a path of representations $\{\mu_t:\pi_1(\Sigma)\to\psl\}_{t\in  [i, i+t_0]}$,
          using the product map $m: \Par \times \Par\to \psl$. 
         Let $(\pm A_i,\pm B_i) \doteq \big(\mu_i(c_1), \mu_i(c_2)\big)\in \Par\times \Par$, with $m(\pm A_i,\pm B_i) = \mu_i(d_1)$.
         Since $\mu_i|_{\pi_1(P_1)}$ is non-abelian, Lemma \ref{lem_evPsubm} implies that the differential of the product map $m$ at the point $\big(\mu_i(c_1), \mu_i(c_2)\big)$ is surjective; then by the inverse function theorem, there is a $t_0 >0$ and a path of non-commuting pairs of parabolic elements $\big\{(\pm A_t,\pm B_t)\big\}_{t\in [i, i+t_0]}$ containing $\big(\pm A_i, \pm B_i\big)$ at $t = i$, such that 
             $m(\pm A_t,\pm B_t) = \pm A_t B_t = \mu_t(d_1)$ for all $t\in [i, i+t_0]$. Moreover, each of the paths 
             $\{\pm A_t\}_{t\in [0,t_0]}$ and 
             $\{\pm B_t\}_{t\in [0,t_0]}$ lies in a connected component of $\Par$, i.e., either $\Par^+$ or $\Par^-$, hence for all $t\in [i,i+t_0]$, $\pm A_t$ is conjugate to $\pm A_0$ and $\pm B_t$ is conjugate to $\pm B_0$.
             For each $t\in [0, t_0]$, we let $\ (\mu_t(c_1), \mu_t(c_2)) = (\pm A_t,\pm B_t)$, which extends $\{\mu_t\}_{t\in [0, t_0]}$ from $\pi_1(\Sigma\setminus P_1)$ to $\pi_1(\Sigma)$. 
             Since $\pm A_{t_0}$ and $\pm B_{t_0}$ do not commute, $\mu_{t_0}|_{\pi_1(P_1)}$ is non-abelian; and since
             $\mu_t(c_k)$ and $\mu_i(c_k)$ are conjugate for all $t\in [i,i+t_0]$ and for all $k\in \{1,\cdots, p\}$, the signs $s(\mu_t) = s(\mu_i)$ and the relative Euler classes $e(\mu_t) = e(\mu_i)$ for all $t\in [i,i+t_0]$.
             By a re-parametrization,
            this defines
            the path $\{\mu_t\}_{t\in [i,i+1]}$ in $\Rns$ starting from $\mu$ such that 
            $\mu_{i+1}|_{\pi_1(P_1)}$ and $\mu_{i+1}|_{\pi_1(P_{p-1-l})}$ for all $l\in \{1,\cdots, i+1\}$ are non-abelian. 
            Since $\mu_{p-3}\in \nonabel$, we let $\psi = \mu_{p-3}$. This completes the proof.
    \end{proof}

 The case that $\phi |_{\pi_1(P_i)}$ is abelian for all $i\in \{1,\cdots, p-2\}$ is considered in following proposition.
 
     \begin{proposition}\label{prop_NAsphere2}
        Let $\Sigma = \Sigma_{0,p}$ with $p\geqslant 4$.
        For $s\in \{\pm 1\}^p$ with $p_+(s)\geqslant 2$ and $p_-(s)\geqslant 2$, let $\phi\in \mathcal{R}^s_0(\Sigma)$ 
        such that $\phi |_{\pi_1(P_i)}$ is abelian for all $i\in \{1,\cdots, p-2\}$.
        Then there exists a path  in $\mathcal{R}^s_0(\Sigma)$ connecting $\phi$ to a representation
        $\psi\in \mathrm{NA}^s_0(\Sigma)$. 
    \end{proposition}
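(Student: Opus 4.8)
The plan is to reduce to Proposition \ref{prop_NAsphere1}. We will produce a path in $\mathcal R^s_0(\Sigma)$ from $\phi$ to a representation $\phi'$ whose restriction to the first pair of pants $\pi_1(P_1)=\langle c_1,c_2\rangle$ is non-abelian; applying Proposition \ref{prop_NAsphere1} to $\phi'$ then finishes the proof. Throughout we use the remark that, since the relative Euler class and the sign are continuous (hence locally constant) on $\mathrm{HP}(\Sigma)$, any path in $\mathcal R^s(\Sigma)$ issuing from a point of $\mathcal R^s_0(\Sigma)$ stays in $\mathcal R^s_0(\Sigma)$; so to keep a deformation inside $\mathcal R^s_0(\Sigma)$ it suffices to keep each $\phi_t(c_k)$ in its component $\Par^{s_k}$, with no separate relative-Euler-class bookkeeping.

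First I would reduce to the case that $\phi$ is abelian. A type-preserving reducible representation is abelian (parabolics in the Borel subgroup of $\psl$ commute), so for an irreducible $\phi$ we must first deform it to an abelian one. Since each $\phi|_{\pi_1(P_i)}$ is abelian and each $\phi(c_i)$ parabolic, commuting parabolics share their unique ideal fixed point, and propagating this along the almost-path decomposition shows that $\phi$ splits into maximal reducible blocks, each inside the stabilizer of one ideal point, glued along decomposition curves $d_i$ with $\phi(d_i)=\pm\mathrm I$; irreducibility forces at least two blocks with distinct fixed points. Conjugating the blocks one at a time so as to bring their fixed points together gives a path that fixes the curves with $\phi(d_i)=\pm\mathrm I$ (so the gluing persists) and preserves all peripheral signs, hence stays in $\mathcal R^s(\Sigma)$, and ends at an abelian representation. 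After a further conjugation we may assume $\phi(c_k)=\pm\begin{bmatrix}1&t_k\\0&1\end{bmatrix}$ with $sgn(t_k)=s_k$ and $\sum_k t_k=0$, and (since the $t_k$'s can be moved freely within this set while staying in $\mathcal R^s_0(\Sigma)$, choosing $|t_1|\neq|t_2|$ when $s_1\neq s_2$) that $\phi(d_1)=(\phi(c_1)\phi(c_2))^{-1}$ is a non-trivial parabolic.

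Next I would bend $P_1$ so that its boundary curve $d_1$ becomes hyperbolic. Put $m=\tfrac12(s_1+s_2)\in\{-1,0,1\}$ and let $\Sigma\setminus P_1=\Sigma_{0,p-1}$, with $d_1$ the common boundary. By Theorem \ref{thm_pants2}(1) the space $\mathrm{HP}^{(s_1,s_2,0)}_m(\Sigma_{0,3})$ of candidate restrictions on $P_1$ is non-empty, and all its members are irreducible, hence non-abelian: a reducible one would make $\phi'(c_1),\phi'(c_2)$ and $\phi'(d_1)=(\phi'(c_1)\phi'(c_2))^{-1}$ simultaneously upper triangular, forcing $\phi'(d_1)$ parabolic or $\pm\mathrm I$. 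By the additivity of Proposition \ref{prop_additivity}, the complementary restriction must have relative Euler class $-m$ and peripheral signs $(0,s_3,\dots,s_p)$; one checks that in each of the three cases for $(s_1,s_2)$ the generalized Milnor--Wood inequality for $\Sigma_{0,p-1}$ with this data is equivalent to $p_+(s)\leqslant p-2$ and $p_-(s)\leqslant p-2$, i.e.\ exactly to the hypotheses $p_+(s),p_-(s)\geqslant 2$. Hence by Theorem \ref{thm_main4} for $\Sigma_{0,p-1}$ (available since $|\chi(\Sigma_{0,p-1})|<|\chi(\Sigma)|$, the base case $\Sigma_{0,3}$ being Theorem \ref{thm_pants2}), a representation with the required complementary data exists, so a target $\phi'\in\mathcal R^s_0(\Sigma)$ with $\phi'|_{\pi_1(P_1)}$ non-abelian exists.

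The hard part is constructing an actual path in $\mathcal R^s(\Sigma)$ from $\phi$ to $\phi'$: unlike in Proposition \ref{prop_NAsphere1}, the complement $\Sigma\setminus P_1$ carries no non-abelian restriction to anchor an inverse-function-theorem argument. The idea is to carry out the deformation inside one pair of pants together with its neighbour, $\pi_1(P_1\cup P_2)\cong\pi_1(\Sigma_{0,4})$ (for $p\geqslant 5$): after first arranging the decomposition curves $d_1$ and $d_2$ to be hyperbolic — which itself requires deforming $\phi$ and is done by the same mechanism on subsurfaces of strictly smaller complexity — one uses the connectedness of the relevant component of $\mathrm{HP}(\Sigma_{0,4})$ (available by induction on $|\chi|$ through Theorem \ref{thm_main4}) to move the abelian restriction on $P_1\cup P_2$ to one that is non-abelian on $P_1$, keeping $\phi_t$ constant on $\pi_1\bigl(\Sigma\setminus(P_1\cup P_2)\bigr)$ and matching along $d_2$ by Lemma \ref{lem_conjugacypath_const}, with Lemma \ref{lem_nonabelpants}, the submersivity of the multiplication map (Lemma \ref{lem_evPsubm}), Proposition \ref{prop_evimage}, and the path-lifting criterion Lemma \ref{lem_plp} supplying the local moves. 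For the base case $p=4$ one gives the family explicitly; there $\Sigma=P_1\cup P_2$, making $P_1$ non-abelian (with $\phi'(d_1)$ hyperbolic) forces $P_2$ non-abelian as well, and the gluing $\phi'$ of two suitable non-abelian pants representations already lies in $\mathrm{NA}^s_0(\Sigma_{0,4})$. In all cases, once $\phi$ is joined to $\phi'$, Proposition \ref{prop_NAsphere1} completes the proof. The main obstacle is precisely this path construction: making the bending on $P_1$ propagate consistently through a complement that is entirely abelian, which forces one to peel off one pair of pants at a time and appeal to the lower-complexity instances of the results being bootstrapped.
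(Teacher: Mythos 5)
There is a genuine gap, and you have in fact located it yourself: the actual construction of a path from the entirely abelian $\phi$ to a representation with a non-abelian pants restriction. Your proposed fix — ``first arranging the decomposition curves $d_1$ and $d_2$ to be hyperbolic, which is done by the same mechanism on subsurfaces of strictly smaller complexity'' — is circular. Every restriction of $\phi$ to a subsurface of the decomposition is abelian, and all of the tools available for making a decomposition curve hyperbolic (Lemma \ref{lem_inthyptorus}, Lemma \ref{lem_inthyppants}, and the inverse-function-theorem arguments behind them) require non-abelian restrictions on both sides of the curve; producing such non-abelian restrictions is exactly what this proposition is supposed to prove, so there is no ``same mechanism'' to invoke on smaller subsurfaces. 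The plan is also internally inconsistent: if you hyperbolize $d_2$ you can no longer keep $\phi_t$ constant on $\pi_1\bigl(\Sigma\setminus(P_1\cup P_2)\bigr)$ and match along $d_2$ via Lemma \ref{lem_conjugacypath_const}, since that lemma only applies when $\phi_t(d_2)$ stays in the conjugacy class of the original $\phi(d_2)$, which here is parabolic or $\pm\mathrm{I}$. Your Milnor--Wood computation shows that a target $\phi'$ with $\phi'(d_1)$ hyperbolic \emph{exists}, but existence of the endpoint is not the issue; connecting $\phi$ to it inside $\mathcal R^s_0(\Sigma)$ is, and no valid route is given (the appeal to Theorem \ref{thm_main4} also risks circularity, since its proof goes through Theorem \ref{thm_nonabel}, of which this proposition is an ingredient, and you do not set up the simultaneous induction that would be needed to avoid this).

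You are also missing the sign bookkeeping that makes any inductive scheme work, and this is not a cosmetic omission. Because of Theorem \ref{thm_Exc1}, a type-preserving restriction to a punctured-sphere subsurface whose induced sign has exactly one $+1$ or exactly one $-1$ and relative Euler class $0$ consists \emph{only} of abelian representations, so it can never be deformed to a non-abelian one within its class; hence one cannot blindly induct on $\Sigma\setminus P_1$ (when $\phi(d_1)$ is parabolic) or pass to $P_1\cup P_2$. The paper's proof handles this with Lemma \ref{lem_claimforNAsphere2}, a purely combinatorial analysis of the induced signs showing that either $\phi|_{\pi_1(\Sigma\setminus P_1)}$ inherits $p_\pm\geqslant 2$ or $\phi|_{\pi_1(P_1\cup P_2)}$ inherits $p_+=p_-=2$, together with an explicit base case for $p=4$ built from straight-line deformations inside a parabolic one-parameter subgroup followed by the one-parameter conjugation of Lemma \ref{lem_nonabelpants}; notably it never needs to make $d_1$ hyperbolic at this stage. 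Your proposal contains no analogue of this sign analysis and no explicit base-case family, so as it stands the argument does not go through.
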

    
    To prove Proposition \ref{prop_NAsphere2}, we need the following Lemma \ref{lem_abelsphere} and Lemma \ref{lem_claimforNAsphere2}.

\begin{lemma}\label{lem_abelsphere}
        Let $\Sigma = \Sigma_{0,p}$ with $p\geqslant 3$.
        Let $\phi$ be a type-preserving representation with sign $s\in \{\pm 1\}^p$ such that for all $i\in \{1,\cdots, p-2\}$, $\phi |_{\pi_1(P_i)}$ is abelian. Then there exists a path $\{\phi_t\}\interval$ in $\mathcal{R}^s_0(\Sigma)$ 
        connecting $\phi$ to an abelian representation $\phi_1$ such that the image $\phi_1(\fund)$ lies in the Borel subgroup of $\psl$. Consequently, $p_+(s)\geqslant 1$ and $p_-(s)\geqslant 1$. 
    \end{lemma}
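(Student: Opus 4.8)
The plan is to deform $\phi$, through type-preserving representations of sign $s$, into one whose image lies in the Borel subgroup $B$ of $\psl$ of (projections of) upper-triangular matrices, and then observe that such a representation is automatically abelian; the relative Euler class and the sign are locally constant on $\HP(\Sigma)$, so they stay equal to $e(\phi)$ and $s$ along the path, while at the endpoint $e=0$ by Proposition \ref{prop_reducible} (the endpoint being reducible). Hence the path lies in $\mathcal{R}^s_0(\Sigma)$ and in particular $e(\phi)=0$. All the deformations used are conjugations: either of all of $\phi$, or of the restriction of $\phi$ to a subsurface along a decomposition curve whose image is $\pm\mathrm I$; each such move keeps the $\phi(c_k)$ of the same type and sign, since conjugation permutes the components of $\Par$ trivially.

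First I would process the pieces $P_1,\dots,P_{p-2}$ in order. Recall $\pi_1(P_1)=\langle c_1,c_2\rangle$, $\pi_1(P_i)=\langle d_{i-1},c_{i+1}\rangle$ for $2\leqslant i\leqslant p-2$, and $\phi(d_i)=(\phi(c_1)\cdots\phi(c_{i+1}))^{-1}$. Since $\phi(c_1)$ and $\phi(c_2)$ are commuting parabolics, they share a fixed point on $\partial\mathbb{H}^2$; conjugating all of $\phi$ by a path starting at $\pm\mathrm I$, we may assume $\phi(c_1),\phi(c_2)\in B$, hence $\phi(d_1)\in B$. Suppose inductively that $\phi(c_1),\dots,\phi(c_j),\phi(d_{j-1})\in B$. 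As $\phi|_{\pi_1(P_j)}$ is abelian, $\phi(c_{j+1})$ commutes with $\phi(d_{j-1})$. If $\phi(d_{j-1})\neq\pm\mathrm I$ it is parabolic fixing $\infty$, whose centralizer in $\psl$ is contained in $B$, so $\phi(c_{j+1})\in B$ already. If $\phi(d_{j-1})=\pm\mathrm I$, the curve $d_{j-1}$ splits $\Sigma$ into $\Sigma'=P_1\cup\dots\cup P_{j-1}$ and $\Sigma''=P_j\cup\dots\cup P_{p-2}$; since $\phi(d_{j-1})$ is central, replacing $\phi|_{\pi_1(\Sigma'')}$ by $g\,\phi|_{\pi_1(\Sigma'')}\,g^{-1}$ and leaving $\phi|_{\pi_1(\Sigma')}$ unchanged still defines a representation of $\pi_1(\Sigma)=\pi_1(\Sigma')\ast_{\langle d_{j-1}\rangle}\pi_1(\Sigma'')$, and choosing $g$ joined to $\pm\mathrm I$ by a path so that $g\,\phi(c_{j+1})\,g^{-1}\in B$ arranges $\phi(c_{j+1})\in B$ without disturbing $\phi(c_1),\dots,\phi(c_j)$. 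In both cases $\phi(c_1)\cdots\phi(c_{j+1})\in B$, so $\phi(d_j)\in B$ and the induction continues. When it terminates, every $\phi(c_k)$ lies in $B$; concatenating the conjugation paths yields a path in $\mathcal{R}^s_0(\Sigma)$ from $\phi$ to a representation $\phi_1$ with $\phi_1(\fund)\subset B$.

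Since $\phi_1$ is type-preserving, each $\phi_1(c_k)=\pm\begin{bmatrix}1 & \mu_k\\ 0 & 1\end{bmatrix}$ with $\mu_k\neq 0$, so all $\phi_1(c_k)$ lie in one unipotent one-parameter subgroup and $\phi_1$ is abelian; moreover the surface relation forces $\sum_k\mu_k=0$, so the $\mu_k$ are not all of one sign, and Lemma \ref{lem_offdiag} gives both a $+1$ and a $-1$ among the components of $s(\phi_1)=s(\phi)=s$, i.e. $p_+(s)\geqslant 1$ and $p_-(s)\geqslant 1$. The delicate point, and the main thing to check, is the degenerate case $\phi(d_{j-1})=\pm\mathrm I$: one must verify that conjugating the downstream restriction by an element centralizing the central value $\phi(d_{j-1})$ produces a genuine representation of $\pi_1(\Sigma)$ (this uses precisely that $\phi(c_1)\cdots\phi(c_j)=\pm\mathrm I$ in that case), and that processing the curves in order — conjugating only strictly downstream pieces — never undoes the alignment into $B$ achieved at earlier stages.
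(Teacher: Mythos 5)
Your proof is correct and follows essentially the same route as the paper: align the pieces of the chosen decomposition into the Borel (in fact unipotent) subgroup by conjugation, using the dichotomy that the decomposition curve either maps to a nontrivial parabolic (so commuting elements are automatically aligned) or to $\pm\mathrm I$ (so the downstream restriction can be conjugated independently without breaking the representation), and then deduce $p_+(s)\geqslant 1$ and $p_-(s)\geqslant 1$ from $\sum_k\mu_k=0$ together with Lemma \ref{lem_offdiag}. The only cosmetic difference is bookkeeping: the paper records a separate conjugating path $g_{k,t}$ for each pair of pants and states the dichotomy in terms of the one-parameter subgroups $S_i$, $S_{i+1}$, whereas you handle the nondegenerate case via the centralizer of a parabolic; these are interchangeable.
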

    \begin{proof}
        Since $\phi|_{\pi_1(P_i)}$ is abelian for all $i\in \{1,\cdots, p-2\}$, by
        Proposition \ref{prop_reducible} and 
        Proposition \ref{prop_additivity}, 
        $\phi\in \mathcal{R}^s_0(\Sigma)$.  
        For each $i\in \{1, \cdots, p-2\}$, let $\Sigma_i\doteq \displaystyle\bigcup_{k = 1}^{i} P_k$.
        We will show by induction that there is a path $\{\phi_t: \pi_1(\Sigma_i)\to \psl\}\interval$ connecting $\phi|_{\pi_1(\Sigma_i)}$ to $\phi_1$ whose image $\phi_1\big(\pi_1(\Sigma_i)\big)$ lies in the Borel subgroup of $\psl$, along with a path $\{g_{k,t}\}\interval$ in $\psl$ for each $k\in \{1,\cdots, i\}$, such that $\phi_t|_{\pi_1(P_k)} = g_{k,t}\big(\phi|_{\pi_1(P_k)}\big)g_{k,t}^{-1}$ for all $t\in [0,1]$. 
        Then as $\Sigma = \Sigma_{p-2}$, this defines the desired path $\path$ in $\mathcal{R}^s_0(\Sigma)$.
        Finally, using $s = s(\phi_1)$, we will conclude that $p_+(s)\geqslant 1$ and $p_-(s)\geqslant 1$.
        \\
        
        As the base case of the induction, the path $\{\phi_t: \pi_1(P_1)\to \psl\}\interval$ is defined as follows. 
        Since $\phi(c_2)\in \phi\big(\pi_1(P_1)\big)$ is parabolic, there is a $g_1\in \psl$ such that $g_1\phi(c_2)g_1^{-1}$ is either $\pm \parpp$ or $\pm \parmp$. 
        Letting $\{g_{1,t}\}\interval$ be a path in $\psl$ connecting $\pm\mathrm{I}$ to $g_1$, 
         the path $\path$ defined by $\phi_t\doteq g_{1,t}\big(\phi|_{\pi_1(P_1)}\big) g_{1,t}^{-1}$ connects $\phi|_{\pi_1(P_1)}$ to $\phi_1 = g_1\big(\phi|_{\pi_1(P_1)}\big)g_1^{-1}$.
        As a $\psl$-conjugation of an abelian representation, $\phi_1$ is abelian on $\pi_1(P_1)$, and its image 
        lies in a one-parameter subgroup of $\psl$ containing $g_1\phi(c_2)g_1^{-1}$, which is $\bigg\{\pm \part\bigg\}_{t\in \mathbb{R}}$.

         Now assume that, for $i\in \{1,\cdots, p-3\}$, 
         there exists a path $\{\phi_t: \pi_1(\Sigma_i)\to \psl\}\interval$ with paths $\{g_{k,t}\}\interval$ in $\psl$ for all $k\in \{1,\cdots, i\}$, satisfying the induction hypothesis. 
         We find the path $\{g_{i+1, t}\}\interval$ in $\psl$ and extend $\path$ from $\pi_1(\Sigma_i)$  to $\pi_1(\Sigma_{i+1})$ as follows. 
         Since $\phi|_{\pi_1(P_i)}$ and $\phi|_{\pi_1(P_{i+1})}$ are abelian, the images $\phi\big(\pi_1(P_i)\big)$ and $\phi\big(\pi_1(P_{i+1})\big)$ respectively lie in one-parameter subgroups $S_i$ and $S_{i+1}$ of $\psl$, where either $S_i =S_{i+1}$ or $S_i\cap S_{i+1} = \{\pm \mathrm I\}$. 
         If $S_i = S_{i+1}$, then for each $t\in [0,1]$, we let $g_{i+1,t}\doteq g_{i,t}$ and let $\phi_t|_{\pi_1(P_{i+1})}\doteq g_{i+1,t}\big(\phi|_{\pi_1(P_{i+1})}\big)g_{i+1,t}^{-1}$, and the image $\phi_1\big(\pi_1(P_{i+1})\big)$ lies in $g_{i+1,1}S_{i+1}g_{i+1,1}^{-1}
         = g_{i,1}S_ig_{i,1}^{-1} = \bigg\{\pm \part\bigg\}_{t\in \mathbb{R}}$. 
         If $S_i\cap S_{i+1} = \{\pm \mathrm I\}$, then similar to the base case, 
         let $g_{i+1}\in \psl$ be such that $g_{i+1}\phi(c_{i+2})g_{i+1}^{-1}$ is either $\pm \parpp$ or $\pm \parmp$. 
         We let $\{g_{i+1,t}\}\interval$ be a path in $\psl$ connecting $\pm\mathrm{I}$ to $g_{i+1}$, 
         and for each $t\in [0,1]$, let $\phi_t|_{\pi_1(P_{i+1})}\doteq g_{i+1,t}\big(\phi|_{\pi_1(P_{i+1})}\big)g_{i+1,t}^{-1}$. 
         As $\phi(d_i)\in \phi\big(\pi_1(P_i)\big)\cap  \phi\big(\pi_1(P_{i+1})\big)\subset S_i\cap S_{i+1} = \{\pm \mathrm I\}$, we have $\phi(d_i) = \pm \mathrm I$. 
         Then for all $t\in [0,1]$,
         $g_{i,t} \phi(d_i)g_{i,t}^{-1} = g_{i+1,t} \phi(d_i)g_{i+1,t}^{-1} = \pm \mathrm I$, 
       hence $\phi_t|_{\pi_1(\Sigma_i)}$ and $\phi_t|_{\pi_1(P_{i+1})}$ agree at $d_i$. 
       Moreover, 
       $\phi_t|_{\pi_1(P_{i+1})}= g_{i+1,t}\big(\phi|_{\pi_1(P_{i+1})}\big)g_{i+1,t}^{-1}$ is abelian on $\pi_1(P_{i+1})$, and its image 
        lies in $\bigg\{\pm \part\bigg\}_{t\in \mathbb{R}}$, as it is a one-parameter subgroup of $\psl$ containing $g_{i+1}\phi(c_{i+2})g_{i+1}^{-1}$.
       This extends the path $\path$ from $\pi_1(\Sigma_i)$ to $\pi_1(\Sigma_{i+1})$, with the image $\phi_1\big(\pi_1(\Sigma_{i+1})\big)$ lying in the Borel subgroup of $\psl$. 
       In particular, when $i = p-3$, this defines the path $\path$ on $\pi_1(\Sigma) = \pi_1(\Sigma_{p-2})$; 
       and since $\phi_t|_{\pi_1(P_k)} = g_{k,t} \big(\phi|_{\pi_1(P_k)}\big)g_{k,t}^{-1}$ for all $k\in \{1,\cdots, p-2\}$, $\phi_t(c_i)$ is conjugate to $\phi(c_i)$ for all $i\in \{1,\cdots, p\}$ and  $t\in [0,1]$. 
       Therefore, the signs $s(\phi_t) = s(\phi)$ and the relative Euler classes $e(\phi_t) = e(\phi)$, and $\path\subset \mathcal{R}^s_0(\Sigma)$.
         \\

         Finally, we prove $p_+(s)\geqslant 1$ and $p_-(s)\geqslant 1$.
         For each $i\in \{1,\cdots, p\}$, we have  $\phi_1(c_i)
    = \pm \begin{bmatrix}
        1 & t_i \\
        0 & 1
    \end{bmatrix}$ for some $t_i\neq 0;$ and since $\phi_1(c_1)\cdots \phi_1(c_p) = \pm\mathrm{I}$, we have
    $\sum_{i=1}^p t_i= 0$. Therefore, there must be an $i_1\in \{1,\cdots, p\}$ with $t_{i_1}>0$ and an $i_2\in \{1,\cdots, p\}$ with $t_{i_2}<0$, i.e., $\phi(c_{i_1})\in \Par^+$ and $\phi(c_{i_2})\in \Par^-$.
    \end{proof}

    \begin{lemma}\label{lem_claimforNAsphere2}
        Let $\Sigma = \Sigma_{0,p}$ with $p\geqslant 4$.
        For $s\in \{\pm 1\}^p$ with $p_+(s)\geqslant 2$ and $p_-(s)\geqslant 2$, let $\phi\in \mathcal{R}^s_0(\Sigma)$ 
        such that $\phi |_{\pi_1(P_i)}$ is abelian for all $i\in \{1,\cdots, p-2\}$. 
        Then one of the following two cases holds:
      \begin{enumerate}[(1)]
      \item $\phi|_{\pi_1(\Sigma\setminus P_1)}\in \mathcal{R}^{s'}_0(\Sigma\setminus P_1)$ for some $s'\in \{\pm 1\}^{p-1}$ with $p_+(s')\geqslant 2$ and $p_-(s')\geqslant 2.$
        
       \item  $\phi|_{\pi_1(P_1\cup P_2)}\in \mathcal{R}^{s''}_0(P_1\cup P_2)$ for some $s''\in \{\pm 1\}^4$ with  $p_+(s'')=p_-(s'') = 2.$
       \end{enumerate}
    \end{lemma}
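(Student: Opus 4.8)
The plan is to normalize $\phi$ near the pair of pants $P_1$ and then split according to whether $\phi(d_1)=\pm\mathrm I$. Since $\phi|_{\pi_1(P_1)}$ is abelian and $\phi(c_1),\phi(c_2)$ are non-trivial parabolic, they commute and hence share an ideal fixed point on $\partial\mathbb H^2$; so after a global $\psl$-conjugation we may write $\phi(c_1)=\pm\begin{bmatrix}1&t_1\\0&1\end{bmatrix}$ and $\phi(c_2)=\pm\begin{bmatrix}1&t_2\\0&1\end{bmatrix}$ with $t_1,t_2\neq 0$ and, by Lemma \ref{lem_offdiag}, $sgn(t_i)=s_i$. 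Then $\phi(d_1)=\bigl(\phi(c_1)\phi(c_2)\bigr)^{-1}=\pm\begin{bmatrix}1&-(t_1+t_2)\\0&1\end{bmatrix}$, and I distinguish the cases $t_1+t_2=0$ and $t_1+t_2\neq 0$.

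\emph{Case $t_1+t_2=0$.} Here $\phi(d_1)=\pm\mathrm I$, so $\phi|_{\pi_1(\Sigma\setminus P_1)}$ is not type-preserving and conclusion (1) is impossible; I verify (2). Since $\phi(c_2)=\phi(c_1)^{-1}$ we get $\phi(d_2)=\bigl(\phi(c_1)\phi(c_2)\phi(c_3)\bigr)^{-1}=\phi(c_3)^{-1}$, a non-trivial parabolic, so $\phi|_{\pi_1(P_1\cup P_2)}$ is type-preserving with sign $s''=(s_1,-s_1,s_3,-s_3)$, hence $p_+(s'')=p_-(s'')=2$. For its relative Euler class, lift $\phi(c_1),\phi(c_2),\phi(c_3),\phi(d_2)$ to $\overline{\Hyp_0}$; as the lift of an inverse is the inverse of the lift, $\widetilde{\phi(c_1)}\,\widetilde{\phi(c_2)}=\mathrm I$ and $\widetilde{\phi(c_3)}\,\widetilde{\phi(d_2)}=\mathrm I$, so the product of the four lifts equals $\mathrm I=z^0$ and $e\bigl(\phi|_{\pi_1(P_1\cup P_2)}\bigr)=0$. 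Thus (2) holds.

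\emph{Case $t_1+t_2\neq 0$.} Now $\phi(d_1)$ is a non-trivial parabolic, and since $\phi|_{\pi_1(P_2)}$ is abelian, $\phi(c_3)$ commutes with $\phi(d_1)$ and hence lies in the same one-parameter unipotent subgroup: $\phi(c_3)=\pm\begin{bmatrix}1&t_3\\0&1\end{bmatrix}$ with $sgn(t_3)=s_3$. The surface $\Sigma\setminus P_1$ (a $\Sigma_{0,p-1}$) has boundary elements $d_1^{-1},c_3,\dots,c_p$; since $\phi(d_1^{-1})$ is parabolic, $\phi|_{\pi_1(\Sigma\setminus P_1)}$ is type-preserving, and additivity (Proposition \ref{prop_additivity}) together with $e(\phi|_{\pi_1(P_1)})=0$ (Proposition \ref{prop_reducible}) gives $e(\phi|_{\pi_1(\Sigma\setminus P_1)})=0$; so $\phi|_{\pi_1(\Sigma\setminus P_1)}\in\mathcal R^{s'}_0(\Sigma\setminus P_1)$ with $s'=(\sigma,s_3,\dots,s_p)$, where $\sigma=sgn(t_1+t_2)$ is the sign of $\phi(d_1^{-1})$. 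If $p_+(s')\geqslant 2$ and $p_-(s')\geqslant 2$, then (1) holds. Otherwise, using the $\pgl\setminus\psl$-conjugation symmetry of Proposition \ref{prop_pglpsl} (which negates $e$ and flips all signs, hence preserves the hypotheses as $e=0$), we may assume $p_+(s')\leqslant 1$; since $p_+(s')+p_-(s')=p-1$, this already gives $p_-(s')\geqslant p-2\geqslant 2$. From $p_+(s')\leqslant 1$ one gets $\#\{i\geqslant 3:s_i=+1\}\leqslant 1$, which combined with $p_+(s)\geqslant 2$ forces $p_+(s)=2$, i.e. $s$ has exactly two $+1$'s. Splitting into the sub-cases (a) all of $c_3,\dots,c_p$ negative, (b) the unique positive among $c_3,\dots,c_p$ is $c_3$, and (c) it is some $c_j$ with $j\geqslant 4$, in each sub-case one checks, using $\sum_i t_i=0$ and that the remaining $t_i$'s entering $t_4+\cdots+t_p$ are negative, that $t_1+t_2+t_3\neq 0$ and that $s''=\bigl(s_1,s_2,s_3,sgn(-(t_1+t_2+t_3))\bigr)$ has $p_+(s'')=p_-(s'')=2$ (when $p=4$ this is immediate, since then $P_1\cup P_2=\Sigma$ and $s''=s$). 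Finally $\phi(c_1),\phi(c_2),\phi(c_3)$ lie in a common abelian subgroup, so $\phi|_{\pi_1(P_1\cup P_2)}$ is abelian with $e\bigl(\phi|_{\pi_1(P_1\cup P_2)}\bigr)=0$ by Proposition \ref{prop_reducible}, and (2) holds.

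The step I expect to be the main obstacle is the combinatorial argument at the end of the case $t_1+t_2\neq0$. The delicate points are to keep the orientation conventions straight — the boundary circle of $\Sigma\setminus P_1$ glued to $P_1$ is represented by $d_1^{-1}$, of sign $sgn(t_1+t_2)$, whereas the fourth boundary of $P_1\cup P_2$ is represented by $d_2$, of sign $sgn(-(t_1+t_2+t_3))$ — and to verify that, whenever (1) fails, the sign pattern this forces on $s$ really does make $t_1+t_2+t_3$ non-zero with exactly the sign needed to produce two $+1$'s and two $-1$'s in $s''$; this is precisely where the constraints $\sum_i t_i=0$ and $p\geqslant 4$ enter.
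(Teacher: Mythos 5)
Your overall architecture differs from the paper's: you normalize $\phi(c_1),\phi(c_2),\phi(c_3)$ into a common upper-triangular unipotent subgroup, split on whether $\phi(d_1)=\pm\mathrm I$, and use the $\pgl\setminus\psl$ symmetry of Proposition \ref{prop_pglpsl} to assume $p_+(s')\leqslant 1$, whereas the paper splits on $\phi(d_2)$. Your degenerate case $t_1+t_2=0$ is handled correctly (the direct lift computation giving $e\big(\phi|_{\pi_1(P_1\cup P_2)}\big)=0$ is fine, and $s''=(s_1,-s_1,s_3,-s_3)$ is balanced). The genuine gap is in the case $t_1+t_2\neq 0$: your decisive step invokes ``$\sum_i t_i=0$'' and the signs of ``$t_4+\cdots+t_p$,'' but $t_4,\dots,t_p$ are not defined. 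The hypothesis is only that each restriction $\phi|_{\pi_1(P_i)}$ is abelian, not that $\phi$ is abelian, so $\phi(c_4),\dots,\phi(c_p)$ need not lie in the one-parameter subgroup containing $\phi(c_1),\phi(c_2),\phi(c_3)$: the chain of abelian pants forces consecutive pieces into the same one-parameter subgroup only as long as the connecting curves go to non-central elements, and the subgroup can switch at any $d_j$ with $\phi(d_j)=\pm\mathrm I$ --- the first place this can happen being $d_2$ itself, which is exactly the degeneration $t_1+t_2+t_3=0$ you must exclude. Consequently the relation $\sum_i t_i=0$ is unavailable, and your sub-cases (a)--(c) cannot be settled by this bookkeeping: in sub-case (b), for instance, you only know $t_1+t_2<0$ and $t_3>0$, which determines neither that $t_1+t_2+t_3\neq 0$ nor the sign of $\phi(d_2)$.

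The missing input is precisely Lemma \ref{lem_abelsphere} applied to complementary subsurfaces, which is how the paper closes this step. If $\phi(d_2)=\pm\mathrm I$, then $\phi(d_3^{-1})=\phi(c_4)$ is parabolic and $\phi|_{\pi_1(\Sigma\setminus(P_1\cup P_2\cup P_3))}$ is type-preserving with abelian pants restrictions, so both signs occur among $\phi(c_4),\dots,\phi(c_p)$; if $\phi(d_2)\in\Par$, then $\phi|_{\pi_1(\Sigma\setminus(P_1\cup P_2))}$ is type-preserving and Lemma \ref{lem_abelsphere} forces, among $\phi(c_4),\dots,\phi(c_p)$, a sign opposite to that of $\phi(d_2^{-1})$. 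Combining these with $p_+(s)\geqslant 2$ and $p_-(s)\geqslant 2$ is what rules out the bad configurations in your sub-cases and pins down the sign of $\phi(d_2)$; your argument nowhere uses Lemma \ref{lem_abelsphere}, and some such global input is unavoidable. A secondary, minor point: the sentence ``combined with $p_+(s)\geqslant 2$ forces $p_+(s)=2$'' is incomplete as written (a priori $p_+(s)$ could be $3$); one must add that $s_1=s_2=+1$ would force $\sigma=+1$, which is incompatible with $p_+(s')\leqslant 1$ once some $s_j=+1$ with $j\geqslant 3$ --- the conclusion is true, but it needs this extra observation.
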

    \begin{proof}
 Since $\phi|_{\pi_1(P_2)}$ is abelian, the elements $\phi(c_3)$ and $\phi(d_2)$ of $\phi\big(\pi_1(P_2)\big)$ commute  and hence lie in the same one-parameter subgroup; and since $\phi(c_3)\in \Par$, either $\phi(d_2) = \pm \mathrm I$ or $\phi(d_2)\in \Par$. Below we will consider these two cases separately.
        \bigskip

        If $\phi(d_2) = \pm \mathrm I$, then we will show that  (1) holds. 
        The fundamental group $\pi_1(\Sigma\setminus P_1)$ has the preferred peripheral elements $c_3,\cdots, c_p,$ and $d_1^{-1}$.
        As $\phi$ is type-preserving, $\phi(c_3),\cdots, \phi(c_p)\in \Par$; and as $\phi(d_1^{-1})\phi(c_3) = \phi(d_2)^{-1} = \pm \mathrm I$, 
        $\phi(d_1^{-1}) = \phi(c_3)^{-1}\in \Par$. Therefore, $\phi|_{\pi_1(\Sigma\setminus P_1)}$ is type-preserving. Moreover, since $\phi|_{\pi_1(P_i)}$ is abelian for $i\in \{2,\cdots, p-2\}$, by Proposition \ref{prop_reducible} and Proposition \ref{prop_additivity}, 
        $e\big(\phi|_{\pi_1(\Sigma\setminus P_1)}\big) = 0$.
        For the sign $s' = s\big(\phi|_{\pi_1(\Sigma\setminus P_1)}\big)$, up to a permutation of the peripheral elements, we have 
        $s'_1 = +1$ if $\phi(d_1^{-1})\in \Parp$, $s'_1 = -1$ if $\phi(d_1^{-1})\in \Parm$; and for $i\in \{2,\cdots, p-1\}$, $s'_i = s_{i+1}$ determined by the sign of $\phi(c_{i+1})$.
        As $\phi(c_3)$ and $\phi(d_1^{-1}) = \phi(c_3)^{-1}$ have opposite signs, $s'_1$ and $s'_2$ have opposite signs. Therefore, to show $p_+(s')\geqslant 2$ and $p_-(s')\geqslant 2$, it suffices to show that the $(p-2)$-tuple $(s'_3,\cdots, s'_{p-1})$ contains both $+1$ and $-1$. To this end, we will apply Lemma \ref{lem_abelsphere} to the restriction $\phi|_{\pi_1(\Sigma\setminus (P_1\cup P_2\cup P_3))}$. 
        The fundamental group $\pi_1\big(\Sigma\setminus (P_1\cup P_2\cup P_3)\big)$ has the preferred peripheral elements $c_5,\cdots, c_p,$ and $d_3^{-1}$; and since $\phi(c_4)\phi(d_3) = \phi(d_2) = \pm \mathrm I$,
        $\phi(d_3^{-1}) = \phi(c_4)\in \Par$. Therefore, the restriction $\phi|_{\pi_1(\Sigma\setminus (P_1\cup P_2\cup P_3))}$ is type-preserving, and its sign is
        determined by the signs of $\phi(c_5),\cdots, \phi(c_p),$ and $\phi(d_3^{-1}) = \phi(c_4)$.
        Since $\phi|_{\pi_1(P_i)}$ is abelian for $i\in \{4,\cdots, p-2\}$, by Lemma \ref{lem_abelsphere}, $p_+\big(s\big(\phi|_{\pi_1(\Sigma\setminus (P_1\cup P_2\cup P_3))}\big)\big)\geqslant 1$ and $p_-\big(s\big(\phi|_{\pi_1(\Sigma\setminus (P_1\cup P_2\cup P_3))}\big)\big)\geqslant 1$, i.e., at least one of the parabolic elements $\phi(c_4),\cdots, \phi(c_p)$ is positive and at least one of them is negative. Consequently, the tuple $(s'_3,\cdots, s'_{p-1})$ contains both $+1$ and $-1$, and hence $p_+(s')\geqslant 2$ and $p_-(s')\geqslant 2$. 
        \\

     The rest of the proof consideres the case that $\phi(d_2)\in \Par$. As the fundamental group $\pi_1(P_1\cup P_2)$ has the preferred peripheral elements $c_1, c_2, c_3,$ and $d_2$, the restriction $\phi|_{\pi_1(P_1\cup P_2)}$ is type-preserving; and since $\phi|_{\pi_1(P_1)}$ and $\phi|_{\pi_1(P_2)}$ are abelian, we can show as above that
        $e\big(\phi|_{\pi_1(P_1\cup P_2)}\big) = 0$. For the sign $s'' = s\big(\phi|_{\pi_1(P_1\cup P_2)}\big)$, up to a permutation of the peripheral elements, we have 
        $s''_i = s_i$ for $i\in \{1,2,3\}$, given by the sign of $\phi(c_i)$; and $s''_4 = +1$ if $\phi(d_2)\in \Par^+$, and $s''_4 = -1$ if $\phi(d_2)\in \Par^-$.
        By Lemma \ref{lem_abelsphere}, $p_+(s'')\geqslant 1$ and $p_-(s'')\geqslant 1$, hence $p_+(s'')$ equals either $1,$ $2$ or $3.$ 
        
        If $p_+(s'') = 2$, then (2) holds. In the case that $p_+(s'') = 1$ or $p_+(s'') = 3$, we will prove that (1) holds. We will first prove this for the case $p_+(s'') = 3$. Then by Proposition \ref{prop_pglpsl}, the same result holds for the case $p_+(s'') = 1$.
\medskip

         Assuming $p_+(s'') = 3$ and $p_-(s'') = 1$, we first prove that $\phi|_{\pi_1(\Sigma\setminus P_1)}\in \mathcal{R}^{s'}_0(\Sigma\setminus P_1)$ for some $s'\in \{\pm 1\}^{p-1}$. 
        Since $\phi|_{\pi_1(P_1)}$ is abelian, the elements $\phi(c_1)$ and $\phi(d_1)$ of $\phi\big(\pi_1(P_1)\big)$ commute and lie in the same one-parameter subgroup; and as $\phi(c_1)\in \Par$, either $\phi(d_1) = \pm \mathrm I$ or $\phi(d_1)\in \Par$. If $\phi(d_1) = \pm \mathrm{I}$, then as $\phi(c_1)\phi(c_2) = \phi(d_1)^{-1} =\pm \mathrm I$, $\phi(c_1)$ and $\phi(c_2) = \phi(c_1)^{-1}$ have opposite signs; and as $\phi(c_3)\phi(d_2) = \phi(d_1) = \pm \mathrm I$, $\phi(c_3)$ and $\phi(d_2) = \phi(c_3)^{-1}$ have opposite signs. This implies that $p_+(s'') = 2$, contradicting the assumption. Therefore, $\phi(d_1)\in \Par$ and $\phi|_{\pi_1(\Sigma\setminus P_1)}$ is type-preserving; and by Proposition \ref{prop_reducible} and Proposition \ref{prop_additivity}, $\phi|_{\pi_1(\Sigma\setminus P_1)}\in \mathcal{R}^{s'}_0(\Sigma\setminus P_1)$ for some $s'\in \{\pm 1\}^{p-1}$.
        Up to a permutation of the peripheral elements, 
        $s'_1 = +1$ if $\phi(d_1^{-1})\in \Parp$, $s'_1 = -1$ if $\phi(d_1^{-1})\in \Parm$; and for $i\in \{2,\cdots, p-1\}$, $s'_i = s_{i+1}$ determined by the sign of $\phi(c_{i+1})$. 
        \medskip
        
        Next, we will prove $p_+(s')\geqslant 2$ and $p_-(s')\geqslant 2$ separately for the cases  $\phi(d_2)\in \Parp$ and $\phi(d_2)\in \Parm$. 
\medskip

        In the case that  $\phi(d_2)\in \Parp$, we will first show $s'_1 = -s'_2$, then show that the tuple $(s'_3,\cdots, s'_{p-1})$ contains both $+1$ and $-1$ in its components, which together imply $p_+(s')\geqslant 2$ and $p_-(s')\geqslant 2$.

        To show that $s'_1 = -s'_2$, we prove that the parabolic elements $\phi(d_1^{-1})$ and $\phi(c_3)$ have opposite signs. As $\phi|_{\pi_1(P_2)}$ is abelian, 
        $\phi(d_1^{-1})$ and $\phi(c_3)$ lie in a common parabolic one-parameter subgroup, hence there are 
        $\xi_1, \xi_2\in \mathbb{R}\setminus \{0\}$ such that, up to a $\psl$-conjugation, $\phi(d_1^{-1}) = 
        \pm \begin{bmatrix}
            1 & \xi_1 \\
            0 & 1
        \end{bmatrix}
        $
        and $\phi(c_3) = 
        \pm \begin{bmatrix}
            1 & \xi_2 \\
            0 & 1
        \end{bmatrix} 
        $. Since
        $\phi(d_2^{-1}) = \phi(d_1^{-1})\phi(c_3) 
         = 
         \pm \begin{bmatrix}
            1 & \xi_1+ \xi_2 \\
            0 & 1
        \end{bmatrix}
        \in \Parm$, 
        we have $\xi_1+ \xi_2 < 0$, and hence $\xi_1<0$ or $\xi_2<0$. Hence, at least one of $\phi(d_1^{-1})$ and $\phi(c_3)$ is in $\Parm$, which implies that if $\phi(d_1^{-1})\in \Parp$, then $\phi(c_3)\in \Parm$. If otherwise that $\phi(d_1^{-1})\in \Parm$, then as $\phi|_{\pi_1(P_1)}$ is abelian, we can show by the similar argument as above that at least one of $\phi(c_1)$ and $\phi(c_2)$ is in $\Parm$. Since $p_-(s'') = 1$, only one in $(s''_1,s''_2,s''_3)= (s_1,s_2,s_3)$ equals $-1$, hence $\phi(c_3)\in \Parp$. Therefore, $\phi(d_1^{-1})$ and $\phi(c_3)$ have opposite signs, and therefore $s'_1 = -s'_2$.

        To show that $(s'_3, \cdots, s'_{p-1})$ contains both $+1$ and $-1$, we recall that 
        $(s'_3, \cdots, s'_{p-1}) = (s_4, \cdots, s_p)$.
      We first find the $+1$ in $(s_4, \cdots, s_p)$. Recall that $\phi|_{\pi_1(\Sigma\setminus (P_1\cup P_2))}$ is type-preserving, and its sign is determined by the signs of $\phi(c_4),\phi(c_5),\cdots, \phi(c_p)$, and $\phi(d_2^{-1})$. 
        As $\phi|_{\pi_1(P_i)}$ is abelian for all $i\in \{3,\cdots, p-2\}$, by Lemma \ref{lem_abelsphere}, $p_+\big(s\big(  \phi|_{\pi_1(P_1\cup P_2)} \big)\big)\geqslant 1$. 
        Since $\phi(d_2^{-1})\in \Par^-$, this implies that at least one of $\phi(c_4),\cdots, \phi(c_p)$ lies in $\Par^+$. 
        Next we find the  $-1$ in $(s_4,\cdots, s_p)$. Since $(s_1,s_2,s_3) = (s''_1, s''_2, s''_3)$, and since $p_-(s'') =  1$ and $s''_4 = +1$, only one among $s_1,s_2,s_3$ are $-1$, and the other two are $+1$. Since $p_-(s)\geqslant 2$, at least one of $s_4,\cdots, s_p$ is $-1$. 
        
        Now, as both $(s'_1, s'_2)$ and $(s'_3,\cdots, s'_{p-1})$ contain at least a $+1$ and a $-1$, we conclude that $p_+(s')\geqslant 2$ and $p_-(s')\geqslant 2$. 
\medskip

        In the case that  $\phi(d_2)\in \Parm$, we will first show $s'_1 = s'_2 = +1$, then show that at least two among $s'_3,\cdots, s'_{p-1}$ equal $-1$, which together imply $p_+(s')\geqslant 2$ and $p_-(s')\geqslant 2$. 
        Since 
        $p_-(s'') = 1$ and $s''_4 = -1$, one has $s''_i = +1$ for $i\in \{1,2,3\}$, i.e., $\phi(c_i)\in \Parp$ for $i\in \{1,2,3\}$. As $\phi|_{\pi_1(P_1)}$ is abelian and $\phi(c_1), \phi(c_2)\in \Parp$, there are 
        $\xi_1, \xi_2 > 0$ such that, up to a $\psl$-conjugation, $\phi(c_1) = 
        \pm \begin{bmatrix}
            1 & \xi_1 \\
            0 & 1
        \end{bmatrix}
        $
        and $\phi(c_2) = 
        \pm \begin{bmatrix}
            1 & \xi_2 \\
            0 & 1
        \end{bmatrix} 
        $. Then $\xi_1 + \xi_2>0$, and
        $\phi(d_1^{-1}) = \phi(c_1)\phi(c_2) 
         = 
         \pm \begin{bmatrix}
            1 & \xi_1+ \xi_2 \\
            0 & 1
        \end{bmatrix}
        \in \Parp$.
        Since $\phi(d_1^{-1}), \phi(c_3)\in \Parp$, we have $s'_1 = s'_2 = +1$. Moreover, since $p_-(s)\geqslant 2$ and $(s_1,s_2,s_3) = (+1,+1,+1)$, at least two in the tuple $(s'_3,\cdots, s'_{p-1}) = (s_4,\cdots, s_p)$ equal $-1$. 
        As $(s'_1, s'_2)$ contains two $+1$'s and $(s'_3,\cdots, s'_{p-1})$ contains two $-1$'s, we conclude that $p_+(s')\geqslant 2$ and $p_-(s')\geqslant 2$. 
        \medskip
        
        Finally, in the case that $p_+(s'') = 1$, we prove that (1) holds. 
        Let $h\in \pgl\setminus \psl$. By Proposition \ref{prop_pglpsl}, the conjugation $h\phi h^{-1}$ is in $\mathcal{R}^{-s}_0(\Sigma)$ with $p_+(-s)\geqslant 2$ and $p_-(-s)\geqslant 2$,
        and its restriction $h\phi h^{-1}|_{\pi_1(P_1\cup P_2)}$ is in $\mathcal{R}^{-s''}_0(P_1\cup P_2)$ with $p_+(-s'') = 3$. As shown above, $h\phi h^{-1}|_{\pi_1(\Sigma\setminus P_1)}$ is in $\mathcal{R}^{s'}_0(\Sigma\setminus P_1)$ for some $s'\in \{\pm 1\}^{p-1}$ satisfying $p_+(s')\geqslant 2$ and $p_-(s')\geqslant 2$. Then by Proposition \ref{prop_pglpsl} again, $\phi|_{\pi_1(\Sigma\setminus P_1)} = h^{-1}(h\phi h^{-1})h|_{\pi_1(\Sigma\setminus P_1)}$ is in $\mathcal{R}^{-s'}_0(\Sigma\setminus P_1)$  with $-s'$ satisfying $p_+(-s')\geqslant 2$ and $p_-(-s')\geqslant 2$.
        This completes the proof.
    \end{proof}

\begin{proof}[Proof of Proposition \ref{prop_NAsphere2}]
        We will proceed by induction on the number of punctures $p$. 
        \bigskip
        
        For the base case $p = 4$, 
        there are in total six possible signs $s\in\{\pm 1\}^4$ satisfying $p_+(s) = p_-(s) = 2$, with four of which satisfying $s_3 = -s_2$ and $s_4 = -s_1$, and two of which satisfying  $s_3 = s_2$ and $s_4 = s_1$. We will consider these cases separately in the proof. 
    \medskip

        In the case that  $s_3 = -s_2$ and $s_4 = -s_1$, we will first connect $\phi$ to the representation $\phi_1$ given by 
        $$\big(\phi_1(c_1), \phi_1(c_2), \phi_1(c_3), \phi_1(c_4)\big)
        = (\phi(c_1), \phi(c_2), \phi(c_2)^{-1}, \phi(c_1)^{-1}\big),$$
        then connect $\phi_1$ to a $\psi\in \mathrm{NA}^s_0(\Sigma_{0,4})$.

        The path $\path$ connecting $\phi$ to $\phi_1$ is constructed as follows.
        As $\phi$ is type-preserving and $\phi |_{\pi_1(P_1)}$ and $\phi |_{\pi_1(P_2)}$ are abelian, by Lemma \ref{lem_abelsphere}, we can assume that the image $\phi\big(\pi_1(\Sigma_{0,4})\big)$ lies in a one-parameter subgroup $\bigg\{\pm \part\bigg\}_{t\in \mathbb{R}}$. Then for each $i\in \{1,2,3,4\}$, there exists a $\xi_i\in \mathbb{R}\setminus\{0\}$ with $sgn(\xi_i) = sgn(s_i)$ such that $\phi(c_i) = \pm \begin{bmatrix}
        1 & \xi_i \\
        0 & 1
        \end{bmatrix}$. 
        We define a path  
        $\{\pm C_t\}\interval$ connecting $\pm C_0 = \phi(c_3)$ to $\pm C_1 = \phi(c_2)^{-1}$
        by setting
        $\pm C_t \doteq 
        \pm \begin{bmatrix}
        1 & \xi_{3,t} \\
        0 & 1
        \end{bmatrix},$ 
        where
        $\xi_{3,t} =  (1-t)\xi_3 - t\xi_2$.
        Since $\{\xi_{3,t}\}\interval$ is a straight line segment connecting $\xi_3$ to $-\xi_2$ and $sgn(\xi_3) = sgn(-\xi_2)$,
        for each $t\in [0,1]$,  $sgn(\xi_{3,t}) = sgn(\xi_3)$ and $\pm C_t \in \Par^{sgn(s_3)}$.
        Similarly, we define a path
        $\{\pm D_t\}\interval$ in $\Par^{sgn(s_4)}$ connecting $\pm D_0 = \phi(c_4)$ to $\pm D_1 = \phi(c_1)^{-1}$
        by setting
        $\pm D_t \doteq 
        \pm \begin{bmatrix}
        1 & \xi_{4,t} \\
        0 & 1
        \end{bmatrix},$ 
        where
        $\xi_{4,t} =  (1-t)\xi_4 - t\xi_1$. Since $\phi(c_1)\phi(c_2)\phi(c_3)\phi(c_4) = \pm \mathrm I$, we have $\sum^4_{i=1}\xi_i = 0$, which implies for each $t\in [0,1]$ that 
        $\xi_{3,t} + \xi_{4,t} 
        =  \xi_3 + \xi_4 - \big(\sum_{i=1}^4\xi_i\big)t 
        = \xi_3 + \xi_4.$
        Then for each $t\in [0,1]$, we have
        $ \pm C_tD_t = 
        \pm\begin{bmatrix}
        1 & \xi_{3,t} + \xi_{4,t} \\
        0 & 1
        \end{bmatrix}
        = \pm\begin{bmatrix}
        1 & \xi_3 + \xi_4 \\
        0 & 1
        \end{bmatrix} = \big(\phi(c_1)\phi(c_2)\big)^{-1}$, hence we can define a representation $\phi_t$ of $\pi_1(\Sigma_{0,4})$ by setting $$\big(\phi_t(c_1), \phi_t(c_2), \phi_t(c_3), \phi_t(c_4)\big)
        \doteq (\phi(c_1), \phi(c_2),\pm C_t, \pm D_t\big).$$
        Since $\{\pm C_t\}\interval\subset \Par^{sgn(s_3)}$ and $\{\pm D_t\}\interval\subset \Par^{sgn(s_4)}$, the signs $s(\phi_t) = s(\phi)$ and the relative Euler classes $e(\phi_t) = e(\phi)$ for all $t\in [0,1]$. This defines a path of representations
        $\{\phi_t\}\interval$ in $\mathcal{R}^s_0(\Sigma_{0,4})$
        connecting 
        $\phi_0 = \phi$ to 
        $\phi_1$ with 
        $$\big(\phi_1(c_1), \phi_1(c_2), \phi_1(c_3), \phi_1(c_4)\big)
        = (\phi(c_1), \phi(c_2), \phi(c_2)^{-1}, \phi(c_1)^{-1}\big).$$
        
        It remains to find a path connecting $\phi_1$ to a $\psi \in \mathrm{NA}^s_0(\Sigma_{0,4})$. 
        As $\phi_1|_{\pi_1(P_1)}$ is abelian with $\phi_1(c_1)\neq \pm \mathrm I$ and $\phi_1(c_2)\neq \pm \mathrm I$,
        by Lemma \ref{lem_nonabelpants}, there is a one-parameter subgroup $\{h_t\}_{t\in \mathbb{R}}$ of $\psl$ with $h_0 = \pm\mathrm{I}$ such that for all $t\neq 0$, $\phi_1(c_1)$ and $ h_t\phi_1(c_2)h_t^{-1}$ do not commute. 
        For each $t\in [0,1]$, we define 
        $\psi_t$ 
        by setting
        $$\big(\psi_t(c_1), \psi_t(c_2), \psi_t(c_3), \psi_t(c_4)\big)
        \doteq (\phi(c_1), h_t\phi(c_2)h_t^{-1}, h_t\phi(c_2)^{-1}h_t^{-1}, \phi(c_1)^{-1}\big).$$
         Since
         $\psi_t(c_i)$ and $\phi_1(c_i)$ are conjugate for all $t\in [0,1]$ and for all $i\in \{1,2,3,4\}$, the signs $s(\psi_t) = s(\phi_1)$ and the relative Euler classes $e(\psi_t) = e(\phi_1)$ for all $t\in [0,1]$, and hence the path $\{\psi_t\}\interval\subset \mathcal{R}^s_0(\Sigma_{0,4})$. Moreover, by the definition of $h_1$, 
         $\psi_1|_{\pi_1(P_1)}$ and $\psi_1|_{\pi_1(P_2)}$ are non-abelian, i.e., $\psi_1\in \mathrm{NA}^s_0(\Sigma_{0,4})$. 
         Letting $\psi = \psi_1$, the composition of 
        $\{\phi_t\}\interval$ and $\{\psi_t\}\interval$ connects $\phi$ to $\psi\in \mathrm{NA}^s_0(\Sigma_{0,4})$ within $\mathcal{R}^s_0(\Sigma_{0,4})$.
        \medskip
        
        In the case that  $s_3 = s_2$ and $s_4 = s_1$,
         we will first connect $\phi$ to the representation $\phi_1$ given by 
        $$\big(\phi_1(c_1), \phi_1(c_2), \phi_1(c_3), \phi_1(c_4)\big)
        = (\phi(c_1), \phi(c_2), \phi(c_1)^{-1}, \phi(c_2)^{-1}\big),$$
        then connect $\phi_1$ to a $\psi\in \mathrm{NA}^s_0(\Sigma_{0,4})$.

        The path $\path$ connecting $\phi$ to $\phi_1$ is constructed as follows.
    Again by Lemma \ref{lem_abelsphere}, we can assume for each $i\in \{1,2,3,4\}$ that $\phi(c_i) = \pm \begin{bmatrix}
        1 & \xi_i \\
        0 & 1
        \end{bmatrix}$ for some $\xi_i\in \mathbb{R}\setminus\{0\}$ with $sgn(\xi_i) = sgn(s_i)$. 
        Similar to the previous case, we define a path  
        $\{\pm C_t\}\interval$ 
        connecting $\pm C_0 = \phi(c_3)$ to $\pm C_1 = \phi(c_1)^{-1}$
        by setting
        $\pm C_t \doteq 
        \pm \begin{bmatrix}
        1 & \xi_{3,t} \\
        0 & 1
        \end{bmatrix},$ 
        where
        $\xi_{3,t} = (1-t)\xi_3 - t\xi_1$. 
        Since $s_3 = s_2$ and $p_+(s) = p_-(s) = 2$, we have $s_3 = -s_1$, i.e., $sgn(\xi_3) = sgn(-\xi_1)$; and since $\{\xi_{3,t}\}\interval$ is a straight line segment connecting $\xi_3$ and $-\xi_1$, for each $t\in [0,1]$, $sgn(\xi_{3,t}) = sgn(\xi_3)$ and $\pm C_t \in \Par^{sgn(s_3)}$.
        Similarly, we define 
        $\{\pm D_t\}\interval$ in $\Par^{sgn(s_4)}$ connecting $\pm D_0 = \phi(c_4)$ to $\pm D_1 = \phi(c_2)^{-1}$
        by setting
        $\pm D_t \doteq 
        \pm \begin{bmatrix}
        1 & \xi_{4,t} \\
        0 & 1
        \end{bmatrix},$ 
        where
        $\xi_{4,t} =  (1-t)\xi_4 - t\xi_2$. Then by the similar argument as above, we have
        $\pm C_tD_t = \big(\phi(c_1)\phi(c_2)\big)^{-1}$ for each $t\in [0,1]$, hence we can define a representation $\phi_t$ of $\pi_1(\Sigma_{0,4})$ by setting $$\big(\phi_t(c_1), \phi_t(c_2), \phi_t(c_3), \phi_t(c_4)\big)
        \doteq (\phi(c_1), \phi(c_2),\pm C_t, \pm D_t\big).$$
        Since $\{\pm C_t\}\interval\subset \Par^{sgn(s_3)}$ and $\{\pm D_t\}\interval\subset \Par^{sgn(s_4)}$, the signs $s(\phi_t) = s(\phi)$ and the relative Euler classes $e(\phi_t) = e(\phi)$ for all $t\in [0,1]$. This defines a path
        $\{\phi_t\}\interval$ in $\mathcal{R}^s_0(\Sigma_{0,4})$
        connecting 
        $\phi_0 = \phi$ to 
        $\phi_1$ with
        $$\big(\phi_1(c_1), \phi_1(c_2), \phi_1(c_3), \phi_1(c_4)\big)
        = (\phi(c_1), \phi(c_2), \phi(c_1)^{-1}, \phi(c_2)^{-1}\big).$$
        
        It remains to find a path connecting $\phi_1$ to a $\psi \in \mathrm{NA}^s_0(\Sigma_{0,4})$. 
        Again by Lemma \ref{lem_nonabelpants}, there is a one-parameter subgroup $\{h_t\}_{t\in \mathbb{R}}$ of $\psl$ with $h_0 = \pm\mathrm{I}$ such that for all $t\neq 0$, $\phi_1(c_1)$ and $ h_t\phi_1(c_2)h_t^{-1}$ do not commute. 
        For each $t\in [0,1]$, we let 
        $g_t \doteq h_t\phi(c_2)h_t^{-1}$, and define 
        $\psi_t$ 
        by setting
        $$\big(\psi_t(c_1), \psi_t(c_2), \psi_t(c_3), \psi_t(c_4)\big)
        = (\phi(c_1), g_t, g_t^{-1}\phi(c_1)^{-1}g_t, g_t^{-1}\big).$$
        Notice that $\phi(c_1)$ and $g_0 = \phi(c_2)$ commute, hence  $g_0^{-1}\phi(c_1)^{-1}g_0 = \phi(c_1)^{-1}$ and $\psi_0 = \phi_1$.
         By the similar argument as above,
         as $\psi_t(c_i)$ and $\phi_1(c_i)$ are conjugate for all $t\in [0,1]$ and for all $i\in \{1,2,3,4\}$, 
         $\{\psi_t\}\interval\subset \mathcal{R}^s_0(\Sigma_{0,4});$ and by the definition of $h_1$, 
         $\psi_1\in \mathrm{NA}^s_0(\Sigma_{0,4})$. 
         Letting $\psi = \psi_1$, the composition of 
        $\{\phi_t\}\interval$ and $\{\psi_t\}\interval$ connects $\phi$ to $\psi\in \mathrm{NA}^s_0(\Sigma_{0,4})$ within $\mathcal{R}^s_0(\Sigma_{0,4})$. This completes the proof for the base case $p = 4$.
        \\
        
        Now let $\Sigma = \Sigma_{0,p}$ with $p\geqslant 5$, and as the induction hypothesis assume that the proposition holds for $\Sigma_{0,p-1}$. 
        By Proposition \ref{prop_NAsphere1}, it suffices to find a path in $\mathcal{R}^s_0(\Sigma)$ that connects $\phi$ to a representation $\rho\in \mathcal{R}^s_0(\Sigma)$ such that $\rho|_{\pi_1(P_i)}$ is non-abelian for some $i\in \{1,\cdots, p-2\}$, which we will do separately in the Cases (1) and (2) of Lemma \ref{lem_claimforNAsphere2}.

         \medskip
        
        If (1) of Lemma \ref{lem_claimforNAsphere2} holds, then as $\Sigma\setminus P_1\cong \Sigma_{0, p-1}$ and $\phi|_{\pi_1(P_i)}$ is abelian for $i\in \{2,\cdots, p-2\}$, by the induction hypothesis, there is a path $\{\phi_t: \pi_1(\Sigma\setminus P_1)\to \psl\}\interval$ in $\mathcal{R}^{s'}_0(\Sigma\setminus P_1)$ that connects
        $\phi_0 = \phi|_{\pi_1(\Sigma\setminus P_1)}$ to $\phi_1 \in \mathrm{NA}^{s'}_0(\Sigma\setminus P_1)$. We extend this path to $\pi_1(\Sigma)$ as follows: Notice that $d_1$ is the common boundary of $P_1$ and $\Sigma\setminus P_1$. 
        As the path lies in $\mathcal{R}^{s'}_0(\Sigma\setminus P_1)$, for all $t\in [0,1]$, $\phi_t(d_1)$ and $\phi(d_1)$ are parabolic elements of the same sign, hence are conjugate. 
        Then by Lemma \ref{lem_conjugacypath_const}, there is a path $\{g_t\}\interval$ in $\psl$ with $g_0 = \pm \mathrm{I}$ such that $\phi_t(d_1) = g_t\phi(d_1)g_t^{-1}$. By letting $\phi_t|_{\pi_1(P_1)}\doteq g_t\big(\phi|_{\pi_1(P_1)}\big)g_t^{-1}$ for all $t\in [0,1]$, we can extend the path $\{\phi_t\}\interval$ from $\pi_1(\Sigma\setminus P_1)$ to $\pi_1(\Sigma)$. 
        Then for all $t\in [0,1]$, as $c_1, c_2\in \pi_1(P_1)$, $\phi_t(c_i) = g_t\phi(c_i)g_t^{-1}$ for each $i\in \{1,2\}$; and for each $i\in \{3,\cdots, p\}$, as $c_i$ is a peripheral element of $\pi_1(\Sigma\setminus P_1)$ and  $\phi_t|_{\pi_1(\Sigma\setminus P_1)}\in \mathcal{R}^{s'}_0(\Sigma\setminus P_1)$, $\phi_t(c_i)$ and $\phi(c_i)$ are parabolic elements of the same sign. 
        Hence, the signs $s(\phi_t) = s(\phi)$ and the relative Euler classes $e(\phi_t) = e(\phi)$ for all $t\in [0,1]$, and $\{\phi_t\}\interval\subset \mathcal{R}^s_0(\Sigma)$. As $\phi_1|_{\pi_1(\Sigma\setminus P_1)}\in \mathrm{NA}^{s'}_0(\Sigma\setminus P_1)$, $\phi_1|_{\pi_1(P_i)}$ is non-abelian for each $i\in \{2,\cdots,  p-2\}$, and  we  can let $\rho = \phi_1$.
        \medskip
        
        If (2) of Lemma \ref{lem_claimforNAsphere2}  holds, then as $P_1\cup P_2\cong \Sigma_{0,4}$ and $\phi |_{\pi_1(P_1)}$ and $\phi |_{\pi_1(P_2)}$ are abelian, 
        as shown in the base case $p=4$,
        there is a path 
        $\big\{\phi_t : \pi_1(P_1\cup P_2)\to \psl\big\}\interval$ in $\mathcal{R}^{s''}_0(P_1\cup P_2)$ that connects
        $\phi_0 = \phi|_{\pi_1(P_1\cup P_2)}$ to $\phi_1 \in \mathrm{NA}^{s''}_0(P_1\cup P_2)$. We extend this path to $\pi_1(\Sigma)$ as follows: Notice that $d_2$ is the common boundary of $P_1\cup P_2$ and $\Sigma\setminus (P_1\cup P_2)$. 
        As the path lies in $\mathcal{R}^{s''}_0(P_1\cup P_2)$, for all $t\in [0,1]$, $\phi_t(d_2)$ and $\phi(d_2)$ are parabolic elements of the same sign, hence  are conjugate. Then by Lemma \ref{lem_conjugacypath_const}, there is a path $\{g_t\}\interval$ in $\psl$ with $g_0 = \pm \mathrm{I}$ such that $\phi_t(d_2) = g_t\phi(d_2)g_t^{-1}$. By letting $\phi_t|_{\pi_1(\Sigma\setminus (P_1\cup P_2))}\doteq g_t\big(\phi|_{\pi_1(\Sigma\setminus (P_1\cup P_2))}\big)g_t^{-1}$ for all $t\in [0,1]$, we extend the path $\{\phi_t\}\interval$ from $\pi_1(P_1\cup P_2)$ to $\pi_1(\Sigma)$. 
        Then for each $i\in \{1,2,3\}$, as $c_i$ is a peripheral element of $\pi_1(P_1\cup P_2)$ and $\phi_t|_{\pi_1(P_1\cup P_2)}\in \mathcal{R}^{s''}_0(P_1\cup P_2)$, $\phi_t(c_i)$ and $\phi(c_i)$ are parabolic elements of the same sign; and for each $i\in \{4,\cdots, p\}$, $\phi_t(c_i) = g_t\phi(c_i)g_t^{-1}$. 
        Hence, the signs $s(\phi_t) = s(\phi)$ and the relative Euler classes $e(\phi_t) = e(\phi)$ for all $t\in [0,1]$, and $\{\phi_t\}\interval\subset \mathcal{R}^s_0(\Sigma)$. As $\phi_1|_{\pi_1(P_1\cup P_2)}\in \mathrm{NA}^{s''}_0(P_1\cup P_2)$, $\phi_1|_{\pi_1(P_1)}$ and $\phi_1|_{\pi_1(P_2)}$ are non-abelian, and we can let $\rho = \phi_1$. 
    \end{proof}
    
    \subsection{Proof of Theorem \ref{thm_nonabel}}\label{Pf}
    
    \begin{proof}[Proof of Theorem \ref{thm_nonabel}]
    If $p_0(s)\geqslant 1$, then the result directly follows from Proposition \ref{prop_NAhyp}; and if $g\geqslant 1$ and $p_0(s) = 0$, then the result follows from
    Proposition \ref{prop_NAtorus}. If $g = 0,$ $p_0(s) = 0$ and $n\neq 0$, then for all $\phi\in \Rns$,
    $\phi|_{\pi_1(P_i)}$ is non-abelian for some $i\in \{1,\cdots, p-2\}$, because otherwise Proposition \ref{prop_reducible} and Proposition $\ref{prop_additivity}$ would imply that $n=e(\phi) = 0$. 
    Then the result follows from Proposition \ref{prop_NAsphere1}. 
    It remains to consider the case that only Condition (4) holds.
    Let $\phi\in \mathcal{R}^s_0(\Sigma_{0,p})$ for some $s\in \{\pm 1\}^p$ with $p_+(s)\neq 1$ and $p_-(s)\neq 1$.
    If $\phi|_{\pi_1(P_i)}$ is non-abelian for some $i\in \{1,\cdots, p-2\}$, then by Proposition \ref{prop_NAsphere1}, there is a path deforming $\phi$ into $\mathrm{NA}^s_0(\Sigma_{0,p})$. If $\phi|_{\pi_1(P_i)}$ is abelian for all $i\in \{1,\cdots, p-2\}$, then by Lemma \ref{lem_abelsphere} and Condition (4), we have $p_+(s)\geqslant 2$ and $p_-(s)\geqslant 2$; and by Proposition \ref{prop_NAsphere2}, there is a path deforming $\phi$ into $\mathrm{NA}^s_0(\Sigma)$, completing the proof.
    \end{proof}

\section{Surfaces with Euler characteristic $-2$}
    In this section, we prove Theorem \ref{thm_main4} for punctured surfaces $\Sigma$ of Euler characteristic $-2$, namely, the two-hole torus and the four-hole sphere, and describe the connected components of $\R$. 
    For both of the cases, we need the following Proposition \ref{prop_sameEuler}.

    \begin{proposition}\label{prop_sameEuler}
            Let $\Sigma = \Sigma_{g,p}$, let $\Sigma'\subset\Sigma$ be a subsurface with the complement  $\Sigma\setminus \Sigma'$ either homeomorphic to  $\Sigma_{1,1}$ or homeomorphic to $\Sigma_{0,3}$, and let $d\in \pi_1(\Sigma)$ be represented by a common boundary component of $\Sigma'$  and $\Sigma\setminus \Sigma'$. For $n\in\mathbb{Z}$ and $s \in \{-1,0,+1\}^p$, let $\phi, \psi$ be representations in $\HPns(\Sigma)$ sending $d$ to hyperbolic elements.
            Suppose 
            $s\big(\phi|_{\pi_1(\Sigma')}\big) = s\big(\psi|_{\pi_1(\Sigma')}\big)=s'$ for some $s'$, $e\big(\phi|_{\pi_1(\Sigma')}\big) = e\big(\psi|_{\pi_1(\Sigma')}\big)=n'$ for some  $n'$, and there is a path
            in $\HP^{s'}_{n'}(\Sigma')$ connecting $\phi|_{\pi_1(\Sigma')}$ and $\psi|_{\pi_1(\Sigma')}$.
            Then there is a path in $\HPns(\Sigma)$ connecting $\phi$ to $\psi$.
        \end{proposition}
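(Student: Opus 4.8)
The plan is to write the desired path of representations of $\pi_1(\Sigma)$ as a path of compatible pairs of representations of $\pi_1(\Sigma')$ and of the complement $\Sigma'':=\Sigma\setminus\Sigma'$, using the given path on $\Sigma'$ and constructing a matching path on the small surface $\Sigma''$ via its evaluation map. Since $d$ is a separating simple closed curve, van Kampen gives $\pi_1(\Sigma)=\pi_1(\Sigma')\ast_{\langle d\rangle}\pi_1(\Sigma'')$, so a path of $\psl$-representations of $\pi_1(\Sigma)$ is the same thing as a path of pairs of representations of $\pi_1(\Sigma')$ and $\pi_1(\Sigma'')$ agreeing on $d$ at each time. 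First I would record the invariants of the restrictions to $\Sigma''$: since $\phi(d)$ and $\psi(d)$ are hyperbolic, Proposition \ref{prop_additivity} gives $e\big(\phi|_{\pi_1(\Sigma'')}\big)=n-n'=:n''$ and $e\big(\psi|_{\pi_1(\Sigma'')}\big)=n''$; and since every puncture of $\Sigma$ lies in $\Sigma'$ or in $\Sigma''$, the signs of $\phi$ and $\psi$ agree on the punctures inside $\Sigma''$, while the $d$-component of the sign is $0$ for both restrictions, so $s\big(\phi|_{\pi_1(\Sigma'')}\big)=s\big(\psi|_{\pi_1(\Sigma'')}\big)=:s''$. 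Thus both $\phi|_{\pi_1(\Sigma'')}$ and $\psi|_{\pi_1(\Sigma'')}$ lie in $\HP^{s''}_{n''}(\Sigma'')$.

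Next I would transport the given path across $d$. Let $\{\phi'_t\}_{t\in[0,1]}$ be the given path in $\HP^{s'}_{n'}(\Sigma')$ from $\phi|_{\pi_1(\Sigma')}$ to $\psi|_{\pi_1(\Sigma')}$; since the $d$-component of $s'$ is $0$, the path $\{g_t:=\phi'_t(d)\}$ lies in $\Hyp\subset\psl$. Consider the evaluation map $\ev^{\Sigma''}\colon \HP^{s''}_{n''}(\Sigma'')\to\Hyp_{n''}$ which records, for a representation $\rho$, the lift of $\rho(d)$ lying in $\Hyp_{n''}$ — up to replacing the generator $d$ of $\pi_1(\Sigma'')$ by its inverse, this is the evaluation map $\ev$ when $\Sigma''\cong\Sigma_{0,3}$ and the restriction of the lifted commutator $\widetilde R$ when $\Sigma''\cong\Sigma_{1,1}$; the hypothesis $e(\rho)=n''$ is precisely what places this lift in $\Hyp_{n''}$. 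By Theorem \ref{thm_PLP} (if $\Sigma''\cong\Sigma_{0,3}$), respectively by Theorem \ref{thm_liftcommu} and Proposition \ref{prop_plpliftcommu} (if $\Sigma''\cong\Sigma_{1,1}$), the map $\ev^{\Sigma''}$ satisfies the path-lifting property and has connected fibers. Lift the path $\{g_t\}$ of hyperbolic elements continuously to a path $\{\widetilde{g_t}\}$ in $\univcover$ with $\widetilde{g_0}=\ev^{\Sigma''}\big(\phi|_{\pi_1(\Sigma'')}\big)\in\Hyp_{n''}$; since $\Hyp_{n''}$ is open and closed in the hyperbolic locus of $\univcover$, the whole path $\{\widetilde{g_t}\}$ stays in $\Hyp_{n''}$, and its endpoint is the unique lift of $g_1=\psi(d)$ in $\Hyp_{n''}$, hence $\widetilde{g_1}=\ev^{\Sigma''}\big(\psi|_{\pi_1(\Sigma'')}\big)$.

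Now I would assemble the path. The path-lifting property of $\ev^{\Sigma''}$ applied to $\{\widetilde{g_t}\}$ and to the point $\phi|_{\pi_1(\Sigma'')}$ yields a non-decreasing surjection $\tau\colon[0,1]\to[0,1]$ and a path $\{\rho_s\}_{s\in[0,1]}$ in $\HP^{s''}_{n''}(\Sigma'')$ with $\rho_0=\phi|_{\pi_1(\Sigma'')}$ and $\ev^{\Sigma''}(\rho_s)=\widetilde{g_{\tau(s)}}$, so $\rho_s(d)=g_{\tau(s)}=\phi'_{\tau(s)}(d)$. Gluing $\{\rho_s\}$ with the reparametrized path $\{\phi'_{\tau(s)}\}$ on $\Sigma'$ produces a path of representations of $\pi_1(\Sigma)$ from $\phi$ (at $s=0$) to the representation $\xi$ obtained by gluing $\psi|_{\pi_1(\Sigma')}$ with $\rho_1$ (at $s=1$, using $\tau(1)=1$); this path lies in $\HP^s_n(\Sigma)$ because along it the peripheral signs on the two sides are the constants $s'$ and $s''$, the element $d$ maps into $\Hyp$, and Proposition \ref{prop_additivity} gives relative Euler class $n'+n''=n$. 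Finally, $\rho_1$ and $\psi|_{\pi_1(\Sigma'')}$ both lie in the fiber $\big(\ev^{\Sigma''}\big)^{-1}(\widetilde{g_1})$, which is connected; choosing a path in this fiber, along which the image of $d$ is the constant $g_1=\psi(d)$, and gluing it with the constant path at $\psi|_{\pi_1(\Sigma')}$ produces a path in $\HP^s_n(\Sigma)$ from $\xi$ to $\psi$. Concatenating the two paths completes the construction.

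The step I expect to require the most care is matching the conventions of the two evaluation maps in the second paragraph: the ambiguity $d\leftrightarrow d^{-1}$ and the sign normalization of the relative Euler class (recall the footnote comparing our normalization with \cite{goldman}) must be tracked so that $\ev^{\Sigma''}\big(\phi|_{\pi_1(\Sigma'')}\big)$ and $\ev^{\Sigma''}\big(\psi|_{\pi_1(\Sigma'')}\big)$ are genuinely the endpoints of the continuously lifted path $\{\widetilde{g_t}\}$, and so that the representations on the two sides really do glue along $d$. Once these identifications are pinned down, everything else is a direct application of the path-lifting property and the connectedness of fibers established in Sections 2 and 3.
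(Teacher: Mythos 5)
Your proposal is correct and follows essentially the same route as the paper's own proof: the paper also records $n''=n-n'$ and $s''$ via Proposition \ref{prop_additivity}, lifts the path $\{\phi'_t(d)\}$ into $\Hyp_{n-n'}$, extends it over $\Sigma\setminus\Sigma'$ by the path-lifting property of the lifted commutator (Proposition \ref{prop_plpliftcommu}) or of the evaluation map (Theorem \ref{thm_PLP}), and then finishes with the connectedness of the fiber over the lift of $\psi(d)$. The convention issue you flag (the $d\leftrightarrow d^{-1}$ and $z^{n''}$-shift in identifying the evaluation map with "the lift of $\rho(d)$") is real but harmless, exactly as you say, since the shifted/inverted path still lies in a single component $\Hyp_k$.
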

        
        \begin{proof} 
        For the case that $\Sigma\setminus \Sigma'\cong \Sigma_{1,1}$, we will first construct a path $\path$ in $\HPns(\Sigma)$ connecting $\phi_0 = \phi$ to a $\phi_1\in \HPns(\Sigma)$ such that $\phi_1|_{\pi_1(\Sigma')} = \psi|_{\pi_1(\Sigma')}$, then construct a path $\{\phi_t\}_{t\in [1,2]}$ in $\HPns(\Sigma)$ connecting $\phi_1$ to $\phi_2 =\psi$. Then the path $\{\phi_t\}_{t\in [0,2]}$ connects $\phi$ to $\psi$.
        
       The path $\{\phi_t\}_{t\in [0,1]}$ is constructed as follows.
       By the assumption, there exists a path $\{\phi_t: \pi_1(\Sigma')\to \psl\}\interval$ in $\HP^{s'}_{n'}(\Sigma')$ connecting $\phi_0= \phi|_{\pi_1(\Sigma')}$ to $\phi_1= \psi|_{\pi_1(\Sigma')}$, which is to be extended to $\pi_1(\Sigma)$.
        For each $t\in [0,1]$, since $\phi(d)\in \Hyp$ and $s(\phi_t) = s(\phi)$, $\phi_t(d)\in \Hyp$. Let $T\doteq \Sigma\setminus \Sigma'$. Then by Proposition \ref{prop_additivity}, 
        we have $\phi|_{\pi_1(T)}, \psi|_{\pi_1(T)} \in \mathcal{W}_{n - n'}(T)$; and by Theorem \ref{thm_goldman}, $n - n'\in \{-1,0,1\}$.
        For each $t\in [0,1]$, we let $\widetilde{\phi_t(d)}$ be the lift of $\phi_t(d)$ to $\Hyp_{n-n'}$. Then the path $\{\widetilde{\phi_t(d)}\}\interval$ lies in $\Hyp_{n-n'}$, which, by Theorem \ref{thm_liftcommu}, is contained in the image of the lifted commutator map.  
        Since $e(\phi|_{\pi_1(T)}) = n - n'$, the lifted commutator map sends the pair $(\phi(a), \phi(b))$ to $\widetilde{\phi(d)}\in \Hyp_{n-n'}$; and
        by Proposition \ref{prop_plpliftcommu}, the path $\{\widetilde{\phi_t(d)}\}\interval$ lifts to the path $\{(\pm A_t, \pm B_t)\}\interval$ in $\psl\times\psl$ with $(\pm A_0, \pm B_0) = (\phi(a), \phi(b)
       )$. For each $t\in [0,1]$, we let $\big(\phi_t(a), \phi_t(b)\big) \doteq (\pm A_t, \pm B_t)$, extending the representation $\phi_t$ from $\pi_1(\Sigma')$ to $\pi_1(\Sigma)$. 
     Since all peripheral elements of $\pi_1(\Sigma)$ are contained in $\pi_1(\Sigma')$, and since $\phi_t|_{\pi_1(\Sigma')}\in \HP^{s'}_{n'}(\Sigma')$ and the signs $s(\phi_t|_{\pi_1(\Sigma')}) = s(\phi|_{\pi_1(\Sigma')})$ for each $t\in [0,1]$, the signs $s(\phi_t) = s(\phi)$ and the relative Euler classes $e(\phi_t) = e(\phi)$. Therefore, the path $\path$ lies in $\HPns(\Sigma)$, and connects $\phi$ to $\phi_1$.
       
       The  path $\{\phi_t\}_{t\in [1,2]}$ is constructed as follows.
       The evaluation map $\ev: \mathcal{W}_{n-n'}(T)\to \Hyp_{n-n'}$
       sends both $\phi_1|_{\pi_1(T)}$ and $\psi|_{\pi_1(T)}$ to $\widetilde{\phi_1(d)}\in \Hyp_{n-n'}$. By Theorem \ref{thm_liftcommu}, the fiber $\ev^{-1}(\widetilde{\phi_1(d)})$ is connected, hence there exists a path $\{\phi_t: \pi_1(T)\to \psl\}_{t\in [1,2]}$ in $\ev^{-1}(\widetilde{\phi_1(d)})$ connecting $\phi_1|_{\pi_1(T)}$ to $\phi_2 = \psi|_{\pi_1(T)}$. As the path lies in $\ev^{-1}(\widetilde{\phi_1(d)})$, for all $t\in [1,2]$, $\phi_t(d) = \phi_1(d)$. Therefore, by letting $\phi_t|_{\pi_1(\Sigma')} \doteq \phi_1|_{\pi_1(\Sigma')} = \psi|_{\pi_1(\Sigma')}$ for all $t\in [1,2]$, we extend the path $\{\phi_t\}_{t\in [1,2]}$ from $\pi_1(T)$ to $\pi_1(\Sigma)$. For all $t\in [1,2]$,
       since $\phi_t(c_i) = \phi_1(c_i)$ for all $i\in \{1,\cdots, p\}$, the signs $s(\phi_t) = s(\phi_1) = s(\phi)$ and the relative Euler classes $e(\phi_t) = e(\phi)$. Therefore, the path $\{\phi_t\}_{t\in [1,2]}$ lies in $\HPns(\Sigma)$, and connects $\phi_1$ to $\psi$.
       \\

       For the case that $\Sigma\setminus \Sigma'\cong \Sigma_{0,3}$,  by a permutation of peripheral elements if necessary, assume that the peripheral elements $c_1,c_2$ are in $\pi_1(\Sigma\setminus\Sigma'),$ and $\pi_1(\Sigma\setminus\Sigma') = \langle c_1, c_2\rangle$.
       As in the previous case, we will first construct a path $\path$ in $\HPns(\Sigma)$ connecting $\phi_0 = \phi$ to a $\phi_1\in \HPns(\Sigma)$ such that $\phi_1|_{\pi_1(\Sigma')} = \psi|_{\pi_1(\Sigma')}$, then construct a path $\{\phi_t\}_{t\in [1,2]}$ in $\HPns(\Sigma)$ connecting $\phi_1$ to $\phi_2 =\psi$. Then the path $\{\phi_t\}_{t\in [0,2]}$ connects $\phi$ to $\psi$.

       The  path $\{\phi_t\}_{t\in [0,1]}$ is constructed as follows. 
       By the assumption, there exists a path $\{\phi_t: \pi_1(\Sigma')\to \psl\}\interval$ in $\HP^{s'}_m(\Sigma')$ connecting $\phi_0= \phi|_{\pi_1(\Sigma')}$ to $\phi_1= \psi|_{\pi_1(\Sigma')}$, which is to be extended to $\pi_1(\Sigma)$. For each $t\in [0,1]$, as in the previous case, we let $\widetilde{\phi_t(d)}$ be the lift of $\phi_t(d)$ in $\Hyp_{n-n'}$, then the path $\{\widetilde{\phi_t(d)}\}\interval$ lies in $\Hyp_{n-n'}$. Let $P\doteq \Sigma\setminus \Sigma'$, and let 
        $\ev: \mathrm{HP}_{n-n'}^{(s_1,s_2,0)}(P)\to\Hyp_{n-n'}$ be the evaluation map defined in Subsection \ref{subsection_PLP}.
       Since $n - n' \in \{-1,0,1\}$, by Proposition \ref{prop_evimage},  $\ev$ is surjective onto $\Hyp_{n-n'}$; and by Theorem \ref{thm_PLP}, 
       the path $\{\widetilde{\phi_t(d)}\}\interval$ lifts to the path $\{\psi_t: \pi_1(P)\to \psl\}\interval$  in $\mathrm{HP}_{n-n'}^{(s_1,s_2,0)}(P)$ with $\psi_0 = \phi|_{\pi_1(P)}$. 
       For each $t\in [0,1]$, since $\psi_t$ and $\phi_t$ agree at the common boundary $d$ of $\Sigma'$ and $P$, by letting $\phi_t|_{\pi_1(P)}\doteq \psi_t$, we extend the path $\path$ from $\pi_1(\Sigma')$ to $\pi_1(\Sigma).$ 
       For each $t\in [0,1]$, since 
       $s(\phi_t|_{\pi_1(\Sigma')})
       = s(\phi|_{\pi_1(\Sigma')})$
       and $s(\phi_t|_{\pi_1(P)})
       = s(\phi|_{\pi_1(P)})$,
       the signs $s(\phi_t) = s(\phi)$ and the relative Euler classes $e(\phi_t) = e(\phi)$. Therefore, the path $\path$ lies in $\HPns(\Sigma)$, and connects $\phi$ to $\phi_1$.

       The  path $\{\phi_t\}_{t\in [1,2]}$ is constructed as follows.
       The evaluation map $\mathrm{HP}_{n-n'}^{(s_1,s_2,0)}(P)\to \Hyp_{n-n'}$
       sends both $\phi_1|_{\pi_1(P)}$ and $\psi|_{\pi_1(P)}$ to $\widetilde{\phi_1(d)}\in \Hyp_{n-n'}$. By Theorem \ref{thm_PLP}, the fiber $\ev^{-1}(\widetilde{\phi_1(d)})$ is connected, hence there exists a path $\{\phi_t: \pi_1(P)\to \psl\}_{t\in [1,2]}$ in $\ev^{-1}(\widetilde{\phi_1(d)})$ connecting $\phi_1|_{\pi_1(P)}$ to $\phi_2 = \psi|_{\pi_1(P)}$. As the path lies in $\ev^{-1}(\widetilde{\phi_1(d)})$, for all $t\in [1,2]$, $\phi_t(d) = \phi_1(d)$. Therefore, by letting $\phi_t|_{\pi_1(\Sigma')} \doteq \phi_1|_{\pi_1(\Sigma')} = \psi|_{\pi_1(\Sigma')}$ for all $t\in [1,2]$, we extend the path $\{\phi_t\}_{t\in [1,2]}$ from $\pi_1(P)$ to $\pi_1(\Sigma)$. For all $t\in [1,2]$,
       as $s(\phi_t|_{\pi_1(P)})=(s_1,s_2,0)$ and $\phi_t(c_i) = \phi_1(c_i)$ for all $i\in \{3,\cdots, p\}$, the signs $s(\phi_t) = s(\phi_1) = s(\phi)$ and the relative Euler classes $e(\phi_t) = e(\phi)$. Therefore, the path $\{\phi_t\}_{t\in [1,2]}$ lies in $\HPns(\Sigma)$, and connects $\phi_1$ to $\psi$.
    \end{proof}

    \subsection{Two-hole torus}\label{subsection_torustwo}

    The fundamental group of the two-hole torus $\Sigma_{1,2}$ has the following presentation
    $$\pi_1(\Sigma_{1,2}) = \langle 
     a, b, c_1, c_2\ |\ [a, b]\cdot c_1 c_2\rangle,$$
     where $c_1$ and $c_2$ are the preferred peripheral elements.
     Let $P\cup T$ be a pants decomposition of $\Sigma_{1,2}$, where $P \cong \Sigma_{0,3}$ with $\pi_1(P) = \langle c_1, c_2 \rangle$ and $T \cong\Sigma_{1,1}$ with $\pi_1(T) = \langle a, b \rangle$, 
     separated by a decomposition curve $d = [a,b] = (c_1c_2)^{-1}$. 
     \\

   To prove the results in this subsection, we need the following lemma.
      \begin{lemma}\label{lem_sameEuler_torus}
           
            For $n\in\mathbb{Z}$ and $s \in \{-1,0,+1\}^2$, let $\phi,\psi\in \HPns(\Sigma_{1,2})$.
            If $\phi(d), \psi(d)\in \Hyp$ and  $e\big(\phi|_{\pi_1(P)}\big) = e\big(\psi|_{\pi_1(P)}\big)$, then there exists a path in $\HPns(\Sigma_{1,2})$ connecting $\phi$ to $\psi$. 
        \end{lemma}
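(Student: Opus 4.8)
\textbf{Proof plan for Lemma \ref{lem_sameEuler_torus}.}

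The plan is to invoke Proposition \ref{prop_sameEuler} with $\Sigma = \Sigma_{1,2}$ and $\Sigma' = P$, so that $\Sigma \setminus \Sigma' = T \cong \Sigma_{1,1}$ and $d$ is the common boundary curve. To apply that proposition we must supply a path in $\HP^{s'}_{n'}(P)$ connecting $\phi|_{\pi_1(P)}$ and $\psi|_{\pi_1(P)}$, where $s' = s\big(\phi|_{\pi_1(P)}\big) = s\big(\psi|_{\pi_1(P)}\big)$ and $n' = e\big(\phi|_{\pi_1(P)}\big) = e\big(\psi|_{\pi_1(P)}\big)$. First I would check that the two restrictions genuinely have the same sign: the peripheral elements $c_1, c_2$ of $\pi_1(P)$ are also peripheral in $\pi_1(\Sigma_{1,2})$, so their types are recorded in $s(\phi) = s(\psi) = s$; and the third peripheral element of $\pi_1(P)$ is $d^{-1} = c_1 c_2$, which by hypothesis is sent to a hyperbolic element by both $\phi$ and $\psi$, giving sign $0$. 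Hence $s\big(\phi|_{\pi_1(P)}\big) = s\big(\psi|_{\pi_1(P)}\big) = (s_1, s_2, 0)$, and the equality of relative Euler classes on $P$ is assumed. So both restrictions lie in the same space $\HP^{(s_1,s_2,0)}_{n'}(P)$.

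Next I would establish that $\HP^{(s_1,s_2,0)}_{n'}(P)$ is connected, which supplies the required path. This follows from the results on $\Sigma_{0,3}$ proved earlier in the excerpt: if $s_1 = s_2 = 0$, then $\HP^{(0,0,0)}_{n'}(P) = \mathcal{W}_{n'}(\Sigma_{0,3})$ is connected by Theorem \ref{thm_goldman} (note $n' \in \{-1,0,1\}$ is forced since $\mathcal{W}_{n'}(\Sigma_{0,3})$ is nonempty); if exactly one of $s_1, s_2$ is nonzero, or both are nonzero, then connectedness is exactly the content of Theorem \ref{thm_pants2} parts (1) and (2) (together with Corollary \ref{cor_evimage} and Theorem \ref{thm_PLP}). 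In every case the nonemptiness of $\HP^{(s_1,s_2,0)}_{n'}(P)$ — which holds because $\phi|_{\pi_1(P)}$ lies in it — together with Theorem \ref{thm_pants2}/Theorem \ref{thm_goldman} gives connectedness, hence a path connecting $\phi|_{\pi_1(P)}$ to $\psi|_{\pi_1(P)}$ inside $\HP^{(s_1,s_2,0)}_{n'}(P)$.

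With this path in hand, Proposition \ref{prop_sameEuler} (in the case $\Sigma \setminus \Sigma' \cong \Sigma_{1,1}$) directly produces a path in $\HP^s_n(\Sigma_{1,2})$ connecting $\phi$ to $\psi$, which is the assertion. I do not expect a serious obstacle here: the lemma is essentially a packaging of Proposition \ref{prop_sameEuler} specialized to the two-hole torus, and the only things to verify are the bookkeeping identities $s\big(\phi|_{\pi_1(P)}\big) = s\big(\psi|_{\pi_1(P)}\big)$ and $e\big(\phi|_{\pi_1(P)}\big) = e\big(\psi|_{\pi_1(P)}\big)$ (the latter is hypothesized, the former from the type data) and the connectedness of the relevant $\Sigma_{0,3}$-stratum. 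The mildest subtlety is making sure the three-hole sphere stratum is labeled consistently — i.e., that $d^{-1} = c_1 c_2$ rather than $c_1 c_2$ appears as the third peripheral generator of $\pi_1(P)$ — but this only affects the identification $\pi_1(P) = \langle c_1, c_2 \rangle$ and not the type of the image, so it is harmless.
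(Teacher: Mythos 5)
Your proposal is correct and follows essentially the same route as the paper: both verify that $\phi|_{\pi_1(P)}$ and $\psi|_{\pi_1(P)}$ lie in the same stratum $\HP^{(s_1,s_2,0)}_{n'}(P)$, invoke Theorem \ref{thm_goldman} and Theorem \ref{thm_pants2} for the connectedness of that stratum, and then apply Proposition \ref{prop_sameEuler} with $\Sigma' = P$ (the paper also notes $e(\phi|_{\pi_1(T)}) = e(\psi|_{\pi_1(T)}) = n - n'$ via Proposition \ref{prop_additivity}, which your setup implicitly covers). No gaps.
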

    \begin{proof}
       The fundamental group $\pi_1(P)$ has the preferred peripheral elements $c_1,c_2$ and $d$. As representations in $\HPns(\torustwo)$, $\phi$ and $\psi$ send $c_1$ and $c_2$ into $\overline{\Hyp}$; and by the assumption, $\phi$ and $\psi$ send $d$ into $\Hyp$. 
       Hence, both $\phi|_{\pi_1(P)}$ and $\psi|_{\pi_1(P)}$ are in $\mathrm{HP}(P)$ with sign $s'= (s_1,s_2, 0)\in \{-1,0,+1\}^3$. Let $n' = e\big(\phi|_{\pi_1(P)}\big) = e\big(\psi|_{\pi_1(P)}\big)$. Then both 
       $\phi|_{\pi_1(P)}$ and $\psi|_{\pi_1(P)}$ are in $\HP^{s'}_{n'}(P)$. By Theorem \ref{thm_goldman} and Theorem \ref{thm_pants2}, $\HP^{s'}_{n'}(P)$ is connected, hence there exists a path $\{\phi_t: \pi_1(P)\to \psl\}\interval$ in $\HP^{s'}_{n'}(P)$ connecting $\phi_0= \phi|_{\pi_1(P)}$ to $\phi_1= \psi|_{\pi_1(P)}$. Moreover, 
       by Proposition \ref{prop_additivity}, $e\big(\phi|_{\pi_1(T)}\big) = e\big(\psi|_{\pi_1(T)}\big) = n - n'$. Therefore, Proposition \ref{prop_sameEuler} completes the proof by letting $\Sigma' = P$.
    \end{proof}
    
    We first prove Theorem \ref{thm_main4} for $\Sigma = \Sigma_{1,2}$, which is stated as Theorem \ref{thm_torustype1} below. 
     \begin{theorem}\label{thm_torustype1}
        Let $n\in \mathbb{Z}$, and let $s\in\{-1,0,+1\}^2$ with $p_0(s)\geqslant 1$.
        Then $\HPns(\torustwo)$ is non-empty if and only if 
        $$\chi(\Sigma_{1,2}) + p_+(s)\leqslant n \leqslant -\chi(\Sigma_{1,2}) - p_-(s).$$ 
        Moreover, each non-empty $\HPns(\torustwo)$ is connected.
    \end{theorem}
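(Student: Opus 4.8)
The plan is to fix the pants decomposition $\torustwo=P\cup T$ with $P\cong\pants$, $\pi_1(P)=\langle c_1,c_2\rangle$, $T\cong\torus$, $\pi_1(T)=\langle a,b\rangle$, and interior decomposition curve $d=[a,b]=(c_1c_2)^{-1}$, and to treat non-emptiness and connectedness separately. Since $\chi(\torustwo)=-2$, the asserted inequality reads $-2+p_+(s)\leqslant n\leqslant 2-p_-(s)$.

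\emph{Non-emptiness.} For the ``only if'' direction, let $\phi\in\HP_n^s(\torustwo)$. As $g=1$, Condition (1) of Theorem \ref{thm_nonabel} applies, so we may first deform $\phi$ within $\HP_n^s(\torustwo)$ into $\nonabel$, and then, by Lemma \ref{lem_inthyptorus}, deform it further so that $\phi(d)$ is hyperbolic, keeping both restrictions non-abelian and keeping the signs and relative Euler class fixed. Since $\phi(d)$ is now hyperbolic, Proposition \ref{prop_additivity} gives $e(\phi)=e\big(\phi|_{\pi_1(P)}\big)+e\big(\phi|_{\pi_1(T)}\big)$; Theorem \ref{thm_goldman} applied to $\torus$ (with $\chi=-1$) gives $e\big(\phi|_{\pi_1(T)}\big)\in\{-1,0,1\}$, and Remark \ref{rmk_MWpants} applied to $\phi|_{\pi_1(P)}\in\mathrm{HP}^{(s_1,s_2,0)}(\pants)$ gives $-1+p_+(s)\leqslant e\big(\phi|_{\pi_1(P)}\big)\leqslant 1-p_-(s)$; adding these yields the inequality. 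For the converse, suppose $n$ lies in the range and choose an integer $n'$ with $-1+p_+(s)\leqslant n'\leqslant 1-p_-(s)$ and $n-n'\in\{-1,0,1\}$; such $n'$ exists because these conditions cut out overlapping intervals with integer endpoints (using $p_+(s)+p_-(s)\leqslant 1$). By Theorem \ref{thm_goldman}, Theorem \ref{thm_pants2} and Corollary \ref{cor_evimage}, there is $\rho_P\in\mathrm{HP}^{(s_1,s_2,0)}_{n'}(\pants)$ with $\ev(\rho_P)$ equal to a prescribed element of $\Hyp_{n'}$, hence with $\rho_P(d)$ equal to a prescribed hyperbolic $h\in\psl$; and by Theorem \ref{thm_goldman} with Theorem \ref{thm_liftcommu} there is $\rho_T\in\mathcal{W}_{n-n'}(\torus)$ with $[\rho_T(a),\rho_T(b)]=h$. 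Since $\rho_P(c_1)\rho_P(c_2)=\rho_P(d)^{-1}=h^{-1}$, gluing $\rho_P$ and $\rho_T$ along $d$ defines $\phi\in\HP_n^s(\torustwo)$ with $s(\phi)=s$, and by additivity $e(\phi)=n'+(n-n')=n$.

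\emph{Connectedness.} If $s=(0,0)$ this is Theorem \ref{thm_goldman}, so assume $s$ has exactly one zero entry; after permuting $c_1,c_2$ and, if necessary, using Proposition \ref{prop_pglpsl}, take $s=(+1,0)$. Given $\phi,\psi\in\HP_n^s(\torustwo)$, use Theorem \ref{thm_nonabel} and Lemma \ref{lem_inthyptorus} to deform both, within $\HP_n^s(\torustwo)$, into $\nonabel$ with $\phi(d),\psi(d)$ hyperbolic; then $e\big(\phi|_{\pi_1(P)}\big),e\big(\psi|_{\pi_1(P)}\big)\in\{0,1\}$ by Theorem \ref{thm_pants2}. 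If these integers agree, Lemma \ref{lem_sameEuler_torus} produces a path from $\phi$ to $\psi$ and we are done. Otherwise it suffices to deform $\phi$ within $\HP_n^s(\torustwo)$, ending again in $\nonabel$ with $\phi(d)$ hyperbolic, so as to change $e\big(\phi|_{\pi_1(P)}\big)$ between $0$ and $1$, after which Lemma \ref{lem_sameEuler_torus} applies. Such a deformation is obtained by moving $\phi(d)$ along a path in $\psl$ between two hyperbolic elements that crosses the parabolic and elliptic strata; this is legitimate because $d$ is an interior curve, so $\phi$ stays in $\HP_n^s(\torustwo)$ as long as $\phi(c_1)$ stays positive parabolic and $\phi(c_2)$ stays hyperbolic, whence $s(\phi)$ and (by continuity) $e(\phi)$ stay constant. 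On the $T$ side one moves $[\phi(a),\phi(b)]$ from $\Hyp_{n''}$ to $\Hyp_{n''\pm1}$ along a path in $\mathcal I\setminus\{\mathrm I\}$ through $\mathrm{Par}_0$, $\Ell_{\pm1}$ and $\mathrm{Par}^{+}_{-1}$ or $\mathrm{Par}^{-}_{1}$, lifting it to a deformation of $(\phi(a),\phi(b))$ by the path-lifting property of the lifted commutator (Theorem \ref{thm_liftcommu}, Proposition \ref{prop_plpliftcommu}); on the $P$ side one realizes the inverse of the same path of elements of $\psl$ as $\phi(c_1)\phi(c_2)$ using the lifted product $\ev$ and the explicit pieces of its image in Proposition \ref{prop_evimage}, keeping $\phi(c_1)\in\Parp$ and $\phi(c_2)\in\Hyp$. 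The two deformations are automatically compatible since $[\phi(a),\phi(b)]\cdot\phi(c_1)\phi(c_2)=\pm\mathrm I$ holds identically.

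\emph{Main obstacle.} The delicate step is this last ``Euler-class transfer'' deformation. The path-lifting results as stated (Theorem \ref{thm_PLP}) only handle evaluation maps whose image lies in a single hyperbolic stratum $\Hyp_n$, whereas here the common value $\phi(d)$ must leave $\overline{\Hyp}$ and return. One therefore needs a path-lifting statement for the lifted product $\ev\colon\overline{\Hyp}\times\overline{\Hyp}\to\univcover$ restricted to pairs of fixed sign, with image allowed to range over the connected subset $\Hyp_0\cup\mathrm{Par}_0\cup\Ell_1\cup\mathrm{Par}^{-}_{1}\cup\Hyp_1$ (and its mirror) of $\mathcal I$; establishing this by a submersion-plus-connected-fiber argument in the spirit of Lemma \ref{lem_evPsubm} and Proposition \ref{prop_evimage}, and then synchronizing it with the $T$-side deformation so the two sides agree on $d$ at every instant, is the heart of the matter. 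The remaining checks — local constancy of the sign and the relative Euler class along each path, and re-entry into $\nonabel$ on the hyperbolic side at the end — are routine.
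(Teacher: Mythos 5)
Your overall architecture coincides with the paper's: the same decomposition $\torustwo=P\cup T$, the same gluing construction for non-emptiness, the same use of Lemma \ref{lem_inthyptorus} plus Proposition \ref{prop_additivity}, Theorem \ref{thm_goldman} and Theorem \ref{thm_pants2} for the bound on $n$, and the same reduction of connectedness to Lemma \ref{lem_sameEuler_torus} after arranging $\phi(d),\psi(d)$ hyperbolic. (Minor remark: for $s=(+1,0)$ you do not need Theorem \ref{thm_nonabel} at all, since $\phi(c_1)$ and $\phi(c_2)$ have different types, so both restrictions are automatically non-abelian.) However, the step you yourself call ``the heart of the matter'' --- the Euler-class transfer on $P$ when $n\in\{0,1\}$ --- is left unproved, and this is a genuine gap. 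Your plan is to realize the moving value of $\phi_t(c_1)\phi_t(c_2)$ via a new path-lifting statement for the lifted product $\ev$ with target ranging over $\Hyp_0\cup\Par_0\cup\Ell_1\cup\Par_1^-\cup\Hyp_1$. Nothing in the paper's toolkit supplies this: Theorem \ref{thm_PLP} only treats targets inside a single $\Hyp_n$, Proposition \ref{prop_evimage} gives surjectivity but no lifting, and a Lemma \ref{lem_plp}-style argument would require connectedness of the fibers of $\ev$ over \emph{elliptic} values for pairs in $\Parp\times\Hyp$, which is established nowhere (the elliptic-target fiber results of Section 7, Proposition \ref{prop_PLP_Ell}, concern $\Par\times\Ell$ and $\Par\times\Par$ only). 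So as written, the connectedness proof for $n\in\{0,1\}$ rests on an unestablished lemma whose proof would itself be a nontrivial piece of new work.

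The paper closes exactly this gap with tools you already have in hand, and it is worth seeing how. Instead of lifting through the product map, it deforms the pair $\big(\phi(c_1),\phi(c_2)\big)$ through the $\SL$-character map: the character is $(2,v,w)$ with $v>2$ and $|w|>2$, and one lifts the straight segment $\{(2,v,w_t)\}$ with $w_t$ running from $w$ to $-w$ using Lemma \ref{lem_plpchar}; since $v>2$, the segment never meets $[-2,2]^3$, so the lifting is available even while the product trace crosses $[-2,2]$, and Lemma \ref{lem_offdiag} plus non-commutativity keep $\phi_t(c_1)\in\Parp$ and $\phi_t(c_2)\in\Hyp$, so $\phi_t$ stays in $\HPns(\torustwo)$ and the restriction to $P$ ends with relative Euler class $1$ by Theorem \ref{thm_pants2}. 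The $T$-side is then not ``automatically compatible'': one must check that the prescribed path $z^n\big(\widetilde{\phi_t(c_1)}\widetilde{\phi_t(c_2)}\big)^{-1}$ stays in $\mathcal I\setminus\{\mathrm I\}$ (it avoids $\mathrm I$ because $\phi_t|_{\pi_1(P)}$ is non-abelian, and avoids $\Par^+_1\cup\Par^-_{-1}\cup\{z^{\pm1}\}$ by Lemma \ref{lem_evimage}), after which Proposition \ref{prop_plpliftcommu} lifts it to a path of pairs $\big(\phi_t(a),\phi_t(b)\big)$. If you replace your proposed new lemma by this character-map argument and carry out the $\mathcal I\setminus\{\mathrm I\}$ verification, your proof becomes complete and essentially coincides with the paper's.
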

    \begin{proof}
    As the case that $p_0(s) = 2$ is included in Theorem \ref{thm_goldman}, we only need to consider the case that $p_0(s) = 1$. We will prove the result for the case that $p_+(s)=1$ and $p_-(s)=0.$ Then the result for the remaining case that $p_-(s)=1$ and $p_+(s)=0$ follows from the previous case and Proposition \ref{prop_pglpsl}.
    \\
    

     For the case that $p_+(s)=1$ and $p_-(s)=0,$ the equality in the result becomes $-1\leqslant n\leqslant 2$; and up to a permutation of the peripheral elements we can assume that  $s=(+1,0)$. We first prove that, if $-1\leqslant n \leqslant 2$, then $\HPns(\Sigma_{1,2})$ is non-empty. To find a representation $\phi$ in $\HPns(\torustwo)$, we will define $\phi$ on $\pi_1(P)$ first, then extend it to $\pi_1(\torustwo)$. 
          If $n = -1$, then let $m = 0$; and if $0\leqslant n\leqslant 2$, then let $m = 1$. 
          As the fundamental group $\pi_1(P)$ has the preferred peripheral elements $c_1,c_2$ and $d$, by Theorem \ref{thm_pants2}, there exists a representation $\phi\in \HP_m^{(+1,0,0)}(P)$, sending $c_1$ into $\Par^+$ and sending $c_2, d$ into $\Hyp$. 
          Since $\phi(d)\in \Hyp$, there exists a unique lift $\widetilde{\phi(d)}$ in $\Hyp_{n-m}$.  As $n-m\in \{-1,0,1\}$, by Theorem \ref{thm_liftcommu}, $\Hyp_{n-m}$ is in the image of the lifted commutator map, hence there exists a $\psl$-pair $(\pm A, \pm B)$ whose lifted commutator equals $\widetilde{\phi(d)}$. We let $\big(\phi(a), \phi(b)\big) \doteq (\pm A, \pm B)$. As the commutator $[\phi(a), \phi(b)]$ equals the projection $\phi(d)$ of $\widetilde{\phi(d)}$, this extends the representation $\phi$ from $\pi_1(P)$ to $\pi_1(\torustwo)$. 
       Since the evaluation map $\ev: \mathcal{W}(T)\to \displaystyle\bigcup_{k = -1}^1\Hyp_k$
       sends $\phi|_{\pi_1(T)}$ into $\Hyp_{n-m}$, we have $e(\phi|_{\pi_1(T)}) = n-m$;
       and since $e(\phi|_{\pi_1(P)}) = m$, by Proposition \ref{prop_additivity}, $e(\phi) = m + (n-m) = n$.
       Moreover, since $\phi(c_1)\in \Par^+$ and $\phi(c_2)\in \Hyp$, we have $s(\phi) = (+1,0)$. As a consequence,  $\phi\in \HPns(\torustwo)$.
   \medskip

          
          Next we prove that, if $\HPns(\Sigma_{1,2})$ is non-empty, then $-1\leqslant n\leqslant 2$. For a representation $\phi\in \HPns(\Sigma_{1,2})$, we will  first construct a path $\path$ in $\HPns(\torustwo)$ connecting $\phi$ to a $\phi_1\in \HPns(\torustwo)$ with $\phi_1(d)$ hyperbolic. 
        Indeed, since $s = (+1,0)$, $\phi(c_1)$ and $\phi(c_2)$ are elements of $\psl$ of different types, hence $\phi|_{\pi_1(P)}$ is non-abelian; and since 
        $[\phi(a), \phi(b)] =\big(\phi(c_1)\phi(c_2)\big)^{-1} 
         \neq \pm\mathrm{I}$, $\phi|_{\pi_1(T)}$ is non-abelian. Then by Lemma \ref{lem_inthyptorus}, there exists a path $\path$ starting from $\phi_0 = \phi$ and with $\phi_1(d)$ hyperbolic. Moreover, for each $t\in [0,1]$ and each $i\in\{1,2\},$ $\phi_t(c_i)$ is conjugate to $\phi(c_i)$. Hence the signs $s(\phi_t) = s(\phi)$ and the relative Euler classes $e(\phi_t) = e(\phi)$, and the path $\path$ lies in $\HPns(\torustwo).$  Next, we will deduce from $e(\phi_1) = n$ that $-1\leqslant n\leqslant 2$.  Indeed, since $\phi_1(d)$ is hyperbolic, $\phi_1|_{\pi_1(P)}\in \HP(P)$ and $\phi_1|_{\pi_1(T)}\in \mathcal{W}(T)$; and since
        $\phi_1(c_1)\in \Par^+$ and $\phi_1(c_2), \phi_1(d)\in \Hyp$, we have $s(\phi_1|_{\pi_1(P)}) = (+1,0,0)$. Then by Theorem \ref{thm_pants2},  we have $0\leqslant e(\phi_1|_{\pi_1(P)})\leqslant 1$; and by Theorem \ref{thm_goldman}, we have $-1\leqslant e(\phi_1|_{\pi_1(T)}) \leqslant 1$. Finally, by  Proposition \ref{prop_additivity} that $n = e(\phi_1|_{\pi_1(P)})+e(\phi_1|_{\pi_1(T)})$, we have $-1\leqslant n\leqslant 2.$
    \medskip

    
         It remains to show that any non-empty $\HPns(\Sigma_{1,2})$ is connected. 
         Let $\phi,\psi \in \HPns(\Sigma_{1,2})$. 
         As shown in the previous paragraph, $\phi|_{\pi_1(P)}$, $\phi|_{\pi_1(T)}$, $\psi|_{\pi_1(P)}$ and $\psi|_{\pi_1(T)}$ are non-abelian.
         Hence, by Lemma \ref{lem_inthyptorus}, we can assume that $\phi(d)$ and $\psi(d)$ are hyperbolic, and $\phi|_{\pi_1(P)}, \psi|_{\pi_1(P)}\in \HP(P)$. 
        Under this assumption, for each $n$ satisfying $-1\leqslant n\leqslant 2$,
        we will find a path in $\HPns(\torustwo)$ connecting $\phi$ to $\psi$.
        \smallskip
        
        In the case that $n = -1$ or $n = 2$, we use  Lemma \ref{lem_sameEuler_torus}.
        If $n = -1$, then we claim that $e(\phi|_{\pi_1(P)}) = e(\psi|_{\pi_1(P)})= 0$. 
        Indeed, as $s(\phi|_{\pi_1(P)}) = (+1,0,0)$, Theorem \ref{thm_pants2} implies that either $e(\phi|_{\pi_1(P)}) = 0$ or $e(\phi|_{\pi_1(P)}) = 1$. If $e(\phi|_{\pi_1(P)}) = 1$, then by Proposition \ref{prop_additivity}, $e(\phi|_{\pi_1(T)})= -2$, contradicting Theorem \ref{thm_goldman}. 
        Therefore, we have $e(\phi|_{\pi_1(P)}) = 0$. The proof for $e(\psi|_{\pi_1(P)}) = 0$ follows verbatim. Then by Lemma \ref{lem_sameEuler_torus}, there exists a path in  $\HPns(\torustwo)$ connecting $\phi$ and $\psi$. If $n = 2$, similar to the case $n = -1$, we can show $e(\phi|_{\pi_1(P)}) = e(\psi|_{\pi_1(P)})= 1$ using Theorem \ref{thm_goldman}, Theorem \ref{thm_pants2}, and Proposition \ref{prop_additivity}. Then by Lemma \ref{lem_sameEuler_torus}, there is a path in $\HPns(\torustwo)$ connecting $\phi$ and $\psi$.
\smallskip

        In the case that  $n = 0$ or $n = 1$, we will find the path as follows. 
        We will first construct paths $\path$ and $\{\psi_t\}\interval$ in $\HPns(\torustwo)$, respectively starting from $\phi$ and $\psi$ and with $e(\phi_1|_{\pi_1(P)}) = e(\psi_1|_{\pi_1(P)}) = 1$. Then by Lemma \ref{lem_sameEuler_torus}, there is a path connecting $\phi_1$ and $\psi_1$; and the composition of these paths connects  $\phi$ to $\psi$. 
        
       Below we construct the path $\path$; and the construction of $\{\psi_t\}\interval$ follows verbatim. If $e(\phi|_{\pi_1(P)}) = 1$, then we let $\{\phi_t\}_{t\in [0,1]}$ be the constant path $\phi_t \equiv \phi$. 
        If $e(\phi|_{\pi_1(P)}) = 0$,
        then we will first construct a path $\{\phi_t: \pi_1(P)\to \psl\}_{t\in [0,1]}$ using Lemma \ref{lem_plpchar}, and then extend it to $\pi_1(\torustwo)$ using Proposition \ref{prop_plpliftcommu}. The path $\{\phi_t: \pi_1(P)\to \psl\}_{t\in [0,1]}$ is constructed as follows.
         Recall that the character of $\phi|_{\pi_1(P)}$ is defined by the triples
         $$\chi(\phi|_{\pi_1(P)}) = \Big(\mathrm{tr}\big(\widetilde{\phi(c_1)}\big), 
    \mathrm{tr}\big(\widetilde{\phi(c_2)}\big), 
    \mathrm{tr}\big(\widetilde{\phi(c_1)}\widetilde{\phi(c_2)}\big)
     \Big)$$ 
     in $\mathbb{R}^3$, where $\widetilde{\phi(c_1)}$ is the lift of $\phi(c_1)$ in $\Par^+_0$ and 
    $\widetilde{\phi(c_2)}$ is the lift of $\phi(c_2)$ in $\Hyp_0$. 
    Since $\phi(d) \in \Hyp$ and $e(\phi|_{\pi_1(P)}) = 0$, $\widetilde{\phi(c_1)}\widetilde{\phi(c_2)} = \widetilde{\phi(d)^{-1}}\in \Hyp_0$, and hence
    $\chi(\phi|_{\pi_1(P)}) = (2,v,w)\in \{2\}\times (2,\infty)^2$. 
    Define the straight line segment
    $\big\{(2,v,w_t)\big\}\interval$ in $\mathbb{R}^3$, where $w_t= (1-2t)w$ connecting $w$ to $-w$. 
    Since
    $v>2$, the path $\big\{(2,v,w_t)\big\}\interval$ lie in
    $\mathbb{R}^3
        \setminus \big([-2,2]^3\cap \kappa^{-1}([-2,2])\big)$, where $\kappa$ is as defined in Proposition \ref{prop_char}. 
    Let $A_{0}$ and $B_{0}$ respectively
    be the projections of $\widetilde{\phi(c_1)}$ and $\widetilde{\phi(c_2)}$ to $\SL$, noting that the image of $(A_{0}, B_{0})$ under the character map $\chi$ equals $\chi(\phi|_{\pi_1(P)}) = (2,v, w)$.
    Since $\phi(c_1)\in \Par^+$ and $\phi(c_2)\in \Hyp$, $A_{0}$ and $B_{0}$ do not commute. Hence, by Lemma \ref{lem_plpchar}, 
        there exists a path $\{(A_{t}, B_{t})\}\interval$ of non-commuting pairs of elements of $\SL$ starting from  $(A_{0}, B_{0})$, and having 
        $\chi(A_{t}, B_{t}) = (2, v,w_t)$ for all $t\in [0,1]$. 
        Let $\pm A_{t}$ and $\pm B_{t}$ respectively be the projections of 
        $A_{t}$ and $B_{t}$ to $\psl$, and define the representation $\phi_t:\pi_1(P)\to \psl$ by letting $\big(\phi_t(c_1), \phi_t(c_2)\big) \doteq (\pm A_{t}, \pm B_{t})$. This defines the path $\{\phi_t\}\interval$ of non-abelian representations starting from $\phi_0 = \phi|_{\pi_1(P)}$ that satisfies $\chi(\phi_t) = \chi(A_{t}, B_{t}) = (2,v,w_t)$ for all $t\in [0,1]$. In particular, $\chi(\phi_1) =  (2,v,-w)\in \{2\}\times (2,\infty)\times (-\infty, -2)$, hence $e(\phi_1)$ is odd. As a consequence, $e(\phi_1)$ is either $1$ or $-1$. We will show that $e(\phi_1) = 1$ as follows.
        Since $\tr A_t = 2$ for all $t\in [0,1]$, the path $\{\phi_t(c_1)\}\interval$ in $\psl$ is contained in $\Par\cup \{\pm \mathrm{I}\}$. It follows that this path never intersects $\pm\mathrm{I}$, as otherwise $A_{t} = \mathrm{I}$ for some $t\in [0,1]$ and would commute with $B_{t}$, contradicting the non-abelianness of $\phi_t$. Since $\phi_0(c_1) = \phi(c_1)$ lies in $\Par^+$ which is connected, we have
        $\phi_t(c_1)\in \Par^+$
        for all $t\in [0,1]$, and in particular, $\phi_1(c_1)\in \Par^+$.
        As $v>2$ and $-z<-2$, we have $\phi_1(c_2), \phi_1(d)\in \Hyp$, and hence the sign $s(\phi_1) = (+1,0,0)$. Consequently, Theorem \ref{thm_pants2} implies that $e(\phi_1) = 1$.
         
         The path $\{\phi_t: \pi_1(P)\to \psl\}\interval$ is extended  to  $\pi_1(\Sigma_{1,2})$ as follows. 
         Consider the path $\big\{z^n\big(\widetilde{\phi_t(c_1)}
         \widetilde{\phi_t(c_2)}\big)^{-1}\big\}\interval$ in $\univcover$, where $z$ is the generator of the center of $\univcover$. 
         We first claim that this path lies in the image of the lifted commutator map. Indeed, as $e(\phi_0) = 0$, we have $\widetilde{\phi_0(c_1)}
         \widetilde{\phi_0(c_2)}\in \Hyp_0$; and as its inverse lying in the same one-parameter subgroup of $\univcover$, $\big(\widetilde{\phi_0(c_1)}
         \widetilde{\phi_0(c_2)}\big)^{-1}\in \Hyp_0$. 
         Similarly, as $e(\phi_1) = 1$, we have $\widetilde{\phi_1(c_1)}
         \widetilde{\phi_1(c_2)}\in \Hyp_1 = z\Hyp_0$ and 
         $\big(\widetilde{\phi_1(c_1)}
         \widetilde{\phi_1(c_2)}\big)^
         {-1}\in z^{-1}\Hyp_0 =  \Hyp_{-1}$. 
         Therefore, 
        the path 
         $\big\{z^n\big(\widetilde{\phi_t(c_1)}
         \widetilde{\phi_t(c_2)}\big)^{-1}\big\}\interval$ connects 
         $z^n\big(\widetilde{\phi_0(c_1)}
         \widetilde{\phi_0(c_2)}\big)^
         {-1}\in \Hyp_n$ to $z^n\big(\widetilde{\phi_1(c_1)}
         \widetilde{\phi_1(c_2)}\big)^
         {-1} \in \Hyp_{n-1}$. In the case $n = 0$, the path $\big\{z^n\big(\widetilde{\phi_{t_1}(c_1)}
         \widetilde{\phi_{t_1}(c_2)}\big)^{-1}\big\}\interval = \big\{\big(\widetilde{\phi_{t_1}(c_1)}
         \widetilde{\phi_{t_1}(c_2)}\big)^{-1}\big\}\interval$ connects $\Hyp_0$ to $\Hyp_{-1}$.
         Since the path $\{w_t\}\interval$ of traces is a straight line segment connecting $w > 2$ to $-w < -2$, there exists a  unique $t_1\in (0,1)$ such that $w_{t_1} = -2$, and a unique $t_2\in (0,1)$ such that $w_{t_2} = 2$. Having the trace $-2$, $\big(\widetilde{\phi_{t_1}(c_1)}
         \widetilde{\phi_{t_1}(c_2)}\big)^{-1}$ is adjacent to $\Hyp_{-1}$ and $\Ell_{-1}$;
         and by Lemma \ref{lem_evimage}, $\big(\widetilde{\phi_{t_1}(c_1)}
         \widetilde{\phi_{t_1}(c_2)}\big)^{-1}\neq z^{-1}$. Therefore,  $\big(\widetilde{\phi_{t_1}(c_1)}
         \widetilde{\phi_{t_1}(c_2)}\big)^{-1}\in \Par^+_{-1}$. Similarly, $\big(\widetilde{\phi_{t_2}(c_1)}
         \widetilde{\phi_{t_2}(c_2)}\big)^{-1}$ is adjacent to $\Ell_{-1}$ and $\Hyp_0$;
         and as $\phi_{t_2}$ is non-abelian, 
         $\phi_{t_2}(c_1)\phi_{t_2}(c_2)\neq \pm \mathrm I$ and hence 
         $\big(\widetilde{\phi_{t_2}(c_1)}
         \widetilde{\phi_{t_2}(c_2)}\big)^{-1}\neq \mathrm I$. Therefore, $\big(\widetilde{\phi_{t_2}(c_1)}
         \widetilde{\phi_{t_2}(c_2)}\big)^{-1}\in \Par^-_0$. 
        As a consequence, 
         the path $\{\big(\widetilde{\phi_0(c_1)}
         \widetilde{\phi_0(c_2)}\big)^
         {-1}\}\interval$ lies in $\Hyp_{-1}\cup \Par^+_{-1}\cup \Ell_{-1}\cup \Par^-_0\cup \Hyp_0$. In the case $n = 1$, the path $\big\{z^n\big(\widetilde{\phi_{t_1}(c_1)}
         \widetilde{\phi_{t_1}(c_2)}\big)^{-1}\big\}\interval = \big\{z\big(\widetilde{\phi_{t_1}(c_1)}
         \widetilde{\phi_{t_1}(c_2)}\big)^{-1}\big\}\interval$ connects $\Hyp_1$ to $\Hyp_0$, having the trace $-w_t$ for each $t\in [0,1]$. Similar to the previous case that $n = 0$,
         $z\big(\widetilde{\phi_{t_1}(c_1)}
         \widetilde{\phi_{t_1}(c_2)}\big)^{-1}\in \Par^-_1$ as it is adjacent to $\Ell_1$ and $\Hyp_1$, and $z\big(\widetilde{\phi_{t_2}(c_1)}
         \widetilde{\phi_{t_2}(c_2)}\big)^{-1}\in \Par^+_0$ as it is adjacent to $\Hyp_0$ and $\Ell_1$, and 
         the path $\{z\big(\widetilde{\phi_0(c_1)}
         \widetilde{\phi_0(c_2)}\big)^
         {-1}\}\interval$ lies in $\Hyp_0\cup \Par^+_0\cup \Ell_1\cup \Par^-_1\cup \Hyp_1$. 
         Therefore, in both of these two cases of $n$, by Theorem \ref{thm_liftcommu}, the path 
          $\big\{z^n\big(\widetilde{\phi_t(c_1)}
         \widetilde{\phi_t(c_2)}\big)^{-1}\big\}\interval$ 
         lies in the image of the lifted commutator, proving the claim. Next, by Proposition \ref{prop_plpliftcommu}, there exists a path $\{(\pm A'_t, \pm B'_t)\}\interval$ of non-commuting pairs of elements of $\psl$ starting from  $(\pm A'_{0}, \pm B'_{0}) = (\phi(a), \phi(b))$, and having 
        the lifted commutator $[\widetilde{A'_t}, \widetilde{B'_t}] = z^n\big(\widetilde{\phi_t(c_1)}
         \widetilde{\phi_t(c_2)}\big)$ for all $t\in [0,1]$. 
         Finally, for each $t\in [0,1]$, we let $\big(\phi_t(a), \phi_t(b)\big)\doteq (\pm A'_t, \pm B'_t)$, 
         extending the path $\path$ from $\pi_1(P)$ to $\pi_1(\torustwo)$. For all $t\in [0,1]$, $\phi_t(c_1)\in \Par^+$; and as $\tr\widetilde{\phi_t(c_2)} = v>2$, $\phi_t(c_2)\in \Hyp$. As a consequence, the path $\path$ is contained in $\HPns(\torustwo)$.
        \end{proof}


    Next, we prove Theorem \ref{thm_main1} for $\Sigma = \Sigma_{1,2}$, which is stated as Theorem \ref{thm_torustype2} below. 
    \begin{theorem}\label{thm_torustype2}
        Let $n\in \mathbb{Z}$ and let $s\in\{\pm 1\}^2$.
        Then $\mathcal{R}^s_n(\torustwo)$ is non-empty if and only if 
        $$\chi(\Sigma_{1,2}) + p_+(s)\leqslant n \leqslant -\chi(\Sigma_{1,2}) - p_-(s).$$ 
        Moreover, each non-empty $\mathcal{R}^s_n(\torustwo)$ is connected.
    \end{theorem}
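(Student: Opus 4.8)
The plan is to prove Theorem \ref{thm_torustype2} by the same scheme used for the three-hole sphere and for $\HPns(\torustwo)$ in Theorem \ref{thm_torustype1}: reduce to the sub-pants $P\cong\Sigma_{0,3}$ and the one-hole torus $T\cong\Sigma_{1,1}$ glued along the separating curve $d=[a,b]=(c_1c_2)^{-1}$, using the additivity of the relative Euler class (Proposition \ref{prop_additivity}) together with Goldman's Theorem \ref{thm_goldman} on $\mathcal W(T)$ and our Theorem \ref{thm_pants2} and Theorem \ref{thm_pants} on $\HP^{s'}_{n'}(P)$ and $\mathcal R^{s'}_{n'}(P)$.

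First I would establish non-emptiness. Fix $s\in\{\pm1\}^2$; the inequality $\chi(\torustwo)+p_+(s)\le n\le-\chi(\torustwo)-p_-(s)$ reads $-2+p_+(s)\le n\le 2-p_-(s)$. By Proposition \ref{prop_pglpsl} it suffices to treat $s=(+1,+1)$ (then $-1\le n\le 2$), $s=(+1,-1)$ (then $0\le n\le 1$), the case $s=(-1,-1)$ following from $s=(+1,+1)$ by the $\pgl\setminus\psl$-conjugation. For a given admissible $n$, I would pick $m\in\{-1,0,1\}$ with $n-m\in\{-1,0,1\}$ and such that $\mathcal R^{(s_1,s_2,0)'}_m(P)\ne\emptyset$, where $(s_1,s_2,0)'$ means we assign $c_1\mapsto s_1$-parabolic, $c_2\mapsto s_2$-parabolic, and $d\mapsto$ hyperbolic; concretely, from Theorem \ref{thm_pants2}(1): if $s_1=s_2$ take $m=s_1$ (leaving $n-m\in\{-1,0,1\}$ for $n\in\{s_1-1,s_1,s_1+1\}$), and if $s_1=-s_2$ take $m=0$ (so $n-m=n\in\{-1,0,1\}$ for $n\in\{-1,0,1\}$, but actually $n\in\{0,1\}$ here). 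One checks these choices cover exactly the admissible range. Having $\phi|_{\pi_1(P)}\in\HP^{(s_1,s_2,0)}_m(P)$ with $\phi(d)$ hyperbolic, I lift $\phi(d)$ to the component $\Hyp_{n-m}\subset\univcover$; since $n-m\in\{-1,0,1\}$, Theorem \ref{thm_liftcommu} says this element is in the image of the lifted commutator, so there is a pair $(\phi(a),\phi(b))$ with $[\phi(a),\phi(b)]=\phi(d)$ and $e(\phi|_{\pi_1(T)})=n-m$. Additivity gives $e(\phi)=n$ and the signs are $s$, so $\mathcal R^s_n(\torustwo)\ne\emptyset$. Conversely, if $\mathcal R^s_n(\torustwo)\ne\emptyset$, then any $\phi$ there has $\phi|_{\pi_1(P)}$ non-abelian (the three boundary images of $P$ are not all parabolic of the same sign unless $n=0$; but even when $e(\phi|_{\pi_1(P)})$ could be $0$, I need the restrictions non-abelian), so I would invoke Lemma \ref{lem_inthyptorus} to deform $\phi$ within $\mathcal R^s_n(\torustwo)$ to $\phi_1$ with $\phi_1(d)$ hyperbolic; then $\phi_1|_{\pi_1(P)}\in\HP^{(s_1,s_2,0)}_{m}(P)$ forces $m\in\{\tfrac12(s_1+s_2)\}$ or $m=0$ as listed in Theorem \ref{thm_pants2}, Theorem \ref{thm_goldman} gives $n-m\in\{-1,0,1\}$, and the resulting bounds on $n=m+(n-m)$ are exactly the generalized Milnor--Wood inequality. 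The subtlety in this paragraph is handling the case $\phi|_{\pi_1(P)}$ abelian or $\phi|_{\pi_1(T)}$ abelian before Lemma \ref{lem_inthyptorus} applies: when $s\in\{\pm1\}^2$ with $s_1\ne s_2$ the restriction to $P$ is automatically non-abelian, and when $s_1=s_2$ one can still arrange non-abelianness by a preliminary deformation as in the proof of Proposition \ref{prop_NAtorus} (using that the commutator $[\phi(a),\phi(b)]=(\phi(c_1)\phi(c_2))^{-1}$ is parabolic hence $\ne\pm\mathrm I$, so $\phi|_{\pi_1(T)}$ is non-abelian, and if $\phi|_{\pi_1(P)}$ were abelian I would bump $\phi(a)$ off the common one-parameter subgroup exactly as in Lemma \ref{lem_nonabeltorus}).

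Next I would prove connectedness of each non-empty $\mathcal R^s_n(\torustwo)$, following verbatim the structure of the proof of Theorem \ref{thm_torustype1}. By the $\pgl$-symmetry reduce to $s=(+1,+1)$ and $s=(+1,-1)$. Given $\phi,\psi\in\mathcal R^s_n(\torustwo)$, first deform both so that their restrictions to $P$ and $T$ are non-abelian (preliminary step above) and then, by Lemma \ref{lem_inthyptorus}, so that $\phi(d),\psi(d)\in\Hyp$ and $\phi|_{\pi_1(P)},\psi|_{\pi_1(P)}\in\HP(P)$. I would then prove a torus-analogue of Lemma \ref{lem_sameEuler_torus} tailored to $\mathcal R^s_n$: if $\phi(d),\psi(d)\in\Hyp$ and $e(\phi|_{\pi_1(P)})=e(\psi|_{\pi_1(P)})$, then $\phi$ and $\psi$ are connected in $\mathcal R^s_n(\torustwo)$ — this follows from Proposition \ref{prop_sameEuler} with $\Sigma'=P$, using Theorem \ref{thm_pants} for connectedness of $\mathcal R^{s'}_{n'}(P)$ and Theorem \ref{thm_liftcommu}/Proposition \ref{prop_plpliftcommu} for the path-lifting on the torus side. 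It then remains to show that $e(\phi|_{\pi_1(P)})$ can be normalized to a single value depending only on $(n,s)$. When $(n,s)$ is extremal (e.g.\ $n=-1$ with $s=(+1,+1)$, forced $e(\phi|_{\pi_1(P)})=0$ by additivity and Theorem \ref{thm_goldman}; or $n=2$ with $s=(+1,+1)$, forced $e(\phi|_{\pi_1(P)})=1$) this is immediate from additivity. In the intermediate cases ($n=0,1$ for $s=(+1,+1)$, and both endpoints for $s=(+1,-1)$, where $e(\phi|_{\pi_1(P)})$ a priori has two possible values for $s=(+1,+1)$, or is forced for $s=(+1,-1)$), I would run the character-variety deformation on $\pi_1(P)$ exactly as in the proof of Theorem \ref{thm_torustype1}: use Lemma \ref{lem_plpchar} to slide the third trace $\mathrm{tr}(\widetilde{\phi(c_1)}\widetilde{\phi(c_2)})$ from $w>2$ to $-w<-2$ keeping $\phi(c_1),\phi(c_2)$ parabolic, push $e(\phi|_{\pi_1(P)})$ from $0$ to the desired odd value, identify which component of $\univcover$ the inverse product traverses via Lemma \ref{lem_offdiag} and Lemma \ref{lem_evimage}, and then extend over $T$ by Proposition \ref{prop_plpliftcommu} since the relevant path in $\univcover$ lies in the image $\mathcal I$ of the lifted commutator. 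I expect the main obstacle to be this last bookkeeping step: verifying, for $s=(+1,-1)$ and the two relevant values $n=0,1$, that both endpoints of the deformation land in $\mathcal R^{s}_n$ (the product $\phi_t(c_1)\phi_t(c_2)$ will itself pass through parabolic elements, and one must confirm those lie in the correct $\Par^\pm_k$ and never equal $z^{\pm1}$, so the sign $s$ and the Euler class are preserved along the whole path), and checking that for $s=(+1,+1)$ the value $e(\phi|_{\pi_1(P)})\in\{0,1\}$ is indeed freely movable to $1$ without leaving $\mathcal R^s_n(\torustwo)$ — i.e.\ that no component of $\mathcal R^s_n(\torustwo)$ is ``stuck'' at $e(\phi|_{\pi_1(P)})=0$. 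I would resolve this by the same trace-path argument, taking care that when $n=1$ and we want $e(\phi|_{\pi_1(P)})=1$ we keep $\widetilde{\phi(c_1)}\widetilde{\phi(c_2)}$ in $\Hyp_1$ and its inverse in $\Hyp_{-1}$, and noting the companion lifted-commutator path lies in $\mathrm{Hyp}_0\cup\Par^+_0\cup\Ell_1\cup\Par^-_1\cup\mathrm{Hyp}_1\subset\mathcal I$, which is exactly the situation already handled in the $n=1$ sub-case of Theorem \ref{thm_torustype1}.
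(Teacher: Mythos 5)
Your overall strategy is the same as the paper's: decompose $\torustwo = P\cup T$ along $d$, use Proposition \ref{prop_additivity} together with Theorem \ref{thm_pants2} and Theorem \ref{thm_goldman} for the two bounds, deform via Theorem \ref{thm_nonabel} and Lemma \ref{lem_inthyptorus} so that $d$ maps to a hyperbolic element, and conclude connectedness through Lemma \ref{lem_sameEuler_torus} (i.e.\ Proposition \ref{prop_sameEuler} with $\Sigma'=P$). However, there are two concrete errors. First, your stated ranges are wrong: for $s=(+1,+1)$ the inequality reads $0\leqslant n\leqslant 2$, not $-1\leqslant n\leqslant 2$, and for $s=(+1,-1)$ it reads $-1\leqslant n\leqslant 1$, not $0\leqslant n\leqslant 1$. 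Since the theorem is an equivalence, this is not cosmetic: as written you would assert non-emptiness of $\mathcal{R}^{(+1,+1)}_{-1}(\torustwo)$, which is empty, and you would never establish non-emptiness of $\mathcal{R}^{(+1,-1)}_{-1}(\torustwo)$, which is non-empty (your own construction with $m=0$ and $n-m=-1$ produces it, but your parenthetical ``actually $n\in\{0,1\}$'' discards it). The uniform statement, as in the paper, is that for every $s\in\{\pm1\}^2$ the inequality is equivalent to $\tfrac12(s_1+s_2)-1\leqslant n\leqslant\tfrac12(s_1+s_2)+1$, and the construction always takes $m=\tfrac12(s_1+s_2)$ on $P$ and $n-m\in\{-1,0,1\}$ on $T$.

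The second error is in your connectedness plan. You claim that $e(\phi|_{\pi_1(P)})$ ``a priori has two possible values for $s=(+1,+1)$'' and propose the trace-sliding normalization from Theorem \ref{thm_torustype1}; this conflates parts (1) and (2) of Theorem \ref{thm_pants2}. Part (2) (Euler class $0$ or $s_1$) applies only when one boundary sign is $0$, which is exactly the setting of Theorem \ref{thm_torustype1}. Here both $c_1,c_2$ are parabolic, so once $\phi(d)\in\Hyp$ the restriction $\phi|_{\pi_1(P)}$ has sign $(s_1,s_2,0)$ with $s_1,s_2\in\{\pm1\}$, and part (1) forces $e(\phi|_{\pi_1(P)})=\tfrac12(s_1+s_2)$ uniquely; in particular $e(\phi|_{\pi_1(P)})=0$ is impossible for $s=(+1,+1)$, and the same forced value holds for $\psi$. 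So the proof ends right after Theorem \ref{thm_nonabel} and Lemma \ref{lem_inthyptorus}: Lemma \ref{lem_sameEuler_torus} applies directly and no character-variety deformation is needed — indeed your proposed deformation would start from a configuration (pants sign $(+1,+1,0)$ with Euler class $0$) that does not exist, and your ``extremal case'' $n=-1$, $s=(+1,+1)$ concerns an empty space. A smaller slip in the same spirit: $s_1\neq s_2$ does not make $\phi|_{\pi_1(P)}$ automatically non-abelian, since parabolics of opposite signs can commute (e.g.\ $\pm\parpp$ and $\pm\begin{bmatrix}1&-2\\0&1\end{bmatrix}$ share a fixed point); the correct move, as in the paper, is simply to invoke Theorem \ref{thm_nonabel} (via Proposition \ref{prop_NAtorus}, available since $g\geqslant1$) for all $s\in\{\pm1\}^2$ before applying Lemma \ref{lem_inthyptorus}.
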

    \begin{proof}
        We first observe that, in all the four possible  cases $(+1, +1),$ $ (+1, -1),$ $ (-1,+1)$ and $(-1,-1)$ of $s$, the inequality in the result is equivalent to 
         $\frac{1}{2}(s_1+s_2) - 1 \leqslant n \leqslant \frac{1}{2}(s_1+s_2) + 1$.
         \medskip

          We begin by proving that, if $\frac{1}{2}(s_1+s_2) -1 \leqslant n \leqslant \frac{1}{2}(s_1+s_2) + 1$, then $\mathcal{R}^s_n(\torustwo)$ is non-empty. To find a representation $\phi$ in $\mathcal{R}^s_n(\torustwo)$, we will define $\phi$ on  $\pi_1(P)$ first, then extend it to $\pi_1(\torustwo)$. Let $m = \frac{1}{2}(s_1+s_2)$.
          As the fundamental group $\pi_1(P)$ has the preferred peripheral elements $c_1,c_2$ and $d$, Theorem \ref{thm_pants2} implies that there exists a representation $\phi\in \HP_m^{(s_1,s_2,0)}(P)$, having $\phi(c_1)\in\Par^{sgn(s_1)}$, $\phi(c_2)\in\Par^{sgn(s_2)}$, and $\phi(d)\in\Hyp$.
          Let $\widetilde{\phi(d)}$ be the lift of $\phi(d)$ in $\Hyp_{n-m}$. As $n-m\in \{-1,0,1\}$, by Theorem \ref{thm_liftcommu}, $\Hyp_{n-m}$ is in the image of the lifted commutator map, hence there exists a $\psl$-pair $(\pm A, \pm B)$ whose lifted commutator equals $\widetilde{\phi(d)}$. We let $\big(\phi(a), \phi(b)\big) \doteq (\pm A, \pm B)$.
       As the commutator $[\phi(a), \phi(b)]$ equals the projection $\phi(d)$ of $\widetilde{\phi(d)}$, this extends the representation $\phi$ from $\pi_1(P)$ to $\pi_1(\torustwo)$. 
       Since the evaluation map $\ev: \mathcal{W}(T)\to \displaystyle\bigcup_{k = -1}^1\Hyp_k$
       sends $\phi|_{\pi_1(T)}$ into $\Hyp_{n-m}$, we have $e(\phi|_{\pi_1(T)}) = n-m$;
       and since $e(\phi|_{\pi_1(P)}) = m$, by Proposition \ref{prop_additivity}, $e(\phi)= n$.
       Moreover, as $\phi(c_1)\in \Par^{sgn(s_1)}$ and $\phi(c_2)\in \Par^{sgn(s_2)}$, we have $s(\phi) = (s_1,s_2)$. As a consequence, $\phi\in \mathcal{R}^s_n(\torustwo)$.
          \medskip
        

        We now prove that, if $\mathcal{R}^s_n(\torustwo)$ is non-empty, then $\frac{1}{2}(s_1+s_2) -1 \leqslant n \leqslant \frac{1}{2}(s_1+s_2) + 1$. For a representation $\phi\in \mathcal{R}^s_n(\torustwo)$, we first construct a path $\path$ in $\mathcal{R}^s_n(\torustwo)$ connecting $\phi$ to a $\phi_1\in \mathcal{R}^s_n(\torustwo)$ with $\phi_1(d)$ hyperbolic. 
        Indeed, by Theorem \ref{thm_nonabel}, we can assume that $\phi|_{\pi_1(P)}$ and $\phi|_{\pi_1(T)}$ are non-abelian. Then by Lemma \ref{lem_inthyptorus}, there exists a path $\path$ starting from $\phi_0 = \phi$ with $\phi_1(d)$ hyperbolic. Moreover, for each $t\in [0,1]$ and each $i\in\{1,2\},$ $\phi_t(c_i)$ is conjugate to $\phi(c_i)$. Therefore, the signs $s(\phi_t) = s(\phi)$ and the relative Euler classes $e(\phi_t) = e(\phi)$, and the path $\path$ lies in $\mathcal{R}^s_n(\torustwo).$  Next, we deduce from $e(\phi_1) = n$ that $\frac{1}{2}(s_1+s_2) -1 \leqslant n \leqslant \frac{1}{2}(s_1+s_2) + 1$. Indeed, 
        since $\phi_1(d)$ is hyperbolic, we have $\phi_1|_{\pi_1(P)}\in \HP(P)$ and $\phi_1|_{\pi_1(T)}\in \mathcal{W}(T)$; and since $\phi_1(c_1)\in\Par^{sgn(s_1)}$ and $\phi_1(c_2)\in\Par^{sgn(s_2)}$, we have $s(\phi_1|_{\pi_1(P)}) = (s_1,s_2,0)$.
    Then by Theorem \ref{thm_pants2}, we have $e(\phi_1|_{\pi_1(P)}) = \frac{1}{2}(s_1+s_2)$; and by Theorem \ref{thm_goldman}, we have $-1\leqslant e(\phi_1|_{\pi_1(T)}) \leqslant 1$. Finally, by Proposition \ref{prop_additivity} that  $n = e(\phi_1|_{\pi_1(P)})+e(\phi_1|_{\pi_1(T)})$, we have $\frac{1}{2}(s_1+s_2) - 1 \leqslant n\leqslant \frac{1}{2}(s_1+s_2) + 1$. 
        \medskip
        
         It remains to show that any non-empty $\HPns(\Sigma_{1,2})$ is connected. 
         Let $\phi,\psi \in \HPns(\Sigma_{1,2})$. 
         By Theorem \ref{thm_nonabel}, we can assume that $\phi|_{\pi_1(P)},$ $\phi|_{\pi_1(T)},$ $\psi|_{\pi_1(P)}$ and $\psi|_{\pi_1(T)}$ are non-abelian; and by Lemma \ref{lem_inthyptorus}, we can assume that $\phi(d)$ and $\psi(d)$ are hyperbolic, and hence $\phi|_{\pi_1(P)}, \psi|_{\pi_1(P)}\in \HP(P)$. 
        Then by Theorem \ref{thm_pants2}, we have
         $e(\phi|_{\pi_1(P)}) = e(\psi|_{\pi_1(P)}) = \frac{1}{2}(s_1+s_2)$, and by Lemma \ref{lem_sameEuler_torus}, there exists a path in $\HPns(\Sigma_{1,2})$ connecting $\phi$ and $\psi$.
    \end{proof}

\subsection{Four-hole sphere}\label{subsection_pantstwo}

        The fundamental group of the four-hole sphere $\Sigma_{0,4}$ has the following presentation
     $$\pi_1(\Sigma_{0,4}) = \langle 
     c_1, c_2, c_3, c_4\ |\ c_1c_2c_3c_4 \rangle,$$
     where $c_1, c_2, c_3$, and $c_4$ are the preferred peripheral elements.
     Let $P_1\cup P_2$ be a pants decomposition of $\Sigma_{0,4}$, where $P_1 \cong\Sigma_{0,3}$ with $\pi_1(P_1) = \langle c_1, c_2 \rangle$ and $P_2 \cong \Sigma_{0,3}$ with $\pi_1(P_2) = \langle c_3, c_4 \rangle$, separated by a decomposition curve $d = c_3c_4 = (c_1c_2)^{-1}$.
     \\

   To prove the results in this subsection, we need the following lemma.
      \begin{lemma}\label{lem_sameEuler_pants}
           
            For $n\in\mathbb{Z}$ and $s \in \{-1,0,+1\}^4$, let $\phi,\psi\in \HPns(\Sigma_{0,4})$.
            If $\phi(d), \psi(d)\in \Hyp$ and  $e\big(\phi|_{\pi_1(P_1)}\big) = e\big(\psi|_{\pi_1(P_1)}\big)$, then there exists a path in $\HPns(\Sigma_{0,4})$ connecting $\phi$ to $\psi$. 
        \end{lemma}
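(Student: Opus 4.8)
The plan is to mimic the structure of Lemma \ref{lem_sameEuler_torus} from the two-hole torus case, now using the subsurface $P_1\cong\Sigma_{0,3}$ in place of the pants $P$ there. First I would observe that since $\phi,\psi\in\HPns(\Sigma_{0,4})$ send the peripheral elements $c_1,c_2$ into $\overline{\Hyp}$, and by hypothesis send the decomposition curve $d$ into $\Hyp$, the restrictions $\phi|_{\pi_1(P_1)}$ and $\psi|_{\pi_1(P_1)}$ both lie in $\mathrm{HP}(P_1)$, with common sign $s'=(s_1,s_2,0)$, where $s=(s_1,s_2,s_3,s_4)$. Setting $n'=e\big(\phi|_{\pi_1(P_1)}\big)=e\big(\psi|_{\pi_1(P_1)}\big)$, both restrictions lie in $\HP^{s'}_{n'}(P_1)$, which is connected by Theorem \ref{thm_goldman} together with Theorem \ref{thm_pants2} (using that $p_0(s')\geqslant 1$). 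Hence there is a path in $\HP^{s'}_{n'}(P_1)$ joining $\phi|_{\pi_1(P_1)}$ to $\psi|_{\pi_1(P_1)}$.

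Next I would verify that the relative Euler classes on the complementary piece agree: by the additivity Proposition \ref{prop_additivity}, $e\big(\phi|_{\pi_1(P_2)}\big)=e(\phi)-n'=n-n'$ and likewise $e\big(\psi|_{\pi_1(P_2)}\big)=n-n'$, where $P_2=\Sigma_{0,4}\setminus P_1\cong\Sigma_{0,3}$ shares exactly the boundary curve $d$ with $P_1$. Now all the hypotheses of Proposition \ref{prop_sameEuler} are satisfied, with $\Sigma=\Sigma_{0,4}$, $\Sigma'=P_1$, $\Sigma\setminus\Sigma'=P_2\cong\Sigma_{0,3}$, and the common boundary element $d$ being sent to hyperbolic elements by both $\phi$ and $\psi$. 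Applying Proposition \ref{prop_sameEuler} then yields the desired path in $\HPns(\Sigma_{0,4})$ connecting $\phi$ to $\psi$, completing the proof.

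I do not anticipate a genuine obstacle here — the proof is essentially a one-paragraph invocation of the two key ingredients already established, exactly parallel to Lemma \ref{lem_sameEuler_torus}. The only point requiring minor care is making sure the hypothesis ``$\Sigma\setminus\Sigma'$ is homeomorphic to $\Sigma_{0,3}$'' in Proposition \ref{prop_sameEuler} is correctly matched: here $P_2$ is a three-holed sphere and $d$ is one of its boundary components, so the ``$\Sigma\setminus\Sigma'\cong\Sigma_{0,3}$'' branch of that proposition applies verbatim, using the evaluation map $\ev\colon\mathrm{HP}_{n-n'}^{(s_3,s_4,0)}(P_2)\to\Hyp_{n-n'}$ and its path-lifting property from Theorem \ref{thm_PLP}. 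One should also note $n-n'\in\{-1,0,1\}$, which follows because $\phi|_{\pi_1(P_2)}\in\mathrm{HP}(P_2)$ sends $d$ to a hyperbolic element, so $\Hyp_{n-n'}$ lies in the image of the lifted product map by Proposition \ref{prop_evimage}; this is exactly the situation handled inside Proposition \ref{prop_sameEuler}.
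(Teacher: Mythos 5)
Your proposal is correct and follows essentially the same route as the paper: restrict to $P_1$, use Theorem \ref{thm_goldman} and Theorem \ref{thm_pants2} to connect the restrictions within $\HP^{s'}_{n'}(P_1)$, match the Euler classes on $P_2$ via Proposition \ref{prop_additivity}, and conclude with Proposition \ref{prop_sameEuler} applied with $\Sigma'=P_1$. The extra remarks about the evaluation map and $n-n'\in\{-1,0,1\}$ are indeed already handled inside Proposition \ref{prop_sameEuler}, so nothing further is needed.
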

    \begin{proof}
       The fundamental group $\pi_1(P_1)$ has the preferred peripheral elements $c_1,c_2$ and $d$. As representations in $\HPns(\pantstwo)$, $\phi$ and $\psi$ send $c_1$ and $c_2$ into $\overline{\Hyp}$; and by the assumption, $\phi$ and $\psi$ send $d$ into $\Hyp$. 
       Hence, both $\phi|_{\pi_1(P_1)}$ and $\psi|_{\pi_1(P_1)}$ are in $\mathrm{HP}(P_1)$ with sign $s'= (s_1,s_2, 0)\in \{-1,0,+1\}^3$. Let $n' = e\big(\phi|_{\pi_1(P_1)}\big) = e\big(\psi|_{\pi_1(P_1)}\big)$. Then both 
       $\phi|_{\pi_1(P_1)}$ and $\psi|_{\pi_1(P_1)}$ are in $\HP^{s'}_{n'}(P_1)$. By Theorem \ref{thm_goldman} and Theorem \ref{thm_pants2}, $\HP^{s'}_{n'}(P_1)$ is connected, hence there exists a path $\{\phi_t: \pi_1(P_1)\to \psl\}\interval$ in $\HP^{s'}_{n'}(P_1)$ connecting $\phi_0= \phi|_{\pi_1(P_1)}$ to $\phi_1= \psi|_{\pi_1(P_1)}$. Moreover, 
       by Proposition \ref{prop_additivity}, $e\big(\phi|_{\pi_1(P_2)}\big) = e\big(\psi|_{\pi_1(P_2)}\big) = n - n'$. Therefore, Proposition \ref{prop_sameEuler} completes the proof by letting $\Sigma' = P_1$.
    \end{proof}
    
    For $\Sigma = \Sigma_{0,4}$, Theorem \ref{thm_main4} considers the signs $s\in \{-1,0,+1\}^4$ with $p_0(s)\in \{1,2,3,4\}$; and the case that $p_0(s) = 4$ is proved in Theorem \ref{thm_goldman}.
    The following Theorem \ref{thm_pantstype1} and  Theorem \ref{thm_pantstype2}
    prove Theorem \ref{thm_main4} for $\Sigma=\Sigma_{0,4}$ respectively for the cases that  $p_0(s) = 3$ and $p_0(s) \in \{1,2\}$.

     \begin{theorem}\label{thm_pantstype1}
        Let $n\in \mathbb{Z}$, and let $s\in\{-1,0,+1\}^4$ with $p_0(s) \geqslant 3$.
        Then $\HPns(\pantstwo)$ is non-empty if and only if 
        $$\chi(\Sigma_{0,4}) + p_+(s)\leqslant n \leqslant -\chi(\Sigma_{0,4}) - p_-(s).$$ 
        Moreover, each non-empty $\HPns(\pantstwo)$ is connected.

    \end{theorem}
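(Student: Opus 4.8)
The plan is to follow closely the proof of Theorem~\ref{thm_torustype1}, using the pants decomposition $\Sigma_{0,4}=P_1\cup P_2$ with $\pi_1(P_1)=\langle c_1,c_2\rangle$, $\pi_1(P_2)=\langle c_3,c_4\rangle$ and decomposition curve $d=c_3c_4=(c_1c_2)^{-1}$. Since $p_0(s)\geqslant 3$, the case $p_0(s)=4$ is contained in Theorem~\ref{thm_goldman}, so we may assume $p_0(s)=3$; and by Proposition~\ref{prop_pglpsl} (which negates both $n$ and $s$ under a $\pgl\setminus\psl$-conjugation) it suffices to treat $p_+(s)=1$, $p_-(s)=0$. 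After a permutation of the peripheral elements we take $s=(+1,0,0,0)$, so the parabolic puncture lies in $P_1$ and the asserted inequality reads $-1\leqslant n\leqslant 2$.

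For non-emptiness, given $-1\leqslant n\leqslant 2$ we first build $\phi$ on $\pi_1(P_1)$ and then extend it over $\pi_1(P_2)$. Choosing $m=0$ if $n\in\{-1,0\}$ and $m=1$ if $n\in\{1,2\}$, Theorem~\ref{thm_pants2} produces a representation in $\HP^{(+1,0,0)}_m(P_1)$ with $\phi(c_1)\in\Par^+$ and $\phi(c_2),\phi(d)\in\Hyp$; since $n-m\in\{-1,0,1\}$, Proposition~\ref{prop_evimage}(3) lets us realise the lift of $\phi(d)$ in $\Hyp_{n-m}$ as $\widetilde{\phi(c_3)}\widetilde{\phi(c_4)}$ for hyperbolic $\phi(c_3),\phi(c_4)$, extending $\phi$ over $\pi_1(P_2)$ with $e(\phi|_{\pi_1(P_2)})=n-m$; additivity (Proposition~\ref{prop_additivity}) then gives $e(\phi)=n$ and $s(\phi)=s$. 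For necessity, starting from any $\phi\in\HPns(\pantstwo)$ we apply Theorem~\ref{thm_nonabel} (its hypothesis (3) holds because $p_0(s)\geqslant 1$) to deform $\phi$ inside $\HPns(\pantstwo)$ into $\mathrm{NA}^s_n(\pantstwo)$, and then Lemma~\ref{lem_inthyppants} to deform it further so that $\phi(d)$ becomes hyperbolic while each $\phi(c_i)$ stays in its conjugacy class; then $\phi|_{\pi_1(P_1)}\in\HP^{(+1,0,0)}(P_1)$ and $\phi|_{\pi_1(P_2)}\in\mathcal W(P_2)$, so Theorem~\ref{thm_pants2} gives $e(\phi|_{\pi_1(P_1)})\in\{0,1\}$, Theorem~\ref{thm_goldman} gives $e(\phi|_{\pi_1(P_2)})\in\{-1,0,1\}$, and additivity forces $-1\leqslant n\leqslant 2$.

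For connectedness, let $\phi,\psi\in\HPns(\pantstwo)$. Using Theorem~\ref{thm_nonabel} and Lemma~\ref{lem_inthyppants} as above, we may assume $\phi(d),\psi(d)\in\Hyp$ with $\phi|_{\pi_1(P_1)},\psi|_{\pi_1(P_1)}\in\HP(P_1)$. When $n=-1$ (resp.\ $n=2$), Theorem~\ref{thm_pants2}, Theorem~\ref{thm_goldman} and additivity force $e(\phi|_{\pi_1(P_1)})=e(\psi|_{\pi_1(P_1)})=0$ (resp.\ $=1$), and Lemma~\ref{lem_sameEuler_pants} then joins $\phi$ to $\psi$ inside $\HPns(\pantstwo)$. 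When $n\in\{0,1\}$ the Euler class $e(\cdot|_{\pi_1(P_1)})\in\{0,1\}$ is not determined, and here, exactly as in the $n\in\{0,1\}$ case of Theorem~\ref{thm_torustype1}, we first deform each of $\phi,\psi$ inside $\HPns(\pantstwo)$ so that $e(\phi|_{\pi_1(P_1)})=e(\psi|_{\pi_1(P_1)})=1$: we use the path-lifting property of the character map (Lemma~\ref{lem_plpchar}) to drag $\tr\!\big(\widetilde{\phi(c_1)}\widetilde{\phi(c_2)}\big)$ from a value in $(2,\infty)$ across $-2$ into $(-\infty,-2)$ while holding $\tr\widetilde{\phi(c_1)}=2$ and $\tr\widetilde{\phi(c_2)}$ in $(2,\infty)$, the path never meeting $\pm\mathrm I$ by non-commutativity, so that $\phi(c_1)$ stays in $\Par^+$ and $\phi(c_2),\phi(d)$ stay hyperbolic, and Theorem~\ref{thm_pants2} identifies the new Euler class as $1$; we then extend this path over $\pi_1(P_2)$ via the path-lifting property of the evaluation map $\ev\colon\mathcal W(P_2)\to\bigcup_{k}\Hyp_k$ (Theorem~\ref{thm_PLP} with sign $(0,0,0)$), which is legitimate since the relevant component indices stay in $\{-1,0,1\}$ by Proposition~\ref{prop_evimage}(3). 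After this reduction Lemma~\ref{lem_sameEuler_pants} again joins $\phi$ to $\psi$, completing the proof.

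I expect the $n\in\{0,1\}$ deformation of $e(\cdot|_{\pi_1(P_1)})$ to be the main obstacle, since it is the only place needing the delicate tracking of signs and of the lifts of the boundary curve through the non-hyperbolic locus; everything else is a direct transcription of the already-proven two-hole-torus argument with the torus piece $T$ replaced by the pair of pants $P_2$.
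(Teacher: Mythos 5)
Your reduction to $s=(+1,0,0,0)$, the non-emptiness construction, the necessity argument via Theorem \ref{thm_nonabel}, Lemma \ref{lem_inthyppants} and additivity, and the connectedness cases $n=-1$ and $n=2$ all coincide with the paper's proof. The problem is the step you yourself flagged as the main obstacle, the $n\in\{0,1\}$ deformation, and there the proposal has a genuine gap: after you drag $\mathrm{tr}\big(\widetilde{\phi_t(c_1)}\widetilde{\phi_t(c_2)}\big)$ from a value in $(2,\infty)$ down past $-2$, the decomposition curve $d$ is \emph{elliptic} for an intermediate interval of $t$ (its trace passes through $(-2,2)$). You then propose to extend the deformation over $\pi_1(P_2)$ "via the path-lifting property of $\ev\colon\mathcal W(P_2)\to\bigcup_k\Hyp_k$ (Theorem \ref{thm_PLP})", but Theorem \ref{thm_PLP} (and Proposition \ref{prop_evimage}) only concerns the evaluation map with target $\Hyp_n$; no path-lifting or fiber-connectivity statement is available in the paper for the lifted product of two hyperbolic elements over elliptic values of the boundary. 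This is exactly the point where the four-holed sphere differs from the two-holed torus: in Theorem \ref{thm_torustype1} the second piece is $T\cong\Sigma_{1,1}$, and the lifted commutator has image $\mathcal I\supset\Ell_{\pm1}\cup\Par_0\cup\Par^+_{-1}\cup\Par^-_1$ together with the path-lifting property of Proposition \ref{prop_plpliftcommu}, so the boundary path may legitimately pass through elliptic and parabolic elements. For $P_2\cong\Sigma_{0,3}$ with hyperbolic boundary there is no such tool, so the transcription "replace $T$ by $P_2$" breaks down precisely at this step.

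The paper's proof of this case takes a different route which you would need to adopt (or reprove an elliptic path-lifting statement, which is not in the paper). It deforms the two pairs of pants \emph{independently}: using Lemma \ref{lem_plpchar} it lifts the straight-line character paths $(2,v_1,(-1)^nw_t)$ on $P_1$ and $(u,v_2,w_t)$ on $P_2$ simultaneously, with the same third coordinate $w_t$ running from $w<-2$ to $-w>2$, so that at every time $t$ the two restrictions give the curve $d$ the same trace. It then shows, by locating the lifts $z^n\big(\ev(\phi_{1,t})\big)^{-1}$ and $\ev(\phi_{2,t})$ in $\univcover$ and checking, at the two parabolic times $t_1,t_2$, that both lifts lie in the same component $\Par^{\mathrm{sgn}(-m)}_m$ resp.\ $\Par^{\mathrm{sgn}(m)}_0$ (ruling out central elements since the restrictions are non-abelian, via Lemma \ref{lem_evimage}), that $\phi_{1,t}(d)$ and $\phi_{2,t}(d)$ are conjugate in $\psl$ for \emph{all} $t$, including the elliptic and parabolic times. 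Lemma \ref{lem_conjugacypath} then produces a conjugating path $\{g_t\}$, and the two families are glued as $\phi_t|_{\pi_1(P_1)}=\phi_{1,t}$, $\phi_t|_{\pi_1(P_2)}=g_t\phi_{2,t}g_t^{-1}$ (preceded by a short conjugation path to arrange $g$ at the starting time). Also note the paper normalizes $e(\cdot|_{\pi_1(P_2)})=0$ rather than $e(\cdot|_{\pi_1(P_1)})=1$; that choice is immaterial, but the gluing-by-conjugation mechanism, with the sign bookkeeping at the parabolic crossings, is the essential content your proposal is missing.
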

    \begin{proof}
        As the case that $p_0(s) = 4$ is included in Theorem \ref{thm_goldman}, we only need to consider the case that $p_0(s) = 3$.
        We will prove the result for the case that $p_+(s)=1$ and $p_-(s)=0.$ Then the result for the remaining case that $p_-(s)=1$ and $p_+(s)=0$ follows from the previous case and Proposition \ref{prop_pglpsl}.
    \\
    

     For the case that $p_+(s)=1$ and $p_-(s)=0,$ the equality in the result becomes $-1\leqslant n\leqslant 2$; and up to a permutation of the peripheral elements we can assume that  $s=(+1,0,0,0)$. We first prove that, if $-1\leqslant n \leqslant 2$, then $\HPns(\Sigma_{0,4})$ is non-empty. To find a representation $\phi$ in $\HPns(\pantstwo)$, we will define $\phi$ on $\pi_1(P_1)$ first , then extend it to $\pi_1(\pantstwo)$. 
          If $n = -1$, then let $m = 0$; and if $0\leqslant n\leqslant 2$, then let $m = 1$. 
          As the fundamental group $\pi_1(P_1)$ has the preferred peripheral elements $c_1,c_2$ and $d$, by Theorem \ref{thm_pants2}, there exists a representation $\phi\in \HP_m^{(+1,0,0)}(P_1)$, sending $c_1$ into $\Par^+$ and sending $c_2, d$ into $\Hyp$. 
          Since $\phi(d)\in \Hyp$, there exists a unique lift $\widetilde{\phi(d)}$ in $\Hyp_{n-m}$.  As $n-m\in \{-1,0,1\}$, 
          by Proposition \ref{prop_evimage}, the evaluation map $\ev: \mathcal{W}_{n-m}(P_2)\to \Hyp_{n-m}$ is surjective. Hence,
          there exists a representation $\psi\in \mathcal{W}_{n-m}(P_2)$ such that $\ev(\psi) = \widetilde{\phi(d)}$. As $\psi$ and $\phi$ agree at the common boundary $d$ of $P_1$ and $P_2$, letting $\phi|_{\pi_1(P_2)}\doteq \psi$ extends the representation $\phi$ from $\pi_1(P_1)$ to $\pi_1(\pantstwo)$.
          Moreover, since $e(\phi|_{\pi_1(P_1)}) = m$ and $e(\phi|_{\pi_1(P_2)}) = n-m$, by Proposition \ref{prop_additivity}, $e(\phi) = m + (n-m) = n$; and since $\phi(c_1)\in \Par^+$ and $\phi(c_i)\in \Hyp$ for $i\in \{2,3,4\}$, we have $s(\phi) = (+1,0,0,0)$. As a consequence,  $\phi\in \HPns(\pantstwo)$.
   \medskip
   
          
          Next we prove that, if $\HPns(\Sigma_{0,4})$ is non-empty, then $-1\leqslant n\leqslant 2$. For a representation $\phi\in \HPns(\Sigma_{0,4})$, we will  first construct a path $\path$ in $\HPns(\pantstwo)$ connecting $\phi$ to a $\phi_1\in \HPns(\pantstwo)$ with $\phi_1(d)$ hyperbolic. 
        Indeed, by Theorem \ref{thm_nonabel}, we can assume that $\phi|_{\pi_1(P_1)}$ and $\phi|_{\pi_1(P_2)}$ are non-abelian, and that $\phi(c_4)$ is hyperbolic. Hence, by Lemma \ref{lem_inthyppants}, there exists a path $\path$ starting from $\phi_0 = \phi$ and with $\phi_1(d)$ hyperbolic. Moreover, for each $t\in [0,1]$ and each $i\in\{1,2,3,4\},$ $\phi_t(c_i)$ is conjugate to $\phi(c_i)$. Hence the signs $s(\phi_t) = s(\phi)$ and the relative Euler classes $e(\phi_t) = e(\phi)$, and the path $\path$ lies in $\HPns(\pantstwo).$  Next, we will deduce from $e(\phi_1) = n$ that $-1\leqslant n\leqslant 2$.  Indeed, since $\phi_1(d)$ is hyperbolic, $\phi_1|_{\pi_1(P_1)}\in \HP(P_1)$ and $\phi_1|_{\pi_1(P_2)}\in \mathcal{W}(P_2)$; and since
        $\phi_1(c_1)\in \Par^+$ and $\phi_1(c_2), \phi_1(d)\in \Hyp$, we have $s(\phi_1|_{\pi_1(P_1)}) = (+1,0,0)$. Then by Theorem \ref{thm_pants2},  we have $0\leqslant e(\phi_1|_{\pi_1(P_1)})\leqslant 1$; and by Theorem \ref{thm_goldman}, we have $-1\leqslant e(\phi_1|_{\pi_1(P_2)}) \leqslant 1$. Finally, by  Proposition \ref{prop_additivity} that $n = e(\phi_1|_{\pi_1(P_1)})+e(\phi_1|_{\pi_1(P_2)})$, we have $-1\leqslant n\leqslant 2.$
    \medskip
    
             It remains to show that any non-empty $\HPns(\Sigma_{0,4})$ is connected. 
         Let $\phi,\psi \in \HPns(\Sigma_{0,4})$. 
         By Theorem \ref{thm_nonabel}, we can assume that $\phi|_{\pi_1(P_1)}$, $\phi|_{\pi_1(P_2)}$, $\psi|_{\pi_1(P_1)}$ and $\psi|_{\pi_1(P_2)}$ are non-abelian; and by Lemma \ref{lem_inthyppants}, we can assume that $\phi(d)$ and $\psi(d)$ are hyperbolic, and $\phi|_{\pi_1(P_1)}, \psi|_{\pi_1(P_1)}\in \HP(P_1)$. Under these assumptions, for each $n$ satisfying $-1\leqslant n\leqslant 2$,
        we will find a path in $\HPns(\pantstwo)$ connecting $\phi$ to $\psi$.
        \smallskip
        
        In the case that $n = -1$ or $n = 2$, we use  Lemma \ref{lem_sameEuler_pants}.
        If $n = -1$, then we claim that $e(\phi|_{\pi_1(P_1)}) = e(\psi|_{\pi_1(P_1)})= 0$. 
        Indeed, as $s(\phi|_{\pi_1(P_1)}) = (+1,0,0)$, Theorem \ref{thm_pants2} implies that either $e(\phi|_{\pi_1(P_1)}) = 0$ or $e(\phi|_{\pi_1(P_1)}) = 1$. If $e(\phi|_{\pi_1(P_1)}) = 1$, then by Proposition \ref{prop_additivity}, $e(\phi|_{\pi_1(P_2)})= -2$, contradicting Theorem \ref{thm_goldman}. 
        Therefore, we have $e(\phi|_{\pi_1(P_1)}) = 0$. The proof for $e(\psi|_{\pi_1(P_1)}) = 0$ follows verbatim. Then by Lemma \ref{lem_sameEuler_pants}, there exists a path in  $\HPns(\pantstwo)$ connecting $\phi$ and $\psi$. If $n = 2$, similar to the case $n = -1$, we can show $e(\phi|_{\pi_1(P_1)}) = e(\psi|_{\pi_1(P_1)})= 1$ using Theorem \ref{thm_goldman}, Theorem \ref{thm_pants2}, and Proposition \ref{prop_additivity}. Then by Lemma \ref{lem_sameEuler_pants}, there is a path in $\HPns(\pantstwo)$ connecting $\phi$ and $\psi$.
\smallskip

        In the case that  $n = 0$ or $n = 1$, we will find the path as follows. 
        We will first construct paths $\{\phi_t\}_{t\in [0,2]}$ and $\{\psi_t\}_{t\in [0,2]}$ in $\HPns(\pantstwo)$, respectively starting from $\phi$ and $\psi$ and with $e(\phi_2|_{\pi_1(P_2)}) = e(\psi_2|_{\pi_1(P_2)}) = 0$. Then by Lemma \ref{lem_sameEuler_pants}, there is a path connecting $\phi_2$ and $\psi_2$; and the composition of these paths connects  $\phi$ to $\psi$. 

    Below we construct the path $\{\phi_t\}_{t\in [0,2]}$; and the construction of $\{\psi_t\}_{t\in [0,2]}$ follows verbatim. If $e(\phi|_{\pi_1(P_2)}) = 0$, then we let $\{\phi_t\}_{t\in [0,2]}$ be the constant path $\phi_t \equiv \phi$. 
        If otherwise that $e(\phi|_{\pi_1(P_2)}) \in \{\pm 1\}$,
        then the path $\{\phi_t\}_{t\in[0,2]}$ is constructed as follows. 
        First, using Lemma \ref{lem_plpchar}, we will construct paths $\{\phi_{1,t}\}_{t\in[1,2]}$ and $\{\phi_{2,t}\}_{t\in[1,2]}$ of representations of $\pi_1(P_1)$ and $\pi_1(P_2)$, respectively. Next, we will find a path $\{g_t\}_{t\in [1,2]}\subset \psl$ using Lemma \ref{lem_conjugacypath}, and
        glue $\{\phi_{1,t}\}_{t\in[1,2]}$ and $\{g_t\phi_{2,t}g_t^{-1}\}_{t\in[1,2]}$ along the decomposition curve $d$ to define the path $\{\phi_t\}_{t\in[1,2]}$. 
        Finally, we will construct a path $\path$ connecting $\phi$ to $\phi_1$, and compose it with $\{\phi_t\}_{t\in[1,2]}$.
        
         The paths $\{\phi_{1,t}\}_{t\in[1,2]}$ and $\{\phi_{2,t}\}_{t\in[1,2]}$ are constructed  as follows. To use Lemma \ref{lem_plpchar}, we first compute the relative Euler classes of the restrictions $\phi|_{\pi_1(P_1)}$ and $\phi|_{\pi_1(P_2)}$, which determine the signs of the third component of the characters $\chi(\phi|_{\pi_1(P_1)})$ and $\chi(\phi|_{\pi_1(P_2)})$. 
         
         In the case that $n = 0$, we show that $e(\phi|_{\pi_1(P_1)}) =1$ and $e(\phi|_{\pi_1(P_2)}) = -1$.
         Indeed, as $s(\phi|_{\pi_1(P_1)}) = (+1,0,0)$, by Theorem \ref{thm_pants2},
         either $e(\phi|_{\pi_1(P_1)}) = 0$ or $e(\phi|_{\pi_1(P_1)}) =1$.
         Then by Proposition \ref{prop_additivity}, we have either $e(\phi|_{\pi_1(P_1)})=0$ and $e(\phi|_{\pi_1(P_2)}) =0$, or $e(\phi|_{\pi_1(P_1)})=1$ and $e(\phi|_{\pi_1(P_2)}) = -1$. 
         Since $e(\phi|_{\pi_1(P_2)})\big)\in \{\pm 1\}$, we conclude that $e(\phi|_{\pi_1(P_1)})=1$ and $e(\phi|_{\pi_1(P_2)})= -1$.
         
         In the case that $n = 1$, we show that $e(\phi|_{\pi_1(P_1)}) =0$ and $e(\phi|_{\pi_1(P_2)}) = 1$.
         Indeed, as $s(\phi|_{\pi_1(P_1)}) = (+1,0,0)$, by Theorem \ref{thm_pants2},
         either $e(\phi|_{\pi_1(P_1)}) = 0$ or $e(\phi|_{\pi_1(P_1)}) =1$.
         Then by Proposition \ref{prop_additivity}, we have either $e(\phi|_{\pi_1(P_1)})=0$ and $e(\phi|_{\pi_1(P_2)})=1$, or $e(\phi|_{\pi_1(P_1)})=1$ and $e(\phi|_{\pi_1(P_2)})= 0$. 
         Since $e(\phi|_{\pi_1(P_2)})\big)\in \{\pm 1\}$, we conclude that $e(\phi|_{\pi_1(P_1)})=0$ and $e(\phi|_{\pi_1(P_2)})= 1$. 

         Let $w\doteq -|\tr\big(\widetilde{\phi(c_1)}\widetilde{\phi(c_2)}\big)|\in (-\infty, -2)$.
         Since $e(\phi|_{\pi_1(P_1)}) = 1$ when $n = 0$ and $e(\phi|_{\pi_1(P_1)}) = 0$ when $n = 1$, we have
         $$\chi\big(\phi|_{\pi_1(P_2)}\big) = (2,v_1, (-1)^n w)$$
         that lies in $\{2\}\times (2,\infty)\times (-\infty, -2)$ when $n = 0$, and in $\{2\}\times (2,\infty)^2$ when $n = 1$.
        Moreover, since
         $e(\phi|_{\pi_1(P_2)})$ is odd, we have
         $$\chi\big(\phi|_{\pi_1(P_2)}\big)
         =  (u,v_2, w)\in (2,\infty)^2\times (-\infty, -2).$$
         For each $t\in [1,2]$, let $w_t\doteq w(3 - 2t)$ connecting $w$ to $-w$, and consider the straight line paths $\{(2,v_1, (-1)^nw_t)\}_{t\in [1,2]}$ and $\{(u,v_2, w_t)\}_{t\in [1,2]}$. 
         Since $v_1, v_2\in (2,\infty)$, these paths lie in 
         $\mathbb{R}^3 
         \setminus \big([-2,2]^3\cap \kappa^{-1}([-2,2])\big)$. 
        Let $A_1,\ B_1$, $C_1$ and $D_1$ respectively
        be the projections of $\widetilde{\phi(c_1)}$,  $\widetilde{\phi(c_2)}$, $\widetilde{\phi(c_3)}$ and $\widetilde{\phi(c_4)}$ to $\SL$.
        Notice that the image of $(A_1, B_1)$ under the character map $\chi$ equals $\chi(\phi|_{\pi_1(P_1)}) = (2,v_1, (-1)^nw)$, and the image of $(C_1, D_1)$ under $\chi$ equals $\chi(\phi|_{\pi_1(P_2)}) = (u,v_2, w)$.
         Moreover, as $\phi|_{\pi_1(P_1)}$ and $\phi|_{\pi_1(P_2)}$ are non-abelian, $(A_1, B_1)$ and $(C_1, D_1)$ are pairs of non-commuting $\SL$-elements.
         Hence, by Lemma \ref{lem_plpchar}, there is a path $\{(A_{t}, B_{t})\}_{t\in [1,2]}$ of non-commuting $\SL$-pairs starting from  $(A_1, B_1)$, and having 
        $\chi(A_{t}, B_{t}) = (2, v_1, (-1)^nw_t)$ for all $t\in [1,2]$; and there is a path $\{(C_{t}, D_{t})\}_{t\in [1,2]}$ of non-commuting $\SL$-pairs starting from  $(C_1, D_1)$, and having 
        $\chi(C_{t}, D_{t}) = (u, v_2,w_t)$ for all $t\in [1,2]$. 
        For each $t\in [1,2]$, let $\pm A_{t}, \pm B_{t}, \pm C_{t},\text{ and } \pm D_{t}$ respectively be the projections of 
        $A_{t}, B_{t}, C_{t},\text{ and }  D_{t}$  to $\psl$. 
        By letting $\big(\phi_{1,t}(c_1), \phi_{1,t}(c_2)\big) \doteq (\pm A_{t}, \pm B_{t})$, we define the path $\{\phi_{1,t}\}_{t\in [1,2]}$ of non-abelian representations starting from $\phi_{1,1} = \phi|_{\pi_1(P_1)}$, such that for all $t\in [0,1]$, $\chi(\phi_{1,t}) = \chi(A_{t}, B_{t}) = (2,v_1,(-1)^nw_t)$. 
        Similarly, by letting $\big(\phi_{2,t}(c_3), \phi_{2,t}(c_4)\big) \doteq (\pm C_{t}, \pm D_{t})$, we define the path $\{\phi_{2,t}\}_{t\in [1,2]}$ of non-abelian representations starting from $\phi_{2,t} = \phi|_{\pi_1(P_2)}$, such that for all $t\in [0,1]$, $\chi(\phi_{2,t}) = \chi(C_{t}, D_{t}) = (u,v_2,w_t)$. 
        Then, since $\chi(\phi_{2,2}) =  (u,v_2,-w)\in (2,\infty)^3$, we have $e(\phi_{2,2}) = 0$.

         To define the path $\{g_t\}_{t\in [1,2]}$  using Lemma \ref{lem_conjugacypath}, we first show that for each $t\in [0,1]$, the elements $z^n\big(\ev(\phi_{1,t})\big)^{-1} = 
         z^n\big(\widetilde{\phi_{1,t}(c_1)}\widetilde{\phi_{1,t}(c_2)}\big)^{-1}$ 
         and 
         $\ev(\phi_{2,t}) = 
            \widetilde{\phi_{2,t}(c_1)}\widetilde{\phi_{2,t}(c_2)}$ are conjugate in $\univcover$, so that  their projections $\phi_{1,t}(d)$ and $\phi_{2,t}(d)$ are conjugate in $\psl$. Notice that the trace $\tr\big(z^n\big(\ev(\phi_{1,t})\big)^{-1}\big) = \tr\big(\ev(\phi_{2,t})\big) = w_t$. Since $w_t$  increases monotonically in $t\in [1,2]$ from $w< -2 $ to $-w > 2$, there are unique $t_1, t_2 \in (1,2)$ such that $w_{t_1} = -2$ and $w_{t_2} = 2$.
         For $n = 0$,
         since $e(\phi_{1,1}) = 1$ and $e(\phi_{1,2}) = 0$, we have $\ev(\phi_{1,1})\in \Hyp_1$ and $\ev(\phi_{1,2})\in \Hyp_0$, and the path $\{\big(\ev(\phi_{1,t})\big)^{-1}\}_{t\in[1,2]}$ connects $\ev(\phi_{1,1})^{-1}\in \Hyp_{-1}$ to $\ev(\phi_{1,2})^{-1}\in \Hyp_0$. 
         For $n = 1$,
         since $e(\phi_{1,1}) = 0$ and $e(\phi_{1,2}) = 1$, we have $\ev(\phi_{1,1})\in \Hyp_0$ and $\ev(\phi_{1,2})\in \Hyp_1$, and the path $\{z\big(\ev(\phi_{1,t})\big)^{-1}\}_{t\in[1,2]}$ connects $z\big(\ev(\phi_{1,1})\big)^{-1}\in \Hyp_1$ to $z\big(\ev(\phi_{1,2})\big)^{-1}\in \Hyp_0$. Therefore, letting $m = (-1)^{n-1}$, we have
         $z^n\big(\ev(\phi_{1,t})\big)^{-1}\in \Hyp_m$ for  $t\in [0, t_1)$, $z^n\big(\ev(\phi_{1,t})\big)^{-1}\in \Ell_m$ for $t\in (t_1, t_2)$, and $z^n\big(\ev(\phi_{1,t})\big)^{-1}\in \Hyp_0$ for $t\in (t_2, 1]$. 
         Then $z^n\big(\ev(\phi_{1,t_1})\big)^{-1}$ is adjacent to $\Hyp_m$ and $\Ell_m$; and as $\phi_{1, t_1}$ is non-abelian, $\phi_{1, t_1}(d)\neq \pm \mathrm I$ and its lift $z^n\big(\ev(\phi_{1,t_1})\big)^{-1}\neq z^m$. Therefore,  
         $z^n\big(\ev(\phi_{1,t_1})\big)^{-1}\in \Par_m^{sgn(-m)}$.
         Similarly, as $z^n\big(\ev(\phi_{1,t_2})\big)^{-1}$ is adjacent to $\Ell_m$ and $\Hyp_0$, 
         $z^n\big(\ev(\phi_{1,t_2})\big)^{-1}\in \Par_0^{sgn(m)}\cup \{\mathrm I\}$; and as $\phi_{1, t_2}$ is non-abelian, $\phi_{1, t_2}(d)\neq \pm \mathrm I$ and its lift $z^n\big(\ev(\phi_{1,t_2})\big)^{-1}\neq \mathrm I$. Therefore, $z^n\big(\ev(\phi_{1,t_2})\big)^{-1}\in \Par^{sgn(m)}_0$. 
         On the other hand, since $e(\phi_{2,1}) = m$ and $e(\phi_{2,2}) = 0$, the path $\{\ev(\phi_{2,t})\}_{t\in[1,2]}$ connects $\ev(\phi_{2,1})\in \Hyp_m$ to $\ev(\phi_{2,2})\in \Hyp_0$, and the proof of $\ev(\phi_{2,t_1
         })\in \Par_m^{sgn(-m)}$ and $\ev(\phi_{2,t_2
         })\in \Par_0^{sgn(m)}$ follows verbatim.
         Consequently, for each $t\in [1,2]$, as elements both lying in the same connected component of $\Hyp_m\cup \Hyp_0$, 
         $\Par_m^{sgn(-m)}\cup \Par_0^{sgn(m)}$, or $\Ell_m$ with the same trace, $z^n\big(\ev(\phi_{1,t})\big)^{-1}$ and $\ev(\phi_{2,t})$ are conjugate in $\univcover$; and hence their projections $\phi_{1,t}(d)$ and $\phi_{2,t}(d)$ are conjugate in $\psl$.

        We now define $\{g_t\}_{t\in [1,2]}\subset \psl$, and
        glue the paths $\{\phi_{1,t}\}_{t\in [1,2]}$ and $\{g_t\phi_{2,t}g_t^{-1}\}_{t\in [1,2]}$ along $d$. 
        Since $\phi_{1,t}(d)$ and $\phi_{2,t}(d)$ are conjugate for all $t\in [1,2]$, by Lemma \ref{lem_conjugacypath}, there exists a path $\{g_t\}_{t\in [1,2]}$ such that 
         $\phi_{1,t}(d) = g_t \phi_{2,t}(d) g_t^{-1}$ for all $t\in [1,2]$. Then we let $\phi_t|_{\pi_1(P_1)} \doteq \phi_{1, t}$ and $\phi_t|_{\pi_1(P_2)} \doteq g_t\phi_{2,t}g_t^{-1}$ for each $t\in [1,2]$, defining the path $\{\phi_t\}_{t\in [1,2]}$.

         Finally, we construct the path $\{\phi_t\}_{t\in [0,1]}$ connecting $\phi$ and $\phi_1$. Let $\{g_t\}\interval$ be a path connecting $g_0 = \pm \mathrm I$ to $g_1$ within the one-parameter subgroup of $\psl$ generated by $g_1$. For each $t\in [0,1]$, we let $\phi_t|_{\pi_1(P_1)}\doteq \phi|_{\pi_1(P_1)}$ and $\phi_t|_{\pi_1(P_2)}\doteq g_t\phi|_{\pi_1(P_2)}g_t^{-1}$.
         As $g_1\phi(d)g_1^{-1} = g_1\phi_{2,1}(d)g_1^{-1} = \phi_{1,1}(d) = \phi(d)$, $g_1$ commutes with $\phi(d)$. Hence for all $t\in [0,1]$, we have $g_t \phi(d) g_t^{-1} = \phi(d)$. 
         This defines a path $\path$ of representations on $\pi_1(\Sigma_{0,4})$. By composing with $\{\phi_t\}_{t\in [1,2]}$,
          we have a path $\{\phi_t\}_{t\in [0,2]}$ starting from  $\phi_0 = \phi$, having  $e(\phi_2|_{\pi_1(P_2)}) = 0$.  Since for all $t\in [0,2]$ and $i\in \{1,2,3,4\}$,  $\phi_t(c_i)$ and $\phi(c_i)$ are conjugate in $\psl$, the signs $s(\phi_t) = s(\phi)$ and the relative Euler classes $e(\phi_t) = e(\phi)$. This implies that the path
          $\{\phi_t\}_{t\in [0,2]}$ lies in $\HP^{s}_n(\pantstwo)$, completing  the proof.
    \end{proof}

        \begin{theorem}\label{thm_pantstype2}
        Let $n\in \mathbb{Z}$, and let $s\in\{-1,0,+1\}^4$ with $p_0(s) = 1$ or $p_0(s) = 2$.
        Then $\HPns(\pantstwo)$ is non-empty if and only if 
        $$\chi(\Sigma_{0,4}) + p_+(s)\leqslant n \leqslant -\chi(\Sigma_{0,4}) - p_-(s).$$ 
        Moreover, each non-empty $\HPns(\pantstwo)$ is connected.
        \end{theorem}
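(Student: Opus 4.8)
The plan is to mimic the proof of Theorem \ref{thm_pantstype1}, decomposing $\pantstwo = P_1\cup P_2$ into two three-hole spheres glued along the decomposition curve $d$, but to exploit the extra flexibility coming from the hypothesis $p_0(s)\le 2$, which guarantees that at least two of the peripheral elements are parabolic. Since a permutation of the punctures of $\Sigma_{0,4}$ induces a homeomorphism $\HPns(\pantstwo)\cong\mathrm{HP}^{\sigma(s)}_n(\pantstwo)$, I would first relabel so that $\pi_1(P_1)=\langle c_1,c_2\rangle$ with $s_1,s_2\in\{\pm 1\}$, i.e. the two boundary components of $P_1$ other than $d$ are parabolic, and set $m\doteq\tfrac12(s_1+s_2)\in\{-1,0,1\}$. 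A short computation, using $p_\pm(s)=p_\pm(s_1,s_2,0)+p_\pm(s_3,s_4,0)$, shows that $n$ satisfies the generalized Milnor-Wood inequality for $\pantstwo$ if and only if $n-m$ satisfies the (generalized) Milnor-Wood inequality for $P_2$ with sign $(s_3,s_4,0)$; in particular $n-m\in\{-1,0,1\}$ throughout that range. The decisive point is that whenever $\phi(d)$ is hyperbolic, the sign $(s_1,s_2,0)$ together with Theorem \ref{thm_pants2} forces $e(\phi|_{\pi_1(P_1)})=m$, so the ambiguity present in Theorem \ref{thm_pantstype1} (where $P_1$ carried only one parabolic boundary component) disappears.

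For non-emptiness, given $n$ in the generalized Milnor-Wood range I would use Theorem \ref{thm_pants2} to produce $\phi_1\in\HP^{(s_1,s_2,0)}_m(P_1)$; since the third entry of its sign is $0$, automatically $\phi_1(d)\in\Hyp$, and I let $\widetilde{\phi_1(d)}$ be its lift in $\Hyp_0$. By the bookkeeping step, $n-m$ lies in the Milnor-Wood range for $P_2$ with sign $(s_3,s_4,0)$, so $\HP^{(s_3,s_4,0)}_{n-m}(P_2)$ is nonempty and, by Proposition \ref{prop_evimage} and Corollary \ref{cor_evimage} (or Theorem \ref{thm_goldman} when $s_3=s_4=0$), the evaluation map $\ev\colon \HP^{(s_3,s_4,0)}_{n-m}(P_2)\to\Hyp_{n-m}$ is onto. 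Choosing $\phi_2$ in the fiber over $z^{n-m}\widetilde{\phi_1(d)}\in\Hyp_{n-m}$, the representations $\phi_1,\phi_2$ agree at $d$, and gluing them gives $\phi\in\HP(\pantstwo)$ with $s(\phi)=s$ and, by additivity (Proposition \ref{prop_additivity}), $e(\phi)=m+(n-m)=n$. Conversely, if $\HPns(\pantstwo)\neq\emptyset$, I pick $\phi$ there; since $p_0(s)\ge 1$, Theorem \ref{thm_nonabel} deforms $\phi$ within $\HPns(\pantstwo)$ so that both restrictions to $\pi_1(P_1)$ and $\pi_1(P_2)$ are non-abelian, and then Lemma \ref{lem_inthyppants}, applicable because some $\phi(c_i)$ is hyperbolic, deforms it further within $\HPns(\pantstwo)$ so that $\phi(d)\in\Hyp$. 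Theorem \ref{thm_pants2} then gives $e(\phi|_{\pi_1(P_1)})=m$, and combining Theorem \ref{thm_pants2}/Theorem \ref{thm_goldman} for $P_2$ with additivity and the bookkeeping step yields the generalized Milnor-Wood inequality for $n$.

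For connectedness, given $\phi,\psi\in\HPns(\pantstwo)$ I would again apply Theorem \ref{thm_nonabel} and Lemma \ref{lem_inthyppants} to deform them within $\HPns(\pantstwo)$ so that all four restrictions $\phi|_{\pi_1(P_i)},\psi|_{\pi_1(P_i)}$ are non-abelian and $\phi(d),\psi(d)\in\Hyp$. Then Theorem \ref{thm_pants2} forces $e(\phi|_{\pi_1(P_1)})=e(\psi|_{\pi_1(P_1)})=m$, and Lemma \ref{lem_sameEuler_pants} directly produces a path in $\HPns(\pantstwo)$ from $\phi$ to $\psi$. In contrast to Theorem \ref{thm_pantstype1}, no intermediate deformation of the relative Euler class on $P_2$, and hence no use of the character path-lifting of Lemma \ref{lem_plpchar} or of Lemma \ref{lem_conjugacypath}, is needed here.

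The argument is essentially routine once this reduction is set up, so I do not expect a genuine obstacle; the only care needed is the bookkeeping that the Milnor-Wood ranges for $\pantstwo$ and for $P_2$ match after the shift by $m$, which should be verified in each of the three cases $(s_1,s_2)\in\{(+1,+1),(+1,-1),(-1,-1)\}$, together with the usual tracking of lifts and sign conventions when gluing along $d$ to confirm $e(\phi)=n$. The hardest structural input---deforming an arbitrary representation so that $\phi(d)$ becomes hyperbolic while remaining in $\HPns(\pantstwo)$---is already provided by Theorem \ref{thm_nonabel} and Lemma \ref{lem_inthyppants}.
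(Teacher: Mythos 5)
Your proposal is correct and follows essentially the same route as the paper's proof: place the two parabolic punctures on $P_1$ so that Theorem \ref{thm_pants2} pins down $e(\phi|_{\pi_1(P_1)})=\tfrac12(s_1+s_2)$, use surjectivity of the evaluation map on $P_2$ (Proposition \ref{prop_evimage}, Corollary \ref{cor_evimage}) for non-emptiness, Theorem \ref{thm_nonabel} and Lemma \ref{lem_inthyppants} to make $\phi(d)$ hyperbolic, additivity for the generalized Milnor--Wood bound, and Lemma \ref{lem_sameEuler_pants} for connectedness. The only cosmetic difference is that the paper reduces the case $s_3=-1$ to $s_3=+1$ via Proposition \ref{prop_pglpsl} and rewrites the inequality as $\tfrac12(s_1+s_2)+s_3-1\leqslant n\leqslant\tfrac12(s_1+s_2)+1$, whereas you verify the shifted Milnor--Wood bookkeeping for all sign cases directly.
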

    \begin{proof}
    	Up to a permutation of the peripheral elements, we can assume that  $s=(s_1,s_2,s_3,0)$ with $s_1, s_2\in \{\pm 1\}$ and $s_3 \in \{-1,0,+1\}$. We will prove the result for the case that $s_3 \in \{0, +1\}$. Then the result for the remaining case that $s_3 = -1$ follows from the case that $s_3 = +1$ and Proposition \ref{prop_pglpsl}. 
	In the case that $s_3 \in \{0, +1\}$, in all the four possible  cases $(+1, +1),$ $ (+1, -1),$ $ (-1,+1)$ and $(-1,-1)$ of $(s_1, s_2)$, the inequality in the result is equivalent to $\frac{1}{2}(s_1+s_2)  + s_3 - 1 \leqslant n \leqslant \frac{1}{2}(s_1+s_2) + 1.$
         \medskip

          We begin by proving that, if $\frac{1}{2}(s_1+s_2) +s_3-1 \leqslant n \leqslant \frac{1}{2}(s_1+s_2) + 1$, then $\HPns(\pantstwo)$ is non-empty. To find a representation $\phi$ in $\HPns(\pantstwo)$, we will define $\phi$ on $\pi_1(P_1)$ first, then extend it to $\pi_1(\pantstwo)$. Let $m = \frac{1}{2}(s_1+s_2)$.
          As the fundamental group $\pi_1(P_1)$ has the preferred peripheral elements $c_1,c_2$ and $d$, Theorem \ref{thm_pants2} implies that there exists a representation $\phi\in \HP_m^{(s_1,s_2,0)}(P_1)$, having $\phi(c_1)\in\Par^{sgn(s_1)}$, $\phi(c_2)\in\Par^{sgn(s_2)}$, and $\phi(d)\in\Hyp$.
          Let $\widetilde{\phi(d)}$ be the lift of $\phi(d)$ in $\Hyp_{n-m}$. 
          As $s_3\in \{0,+1\}$ and 
          $s_3 - 1\leqslant n - m \leqslant 1$, by Proposition \ref{prop_evimage} and Corollary \ref{cor_evimage}, the evaluation map $\ev: \HP_{n-m}^{(s_3,0,0)}(P_2)\to \Hyp_{n-m}$ is surjective. Hence, there exists a representation $\psi\in \HP_{n-m}^{(s_3,0,0)}(P_2)$ such that $\ev(\psi) = \widetilde{\phi(d)}$. As
          $\psi$ and $\phi$ agree at the common boundary $d$ of $P_1$ and $P_2$, letting $\phi|_{\pi_1(P_2)}\doteq \psi$ extends the representation $\phi$ from $\pi_1(P_1)$ to $\pi_1(\pantstwo)$.
      As $e(\phi|_{\pi_1(P_1)}) = m$ and $e(\phi|_{\pi_1(P_2)}) = n-m$,
       by Proposition \ref{prop_additivity}, $e(\phi)= m + (n-m) = n$. 
       Moreover, we have $\phi(c_1)\in \Par^{sgn(s_1)}$, $\phi(c_2)\in \Par^{sgn(s_2)}$, and $\phi(c_4)\in \Hyp$; and have $\phi(c_3)\in \Hyp$ when $s_3 = 0$, and $\phi(c_3)\in \Par^+$ when $s_3 = 1$. This implies that the sign $s(\phi) = (s_1,s_2,s_3,0)$. As a consequence, $\phi\in \HPns(\pantstwo)$.
          \medskip
        

        We now prove that, if $\HPns(\pantstwo)$ is non-empty, then $\frac{1}{2}(s_1+s_2) + s_3-1 \leqslant n \leqslant \frac{1}{2}(s_1+s_2) + 1$. For a representation $\phi\in \HPns(\pantstwo)$, we first construct a path $\path$ in $\HPns(\pantstwo)$ connecting $\phi$ to a $\phi_1\in \HPns(\pantstwo)$ with $\phi_1(d)$ hyperbolic. 
        Indeed, by Theorem \ref{thm_nonabel}, we can assume that $\phi|_{\pi_1(P_1)}$ and $\phi|_{\pi_1(P_2)}$ are non-abelian, and $\phi(c_4)$ is hyperbolic. Hence, by Lemma \ref{lem_inthyppants}, there exists a path $\path$ starting from $\phi_0 = \phi$ with $\phi_1(d)$ hyperbolic. Moreover, for each $t\in [0,1]$ and each $i\in\{1,2,3,4\},$ $\phi_t(c_i)$ is conjugate to $\phi(c_i)$. Therefore, the signs $s(\phi_t) = s(\phi)$ and the relative Euler classes $e(\phi_t) = e(\phi)$, and the path $\path$ lies in $\HPns(\pantstwo).$  Next, we deduce from $e(\phi_1) = n$ that $\frac{1}{2}(s_1+s_2) + s_3 -1 \leqslant n \leqslant \frac{1}{2}(s_1+s_2) + 1$. Indeed, 
        for each $j\in \{1,2\}$, since $\phi_1(d)$ is hyperbolic, we have $\phi_1|_{\pi_1(P_j)}\in \HP(P_j)$; and the sign $s(\phi_1) = s = (s_1,s_2,s_3,0)$ implies that $s(\phi_1|_{\pi_1(P_1)}) = (s_1,s_2,0)$ and $s(\phi_1|_{\pi_1(P_2)}) = (s_3,0,0)$.
    Then by Theorem \ref{thm_goldman} and Theorem \ref{thm_pants2}, we have $e(\phi_1|_{\pi_1(P_1)}) = \frac{1}{2}(s_1+s_2)$ and $ s_3 -1\leqslant e(\phi_1|_{\pi_1(P_2)}) \leqslant 1$. Finally, by Proposition \ref{prop_additivity} that  $n = e(\phi_1|_{\pi_1(P_1)})+e(\phi_1|_{\pi_1(P_2)})$, we have $\frac{1}{2}(s_1+s_2) + s_3- 1 \leqslant n\leqslant \frac{1}{2}(s_1+s_2) + 1$. 
        \medskip
        
         It remains to show that any non-empty $\HPns(\Sigma_{0,4})$ is connected. 
         Let $\phi,\psi \in \HPns(\Sigma_{0,4})$. 
         By Theorem \ref{thm_nonabel}, we can assume that $\phi|_{\pi_1(P_1)},$ $\phi|_{\pi_1(P_2)},$ $\psi|_{\pi_1(P_1)}$ and $\psi|_{\pi_1(P_2)}$ are non-abelian; and by Lemma \ref{lem_inthyppants}, we can assume that $\phi(d)$ and $\psi(d)$ are hyperbolic, and hence $\phi|_{\pi_1(P_1)}, \psi|_{\pi_1(P_1)}\in \HP(P_1)$. 
        Then by Theorem \ref{thm_pants2}, we have
         $e(\phi|_{\pi_1(P_1)}) = e(\psi|_{\pi_1(P_1)}) = \frac{1}{2}(s_1+s_2)$, and by Lemma \ref{lem_sameEuler_pants}, there exists a path in $\HPns(\Sigma_{0,4})$ connecting $\phi$ and $\psi$.
    \end{proof}

    Finally, we consider the type-preserving representations of $\pi_1(\pantstwo)$.
    For $n\in \mathbb{Z}$ and $s\in \{\pm 1\}^4$, corresponding to the Cases (2) and (3) in Theorem \ref{thm_main2}, we will call the pair $(n,s)$ \emph{exceptional} if it satisfies one of the following conditions:
    \begin{enumerate}[(1)]
        \item  $n = 0,$ and either $p_-(s) = 1$ or $p_+(s)=1$, and
        \item  $n = 1$ and $p_-(s) = 0$,  or $n = -1$ and $ p_+(s) = 0$.
    \end{enumerate} 

    The following Theorem \ref{thm_pantstype4} proves Theorem \ref{thm_main2} for $\Sigma = \Sigma_{0,4}$ for the non-exceptional pairs, and the case for the exceptional pairs will be discussed in Section 7.

    \begin{theorem}\label{thm_pantstype4}
        Let $n\in \mathbb{Z}$, $s\in\{\pm 1\}^4$ such that the pair $(n,s)$ is not exceptional.
        Then $\mathcal{R}^s_n(\pantstwo)$ is non-empty if and only if 
       $$\chi(\Sigma_{0,4}) + p_+(s)\leqslant n \leqslant -\chi(\Sigma_{0,4}) - p_-(s).$$ 
        Moreover, each non-empty $\mathcal{R}^s_n(\pantstwo)$ above is connected.
    \end{theorem}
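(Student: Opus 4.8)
The plan is to follow the strategy of Theorem~\ref{thm_pantstype2}, using the pants decomposition $\pantstwo = P_1\cup P_2$ along the decomposition curve $d = c_3c_4 = (c_1c_2)^{-1}$, where $\pi_1(P_1)=\langle c_1,c_2\rangle$ and $\pi_1(P_2)=\langle c_3,c_4\rangle$. The first point is purely combinatorial: for $s\in\{\pm 1\}^4$ one has $\chi(\pantstwo)+p_+(s) = -\chi(\pantstwo)-p_-(s) = p_+(s)-2$, so the generalized Milnor--Wood inequality collapses to the single equation $n=p_+(s)-2$, and an inspection of the definition of ``exceptional'' shows that a pair with $n=p_+(s)-2$ is never exceptional. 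Hence for a non-exceptional pair $(n,s)$ the inequality is equivalent to $n=p_+(s)-2$, and the theorem amounts to: $\mathcal R_n^s(\pantstwo)\neq\varnothing$ iff $n=p_+(s)-2$, together with connectedness of each such space.

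For non-emptiness when $n=p_+(s)-2$: put $n_1=\tfrac12(s_1+s_2)$ and $n_2=\tfrac12(s_3+s_4)$, which are integers with $n_1+n_2=\tfrac12(p_+(s)-p_-(s))=n$. By Theorem~\ref{thm_pants2}(1) choose $\phi_1\in\HP_{n_1}^{(s_1,s_2,0)}(P_1)$ with $\phi_1(c_i)\in\Par^{sgn(s_i)}$ and $\phi_1(d)\in\Hyp$, and let $\widetilde{\phi_1(d)}\in\Hyp_0$ be its lift. By Corollary~\ref{cor_evimage}(1) the evaluation map $\ev\colon\HP_{n_2}^{(s_3,s_4,0)}(P_2)\to\Hyp_{n_2}$ is onto, so there is $\phi_2$ in this space with $\ev(\phi_2)=z^{n_2}\widetilde{\phi_1(d)}$, whence $\phi_2(c_3)\phi_2(c_4)=\phi_1(d)$ and $\phi_1,\phi_2$ agree on $d$. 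Gluing gives a type-preserving $\phi$ with $s(\phi)=s$, and since $\phi(d)$ is hyperbolic, Proposition~\ref{prop_additivity} yields $e(\phi)=n_1+n_2=n$.

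Necessity and connectedness are both reduced to $\mathrm{NA}^s_n(\pantstwo)$ via Theorem~\ref{thm_nonabel}, which applies because a non-exceptional pair satisfies condition (2) when $n\neq 0$ and condition (4) when $n=0$ (non-exceptional with $n=0$ forces $p_+(s)\neq 1$ and $p_-(s)\neq 1$). So let $\phi\in\mathrm{NA}^s_n(\pantstwo)$ and split according to the type of $\phi(d)=(\phi(c_1)\phi(c_2))^{-1}$. If $\phi(d)$ is hyperbolic, Theorem~\ref{thm_pants2}(1) on each piece forces $e(\phi|_{\pi_1(P_1)})=\tfrac12(s_1+s_2)$ and $e(\phi|_{\pi_1(P_2)})=\tfrac12(s_3+s_4)$, so $e(\phi)=p_+(s)-2$ by Proposition~\ref{prop_additivity}. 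If $\phi(d)$ is parabolic, then $\phi|_{\pi_1(P_i)}$ is a non-abelian element of $\mathcal R(P_i)$, so by Lemma~\ref{lem_R0pants} has relative Euler class $\pm 1$; Theorem~\ref{thm_pants} then pins down all four boundary signs and forces $p_+(s)=2$ and $e(\phi)=0=p_+(s)-2$ (remembering that $d$ and $d^{-1}$ have opposite signs, so the $d$-boundary signs of the two pieces are opposite). If $\phi(d)$ is elliptic, then $\ev(\phi|_{\pi_1(P_i)})$ lies in the elliptic part $\Ell_{-1}\cup\Ell_1$ of the image in Proposition~\ref{prop_evimage}(1); the families in that proof show it lies in $\Ell_1$ only when $s_{2i-1}=s_{2i}=+1$ and in $\Ell_{-1}$ only when $s_{2i-1}=s_{2i}=-1$, while the requirement $\widetilde{\phi(c_1)}\widetilde{\phi(c_2)}\,\widetilde{\phi(c_3)}\widetilde{\phi(c_4)}=z^n$ forces $n=\delta_1+\delta_2$ where $\Ell_{\delta_i}\ni\ev(\phi|_{\pi_1(P_i)})$, again giving $n=p_+(s)-2$. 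This proves necessity. For connectedness, given $\phi,\psi\in\mathrm{NA}^s_n(\pantstwo)$: when $s_1\neq s_2$ and $s_3\neq s_4$ the products $\phi(c_1)\phi(c_2)$ and $\phi(c_3)\phi(c_4)$ are automatically hyperbolic (a non-commuting product of two parabolics of opposite sign has trace $>2$), so $\phi(d)$ is hyperbolic; in the remaining cases one first deforms $\phi$ inside $\mathrm{NA}^s_n(\pantstwo)$ so that $\phi(d)$ becomes hyperbolic, pushing $\phi(c_{2i-1})\phi(c_{2i})$ through $\Par$ or $\Ell$ into $\Hyp$ along the explicit conjugating families of Proposition~\ref{prop_evimage}(1) while keeping the peripheral conjugacy classes fixed, and carrying the complementary piece along by Lemma~\ref{lem_conjugacypath_const}. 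Once $\phi(d),\psi(d)$ are hyperbolic, Theorem~\ref{thm_pants2}(1) gives $e(\phi|_{\pi_1(P_1)})=e(\psi|_{\pi_1(P_1)})=\tfrac12(s_1+s_2)$, and Lemma~\ref{lem_sameEuler_pants} produces the desired path in $\mathcal R_n^s(\pantstwo)$. Finally, the cases of $s$ left out (those with $s_3=-1$ after a permutation) follow from the above together with Proposition~\ref{prop_pglpsl}.

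The main obstacle is the step, in the connectedness argument, of deforming a representation non-abelian on both pieces so that the separating curve $d$ becomes hyperbolic: the analogue of Lemma~\ref{lem_inthyppants} is not directly available since no peripheral element is hyperbolic, so this deformation must be built by hand from the explicit one-parameter families, tracking carefully the opposite orientations of $d$ seen from $P_1$ and from $P_2$ so that the complementary piece can be made to follow. A secondary, bookkeeping-heavy difficulty is enumerating, across the three types of $\phi(d)$, exactly which sign tuples occur and confirming each forces $n=p_+(s)-2$ rather than slipping into an exceptional configuration.
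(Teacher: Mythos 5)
Your skeleton matches the paper's (reduce the inequality to $n=p_+(s)-2$, glue via the evaluation maps for non-emptiness, pass to $\mathrm{NA}^s_n(\pantstwo)$ by Theorem \ref{thm_nonabel}, get hyperbolicity of $\phi(d)$, then use Theorem \ref{thm_pants2} with additivity and Lemma \ref{lem_sameEuler_pants}), but your treatment of the elliptic case contains a genuine error. You claim that if $\ev(\phi|_{\pi_1(P_1)})\in\Ell_{\delta_1}$ and $\ev(\phi|_{\pi_1(P_2)})\in\Ell_{\delta_2}$ then the relation $\widetilde{\phi(c_1)}\widetilde{\phi(c_2)}\widetilde{\phi(c_3)}\widetilde{\phi(c_4)}=z^n$ forces $n=\delta_1+\delta_2$. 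This is false, because the indexing of the elliptic components is not additive under translation by $z$: there is no $\Ell_0$, and $z^k\Ell_{-1}=\Ell_k$ for $k\geqslant 1$ (angles in $((k-1)\pi,k\pi)$), not $\Ell_{k-1}$. Carrying out the computation correctly, $\delta_1=\delta_2=+1$ forces $n=1$, $\delta_1=\delta_2=-1$ forces $n=-1$, and only the mixed case gives $n=0$. So when $p_-(s)=0$ or $p_+(s)=0$ the elliptic configuration does \emph{not} satisfy $n=p_+(s)-2$; it lands exactly on the exceptional pair $(s_1,s)$, and the correct conclusion is that it cannot occur under the non-exceptionality hypothesis. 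This is what the paper does, via Lemma \ref{lem_inthyp_sphere}(2) together with Proposition \ref{Prop_Exc2} (the ``totally non-hyperbolic'' components): an elliptic $\phi(d)$ with all signs equal would put $\phi$ in $\mathcal{R}^s_{s_1}(\pantstwo)$, contradicting that $(n,s)$ is not exceptional.

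The error propagates into your connectedness plan. Because you believe the elliptic case is compatible with $n=p_+(s)-2$, you set up, as the ``main obstacle,'' a hand-made deformation pushing $\phi(c_1)\phi(c_2)$ from $\Ell$ through $\Par$ into $\Hyp$ while fixing the peripheral conjugacy classes. In the classes where all four signs agree (so $n=\pm 2$) this deformation is unnecessary, since there $\phi(d)$ is automatically hyperbolic for every representation in $\mathrm{NA}^s_n(\pantstwo)$ (elliptic would force $n=\pm 1$ and parabolic would force an abelian piece by Theorem \ref{thm_pants} and Lemma \ref{lem_R0pants}); and in the one non-exceptional situation where elliptic $\phi(d)$ genuinely occurs, namely $n=0$ with the two positive punctures grouped in $P_1$ and the two negative ones in $P_2$, such a deformation is not constructed in your proposal and is not needed either: the paper's route is simply to permute the peripheral elements so that $P_1$ carries punctures of opposite signs, after which Lemma \ref{lem_inthyp_sphere}(1) (a direct trace computation using Lemma \ref{lem_offdiag}) shows $\phi(d)$ is hyperbolic for every non-abelian-on-pieces representation, and Lemma \ref{lem_sameEuler_pants} finishes. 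So the missing ingredients are the correct $z$-translation bookkeeping for the $\Ell_k$'s (equivalently, the content of Lemma \ref{lem_offdiag_Ell} and Lemma \ref{lem_evimage_Ell}) and the permutation-of-punctures reduction that makes the separating curve hyperbolic without any deformation.
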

    \begin{remark}
        Theorem \ref{thm_pantstype4} can also be found in \cite{yang}, where the connected components of the space of  $\psl$-characters  of irreducible type-preserving representations were characterized.
    \end{remark}
    
To prove Theorem \ref{thm_pantstype4}, we need the following lemma.
    \begin{lemma}\label{lem_inthyp_sphere}
        For $p\geqslant 4$, let $\displaystyle\bigcup^{p-2}_{i=1} P_i$ be the chosen almost-path decomposition of $\Sigma_{0,p}$, with the decomposition curves $d_i = c_{i+2}\cdots c_p = (c_1\cdots c_{i+1})^{-1}$ for $i\in \{1\cdots p-3\}$.
        For $n\in \mathbb{Z}$ and $s\in \{\pm 1\}^p$, let $\phi\in \mathrm{NA}^s_n(\Sigma_{0,p})$. 
        \begin{enumerate}[(1)]
        \item If $s_1 = -s_2$, then $\phi(d_1)$ is hyperbolic.
        \item If $p_+(s) = 0$ or $p_-(s) = 0$, then either $\phi(d_i)$ is elliptic for all $i\in \{1,\cdots, p-3\}$, 
        or there exists an $i\in \{1,\cdots, p-3\}$ such that $\phi(d_i)$ is hyperbolic.
        \end{enumerate}
    \end{lemma}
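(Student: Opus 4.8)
\textbf{Proof proposal for Lemma \ref{lem_inthyp_sphere}.}

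The plan is to exploit the constraint coming from the fact that the relevant $\phi(c_i)$'s are parabolic and the decomposition curves are products of them. For part (1), recall that $d_1 = (c_1c_2)^{-1}$, so $\phi(d_1)^{-1} = \phi(c_1)\phi(c_2)$ is the product of two parabolic elements of opposite signs. First I would pass to $\SL$, choosing the lifts $\widetilde{\phi(c_1)}, \widetilde{\phi(c_2)}$ in $\mathrm{Par}_0$. By the analysis already carried out in the proof of Proposition \ref{prop_evimage}(1) — specifically the case $s_1\neq s_2$ there — the product of a lift in $\Par^{sgn(s_1)}_0$ and a lift in $\Par^{sgn(s_2)}_0$ with $s_1 = -s_2$ has trace $2 + c^2t^2$ where $c\neq 0$ because $\phi|_{\pi_1(P_1)}$ is non-abelian (the two parabolics do not share a fixed point, as they have opposite signs and generate a non-abelian group). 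Hence $|\tr\phi(d_1)| = |2 + c^2| > 2$, so $\phi(d_1)$ is hyperbolic. The non-abelianness of $\phi|_{\pi_1(P_1)}$, guaranteed by $\phi\in\mathrm{NA}^s_n$, is exactly what rules out $\phi(d_1) = \pm\mathrm I$ or $\phi(d_1)$ parabolic. I expect this part to be short.

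For part (2), assume WLOG (using Proposition \ref{prop_pglpsl} to swap $+$ and $-$) that $p_-(s) = 0$, i.e. every $\phi(c_i)$ is positive parabolic. Suppose, for contradiction, that no $\phi(d_i)$ is hyperbolic and also not all $\phi(d_i)$ are elliptic; then some $\phi(d_{i_0})$ is parabolic or $\pm\mathrm I$. Since $\phi\in\mathrm{NA}^s_n$, $\phi|_{\pi_1(P_{i_0})}$ is non-abelian, and $d_{i_0}$ bounds $P_{i_0}$, so $\phi(d_{i_0})\neq\pm\mathrm I$; hence $\phi(d_{i_0})$ is parabolic. The key observation is that whether $\phi(d_i)$ changes type as $i$ increases is controlled by the signs: $d_{i} = (d_{i-1}^{-1}c_{i+1})^{-1}$, so $\phi(d_i)^{-1} = \phi(d_{i-1})^{-1}\phi(c_{i+1})$ with $\phi(c_{i+1})\in\Par^+$. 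I would track the sequence $\phi(d_1),\dots,\phi(d_{p-3})$ inductively: at each step we multiply by a positive parabolic. When $\phi(d_{i-1})$ is hyperbolic we are done; when $\phi(d_{i-1})$ is parabolic or $\pm\mathrm I$, I want to show that non-abelianness of $\phi|_{\pi_1(P_i)}$ together with $\phi(c_{i+1})\in\Par^+$ forces $\phi(d_i)$ to be either elliptic or hyperbolic, and moreover that once we reach an elliptic $\phi(d_i)$, we cannot return to parabolic without passing through hyperbolic — and the endpoints $\phi(d_1)$ (which by the $p_-(s)=0$ case of the Proposition \ref{prop_evimage} computation with $s_1 = s_2 = 1$ is in $\Hyp_{-1}\cup\Par^+_{-1}\cup\Ell_{-1}$, hence parabolic or elliptic or hyperbolic, never $\pm\mathrm I$) and $\phi(d_{p-3})$ (a product involving $c_{p-1}, c_p$, again two positive parabolics, so in $\Hyp_{-1}\cup\Par^+_{-1}\cup\Ell_{-1}$) both lie on the "$-1$" side. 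The point is that the trace, read off from the lifted products as in Proposition \ref{prop_evimage}, behaves monotonically under multiplication by a positive parabolic within a one-parameter family, and the moment it leaves $[-2,2]$ on the $\Hyp_{-1}$ side we get the hyperbolic conclusion.

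Concretely I would argue: if $\phi(d_{i_0})\in\Par$ for some $i_0$, then consider the two-pants piece $P_{i_0}\cup P_{i_0+1}$ (or use $P_1$ if $i_0=1$ and the fact that $\phi(d_1)$ is already computed); applying the trace formulas from the proof of Proposition \ref{prop_evimage} to $\phi(d_{i_0+1})^{-1} = \phi(d_{i_0})^{-1}\phi(c_{i_0+2})$ with both factors positive parabolic and non-commuting (non-abelianness of $\phi|_{\pi_1(P_{i_0+1})}$) shows $\tr\phi(d_{i_0+1}) = 2 - c^2$ with $c\neq 0$, so $\phi(d_{i_0+1})$ is elliptic, parabolic of the appropriate sign, or hyperbolic; iterating, since at each later stage we only multiply by positive parabolics, the trace can only decrease through $[-2,2]$ and the lifted element can only move from $\Par^+_{-1}$ toward $\Ell_{-1}$ toward $\Par^-_{-1}$; but $\Par^-_{-1}$ is disjoint from the image $\ev(\Par\times\Par)$ by Lemma \ref{lem_evimage}, so it cannot be a value of $\phi(d_j)$, which forces the trace to exit $[-2,2]$ on the hyperbolic side at some $j$, i.e. $\phi(d_j)$ is hyperbolic. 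The main obstacle will be making the "monotone evolution through $[-2,2]$" rigorous across different pants — i.e. carefully setting up, for each $i$, the one-parameter family interpolating between $\phi(d_{i-1})$ and $\phi(d_i)$ and verifying that the sign of the relevant off-diagonal entry (which by Lemma \ref{lem_offdiag} controls which parabolic stratum is hit) is consistent, so that the strata are traversed in the order $\Hyp_0, \Par^+_0, \Ell_1, \Par^-_1, \Hyp_1$ or its $z^{-1}$-translate, never allowing a return. That bookkeeping, rather than any single computation, is where the care is needed.
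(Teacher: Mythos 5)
Your part (1) is correct and is essentially the paper's argument: normalize $\phi(c_1)$, use non-abelianness of $\phi|_{\pi_1(P_1)}$ to get a nonzero off-diagonal entry, and use the opposite signs (via Lemma \ref{lem_offdiag}, or equivalently the $s_1\neq s_2$ trace computation from Proposition \ref{prop_evimage}) to force $|\tr|>2$ for $\phi(c_1)\phi(c_2)$, hence $\phi(d_1)$ hyperbolic.

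Part (2) has a genuine gap, and it is exactly at the step you flag as ``bookkeeping.'' The $\phi(d_i)$ form a \emph{discrete} sequence of group elements, not a continuous path in $\univcover$, so there is no meaning to ``the trace can only decrease through $[-2,2]$'' or ``once elliptic, we cannot return to parabolic without passing through hyperbolic'': multiplying a fixed element by a positive parabolic has no monotone effect on the trace, and the product of an elliptic element with a positive parabolic can perfectly well be parabolic (of either sign), so the sequence can jump from elliptic back to parabolic with no hyperbolic value ever appearing among the intermediate $d_j$'s --- the obstruction, when it exists, comes from elsewhere. Moreover, your appeal to Lemma \ref{lem_evimage} to exclude $\Par^-_{-1}$ is out of scope: that lemma constrains the lifted product of two elements of $\overline{\Hyp}$ with lifts in $\overline{\Hyp_0}$, whereas along your chain the left factor $\phi(d_i)^{-1}$ is typically elliptic, and the corresponding constraints (the Section 7 analogue, Lemma \ref{lem_evimage_Ell}) only pin down the \emph{elliptic} part of the image, not which parabolic stratum can occur. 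So the induction as sketched does not close.

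The paper's proof of (2) avoids all of this by a purely local sign argument that reduces (2) to (1). First, $\phi(d_i)\neq\pm\mathrm I$ for every $i$ since $\phi\in\nonabel$. Now suppose some $\phi(d_i)$ is parabolic, say of sign $s'$. The two elements $d_i$ and $d_i^{-1}$ then map to parabolics of \emph{opposite} signs, while all the $\phi(c_k)$ share the single sign $s_1$. Hence on one side of the curve $d_i$ there is a subsurface whose two adjacent preferred boundary curves (either $(d_i^{-1},c_{i+2})$ or $(c_{i+1},d_i)$, suitably interpreted) are parabolic of opposite signs; applying part (1) to the restriction of $\phi$ to that subsurface (its relevant pair of pants is non-abelian because $\phi\in\nonabel$) shows that the adjacent decomposition curve $d_{i-1}$ or $d_{i+1}$ maps to a hyperbolic element. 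At the extreme positions ($i=1$, $i=p-3$, and the base case $p=4$) the subsurface on one side is a single pair of pants bounded by two $c_k$'s and $d_i$; there one uses Theorem \ref{thm_pants} together with Lemma \ref{lem_R0pants} to rule out the sign pattern $(s_1,s_1,-s_1)$ (it would force relative Euler class $0$ and hence an abelian restriction, contradicting $\phi\in\nonabel$), which pins down the sign of $\phi(d_i)$ and again lets part (1) be applied on the other side. This yields the dichotomy directly: if no $\phi(d_i)$ is hyperbolic, then none can be parabolic or $\pm\mathrm I$, so all are elliptic. I would suggest you restructure your part (2) along these lines rather than trying to make the monotone-evolution heuristic rigorous.
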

    \begin{proof}
    For (1), 
    as $\phi(c_1)\in \Par^{sgn(s_1)}$, 
    up to a $\psl$-conjugation, we can assume that 
    $\phi(c_1) = \pm \begin{bmatrix}
        1 & s_1 \\
        0 & 1
    \end{bmatrix}$. For $\phi(c_2) = \pm \abcd \in \Par^{sgn(s_2)}$ with $a+d = 2$, we have $c \neq 0$, 
     as otherwise $\phi(c_2) = \pm \begin{bmatrix}
        1 & b \\
        0 & 1
    \end{bmatrix}$ which commutes with $\phi(c_1)$, contradicting 
    that $\phi|_{\pi_1(P_1)}$ is non-abelian. Then by Lemma \ref{lem_offdiag}, we have $sgn(c) = -sgn(s_2) = sgn(s_1)$, and hence $s_1c>0$. As a consequence, 
     $\phi(d_1) = \big(\phi(c_1)\phi(c_2)\big)^{-1}
     = 
     \pm \begin{bmatrix}
         d & -b - s_1d \\
         -c & a + s_1c
     \end{bmatrix}$ has the  trace $a+d+s_1c > 2$ in absolute value,  hence is hyperbolic. 
     \medskip
     
    For (2), we will consider the cases that $p = 4$ and $p\geqslant 5$ separately .
    \smallskip
    
    In the case that $p = 4$, we show that $\phi(d_1)$ is either hyperbolic or elliptic. 
    Since $\phi|_{\pi_1(P_1)}$ is non-abelian, we have $\phi(d_1) \neq \pm\mathrm{I}$. Now suppose that $\phi(d_1)$ is parabolic. If $\phi(d_1)\in \Par^+$, then we let $s' = +1$; and if $\phi(d_1)\in \Par^-$, then we let $s' = -1$.
       Since $\pi_1(P_1)$ has the preferred peripheral elements $c_1,c_2, d_1$ that are mapped to parabolic elements, 
       $\phi|_{\pi_1(P_1)}$ is type-preserving with the sign $s(\phi|_{\pi_1(P_1)}) = (s_1,s_2,s')$; and since $\pi_1(P_2)$ has the preferred peripheral elements $c_3,c_4, d_1^{-1}$ that are mapped to parabolic elements, 
       $\phi|_{\pi_1(P_2)}$ is type-preserving with the sign $s(\phi|_{\pi_1(P_2)}) = (s_3,s_4,-s')$. Since $p_+(s)=0$ or $p_-(s)=0,$ we have $s_1 = s_2 = s_3 = s_4$, and one of  $s(\phi|_{\pi_1(P_1)})$ and $s(\phi|_{\pi_1(P_2)})$ equals $(s_1, s_1, s_1)$ while the other equals $(s_1, s_1, -s_1)$.
       Then by Theorem \ref{thm_pants}, one of $e(\phi|_{\pi_1(P_1)})$ and $e(\phi|_{\pi_1(P_2)})$ is $s_1$, while the other is $0$; and by Lemma \ref{lem_R0pants}, one of $\phi|_{\pi_1(P_1)}$ and  $\phi|_{\pi_1(P_2)}$ is abelian, contradicting that $\phi\in \mathrm{NA}^s_n(\Sigma_{0,p})$. Therefore, $\phi(d_1)$ is not parabolic, and hence either hyperbolic or elliptic.
    \smallskip
    
    In the case that $p\geqslant 5$, we first observe that $\phi(d_i)\neq \pm\mathrm{I}$ for all $i\in \{1\cdots p - 3\}$, as otherwise $\phi|_{\pi_1(P_i)}$ is abelian for some $i\in \{1,\cdots, p-2\}$. 
Therefore, it suffices to show that, if 
     $\phi(d_i)$ is parabolic for some $i\in \{1\cdots p - 3\}$, then 
     $\phi(d_j)$ is hyperbolic for some $j\in \{1\cdots p - 3\}\setminus \{i\}$. 
         Suppose that $\phi(d_i)$ is parabolic.
         Let $\Sigma_1 \doteq \displaystyle\bigcup_{k =1}^i P_k$ and let $\Sigma_2 \doteq \displaystyle\bigcup_{k = i+1}^{p-2} P_k$ with the common boundary $d_i=\Sigma_1\cap\Sigma_2$. Then, since $p - 3\geqslant 2$, at least one of $\Sigma_1$ and $\Sigma_2$ has more than three punctures so that at least one of $d_{i-1}\in \pi_1(\Sigma_1)$ and $d_{i+1}\in \pi_1(\Sigma_2)$ is defined. 
         We will show that at least one of $\phi(d_{i-1})$ and $\phi(d_{i+1})$ is hyperbolic using part (1).
    If $\phi(d_i)\in \Par^+$, then we let $s' = +1$; and if $\phi(d_i)\in \Par^-$, then we  let $s' = -1$.
     Then the restrictions $\phi|_{\pi_1(\Sigma_1)}$ and 
     $\phi|_{\pi_1(\Sigma_2)}$ are type-preserving, respectively with signs 
     $(s_1,\cdots, s_{i+1}, s')$ and $(-s', s_{i+2},\cdots, s_p)$.
     Moreover, since $p_+(s) = 0$ or $p_-(s) = 0$, we have $s_1 =\cdots =  s_p$, which equals either $s'$ or $-s'$. We consider the following three cases: $i=1,$ $i=p-3$ and $2\leqslant i\leqslant p-4.$ 
     If $i = 1$, then $\Sigma_1 = P_1$, and the sign $s(\phi|_{\pi_1(P_1)}) = (s_1,s_2,s')$. 
     It follows that $s' = s_1 = s_2$, as otherwise $\phi|_{\pi_1(P_1)}$ is abelian by Theorem \ref{thm_pants} and Lemma \ref{lem_R0pants}, contradicting that $\phi\in \mathrm{NA}^s_n(\Sigma_{0,p})$. Then $-s'$ and $s_3 = s_2$ have opposite signs; 
     and applying part (1) to $\phi|_{\pi_1(\Sigma_2)}$, 
     we have $\phi(d_2)= \big(\phi(d_1^{-1})\phi(c_3)\big)^{-1}\in \Hyp$. 
     Similarly, if $i = p-3$, then $\Sigma_2 = P_{p-2}$, and the sign $s(\phi|_{\pi_1(P_{p-2})}) = (-s',s_{p-1},s_p)$. 
     It follows that $-s' = s_{p-1} = s_p$, as otherwise $\phi|_{\pi_1(P_{p-2})}$ is abelian by Theorem \ref{thm_pants} and Lemma \ref{lem_R0pants}, contradicting that $\phi\in \mathrm{NA}^s_n(\Sigma_{0,p})$. Then $s'$ and $s_{p-2} = s_{p-1}$ have opposite signs; and applying part (1) to $\phi|_{\pi_1(\Sigma_1)}$, we have $\phi(d_{p-4}) = \phi(c_{p-2})\phi(d_{p-3})\in \Hyp$.
     Finally, if $2\leqslant i\leqslant p-4$, then $s_{i+1} = s_{i+2}$ implies that  either
     $s_{i+1} = -s'$ or $s_{i+2} = s'$, 
     hence by applying part (1) to either $\phi|_{\pi_1(\Sigma_1)}$ or $\phi|_{\pi_1(\Sigma_2)}$, we conclude that either $\phi(d_{i-1}) = \phi(c_{i+1})\phi(d_i)$ or $\phi(d_{i+1}) = \big(\phi(d_i^{-1})\phi(c_{i+2})\big)^{-1}$ is hyperbolic.
    \end{proof}

    \begin{proof}[Proof of Theorem \ref{thm_pantstype4}]
        We first observe that the inequality in the result is equivalent to the following equality
        $n = \frac{1}{2}(s_1+s_2+s_3+s_4).$
                 \medskip

        We begin by proving that, if $n = \frac{1}{2}(s_1+s_2+s_3+s_4)$, then $\mathcal{R}^s_n(\pantstwo)$ is non-empty. To find a representation $\phi$ in $\mathcal{R}^s_n(\pantstwo)$, we will define $\phi$ on $\pi_1(P_1)$ first, then extend it to $\pi_1(\pantstwo)$. Let $m = \frac{1}{2}(s_1+s_2)$.
          As the fundamental group $\pi_1(P_1)$ has the preferred peripheral elements $c_1,c_2$ and $d$, Theorem \ref{thm_pants2} implies that there exists a representation $\phi\in \HP_m^{(s_1,s_2,0)}(P_1)$, having $\phi(c_1)\in\Par^{sgn(s_1)}$, $\phi(c_2)\in\Par^{sgn(s_2)}$, and $\phi(d)\in\Hyp$.
          Let $\widetilde{\phi(d)}$ be the lift of $\phi(d)$ in $\Hyp_{n-m}$. As  
          $n - m = \frac{1}{2}(s_3 + s_4)$, by Corollary \ref{cor_evimage}, the evaluation map $\ev: \HP_{n-m}^{(s_3,s_4,0)}(P_2)\to \Hyp_{n-m}$ is surjective. Hence, there exists a representation $\psi\in \HP_{n-m}^{(s_3,s_4,0)}(P_2)$ such that $\ev(\psi) = \widetilde{\phi(d)}$. As $\psi$ and $\phi$ agree at the common boundary $d$ of $P_1$ and $P_2$, letting $\phi|_{\pi_1(P_2)}\doteq \psi$ extends the representation $\phi$ from $\pi_1(P_1)$ to $\pi_1(\pantstwo)$. 
       Moreover, as 
       $e(\phi|_{\pi_1(P_1)}) = m$ and $e(\phi|_{\pi_1(P_2)}) = n-m$, by Proposition \ref{prop_additivity}, 
       the relative Euler class 
       $e(\phi) = m + (n-m) = n$;
       and as $\phi(c_i)\in \Par^{sgn(s_i)}$ for $i\in \{1,2,3,4\}$, the sign 
       $s(\phi) = s$. As a consequence, $\phi\in \mathcal{R}^s_n(\pantstwo)$.
          \medskip
        

        Next we prove that, if $\mathcal{R}^s_n(\pantstwo)$ is non-empty, then $n = \frac{1}{2}(s_1+s_2+s_3+s_4)$. 
        If $p_+(s)\geqslant 1$ and $p_-(s)\geqslant 1$, then up to a permutation of the peripheral elements, we can assume that $s_1 = +1$ and $s_2 = -1$.
        For a representation $\phi\in \mathcal{R}^s_n(\pantstwo)$, 
        since the pair $(n,s)$ is not exceptional,
           by Theorem \ref{thm_nonabel}, we can assume that $\phi|_{\pi_1(P_1)}$ and $\phi|_{\pi_1(P_2)}$ are non-abelian; and by Lemma \ref{lem_inthyp_sphere}, $\phi(d)$ is hyperbolic. 
    If otherwise that $p_+(s) = 0$ or $p_-(s) = 0$, then by Theorem \ref{thm_nonabel} again, we can assume that $\phi|_{\pi_1(P_1)}$ and $\phi|_{\pi_1(P_2)}$ are non-abelian; and by Lemma \ref{lem_inthyp_sphere}, $\phi(d)$ is either hyperbolic or elliptic.
        If $\phi(d)$ is elliptic, 
        then $\phi$ is so called ``totally non-hyperbolic" (see the beginning of Section 7 for the definition), and Proposition \ref{Prop_Exc2} would imply that $\phi \in \mathcal{R}^s_{s_1}(\Sigma_{0,4})$ and the pair $(n, s) = (s_1, s)$ is exceptional, which is a contradiction. Therefore, $\phi(d)$ is hyperbolic;
    and we have $\phi|_{\pi_1(P_1)}\in \HP(P_1)$ with sign $s(\phi|_{\pi_1(P_1)}) = (s_1,s_2,0)$, and $\phi|_{\pi_1(P_2)}\in \HP(P_2)$ with sign $s(\phi|_{\pi_1(P_2)}) = (s_3,s_4,0)$.
    Then by Theorem \ref{thm_pants2}, we have $e(\phi|_{\pi_1(P_1)}) = \frac{1}{2}(s_1+s_2)$ and $e(\phi|_{\pi_1(P_1)}) = \frac{1}{2}(s_3+s_4)$. Finally, by Proposition \ref{prop_additivity}, we have $n = e(\phi|_{\pi_1(P_1)})+e(\phi|_{\pi_1(P_2)}) = \frac{1}{2}(s_1+s_2+s_3+s_4)$. 
        \medskip
        
         It remains to show that each non-empty $\mathcal{R}^s_n(\Sigma_{0,4})$ is connected. 
         Let $\phi,\psi \in \mathcal{R}^s_n(\Sigma_{0,4})$. 
         By Theorem \ref{thm_nonabel}, we can assume that $\phi|_{\pi_1(P_1)},$ $\phi|_{\pi_1(P_2)},$ $\psi|_{\pi_1(P_1)}$ and $\psi|_{\pi_1(P_2)}$ are non-abelian. Then as shown above, $\phi(d)$ and $\psi(d)$ are hyperbolic, and hence $\phi|_{\pi_1(P_1)}, \psi|_{\pi_1(P_1)}\in \HP(P_1)$ with signs $s(\phi|_{\pi_1(P_1)}) = s(\psi|_{\pi_1(P_1)}) = (s_1,s_2,0)$. 
        Finally, by Theorem \ref{thm_pants2}, we have
         $e(\phi|_{\pi_1(P_1)}) = e(\psi|_{\pi_1(P_1)}) = \frac{1}{2}(s_1+s_2)$; and by Lemma \ref{lem_sameEuler_pants}, there exists a path in $\HPns(\Sigma_{0,4})$ connecting $\phi$ and $\psi$.
    \end{proof}

\section{General surfaces}
     In this section, we complete the proof of Theorem \ref{thm_main1} and Theorem \ref{thm_main4} for general punctured surfaces. Corresponding to the Cases (2) and (3) in Theorem \ref{thm_main2}, for $\Sigma = \Sigma_{g,p}$, $n\in \mathbb{Z}$ and $s\in \{-1,0,+1\}^p$, we will call the triple $(\Sigma, n,s)$ \emph{exceptional} if $g = 0$, $p_0(s) = 0$, and the pair $(n,s)$ satisfies one of the following conditions:
    \begin{enumerate}[(1)]
        \item  $n = 0,$ and either $p_-(s) = 1$ or $p_+(s)=1$, and
        \item  $n = 1$ and $p_-(s) = 0$,  or $n = -1$ and $ p_+(s) = 0$.
    \end{enumerate}
    The generalized Milnor–Wood inequality does not hold for these exceptional triples $(\Sigma, n, s)$; but  the corresponding spaces $\Rns$ are non-empty, forming the \emph{exceptional components} 
    of $\R$. This which will be discussed in Section 7.
    \\
    
     The main result in this section describes $\HPns(\Sigma)$ for all non-exceptional $(\Sigma, n, s)$, which is stated as Theorem \ref{thm_general} below.
\begin{theorem}\label{thm_general}
        Let $\Sigma = \Sigma_{g,p}$, $n\in \mathbb{Z}$ and 
        $s \in \{-1,0,+1\}^p$ such that the triple $(\Sigma, n,s)$ is not exceptional.
        Then $\HPns(\Sigma)$ is non-empty if and only if 
        $$\chi(\Sigma) + p_+(s)\leqslant n\leqslant -\chi(\Sigma) - p_-(s).$$
        Moreover, each non-empty $\HPns(\Sigma)$ above is connected.
    \end{theorem}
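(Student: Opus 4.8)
\emph{The plan is to argue by induction on $-\chi(\Sigma)$.} The cases $\chi(\Sigma)=-1$ and $\chi(\Sigma)=-2$ are Sections 3 and 5 (Theorems \ref{thm_goldman}, \ref{thm_pants2}, \ref{thm_pants}, Remark \ref{rmk_MWpants}, and Theorems \ref{thm_torustype1}, \ref{thm_torustype2}, \ref{thm_pantstype1}, \ref{thm_pantstype2}, \ref{thm_pantstype4}), so I would assume $\chi(\Sigma)\leq -3$ and that the theorem holds for all punctured surfaces of strictly larger Euler characteristic. I would fix the almost-path decomposition $\Sigma=\apdecomp$ of Section 4 and its dual tree, which has more than one vertex. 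Two reduction moves would be used repeatedly on a given $\phi\in\HPns(\Sigma)$: first, since $(\Sigma,n,s)$ is non-exceptional and $\chi(\Sigma)\leq -2$, one of the conditions (1)--(4) of Theorem \ref{thm_nonabel} holds, so $\phi$ may be deformed inside $\HPns(\Sigma)$ into $\nonabel$; second, applying Lemma \ref{lem_inthyptorus} to each $T_j\cup P_j$ and Lemma \ref{lem_inthyppants} to each $P_i\cup P_{i+1}$, processed from the leaves of the dual tree inward so that at each step one boundary component of the relevant $\Sigma_{0,4}$ is already hyperbolic, $\phi$ may be further deformed so that it sends every decomposition curve to a hyperbolic element. (When $g=0$ and $p_0(s)=0$, the second move needs an initial hyperbolic decomposition curve, supplied by Lemma \ref{lem_inthyp_sphere} once the ``totally non-hyperbolic'' possibility is excluded using the description of the exceptional components in Section 7.)

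\emph{Necessity.} Given $\phi\in\HPns(\Sigma)$, apply the two moves to assume $\phi\in\nonabel$ and $\phi$ hyperbolic on all decomposition curves. Since the dual tree has more than one vertex, every piece of the decomposition has at least one decomposition-curve boundary and hence lies in the hyperbolic/mixed regime where the Milnor--Wood inequality holds (Theorems \ref{thm_goldman}, \ref{thm_pants2}, Remark \ref{rmk_MWpants}). Summing these local inequalities over the $g+p-2$ three-hole spheres and the $g$ one-hole tori, using additivity (Proposition \ref{prop_additivity}) and noting that a decomposition curve contributes nothing to either local sign-count whereas each peripheral element $c_k$ contributes $1$ to the lower count exactly when $s_k=+1$ and $1$ to the upper count exactly when $s_k=-1$, the decomposition-curve terms telescope and the identity $-(2g+p-2)=\chi(\Sigma)$ gives $\chi(\Sigma)+p_+(s)\leq n\leq -\chi(\Sigma)-p_-(s)$.

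\emph{Realization.} Conversely, given $(n,s)$ in this range, pick a leaf $\Sigma_0$ of the dual tree (homeomorphic to $\Sigma_{1,1}$ or $\Sigma_{0,3}$) meeting $\Sigma'\doteq\Sigma\setminus\Sigma_0$ along a single curve $d$; then $\chi(\Sigma')=\chi(\Sigma)+1$ and $d$ becomes a peripheral element of $\pi_1(\Sigma')$. A short arithmetic check (using $\chi(\Sigma)\leq -3$) yields an integer $n_0$ such that $n_0$, together with the sign $s_0$ of $\Sigma_0$ recording a hyperbolic entry at $d$, satisfies the generalized Milnor--Wood inequality on $\Sigma_0$, and such that $(n-n_0,s')$ satisfies it on $\Sigma'$, where $s'$ records a hyperbolic entry at $d$. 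By the base cases ($\chi(\Sigma_0)=-1$) there is $\phi_0\in\HP^{s_0}_{n_0}(\Sigma_0)$ with $\phi_0(d)$ hyperbolic; since $p_0(s')\geq 1$, the triple $(\Sigma',n-n_0,s')$ is non-exceptional, so by the induction hypothesis there is $\phi_1\in\HP^{s'}_{n-n_0}(\Sigma')$ with $\phi_1(d)$ hyperbolic. Using the surjectivity and path-lifting of the evaluation maps (Proposition \ref{prop_evimage}, Corollary \ref{cor_evimage}, Theorem \ref{thm_PLP}, Proposition \ref{prop_plpliftcommu}) I would conjugate $\phi_1$ so that $\phi_1(d)=\phi_0(d)^{-1}$ and glue, obtaining $\phi\in\HPns(\Sigma)$ with $e(\phi)=n$ and $s(\phi)=s$ by Proposition \ref{prop_additivity}.

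\emph{Connectedness, and the main obstacle.} For $\phi,\psi\in\HPns(\Sigma)$, apply the two moves to each, then invoke Lemma \ref{lem_graph} to get a sequence $\phi=\phi_1,\dots,\phi_N=\psi$ in $\HPns(\Sigma)$ in which consecutive $\phi_j,\phi_{j+1}$ have equal relative Euler class on every piece except a connected union of two adjacent pieces, on which they also agree in Euler class. Since the dual tree properly contains the edge spanned by that union, it has a leaf $\Sigma_0$ outside it; then $\phi_j$ and $\phi_{j+1}$ agree in Euler class (and in sign) on $\Sigma_0$, so their restrictions to $\pi_1(\Sigma\setminus\Sigma_0)$ lie in one and the same subspace of $\HP(\Sigma\setminus\Sigma_0)$ cut out by a fixed sign and relative Euler class, which is non-exceptional (it has the hyperbolic boundary $d$) and connected by the induction hypothesis; extending a path joining them there to all of $\Sigma$ via Proposition \ref{prop_sameEuler} connects $\phi_j$ to $\phi_{j+1}$ in $\HPns(\Sigma)$, and concatenation connects $\phi$ to $\psi$. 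I expect the main difficulty to be the bookkeeping of the two reduction moves and their interaction with Lemma \ref{lem_graph} and Proposition \ref{prop_sameEuler}: one must keep every decomposition curve simultaneously hyperbolic and, along the connecting paths, of controlled conjugacy type --- which is where Lemmas \ref{lem_conjugacypath}, \ref{lem_conjugacypath_const} and the path-lifting properties of the evaluation maps enter --- and, when $g=0$ and $p_0(s)=0$, one must reconcile the requirement that decomposition curves be made hyperbolic with the existence of the ``totally non-hyperbolic'' exceptional representations of Section 7.
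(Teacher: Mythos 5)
Your proposal is correct and follows essentially the same route as the paper: induction on $\chi(\Sigma)$ over the fixed almost-path decomposition, using Theorem \ref{thm_nonabel} together with a hyperbolization of all decomposition curves (the paper packages your leaf-inward sweep as Lemma \ref{lem_inthyp}, handling the $g=0$, $p_0(s)=0$ case via Lemma \ref{lem_inthyp_sphere} and Proposition \ref{Prop_Exc2} exactly as you indicate), additivity for the generalized Milnor--Wood inequality, removal of a leaf plus surjectivity and path-lifting of the evaluation maps for realization, and Lemma \ref{lem_graph} with Proposition \ref{prop_sameEuler} for connectedness. The only organizational difference is in the connectedness step, where the paper re-assembles $\Sigma$ one piece at a time from the two-piece subsurface by repeated use of Proposition \ref{prop_sameEuler}, whereas you cut off a leaf $\Sigma_0$, invoke the inductive connectedness of $\HP^{s''}_{n''}(\Sigma\setminus\Sigma_0)$ (non-exceptional since the cutting curve is hyperbolic), and apply Proposition \ref{prop_sameEuler} once; both variants work.
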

    In the proof of Theorem \ref{thm_general}, we will use the following notation defined in Section 4. Recall that we have fixed an almost-path decomposition $\apdecomp$ of $\Sigma$, by connecting the vertices $P_i$ and $P_{i+1}$ by an edge for each $i\in\{1,\dots,g+p-3\}$ and 
    connecting the vertices $T_j$ and $P_j$ by an edge for each $j\in\{1,\dots,g\}$ in the dual graph. The decomposition curves are the following separating curves in $\Sigma$: For $i\in \{1,\cdots, g+p-3\}$, $d_i$ is the common boundary of $P_i$ and $P_{i+1}$; and for $j\in \{1,\cdots, g\}$, $d'_j$ is the common boundary of $T_j$ and $P_j$. 
    Moreover, we denote by $\nonabel$ the subspace of $\HPns(\Sigma)$ consisting of representations whose restriction to  each $\pi_1(P_i)$ for  $i \in\{ 1,\cdots, g+p-2\}$,  and to each $\pi_1(T_j)$ for $j \in\{ 1,\cdots, g\}$, is non-abelian.
    \\
    
    The proof of Theorem \ref{thm_general} relies on the following Lemma \ref{lem_inthyp}.
    \begin{lemma}\label{lem_inthyp}
         Let $\Sigma = \Sigma_{g,p}$ 
         with $\chi(\Sigma)\leqslant -2$, together with the chosen almost-path decomposition $\apdecomp$. Let $n\in \mathbb{Z}$ and
        $s\in \{-1,0,+1\}^p$ such that the triple $(\Sigma, n,s)$ is not exceptional. 
        If $p_0(s)\geqslant 1$, then assume that $s_1 = 0$; and if $p_0(s) = 0$, $p_+(s)\geqslant 1$ and $p_-(s)\geqslant 1$, then assume that $s_1 = -s_2$.
        Then for every $\phi\in \nonabel$, there is a path $\{\phi_t\}_{t\in [0,1]}$ in $\nonabel$ starting from $\phi_0 = \phi$ such that: 
        \begin{enumerate}[(1)]       \item For all $t\in [0,1]$ and all $i\in \{1,\cdots , p\}$, $\phi_t(c_i)$ is conjugate to $\phi(c_i)$ in $\psl$.
        \item $\phi_1$ maps all the decomposition curves to hyperbolic elements.
        \end{enumerate}
    \end{lemma}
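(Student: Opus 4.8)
The plan is to deform $\phi$ one decomposition curve at a time, always staying in $\nonabel$ and keeping the conjugacy class of each $\phi(c_i)$ fixed, until every decomposition curve is sent to a hyperbolic element. Two elementary moves are available. Applying Lemma \ref{lem_inthyptorus} to a subsurface $T_j\cup P_j\cong\torustwo$ deforms $\phi$ so that the internal curve $d'_j=[a_j,b_j]$ becomes hyperbolic, while $\phi_t$ stays non-abelian on $\pi_1(T_j)$ and on $\pi_1(P_j)$ and the two boundary curves of $T_j\cup P_j$ are moved only within their conjugacy classes; applying Lemma \ref{lem_inthyppants} to a subsurface $P_i\cup P_{i+1}\cong\pantstwo$ does the analogous thing for the internal curve $d_i$, provided at least one of the four boundary curves of $P_i\cup P_{i+1}$ is already sent to a hyperbolic element. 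Each move is extended from the subsurface to $\pi_1(\Sigma)$ by the standard gluing: since the boundary curves are moved within their conjugacy classes, Lemma \ref{lem_conjugacypath_const} produces continuous paths of conjugators in $\psl$ starting at $\pm\mathrm I$, and we conjugate the restriction of $\phi$ to each complementary subsurface by the appropriate one; because the dual graph is a tree, distinct internal boundary curves bound distinct complementary pieces, so the gluing is consistent. Conjugation preserves non-abelianness of every $\pi_1(P_k)$ and $\pi_1(T_k)$ and the type of every $\phi_t(c_i)$, so the path stays in $\nonabel$ with the same relative Euler class, and condition (1) holds throughout; moreover once a decomposition curve has been made hyperbolic it remains hyperbolic under every later move, since in a later step it is either an untouched curve (only conjugated) or a boundary curve of the subsurface being deformed (kept in its conjugacy class).

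\textbf{Order of the moves.} First I would make $\phi(d'_j)$ hyperbolic for $j=1,\dots,g$ in succession; Lemma \ref{lem_inthyptorus} has no prerequisite beyond non-abelianness, so nothing obstructs this (when $p=1$ the two one-holed tori attached to $P_{g-1}$ are handled one after the other). After this step, if $g\geqslant 1$ the curve $d'_1$ is a hyperbolic boundary curve of $P_1\cup P_2$; and if $g=0$ but $p_0(s)\geqslant 1$, the hypothesis $s_1=0$ makes $\phi(c_1)$ hyperbolic, and $c_1$ is a boundary curve of $P_1\cup P_2$; in either case Lemma \ref{lem_inthyppants} applies to $P_1\cup P_2$ and makes $\phi(d_1)$ hyperbolic. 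If instead $g=0$, $p_0(s)=0$ and $p_+(s),p_-(s)\geqslant 1$, the hypothesis $s_1=-s_2$ together with Lemma \ref{lem_inthyp_sphere}(1) shows $\phi(d_1)$ is already hyperbolic. Having a first hyperbolic decomposition curve $d_{i_0}$ in the chain $P_1-\cdots-P_{g+p-2}$, I would then propagate outward: for $i$ decreasing from $i_0-1$ to $1$, apply Lemma \ref{lem_inthyppants} to $P_i\cup P_{i+1}$, whose boundary curve $d_{i+1}$ was made hyperbolic at the previous step, to make $\phi(d_i)$ hyperbolic; symmetrically for $i$ increasing from $i_0+1$. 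This makes every $d_i$ hyperbolic, completing the construction in these cases.

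\textbf{The remaining case.} The one case not covered above is $g=0$, $p_0(s)=0$, and $p_+(s)=0$ or $p_-(s)=0$; by Proposition \ref{prop_pglpsl} we may assume $p_-(s)=0$, so every $\phi(c_i)$ is positive parabolic and, the triple being non-exceptional, $n\neq 1$. By Lemma \ref{lem_inthyp_sphere}(2) either some $\phi(d_i)$ is hyperbolic — and then we proceed by the propagation above — or every $\phi(d_i)$ is elliptic. The crux is to rule out the latter. Here I would observe that in this case $\phi\in\mathcal R(\Sigma)\subset\HP(\Sigma)$, and compute $e(\phi)$ directly in $\univcover$: lifting each $\phi(c_i)$ to $\mathrm{Par}^+_0$ and using the description of the image of the lifted product map (Proposition \ref{prop_evimage}) to track the partial products $\widetilde{\phi(c_1)}\cdots\widetilde{\phi(c_k)}$ — each of which, being elliptic, lies in some $\Ell_{m_k}$ — one checks that they march through $\Par^+_0,\Ell_1,\dots$ in such a way that $\widetilde{\phi(c_1)}\cdots\widetilde{\phi(c_p)}=z$, i.e. $n=1$, contradicting non-exceptionality. (This is essentially the content of Proposition \ref{Prop_Exc2} on totally non-hyperbolic representations.) Hence some $\phi(d_i)$ is hyperbolic and the propagation applies.

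\textbf{Main obstacle.} Everything except the last paragraph is the routine gluing plus sign/Euler-class bookkeeping already used repeatedly above. The substantive point is the exclusion of the all-elliptic alternative: it requires controlling how the ``level'' of a partial product in $\univcover$ can change when one multiplies by a positive parabolic element of $\mathrm{Par}^+_0$, so as to pin down the relative Euler class to be exactly $1$. I expect this universal-cover computation (or the invocation of the Section 7 classification of totally non-hyperbolic representations) to be the only genuinely delicate step.
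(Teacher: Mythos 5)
Your proposal is correct, and it uses the same basic ingredients as the paper (Lemmas \ref{lem_inthyptorus}, \ref{lem_inthyppants}, \ref{lem_conjugacypath_const}, \ref{lem_inthyp_sphere} and Proposition \ref{Prop_Exc2}), but it organizes them differently. The paper argues by induction on $\chi(\Sigma)$: it produces a single hyperbolic decomposition curve $d$ by exactly your three-case analysis ($T_1\cup P_1$ via Lemma \ref{lem_inthyptorus} when $g\geqslant 1$; $P_1\cup P_2$ via Lemma \ref{lem_inthyppants} using $s_1=0$ when $g=0$, $p_0(s)\geqslant 1$; Lemma \ref{lem_inthyp_sphere} plus Proposition \ref{Prop_Exc2} when $g=0$, $p_0(s)=0$), then cuts $\Sigma$ along $d$ into $\Sigma_1,\Sigma_2$, observes that neither sub-triple is exceptional because $d$ contributes a $0$ to each restricted sign, applies the inductive hypothesis to both pieces, and reglues with Lemma \ref{lem_conjugacypath_const}. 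You instead avoid the induction entirely and propagate hyperbolicity curve-by-curve along the almost-path decomposition, first making each $d'_j$ hyperbolic and then marching the four-holed-sphere move down the chain, with the previously treated curve serving as the hyperbolic boundary that Lemma \ref{lem_inthyppants} requires. The trade-off is that you must do the persistence bookkeeping explicitly (each earlier hyperbolic curve is either a boundary curve of the current subsurface, hence kept in its conjugacy class, or lies in the complement, hence only conjugated), which you do; the paper's recursion gets this for free at the cost of the non-exceptionality check for the pieces. Two small points worth recording in a write-up: when invoking Lemma \ref{lem_conjugacypath_const} you should note that the image of a decomposition curve is never $\pm\mathrm I$, which follows from non-abelianness of the adjacent $\phi|_{\pi_1(P_k)}$ or $\phi|_{\pi_1(T_j)}$; and for the all-elliptic alternative it is cleanest to do exactly what the paper does and cite Proposition \ref{Prop_Exc2} directly (its proof is independent of this lemma, so there is no circularity), rather than redoing the universal-cover level count, which as you note needs the finer control of Lemma \ref{lem_evimage_Ell} and not just Proposition \ref{prop_evimage}.
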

    \begin{proof}

    We will proceed by induction
    on the Euler characteristic of $\Sigma$. For the base case $\chi(\Sigma) = -2$, if $g\geqslant 1$, then the result follows by Lemma \ref{lem_inthyptorus}; and if $g = 0$ and $p_0(s)\geqslant 1$, then the result follows by Lemma \ref{lem_inthyppants}. 
    In the case that $g = 0$ and $p_0(s) = 0$, we claim  that $\phi(d_1)$ is hyperbolic, then we can let $\path$ be the constant path $\phi_t\equiv \phi$. 
    Indeed, if $p_+(s)\geqslant 1$ and $p_-(s)\geqslant 1$, as $\phi\in \nonabel$ and $s_1 = -s_2$, Lemma \ref{lem_inthyp_sphere} implies that $\phi(d_1)$ is hyperbolic. If otherwise that $p_+(s) = 0$ or $p_-(s) = 0$, then by Lemma \ref{lem_inthyp_sphere} again, $\phi(d_1)$ is either hyperbolic or elliptic; and if $\phi(d_1)$ is elliptic, then $\phi$ is so called ``totally non-hyperbolic" (see the beginning of Section 7 for the definition) and Proposition \ref{Prop_Exc2} would imply that $\phi \in \mathcal{R}^s_{s_1}(\Sigma)$ and the triple $(\Sigma, n, s)=(\Sigma, s_1, s)$ is exceptional, which is a contradiction. Therefore, $\phi(d_1)$ is hyperbolic.
    \bigskip
    
    Now assume that the result holds for all punctured surfaces with the Euler characteristic strictly greater than $\chi(\Sigma)$.
        For $\phi\in \nonabel$, we will construct the path $\path$ starting from $\phi$ as follows. We will first construct a path $\{\psi_t\}\interval$ in $\nonabel$ starting from $\psi_0 = \phi$ and satisfying (1),
        such that $\psi_1$ maps at least one decomposition curve to a hyperbolic element.
       Then, using the induction hypothesis, we will construct a path $\{\rho_t\}\interval$ in $\nonabel$ starting from $\rho_0 = \psi_1$ that satisfies (1) and (2).
        Then the composition of these two paths defines the path $\path$.
        \medskip
        
        Below we construct the path $\{\psi_t\}\interval$]  for the following three cases separately: $g\geqslant 1$, $g =  0$ and $p_0(s)\geqslant 1$, and $g =0$ and  $p_0(s) = 0$.

        In the case that $g\geqslant 1$, we consider the subsurface $P_1\cup T_1\subset \Sigma$ 
        where $P_1$ and $T_1$ are 
        separated by the decomposition curve $d'_1$. 
    Since the restrictions $\phi|_{\pi_1(P_1)}$ and $\phi|_{\pi_1(T_1)}$ are non-abelian, by Lemma \ref{lem_inthyptorus}, there exists a path 
    $\{\psi_t: \pi_1(P_1\cup T_1)\to \psl\}\interval$ of representations 
    starting from $\psi_0 = \phi|_{\pi_1(P_1\cup T_1)}$ such that $\psi_t|_{\pi_1(P_1)}$ and $\psi_t|_{\pi_1(T_1)}$ are non-abelian for all $t\in [0,1]$, and
    $\psi_1(d'_1)$ is hyperbolic. Moreover, for all $t\in [0,1]$, $\psi_t(c_1)$ is conjugate to $\phi(c_1)$, and $\psi_t(d_1)$ is conjugate to $\phi(d_1)$. We extend this path to $\pi_1(\Sigma)$ as follows.
    Since $\psi_t(d_1)$ is conjugate to $\phi(d_1)$ for all $t\in [0,1]$, by Lemma \ref{lem_conjugacypath_const},
    there exists a path $\{g_t\}\interval$ in $\psl$ such that $g_0 = \pm\mathrm{I}$, and 
    for all $t\in [0,1]$, $\psi_t(d_1) = g_t \phi(d_1) g_t^{-1}$. 
    Then for each $t\in [0,1]$, as $d_1$ is the common boundary of $P_1\cup T_1$ and its complement $\Sigma' = \Sigma \setminus (P_1\cup T_1)$, letting $\psi_t|_{\pi_1(\Sigma')}\doteq g_t\big(\phi|_{\pi_1(\Sigma')}\big)g_t^{-1}$ 
    extends the representation $\psi_t$ from $\pi_1(P_1\cup T_1)$ to $\pi_1(\Sigma)$. 
    For all $t\in [0,1]$, as a $\psl$-conjugation of a non-abelian representation, 
    each $\psi_t|_{\pi_1(P_i)}$, $i \in\{2,\cdots, g+p-2\}$ and each $\psi_t|_{\pi_1(T_j)}$, $j \in\{2,\cdots, g\}$, is non-abelian.
    Moreover, for each $t\in [0,1]$ and each $i\in\{1,\cdots, p\},$ $\psi_t(c_i)$ is conjugate to $\phi(c_i)$, hence the signs $s(\psi_t) = s(\phi)$ and the relative Euler classes $e(\psi_t) = e(\phi)$. As a consequence, the path $\{\psi_t\}\interval$ lies in $\nonabel.$
     
     In the case that $g = 0$ and $p_0(s)\geqslant 1$, we consider the subsurface $P_1\cup P_2\subset \Sigma$, 
     where $P_1$ and $P_2$ are separated by the decomposition curve $d_1$. 
    The restrictions $\phi|_{\pi_1(P_1)}$ and $\phi|_{\pi_1(P_2)}$ are non-abelian; and since $s_1 = 0$, $\phi(c_1)$ is hyperbolic.
    Hence, by Lemma \ref{lem_inthyppants}, there exists a path 
    $\{\psi_t: \pi_1(P_1\cup P_2)\to \psl\}\interval$ of representations 
    starting from $\psi_0 = \phi|_{\pi_1(P_1\cup P_2)}$ such that $\psi_t|_{\pi_1(P_1)}$ and $\psi_t|_{\pi_1(P_2)}$ are non-abelian for all $t\in [0,1]$, and
    $\psi_1(d_1)$ is hyperbolic. Moreover, for all $t\in [0,1]$, $\psi_t(c_i)$ is conjugate to $\phi(c_i)$ for each $i\in \{1,2,3\}$, and $\psi_t(d_2)$ is conjugate to $\phi(d_2)$. We extend this path to $\pi_1(\Sigma)$ as follows.
    Since $\psi_t(d_2)$ is conjugate to $\phi(d_2)$ for all $t\in [0,1]$, by Lemma \ref{lem_conjugacypath_const},
    there exists a path $\{g_t\}\interval$ in $\psl$ such that $g_0 = \pm\mathrm{I}$, and 
    for all $t\in [0,1]$, $\psi_t(d_2) = g_t \phi(d_2) g_t^{-1}$. 
    Then for each $t\in [0,1]$, as $d_2$ is the common boundary of $P_1\cup P_2$ and its complement $\Sigma' = \Sigma \setminus (P_1\cup P_2)$, letting $\psi_t|_{\pi_1(\Sigma')}\doteq g_t\big(\phi|_{\pi_1(\Sigma')}\big)g_t^{-1}$ 
    extends the representation $\psi_t$ from $\pi_1(P_1\cup P_2)$ to $\pi_1(\Sigma)$. 
    For all $t\in [0,1]$, as a $\psl$-conjugation of a non-abelian representation, 
    each $\psi_t|_{\pi_1(P_i)}$, $i \in\{3,\cdots, p-2\}$ is non-abelian.
    Moreover, for each $t\in [0,1]$ and each $i\in\{1,\cdots, p\},$ $\psi_t(c_i)$ is conjugate to $\phi(c_i)$, hence the signs $s(\psi_t) = s(\phi)$ and the relative Euler classes $e(\psi_t) = e(\phi)$. As a consequence, the path $\{\psi_t\}\interval$ lies in $\nonabel.$
    
     In the case that $g = 0$ and $p_0(s) = 0$, we will prove that there is an $i\in \{1,\cdots, p-2\}$ such that $\phi(d_i)$ is hyperbolic, then we can let  $\path$ be the constant path $\phi_t\equiv \phi$. 
    If  $p_+(s)\geqslant 1$ and $p_-(s)\geqslant 1$, then as $\phi\in \nonabel$ and $s_1 = -s_2$, Lemma \ref{lem_inthyp_sphere} implies that $\phi(d_1)$ is hyperbolic. 
    If otherwise that $p_+(s) = 0$ or $p_-(s) = 0$, then by Lemma \ref{lem_inthyp_sphere} again,
       either $\phi(d_i)\in \Ell$ for all $i\in \{1,\cdots p-3\}$, or there exists an $i\in \{1,\cdots, p-3\}$ such that $\phi(d_i)\in \Hyp$; and if $\phi(d_i)\in \Ell$ for all $i\in \{1,\cdots p-3\}$, then $\phi$ is so called ``totally non-hyperbolic" (see the beginning of Section 7 for the definition) and Proposition \ref{Prop_Exc2} would imply that $\phi \in \mathcal{R}^s_{s_1}(\Sigma)$ and the triple $(\Sigma, n, s)=(\Sigma, s_1, s)$ is exceptional, which is a contradiction. Therefore, there exists an $i\in \{1,\cdots, p-2\}$ such that $\phi(d_i)$ is hyperbolic.
     \medskip
     
    Next, we construct the path $\{\rho_t\}\interval$ using the induction hypothesis.
    Let $d$ be a decomposition curve such that $\psi_1(d)\in \Hyp$.
    Then $d$ separates $\Sigma$ into two subsurfaces $\Sigma_1$ and $\Sigma_2$, with $\chi(\Sigma_1)> \chi(\Sigma)$ and $\chi(\Sigma_2)> \chi(\Sigma)$.
    Since $\psi_1(c_i)\in \overline{\Hyp}$ for 
    $i\in \{1,\cdots, p\}$ and $\psi_1(d)\in \Hyp$, we have 
    $\psi_1|_{\pi_1(\Sigma_1)}
    \in \HP(\Sigma_1)$ and $\psi_1|_{\pi_1(\Sigma_2)}
    \in \HP(\Sigma_2)$. 
    Let the relative Euler classes 
    $n' \doteq e\big(\psi_1|_{\pi_1(\Sigma_1)}\big)$ and 
    $n'' \doteq e\big(\psi_1|_{\pi_1(\Sigma_2)}\big)$, and the signs $s' = s\big(\psi_1|_{\pi_1(\Sigma_1)}\big)$ and 
    $s'' = s(\psi_1|_{\pi_1(\Sigma_2)})$. 
    As $\psi_1(d)\in \Hyp$, we have $p_0(s')\geqslant 1$ and $p_0(s'')\geqslant 1$, hence neither $(\Sigma_1, n', s')$ nor $(\Sigma_2, n'', s'')$ is exceptional. Moreover, since $\psi_1\in \nonabel$, we have 
    $\psi_1|_{\pi_1(\Sigma_1)}\in \mathrm{NA}^{s'}_{n'}(\Sigma_1)$ 
    and 
    $\psi_1|_{\pi_1(\Sigma_2)}\in \mathrm{NA}^{s''}_{n''}(\Sigma_2)$. 
    Therefore, by the induction hypothesis, there exist paths
    $\{\rho_{1,t}\}_{t\in [0,1]}\subset \mathrm{NA}^{s'}_{n_1}(\Sigma_1)$ and $\{\rho_{2,t}\}_{t\in [0,1]}\subset \mathrm{NA}^{s''}_{n_2}(\Sigma_2)$, respectively starting from $\psi_1|_{\pi_1(\Sigma_1)}$
    and $\psi_1|_{\pi_1(\Sigma_2)}$, and both satisfying Conditions (1) and (2) in the lemma. 
    For each $j\in 
    \{1,2\}$, as $\rho_{j,t}(d)$ is conjugate to $\psi_1(d)$ for all $t\in [0,1]$, by Lemma \ref{lem_conjugacypath_const}, there exists a path $\{g_{j,t}\}\interval$ in $\psl$ starting from $g_{j,0} = \pm\mathrm{I}$, such that $g_{j,t}\rho_{j,t}(d)(g_{j,t})^{-1} = \psi_1(d)$.
    Then for each $t\in [0,1]$, as $g_{1,t}\rho_{1,t}(g_{1,t})^{-1}$ and $g_{2,t}\rho_{2,t}(g_{2,t})^{-1}$ agree at the common boundary $d$ of $\Sigma_1$ and $\Sigma_2$,
    letting $\rho_t|_{\pi_1(\Sigma_1)}\doteq g_{1,t}\rho_{1,t}(g_{1,t})^{-1}$ and 
    $\rho_t|_{\pi_1(\Sigma_2)}\doteq g_{2,t}\rho_{2,t}(g_{2,t})^{-1}$ defines the representation $\rho_t$. This defines a path $\{\rho_t\}\interval$ starting from $\rho_0 = \psi_1$.

    Finally, we show that $\{\rho_t\}\interval$ satisfies Conditions (1) and (2), and that $\{\rho_t\}\interval\subset \nonabel$.
    For each $t\in [0,1]$ and $i\in \{1,\cdots, p\}$, as $\rho_t(c_i)$ is conjugate to either 
    $\rho_{1,t}(c_i)$ or $\rho_{2,t}(c_i)$ in $\psl$, $\rho_t(c_i)$ is conjugate to $\psi_1(c_i)$, and hence conjugate to $\phi(c_i)$. This verifies Condition (1). For Condition (2), we notice that every decomposition curve in $\Sigma$ is either the common boundary $d$ of $\Sigma_1$ and $\Sigma_2$, or a decomposition curve of $\Sigma_j$ for $j\in \{1,2\}$. Since $\psi_1(d)\in \Hyp$ and $s(\rho_{1,1}) = s(\psi_1|_{\pi_1(\Sigma_1)})$,
    $\rho_{1,1}(d) \in \Hyp$, and hence its $\psl$-conjugation $\rho_1(d)\in \Hyp$. Moreover, for each $j\in \{1,2\}$, as a conjugation of $\rho_{j,1}$, $\rho_1$ sends each decomposition curve of $\Sigma_j$ into $\Hyp$. Therefore, $\{\rho_t\}\interval$ satisfies Condition (2).
    It remains to show that $\{\rho_t\}\interval\subset \nonabel$.
    For each $t\in [0,1]$ and each $i \in\{1,\cdots, g+p-2\}$, the restriction $\rho_t|_{\pi_1(P_i)}$ is non-abelian, as it is a $\psl$-conjugation of a non-abelian representation $\rho_{j,t}|_{\pi_1(P_i)}$ for some $j\in \{1,2\}$. Similarly, for each $t\in [0,1]$ and each $j \in\{1,\cdots, g\}$, as a $\psl$-conjugation of a non-abelian representation, the restriction $\rho_t|_{\pi_1(T_j)}$ is non-abelian. Moreover, for each $t\in [0,1]$ and each $i\in\{1,\cdots, p\},$ $\rho_t(c_i)$ is conjugate to $\psi_1(c_i)$, hence the signs $s(\rho_t) = s(\psi_1) = s
    $ and the relative Euler classes $e(\rho_t) = e(\psi_1) = n$. As a consequence, the path $\{\rho_t\}\interval$ lies in $\nonabel.$
    \end{proof}

    \begin{proof}[Proof of Theorem \ref{thm_general}] 
    Up to a permutation of peripheral elements, 
    if $p_0(s) \geqslant 1$, then we can assume that  $s_i = 0$ for $i\in \{1,\cdots, p_0(s)\}$; and if $p_0(s) = 0$, $p_+(s)\geqslant 1$ and $p_-(s)\geqslant 1$, then we can assume that $s_1 = +1$ and $s_2 = -1$. Let $\Sigma = \apdecomp$ be the chosen almost-path decomposition.
    \\

    We first prove that, if $\chi(\Sigma) + p_+(s)\leqslant n\leqslant -\chi(\Sigma) - p_-(s)$, then $\HPns(\Sigma)$ is non-empty. We will prove this by induction on the Euler characteristic $\chi(\Sigma)$. The base cases $\chi(\Sigma) = -1$ and $\chi(\Sigma) = -2$ are respectively proved in Section 3 and Section 5. 
    Let $\Sigma = \Sigma_{g,p}$ be a punctured surface with Euler characteristic $\chi(\Sigma)<-2$, and assume that the statement above holds for any surface $\Sigma'$ with Euler characteristic $\chi(\Sigma')  = \chi(\Sigma) + 1$.
    To construct a representation $\phi$ in $\HPns(\Sigma)$, we will find a representation of the fundamental group of a subsurface $\Sigma'$ of $\Sigma$ using the induction hypothesis and then extend it to $\pi_1(\Sigma)$, for the cases $g\geqslant 1$ and $g = 0$ separately.
    
    In the case that $g\geqslant 1$, we let $\Sigma'\doteq \Sigma\setminus T_1$.  
    The fundamental group $\pi_1(\Sigma')$ has the preferred peripheral elements $c_1,\cdots, c_p$ and $d'_1$.
    Let $s' \doteq (s_1,\cdots,s_p, 0) \in \{-1,0,+1\}^{p+1}$. Then $p_+(s') = p_+(s)$, $p_-(s') = p_-(s)$ and $\chi(\Sigma) = \chi(\Sigma')-1$, and we can rewrite the generalized Milnor-Wood inequality into 
    $\chi(\Sigma') + p_+(s') - 1\leqslant n\leqslant -\chi(\Sigma') - p_-(s') + 1.$ 
    If $n = \chi(\Sigma') + p_+(s') - 1$, then let $m = -1$; if $n = -\chi(\Sigma') - p_-(s') + 1$, then let $m = +1$; and if otherwise that 
    $\chi(\Sigma') + p_+(s')\leqslant n \leqslant -\chi(\Sigma') - p_-(s')$, then let $m = 0$.
    With this choice of $m,$ we have 
    $$\chi(\Sigma') + p_+(s')\leqslant n - m\leqslant -\chi(\Sigma') - p_-(s');$$
    and by the induction hypothesis, there exists a representation $\phi \in \HP^{s'}_{n - m}(\Sigma')$ that maps $d'_1$ to a hyperbolic element.
    We extend $\phi$ to $\pi_1(\Sigma)$ as follows. As $\phi(d'_1)\in \Hyp$, there is a unique lift
    $\widetilde{\phi(d'_1)}$ of $\phi(d'_1)$ in $\Hyp_{m}$, which by Theorem \ref{thm_liftcommu} is contained in the image of the lifted commutator. 
    Therefore, there exists a $\psl$-pair $(\pm A, \pm B)$ whose lifted commutator equals $\widetilde{\phi(d'_1)}$. Then as the commutator $[\pm A, \pm B] = \phi(d'_1)$,  we can extend $\phi$ from $\pi_1(\Sigma')$ to $\pi_1(\Sigma)$ by letting $\big(\phi(a_1), \phi(b_1)\big) \doteq (\pm A, \pm B)$. 
    Since the evaluation map $\ev: \mathcal{W}(T_1)\to \displaystyle\bigcup_{k =-1}^1\Hyp_k$
       maps $\phi|_{\pi_1(T_1)}$ into $\Hyp_{m}$, we have $e(\phi|_{\pi_1(T_1)}) = m$; and since $e(\phi|_{\pi_1(\Sigma')}) = n-m$, by Proposition \ref{prop_additivity}, $e(\phi) = m + (n-m) = n$.
    Moreover, we have $s(\phi) = s$, and hence
    $\phi \in \HPns(\Sigma)$.

    In the case that $g = 0$, we let $\Sigma'\doteq \Sigma\setminus P_1$.
    The fundamental group $\pi_1(\Sigma')$ has the preferred peripheral elements $c_3,\cdots, c_p$ and $d_1^{-1}$.
    Let $s' \doteq (s_3,\cdots,s_p, 0)\in \{-1,0,+1\}^{p-1}$. 
    To satisfy the induction hypothesis, we first choose an $m$ satisfying
    $$\chi(\Sigma') + p_+(s')\leqslant n - m\leqslant -\chi(\Sigma') - p_-(s')$$
    separately in the cases $p_0(s)\geqslant 1$ and $p_0(s) = 0$.
    
    For $p_0(s)\geqslant 1$, we have $s_1 = 0$. 
    If $s_2 = 0$, then we have $p_+ (s) = p_+ (s')$ and $p_- (s) = p_- (s')$, and  the generalized Milnor-Wood inequality becomes  
    $
    \chi(\Sigma') + p_+(s') - 1\leqslant n\leqslant -\chi(\Sigma') - p_-(s') + 1;
    $
    if $s_2 = +1$, then we have $p_+ (s) = p_+ (s')+1$ and $p_- (s) = p_- (s')$, and  the generalized Milnor-Wood inequality becomes
    $
    \chi(\Sigma') + p_+(s')\leqslant n\leqslant -\chi(\Sigma') - p_-(s') + 1;
    $
  and if $s_2 = -1$, then we have $p_+ (s) = p_+ (s')$ and $p_- (s) = p_- (s')+1$, and  the generalized Milnor-Wood inequality becomes 
     $\chi(\Sigma') + p_+(s') - 1\leqslant n\leqslant -\chi(\Sigma') - p_-(s').
    $
    This leads us to choose an $m$ as follows: If $n = \chi(\Sigma') + p_+(s') - 1$, then let $m = -1$; if $n = -\chi(\Sigma') - p_-(s') + 1$, then let $m = +1$; and if otherwise that 
    $\chi(\Sigma') + p_+(s')\leqslant n \leqslant -\chi(\Sigma') - p_-(s')$, then let $m = 0$. 
    With this choice of $m,$ for all $s_2\in \{-1,0,1\}$ we have $\chi(\Sigma') + p_+(s')\leqslant n - m\leqslant -\chi(\Sigma') - p_-(s').$

    For $p_0(s) = 0$, we let $m = \frac{1}{2}(s_1+s_2)$. Then for $n$ safisfying $\chi(\Sigma') + p_+(s) - 1\leqslant n\leqslant -\chi(\Sigma') - p_-(s) + 1$, we have  $\chi(\Sigma') + p_+(s')\leqslant n-m\leqslant -\chi(\Sigma') - p_-(s').$
    Indeed, as $p_+(s) - p_+(s')$ and $p_-(s) - p_-(s')$ are respectively the numbers of $+1$'s and $-1$'s in $(s_1, s_2)$,
        in all the four possible  cases $(+1, +1),$ $ (+1, -1),$ $ (-1,+1)$ and $(-1,-1)$ of $(s_1, s_2)$, 
        we have $\frac{1}{2}(s_1+s_2) = p_+(s) - p_+(s') - 1 = 1 - p_-(s) +p_-(s')$. 
        From $m = p_+(s) - p_+(s') - 1$, we have $n - m \geqslant \chi(\Sigma')+ p_+(s')$; and from $m = 1 -p_-(s) + p_-(s')$, we have $n - m \leqslant -\chi(\Sigma')- p_-(s')$.

   Now, with this choice of $m$ and by the induction hypothesis, there exists a representation $\phi \in \HP^{s'}_{n - m}(\Sigma')$ that maps $d_1$ to a hyperbolic element. 
    We extend $\phi$ to $\pi_1(\Sigma)$ as follows. 
    We recall that the fundamental group $\pi_1(P_1)$ has the preferred peripheral elements $c_1,c_2,$ and $d_1$. As $\phi(d_1)\in \Hyp$, there is a unique lift
    $\widetilde{\phi(d_1)}$ of $\phi(d_1)$ in $\Hyp_{m}$; and by Proposition \ref{prop_evimage} and Corollary \ref{cor_evimage}, the evaluation map $\ev: \HP_{m}^{(s_1, s_2, 0)}(P_1)\to \Hyp_{m}$ is surjective. Hence, there exists a representation $\psi\in \HP_{m}^{(s_1, s_2, 0)}(P_1)$ such that $\ev(\psi) = \widetilde{\phi(d_1)}$. As $\psi$ and $\phi$ agree at the common boundary $d_1$ of $P_1$ and $\Sigma'$, letting $\phi|_{\pi_1(P_1)}\doteq \psi$ extends the representation $\phi$ from $\pi_1(\Sigma')$ to $\pi_1(\Sigma)$. 
    As $\ev(\phi|_{\pi_1(P_1)})\in \Hyp_m$, we have $e(\phi|_{\pi_1(P_1)}) = m$;
       and as $e(\phi|_{\pi_1(\Sigma')}) = n - m$, by Proposition \ref{prop_additivity}, $e(\phi)= m + (n-m)= n$.
       Moreover, as $s(\phi|_{\pi_1(\Sigma')}) = s'$ and $s(\phi|_{\pi_1(P_1)}) = (s_1, s_2, 0)$, we have $s(\phi) = s$. As a consequence, $\phi\in \HPns(\Sigma)$.
    \\

    Next we prove that, if $\HPns(\Sigma)$ is non-empty, then $\chi(\Sigma) + p_+(s)\leqslant n\leqslant -\chi(\Sigma) - p_-(s)$. Let $\phi\in \HPns(\Sigma)$. As the triple $(\Sigma, n, s)$ is not exceptional,
    by Theorem \ref{thm_nonabel} and Lemma \ref{lem_inthyp}, we can assume that $\phi$ maps all decomposition curves to hyperbolic elements. 
       Then for all $i\in \{1,\cdots, g+p-2\}$, $\phi|_{\pi_1(P_i)}\in \HP(P_i)$; and by Theorem \ref{thm_goldman} and Theorem \ref{thm_pants2} together with Remark \ref{rmk_MWpants}, we have 
     $$\chi(P_i)+ p_+\big(s\big(\phi|_{\pi_1(P_i)}\big)\big) \leqslant e\big(\phi|_{\pi_1(P_i)}\big) \leqslant -\chi(P_i) - p_-\big(s\big(\phi|_{\pi_1(P_i)}\big)\big).$$
     Moreover, for all $j\in \{1,\cdots, g\}$, $\phi|_{\pi_1(T_j)}\in \mathcal{W}(T_j)$; and by Theorem \ref{thm_goldman}, we have 
     $$\chi(T_j) \leqslant e\big(\phi|_{\pi_1(T_j)}\big) \leqslant -\chi(T_j).$$
      From these inequalities and Proposition \ref{prop_additivity}, we have the generalized Milnor-Wood inequality
      $$\chi(\Sigma) + p_+(s)\leqslant n\leqslant -\chi(\Sigma) - p_-(s).$$
     Indeed, for all $i\in \{1,\cdots, g+p-2\}$, each preferred peripheral element of $\pi_1(P_i)$ is either a decomposition curve or $c_k$ for some $k\in \{1,\cdots, p\}$; and its image is parabolic only if it is $c_k$ for some $k\in \{1,\cdots, p\}$, as $\phi$ maps all decomposition curves to hyperbolic elements.
     Therefore, we have
     $p_+(s) = \displaystyle\sum_{i = 1}^{g+p-2} p_+ \big(s\big(\phi|_{\pi_1(P_i)}\big)\big)$ and $p_-(s) = \displaystyle\sum_{i = 1}^{g+p-2} p_- \big(s\big(\phi|_{\pi_1(P_i)}\big)\big)$.
    Finally, since 
    $\chi(\Sigma) = \displaystyle\sum_{i = 1}^{g+p-2} \chi(P_i) + \displaystyle\sum_{j = 1}^{g} \chi(T_j)$ and 
    $n = \displaystyle\sum_{i = 1}^{g+p-2} e\big(\phi|_{\pi_1(P_i)}\big) + \displaystyle\sum_{j = 1}^{g} e\big(\phi|_{\pi_1(T_j)}\big)$
    by Proposition \ref{prop_additivity}, we obtain the desired generalized Milnor-Wood inequality.

\bigskip

    It remains to show that any non-empty $\HPns(\Sigma)$ above is connected. Let $\phi,\psi \in \HPns(\Sigma)$. 
         As the triple $(\Sigma, n, s)$ is not exceptional, by Theorem \ref{thm_nonabel} and Lemma \ref{lem_inthyp}, we can assume that $\phi$ and $\psi$ map all decomposition curves to hyperbolic elements. 
    Then by 
    Lemma \ref{lem_graph}, 
    it suffices to consider the case that 
    there exists a subsurface $\Sigma' \doteq  T_j\cup P_j$ for some $j\in \{1,\cdots, g\}$ or $\Sigma' \doteq P_i\cup P_{i+1}$ for some $i\in \{1,\cdots, g+p-2\}$,
    such that $e(\phi|_{\pi_1(\Sigma')}) = e(\psi|_{\pi_1(\Sigma')}) = n'$ for some $n',$ and for all 
    $k\in \{1,\cdots, g+p-2\}\setminus \{i, i+1\}$ and $l\in \{1,\cdots, g\}\setminus \{j\}$, $e\big(\phi|_{\pi_1(P_k)}\big)  = e\big(\psi|_{\pi_1(P_k)}\big)$ and $e\big(\phi|_{\pi_1(T_l)}\big)  =e\big(\psi|_{\pi_1(T_l)}\big).$
        In this case, we find a path in $\HPns(\Sigma)$ connecting $\phi$ and $\psi$ as follows. 
        We will show by induction that for all $k\in \{2,\cdots, -\chi(\Sigma)\}$, there is a connected subsurface $\Sigma_k$ of $\Sigma$ of Euler characteristic $\chi(\Sigma_k) = -k$ bounded by the decomposition curves and the boundary components of $\Sigma$; 
        and that there is a path $\path$ in $\HP(\Sigma_k)$
        connecting $\phi|_{\pi_1(\Sigma_k)}$ to $\psi|_{\pi_1(\Sigma_k)}$, such that for all $t\in [0,1]$,
        $s(\phi_t|_{\pi_1(\Sigma_k)}) = s(\phi|_{\pi_1(\Sigma_k)})$ and $e(\phi_t|_{\pi_1(\Sigma_k)}) = e(\phi|_{\pi_1(\Sigma_k)})$. Then since $\Sigma = \Sigma_{-\chi(\Sigma)}$, there is a path $\path$ in $\HPns(\Sigma)$ connecting $\phi$ and $\psi$. 
        For the base case $k = 2$, we let $\Sigma_2 = \Sigma'$. Since $s(\phi) = s(\psi)$, and since both $\phi$ and $\psi$ map the only decomposition curve to hyperbolic elements, we have $s(\phi|_{\pi_1(\Sigma')}) = s(\psi|_{\pi_1(\Sigma')}) = s'$ for some $s'$. 
        Then by  Theorem \ref{thm_torustype1}, Theorem \ref{thm_pantstype1} and Theorem \ref{thm_pantstype2}, there is a path $\path$ in $\HP^{s'}_{n'}(\Sigma')$ connecting $\phi|_{\pi_1(\Sigma')}$ to $\psi|_{\pi_1(\Sigma')}$. 
     Now assume that, for some
     $k\in \{2, \cdots, -\chi(\Sigma)\}$, 
     there is a connected subsurface $\Sigma_k\subset \Sigma$ with Euler characteristic $\chi(\Sigma_k) = -k$ and  
        with a path $\path$ in $\HP(\Sigma_k)$
        connecting $\phi|_{\pi_1(\Sigma_k)}$ and $\psi|_{\pi_1(\Sigma_k)}$ such that for all $t\in [0,1]$,
        $s(\phi_t|_{\pi_1(\Sigma_k)}) = s(\phi|_{\pi_1(\Sigma_k)})$ and $e(\phi_t|_{\pi_1(\Sigma_k)}) = e(\phi|_{\pi_1(\Sigma_k)})$. 
        We will find a subsurface $\Sigma_{k+1}$ of $\Sigma$ with Euler characteristic $\chi(\Sigma_{k+1}) = -(k+1)$, and find the path in $\HP(\Sigma_{k+1})$ connecting $\phi|_{\pi_1(\Sigma_{k+1})}$ and $\psi|_{\pi_1(\Sigma_{k+1})}$. 
        Since $\chi(\Sigma_k) > \chi(\Sigma)$, $\Sigma\setminus \Sigma_k$ is non-empty, and there exists a $\Sigma'' \doteq P_i$ for some $i\in \{1,\cdots, g+p-2\}$ or $\Sigma'' \doteq  T_j$ for some $j\in \{1,\cdots, g\}$  that shares a common boundary component with $\Sigma_k$. 
        Then $\phi$ and $\psi$ map this common boundary component to a hyperbolic element, and $e\big(\phi|_{\pi_1(\Sigma'')}\big)  = e\big(\psi|_{\pi_1(\Sigma'')}\big)$. Hence, by applying Proposition \ref{prop_sameEuler} to $\Sigma_{k+1}\doteq \Sigma_k\cup \Sigma''$, we obtain a path connecting $\phi|_{\pi_1(\Sigma_{k+1})}$ to $\psi|_{\pi_1(\Sigma_{k+1})}$, such that for all $t\in [0,1]$,
        $s(\phi_t|_{\pi_1(\Sigma_{k+1})}) = s(\phi|_{\pi_1(\Sigma_{k+1})})$ and $e(\phi_t|_{\pi_1(\Sigma_{k+1})}) = e(\phi|_{\pi_1(\Sigma_{k+1})})$. This completes the proof.
        \end{proof}
        
        \begin{proof}[Proof of Corollary \ref{thm_main3} (1) and Corollary \ref{total} (1)] 
        By Theorem \ref{thm_general}, for $g\geqslant 1,$ the connected components of $\mathcal R(\Sigma)$ are in one-to-one correspondence to the pairs $(n,s)$ that satisfy the generalized Milnor-Wood inequality  $\chi(\Sigma) + p_+(s)\leqslant n\leqslant -\chi(\Sigma) - p_-(s)$. 
        To count the number of such pairs, we let $k = p_+(s)$. Then $0\leqslant k\leqslant p$ and the generalized Milnor-Wood inequality becomes 
    $n + p + \chi(\Sigma)\leqslant k \leqslant n -\chi(\Sigma),$ which together imply the formula in Corollary \ref{thm_main3} (1). Finally, combining with the original Milnor-Wood inequality $\chi(\Sigma)\leqslant n\leqslant -\chi(\Sigma)$, we obtain the formula in Corollary \ref{total} (1). \end{proof}

\section{Exceptional components}

In this section  we complete the proof of Theorem \ref{thm_main2}. As Case (1) is proved in Theorem \ref{thm_general}, it remains to prove Cases (2) and (3). 
 Let $\Sigma = \Sigma_{0,p}$ with $p\geqslant 3$ 
and with  the following presentation of the fundamental group $$\pi_1(\Sigma) = \langle c_1, c_2,\cdots, c_p | c_1c_2\cdots c_p \rangle.$$
We will use  the chosen almost-path decomposition $\Sigma = \displaystyle\bigcup^{p-2}_{i = 1} P_i$, with the decomposition curves  $d_i = c_{i+2}\cdots c_p = (c_1\cdots c_{i+1})^{-1}$ for $i\in \{1,\cdots, p-3\}$.
For $n\in \mathbb{Z}$ and $s\in \{\pm 1\}^p$ corresponding to either Case (2) or Case (3) in Theorem \ref{thm_main2}, 
the triple $(\Sigma, n, s)$ is exceptional, and does not satisfy the generalized Milnor-Wood inequality. 
For each of these two cases, we will prove that the corresponding spaces $\Rns$ remain non-empty and connected, forming  the \emph{exceptional components} 
of $\R$. Moreover, we will show that these exceptional components consist of type-preserving representations that send all the decomposition curves $\{d_1,\dots, d_{p-3}\}$ of the chosen almost-path decomposition to non-hyperbolic elements of $\psl$, which we will call the \emph{totally non-hyperbolic representations}. We will also show that these totally non-hyperbolic representations send not only the decomposition curves but also all the simple closed curves on $\Sigma$ to non-hyperbolic elements, hence coincide with the notion of the ``totally non-hyperbolic representations" in \cite{deroin_tholozan}.


    \subsection{Abelian representations}
    In this subsection, we will prove Theorem \ref{thm_main2} for Case (2). Moreover, we will show that the corresponding components $\mathcal{R}^s_n(\Sigma)$ consist entirely of abelian representations. This also explains why Theorem \ref{thm_nonabel}, and consequently, Theorem \ref{thm_general} fail in this case. 
    These results are stated as Theorem \ref{thm_Exc1} below.

    \begin{theorem}\label{thm_Exc1}
        Let $\Sigma = \Sigma_{0,p}$ with $p\geqslant 3$, and let $s\in \{\pm 1\}^p$ with $\ p_+(s) = 1 \mbox{ or }p_-(s) = 1.$ Then 
        $\mathcal{R}^s_0(\Sigma)$ is non-empty, connected, and consists of abelian representations. 
    \end{theorem}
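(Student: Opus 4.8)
The plan is to establish non-emptiness by exhibiting an explicit abelian representation, to prove the assertion that $\mathcal{R}^s_0(\Sigma)$ consists of abelian representations by induction on $p$, and to deduce connectedness from the rigid structure of abelian parabolic representations. By the symmetry of $\Sigma_{0,p}$ permuting the punctures we have a homeomorphism $\mathcal{R}^s_0(\Sigma)\cong\mathcal{R}^{\sigma(s)}_0(\Sigma)$ for any permutation $\sigma$, and by Proposition \ref{prop_pglpsl} conjugation by an element of $\pgl\setminus\psl$ carries $\mathcal{R}^s_0(\Sigma)$ homeomorphically onto $\mathcal{R}^{-s}_0(\Sigma)$ and interchanges the conditions $p_+(s)=1$ and $p_-(s)=1$; since these homeomorphisms preserve abelianness and connectedness, I may assume $s=(+1,-1,\dots,-1)$, i.e.\ $\phi(c_1)\in\Parp$ and $\phi(c_j)\in\Parm$ for $j\in\{2,\dots,p\}$. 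For non-emptiness, the representation sending $c_1\mapsto\pm\begin{bmatrix}1&p-1\\0&1\end{bmatrix}$ and $c_j\mapsto\pm\begin{bmatrix}1&-1\\0&1\end{bmatrix}$ for $j\geqslant 2$ is type-preserving with sign $s$, is abelian, hence has relative Euler class $0$ by Proposition \ref{prop_reducible}; so it lies in $\mathcal{R}^s_0(\Sigma)$.

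For the core assertion I would induct on $p$, the base case $p=3$ being Lemma \ref{lem_R0pants}. Fix $\phi\in\mathcal{R}^s_0(\Sigma_{0,p})$ with $p\geqslant 4$ and $s=(+1,-1,\dots,-1)$, and use the pants decomposition $\Sigma=P_1\cup(\Sigma\setminus P_1)$ with $\pi_1(P_1)=\langle c_1,c_2\rangle$ and decomposition curve $d_1=(c_1c_2)^{-1}$. The key elementary input is that a product of a positive and a negative parabolic element of $\psl$ is never elliptic: normalizing $\phi(c_1)=\pm\begin{bmatrix}1&1\\0&1\end{bmatrix}$ and writing an $\SL$-representative of $\phi(c_2)$ as $\begin{bmatrix}1+x&y\\z&1-x\end{bmatrix}$ with $yz=-x^2$ and $z\geqslant 0$ (which holds for a negative parabolic, cf.\ the proof of Lemma \ref{lem_offdiag}), one computes that $\begin{bmatrix}1&1\\0&1\end{bmatrix}\begin{bmatrix}1+x&y\\z&1-x\end{bmatrix}$ has trace $2+z\geqslant 2$, with equality exactly when $z=0$, i.e.\ exactly when $\phi(c_1)$ and $\phi(c_2)$ commute. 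Hence $\phi(d_1^{-1})=\phi(c_1)\phi(c_2)$ is hyperbolic if $\phi(c_1),\phi(c_2)$ do not commute, and is parabolic or $\pm\mathrm I$ if they do, and I split into these cases. If $\phi(d_1^{-1})$ is hyperbolic, then $\phi|_{\pi_1(P_1)}\in\mathrm{HP}^{(+1,-1,0)}(\Sigma_{0,3})$, so $e(\phi|_{\pi_1(P_1)})=0$ by Theorem \ref{thm_pants2}, whence $e(\phi|_{\pi_1(\Sigma\setminus P_1)})=0$ by Proposition \ref{prop_additivity}; but $\phi|_{\pi_1(\Sigma\setminus P_1)}$ then has sign with $p_0=1$, $p_+=0$ on $\Sigma_{0,p-1}$, violating the generalized Milnor--Wood inequality of Theorem \ref{thm_general} — a contradiction. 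If $\phi(d_1^{-1})=\pm\mathrm I$, then $\phi(c_2)=\phi(c_1)^{-1}$ forces $\widetilde{\phi(c_1)}\widetilde{\phi(c_2)}=\mathrm I$ in $\univcover$, so $\widetilde{\phi(c_3)}\cdots\widetilde{\phi(c_p)}=z^0$, giving a representation in $\mathcal{R}^{(-1,\dots,-1)}_0(\Sigma_{0,p-2})$ when $p\geqslant 5$ (empty by Theorem \ref{thm_general}), or, when $p=4$, forcing $\phi(c_4)=\phi(c_3)^{-1}\in\Parp$, contradicting $s$. In the remaining case $\phi(c_1),\phi(c_2)$ commute and $\phi(d_1^{-1})$ is a nontrivial parabolic in their common one-parameter subgroup; tracking signs via Lemma \ref{lem_offdiag}, if $\phi(d_1^{-1})\in\Parm$ then $\phi|_{\pi_1(\Sigma\setminus P_1)}\in\mathcal{R}^{(-1,\dots,-1)}_0(\Sigma_{0,p-1})$, again empty by Theorem \ref{thm_general}; and if $\phi(d_1^{-1})\in\Parp$ then $\phi|_{\pi_1(\Sigma\setminus P_1)}\in\mathcal{R}^{s'}_0(\Sigma_{0,p-1})$ with $p_+(s')=1$, so by the inductive hypothesis it is abelian. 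In this last surviving case $\phi(\pi_1(P_1))$ and $\phi(\pi_1(\Sigma\setminus P_1))$ lie in one-parameter parabolic subgroups of $\psl$ sharing the nontrivial element $\phi(d_1)$, hence in the same one, so $\phi$ is abelian; this closes the induction.

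Finally, for connectedness: an abelian type-preserving representation has image generated by parabolics with a common fixed point in $\partial\mathbb H^2$, so is $\psl$-conjugate to one with $\phi(c_i)=\pm\begin{bmatrix}1&t_i\\0&1\end{bmatrix}$ where $t_1>0>t_2,\dots,t_p$ and $\sum_i t_i=0$. Given two such representations, connect each to its normal form by a $\psl$-conjugation path (which stays in $\mathcal{R}^s_0(\Sigma)$ since conjugation preserves the sign and the relative Euler class), and connect the two normal forms by linearly interpolating the parameters $t_i$, which keeps $t_1>0$, $t_j<0$ and $\sum_i t_i=0$, hence stays in $\mathcal{R}^s_0(\Sigma)$; alternatively one may invoke Lemma \ref{lem_abelsphere}. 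The main obstacle is the core assertion: precisely because the genericity result Theorem \ref{thm_nonabel} fails here (there is no non-abelian representation to deform into), the argument cannot follow the pattern of the earlier sections and must proceed directly, and the crux is the non-ellipticity trace estimate above combined with the careful bookkeeping of the signs of the parabolic boundary elements $\phi(d_1^{\pm1})$ produced at each step of the induction.
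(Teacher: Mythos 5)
Your proposal is correct, and while your non-emptiness example and your connectedness argument (conjugate to a common upper-triangular normal form, then interpolate the translation lengths linearly) coincide with the paper's, your proof of the core assertion that every $\phi\in\mathcal R^s_0(\Sigma)$ is abelian takes a genuinely different route. The paper argues in two stages: first, assuming some $\phi|_{\pi_1(P_i)}$ is non-abelian, it deforms $\phi$ into $\mathrm{NA}^s_0(\Sigma)$ via Proposition \ref{prop_NAsphere1}, invokes Lemma \ref{lem_inthyp_sphere} (using $s_1=-s_2$) to make $\phi(d_1)$ hyperbolic, and contradicts additivity with Theorem \ref{thm_pants2} and the generalized Milnor--Wood inequality of Theorem \ref{thm_general}; second, once all pants restrictions are abelian, it glues the one-parameter subgroups together by choosing the largest $i_0$ with $\phi(d_{i_0})=\pm\mathrm I$ and deriving a sign contradiction from Lemma \ref{lem_abelsphere}. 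You instead induct on $p$ through the single pair of pants $P_1$: your elementary trace computation that a positive and a negative parabolic multiply to an element of $|\mathrm{tr}|\geqslant 2$, hyperbolic exactly when they do not commute, replaces both Proposition \ref{prop_NAsphere1} and Lemma \ref{lem_inthyp_sphere}, and your case analysis on $\phi(d_1^{-1})\in\Hyp$, $=\pm\mathrm I$, or $\in\Par^{\pm}$ feeds either into emptiness statements (Theorems \ref{thm_pants2} and \ref{thm_general}, which the paper also relies on) or into the inductive hypothesis, with the final gluing of the two one-parameter subgroups along $\phi(d_1)$ replacing Lemma \ref{lem_abelsphere}. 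What your route buys is independence from the Section 4 deformation machinery and a more local, hands-on argument; what the paper's route buys is uniform treatment of all pants at once by reusing lemmas it has already built, at the cost of quoting heavier machinery. Two small expository points to tighten: in the case $\phi(d_1^{-1})\in\Par^\pm$ you should say explicitly that $e\big(\phi|_{\pi_1(\Sigma\setminus P_1)}\big)=0$ follows from Propositions \ref{prop_reducible} and \ref{prop_additivity} (abelian on $P_1$, parabolic decomposition curve), and in the case $\phi(d_1^{-1})=\pm\mathrm I$ your direct manipulation of lifts is the right move, since Proposition \ref{prop_additivity} does not formally apply when the separating curve maps to $\pm\mathrm I$; neither point is a gap in substance.
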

    \begin{proof}
       As the case that $p = 3$ is proved by Theorem \ref{thm_pants} and Lemma \ref{lem_R0pants}, we only need to consider the case that $p\geqslant 4$. We will prove the result for the case that $p_+(s)=1$. Then the result for the remaining case that $p_-(s)=1$ follows from the previous case and Proposition \ref{prop_pglpsl}.
       \\

       Up to a permutation of the peripheral elements we can assume that $s_1 = +1$ and $s_2=\cdots=s_p= -1$. To show that $\mathcal{R}^s_0(\Sigma)$ is non-empty, we define a  representation $\phi$ by letting 
        $\phi(c_1) = \pm \begin{bmatrix}
                        1 & p-1\\
                        0 & 1
                    \end{bmatrix}$ and 
        $\phi(c_i) = \pm \parmp$ for each $i\in \{2,\cdots, p\}$, which is an abelian representation and is in $\mathcal{R}^s_0(\Sigma)$ by Proposition \ref{prop_reducible}.
        \medskip 
        
        Next, we prove that every representation $\phi$ in $\mathcal{R}^s_0(\Sigma)$ is abelian.
        Let $ \displaystyle\bigcup_{i=1}^{p-2} P_i$ be the chosen almost-path decomposition of $\Sigma$.
        We will first show that $\phi |_{\pi_1(P_i)}$ is abelian for all $i\in\{1,\dots,p-2\}$, then use this to show that $\phi$ is abelian on the entire $\pi_1(\Sigma)$.
        
        We use contradiction. Suppose otherwise that $\phi|_{\pi_1(P_i)}$ is non-abelian for some $i\in \{1,\cdots, p-2\}$. 
        Then by Proposition \ref{prop_NAsphere1}, we can assume that $\phi\in \mathrm{NA}^s_0(\Sigma)$; and since $s_1 = -s_2$, by Lemma \ref{lem_inthyp_sphere}, $\phi(d_1)$ is hyperbolic. 
        As the fundamental group $\pi_1(P_1)$ has the preferred peripheral elements $c_1, c_2$ and $d_1$ with $\phi(c_1)\in \Par^+, \phi(c_2)\in \Par^-$ and $\phi(d_1)\in \Hyp$, we have 
        $\phi|_{\pi_1(P_1)}\in \HP(P_1)$ with sign $s\big(\phi|_{\pi_1(P_1)}\big) = (+1, -1, 0)$; and by Theorem \ref{thm_pants2}, it follows that the relative Euler class $e\big(\phi|_{\pi_1(P_1)}\big) = 0$.
        Similarly, as the fundamental group $\pi_1(\Sigma\setminus P_1)$ has the preferred peripheral elements $c_3,\cdots, c_p$ and $d_1^{-1}$ with $\phi(c_i)\in \Par^-$ for 
    $i\in \{3,\cdots, p\}$ and $\phi(d_1^{-1})\in \Hyp$, we have 
    $\phi|_{\pi_1(\Sigma \setminus P_1)}\in \HP(\Sigma \setminus P_1)$ with sign $s\big(\phi|_{\pi_1(\Sigma\setminus P_1)}\big) = (-1,\cdots, -1, 0)$.
    As $p_0\big(s\big(\phi|_{\pi_1(\Sigma\setminus P_1)}\big)\big) = 1$, the triple $\big(\Sigma\setminus P_1, e(\phi|_{\pi_1(\Sigma \setminus P_1)}), s(\phi|_{\pi_1(\Sigma \setminus P_1)})\big)$ is not exceptional, hence by Theorem \ref{thm_general}  satisfies the generalized Milnor-Wood inequality
     $$\chi(\Sigma \setminus P_1) + p_+\big(s(\phi|_{\pi_1(\Sigma \setminus P_1)})\big)\leqslant e(\phi|_{\pi_1(\Sigma \setminus P_1)})\leqslant -\chi(\Sigma \setminus P_1) - p_-\big(s(\phi|_{\pi_1(\Sigma \setminus P_1)})\big).$$
     As $\chi(\Sigma\setminus P_1) = 3-p$ 
     and $p_-\big(s(\phi|_{\pi_1(\Sigma \setminus P_1)})\big) = p-2$, it follows that
     $e(\phi|_{\pi_1(\Sigma \setminus P_1)})\leqslant -1$. Then as
     $e\big(\phi|_{\pi_1(P_1)}\big) = 0$, by Proposition \ref{prop_additivity}, we have 
     $0 = e\big(\phi|_{\pi_1(P_1)}\big) + e\big(\phi|_{\pi_1(\Sigma \setminus P_1)}\big)\leqslant -1$, which is a contradiction. 
     
     We now prove that $\phi$ is abelian on $\pi_1(\Sigma)$. For each $i\in \{1,\cdots, p-2\}$, as $\phi|_{\pi_1(P_i)}$  is abelian, the image $\phi\big(\pi_1(P_i)\big)$ lies in a one-parameter subgroup $S_i$ of $\psl$; and for each $i\in \{1,\cdots, p-3\}$, either $S_i =S_{i+1}$ or $S_i\cap S_{i+1} = \{\pm \mathrm I\}$. We will show that 
         $S_i =S_{i+1}$ for all $i\in \{1,\cdots, p-3\}$, and conclude that the entire image $\phi\big(\pi_1(\Sigma)\big)$ lies in the one-parameter subgroup $S_1$, and hence $\phi$ abelian. 
         Suppose that $S_i \neq S_{i+1}$ for some $i\in \{1,\cdots, p-3\}$. Then $S_i\cap S_{i+1} = \{\pm \mathrm I\}$, and as an element in $\phi\big(\pi_1(P_i)\big)\cap  \phi\big(\pi_1(P_{i+1})\big)\subset S_i\cap S_{i+1}$, $\phi(d_i) = \pm \mathrm I$. 
         Since there exists an $i\in \{1,\cdots, p-3\}$ where $\phi(d_i) = \pm \mathrm I$, we can choose the largest $i_0\in\{1,\cdots, p-3\}$  such that $\phi(d_{i_0}) = \pm \mathrm I$. 
         If $i_0 = p-3$, then as $\phi(c_{p-1})\phi(c_p) = \phi(d_{p-3}) = \pm \mathrm I$, we have $\phi(c_{p-1}) = \phi(c_p)^{-1}$, contradicting that both $\phi(c_{p-1})$ and $\phi(c_p)$ are in $\Par^-$.
         If otherwise that $i_0 \leqslant p-4$, we let $\Sigma' \doteq \displaystyle\bigcup^{p-2}_{i = i_0+2} P_i$. 
         The fundamental group $\pi_1(\Sigma')$ has the preferred peripheral elements $c_{i_0+3},\cdots, c_p$ and $d_{i_0+1}^{-1}$, and for $k\in \{i_0+3,\cdots, p\}$, we have $\phi(c_k)\in \Par^-$. Moreover, since $\phi(d_{i_0}^{-1}) = \pm \mathrm I$ and $\phi(c_{i_0+2})\in \Par^-$, we have 
         $\phi(d_{i_0 + 1}^{-1}) = \phi(d_{i_0}^{-1})\phi(c_{i_0+2})\in \Par^-$. Therefore, $\phi|_{\pi_1(\Sigma')}$ is type-preserving, and $p_+\big(s\big(\phi|_{\pi_1(\Sigma')}\big)\big) = 0$. 
         However, as $\phi |_{\pi_1(P_i)}$ is abelian for all $i\in \{i_0+2,\cdots, p-2\}$, $p_+\big(s\big(\phi|_{\pi_1(\Sigma')}\big)\big)\geqslant 1$ by Lemma \ref{lem_abelsphere}, which is a contradiction. Therefore, $S_i =S_{i+1}$ for all $i\in \{1,\cdots, p-3\}$; and as a consequence, $\phi$ is abelian.
         \\
         
        Finally, we prove that $\mathcal{R}^s_0(\Sigma)$ is connected. 
        Let $\phi_1, \phi_2\in \mathcal{R}^s_0(\Sigma)$. For each $j\in \{1,2\}$, since $\phi_j(c_1)\in \Par^+$, there is a $g_j\in \psl$ such that $g_j\phi_j(c_1)g_j^{-1} = \pm \parpp$.
        Let $\{g_{j,t}\}\interval$ be a path in $\psl$ connecting $\pm\mathrm{I}$ to $g_j$. 
         Then the path $\{\phi_{j,t}\}\interval$ defined by $\phi_{j,t}\doteq g_{j,t}\phi_j g_{j,t}^{-1}$ connects $\phi_j$ to $\phi_{j,1} = g_j\phi_jg_j^{-1}$.
        Since $\phi_j \in \mathcal{R}^s_0(\Sigma)$ is abelian, its $\psl$-conjugation $\phi_{j,1}$ is also abelian; and the image $\phi_{j,1}\big(\pi_1(\Sigma)\big)$
        lies in the one-parameter subgroup of $\psl$ that contains $g_j\phi_j(c_1)g_j^{-1}$, which is $\bigg\{\pm \part\bigg\}_{t\in \mathbb{R}}$. 
        Moreover, for each $j\in \{1,2\}$ and $t\in [0,1]$, since $\phi_{j,t}(c_i)$ and $\phi_j(c_i)$ are conjugate for all $i\in \{1,\cdots, p\}$, the signs $s(\phi_{j,t}) = s(\phi_j)$ and the relative Euler classes $e(\phi_{j,t}) = e(\phi_j)$. Therefore, the paths $\{\phi_{1,t}\}\interval$ and $\{\phi_{2,t}\}\interval$ are in $\mathcal{R}^s_0(\Sigma).$
        Now for each $i\in \{1,\cdots, p\}$ and each $j\in \{1,2\}$, 
        we let 
        $g_j\phi_j(c_i)g_j^{-1} = 
        \pm \begin{bmatrix}
        1 & \xi_{i,j} \\
        0 & 1
        \end{bmatrix}$ for some $\xi_{i,j}\in \mathbb{R}\setminus \{0\}$.
        To connect $g_1\phi_1g_1^{-1}$ and $g_2\phi_2g_2^{-1},$ we
        consider the path $\{\psi_t\}_{t\in [1,2]}$ given by $\psi_t(c_i) = \pm  \begin{bmatrix}
        1 & (2-t)\xi_{i,1}+(t-1)\xi_{i,2} \\
        0 & 1
        \end{bmatrix}$ for each $i\in \{1,\cdots, p\}$. 
        Since $s(g_1\phi_1g_1^{-1})=s(g_2\phi_2g_2^{-1}),$ 
        for each $i\in \{1,\cdots, p\}$, 
        we have $sgn(\xi_{i,1})=sgn(\xi_{i,2})$, 
        and hence for all $t\in [1,2],$ we have $sgn\big((2-t)\xi_{i,1}+(t-1)\xi_{i,2}\big)=sgn(\xi_{i,1})$. 
        Therefore, for all $t\in [1,2]$, the signs $s(\psi_t)=s(g_1\phi_1g_1^{-1})$ 
         and the relative Euler classes $e(\psi_t)=e(g_1\phi_1g_1^{-1})$, and the path $\{\psi_t\}_{t\in [1,2]}$ is in $\mathcal R_0^s(\Sigma)$ connecting $g_1\phi_1g_1^{-1}$ and $g_2\phi_2g_2^{-1}.$ As a consequence, $\mathcal R_0^s(\Sigma)$ is connected. 
\end{proof}

\begin{remark}
        Theorem \ref{thm_Exc1} shows that $\mathcal{R}^s_0(\Sigma)$ with $p_+(s) = 1$ or $p_-(s) = 1$ consists of totally non-hyperbolic representations,
        as abelian type-preserving representations map all the simple closed curves (hence all the decomposition curves) to parabolic elements.
    \end{remark}

\subsection{Non-abelian representations}

    In this subsection, we will prove Theorem \ref{thm_main2} for Case (3). Moreover, we will show that the corresponding components $\mathcal{R}^s_n(\Sigma)$ consist entirely of totally non-hyperbolic representations. This also explains why Lemma \ref{lem_inthyp}, and consequently, Theorem \ref{thm_general} fail in this case. 
    These results are stated as Theorem \ref{thm_Exc2} below.

    \begin{theorem}\label{thm_Exc2}
        Let $\Sigma = \Sigma_{0,p}$ with $p\geqslant 3$, and let $s\in \{\pm 1\}^p$ with $\ p_-(s) = 0 \mbox{ or }p_+(s) = 0.$ Then 
        $\mathcal{R}^s_{s_1}(\Sigma)$ is non-empty, connected, and consists of totally non-hyperbolic representations. 
    \end{theorem}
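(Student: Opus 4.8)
The plan is to reduce, via Proposition \ref{prop_pglpsl}, to the case $s=(+1,\dots,+1)$ and $n=s_1=1$ (the case $p_+(s)=0$ then follows by conjugating with an element of $\pgl\setminus\psl$), and to dispose of $p=3$ by Theorem \ref{thm_pants}(3) (where the ``totally non-hyperbolic'' assertion is vacuous since $\pants$ has no essential simple closed curve). So fix $p\geqslant 4$ and the chosen almost-path decomposition $\Sigma_{0,p}=\bigcup_{i=1}^{p-2}P_i$ with decomposition curves $d_1,\dots,d_{p-3}$. Three assertions must be established: $\mathcal R^s_1(\Sigma_{0,p})$ is non-empty; every $\phi\in\mathcal R^s_1(\Sigma_{0,p})$ sends every essential simple closed curve, in particular every $d_i$, to a non-hyperbolic element; and $\mathcal R^s_1(\Sigma_{0,p})$ is connected.

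For non-emptiness I would argue by induction on $p$. Decompose $\Sigma_{0,p}=\Sigma'\cup P$, where $P\cong\pants$ carries the peripheral elements $c_{p-1},c_p$ (and hence the third peripheral element $d^{-1}$ with $d=c_{p-1}c_p$) and $\Sigma'\cong\Sigma_{0,p-1}$ carries $c_1,\dots,c_{p-2},d$. By induction choose $\psi\in\mathcal R^{(+1,\dots,+1)}_1(\Sigma_{0,p-1})$, with $d$ playing the role of its last peripheral element; by Theorem \ref{thm_pants}(2) choose $\rho\in\mathcal R^{(+1,+1,-1)}_0(\pants)$ and conjugate it so that $\rho(d^{-1})=\psi(d)^{-1}$. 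Gluing $\psi$ and $\rho$ along $d$ yields a type-preserving representation $\phi$ of $\pi_1(\Sigma_{0,p})$ with all peripheral signs $+1$; since $\phi(d)=\psi(d)\in\Par$, Proposition \ref{prop_additivity} gives $e(\phi)=1+0=1$, so $\phi\in\mathcal R^{(+1,\dots,+1)}_1(\Sigma_{0,p})$.

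For the totally non-hyperbolic assertion, suppose $\phi\in\mathcal R^s_1(\Sigma_{0,p})$ and $\gamma$ is an essential simple closed curve with $\phi(\gamma)$ hyperbolic. Then $\gamma$ separates $\Sigma_{0,p}$ into two punctured spheres $\Sigma_1,\Sigma_2$, each with at least three boundary-or-puncture components, so $\chi(\Sigma_j)\leqslant -1$; each restriction $\phi|_{\pi_1(\Sigma_j)}$ lies in $\HP(\Sigma_j)$ with $p_0\geqslant 1$, so the triple $(\Sigma_j, e(\phi|_{\pi_1(\Sigma_j)}), s(\phi|_{\pi_1(\Sigma_j)}))$ is not exceptional, and Theorem \ref{thm_general} forces $e(\phi|_{\pi_1(\Sigma_j)})\geqslant\chi(\Sigma_j)+p_+=1$ for $j=1,2$. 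By Proposition \ref{prop_additivity}, $e(\phi)\geqslant 2$, contradicting $e(\phi)=1$. Since on a punctured sphere every essential simple closed curve is separating of this kind, and each $d_i$ is such a curve, $\phi$ is totally non-hyperbolic.

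Connectedness I would prove by induction on $p$ as well (base case $p=3$ by Theorem \ref{thm_pants}(3)). Given $\phi\in\mathcal R^s_1(\Sigma_{0,p})$, note $\phi(c_1),\phi(c_2)\in\Parp$ cannot be mutual inverses, so $\phi(d_1)\neq\pm\mathrm I$, whence by the previous step $\phi(d_1)\in\Parp\cup\Parm\cup\Ell$. First I would connect $\phi$, inside $\mathcal R^s_1(\Sigma_{0,p})$, to a representation with $\phi(d_1)\in\Parm$: when $\phi(d_1)\in\Parp\cup\Ell$, deform the pair $(\phi(c_1),\phi(c_2))$ along a $\psl$-conjugate of the explicit one-parameter family of pairs of positive parabolics from the proof of Proposition \ref{prop_evimage}(1), whose product sweeps monotonically through $\Ell$ and into $\Parm$ while the pair stays in $\Parp\times\Parp$ and never enters the hyperbolic range (that any such pair is $\psl$-conjugate to a member of the family follows from Proposition \ref{prop_char}(2) together with a sign argument). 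Once $\phi(d_1)\in\Parm$ one has $\phi(d_1^{-1})\in\Parp$, $e(\phi|_{\pi_1(P_1)})=0$ by Theorem \ref{thm_pants}, and $\phi|_{\pi_1(\Sigma_{0,p}\setminus P_1)}\in\mathcal R^{(+1,\dots,+1)}_1(\Sigma_{0,p-1})$, which is connected by induction; combined with the connectedness of $\mathcal R^{(+1,+1,-1)}_0(\pants)$ (Theorem \ref{thm_pants}) and of its fibers over the conjugacy class of $d_1$ (each a torsor under the connected centralizer of a parabolic), a path of restrictions to $\Sigma_{0,p}\setminus P_1$ lifts to a path in $\mathcal R^s_1(\Sigma_{0,p})$, so all representations with $\phi(d_1)\in\Parm$ are mutually joined; the representations appearing at the parameter endpoints are controlled by Theorem \ref{thm_Exc1} and Proposition \ref{Prop_Exc2}. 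The step I expect to be the main obstacle is carrying this deformation across $d_1$ while $\phi(d_1)$ is \emph{elliptic}: at intermediate times $\phi_t|_{\pi_1(\Sigma_{0,p}\setminus P_1)}$ is a representation of $\pi_1(\Sigma_{0,p-1})$ with one peripheral element pinned to a moving elliptic element and the rest in $\Parp$, so what is needed is a path in a space of representations with one elliptic boundary component. The plan is to prove, by a parallel induction on the number of punctures — peeling off pairs of pants exactly as above and using the explicit descriptions of $\mathrm{Hom}(\pi_1(\pants),\psl)$ from Section 3 — that for a prescribed elliptic $E$ the space of representations of $\pi_1(\Sigma_{0,q})$ sending $q-1$ chosen peripheral elements into $\Parp$ and the last to $E$ is non-empty and connected; this is a self-contained, limited instance of the elliptic-boundary analysis of \cite{RY}, and feeding it into the argument above completes the proof.
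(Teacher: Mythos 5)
Your reduction to $s=(+1,\dots,+1)$, your non-emptiness construction, and your totally-non-hyperbolic argument are sound. The non-emptiness step is a genuinely different (and perfectly valid) route: you glue a representation of $\pi_1(\Sigma_{0,p-1})$ of Euler class $1$ to an abelian element of $\mathcal R^{(+1,+1,-1)}_0(\pants)$ along a parabolic curve and apply Proposition \ref{prop_additivity}, whereas the paper simply exhibits explicit matrices (Proposition \ref{Prop_Exc2}); the cut-and-estimate argument ruling out hyperbolic images of essential simple closed curves is essentially the paper's own (Theorem \ref{thm_general} on each side of the curve plus additivity, giving $2\leqslant 1$).

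The genuine gap is in the connectedness step, and it sits exactly where you flag it. Your sweep of $\phi(d_1)$ from $\Parp\cup\Ell$ into $\Parm$ requires extending, across the complementary subsurface $\Sigma_{0,p}\setminus P_1\cong\Sigma_{0,p-1}$, a path along which one boundary value moves through \emph{elliptic} elements; for this you assert, as a ``plan,'' that for a prescribed elliptic $E$ the space of representations of $\pi_1(\Sigma_{0,q})$ sending $q-1$ peripherals into $\Parp$ and the last one to $E$ is non-empty and connected, and that paths of the boundary value can be lifted. None of this follows from the Section 3 machinery you propose to reuse: Proposition \ref{prop_evimage} and Theorem \ref{thm_PLP} concern the lifted product map only over the hyperbolic pieces $\Hyp_n$, and the relative Euler class is not even defined once a boundary component is elliptic, so one must separately control in which component $\Ell_k$ of $\univcover$ the lifted boundary value lies — otherwise a path could return to parabolic boundary with the wrong Euler class. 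Supplying this is precisely the technical content of the paper's proof: the sign/index computation for elliptic lifts (Lemma \ref{lem_offdiag_Ell}), the identification of the elliptic part of the image of the lifted product as $\Ell_1$ resp.\ $\Ell_s$ (Lemma \ref{lem_evimage_Ell}), the path-lifting property and connectedness of fibers for the evaluation maps on $\PE^s(\pants)$ and $\PP^s(\pants)$ (Proposition \ref{prop_PLP_Ell}, which also needs the non-commutativity hypothesis, hence the reduction via Theorem \ref{thm_nonabel}), and the pants-by-pants gluing induction of Lemma \ref{lem_Exc2}. Until you prove your elliptic-boundary connectedness and lifting claim at this level of detail (even its base case $q=3$ is exactly Proposition \ref{prop_PLP_Ell}(2) combined with Lemma \ref{lem_evimage_Ell}(2)), the connectedness of $\mathcal R^s_{s_1}(\Sigma)$ is not established; deferring it to the forthcoming elliptic-boundary analysis of \cite{RY} leaves the proof incomplete.
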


    We will first extend the lifted product map 
    $\ev: \overline{\Hyp}\times \overline{\Hyp}\to \widetilde{\SL}$
    defined in Subsection \ref{subsection_PLP} to $\psl\times \psl$ in Subsection \ref{subsection_PLP_Ell}, using which we will prove Theorem \ref{thm_Exc2} in Subsection \ref{subsection_exc2proof}.

\subsubsection{The evaluation map and the path-lifting properties}\label{subsection_PLP_Ell}

    In this subsection, we define the \emph{lifted product map}  
        $$\ev: \psl\times \psl\to \widetilde{\SL}$$
    by sending the pair $(g_1,g_2)$ to the product $\widetilde{g_1}
    \widetilde{g_2}$ of the unique lifts $\widetilde{g_1}$ and $
    \widetilde{g_2}$ of  $g_1$ and $g_2$  in $\overline{\Hyp_0}\cup \Ell_1$.
    Recall that the product $\widetilde{g_1}\widetilde{g_2}$ in the universal cover $\widetilde{\SL}$ is defined as follows. For $i\in \{1,2\},$ let $\{\widetilde{g_{i,t}}\}\interval$ be a path connecting $\widetilde{g_{i,0}} = \mathrm{I}$ and $\widetilde{g_{i,1}} = \widetilde{g_i}$ in the one-parameter subgroup of $\univcover$ generated by $\widetilde{g_i},$ and let $\{g_{i,t}\}\interval$ be the projection of $\{\widetilde{g_{i,t}}\}\interval$ to $\psl$.
    Let $\{\widetilde{g_{1,t}g_{2,t}}\}\interval$ be the lift of the path $\{g_{1,t}g_{2,t}\}\interval$ in $\psl$ to  $\widetilde{\SL}$ starting from $\mathrm I.$ Then  $\widetilde{g_1}\widetilde{g_2}$ is the endpoint $\widetilde{g_{1,1}g_{2,1}}$ of the lifted path. Here we notice that   $\ev$ is not continuous on the whole $\psl\times\psl.$ See also \cite{deroin_tholozan} for a similar discussion.
    \\
     
     To define the evaluation maps and study their path-lifting properties, we need the following Lemma \ref{lem_offdiag_Ell} and Lemma \ref{lem_evimage_Ell}. 

\begin{lemma}\label{lem_offdiag_Ell} 
        For $n\in \mathbb{Z}$ and an elliptic element $\widetilde{A}\in \mathrm{Ell}_n$ of $\univcover,$ let $A$ be its projection to $\SL$, and for $i,j\in\{1,2\}$ let $a_{ij}$ be the $(i,j)$-entry of $A.$ 
        Then $a_{12}\neq0$ and $a_{21}\neq 0.$  Moreover:
        \begin{enumerate}[(1)]
            \item 
            If $n$ is odd, then $sgn(n)=sgn(a_{12}) = -sgn(a_{21})$.
            
            \item 
            If $n$ is even, then $sgn(n)=-sgn(a_{12}) = sgn(a_{21})$.
        \end{enumerate}
    \end{lemma}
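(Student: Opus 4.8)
The plan is to reduce $\widetilde A$ to a standard normal form and compute its projection directly, exactly parallel to the proof of Lemma~\ref{lem_offdiag} for parabolic elements. By the definition of $\Ell_n$, an element $\widetilde A\in\Ell_n$ is conjugate in $\univcover$ to $\widetilde{\exp}\begin{bmatrix}0 & \theta\\ -\theta & 0\end{bmatrix}$ for some $\theta\in((n-1)\pi,n\pi)$ when $n>0$ and some $\theta\in(n\pi,(n+1)\pi)$ when $n<0$; in particular $\theta$ is not an integer multiple of $\pi$, so $\sin\theta\neq 0$. Since the covering map $\univcover\to\SL$ is a group homomorphism carrying $\widetilde{\exp}$ to $\exp$, the projection $A$ of $\widetilde A$ to $\SL$ has the form $A=gBg^{-1}$ for some $g=\begin{bmatrix}a & b\\ c & d\end{bmatrix}\in\SL$ and $B=\begin{bmatrix}\cos\theta & \sin\theta\\ -\sin\theta & \cos\theta\end{bmatrix}$.

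First I would multiply out $gBg^{-1}$, using $g^{-1}=\begin{bmatrix}d & -b\\ -c & a\end{bmatrix}$ and $ad-bc=1$, to obtain
$$A=\begin{bmatrix}\cos\theta-(ac+bd)\sin\theta & (a^2+b^2)\sin\theta\\ -(c^2+d^2)\sin\theta & \cos\theta+(ac+bd)\sin\theta\end{bmatrix}.$$
Because $ad-bc=1$, neither $(a,b)$ nor $(c,d)$ is the zero vector, so $a^2+b^2>0$ and $c^2+d^2>0$; combined with $\sin\theta\neq 0$ this shows $a_{12}=(a^2+b^2)\sin\theta\neq 0$ and $a_{21}=-(c^2+d^2)\sin\theta\neq 0$, and moreover $sgn(a_{12})=sgn(\sin\theta)$ and $sgn(a_{21})=-sgn(\sin\theta)$. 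So the whole statement reduces to determining $sgn(\sin\theta)$ from the fact that $\theta$ lies in the prescribed interval.

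The remaining step is the elementary sign bookkeeping over the sign and parity of $n$. Since $\sin$ is $2\pi$-periodic and positive precisely on the intervals $(2\ell\pi,(2\ell+1)\pi)$, one checks that the defining interval of $\Ell_n$ forces $sgn(\sin\theta)=(-1)^{n+1}sgn(n)$: for $n>0$ the interval $((n-1)\pi,n\pi)$ is one on which $\sin$ is positive exactly when $n$ is odd, and for $n<0$ the same conclusion follows after shifting $\theta$ by an even multiple of $\pi$ into $(0,2\pi)$. Substituting this into $sgn(a_{12})=sgn(\sin\theta)$ and $sgn(a_{21})=-sgn(\sin\theta)$ yields $sgn(n)=sgn(a_{12})=-sgn(a_{21})$ when $n$ is odd and $sgn(n)=-sgn(a_{12})=sgn(a_{21})$ when $n$ is even, which are precisely (1) and (2). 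There is no genuine obstacle here; the only point requiring care is getting the four-way case split on $n$ right, which is a direct analogue of the parabolic computation in Lemma~\ref{lem_offdiag} with the rotation normal form replacing the unipotent one.
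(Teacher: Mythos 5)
Your proposal is correct and follows essentially the same route as the paper's proof: conjugate $\widetilde{A}$ to the rotation normal form, compute $gBg^{-1}$ explicitly to get $a_{12}=(a^2+b^2)\sin\theta$ and $a_{21}=-(c^2+d^2)\sin\theta$ with $a^2+b^2>0$ and $c^2+d^2>0$, and read off the sign of $\sin\theta$ from the defining interval of $\Ell_n$. The only difference is bookkeeping: the paper first replaces the angle by a representative in $(-\pi,0)\cup(0,\pi)$ congruent modulo $2\pi$ and then splits into the four cases on the sign and parity of $n$, whereas you work with the original angle and the periodicity of $\sin$ directly, which amounts to the same computation.
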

    \begin{proof} 
    Let $\widetilde{\exp}: \sl\to \univcover$ be the exponential map of $\univcover$.
        By the definition of $\Ell_n$, $\widetilde{A}\in \Ell_n$ is conjugate to $\widetilde{\exp}\begin{bmatrix}
        0 & \alpha \\
       -\alpha & 0 
    \end{bmatrix}$ for some $\alpha \in \big((n-1)\pi, n\pi\big)$ if $n>0$, and for
    some $\alpha \in \big(n\pi, (n+1)\pi\big)$ if $n<0$.
    Therefore, letting $\exp: \sl\to \SL$ be the exponential map of $\SL$,
    the projection $A$ of $\widetilde{A}$ is conjugate to 
    $\exp\begin{bmatrix}
        0 & \theta \\
       -\theta & 0 
    \end{bmatrix}
    = \begin{bmatrix}
        \cos\theta & \sin\theta \\
       -\sin\theta & \cos\theta
    \end{bmatrix}$ in $\SL$ for some $\theta\in (-\pi,0)\cup(0, \pi)$ 
    with $\theta \equiv \alpha \pmod{2\pi}$. 
    It follows that
    $$A = \abcd
        \begin{bmatrix}
            \cos\theta & \sin\theta \\
            -\sin\theta & \cos\theta
        \end{bmatrix}
        \abcd^{-1}
        = \begin{bmatrix}
            \cos\theta - (ac+bd)\sin\theta & (a^2 + b^2)\sin\theta \\
            -(c^2 + d^2)\sin\theta & \cos\theta + (ac+bd)\sin\theta
        \end{bmatrix}$$ for some matrix $\abcd\in \SL$. 
        Since $ad - bc = 1$, at least one of $a$ and $b$ is nonzero, hence $a^2 + b^2 > 0$; and at least one of $c$ and $d$ is nonzero, hence $c^2 + d^2 > 0$. 
        In the case that $n$ is odd, if $n>0$ then $\theta\in (0,\pi)$, and hence 
        $a_{12} = (a^2+b^2)\sin\theta >0$ and $a_{21} = -(c^2+d^2)\sin\theta < 0$; and 
        if $n<0$ then $\theta\in (-\pi,0)$, and hence 
        $a_{12} = (a^2+b^2)\sin\theta <0$ and $a_{21} = -(c^2+d^2)\sin\theta > 0$. As a consequence, $a_{12}\neq 0$, $a_{21}\neq 0$, and $sgn(n) = sgn(a_{12}) = -sgn(a_{21})$. Similarly, in the case that $n$ is even, if $n>0$ then $\theta\in (-\pi,0)$, and hence 
        $a_{12} = (a^2+b^2)\sin\theta <0$ and $a_{21} = -(c^2+d^2)\sin\theta > 0$; and 
        if $n<0$ then $\theta\in (0,\pi)$, and hence 
        $a_{12} = (a^2+b^2)\sin\theta >0$ and $a_{21} = -(c^2+d^2)\sin\theta < 0$. As a consequence, $sgn(n)=-sgn(a_{12}) = sgn(a_{21})$.
    \end{proof} 
    \begin{lemma}\label{lem_evimage_Ell}Let $s\in \{\pm 1\}$, and let $\ev: \psl\times \psl\to \widetilde{\SL}$ be the lifted product map defined above.
    \begin{enumerate}[(1)] 
    
    \item $\ev\big(\Par^{sgn(s)} \times\Ell\big)\cap \big(\bigcup_{k\in \mathbb{Z}\setminus \{0\}}\Ell_k\big) = \Ell_1$.
    \item $\ev\big(\Par^{sgn(s)} \times\Par^{sgn(s)} \big)\cap \big(\bigcup_{k\in \mathbb{Z}\setminus \{0\}}\Ell_k\big) = \Ell_s$.
    \end{enumerate}
    \end{lemma}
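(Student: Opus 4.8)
The plan is to prove each of the two identities by establishing the two inclusions separately, following the template of Proposition \ref{prop_evimage} and Lemma \ref{lem_evimage}. For a pair $(g_1,g_2)$ in the relevant domain one normalizes $g_1$ by a simultaneous $\psl$-conjugation — which, since $\ev(hg_1h^{-1},hg_2h^{-1})=\widetilde h\,\ev(g_1,g_2)\,\widetilde h^{-1}$ and conjugation preserves every component $\Hyp_k$, $\Par_k^{\pm}$, $\Ell_k$, does not change membership in any $\Ell_k$ — then writes down the one-parameter subgroup path $\{g_{1,t}g_{2,t}\}\interval$ whose $t=1$ lift is $\ev(g_1,g_2)$, computes its projection $A_t\in\SL$, and reads off the sign of the lower-left entry of $A_t$; Lemma \ref{lem_offdiag} and Lemma \ref{lem_offdiag_Ell} then restrict which cells $\Par_k^{\pm}$ and $\Ell_k$ the path may meet.

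\emph{Part (2).} Since both arguments are parabolic, their unique lifts in $\overline{\Hyp_0}\cup\Ell_1$ lie in $\Par_0\subset\overline{\Hyp_0}$, so on $\Par^{sgn(s)}\times\Par^{sgn(s)}$ the extended $\ev$ coincides with the lifted product map of Subsection \ref{subsection_PLP}; hence $\ev(\Par^{sgn(s)}\times\Par^{sgn(s)})\subseteq\mathcal I$ with $\mathcal I$ as in Proposition \ref{prop_evimage}, and so $\ev(\Par^{sgn(s)}\times\Par^{sgn(s)})\cap\bigcup_{k\neq 0}\Ell_k\subseteq \Ell_{-1}\cup\Ell_1$. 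To decide between these, normalize $g_1=\pm\left(\begin{smallmatrix}1&s\\0&1\end{smallmatrix}\right)$ and $g_2=\pm M\left(\begin{smallmatrix}1&s\\0&1\end{smallmatrix}\right)M^{-1}$ with $M=\left(\begin{smallmatrix}a&b\\c&d\end{smallmatrix}\right)$; the $\Par\times\Par$ computation already carried out in the proof of Lemma \ref{lem_evimage} gives $(A_t)_{21}=-sc^2t$, and one may assume $c\neq 0$ (otherwise $g_1,g_2$ commute and the product never becomes elliptic), so $(A_t)_{21}$ has sign $-sgn(s)$ for all $t\in(0,1]$. Lemma \ref{lem_offdiag_Ell} then forces every elliptic value to lie in $\Ell_1$ when $s=+1$ and in $\Ell_{-1}$ when $s=-1$, i.e. in $\Ell_s$. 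The reverse inclusion $\Ell_s\subseteq\ev(\Par^{sgn(s)}\times\Par^{sgn(s)})$ is read off from the family exhibited in the proof of Proposition \ref{prop_evimage}(1) with $s_1=s_2=s$, which realizes every trace in $(-2,2)$ inside $\Ell_s$; conjugating as in that proof yields all of $\Ell_s$.

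\emph{Part (1).} For $\Ell_1\subseteq\ev(\Par^{sgn(s)}\times\Ell)$, use the explicit family $g_1=\pm\left(\begin{smallmatrix}1&s\\0&1\end{smallmatrix}\right)\in\Par^{sgn(s)}$ and $g_2(\beta)=\pm\left(\begin{smallmatrix}\cos\beta&s\sin\beta\\-s\sin\beta&\cos\beta\end{smallmatrix}\right)\in\Ell$ for $\beta\in(0,\pi)$: then $g_1g_2(\beta)$ has trace $2\cos\beta-\sin\beta$, which starts at $2$, first reaches $-2$ at some $\beta_1\in(0,\pi)$, so $g_1g_2(\beta)$ is elliptic with trace sweeping all of $(-2,2)$ for $\beta\in(0,\beta_1)$. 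As $\beta\to 0^+$ the lift $\widetilde{g_2(\beta)}\in\Ell_1$ tends to $\mathrm I$ if $s=+1$ and to $z$ if $s=-1$, so $\ev(g_1,g_2(\beta))$ tends to $\widetilde{g_1}\in\Par_0^+$, respectively to $z\widetilde{g_1}\in\Par_1^-$, and hence lies in $\Ell_1$ for small $\beta>0$ and, by continuity of $\ev$ on $\Par^{sgn(s)}\times\Ell$, throughout $\beta\in(0,\beta_1)$; conjugating then gives all of $\Ell_1$. For the opposite inclusion, introduce the barrier
$$B=\bigcup_{k\in\mathbb Z}\{z^k\}\ \cup\ \bigcup_{m\in\mathbb Z}\bigl(\Par_{2m}^-\cup\Par_{2m+1}^+\bigr)\ \cup\ \bigcup_{k>0,\ k\ \mathrm{even}}\Ell_k\ \cup\ \bigcup_{k<0,\ k\ \mathrm{odd}}\Ell_k .$$
Normalizing $g_1=\pm\left(\begin{smallmatrix}1&s\\0&1\end{smallmatrix}\right)$ and conjugating $g_2$ into standard elliptic position by $M=\left(\begin{smallmatrix}a&b\\c&d\end{smallmatrix}\right)$ with rotation angle $\theta\in(0,\pi)$, a direct computation gives $(A_t)_{21}=-(c^2+d^2)\sin(\theta t)<0$ for all $t\in(0,1]$ (using $c^2+d^2>0$), independent of $s$. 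Then Lemma \ref{lem_offdiag} and Lemma \ref{lem_offdiag_Ell} show the whole path $\{\widetilde{g_{1,t}g_{2,t}}\}\interval$ avoids $B$, so in particular $\ev(g_1,g_2)\notin B$. Since $\ev$ restricted to $\Par^{sgn(s)}\times\Ell$ is continuous — the lift maps being the inverses of the coverings $\Par_0^{sgn(s)}\to\Par^{sgn(s)}$ and $\Ell_1\to\Ell$, which are continuous open bijections, hence homeomorphisms — and the domain is connected, the image is a connected subset of $\univcover\setminus B$, hence lies in one component of $\univcover\setminus B$; the example above shows this component meets $\Ell_1$, and inspection of the cell structure of $\univcover$ (Figure \ref{fig: Universal_cover_Ell_colored2}) identifies that component as $\Hyp_0\cup\Par_0^+\cup\Ell_1\cup\Par_1^-\cup\Hyp_1$. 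Hence the only $\Ell_k$ it meets is $\Ell_1$.

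The main obstacle is the failure of $\ev$ to be continuous on all of $\psl\times\psl$: the Part (1) argument hinges on $\ev$ being continuous on $\Par^{sgn(s)}\times\Ell$, which requires knowing that $\Par_0^{\pm}\to\Par^{\pm}$ and $\Ell_1\to\Ell$ are homeomorphisms (equivalently, that the deck group $\langle z\rangle$ permutes the components $\Par_k^{\pm}$, respectively $\Ell_k$, freely transitively), and it requires pinning down the exact connected component of $\univcover\setminus B$ occupied by the image, which rests on the adjacency relations among the cells $\Hyp_k$, $\Par_k^{\pm}$, $\Ell_k$ of $\univcover$. Part (2) is comparatively routine once Proposition \ref{prop_evimage} is invoked.
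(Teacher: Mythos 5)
Your proposal is correct, and its computational core is exactly the paper's: normalize $g_1$, take the one-parameter paths, read off the sign of the $(2,1)$-entry of the projected path ($-(c^2+d^2)\sin(\theta t)$ in case (1), $-sc^2t$ with $c\neq 0$ in case (2)), and apply Lemma \ref{lem_offdiag} and Lemma \ref{lem_offdiag_Ell}. The differences are organizational. For the forward inclusion in (2), you get the a priori restriction to $\Ell_{-1}\cup\Ell_1$ from Proposition \ref{prop_evimage} (the extended map agrees with the old lifted product on $\Par\times\Par$) and then only need the sign of the endpoint's lower-left entry; the paper instead reruns the path argument to show the lifted path avoids $\Ell_{-s}\cup\Ell_{2s}$ — your version is a modest shortcut. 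For the forward inclusion in (1), the paper argues path by path with the minimal barrier $\Ell_{-1}\cup\Ell_2$: the lifted path avoids these two cells, hence stays in the component of the complement containing $\mathrm I$, namely $\overline{\Hyp_0}\cup\overline{\Hyp_1}\cup\Ell_1$, so an elliptic endpoint lies in $\Ell_1$. You instead take a much larger barrier $B$ and argue globally via continuity of $\ev$ on the connected domain $\Par^{sgn(s)}\times\Ell$; this is valid (and the continuity you invoke is the same fact the paper asserts for its restricted evaluation maps), but it buys nothing over the per-path argument while costing you the continuity justification and the identification of the components of $\univcover\setminus B$. Two small points there: your path does meet $B$ at $t=0$ (since $\mathrm I\in B$), so the claim should be "avoids $B$ for $t\in(0,1]$" — what your argument actually uses is only that the endpoint avoids $B$, which the entry computation does give; and once your forward direction is in hand, your limiting argument ($\beta\to 0^+$, with the lift of $g_2(\beta)$ tending to $\mathrm I$ or $z$ and adjacency of $\Ell_1$ to $\Par_0^+$ resp. $\Par_1^-$) for the reverse inclusion in (1) becomes unnecessary — the paper proves the forward direction first and simply cites it to place the elliptic members of its explicit family in $\Ell_1$, and you could do the same. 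Citing the family from the proof of Proposition \ref{prop_evimage}(1) with $s_1=s_2=s$ for the reverse inclusion in (2) is legitimate and equivalent to the paper's fresh example.
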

    \begin{proof} 
    For (1), we first show 
    $\ev\big(\Par^{sgn(s)} \times\Ell\big)\cap \big(\bigcup_{k\in \mathbb{Z}\setminus \{0\}}\Ell_k\big) \subset \Ell_1$ as follows. 
    Let $(g_1, g_2)\in \Par^{sgn(s)}\times \Ell$, and suppose that $\ev(g_1, g_2) \in \Ell_k$ for some $k\in \mathbb{Z}\setminus\{0\}$. 
    Let $\widetilde{g_1}$ be the lift of $g_1$ in $\Par^{sgn(s)}_0$, and let $\widetilde{g_2}$ be the lift of $g_2$ in $\Ell_1$.
    For $i\in \{1,2\},$ let $\{\widetilde{g_{i,t}}\}\interval$ be a path  connecting $\mathrm{I}$ and $\widetilde{g_i}$ in the one-parameter subgroup of $\univcover$ generated by $\widetilde{g_i},$ and let $\{g_{i,t}\}\interval$ be the projection of $\{\widetilde{g_{i,t}}\}\interval$ to $\psl$.
    Then the lift $\{\widetilde{g_{1,t}g_{2,t}}\}\interval$ of $\{g_{1,t}g_{2,t}\}_{t\in [0,1]}$ in $\widetilde{\SL}$ connects $\widetilde{g_{1,0}g_{2,0}} = \mathrm{I}$ to $\widetilde{g_{1,1}g_{2,1}} = \widetilde{g_1}\widetilde{g_2}$. Up to a $\psl-$conjugation, we can assume that $g_1 = \pm \begin{bmatrix}
        1 & s \\
        0 & 1
    \end{bmatrix}$; and by a re-parametrization if necessary, we can assume that 
        the projection $\{A_t\}\interval$ of the path $\{\widetilde{g_{1,t}}\}\interval$  in $\univcover$ to $\SL$ equals 
        $$A_t
        = \begin{bmatrix}
            1 & st \\
            0 & 1
        \end{bmatrix}$$
        for all $t\in [0,1].$ 
        Similarly, we can also assume that the projection $\{B_t\}\interval$ of the path $\{\widetilde{g_{2,t}}\}\interval$  in $\univcover$ to $\SL$ equals $$B_t=\abcd\begin{bmatrix}
            \cos(\theta t) & \sin(\theta t)\\
            -\sin(\theta t) & \cos(\theta t)
        \end{bmatrix}\abcd^{-1}$$
        for some $\theta\in (0,\pi)$ and $\abcd\in\SL$. 
        Then the projection $\{A_tB_t\}\interval$ of the path $\{\widetilde{g_{1,t}g_{2,t}}\}\interval$  in $\univcover$ to $\SL$ equals 
          \begin{equation*}
          \begin{split}
          &A_tB_t=\\
           &\begin{bmatrix}
            \cos(\theta t) - (ac + bd)\sin(\theta t) - (c^2+d^2)st\sin(\theta t) 
               & (a^2+b^2)\sin(\theta t) + st \cos(\theta t) + (ac+bd)st\sin(\theta t) \\
               -(c^2 + d^2)\sin(\theta t) & \cos(\theta t) + (ac + bd)\sin(\theta t)
            \end{bmatrix}.
            \end{split}
            \end{equation*}
            Since $ad - bc = 1$, at least one of $c$ and $d$ is nonzero, hence $c^2 + d^2 > 0$; and since $\theta\in (0, \pi)$, we have the $(2,1)$-entry $(A_tB_t)_{21} = -(c^2+d^2)\sin(\theta t) <0$ for all $t\in (0,1]$. 
            Then by Lemma \ref{lem_offdiag_Ell}, the path $\{\widetilde{g_{1,t}g_{2,t}}\}\interval$ intersects neither $\Ell_{-1}$ nor $\Ell_2$, hence lies in $\overline{\Hyp_0}\cup \overline{\Hyp_1}\cup \Ell_1$ as it is the connected component 
            of $\univcover\setminus \big(\Ell_{-1}\cup \Ell_2\big)$ that contains $\widetilde{g_{1,0}g_{2,0}} = \mathrm I$.
            As $\widetilde{g_{1,1}g_{2,1}} = \widetilde{g_1}\widetilde{g_2}$ is elliptic by the assumption, we have $\widetilde{g_{1,1}g_{2,1}}\in \Ell_1$. 
            \medskip

            Next, to show that 
    $\Ell_1\subset \ev\big(\Par^{sgn(s)} \times\Ell\big)\cap \big(\bigcup_{k\in \mathbb{Z}\setminus \{0\}}\Ell_k\big)$, 
    we consider $$g_1=\pm \begin{bmatrix}
        1 & s \\
        0 & 1
    \end{bmatrix}\quad\text{and}\quad g_2
        = \pm \begin{bmatrix}
        \cos\theta & \frac{1}{2}\sin\theta \\
        -2\sin\theta & \cos\theta
        \end{bmatrix}$$ for $\theta\in (0,\pi)$.  Then $\ev(g_1,g_2) = \widetilde{g_1}\widetilde{g_2}$ projects to the matrix 
         $$A_\theta = \begin{bmatrix}
        \cos\theta - 2s\sin\theta &  \frac{1}{2}\sin\theta + s\cos\theta  \\
        -2\sin\theta & \cos\theta
        \end{bmatrix}$$
           in $\SL$ with $\tr(A_\theta)=2(\cos\theta-s\sin\theta).$ 
            If $s = +1$, 
            then as $\theta$ varies from $0$ to $\frac{\pi}{2},$ the trace $\tr(A_\theta)=2(\cos\theta-\sin\theta)$ decreases from $2$ to $-2$; and 
            if $s = -1$, 
            then as $\theta$ varies from $\frac{\pi}{2}$ to $\pi,$ the trace $\tr(A_\theta)=2(\cos\theta+\sin\theta)$ decreases from $2$ to $-2$. Moreover, whenever $\tr(A_\theta)\in (-2,2)$, the corresponding $\widetilde{g_1}\widetilde{g_2}$ is elliptic as it projects to $A_\theta$ in $\SL$, hence lies in $\Ell_1$ as shown in the previous paragraph. 
            Since two elements in $\Ell_1$ are conjugate if and only if they have the same trace, for any $\widetilde{C} \in \Ell_1$, there exists a $\theta\in (0,\pi)$ where the corresponding $\widetilde{g_1}\widetilde{g_2}$ is conjugate to $\widetilde{C}$ by some element $\widetilde{h}\in \univcover$. Let $h$ be the projection of $\widetilde{h}$ to $\psl$. Then  $\widetilde{h}\widetilde{g_1}\widetilde{h}^{-1}$ is the unique lift of $hg_1h^{-1}$ in $\Par^{sgn(s)}_0$, $\widetilde{h}\widetilde{g_2}\widetilde{h}^{-1}$ is the unique lift of $hg_2h^{-1}$ in $\Ell_1$,  and we have $\ev(hg_1h^{-1}, hg_2h^{-1}) = 
        \widetilde{C}$. Therefore, $\Ell_1\subset \ev\big(\Par^{sgn(s)} \times\Ell\big)$, and consequently, 
            $\Ell_1\subset \ev\big(\Par^{sgn(s)} \times\Ell\big)\cap \big(\bigcup_{k\in \mathbb{Z}\setminus \{0\}}\Ell_k\big)$.
        \\

    For (2), we first show 
    $\ev\big(\Par^{sgn(s)} \times\Par^{sgn(s)} \big)\cap \big(\bigcup_{k\in \mathbb{Z}\setminus \{0\}}\Ell_k\big) \subset \Ell_s$ as follows. 
    Let $g_1, g_2\in \Par^{sgn(s)}$, and suppose that $\ev(g_1, g_2) \in \Ell_k$ for some $k\in \mathbb{Z}\setminus\{0\}$. 
    For $i\in \{1,2\},$ let $\widetilde{g_i}$ be the lift of $g_i$ in $\Par^{sgn(s)}_0$, let $\{\widetilde{g_{i,t}}\}\interval$ be a path connecting $\mathrm{I}$ and $\widetilde{g_i}$ in the one-parameter subgroup of $\univcover$ generated by $\widetilde{g_i},$ and let $\{g_{i,t}\}\interval$ be the projection of $\{\widetilde{g_{i,t}}\}\interval$ to $\psl$. Then the lift $\{\widetilde{g_{1,t}g_{2,t}}\}\interval$ of $\{g_{1,t}g_{2,t}\}_{t\in [0,1]}$ in $\widetilde{\SL}$ connects $\widetilde{g_{1,0}g_{2,0}} = \mathrm{I}$ to $\widetilde{g_{1,1}g_{2,1}} = \widetilde{g_1}\widetilde{g_2}$. Up to a $\psl-$conjugation, we can assume that $g_1 = \pm \begin{bmatrix}
        1 & s\\
        0 & 1
    \end{bmatrix}$; and by a re-parametrization if necessary, we can assume that 
        the projection $\{A_t\}\interval$ of the path $\{\widetilde{g_{1,t}}\}\interval$  in $\univcover$ to $\SL$ equals 
        $$A_t
        = \begin{bmatrix}
        1 & st\\
        0 & 1
    \end{bmatrix}$$
        for all $t\in [0,1].$ 
        Similarly, we can also assume that the projection $\{B_t\}\interval$ of the path $\{\widetilde{g_{2,t}}\}\interval$  in $\univcover$ to $\SL$ equals $$B_t=\abcd\begin{bmatrix}
            1 & st \\
            0 & 1
        \end{bmatrix}\abcd^{-1}$$
        for some $\abcd\in\SL$. 
        Then the projection $\{A_tB_t\}\interval$ of the path $\{\widetilde{g_{1,t}g_{2,t}}\}\interval$  in $\univcover$ to $\SL$ equals 
          $$A_tB_t=  
        \begin{bmatrix}
                1 - sact - c^2t^2
                & (1 + a^2)st + ac t^2  \\
                -sc^2t 
                & 1 + sact
            \end{bmatrix}.$$
           We observe that $c\neq 0$, as otherwise $\widetilde{g_1}\widetilde{g_2}= \begin{bmatrix}
                1 
                & (1 + a^2)s  \\
               0
                & 1
            \end{bmatrix} \not\in \Ell_k$.
            Then for all $t\in (0,1]$, the sign of the $(2,1)$-entry $(A_tB_t)_{21} = -sc^2t$ equals $sgn(-sc^2t) = -sgn(s)$. 
            Then by Lemma \ref{lem_offdiag_Ell}, the path $\{\widetilde{g_{1,t}g_{2,t}}\}\interval$ intersects neither $\Ell_{-s}$ nor $\Ell_{2s}$, 
            hence lies in $\overline{\Hyp_0}\cup \overline{\Hyp_s}\cup \Ell_s$ as it is the connected component 
            of $\univcover\setminus \big(\Ell_{-s}\cup \Ell_{2s}\big)$ that contains $\widetilde{g_{1,0}g_{2,0}} = \mathrm I$. 
            Since $\widetilde{g_{1,1}g_{2,1}} = \widetilde{g_1}\widetilde{g_2}$ is elliptic by the assumption, we have $\widetilde{g_{1,1}g_{2,1}}\in \Ell_s$. 
            \medskip
            
            Next, to show 
            $\Ell_s\subset \ev\big(\Par^{sgn(s)} \times\Par^{sgn(s)} \big)\cap \big(\bigcup_{k\in \mathbb{Z}\setminus \{0\}}\Ell_k\big)$, 
    we consider $$g_1=\pm \begin{bmatrix}
        1 & s \\
        0 & 1
    \end{bmatrix}\quad\text{and}\quad g_2
        = \pm \begin{bmatrix}
            1 & 0 \\
            -s\lambda & 1
        \end{bmatrix}$$ for some $\lambda >0$. 
        Then $\ev(g_1,g_2) = \widetilde{g_1}\widetilde{g_2}$ projects to the matrix 
         $$A_\theta = \begin{bmatrix}
        1-\lambda & s \\
        -s\lambda & 1
        \end{bmatrix}$$
           in $\SL$. 
            As $\lambda$ varies from $0$ to $4,$ the trace $\tr(A_\theta)=2-\lambda$ decreses from $2$ to $-2$.
            Moreover, whenever $\tr(A_\theta)\in (-2,2)$, the corresponding $\widetilde{g_1}\widetilde{g_2}$ is elliptic as it projects to $A_\theta$ in $\SL$, hence lies in $\Ell_s$ as shown in the previous paragraph. 
            Since two elements in $\Ell_s$ are conjugate if and only if they have the same trace, for any $\widetilde{C} \in \Ell_s$, there exists a $\lambda\in (0,4)$ where the corresponding $\widetilde{g_1}\widetilde{g_2}$ is conjugate to $\widetilde{C}$ by some element $\widetilde{h}\in \univcover$. Let $h$ be the projection of $\widetilde{h}$ to $\psl$. Then $\widetilde{h}\widetilde{g_i}\widetilde{h}^{-1}$ is the unique lift of $hg_ih^{-1}$ in $\Par_0$ for $i \in \{1,2\},$ and  we have $\ev(hg_1h^{-1}, hg_2h^{-1}) = 
            \widetilde{C}$. Therefore, $\Ell_s\subset \ev\big(\Par^{sgn(s)} \times\Par^{sgn(s)}\big)$, and consequently, 
            $\Ell_s\subset \ev\big(\Par^{sgn(s)} \times\Par^{sgn(s)}\big)\cap \big(\bigcup_{k\in \mathbb{Z}\setminus \{0\}}\Ell_k\big)$.  
    \end{proof}

    For $s\in \{\pm 1\}$, 
    let $\PE^{s}(\Sigma_{0,3})$ be the space of $\psl$-representations of $\pi_1(\Sigma_{0,3})$ that send the preferred peripheral element 
    $c_1$ into $\Par^{sgn(s)}$, and the preferred peripheral elements $c_2,c_3$ into $\Ell$. 
    Similarly, let $\PP^{s}(\Sigma_{0,3})$ be the space of $\psl$-representations of $\pi_1(\Sigma_{0,3})$ that send $c_1, c_2$ into $\Par^{sgn(s)}$, and $c_3$ into $\Ell$. 
    Under the identification of $\psl$-representations of $\pi_1(\Sigma_{0,3})$ with pairs of elements of $\psl,$  both $\PE^s(\Sigma_{0,3})$ and $\PP^s(\Sigma_{0,3})$ can be considered as subspaces of $\psl\times \psl$. By Lemma \ref{lem_evimage_Ell}, 
    we have $\ev\big(\PE^s(\Sigma_{0,3})\big) = \Ell_1$ and 
    $\ev(\PP^s\big(\Sigma_{0,3})\big) = \Ell_s$, and
    the \emph{evaluation maps} 
     $$\ev: \mathrm{PE}^s(\Sigma_{0,3})\to \Ell_1$$
     and 
     $$\ev: \mathrm{PP}^s(\Sigma_{0,3})\to \Ell_s$$
     are respectively defined as the restrictions of the lifted product map. Here we notice that, even though the lifted product map is not continuous on the whole $\psl\times \psl$, its restrictions to $\PE^s(\pants)$ and $\PP^s(\pants)$ are continuous.
        \begin{proposition} \label{prop_PLP_Ell}
        Let $s\in \{\pm 1\}$.
        \begin{enumerate}[(1)]
    \item The evaluation map $\ev: \PE^s(\Sigma_{0,3})\to \Ell_1$ satisfies the path-lifting property. Moreover, for each $\widetilde{C}$ in $\Ell_1$, the fiber $\ev^{-1}\big(\widetilde{C}\big)$ is connected in $\mathrm{PE}^s(\Sigma_{0,3})$.

    \item The evaluation map $\ev: \PP^s(\Sigma_{0,3})\to \Ell_s$ satisfies the path-lifting property. Moreover, for each $\widetilde{C}$ in $\Ell_s$, the fiber $\ev^{-1}\big(\widetilde{C}\big)$ is connected in $\PP^s(\Sigma_{0,3})$.
    \end{enumerate}
    \end{proposition}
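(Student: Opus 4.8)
The plan is to verify, for each of the two evaluation maps, the two hypotheses of Lemma~\ref{lem_plp}: that the map is a submersion and that each of its fibers is connected; the path-lifting property and, since $\Ell_1$ and $\Ell_s$ are path-connected, the connectedness of the fibers, will then follow. The submersion property is immediate. Under the identification of $\psl$-representations of $\pi_1(\pants)$ with pairs of elements of $\psl$, the evaluation map on $\PE^s(\pants)$ is the restriction of the multiplication map $m$ to $\Par^{sgn(s)}\times\Ell$; since $\Ell$ is open in $\psl$, this restriction is a submersion by Lemma~\ref{lem_evPsubm}(1). The evaluation map on $\PP^s(\pants)$ is the restriction of $m$ to $\Par^{sgn(s)}\times\Par^{sgn(s)}$; here, if $g_1,g_2\in\Par^{sgn(s)}$ commute then $g_1g_2$ is again parabolic or equal to $\pm\mathrm I$, hence never elliptic, so every representation in $\PP^s(\pants)$ is non-abelian, and Lemma~\ref{lem_evPsubm}(2) shows that the restriction of $m$ to $\PP^s(\pants)$ is a submersion. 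It therefore remains to prove that the fibers are connected, and this is carried out following the template of the proofs of Theorem~\ref{thm_PLP} and Theorem~\ref{thm_pants2}, with Lemma~\ref{lem_offdiag_Ell} and Lemma~\ref{lem_evimage_Ell} playing the role that Lemma~\ref{lem_offdiag} and the statements about $\overline{\Hyp}$ played there.

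For (2), let $\widetilde C\in\Ell_s$ and $\phi_1,\phi_2\in\ev^{-1}(\widetilde C)$. Since $\phi_j(c_1)$ and $\phi_j(c_2)$ are parabolic of sign $s$, their lifts in $\Par_0^{sgn(s)}$ project to $\SL$-matrices of trace $2$, whose product is the projection of $\ev(\phi_j)=\widetilde C$; hence the character of $\phi_j$ is $(2,2,z)$ with $z=\tr\widetilde C\in(-2,2)$, the same for $j=1,2$. As $\kappa(2,2,z)=(z-2)^2+2\neq 2$, Proposition~\ref{prop_char}(2) shows the two trace-$2$ $\SL$-lifts of the pairs $(\phi_1(c_1),\phi_1(c_2))$ and $(\phi_2(c_1),\phi_2(c_2))$ lie in one $\GL$-orbit; they cannot be conjugate by an element of $\GL\setminus\SL$, as by Lemma~\ref{lem_offdiag} this would reverse the sign of the parabolic $\phi_1(c_1)$, so they are $\SL$-conjugate, whence $\phi_2=g\phi_1 g^{-1}$ for some $g\in\psl$. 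From $\ev(\phi_1)=\ev(\phi_2)=\widetilde C$ we get $\phi_2(c_3)=\phi_1(c_3)$, so $g$ commutes with $\phi_1(c_3)$; taking a path $\{g_t\}\interval$ from $\pm\mathrm I$ to $g$ inside the one-parameter subgroup generated by $g$, the path $\{g_t\phi_1 g_t^{-1}\}\interval$ stays in $\PP^s(\pants)$ and, since each $g_t$ commutes with $\phi_1(c_3)$, also stays in $\ev^{-1}(\widetilde C)$, connecting $\phi_1$ to $\phi_2$.

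For (1), let $\widetilde C\in\Ell_1$ and $\phi_1,\phi_2\in\ev^{-1}(\widetilde C)$. For $j=1,2$ let $A_j$ be the trace-$2$ $\SL$-lift of $\phi_j(c_1)$ and $B_j$ the projection of the $\Ell_1$-lift $\widetilde{\phi_j(c_2)}$; then $A_jB_j$ is the projection of $\ev(\phi_j)=\widetilde C$, so $\chi(A_j,B_j)=(2,y_j,z)$ with $z=\tr\widetilde C\in(-2,2)$ fixed and $y_j=\tr B_j\in(-2,2)$, while $\kappa(2,y_j,z)=(y_j-z)^2+2\neq 2$ since $B_j$ elliptic forces $\langle A_j,B_j\rangle$ to be irreducible. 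Using the path-lifting property of the character map (Lemma~\ref{lem_plpchar}) along straight segments in $\{2\}\times(-2,2)\times\{z\}$ avoiding the locus $y=z$, one deforms $\phi_1$ and $\phi_2$ to representations with a common character $(2,y,z)$; along such a deformation the image of $c_1$ stays parabolic of sign $s$ (the representation stays non-abelian, so it never becomes $\pm\mathrm I$), the image of $c_2$ stays elliptic, and the image of $c_3$ has trace $z$ hence stays elliptic with the evaluation staying in $\Ell_1$ by Lemma~\ref{lem_evimage_Ell}(1), so the deformation stays in $\PE^s(\pants)$. As in (2), Proposition~\ref{prop_char}(2) together with the sign argument for the parabolic $c_1$ via Lemma~\ref{lem_offdiag} shows the two deformed representations are $\psl$-conjugate; one then connects them by a conjugation path and, exactly as in the last part of the proof of Theorem~\ref{thm_PLP}, uses Lemma~\ref{lem_conjugacypath_const} to push the resulting path back into the fiber $\ev^{-1}(\widetilde C)$.

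The main obstacle is the fiber connectedness in part (1). Keeping the deformation inside $\PE^s(\pants)$ confines the character to $\{2\}\times(-2,2)\times\{z\}$ minus the reducibility locus $y=z$, which is disconnected, so when $y_1$ and $y_2$ lie on opposite sides of $z$ a common character cannot be reached by a single straight segment. Resolving this is where the combinatorial information in Lemma~\ref{lem_offdiag_Ell} and Lemma~\ref{lem_evimage_Ell} about the components $\Ell_{\pm1}$ and their boundary strata $\Par_{\pm1}$ is used: one reorganizes the argument by an auxiliary deformation (or by showing directly that the fiber, viewed as the subset of $\Par^{sgn(s)}$ cut out by the ellipticity condition on $g_1^{-1}C$, is connected). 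Apart from this point the proof is routine bookkeeping parallel to Theorem~\ref{thm_PLP} and Theorem~\ref{thm_pants2}.
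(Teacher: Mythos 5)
Your part (2) is essentially correct, and your choice of conjugating path inside the centralizer of $\phi_1(c_3)$ is a legitimate shortcut (the paper instead conjugates along an arbitrary path in $\psl$ and then pushes the resulting path back into the fiber via Lemma \ref{lem_conjugacypath_const}). The problem is part (1): you flag the disconnectedness of $\{(2,y,z)\ |\ y\in(-2,2),\ y\neq z\}$ as ``the main obstacle'' when $y_1$ and $y_2$ lie on opposite sides of $z$, and then leave its resolution unspecified (``an auxiliary deformation'', or ``showing directly that the fiber is connected''). That unresolved point is precisely the heart of the matter, and the correct resolution is that the opposite-side case never occurs. Normalize so that the $\SL$-projection of $\widetilde{\phi_j(c_1)}\in\Par^{sgn(s)}_0$ is $\begin{bmatrix} 1 & s\\ 0 & 1\end{bmatrix}$ and write the $\SL$-projection of $\widetilde{\phi_j(c_2)}$ as $\begin{bmatrix} a & b\\ c & d\end{bmatrix}$; then $z=\mathrm{tr}\big(\widetilde{\phi_j(c_1)}\widetilde{\phi_j(c_2)}\big)=a+d+sc=y_j+sc$. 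Since $\widetilde{\phi_j(c_2)}$ lies in $\Ell_1$ (and not $\Ell_{-1}$), Lemma \ref{lem_offdiag_Ell} forces $c<0$, so $y_j-z=-sc$ has the sign of $s$ for both $j=1,2$. Hence $y_1$ and $y_2$ lie on the same side of $z$, determined by $s$, the straight segment $\{(2,y_t,z)\}$ avoids $y=z$, and the deformation to a common character goes through with a single segment. Without this computation your argument is incomplete exactly at its crux.

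A secondary issue in the same step: your claim that $\kappa(2,y_j,z)\neq 2$ ``since $B_j$ elliptic forces $\langle A_j,B_j\rangle$ to be irreducible'' does not follow from Proposition \ref{prop_char}, whose reducibility criterion is stated only for pairs projecting into $\overline{\Hyp}\times\overline{\Hyp}$; moreover, real irreducibility alone does not rule out $\mathrm{tr}[A,B]=2$ (two elliptic rotations about the same point generate an irreducible abelian group with commutator $\mathrm I$). What actually gives $\kappa(2,y_j,z)=(y_j-z)^2+2>2$ is the same entry computation as above, namely $y_j-z=-sc\neq 0$; alternatively one can argue that a parabolic and an elliptic element share no eigenvector in $\mathbb{C}^2$. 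Either way, the missing ingredient is the explicit use of Lemma \ref{lem_offdiag_Ell} on the $(2,1)$-entry, which your proposal invokes only nominally.
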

    \begin{proof}
    For (1), by Lemma \ref{lem_plp} and Lemma \ref{lem_evPsubm}, it suffices to show that for all $\widetilde{C}\in \Ell_1$, the fiber $\ev^{-1}(\widetilde{C})$ is connected in $\PE^s(\Sigma_{0,3})$. Let $\widetilde{C}\in \Ell_1$, and let $\phi_1, \phi_2$ be representations in $\PE^s(\Sigma_{0,3})$ such that $\ev(\phi_1) = \ev(\phi_2) = \widetilde{C}$. For each $j \in \{ 1,2\}$,  we let the character of $\phi_j$ be the triple $$
    \chi(\phi_j) \doteq 
    \Big(
    \mathrm{tr}\big(\widetilde{\phi_j(c_1)}\big), 
    \mathrm{tr}\big(\widetilde{\phi_j(c_2)}\big), 
    \mathrm{tr}\big(\widetilde{\phi_j(c_1)}\widetilde{\phi_j(c_2)}\big)\Big)
    $$
    in $\mathbb{R}^3$, 
    where $\widetilde{\phi_j(c_1)}$ and 
    $\widetilde{\phi_j(c_2)}$ are respectively the lifts of $\phi_j(c_1)$ and $\phi_j(c_2)$ in $\overline{\Hyp_0}\cup \Ell_1$,
    and their traces are defined as the traces of their projections to $\SL$. 
    We will first construct a path 
    $\{\psi_t\}_{t\in [1,2]}$ in $\PE^s(\pants)$ connecting $\psi_1 = \phi_1$ to a $\psi_2\in \PE^s(\pants)$ with 
    $\chi(\psi_2) = \chi(\phi_2)$, followed by constructing a path 
 $\{\rho_t\}_{t\in [0,1]}$ 
 in $\PE^s(\pants)$ connecting $\rho_0 = \psi_2$ to $\rho_1 = \phi_2$. Then the composition of these two paths defines a path $\{\phi'_t\}_{t\in [1,2]}$ in $\PE^s(\Sigma)$ connecting $\phi_1$ to $\phi_2$. Finally, using $\{\phi'_t\}_{t\in [1,2]}$, we will construct a path $\{\phi_t\}_{t\in [1,2]}$ in $\ev^{-1}(\widetilde{C})$ connecting $\phi_1$ to $\phi_2$.
    \medskip
    
    First, we construct the path $\{\psi_t\}_{t\in [1,2]}$ as follows.
    Since $\widetilde{\phi_j(c_1)}\in \Par^{sgn(s)}_0$ and $\widetilde{\phi_j(c_2)}\in \Ell_1$ for $j\in \{1,2\}$, 
    we have the characters $\chi(\phi_1) = (2,y_1,z)$ and 
    $\chi(\phi_2) = (2,y_2,z)$ with $y_1, y_2 \in (-2,2)$ and $z = \tr \widetilde{C}$. Define the straight line segment
    $\big\{(2,y_t,z)\big\}_{t\in [1,2]}$ in $\mathbb{R}^3$ with $y_t= (2-t)y_1+(t-1)y_2$ connecting $y_1$ to $y_2$.
    We will lift $\big\{(2,y_t,z)\big\}_{t\in [1,2]}$ to $\SL\times\SL$ using
    Lemma \ref{lem_plpchar}, then its projection to $\psl\times \psl$ defines the path $\{\psi_t\}_{t\in [1,2]}$.
    Let $A_1$ and $B_1$ respectively
    be the projections of $\widetilde{\phi_1(c_1)}$ and $\widetilde{\phi_1(c_2)}$ to $\SL$, noting that the image of $(A_1, B_1)$ under the character map $\chi$ equals $\chi(\phi_1) = (2,y_1, z)$.
    Since $\phi_1(c_1)\in \Par^{sgn(s)}$ and $\phi_1(c_2)\in \Ell$, $A_1$ and $B_1$ do not commute. 
    We also claim that the path $\big\{(2,y_t,z)\big\}_{t\in [1,2]}$ lies in
    $\mathbb{R}^3
        \setminus \big([-2,2]^3\cap \kappa^{-1}([-2,2])\big)$, where $\kappa$ is as defined in Proposition \ref{prop_char}. Indeed, 
        up to an $\SL$-conjugation, we can assume that 
        $A_1 = \begin{bmatrix}
        1 & s \\
        0 & 1
        \end{bmatrix}$ and $B_1 = \abcd$ for some $\abcd\in \SL$. Then $A_1B_1
        = \begin{bmatrix}
        a+sc & b+sd \\
        c & d
        \end{bmatrix}$ with the trace
        $\tr(A_1B_1) = a+d + sc = z$. As $\widetilde{\phi_1(c_2)}\in \Ell_1$, Lemma \ref{lem_offdiag_Ell} implies $c<0$; and since $y_1 = a+d$, we have $y_1 > z$ if $s = +1$, and $y_1 < z$ if $s = -1$. 
        Similarly, we can show that $y_2 > z$ if $s = +1$, and $y_2 < z$ if $s = -1$. 
        Hence, if $s = +1$, then $y_t = (2-t)y_1+(t-1)y_2 > z$ for all $t\in [1,2]$; and if $s = -1$, then $y_t = (2-t)y_1+(t-1)y_2 < z$ for all $t\in [1,2]$. Thus, $y_t \neq z$ for all $t\in [1,2]$, and
        we have $\kappa(2,y_t,z)
        = y_t^2 + z^2 - 2y_tz + 2 > 2$. Therefore, the path $\big\{(2,y_t,z)\big\}_{t\in [1,2]}$ lies in
    $\mathbb{R}^3
        \setminus \big([-2,2]^3\cap \kappa^{-1}([-2,2])\big)$. 
        Then by Lemma \ref{lem_plpchar}, 
        there exists a path $\{(A_t, B_t)\}_{t\in [1,2]}$ of non-commuting pairs of elements of $\SL$ starting from  $(A_1, B_1)$, having 
        $\chi(A_t, B_t) = (2, y_t,z)$ for all $t\in [1,2]$. 
        Let $\pm A_t$ and $\pm B_t$ respectively be the projections of 
        $A_t$ and $B_t$ to $\psl$, and define $\psi_t:\pi_1(\pants)\to \psl$ by letting $\big(\psi_t(c_1), \psi_t(c_2)\big) = (\pm A_t, \pm B_t)$. Then the path $\{\psi_t\}_{t\in [1,2]}$ of $\psl-$representations starting from $\psi_1 = \phi_1$ satisfies $\chi(\psi_t) = \chi(A_t, B_t) = (2,y_t, z)$ for all $t\in [1,2]$, which implies $\chi(\psi_2) =(2,y_2,z)$. 
        %
        Since $\tr A_t = 2$ for all $t\in [1,2]$, the path $\{\psi_t(c_1)\}_{t\in [1,2]}$ in $\psl$ is contained in $\Par\cup \{\pm \mathrm{I}\}$; and it never passes $\pm\mathrm{I}$, as otherwise $A_t = \mathrm{I}$ for some $t\in [1,2]$ and would commute with $B_t$. Since $\psi_1(c_1)$ lies in $\Par^{sgn(s)}$ which is a connected component of $\Par$, we have
        $\psi_t(c_1)\in \Par^{sgn(s)}$
        for all $t\in [1,2]$; since $y_1, y_2\in (-2,2)$,
        $y_t = (2-t)y_1+(t-1)y_2 \in (-2,2)$, and hence $\psi_t(c_2)\in \Ell$ for all $t\in [1,2]$; and since $|z|<2,$ $\psi_t(c_3) = \big(\psi_t(c_1)\psi_t(c_2)\big)^{-1}\in \Ell$ for all $t\in [1,2]$. This implies that 
        the path $\{\psi_t\}_{t\in [1,2]}$ lies in $\PE^s(\pants)$.
        
    Next, we construct the path $\{\rho_t\}_{t\in [0,1]}$ connecting $\psi_2$ to $\phi_2$.
        Let $A$ and $B$ respectively
    be the projections of $\widetilde{\phi_2(c_1)}$ and $\widetilde{\phi_2(c_2)}$ to $\SL$, noting that the image of $(A, B)$ under the character map $\chi$ equals $\chi(\phi_2) = (2,y_2, z)$. As shown 
    in the previous paragraph, we have $y_2 > z$ if $s = +1$ and $y_2 < z$ if $s = -1$, hence 
    $\kappa(2,y_2,z)
        = y_2^2 + z^2 - 2y_2z + 2 \neq 2$. 
    Since both $(A_2, B_2)$ and $(A, B)$ are in $\chi^{-1}(2,y_2,z)\subset \SL\times \SL$ with $\kappa(2, y_2, z)\neq 2$, by Proposition \ref{prop_char}, these pairs of elements of $\SL$ are $\GL$-conjugate.
        It follows that they are $\SL$-conjugate as otherwise the parabolic elements $\psi_2(c_1)$ and $\phi_2(c_1)$, which are respectively the projections of $A_2$ and $A$, would have opposite signs. (See the proof of Proposition \ref{prop_pglpsl}.)
        Let $P\in\SL$ be such that 
        $(A, B)
        = 
        \big(PA_2P^{-1}, PB_2P^{-1}\big),$ and let $g=\pm P$ be the projection of $P$ to $\psl.$ Then we have $\phi_2=g \psi_2 g^{-1}.$ 
        Let $\{g_t\}\interval$ be a path in $\psl$ connecting $g_0 =\pm \mathrm I$ and $g_1 = g$. Then
        the path $\{\rho_t\}_{t\in [0,1]}$ defined by $\rho_t \doteq g_t\psi_2 g_t^{-1}$ connects $\psi_2$ to $\phi_2$. Moreover, as
        $\psl$-conjugations of
        $\psi_2(c_1)\in \Par^{sgn(s)}$ remain in $\Par^{sgn(s)}$ and $\psl$-conjugations of
        $\psi_2(c_2), \psi_2(c_3)\in \Ell$ remain in $\Ell$, the path 
        $\{\rho_t\}\interval$ lies in $\PE^s(\pants)$. 
        As 
        $\{\psi_t\}$ connects $\phi_1$ and $\psi_2$ and $\{\rho_t\}$ connects $\psi_2$ and $\phi_2,$ the composition of these two paths with a suitable re-parametrization defines a path $\{\phi'_t\}_{t\in [1,2]}$ in $\PE^s(\pants)$ connecting $\phi'_1 = \phi_1$ and $\phi'_2 = \phi_2$. 
        
        Finally, 
        we construct the path $\{\phi_t\}_{t\in [1,2]}$ in $\ev^{-1}(\widetilde{C})$ as follows. 
        For each $t\in [1,2]$, as 
        both $\widetilde{\phi'_t(c_1)}\widetilde{\phi'_t(c_2)} = \ev(\phi'_t)$ and $\widetilde{C}$ lie in $\Ell_1$ with the traces $\mathrm{tr}\big(\widetilde{\phi'_t(c_1)}\widetilde{\phi'_t(c_2)}\big) = z = \tr\widetilde{C}$, they are conjugate in $\univcover$; and hence their projections $\phi'_t(c_1)\phi'_t(c_2)$ and $\pm C$ to $\psl$ are $\psl$-conjugate. 
        Guaranteed by  Lemma \ref{lem_conjugacypath_const}, let $\{h_t\}_{t\in [1,2]}$ be a continuous path in $\psl$ such that 
        $\pm C = h_t\big(\phi'_t(c_1)\phi'_t(c_2)\big)h_t^{-1}$ for all $t\in [1,2]$. Since $\phi'_1, \phi'_2\in \ev^{-1}(\widetilde{C})$, for $j\in\{1,2\},$ 
        $h_j\big(\pm C\big)h_j^{-1}=h_j\big(\phi'_j(c_1)\phi'_j(c_2)\big)h_j^{-1}= \pm C$, and hence $h_j$ commutes with $\pm C$. This implies that $h_1^{-1}h_t\big(\phi'_t(c_1)\phi'_t(c_2)\big)h_t^{-1}h_1 = h_1\big(\pm C\big) h_1^{-1}= \pm C$ for all $t\in [1,2]$; and by replacing $h_t$ by  $h_1^{-1}h_t$ if necessary, we can assume that $h_1 = \pm \mathrm I$. 
        Then the path $\{h_t\phi'_th_t^{-1}\}_{t\in [1,2]}$ connects  $h_1\phi'_1h_1^{-1} = \phi_1$ to $h_2\phi'_2h_2^{-1} = 
         h_2\phi_2h_2^{-1}$. Moreover, since $\phi'_t$ lies in $\PE^s(\pants)$ for all $t\in [1,2]$, its $\psl$-conjugation $h_t\phi'_th_t^{-1}$ also lies in $\PE^s(\pants)$ as shown in the previous paragraph. Thus, the evaluation map $\ev$ is defined at $h_t\phi'_th_t^{-1}$ for all $t\in [1,2]$ with $\ev\big(h_t\phi'_th_t^{-1}\big)=\widetilde{C}$ which is the unique lift of $\pm C$ to $\Ell_1$, i.e., $\{h_t\phi'_th_t^{-1}\}_{t\in[1,2]}\subset \ev^{-1}(\widetilde C)$.
        It remains to find a path in $\ev^{-1}(\widetilde{C})$ connecting $h_2\phi_2h_2^{-1}$ and $\phi_2$. Let $\{h_{2,t}\}\interval$ be a path connecting $h_{2,0} = h_2$ to $h_{2,1} = \pm\mathrm{I}$ in the one-parameter subgroup of $\psl$ generated by $h_2$. Since $h_2$ and $\pm C$ commute, $\pm C$ lies in this one-parameter subgroup, and we have $h_{2,t}
        \phi_2(c_1)
        \phi_2(c_2)
        h_{2,t}^{-1} = h_{2,t}
       \big(\pm C\big)
        h_{2,t}^{-1} = \pm C$ for all $t\in [0,1]$. 
        As a consequence, the path 
        $\{h_{2,t}
        \phi_2
            h_{2,t}^{-1}\}_{t\in [0,1]}$ connects $h_2\phi_2h_2^{-1}$ to $\phi_2$ within $\PE^s(\pants)$ with $\ev(h_{2,t}
        \phi_2
            h_{2,t}^{-1}) = \widetilde{C}$ for all $t\in [0,1]$, i.e., $\{h_{2,t}
        \phi_2
            h_{2,t}^{-1}\}_{t\in [0,1]}\subset \ev^{-1}(\widetilde{C})$. 
            Finally, the composition of 
            $\{h_t\phi'_th_t^{-1}\}_{t\in[1,2]}$ and  
            $\{h_{2,t}
        \phi_2
            h_{2,t}^{-1}\}_{t\in [0,1]}$ with a suitable re-parametrization defines a path $\{\phi_t\}_{t\in [1,2]}$ in 
            $\ev^{-1}(\widetilde{C})$ connecting $\phi_1$ and $\phi_2$. This concludes that  the fiber $\ev^{-1}(\widetilde{C})$ is connected in $\PE^s(\pants)$.   
    \\

    For (2), by Lemma \ref{lem_plp} and Lemma \ref{lem_evPsubm}, it suffices to show that for all $\widetilde{C}\in \Ell_s$, the fiber $\ev^{-1}(\widetilde{C})$ is connected in $\PP^s(\Sigma_{0,3})$. 
    Let $\widetilde{C}\in \Ell_s$, and let $\phi_1, \phi_2\in \PP^s(\Sigma_{0,3})$ such that $\ev(\phi_1) = \ev(\phi_2) = \widetilde{C}$. 
    We will construct a path  
     $\{\phi'_t\}_{t\in [1,2]}$ 
     in $\PP^s(\pants)$ connecting $\phi'_1 = \phi_1$ to $\phi'_2 = \phi_2$, then the construction of a path $\{\phi_t\}_{t\in [1,2]}$ in $\ev^{-1}(\widetilde{C})$ connecting $\phi_1$ to $\phi_2$ follows verbatim that of (1). 
     Since $\widetilde{\phi_j(c_1)}$ and $\widetilde{\phi_j(c_2)}$ are in $\Par^{sgn(s)}_0$ for $j\in \{1,2\}$,
    we have the characters $\chi(\phi_1) = \chi(\phi_2) = (2,2,z)$ with $z = \tr \widetilde{C}$. Moreover, since $|z|<2$, 
    $\kappa(2, 2, z) = (z-2)^2 + 2 \neq 2$. 
    For $j\in \{1,2\}$,
    let $A_j$ and $B_j$ respectively be the projections of $\widetilde{\phi_j(c_1)}$ and $\widetilde{\phi_j(c_2)}$ in $\Par^{sgn(s)}_0$ to $\SL$.
    Since both $(A_1, B_1)$ and $(A_2, B_2)$ are in $\chi^{-1}(2,2,z)\subset \SL\times \SL$ with $\kappa(2, 2, z)\neq 2$, by Proposition \ref{prop_char}, $(A_1, B_1)$ and $(A_2, B_2)$ are $\GL$-conjugate; and they are $\SL$-conjugate as the parabolic elements $\phi_1(c_1)$ and $\phi_2(c_1)$ 
    have the same sign. (See the proof of Proposition \ref{prop_pglpsl}.) Let $P\in\SL$ be such that 
        $(A, B)
        = 
        \big(PA_2P^{-1}, PB_2P^{-1}\big),$ and let $g=\pm P$ be the projection of $P$ to $\psl.$ Then we have $\phi_2=g \phi_1 g^{-1}.$ 
        Let $\{g_t\}_{t\in [1,2]}$ be a path in $\psl$ connecting $g_1 =\pm \mathrm I$ and $g_2 = g$. Then
        the path $\{\phi'_t\}_{t\in [1,2]}$ defined by $\phi'_t \doteq g_t\phi_1 g_t^{-1}$ connects $\phi_1$ to $\phi_2$ in $\PP^s(\pants)$.
    \end{proof}

\subsubsection{Proof of Theorem \ref{thm_Exc2}}\label{subsection_exc2proof}
The proof of Theorem \ref{thm_Exc2} relies on the following Lemma \ref{lem_Exc2} and Proposition \ref{Prop_Exc2}.
\begin{lemma}\label{lem_Exc2}
    Let $\Sigma = \Sigma_{0,p}$ with $p\geqslant 3$, and let $s\in \{\pm 1\}^p$ with $p_-(s) = 0$ or $p_+(s) = 0$.
    Let $\phi$ and $\psi$ be totally non-hyperbolic representations with sign $s$ such that $\phi|_{\pi_1(P_i)}$ and $\psi|_{\pi_1(P_i)}$ are non-abelian for all $i \in \{1,\cdots,p-2\}$. Then $e(\phi) = e(\psi) = n$ for some $n\in \mathbb{Z}$, and there is a path in $\nonabel$ connecting $\phi$ and $\psi$.
\end{lemma}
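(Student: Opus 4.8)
By Proposition \ref{prop_pglpsl} we may assume $p_-(s)=0$, i.e. $s=(+1,\dots,+1)$; the case $p_+(s)=0$ is obtained from it by conjugating by an element of $\pgl\setminus\psl$ (which preserves element types and abelianness and negates the relative Euler class). The case $p=3$ is immediate from Theorem \ref{thm_pants} together with Proposition \ref{prop_reducible}: a non-abelian type-preserving representation of $\pi_1(\pants)$ with all peripheral signs $+1$ lies in $\mathcal R^{(+1,+1,+1)}_1(\pants)=\mathrm{NA}^{s}_{1}(\pants)$, which is connected. So assume $p\geqslant 4$ and fix the chosen almost-path decomposition $\Sigma=\bigcup_{i=1}^{p-2}P_i$ with decomposition curves $d_1,\dots,d_{p-3}$. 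Since $\phi$ is totally non-hyperbolic and $\phi|_{\pi_1(P_i)}$ is non-abelian, we have $\phi(d_i)\neq\pm\mathrm I$ and $\phi(d_i)\notin\Hyp$, so Lemma \ref{lem_inthyp_sphere}(2) forces $\phi(d_i)\in\Ell$ for every $i$; hence, after the obvious relabelling of generators in each pants, $\phi|_{\pi_1(P_1)}\in\PP^{+1}(P_1)$, $\phi|_{\pi_1(P_i)}\in\PE^{+1}(P_i)$ for $2\leqslant i\leqslant p-3$, and $\phi|_{\pi_1(P_{p-2})}\in\PP^{+1}(P_{p-2})$ (and likewise for $\psi$).

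First I would pin down the relative Euler class. Let $\widetilde{\phi(c_j)}\in\Par^+_0$ be the canonical lifts and set $E_k=\widetilde{\phi(c_1)}\cdots\widetilde{\phi(c_k)}$, so that $E_k$ is a lift of $\phi(c_1)\cdots\phi(c_k)=\phi(d_{k-1})^{-1}\in\Ell$ for $2\leqslant k\leqslant p-2$. I claim $E_k\in\Ell_1$ for these $k$. For $k=2$ this is Lemma \ref{lem_evimage_Ell}(2), since $E_2=\ev(\phi(c_1),\phi(c_2))$ is an elliptic element in $\ev(\Par^+\times\Par^+)$, hence in $\Ell_{s_1}=\Ell_1$. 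For the inductive step, $E_{k-1}\in\Ell_1$ is the lift of $\phi(d_{k-2})^{-1}$ in $\overline{\Hyp_0}\cup\Ell_1$, so $E_k=\ev\bigl(\phi(d_{k-2}^{-1}),\phi(c_k)\bigr)=\widetilde{\phi(d_{k-2}^{-1})}\,\widetilde{\phi(c_k)}$; this is conjugate in $\univcover$ to $\widetilde{\phi(c_k)}\,\widetilde{\phi(d_{k-2}^{-1})}=\ev\bigl(\phi(c_k),\phi(d_{k-2}^{-1})\bigr)$, an elliptic element in $\ev(\Par^+\times\Ell)$, hence in $\Ell_1$ by Lemma \ref{lem_evimage_Ell}(1); therefore $E_k\in\Ell_1$. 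Likewise $G:=\widetilde{\phi(c_{p-1})}\widetilde{\phi(c_p)}=\ev(\phi(c_{p-1}),\phi(c_p))$ projects to $\phi(c_{p-1})\phi(c_p)=\phi(d_{p-3})\in\Ell$, so $G\in\Ell_1$. Now $E_{p-2}$ and $G$ are the lifts in $\Ell_1$ of the mutually inverse elliptic elements $\phi(d_{p-3})^{-1}$ and $\phi(d_{p-3})$, and from the description of $\Ell_1$ one checks $zG^{-1}$ is also a lift of $\phi(d_{p-3})^{-1}$ in $\Ell_1$, whence $E_{p-2}=zG^{-1}$ and
$$z^{e(\phi)}=\widetilde{\phi(c_1)}\cdots\widetilde{\phi(c_p)}=E_{p-2}\,G=zG^{-1}G=z,$$
so $e(\phi)=1$. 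The same computation gives $e(\psi)=1$, so $e(\phi)=e(\psi)=1=:n$, and it remains to connect $\phi$ and $\psi$ inside $\nonabel$ for this $n$.

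For the path the plan is to mirror the construction in Proposition \ref{prop_sameEuler} and Lemma \ref{lem_inthyp}, with the subsurfaces of Euler characteristic $-1$ now carrying elliptic rather than hyperbolic decomposition curves, and with the evaluation maps $\ev:\mathcal W(\cdot)\to\Hyp_k$ and $\ev:\HP^{\cdot}_k(\cdot)\to\Hyp_k$ replaced by $\ev:\PE^{+1}(\pants)\to\Ell_1$ and $\ev:\PP^{+1}(\pants)\to\Ell_1$, whose path-lifting property and fibre-connectedness are Proposition \ref{prop_PLP_Ell}. Peeling off the pants of the almost-path decomposition one at a time (say from $P_1$ outward), I would at each stage use path-lifting of $\ev$ on the pants being attached to bring the conjugacy class of the shared elliptic curve to a fixed target, conjugate the complementary subsurface by a path supplied by Lemma \ref{lem_conjugacypath_const} to keep the pieces glued, and then use connectedness of the relevant $\ev$-fibre to match $\phi$ and $\psi$ pants-by-pants without moving the decomposition curves. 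Because every representation in $\PE^{+1}(\pants)$ or $\PP^{+1}(\pants)$ is automatically non-abelian (a parabolic and an elliptic element, or two parabolics of equal sign, cannot commute) and sends the prescribed curves to the prescribed types, the entire path lies in $\mathcal R^s_1(\Sigma)$ and is non-abelian on every $P_i$, i.e. in $\nonabel$. The main obstacle is book-keeping rather than a new idea: each interior pants has two elliptic boundary curves, so one must track which curve each evaluation map controls, and since the global lifted product map is discontinuous everything must be phrased through its continuous restrictions to individual pants — which is precisely where Lemma \ref{lem_evimage_Ell} is used, to certify that the partial products keep landing in the correct components $\Ell_1$.
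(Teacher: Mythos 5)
Your computation of the relative Euler class is correct and genuinely different from the paper's treatment. The paper never evaluates $e(\phi)$ inside this lemma: it obtains $e(\phi)=e(\psi)$ only as a byproduct of the connecting path (along which all peripheral images remain parabolic of fixed sign), and defers the identification $n=s_1$ to Proposition \ref{Prop_Exc2}, where it is deduced by exhibiting one explicit totally non-hyperbolic representation of Euler class $1$ and then invoking this lemma. Your inductive argument with the partial products $E_k$, Lemma \ref{lem_evimage_Ell}, the uniqueness of the lift of an elliptic element in $\overline{\Hyp_0}\cup\Ell_1$, and the identity $E_{p-2}=zG^{-1}$ is sound (conjugation in $\univcover$ preserves $\Ell_1$, so passing from $\widetilde{g_1}\widetilde{g_2}$ to $\widetilde{g_2}\widetilde{g_1}$ is legitimate), and it buys a cleaner logical structure: it would prove $n=s_1$ directly, without the model representation of Proposition \ref{Prop_Exc2}.

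The connectivity assertion, however, is the substance of the lemma, and there your argument is only a sketch whose gluing mechanism fails as stated. You propose to deform the attached pants by path-lifting and then carry the complementary subsurface along by a conjugating path from Lemma \ref{lem_conjugacypath_const}; but that lemma produces such a path only when the moving boundary value stays in a single conjugacy class, which for elliptic elements means constant trace. In matching $\phi$ to $\psi$ the trace of the shared curve $d_i$ generically changes ($\mathrm{tr}\,\phi(d_i)\neq \mathrm{tr}\,\psi(d_i)$), and even along a fiber of $\ev$ (fixing one boundary of a pants) the other elliptic boundary wanders through different conjugacy classes, so the complement cannot in general be dragged along by conjugation. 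The mechanism that works — and the one the paper uses — is to build the path inductively on the nested subsurfaces $\Sigma_i=P_1\cup\cdots\cup P_i$: having connected $\phi|_{\pi_1(\Sigma_i)}$ to $\psi|_{\pi_1(\Sigma_i)}$, one extends the deformation across $P_{i+1}$ by lifting the path of $\Ell_1$-lifts of $d_i$ through the path-lifting property of $\ev:\PE^{s_{i+2}}(P_{i+1})\to\Ell_1$ (Proposition \ref{prop_PLP_Ell}), and only afterwards uses connectedness of the fiber over the now-fixed lift of $d_i$ to match the new pants to $\psi$; no conjugation of a complement is needed because the representation is not yet defined beyond $\Sigma_{i+1}$, and the last pants is handled with $\PP^{s_p}(P_{p-2})$ in the same way. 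Your observations that every representation in $\PE$ or $\PP$ is automatically non-abelian and that all peripheral images stay parabolic of fixed sign are correct and are exactly what keeps the path inside $\nonabel$; but until the conjugation step is replaced by this iterated lifting, together with the bookkeeping of which boundary each evaluation map controls, the path construction has a genuine hole.
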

\begin{proof}
        If $p = 3$, then by Theorem \ref{thm_pants}, $\phi$ and $\psi$ are in $\mathcal{R}^s_{s_1}(\Sigma)$ which is connected; and by Proposition \ref{prop_reducible}, 
        $\mathcal{R}^s_{s_1}(\Sigma)$ consists of non-abelian representations, hence the lemma follows. 
        If otherwise that $p\geqslant 4$, then by Lemma \ref{lem_inthyp_sphere} and the assumption that $\phi$ and $\psi$ are totally non-hyperbolic, 
        for all $i\in \{1,\cdots, p-3\}$, $\phi(d_i)$ and $\psi(d_i)$ are elliptic. 
        For each $i\in \{1, \cdots, p-3\}$, let $\Sigma_i\doteq \displaystyle\bigcup_{k = 1}^{i} P_k$.
        We will first show by induction that for all $i\in \{1,\cdots, p-3\}$, there is a path $\{\phi_t: \pi_1(\Sigma_i)\to \psl\}_{t\in [0,i]}$ of representations of $\pi_1(\Sigma_i)$ connecting $\phi|_{\pi_1(\Sigma_i)}$ to $\psi|_{\pi_1(\Sigma_i)}$, such that for all $t\in [0,i]$, $\phi_t(c_k)\in \Par^{sgn(s_k)}$ for all $k\in \{1,\cdots, i+1\}$, and $\phi_t(d_l)\in \Ell$ for all $l\in \{1,\cdots, i\}$. 
        Then as $\Sigma_{p-3} = \Sigma\setminus P_{p-2}$,
        there is a path $\{\phi_t: \pi_1(\Sigma\setminus P_{p-2})\to \psl\}_{t\in [0,p-3]}$ connecting $\phi|_{\pi_1(\Sigma\setminus P_{p-2})}$ to $\psi|_{\pi_1(\Sigma\setminus P_{p-2})}$, such that for all $t\in [0,p-3]$, 
        $\phi_t(c_k)\in \Par^{sgn(s_k)}$ for all $k\in \{1,\cdots, p-2\}$, and $\phi_t(d_l)\in \Ell$ for all $l\in \{1,\cdots, p-3\}$. Finally, using $\{\phi_t\}_{t\in [0,p-3]}$ on $\pi_1(\Sigma\setminus P_{p-2})$, we will construct a path $\{\phi_t\}_{t\in [0,p-2]}$ on $\pi_1(\Sigma)$ connecting $\phi$ to $\psi$.
        \medskip

        As the base case of the induction, the path $\{\phi_t: \pi_1(P_1)\to \psl\}\interval$ is defined as follows. 
        The fundamental group $\pi_1(P_1)$ has the preferred peripheral elements $c_1,c_2$ and $d_1$. 
        For each $k\in \{1,2\}$, we have $\phi(c_k), \psi(c_k)\in \Par^{sgn(s_k)}$, where $s_1 = s_2$ as otherwise $p_+(s) \neq 0$ and $p_-(s) \neq 0$.
        Moreover, we have $\phi(d_1), \psi(d_1)\in \Ell$, hence both $\phi|_{\pi_1(P_1)}$ and $\psi|_{\pi_1(P_1)}$ are in $\PP^{s_1}(P_1)$. 
        By Lemma \ref{lem_evimage_Ell}, 
        the image of the evaluation map $\ev: \PP^{s_1}(P_1)\to \Ell_{s_1}$ is $\Ell_{s_1}$, which is connected; and by Theorem \ref{prop_PLP_Ell}, its domain $\PP^{s_1}(P_1)$ is connected. 
         Therefore, there exists a path $\{\phi_t\}\interval$ in $\PP^{s_1}(P_1)$ connecting $\phi_0= \phi|_{\pi_1(P_1)}$ to $\phi_1= \psi|_{\pi_1(P_1)}$, with 
        $\phi_t(c_1)\in \Par^{sgn(s_1)}, \phi_t(c_2)\in \Par^{sgn(s_2)}$, and $\phi_t(d_1)\in \Ell$ for all $t\in [0,1]$.
         \medskip

         Now assume that, for some
     $i\in \{1, \cdots, p-4\}$, there is a path $\{\phi_t: \pi_1(\Sigma_i)\to \psl\}_{t\in [0,i]}$ connecting $\phi|_{\pi_1(\Sigma_i)}$ to $\psi|_{\pi_1(\Sigma_i)}$ that satisfies the condition above. We will first construct a path $\{\phi_t: \pi_1(\Sigma_{i+1})\to \psl\}_{t\in [0,i]}$ 
    connecting $\phi|_{\pi_1(\Sigma_{i+1})}$ to a $\phi_i$ such that $\phi_i|_{\pi_1(\Sigma_i)} = \psi|_{\pi_1(\Sigma_i)}$, then construct a path $\{\phi_t: \pi_1(\Sigma_{i+1})\to \psl\}_{t\in [i,i+1]}$ connecting $\phi_i$ to $\psi|_{\pi_1(\Sigma_{i+1})}$. Then the path $\{\phi_t\}_{t\in [0,i+1]}$ connects $\phi|_{\pi_1(\Sigma_{i+1})}$ to $\psi|_{\pi_1(\Sigma_{i+1})}$.

        The  path $\{\phi_t: \pi_1(\Sigma_{i+1})\to \psl\}_{t\in [0,i]}$  is constructed as follows. 
       By the assumption, there exists a path $\{\phi_t: \pi_1(\Sigma_i)\to \psl\}_{t\in [0,i]}$ connecting $\phi_0= \phi|_{\pi_1(\Sigma_i)}$ to $\phi_i= \psi|_{\pi_1(\Sigma_i)}$, which is to be extended to $\pi_1(\Sigma_{i+1})$. 
       Recall that the fundamental group $\pi_1(P_{i+1})$ has the preferred peripheral elements $c_{i+2}, d_{i+1}$, and $d_i^{-1}$. 
       For each $t\in [0,i]$, we let $\widetilde{\phi_t(d_i)}$ be the lift of $\phi_t(d_i)\in \Ell$ in $\Ell_1$, then the path $\{\widetilde{\phi_t(d_i)}\}_{t\in [0,i]}$ lies in $\Ell_1$. Let 
        $\ev: \PE^{s_{i+2}}(P_{i+1})\to\Ell_1$ be the evaluation map defined in Subsection \ref{subsection_PLP_Ell}.
       By Lemma \ref{lem_evimage_Ell}, $\ev$ is surjective onto $\Ell_1$; and by Proposition \ref{prop_PLP_Ell}, 
       the path $\{\widetilde{\phi_t(d_i)}\}_{t\in[0,i]}$ lifts to the path $\{\psi_t
       \}_{t\in[0,i]}$  in $\PE^{s_{i+2}}(P_{i+1})$ with $\psi_0 = \phi|_{\pi_1(P_{i+1})}$. 
       For each $t\in [0,i]$, since $\psi_t$ and $\phi_t$ agree at the common boundary $d_i$ of $\Sigma_i$ and $P_{i+1}$, by letting $\phi_t|_{\pi_1(P_{i+1})}\doteq \psi_t$, we extend the path $\{\phi_t\}_{t\in [0,i]}$ from $\pi_1(\Sigma_i)$ to $\pi_1(\Sigma_{i+1})$ with $\phi_0 = \phi|_{\pi_1(\Sigma_{i+1})}$. For each $t\in [0,i]$, by the assumption, 
       $\phi_t(c_k)\in \Par^{sgn(s_k)}$ for all $k\in \{1,\cdots, i+1\}$, and $\phi_t(d_l)\in \Ell$ for all $l\in \{1,\cdots, i\}$. Moreover, as the path $\{\phi_t|_{\pi_1(P_{i+1})}\}_{t\in [0,i]}$ lies in $\PE^{s_{i+2}}(P_{i+1})$,
       we have $\phi_t(c_{i+2})\in \Par^{sgn(s_{i+2})}$ and $\phi_t(d_{i+1})\in \Ell$ for each $t\in [0,i]$. 

       The  path $\{\phi_t: \pi_1(\Sigma_{i+1})\to \psl\}_{t\in [i,i+1]}$  is constructed as follows.
       The evaluation map $\PE^{s_{i+2}}(P_{i+1})\to \Ell_1$
       sends both $\phi_i|_{\pi_1(P_{i+1})}$ and $\psi|_{\pi_1(P_{i+1})}$ to $\widetilde{\phi_i(d_i)}\in \Ell_1$. By Proposition \ref{prop_PLP_Ell}, the fiber $\ev^{-1}(\widetilde{\phi_i(d_i)})$ is connected, hence there exists a path $\{\phi_t: \pi_1(P_{i+1})\to \psl\}_{t\in [i,i+1]}$ in $\ev^{-1}(\widetilde{\phi_i(d_i)})$ connecting $\phi_i|_{\pi_1(P_{i+1})}$ to $\phi_{i+1} = \psi|_{\pi_1(P_{i+1})}$. As the path lies in $\ev^{-1}(\widetilde{\phi_i(d_i)})$, for all $t\in [i,i+1]$, $\phi_t(d_i) = \phi_i(d_i)$. Therefore, by letting $\phi_t|_{\pi_1(\Sigma_i)} \doteq \phi_i|_{\pi_1(\Sigma_i)} = \psi|_{\pi_1(\Sigma_i)}$ for all $t\in [i,i+1]$, we extend the path $\{\phi_t\}_{t\in [i,i+1]}$ from $\pi_1(P_{i+1})$ to $\pi_1(\Sigma_{i+1})$ with $\phi_{i+1} = \psi|_{\pi_1(\Sigma_{i+1})}$. For each $t\in [i,i+1]$, $\phi_t(c_k) = \phi_i(c_k)\in \Par^{sgn(s_k)}$ for all $k\in \{1,\cdots, i+1\}$, and $\phi_t(d_l) = \phi_i(d_l)\in \Ell$ for all $l\in \{1,\cdots, i\}$. Moreover, as the path $\{\phi_t|_{\pi_1(P_{i+1})}\}_{t\in [i,i+1]}$ lies in $\PE^{s_{i+2}}(P_{i+1})$,
       we have $\phi_t(c_{i+2})\in \Par^{sgn(s_{i+2})}$ and $\phi_t(d_{i+1})\in \Ell$ for each $t\in [i,i+1]$.
       \medskip

       Next, we construct the path $\{\phi_t\}_{t\in [0,p-2]}$ connecting $\phi$ to $\psi$. 
       By the induction above, there exists a path $\{\phi_t: \pi_1(\Sigma\setminus P_{p-2})\to \psl\}_{t\in[0,p-3]}$ connecting $\phi_0= \phi|_{\pi_1(\Sigma\setminus P_{p-2})}$ to $\phi_{p-3}= \psi|_{\pi_1(\Sigma\setminus P_{p-2})}$, which is to be extended to $\pi_1(\Sigma)$. Recall that the fundamental group $\pi_1(P_{p-2})$ has the preferred peripheral elements $c_{p-1}, c_p$, and $d_{p-3}^{-1}$.
       Let $\{\widetilde{\phi_t(d_{p-3})}\}_{t\in [0,p-3]}$ be the lift of the path $\{\phi_t(d_{p-3})\}_{t\in [0,p-3]}\subset \Ell$ in $\Ell_{s_p}$, and let
        $\ev: \PP^{s_p}(P_{p-2})\to\Ell_{s_p}$ be the evaluation map defined in Subsection \ref{subsection_PLP_Ell}.
       For each $k\in \{p-1,p\}$, we have $\phi(c_k), \psi(c_k)\in \Par^{sgn(s_k)}$, where $s_{p-1} = s_p$ as otherwise $p_+(s) \neq 0$ and $p_-(s) \neq 0$. 
       Moreover, we have $\phi(d_{p-3}^{-1}), \psi(d_{p-3}^{-1})\in \Ell$, hence both $\phi|_{\pi_1(P_{p-2})}$ and $\psi|_{\pi_1(P_{p-2})}$ are in $\PP^{s_p}(P_{p-2})$. By Lemma \ref{lem_evimage_Ell}, $\ev$ is surjective onto $\Ell_{s_p}$; and by Proposition \ref{prop_PLP_Ell}, 
       the path $\{\widetilde{\phi_t(d_{p-3})}\}_{t\in[0,p-3]}$ lifts to the path $\{\psi_t
       \}_{t\in[0,p-3]}$  in $\PP^{s_p}(P_{p-2})$ with $\psi_0 = \phi|_{\pi_1(P_{p-2})}$. 
       For each $t\in [0,p-3]$, since $\psi_t$ and $\phi_t$ agree at the common boundary $d_{p-3}$ of $\Sigma\setminus P_{p-2}$ and $P_{p-2}$, by letting $\phi_t|_{\pi_1(P_{p-2})}\doteq \psi_t$, we extend the path $\{\phi_t\}_{t\in [0,p-3]}$ from $\pi_1(\Sigma\setminus P_{p-2})$ to $\pi_1(\Sigma)$ with $\phi_0 = \phi$. 
       It remains to find a path $\{\phi_t\}_{t\in [p-3,p-2]}$ connecting $\phi_{p-3}$ to $\psi$. Notice that $\ev(\phi_{p-3}|_{\pi_1(P_{p-2})}) = \ev(\psi|_{\pi_1(P_{p-2})}) = \widetilde{\phi_{p-3}(d_{p-3})}\in \Ell_{s_p}$. By Proposition \ref{prop_PLP_Ell}, the fiber $\ev^{-1}\big(\widetilde{\phi_{p-3}(d_{p-3})}\big)$ is connected, hence there is a path $\{\phi_t: \pi_1(P_{p-2})\to \psl\}_{t\in [p-3,p-2]}$ in $\ev^{-1}(\widetilde{\phi_{p-3}(d_{p-3})})$ connecting $\phi_{p-3}|_{\pi_1(P_{p-2})}$ to $\phi_{p-2} = \psi|_{\pi_1(P_{p-2})}$. As the path lies in $\ev^{-1}(\widetilde{\phi_{p-3}(d_{p-3})})$, for all $t\in [p-3,p-2]$, $\phi_t(d_{p-3}) = \phi_{p-3}(d_{p-3})$. Therefore, by letting $\phi_t|_{\pi_1(\Sigma\setminus P_{p-2})} \doteq \phi_{p-3}|_{\pi_1(\Sigma\setminus P_{p-2})} = \psi|_{\pi_1(\Sigma\setminus P_{p-2})}$ for all $t\in [p-3,p-2]$, we extend the path $\{\phi_t\}_{t\in [p-3,p-2]}$ from $\pi_1(P_{p-2})$ to $\pi_1(\Sigma)$  with $\phi_{p-2} = \psi$. This defines the path $\{\phi_t\}_{t\in [0,p-2]}$ connecting $\phi$ to $\psi$.

       Finally, we show that $e(\phi) = e(\psi) = n$ for some $n\in \mathbb{Z}$, and that $\{\phi_t\}_{t\in [0,p-2]}\subset \nonabel$.
       For each $k\in \{1,\cdots, p-2\}$, 
       we have $\phi_t(c_k)\in \Par^{sgn(s_k)}$ for $t\in [0,p-3]$, also 
       $\phi_t(c_k) = \phi_{p-3}(c_k)\in \Par^{sgn(s_k)}$ for $t\in [p-3,p-2]$.
       Moreover, 
       since $s_{p-1} = s_p$ and
       $\{\phi_t|_{\pi_1(P_{p-2})}\}_{t\in [0,p-2]}\subset \PP^{s_p}(P_{p-2})$,
       $\phi_t(c_{p-1})\in \Par^{sgn(s_{p-1})}$ and $\phi_t(c_p)\in \Par^{sgn(s_p)}$.
       Consequently, 
       for each $t\in [0,p-2]$, 
       the signs $s(\phi_t) = s(\phi)$ and the relative Euler classes $e(\phi_t) = e(\phi)$, and hence $e(\psi) = e(\phi) =n$ for some $n\in \mathbb{Z}$. Moreover, 
          $\phi_t(c_1)\in \Par$ and $\phi_t(d_1)\in \Ell$ do not commute, hence $\phi_t|_{\pi_1(P_1)}$ is non-abelian; and 
          for each $i\in \{2,\cdots, p-2\}$, 
          $\phi_t(c_{i+1})\in \Par$ and $\phi_t(d_{i-1})\in \Ell$ do not commute, hence $\phi_t|_{\pi_1(P_i)}$ is non-abelian. As a consequence, the path $\{\phi_t\}_{t\in [0,p-2]}$ lies in $\nonabel$.    
\end{proof}
\begin{proposition}\label{Prop_Exc2}
    Let $\Sigma = \Sigma_{0,p}$ with $p\geqslant 3$, and let $s\in \{\pm 1\}^p$ with $p_-(s) = 0$ or $p_+(s) = 0$. Then the space $\mathcal{R}^s_{s_1}(\Sigma)$ is non-empty. Moreover, for a totally non-hyperbolic representation $\phi$ with sign $s$, if $\phi|_{\pi_1(P_i)}$ is non-abelian for all $i \in \{1,\cdots,p-2\}$, then $\phi$ is in $\mathcal{R}^s_{s_1}(\Sigma)$.
\end{proposition}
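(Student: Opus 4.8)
The plan is to reduce to the case $s=(+1,\dots,+1)$, exhibit one explicit representation in $\mathcal{R}^s_{s_1}(\Sigma)$, compute its relative Euler class directly, and then invoke Lemma \ref{lem_Exc2} for the ``moreover'' part.

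\textbf{Reduction and base case.} A $\pgl\setminus\psl$-conjugation preserves the types of all elements of $\psl$ (hence sends a totally non-hyperbolic representation that is non-abelian on each $P_i$ to one of the same kind) and, by Proposition \ref{prop_pglpsl}, negates both the relative Euler class and the sign; so it suffices to treat the case $p_-(s)=0$, i.e.\ $s=(+1,\dots,+1)$ and $s_1=+1$. For $p=3$ there are no decomposition curves, so every type-preserving representation of $\pi_1(\Sigma_{0,3})$ is totally non-hyperbolic; by Theorem \ref{thm_pants}, $\mathcal{R}^{(+1,+1,+1)}_{1}(\Sigma_{0,3})$ is non-empty, its representations are non-abelian by Proposition \ref{prop_reducible}, and any non-abelian type-preserving representation with sign $(+1,+1,+1)$ has relative Euler class $1=s_1$ by Theorem \ref{thm_pants}. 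So assume $p\geqslant 4$.

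\textbf{Construction.} I would build a representation $\phi_0$ on $\Sigma=\displaystyle\bigcup_{i=1}^{p-2}P_i$ by gluing along the decomposition curves $d_1,\dots,d_{p-3}$: pick $\phi_0|_{\pi_1(P_1)}$ in $\PP^{+1}(P_1)$; then, inductively for $i=2,\dots,p-3$, pick $\phi_0|_{\pi_1(P_i)}$ in $\PE^{+1}(P_i)$ whose value on the boundary loop $d_{i-1}$ equals the value already determined by $\phi_0|_{\pi_1(P_{i-1})}$ (for a fixed elliptic value of one generator, one can choose the parabolic generator close to the identity so that the product stays elliptic); and finally pick $\phi_0|_{\pi_1(P_{p-2})}$ in $\PP^{+1}(P_{p-2})$ matching on $d_{p-3}$. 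The evaluation maps $\ev\colon\PP^{+1}(P_i)\to\Ell_1$ and $\ev\colon\PE^{+1}(P_i)\to\Ell_1$ are surjective by Lemma \ref{lem_evimage_Ell}, with non-empty fibers by Proposition \ref{prop_PLP_Ell}, so the prescribed lift in $\Ell_1$ of each gluing holonomy can be realized and the pieces glued. Every piece of $\phi_0$ is non-abelian (two commuting positive parabolics cannot have an elliptic product; a parabolic and an elliptic element are of different types), every $\phi_0(c_i)$ is positive parabolic, and every $\phi_0(d_j)$ is elliptic; hence $\phi_0$ is totally non-hyperbolic with sign $(+1,\dots,+1)$ and non-abelian on each $P_i$.

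\textbf{Euler class.} Using $d_i=(c_1\cdots c_{i+1})^{-1}$, I would show by induction on $i$ that $\widetilde{\phi_0(c_1)}\cdots\widetilde{\phi_0(c_{i+1})}$ (a product of the lifts in $\Par^+_0$) equals the lift of $\phi_0(d_i^{-1})$ in $\Ell_1$: the base case $i=1$ is Lemma \ref{lem_evimage_Ell}(2), and the inductive step multiplies on the right by $\widetilde{\phi_0(c_{i+2})}\in\Par^+_0$, noting that the resulting element is conjugate in $\univcover$ to $\widetilde{\phi_0(c_{i+2})}\,\widetilde{\phi_0(d_i^{-1})}$, which lies in $\Ell_1$ by Lemma \ref{lem_evimage_Ell}(1). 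Then, since $c_{p-1}c_p=d_{p-3}$ and $\widetilde{\phi_0(c_{p-1})}\widetilde{\phi_0(c_p)}\in\Ell_1$ by Lemma \ref{lem_evimage_Ell}(2), the full product $\widetilde{\phi_0(c_1)}\cdots\widetilde{\phi_0(c_p)}$ equals the lift of $\phi_0(d_{p-3}^{-1})$ in $\Ell_1$ times the lift of $\phi_0(d_{p-3})$ in $\Ell_1$; since for an elliptic $B\in\psl$ the lift of $B^{-1}$ in $\Ell_1$ is $z$ times the inverse of the lift of $B$ in $\Ell_1$, this product equals $z$, so $e(\phi_0)=1=s_1$ and $\phi_0\in\mathcal{R}^s_{s_1}(\Sigma)$, which is therefore non-empty. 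For the ``moreover'' part, if $\phi$ is totally non-hyperbolic with sign $(+1,\dots,+1)$ and non-abelian on every $P_i$, then Lemma \ref{lem_Exc2} applied to $\phi$ and $\phi_0$ gives a path joining them within $\mathrm{NA}^s_{s_1}(\Sigma)$ and $e(\phi)=e(\phi_0)=s_1$, so $\phi\in\mathcal{R}^s_{s_1}(\Sigma)$; undoing the reduction handles the sign $(-1,\dots,-1)$. The step I expect to be the main obstacle is exactly this Euler-class bookkeeping — keeping track of which component $\Ell_k$ each partial product of lifts lands in, and in particular handling the asymmetry between left- and right-multiplication by a parabolic lift, which forces the conjugation trick that reduces everything to the situations already covered by Lemma \ref{lem_evimage_Ell}.
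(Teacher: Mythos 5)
Your argument is correct, and for the heart of the proposition it takes a genuinely different route from the paper. Both proofs share the same skeleton — reduce to $s=(+1,\dots,+1)$ via Proposition \ref{prop_pglpsl}, produce one reference representation, and finish the ``moreover'' part with Lemma \ref{lem_Exc2} — but the reference representation is obtained very differently. The paper writes down explicit matrices giving a representation $\psi$ that is abelian on $\Sigma\setminus P_1$, so its Euler class $1$ comes for free from Theorem \ref{thm_pants} and the additivity formula (Proposition \ref{prop_additivity}, available because $\psi(d_1)$ is parabolic); since this $\psi$ is not non-abelian on every pair of pants, the paper must then run a rather delicate deformation (a character-map lift via Lemma \ref{lem_plpchar} on $P_1$, an explicit family on $\Sigma\setminus P_1$, glued with Lemma \ref{lem_conjugacypath}) to reach a totally non-hyperbolic representation in $\mathrm{NA}^s_1(\Sigma)$ before Lemma \ref{lem_Exc2} can be applied. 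You instead build the fully non-abelian, totally non-hyperbolic reference $\phi_0$ directly by gluing $\PP^{+1}$ and $\PE^{+1}$ pieces along elliptic decomposition curves, and then pay the price that additivity is unavailable (the $d_i$ are elliptic) by computing $e(\phi_0)=1$ through the lift-tracking induction with Lemma \ref{lem_evimage_Ell}. That bookkeeping is the one genuinely new ingredient, and it checks out: the components $\Ell_k$ are conjugation-invariant, so conjugating $\widetilde{B}\widetilde{A}$ by $\widetilde{A}$ legitimately reduces the ``elliptic times parabolic'' case to Lemma \ref{lem_evimage_Ell}(1); the covering map is injective on $\Ell_1$, so ``the lift in $\Ell_1$'' is well defined; and for $\widetilde{B}\in\Ell_1$ one indeed has $\widetilde{B}^{-1}\in\Ell_{-1}$ and $z\widetilde{B}^{-1}\in\Ell_1$, which gives the final product $z$ and hence $e(\phi_0)=1$. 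Your approach buys a shorter, more conceptual construction that avoids the paper's deformation machinery; the paper's approach buys an elementary, purely matrix-level non-emptiness statement and keeps all Euler-class computations inside the additivity framework. (Two small remarks: the appeal to Proposition \ref{prop_PLP_Ell} for non-empty fibers is superfluous — surjectivity in Lemma \ref{lem_evimage_Ell} plus your ``parabolic close to the identity'' remark already realize every gluing — and when gluing you only need the $\psl$-values on the $d_i$ to match, as you note, since the lifts are only used afterwards in the Euler-class computation.)
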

\begin{proof}
    As the case that $p = 3$ is proved by Theorem \ref{thm_pants}, it suffices to consider the case that $p\geqslant 4$. We will prove the result for the case that $p_-(s)=0$. Then the result for the remaining case that $p_+(s)=0$ follows from the previous case and Proposition \ref{prop_pglpsl}.
    \\
        
    As $s_i = +1$ for all $i\in \{1,\cdots, p\}$, we first show that $\mathcal{R}^s_1(\Sigma)$ is non-empty. We define a  representation $\psi$ by letting 
        $\psi(c_1) = \pm \begin{bmatrix}
                        3 & p-2\\
                        \frac{4}{2-p} & -1
                    \end{bmatrix}$, 
        $\psi(c_2) = \pm \begin{bmatrix}
                        1 & 0\\
                        \frac{4}{2-p} & 1
                    \end{bmatrix}$, 
        and 
        $\psi(c_i) = \pm \parpp$ for each $i\in \{3,\cdots, p\}$. 
        By Lemma \ref{lem_offdiag}, $\psi(c_i)\in \Par^+$ for all $i\in \{1,\cdots, p\}$, and hence $s(\psi) = s$. 
        To show that $e(\psi) = 1$, we consider the restrictions 
        $\psi|_{\pi_1(P_1)}$ and $\psi|_{\pi_1(\Sigma\setminus P_1)}$. The fundamental group $\pi_1(P_1)$ has the preferred peripheral elements $c_1,c_2,$ and $d_1$, and
        the fundamental group $\pi_1(\Sigma\setminus P_1)$ has the preferred peripheral elements $c_3,\cdots, c_p$ and $d_1^{-1}$. As $\psi(d_1) = \big(\psi(c_1)\psi(c_2)\big)^{-1} = \pm \begin{bmatrix}
                        1 & p-2\\
                        0 & 1
                    \end{bmatrix}\in \Par^+$, 
        $\psi|_{\pi_1(P_1)}$ and $\psi|_{\pi_1(\Sigma\setminus P_1)}$ are type-preserving. 
        As $s(\psi|_{\pi_1(P_1)}) = (+1,+1,+1)$, Theorem \ref{thm_pants} implies $e(\psi|_{\pi_1(P_1)}) = 1$; and as $\psi|_{\pi_1(\Sigma\setminus P_1)}$ is abelian, Proposition \ref{prop_reducible} implies that $e(\psi|_{\pi_1(\Sigma\setminus P_1)}) = 0$. Hence by Proposition \ref{prop_additivity}, $e(\psi) = e(\psi|_{\pi_1(P_1)})
        + (\psi|_{\pi_1(\Sigma \setminus P_1)}) = 1.$
        \\
        
        Next, let $\phi$ be a totally non-hyperbolic representation with sign $s$, and suppose that $\phi|_{\pi_1(P_i)}$ is non-abelian for all $i \in \{1,\cdots,p-2\}$. To show that $\phi\in \mathcal{R}^s_1(\Sigma)$,
        we will first
        find a totally non-hyperbolic representation 
        $\rho$
        in $\mathrm{NA}^s_1(\Sigma)$. Then by applying Lemma \ref{lem_Exc2} to $\phi$ and $\rho$, we conclude that $\phi\in \mathrm{NA}^s_1(\Sigma)\subset\mathcal{R}^s_1(\Sigma)$.
        To this end, we will 
        construct a path $\{\psi_t\}_{t\in[0,2]}$ in $\mathcal{R}^s_1(\Sigma)$ connecting $\psi_0 =\psi$ to a $\psi_2 \in \mathrm{NA}^s_1(\Sigma)$ as follows. 
        First, we will define paths $\{\psi_t: \pi_1(\Sigma\setminus P_1)\to \psl\}_{t\in[1,2]}$ and $\{\psi'_t: \pi_1( P_1)\to \psl\}_{t\in[1,2]}$ of representations of $\pi_1(\Sigma\setminus P_1)$ and $\pi_1(P_1)$, respectively. Next, we will find a path $\{g_t\}_{t\in [1,2]}\subset \psl$ using Lemma \ref{lem_conjugacypath}, and
        glue $\{\psi_t: \pi_1(\Sigma\setminus P_1)\to \psl\}_{t\in[1,2]}$ and $\{g_t\psi'_tg_t^{-1}: \pi_1(P_1)\to \psl\}_{t\in[1,2]}$ along the decomposition curve $d_1$ to extend the path $\{\psi_t\}_{t\in[1,2]}$ to $\pi_1(\Sigma)$. 
        Finally, we will construct a path $\{\psi_t\}_{t\in[0,1]}$ connecting $\psi$ to $\psi_1$, and compose it with $\{\psi_t\}_{t\in[1,2]}$.
        \smallskip
        
        We first define a path 
        $\{\psi_t: \pi_1(\Sigma\setminus P_1)\to \psl\}_{t\in [1,2]}$ as follows: 
        For each $t\in [1,2]$,
        let 
        $\psi_t(c_k) \doteq \pm \parpp$ for each $k\in \{3,\cdots, p-1\}$, and let 
        $\psi_t(c_p) \doteq \pm
        \begin{bmatrix}
            1 - \frac{t-1}{\sqrt{p-3}} & 1 \\
            - \frac{(t-1)^2}{p-3} & 1+ \frac{t-1}{\sqrt{p-3}}
        \end{bmatrix}$.
        Then the values $\{\psi_t(c_k)\}_{k\in\{3,\dots,p\}}$ determine the representation $\psi_t$ of $\pi_1(\Sigma\setminus P_1)$ with $\psi_1 = \psi|_{\pi_1(\Sigma\setminus P_1)}$. 
        For each $t\in (1,2]$ and each $i\in \{1,\cdots, p-3\}$, $\psi_t(d_i)$ is elliptic, as
        $$\psi_t(d_i) = \psi_t(c_{i+2})\cdots \psi_t(c_p) 
            = \pm 
        \begin{bmatrix}
            1-\frac{t-1}{\sqrt{p-3}} - \frac{(p-2-i)(t-1)^2}{p-3} & p-1-i+\frac{(p-2-i)(t-1)}{\sqrt{p-3}}  \\
            -\frac{(t-1)^2}{p-3} & 1+ \frac{t-1}{\sqrt{p-3}}
        \end{bmatrix}
        $$
         has the trace $2 - \frac{(p-2-i)}{p-3}(t-1)^2 < 2$ in absolute value.
        
        Next, we construct a path 
        $\{\psi'_t: \pi_1( P_1)\to \psl\}_{t\in [1,2]}$ using
    Lemma \ref{lem_plpchar} as follows. 
        Since $e\big(\psi|_{\pi_1(P_1)}\big) = 1$, it follows that
        the character $\chi(\psi|_{\pi_1(P_1)}) = (2,2,-2)$. Define the straight line segment $\{(2,2,z_t)\}_{t\in [1,2]}$ in $\mathbb{R}^3$, where $z_t 
        = (t-1)^2 - 2 
        $
        connecting $-2$ to $-1$.
    Let $A_1$ and $B_1$ respectively
    be the lifts of $\psi(c_1)$ and $\psi(c_2)$ in $\SL$ with trace $2$, noting that the image of $(A_1, B_1)$ under the character map $\chi$ equals $\chi(\psi|_{\pi_1(P_1)}) = (2,2,-2)$.
    As $e(\psi|_{\pi_1(P_1)}) = 1$, by Proposition \ref{prop_reducible}, $\psi|_{\pi_1(P_1)}$ is non-abelian.
    In particular, $\psi(c_1)$ and $\psi(c_2)$ do not commute in $\psl$, and their lifts $A_1$ and $B_1$ do not commute in $\SL$. Moreover, as $\kappa(2, 2, z_t) = (z_t - 2)^2 + 2 > 2$ for all $t\in [1,2]$, where $\kappa$ is as defined in Proposition \ref{prop_char},  the path $\big\{(2,2,z_t)\big\}_{t\in [1,2]}$ lies in
    $\mathbb{R}^3
    \setminus \big([-2,2]^3\cap \kappa^{-1}([-2,2])\big)$. 
    Then by Lemma \ref{lem_plpchar}, 
        there exists a path $\{(A_t, B_t)\}_{t\in [1,2]}$ of non-commuting pairs of elements of $\SL$ starting from  $(A_1, B_1)$, having 
        $\chi(A_t, B_t) = (2,2,z_t)$ for all $t\in [1,2]$. 
        Let $\pm A_t$ and $\pm B_t$ respectively be the projections of 
        $A_t$ and $B_t$ to $\psl$, and define a representation $\psi'_t:\pi_1(P_1)\to \psl$ by letting $\big(\psi'_t(c_1), \psi'_t(c_2)\big) = (\pm A_t, \pm B_t)$. Then the path $\{\psi'_t\}_{t\in [1,2]}$ of $\psl-$representations starting from $\psi'_1 = \psi|_{\pi_1(P_1)}$ satisfies $\chi(\psi'_t) = \chi(A_t, B_t) = (2,2,z_t)$ for all $t\in [1,2]$. 
        Since $\tr A_t = \tr B_t = 2$ for all $t\in [1,2]$, for each $i\in \{1,2\}$, the path $\{\psi'_t(c_i)\}_{t\in [1,2]}$ in $\psl$ is contained in $\Par\cup \{\pm \mathrm{I}\}$; and it never passes $\pm\mathrm{I}$, as otherwise $A_t = \mathrm{I}$ or $B_t = \mathrm{I}$ for some $t\in [1,2]$ which contradicts that $A_t$ and $B_t$ do not commute. Since $\psi'_1(c_1)$ and $\psi'_1(c_2)$ lie in $\Par^+$ which is a connected component of $\Par$, we have
        $\psi'_t(c_1), \psi'_t(c_2)\in \Par^+$
        for all $t\in [1,2]$.

        We now define $\{g_t\}_{t\in [1,2]}\subset \psl$ using Lemma \ref{lem_conjugacypath}, and
        glue the paths $\{\psi_t\}_{t\in [1,2]}$ and $\{g_t\psi'_tg_t^{-1}\}_{t\in [1,2]}$ along $d_1$. To this end, we first prove that for each $t\in [1,2]$, $\psi_t(d_1)$ and $\psi'_t(d_1)$ are conjugate in $\psl$. 
        For $t = 1$, $\psi_1(d_1) = \psi'_1(d_1) = \psi(d_1)$, hence are clearly conjugate;
        and since $\psi_1(d_1) = \psi'_1(d_1) =\pm \begin{bmatrix}
            1 & p-2\\
            0 & 1
        \end{bmatrix}\in \Par^+$, 
        there exist unique lifts $\{\widetilde{\psi_t(d_1)}\}_{t\in [1,2]}$
        and $\{\widetilde{\psi'_t(d_1)}\}_{t\in [1,2]}$ of $\{\psi_t(d_1)\}_{t\in [1,2]}$ and $\{\psi'_t(d_1)\}_{t\in [1,2]}$, respectively, both starting from $\Par^+_0$. 
        Notice that the trace $\tr\big(\widetilde{\psi_t(d_1)}\big) = \tr\big(\widetilde{\psi'_t(d_1)}\big) = -z_t$.
        For all $t\in (0,1]$, $\widetilde{\psi_t(d_1)}$ and $\widetilde{\psi'_t(d_1)}$ are elliptic as their trace $-z_t\in (-2,2)$, hence
         lie in $\Ell_1$ as it is the only lift of $\Ell$ in $\univcover$ that is adjacent to $\Par_0^+$. Consequently, for each $t\in (1,2]$, as elements both lying in  $\Ell_1$ with the same trace, $\widetilde{\psi_t(d_1)}$ and $\widetilde{\psi'_t(d_1)}$ are conjugate in $\univcover$; and hence their projections $\psi_t(d_1)$ and $\psi'_t(d_1)$ are conjugate in $\psl$. Then by Lemma \ref{lem_conjugacypath}, there is a path $\{g_t\}_{t\in [1,2]}$ such that 
         $\psi_t(d_1) = g_t \psi'_t(d_1) g_t^{-1}$ for all $t\in [1,2]$. By letting $\psi_t|_{\pi_1(P_1)} \doteq g_t\psi'_tg_t^{-1}$ for each $t\in [1,2]$, we extend the path $\{\psi_t\}_{t\in [1,2]}$ to $\pi_1(\Sigma)$.

         Finally, we construct the path $\{\psi_t\}_{t\in [0,1]}$ connecting $\psi$ and $\psi_1$. Let $\{g_t\}\interval$ be a path connecting $g_0 = \pm \mathrm I$ to $g_1$ within the one-parameter subgroup of $\psl$ generated by $g_1$. For each $t\in [0,1]$, we let $\psi_t|_{\pi_1(P_1)}\doteq g_t\psi|_{\pi_1(P_1)}g_t^{-1}$ and $\psi_t|_{\pi_1(\Sigma\setminus P_1)}\doteq \psi|_{\pi_1(\Sigma\setminus P_1)}$.
         As $g_1\psi(d_1)g_1^{-1} = g_1\psi'_1(d_1)g_1^{-1} = \psi_1(d_1) = \psi(d_1)$, $g_1$ commutes with $\psi(d_1)$, and hence $g_t \psi(d_1) g_t^{-1} = \psi(d_1)$ for all $t\in [0,1]$. 
         This defines a path $\{\psi_t\}\interval$ of representations of $\pi_1(\Sigma)$. By composing with $\{\psi_t\}_{t\in [1,2]}$,
          we have a path $\{\psi_t\}_{t\in [0,2]}$ starting from  $\psi_0 = \psi$. 
          For each $t\in [0,2]$ and each $i\in \{1,\cdots, p\}$, $\psi_t(c_i)\in \Par^+$, hence
          the signs $s(\psi_t) = s$ and the relative Euler classes $e(\psi_t) = e(\psi) = 1$, i.e., the path
          $\{\psi_t\}_{t\in [0,2]}$ lies in $\mathcal{R}^s_1(\Sigma)$. Moreover, 
          $\psi_2(c_1)\in \Par$ and $\psi_2(d_1)\in \Ell$ do not commute, hence $\psi_2|_{\pi_1(P_1)}$ is non-abelian; and 
          for each $i\in \{2,\cdots, p-2\}$, 
          $\psi_2(c_{i+1})\in \Par$ and $\psi_2(d_{i-1})\in \Ell$ do not commute, hence $\psi_2|_{\pi_1(P_i)}$ is non-abelian. 
          As $\psi_2$ is a totally non-hyperbolic representation in $\mathrm{NA}^s_1(\Sigma)$, we let $\rho = \psi_2$. This completes the proof.
              
\end{proof}

    \begin{proof}[Proof of Theorem \ref{thm_Exc2}]
        As the case that $p = 3$ is proved by Theorem \ref{thm_pants}, it suffices to consider the case that $p\geqslant 4$. We will prove the result for the case that $p_-(s)=0$. Then the result for the remaining case that $p_+(s)=0$ follows from the previous case and Proposition \ref{prop_pglpsl}. 
        \medskip
        
        By Proposition \ref{Prop_Exc2}, $\mathcal{R}^s_1(\Sigma)$ is non-empty. 
        We will first prove that $\mathcal{R}^s_1(\Sigma)$ consists of totally non-hyperbolic representations, then conclude that $\mathcal{R}^s_1(\Sigma)$ is connected.
        \smallskip
        
        We first prove by contradiction that every representation $\phi$ in $\mathcal{R}^s_1(\Sigma)$ is totally non-hyperbolic. Suppose otherwise that $\phi(d_i)$ is hyperbolic for some $i\in \{1,\cdots, p-3\}$. 
        Then $d_i$ separates $\Sigma$ into two subsurfaces $\Sigma_1$ and $\Sigma_2$. 
        As $\phi(d_i)\in \Hyp$ and $\phi(c_k)\in \Par^+$ for each
        $k\in \{1,\cdots, p\}$, we have 
        $\phi|_{\pi_1(\Sigma_1)}
        \in \HP(\Sigma_1)$ and $\phi|_{\pi_1(\Sigma_2)}
        \in \HP(\Sigma_2)$.
        For each $j\in \{1,2\}$,
        the triple $\big(\Sigma_j, e(\phi|_{\pi_1(\Sigma_j)}), s(\phi|_{\pi_1(\Sigma_j)})\big)$ is not exceptional as $p_0\big(s\big(\phi|_{\pi_1(\Sigma_j)}\big)\big) = 1$, and hence Theorem \ref{thm_general} implies
         $$\chi(\Sigma_j) + p_+\big(s(\phi|_{\pi_1(\Sigma_j)})\big)\leqslant e(\phi|_{\pi_1(\Sigma_j)})\leqslant -\chi(\Sigma_j) - p_-\big(s(\phi|_{\pi_1(\Sigma_j)})\big).$$
         As $\phi$ maps the common boundary $d_i$ of $\Sigma_1$ and $\Sigma_2$ 
         to a hyperbolic element, we have $p_+(s) = p_+ \big(s\big(\phi|_{\pi_1(\Sigma_1)}\big)\big) + p_+ \big(s\big(\phi|_{\pi_1(\Sigma_2)}\big)\big)$ and $p_-(s) = p_- \big(s\big(\phi|_{\pi_1(\Sigma_1)}\big)\big) + p_- \big(s\big(\phi|_{\pi_1(\Sigma_2)}\big)\big)$. Moreover, $e(\phi) = e(\phi|_{\pi_1(\Sigma_1)})
         + e(\phi|_{\pi_1(\Sigma_2)})$ by Proposition \ref{prop_additivity}, and $\chi(\Sigma) = \chi(\Sigma_1)+\chi(\Sigma_2)$, which together imply
        $$\chi(\Sigma)+ p_+(s) \leqslant e(\phi) \leqslant -\chi(\Sigma) - p_-(s).$$
        Since $\chi(\Sigma) = 2-p$, $p_+(s) = p$ and $e(\phi) = 1$, we would obtain that $2\leqslant 1$, which is a contradiction.
        \smallskip

        It remains to show that $\mathcal{R}^s_1(\Sigma)$ is connected. 
        Let $\phi, \psi\in \mathcal{R}^s_1(\Sigma)$.
        As $p_+(s)\neq 1$ and $p_-(s)\neq 1$, by Theorem \ref{thm_nonabel}, we can assume that $\phi, \psi\in \mathrm{NA}^s_1(\Sigma)$; and they are totally non-hyperbolic as shown above. Therefore, by Lemma \ref{lem_Exc2}, there exists a path in $\mathcal{R}^s_1(\Sigma)$ connecting $\phi$ and $\psi$. 
        \end{proof}
    \begin{remark}
        In the proof of Theorem \ref{thm_Exc2}, by replacing the decomposition curve $d_i$ with any essential simple closed curve $\gamma$ on $\Sigma$, we can prove that every representation in $\mathcal{R}^s_{s_1}(\Sigma)$ sends all the simple closed curves on $\Sigma$ to non-hyperbolic elements.
    \end{remark}
    
        \begin{proof}[Proof of Corollary \ref{thm_main3} (2) and (3) and Corollary \ref{total} (2)] 
        By Theorem \ref{thm_general},
        the non-exceptional components of $\mathcal R(\Sigma)$ 
        are in one-to-one correspondence to the pairs $(n,s)$ that satisfy the generalized Milnor-Wood inequality  
        $\chi(\Sigma) + p_+(s)\leqslant n\leqslant -\chi(\Sigma) - p_-(s)$, the number of which  shares the same formula as in Corollary \ref{thm_main3} (1). 
    For the exceptional pairs $(n,s)$, if $n = 0$, then by Theorem \ref{thm_Exc1}, there are $2p$ exceptional components of $\Rn$ corresponding to the signs $s\in \{\pm 1\}^p$ with $\ p_+(s) = 1 \mbox{ or }p_-(s) = 1$; and if $n =1$ or $n=-1$, then by Theorem \ref{thm_Exc2}, there is one exceptional component of $\Rn$ for each case. Together with the number of non-exceptional components, we obtain the formula in Corollary \ref{thm_main3} (2) and (3). Finally, combining with the original Milnor-Wood inequality $\chi(\Sigma)\leqslant n\leqslant -\chi(\Sigma)$, we obtain the formula in Corollary \ref{total} (2). \end{proof}
\section{Appendix: submersive property of the evaluation maps
}
\subsection{Proof of Lemma \ref{lem_evTsubm}}
\begin{proof}
    In this proof, for $2\times 2$-matrices $X$ and $Y$, we denote $[X, Y]\doteq XY - YX$.
    Let $\xi, \eta \in \sl$. 
    The differential of $R$ at a point $(A,B)\in \SL\times \SL$ equals
    $$d{R}_{(A,B)}(\xi A, \eta B)
    = [\xi, ABA^{-1}]B^{-1}
    + 
    A[\eta, BA^{-1}B^{-1}],$$ 
    and by composing this with the linear isomorphism 
    $\xi\mapsto \xi(ABA^{-1}B^{-1})^{-1}$, we obtain the map $R_*: \sl\times\sl\to \sl$ defined by
    $$
    R_*(\xi, \eta)
    = [\xi, ABA^{-1}]AB^{-1}A^{-1}
    + 
    A[\eta, BA^{-1}B^{-1}]BAB^{-1}A^{-1}.$$
    To show that $dR_{(A,B)}$ is surjective, for each generator $
    \sldiag,
    \slupper$, and $\sllower$ of $\sl$,
    we will find a preimage $(\xi, \eta)\in \sl\times \sl$ under $R_*$.
    \\
    
    For convenience in computation, we let 
    $A = \abcd$ and 
    $B^{-1} = \pqrs$.
    Moreover, 
    up to conjugation and multiplication by $-\mathrm I$, we can assume that the matrix $ABA^{-1}$ is one of the following: 
    $\xdiag$ for some $x > 1$, 
    $\begin{bmatrix}
        1 & t\\
        0 & 1
    \end{bmatrix}$ for some $t\in \mathbb{R}$,
    $\elltheta$ for some $\theta\in (0, \pi)$.
    \\
    
    First, we let $ABA^{-1} = \xdiag$ for $x > 1$. 
    By computing $R_*(\xi, 0) = \xi - ABA^{-1}\xi AB^{-1}A^{-1},$ we obtain 
    $R_*\bigg(
    \begin{bmatrix}
        0 & \frac{1}{1-x^2}\\
        0 & 0
    \end{bmatrix}, 0\bigg)
    = \slupper$, 
    $R_*\bigg(
    \begin{bmatrix}
        0 & 0\\
        \frac{x^2}{x^2-1} & 0
    \end{bmatrix}, 0\bigg)
    = \sllower$ 
    and 
    $R_*\bigg(\sldiag, 0\bigg)
    = 0$. 
    As $\slupper$ and $\sllower$ are in the image of $R_*$, it remains to find $\sldiag$ in the image, which exists if and only if the image contains a matrix with non-zero diagonal entries.
    By computing $R_*(0,\eta) = A\eta A^{-1} - ABA^{-1}B^{-1}\eta BAB^{-1}A^{-1}$, we obtain 
\begin{eqnarray*}
    R_*\bigg(0, \slupper\bigg)
    &=& \begin{bmatrix}
        -ac + pr          &  a^2 - p^2x^2   \\
        -c^2 + r^2x^{-2}  &  ac - pr
      \end{bmatrix},\\
      R_*\bigg(0, \sllower\bigg)&=& \begin{bmatrix}
        bd - qs          &  -b^2 - q^2x^2   \\
        d^2 - s^2x^{-2}  &  -bd + qs
      \end{bmatrix}, \mbox{ and }\\
      R_*\bigg(0, \sldiag\bigg)&=& 
      \begin{bmatrix}
        (ad + bc) - (ps + qr)   &   -2ab + 2pqx^2  \\
        2cd - 2rsx^{-2}         &  - (ad + bc) + (ps + qr)
      \end{bmatrix}.
\end{eqnarray*}
      If all the three matrices have zero diagonal entries, then we have $ac = pr$, $bd = qs$, and $ad + bc = ps + qr$. Combining with $\det(A) = ad - bc= 1$ and $\det(B) = ps - qr= 1$, we also obtain $ad = ps$ and $bc = qr$. 
      By solving this system of equations, we obtain 
      $a = tp, 
        b = tq, 
        c = t^{-1}r, d = t^{-1}s$ for some $t\neq 0$, which implies that
      $A = \abcd = 
      \begin{bmatrix}
        t & 0 \\
        0 & t^{-1}
      \end{bmatrix}
      \pqrs = \begin{bmatrix}
        t & 0 \\
        0 & t^{-1}
      \end{bmatrix} B^{-1}$. 
      Then $AB = \begin{bmatrix}
        t & 0 \\
        0 & t^{-1}
      \end{bmatrix}$ and 
      $ABA^{-1} = \xdiag$ 
      together imply 
      $A = \begin{bmatrix}
        xt^{-1} & 0 \\
        0 & tx^{-1}
      \end{bmatrix}$ and 
      $B = \begin{bmatrix}
        t^2x^{-1} & 0 \\
        0 & xt^{-2}
      \end{bmatrix}$ that commute with each other. As a consequence, $dR_{(A,B)}$ is surjective if and only if $A$ and $B$ do not commute.
      \\

    Next, we let $ABA^{-1} = \part$ for $t\in\mathbb{R}$. 
    By computing $R_*(\xi, 0) = \xi - ABA^{-1}\xi AB^{-1}A^{-1},$ we obtain 
    $R_*\bigg(\slupper, 0\bigg)
    = 0$, 
    $R_*\bigg(\begin{bmatrix}
        \frac{1}{2} & 0\\
        -\frac{1}{t} & -\frac{1}{2}
    \end{bmatrix}, 0\bigg)
    = \sldiag$ 
    and 
    $R_*\bigg(
    \begin{bmatrix}
        \frac{1}{2t} & 0\\
        0 & -\frac{1}{2t}
    \end{bmatrix}, 0\bigg)
    = \slupper$.
    As $\sldiag$ and $\slupper$ are in the image of $R_*$, it remains to find $\sllower$ in the image, which exists if and only if the image contains a matrix with non-zero $(2,1)$-entry. By computing $R_*(0,\eta) = A\eta A^{-1} - ABA^{-1}B^{-1}\eta BAB^{-1}A^{-1}$, we obtain 
    $R_*\bigg(0, \slupper\bigg) = \begin{bmatrix}
        -ac + pr + r^2t          &  a^2 - p^2 - r^2t^2   \\
        -c^2 + r^2  &  ac - pr - r^2t
      \end{bmatrix},$
      with its $(2,1)$-entry equals  $-c^2 + r^2$;
    and by similar computations, we obtain
    $R_*\bigg(0, \sllower\bigg)$ 
    where its $(2,1)$-entry equals  $d^2 - s^2$,
    and $R_*\bigg(0, \sldiag\bigg)$ where its $(2,1)$-entry equals  $2(cd - rs)$. 
    If all the matrices in the image have zero $(2,1)$-entries, then we have 
    $c^2 = r^2$, $d^2 = s^2$ and $cd = rs$, which implies
      $r = c$ and $s = d$, or $r = -c$ and $s = -d$. 
      If $r = c$ and $s = d$,
      then by computing \\
      $$AB^{-1}A^{-1} = 
       \abcd\pqrs\inverse
       =
      \begin{bmatrix}
        a(pd - qc) & -abp - b^2c + a^2q + abd \\
        c(pd - qc) & -bcp - bdc + acq +ad^2
      \end{bmatrix}
      = \begin{bmatrix}
            1 & -t \\
            0 & 1
      \end{bmatrix}
      $$
      with $pd - qc = ps - qr = \det(B^{-1})= 1$,
      we obtain
      $a = d = p = s = 1 \mbox{ and }c = r = 0$, i.e., $A = 
      \begin{bmatrix}
      1 & b \\ 
      0 & 1
      \end{bmatrix}$ and 
      $B = 
      \begin{bmatrix}
      1 & -q \\ 
      0 & 1
      \end{bmatrix}$ that commute with each other. 
      Similarly, if $r = -c$ and $s = -d$, then by computing $AB^{-1}A^{-1}$ as above, we obtain $A = 
      \begin{bmatrix}
      1 & b \\ 
      0 & 1
      \end{bmatrix}$ and 
      $B = 
      -\begin{bmatrix}
      1 & q \\ 
      0 & 1
      \end{bmatrix}$ that commute with each other. 
      As a consequence, $dR_{(A,B)}$ is surjective if and only if $A$ and $B$ do not commute.
      \medskip
      
    Finally, we let $ABA^{-1} = \elltheta$ for $\theta\in (0, \pi)$. Let $x \doteq \cos\theta$ and $y\doteq \sin\theta$. 
    By computing $R_*(\xi, 0) = \xi - ABA^{-1}\xi AB^{-1}A^{-1}$ with $ABA^{-1} = \xell$, we obtain 
    $$R_*\bigg(\sldiag, 0\bigg)
    = 2xy\begin{bmatrix}
        0 & 1 \\
        1 & 0
    \end{bmatrix} 
    + 2y^2\sldiag$$ and $$R_*\bigg(\slupper, 0\bigg)
    = R_*\bigg(\sllower, 0\bigg) = y^2\begin{bmatrix}
        0 & 1 \\
        1 & 0
    \end{bmatrix} 
    - xy\sldiag.$$
    Then $\begin{bmatrix}
        0 & 1 \\
        1 & 0
    \end{bmatrix}$ and $\sldiag$ are in the image of $R_*$ and it remains to find $\slupper$ in the image, which exists if and only if the image contains a matrix whose $(2,1)$ and $(1,2)$-entries have different values. By computing $R_*(0,\eta) = A\eta A^{-1} - ABA^{-1}B^{-1}\eta BAB^{-1}A^{-1}$, we obtain  
    $$R_*\bigg(0, \slupper\bigg) 
    = \begin{bmatrix}
        - ac + (px + ry)(rx - py)    &  a^2 - (px + ry)^2   \\
        -c^2 + (rx - py)^2  &  ac - (px + ry)(rx - py)
      \end{bmatrix},$$ 
      which is generated by $\begin{bmatrix}
        0 & 1 \\
        1 & 0
    \end{bmatrix}$ and $\sldiag$ if and only if $a^2 - (px + ry)^2 = -c^2 + (rx - py)^2$. 
     By similar computation, $R_*\bigg(0, \sllower\bigg)$  is generated by $\begin{bmatrix}
        0 & 1 \\
        1 & 0
    \end{bmatrix}$ and $\sldiag$ if and only if 
    $$-b^2 + (qx + sy)^2 = d^2 - (sx - qy)^2,$$
    and $R_*\bigg(0, \sldiag\bigg)$ is generated by $\begin{bmatrix}
        0 & 1 \\
        1 & 0
    \end{bmatrix}$ and $\sldiag$ if and only if $$- ab + (px + ry)(qx + sy) = cd - (rx - py)(sx - qy).$$
      If all the matrices in the image have the same $(1,2)$ and $(2,1)$-entries, then we have the system of equations
      \begin{eqnarray*}
           a^2 + c^2 &=& (px + ry)^2 + (rx - py)^2, \\
             b^2 + d^2 &=& (qx + sy)^2+  (sx - qy)^2, \text{ and }\\
           ab + cd &=& (px + ry)(qx + sy) + (rx - py)(sx - qy).
      \end{eqnarray*} 
      Since $x^2+y^2 = \cos^2\theta
      + \sin^2\theta = 1,$ we have
      $(px + ry)^2 + (rx + py)^2
      = (x^2+y^2)(p^2+ r^2) = p^2 + r^2,$ hence the first equation becomes 
      \begin{eqnarray}
          a^2 + c^2 = p^2 + r^2.\label{eq1}
      \end{eqnarray}
      Similarly, combining $x^2+y^2 = 1$ with the second and third equation, we obtain
      \begin{eqnarray}
          b^2 + d^2 &=& q^2 + s^2\label{eq2}, \text{ and }\\
          ab + cd &=& pq + rs.\label{eq3}
      \end{eqnarray}
      From the determinant $\det(A) = \det(B) = 1$, we obtain
      \begin{eqnarray}
           ad - bc &=& ps - qr = 1. \label{det}
      \end{eqnarray} 
      Moreover, by computing each entry of $AB^{-1}A^{-1} = \begin{bmatrix}
          x & -y \\
          y & x
      \end{bmatrix}$ with $A = \abcd$ and $B^{-1} = \pqrs$, we have $2x = \tr(AB^{-1}A^{-1}) = \tr(B^{-1}) = p+s$ and 
           $2y = (AB^{-1}A^{-1})_{21} 
           - (AB^{-1}A^{-1})_{12} =  r-q$. 
      Combining this with 
      $\eqref{eq1} + \eqref{eq2} + 2\times\eqref{det}$, we obtain 
      $$(a+d)^2 + (b-c)^2 = (p+s)^2 + (q-r)^2 = (2x)^2 + (2y)^2 = 4,$$
      and by subtracting $4\times \eqref{det},$ we obtain \\
      $$(a-d)^2 + (b+c)^2 = (a+d)^2 + (b-c)^2 - 4(ad - bc)
     = 0$$
     and 
     $$(p-s)^2 + (q+r)^2 = (p+s)^2 + (q-r)^2 - 4(ps - qr)
     = 0.$$
     This implies $a = d, b = -c$ and $p = s, q = -r$, i.e., 
      $A = 
      \begin{bmatrix}
        a & b \\
        -b & a
      \end{bmatrix}\mbox{ with }a^2 + b^2 = 1$ and 
    $B = 
      \begin{bmatrix}
        p & -q \\
        q & p
      \end{bmatrix}\mbox{ with }p^2 + q^2 = 1$ 
      that commute with each other. 
      As a consequence, $dR_{(A,B)}$ is surjective if and only if $A$ and $B$ do not commute.
\end{proof}
\subsection{Proof of Lemma \ref{lem_evPsubm}}
\begin{proof}
    Let $\xi, \eta \in \sl$. 
    The differential of $m$ at a point $(A,B)\in \SL\times \SL$ equals
    $$d{m}_{(A,B)}(\xi A, \eta B)
    = \xi AB  + A\eta B,$$ 
    and by composing this with the linear isomorphism 
    $\xi\mapsto \xi(AB)^{-1}$, we obtain the map $m_*: \sl\times\sl\to \sl$ defined by
    $$
    m_*(\xi, \eta)
    = \xi  + A\eta A^{-1}.$$
    To show that each restriction of $dm_{(A,B)}$ is surjective, for each generator $
    \sldiag,
    \slupper$, and $\sllower$ of $\sl$,
    we will find a preimage $(\xi, \eta)\in \sl\times \sl$ under $m_*$ such that $(\xi A, \eta B)$ lies in the tangent space of the restricted domain of $m$.
    \\
    
    For (1), for each $\xi \in \sl$, we have $m_*(\xi, 0) = m_*(0,A^{-1}\xi A) = \xi$.
    Moreover, for the restriction of $m$ to each of $U\times U, \Par\times U, U\times \Par$,
    $(\xi A, 0)$ or 
    $(0, A^{-1}\xi A B)$ lies in the tangent space of the domain, as $\xi A\in T_AU\cong T_A\SL$ and $(A^{-1}\xi A) B\in T_BU\cong T_B\SL$. Therefore, $m$ is submersive on these domains.
    \medskip
    
    For (2), as the tangent space of the domain is 
    $T_A\Par\times T_B\Par$, we first compute the elements of $T_X\Par$ for each $X = \abcd\in \Par$ in $\SL$. 
    Each element of $T_X\SL$ can be written as $$\xi X = \bigg(x\slupper + y\sllower + z\sldiag\bigg) \abcd
    = \begin{bmatrix}
            cx+az & dx+bz \\
            ay-cz & by-dz
        \end{bmatrix}$$\\
    for some $x,y,z\in \mathbb{R}.$
    Since $\Par$ is the level set $\tr^{-1}(2)\cup \tr^{-1}(-2)$  of the trace map, the differential of the trace map $d(\tr)_X:T_X\SL\to \mathbb{R}$ at $X$, which maps $\xi X\in \sl$ to the trace $\tr(\xi X)$, is zero on $T_X\Par$.
    This implies that $\xi X\in T_X\Par$ if and only if
    $d(\tr)_X(\xi X) = cx + by + (a-d)z = 0.$

    We now compute the image of $dm_{(A,B)}$.
    Up to conjugation and multiplication by $-\mathrm I$, we can assume that 
    $A = \begin{bmatrix}
        1 & s \\
        0 & 1
    \end{bmatrix}$ for some $s\in \{\pm 1\}$.
    We have $\xi A = x\slupper + y\sllower + z\sldiag\in T_A\Par$ if and only if $y = 0$, i.e.,
    $T_A\Par$ is generated by
        $\slupper A$ and $\sldiag A$.
    Hence for $\xi = \slupper$ or $\xi = \sldiag$, 
    $m_*(\xi, 0) = \xi$ and $(\xi A, 0)\in T_A\Par\times T_B\Par$.
    As $\slupper$ and $\sldiag$
    are in the image of $m_*$, it remains to find a preimage $(\xi,\eta)\in \sl\times \sl$ of $\sllower$ such that $(\xi A, \eta B)\in T_A\Par\times T_B\Par$; and it suffices to find a $(\xi,\eta)$ with $(\xi A, \eta B)\in T_A\Par\times T_B\Par$ whose image $m_*(\xi,\eta)$ has non-zero $(2,1)$-entry. By computing $R_*(0,\eta) =  A\eta A^{-1}$ for $
    \eta = \begin{bmatrix}
        z  & x \\
        y  & -z
      \end{bmatrix}\in \sl$,
    we obtain 
$m_*(0,\eta)
      = A\eta A^{-1}
      = \begin{bmatrix}
        z-y  & x - y + 2z \\
          y    &  y-z
      \end{bmatrix}$.
      Therefore, there is such $(\xi, \eta)$ if and only if 
      there is an $\eta B = \begin{bmatrix}
        z  & x \\
        y  & -z
      \end{bmatrix} B\in T_B\Par$ with $y \neq 0$.
      Letting $B = \abcd\in \SL$, having $T_B\Par = \bigg\{ \begin{bmatrix}
        z  & x \\
        0  & -z
      \end{bmatrix} B: x,z\in \mathbb{R}^2\bigg\}$ is equivalent to having $cx + by + (a-d)z = 0$ for all $(x,0,z)\in \mathbb{R}^3$, i.e., $c = a-d = 0$ and $B = \pm\begin{bmatrix}
        1 & b \\
        0 & 1
    \end{bmatrix}$ for some $b\in \mathbb{R}$ which commutes with $A$.
    As a consequence, $dm_{(A,B)}$ is surjective if and only if $A$ and $B$ do not commute.
\end{proof}

\noindent
Inyoung Ryu\\
Department of Mathematics\\  Texas A\&M University\\
College Station, TX 77843, USA\\
(riy520@tamu.edu)
\\

\noindent
Tian Yang\\
Department of Mathematics\\  Texas A\&M University\\
College Station, TX 77843, USA\\
(tianyang@math.tamu.edu)

\end{document}